\tikzset{node distance=2cm, auto}
\newcommand*{\DashedArrow}[1][]{\mathbin{\tikz [baseline=-0.25ex,-latex, dashed,#1] \draw [#1] (0pt,0.5ex) -- (1.3em,0.5ex);}}
\newtheorem{theorem}{Theorem}[section]
\newtheorem*{theorem*}{Theorem}
\newtheorem{lemma}[theorem]{Lemma}
\newtheorem{proposition}[theorem]{Proposition}
\newtheorem*{proposition*}{Proposition}
\newtheorem{corollary}[theorem]{Corollary}
\newtheorem*{corollary*}{Corollary}
\theoremstyle{definition} 	\newtheorem{definition}[theorem]{Definition}
\newtheorem*{definition*}{Definition}
\theoremstyle{remark}    	\newtheorem{remark}[theorem]{Remark}
\theoremstyle{definition} 	\newtheorem{notation}[theorem]{Notation}
\theoremstyle{definition}	
\theoremstyle{remark}   	
\theoremstyle{definition}	\newtheorem*{conjecture*}{Conjecture}
					\newtheorem{conjecture}[theorem]{Conjecture}
					\newtheorem{conjectureintro}{Conjecture}
\newcommand{\nc}{\newcommand}
\nc{\UCubic}{\calC ub}
\nc{\CC}{\mathbb{C}}
\nc{\HH}{\mathbb{H}}
\nc{\LL}{\mathbb{L}}
\nc{\PP}{\mathbb{P}}
\nc{\QQ}{\mathbb{Q}}
\nc{\RR}{\mathbb{R}}
\nc{\BBS}{\mathbb{S}}
\nc{\ZZ}{\mathbb{Z}}
\nc{\kk}{\mathsf{k}}
\nc{\scrL}{\mathscr{L}}
\nc{\scrM}{\mathscr{M}}
\nc{\Bg}{B}
\nc{\Bs}{B^{\sfs}}
\nc{\Bn}{B^{\sfn}}
\nc{\sfs}{\mathsf{s}}
\nc{\sfn}{\mathsf{n}}
\nc{\Gr}{\mathbb{G}\mathrm{r}}
\nc{\Ver}{\mathrm{Ver}}
\nc{\Bl}{\mathrm{Bl}}
\nc{\Pic}{\mathrm{Pic}}
\nc{\Hilb}{\mathsf{Hilb}}
\nc{\Flag}{\mathbb{F}\mathrm{l}}
\nc{\PGL}{PGL}
\nc{\GL}{GL}
\nc{\PSL}{PSL}
\nc{\SSL}{SL}
\nc{\SL}{SL}
\nc{\LieSL}{\mathfrak{sl}}
\nc{\Ker}{\mathrm{Ker}}
\nc{\Coker}{\mathrm{Coker}}
\nc{\coker}{\mathrm{coker}}
\nc{\Image}{\mathrm{Im}}
\nc{\rank}{\mathrm{rank}}
\nc{\corank}{\mathrm{corank}}
\nc{\codim}{\mathrm{codim}}
\nc{\ch}{\mathrm{ch}}
\nc{\Tor}{\mathrm{Tor}}
\nc{\Lotimes}{\stackrel{L}{\otimes}}
\nc{\Ev}{Ev}
\nc{\Kosz}{\mathsf{Kosz}}
\nc{\cone}{\mathsf{cone}}
\nc{\Proj}{\mathsf{Proj}}
\nc{\ev}{\mathsf{ev}}
\nc{\Sym}{\mathsf{Sym}}
\nc{\calU}{\mathcal{U}}
\nc{\calO}{\mathcal{O}}
\nc{\calI}{\mathcal{I}}
\nc{\calC}{\mathcal{C}}
\nc{\calL}{\mathcal{L}}
\nc{\calM}{\mathcal{M}}
\nc{\calT}{\mathcal{T}}
\nc{\calK}{\mathcal{K}}
\nc{\calHom}{\mathcal{H}\mathit{om}}
\nc{\calExt}{\mathcal{E}\mathit{xt}}
\nc{\restr}[2]{{#1}\vert_{#2}}
\nc{\korth}{\kappa}
\nc{\Dtr}{D_{tri}}
\nc{\Dfat}{D_{fat}}
\nc{\D}{D}
\nc{\Id}{Id}
\nc{\ST}{\mathsf{ST}}
\nc{\BC}{\mathsf{BC}}
\nc{\tr}{\mathsf{tri}}
\nc{\Tr}{\mathsf{Tri}}
\nc{\PhiK}{\Phi_{\Bl}(\CO_{\PP(K)}(2h))}
\nc{\Hom}{\mathop{\mathsf{Hom}}\nolimits}
\nc{\Ext}{\mathop{\mathsf{Ext}}\nolimits}
\nc{\ext}{\mathop{\mathrm{ext}}\nolimits}
\nc{\CRHom}{\mathit{R}\mathcal{H}\mathit{om}}
\nc{\RHom}{\mathsf{RHom}}
\nc{\Cone}{\mathsf{Cone}}
\nc{\Supp}{\mathsf{Supp}}
\nc{\Cm}{m}
\nc{\CA}{\mathcal{A}}
\nc{\CB}{\mathcal{B}}
\nc{\calB}{\mathcal{B}}
\nc{\calD}{\mathcal{D}}
\nc{\CE}{\mathcal{E}}
\nc{\CF}{\mathcal{F}}
\nc{\CG}{\mathcal{G}}
\nc{\CH}{\mathcal{H}}
\nc{\CI}{\mathcal{I}}
\nc{\CJ}{\mathcal{J}}
\nc{\CK}{\mathcal{K}}
\nc{\CL}{\mathcal{L}}
\nc{\CM}{\mathcal{M}}
\nc{\CN}{\mathcal{N}}
\nc{\CO}{\mathcal{O}}
\nc{\CQ}{\mathcal{Q}}
\nc{\CR}{\mathcal{R}}
\nc{\CS}{\mathcal{S}}
\nc{\CT}{\mathcal{T}}
\nc{\CU}{\mathcal{U}}
\nc{\WC}{\widetilde{C}}
\nc{\WE}{\widetilde{E}}
\nc{\WL}{\widetilde{L}}
\nc{\WZ}{\widetilde{Z}}
\nc{\Wpi}{\widetilde{\pi}}
\nc{\Wsigma}{\widetilde{\sigma}}
\nc{\WDtr}{\widetilde{\Dtr}}
\nc{\WDfat}{\widetilde{\Dfat}}
\nc{\WCL}{\widetilde{\CL}}
\nc{\WCN}{\widetilde{\CN}}
\nc{\CMI}{\mathcal{MI}}
\nc{\CMIbar}{\overline{\CMI}}
\nc{\CMItil}{\widetilde{\CMI}}
\nc{\WCMI}{\widetilde{\CMI}}
\nc{\CHom}{\mathcal{H}\mathit{om}}
\nc{\CTaut}{\mathcal{T}}
\nc{\CTang}{\mathcal{T}}
\nc{\CSTaut}{\mathcal{T}^\mathsf{sp}}
\nc{\CRelTaut}{\mathcal{T}^\mathsf{rel}}
\nc{\CKer}{\mathcal{K}\mathrm{er}}
\nc{\JL}{\mathsf{JL}}
\nc{\JC}{\mathsf{JC}}
\nc{\divQ}{Q}
\nc{\Fe}{\mathfrak{e}}
\nc{\Fm}{\mathfrak{m}}
\nc{\Fn}{\mathfrak{n}}
\nc{\FA}{\mathfrak{A}}
\nc{\FMI}{\mathfrak{MI}}
\nc{\SGr}{\Gr^{\mathsf{sp}}}
\nc{\SCL}{\CL^{\mathsf{sp}}}
\nc{\Scoker}{\coker}
\nc{\RelGr}{\Gr_{\Bg}(3,A \otimes \CK)}
\nc{\Gropen}{\mathbb{G}\mathrm{rop}}
\nc{\Mon}{\textsf{M}}
\nc{\cod}{\codim}
\nc{\Spec}{\mathsf{Spec}}
\nc{\length}{length}
\nc{\UL}{\CL}
\nc{\IQ}{IQ}
\nc{\ILC}{\mathsf{ILC}}
\nc{\UIL}{I_\UL}
\nc{\Imsigma}{\sigma(A)}
\nc{\Imkappa}{\kappa(A)}
\nc{\Imtau}{\tau(A)}
\nc{\norm}{\textrm{norm}}
\nc{\eps}{\epsilon}
\nc{\Lvee}{\prescript{\vee}{}}
\nc{\Lperp}{\prescript{\perp}{}}
\nc{\red}[1]{\textcolor{red}{#1}}
\nc{\green}[1]{\textcolor{green}{#1}}
\nc{\blue}[1]{\textcolor{blue}{#1}}
\nc{\purple}[1]{\textcolor{purple}{#1}}
\nc{\magenta}[1]{\textcolor{magenta}{#1}}
\nc{\bb}{b}
\nc{\cc}{c}
\nc{\dd}{d}
\nc{\ee}{e}
\title{Rational curves and instantons on the Fano threefold $Y_5$}
\author{Giangiacomo Sanna}
\begin{document}
\maketitle

\begin{abstract}
This thesis is an investigation of the moduli spaces of instanton bundles on the Fano threefold $Y_5$ (a linear section of $\Gr(2,5)$). It contains new proofs of classical facts about lines, conics and cubics on $Y_5$, and about linear sections of $Y_5$.

The main original results are a Grauert--M\"ulich theorem for the splitting type of instantons on conics, a bound to the splitting type of instantons on lines and an $SL_2$-equivariant description of the moduli space in charge $2$ and $3$.

Using these results we prove the existence of a unique $SL_2$-equivariant instanton of minimal charge and we show that for all instantons of charge $2$ the divisor of jumping lines is smooth. In charge $3$, we provide examples of instantons with reducible divisor of jumping lines. Finally, we construct a natural compactification for the moduli space of instantons of charge $3$, together with a small resolution of singularities for it.
\end{abstract}

\tableofcontents

\section*{Introduction}

The goal of this thesis is an investigation of instanton vector bundles on a Fano threefold of index 2 and degree 5,
a natural generalization of instanton vector bundles on $\PP^3$. 

\begin{definition*}
An instanton on $\PP^3$ is a $\mu$-stable vector bundle $E$ of rank $2$ such that $c_1(E) = 0$ and $H^1(E(-2)) = 0$. Its second Chern class $c_2(E)$ is called the \emph{charge} of $E$.
\end{definition*}

The standard approach \cite{atiyah1978construction} (see also \cite{okonek1980vector}) to the description of instantons on $\PP^3$ and of their moduli space is via the Beilinson spectral sequence. 
Using this approach one can write any instanton $E$ of charge $n$ as the middle cohomology of a complex
\begin{equation}\label{intro:monad for P3}
0 \to \CO_{\PP^3}(-1)^{\oplus n} \to \CO_{\PP^3}^{\oplus 2n+2} \to \CO_{\PP^3}(1)^{\oplus n} \to 0
\end{equation}
such that the first map is fiberwise injective and the last map is surjective. 
Such a complex is usually called a \emph{monad} for $E$ and is determined by $E$ up to the action of a group. 
This leads to a description of the moduli space of instantons as a GIT quotient.
This description turns out to be too complicated for large $n$ to provide answers to the natural questions about the moduli space of instantons: smoothness, irreducibility and rationality.
In the end, some of these questions were solved in recent works of Verbitsky, Jardim, Markushevitch and Tikhomirov by a different technique (see \cite{jardim2011trihyperkahler}, \cite{markushevich2010rationality}, \cite{tikhomirov2012moduli}).

On the other hand, for small values of $n$ one can give a more explicit description of the moduli space of instantons (see \cite{ellingsrud1981stable}) by analyzing the loci of jumping lines.

\begin{definition*}
A line $L$ is jumping for an instanton $E$ if $E$ restricted to $L$ is not trivial. 
We will say that a line $L$ is $k$-jumping for $E$ if $E$ restricted to $L$ is isomorphic to $\CO_L(-k) \oplus \CO_L(k)$.
\end{definition*}

By Grauert--M\"ulich theorem the locus of jumping lines for an instanton $E$ is a hypersurface $S_E$ of degree $n$ in the Grassmannian $\Gr(2,4)$ parameterizing lines in $\PP^3$.
It comes with a rank one torsion free sheaf $\CL_E$ and one can show that an instanton $E$ can be reconstructed from the pair $(S_E,\CL_E)$.
However, it turns out to be quite hard to characterize which pairs $(S_E,\CL_E)$ give rise to instantons, at least for higher values of the charge. 
One of the reasons for this is the high dimension of $S_E$ and its singularities.

Recently, the definition of instanton bundle was extended from $\PP^3$ to other Fano threefolds of Picard number $1$.

\begin{definition*}[\cite{faenzi2011even}]
A rank $2$ vector bundle $E$ with $c_1(E) = 0$ or $1$ on a Fano threefold $X$ is an instanton if there is a twist $E \otimes \CO_X(t) = F$ such that
\[
F \cong F^* \otimes \omega_X, \qquad H^1(X,F) = 0
\]
\end{definition*}

Faenzi proved some general results about instantons on Fano threefolds, such as the existence of a good component of arbitrary charge (see \cite{faenzi2011even}).
On the other hand, Kuznetsov investigated in greater detail the case of Fano threefolds of index 2.
In particular, in \cite{kuznetsov2012instanton} the locus of jumping lines $D_E$ was defined together with a theta-sheaf $\CL_E$ on it. 
Moreover, motivated by the Grauert--M\"ulich theorem, the following conjecture was suggested.

\begin{conjectureintro}[\cite{kuznetsov2012instanton}]\label{conjecture-jumping-lines}
For an instanton $E$ on a Fano threefold of index $2$ the generic line is not jumping.
An instanton $E$ can be reconstructed from its theta-sheaf $\CL_E$. 
\end{conjectureintro}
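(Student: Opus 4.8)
The plan is to establish the two assertions by separate methods: the generic non-jumping by a Grauert--M\"ulich type restriction argument, and the reconstruction by inverting a Fourier--Mukai type integral transform on the subcategory of $\mathrm{D}^b(X)$ generated by $E$. Throughout, write $\Sigma$ for the Hilbert scheme of lines on $X$, with universal line $\calI \subset X \times \Sigma$ and projections $p \colon \calI \to X$, $q \colon \calI \to \Sigma$; since $X$ has index $2$ it is covered by lines, so $p$ is dominant. The common starting point is the criterion that a line $L$ is non-jumping exactly when $E\vert_L \cong \CO_L^{\oplus 2}$, i.e.\ when $H^0(L, E\vert_L(-1)) = 0$. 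By semicontinuity applied to $R q_*\, p^*(E(-1))$ the jumping lines form a closed subscheme $D_E \subseteq \Sigma$, and the theta-sheaf $\CL_E$ is the torsion sheaf that this transform produces on $D_E$.

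For the first assertion I would prove that $D_E \neq \Sigma$. Suppose to the contrary that every line jumps, so that the generic restriction is $E\vert_L \cong \CO_L(a) \oplus \CO_L(-a)$ with $a \geq 1$. The maximal destabilising sub-line-bundles $\CO_L(a) \hookrightarrow E\vert_L$ assemble, via the relative Harder--Narasimhan filtration of $p^* E$ over the generic point of $\Sigma$, into a subsheaf of $p^* E$ on $\calI$. Pushing forward along the dominant map $p$ and saturating yields a subsheaf $F \subset E$ whose slope I would bound from below using the positivity of $\CO_L(a)$ together with the normal bundle of a generic line, which for $X = Y_5$ is the balanced $\CO_L \oplus \CO_L$. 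The resulting inequality $\mu(F) > 0 = \mu(E)$ contradicts the $\mu$-stability of the instanton, so the generic line cannot jump.

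For the reconstruction I would use the integral functor $\Phi = R q_* \circ (p^*(-) \otimes \mathcal{K})$ attached to a suitable kernel $\mathcal{K}$ on $\calI$, chosen so that $\Phi(E) \cong \CL_E$ by the computation of the previous paragraph. The key is to show that $\Phi$ is fully faithful on the subcategory containing $E$, which I would do by exhibiting the transposed transform $\Psi$ given by the dual kernel and proving $\Psi\Phi(E) \cong E$. Concretely, convolving $\CL_E$ against the tautological resolution of the diagonal $\CO_\Delta$ on $X \times X$ by objects pulled back through the two incidence correspondences produces a complex whose only surviving cohomology is $E$; the vanishing of the remaining terms should follow from the cohomological profile of an instanton, namely $H^\bullet(X, E(t)) = 0$ for the relevant twists $t$, together with the fact that $E$ lies in the correct piece of the semiorthogonal decomposition of $\mathrm{D}^b(X)$.

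The hard part will be this last step. Unlike a genuine Fourier--Mukai equivalence, here $\CL_E$ is a torsion sheaf supported on $D_E$, which may be singular or reducible, so one cannot simply invert a global equivalence but must instead control $\Psi\Phi$ on a subcategory and track the support carefully. Verifying $\Psi\Phi(E) \cong E$ reduces to computing the relative cohomology of the universal line and the ext-groups between the building blocks of the decomposition --- precisely the input that depends on the individual threefold. For $X = Y_5$ these computations are made tractable by the $\Gr(2,5)$ description and the $\SL_2$-symmetry, and it is there that the explicit geometry of the preceding sections should carry the argument; for a general index-$2$ Fano threefold the same scheme is the natural route, but the requisite vanishing statements must be checked case by case.
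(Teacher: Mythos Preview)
The statement you are attempting to prove is a \emph{conjecture}, not a theorem: the paper does not prove it, and in fact explains why the very method you propose cannot work. The paper proves only special cases (charge~$2$ on $Y_5$, via the bound of Theorem~\ref{thm:gamma injective}), and restates the general assertion as Conjecture~\ref{the conjecture}, still open.

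Your argument for the first assertion relies on a Grauert--M\"ulich type theorem, which requires the family of lines through a general point to be irreducible (this is condition~\eqref{itm:irreducible fiber} in Theorem~\ref{thm:GM}). The paper computes this family explicitly for $Y_5$: by Corollary~\ref{cor:3 to 1 cover} the map $r_Y \colon \CL \to Y$ is a $3$-to-$1$ cover, so the fiber over a general point consists of three \emph{distinct} points, hence is disconnected. The paper states this obstruction directly after Theorem~\ref{thm:jumping sheaf}: ``the usual Grauert--M\"ulich theorems \ldots\ can only be used provided the family of lines through a general point is irreducible. By corollary~\ref{cor:3 to 1 cover} this is clearly not the case for~$Y$.'' Your step ``pushing forward along the dominant map $p$ and saturating yields a subsheaf $F \subset E$'' breaks exactly here: the relative Harder--Narasimhan filtration on $p^*E$ does not descend to a filtration of $E$ when the fibers of $p$ are disconnected, because the three destabilizing sub-line-bundles over the three lines through a point need not agree at that point. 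This is precisely why the paper turns instead to \emph{conics}, where the family through a point is a $\PP^2$ (Corollary~\ref{cor:conics through P}), and proves the analogue for conics (Theorem~\ref{cor:generic conic is not jumping}); the translation back to lines (Corollary~\ref{cor:jumping lines vs conics}) gives only a conditional statement.

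For the reconstruction assertion, the paper attributes the degree~$4$ and~$5$ cases to \cite{kuznetsov2012instanton} and does not reproduce a proof; your sketch is in the right spirit but, as you note yourself, depends on case-by-case cohomological input and is not a proof of the general statement.
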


In the case of Fano threefolds of index $2$ and degree $4$ or $5$, \cite{kuznetsov2012instanton} provides more concrete descriptions of the moduli spaces of instantons.

Namely, in the case of the degree $4$ threefold the moduli spaces of instantons were shown to be related to the moduli spaces of self dual vector bundles on a curve of genus~2. 
Using this, the first part of conjecture \ref{conjecture-jumping-lines} was reinterpreted and the second part was proved.

The degree 5 case is that of a triple linear section of $\Gr(2,5)$.
\begin{definition*}
The variety $Y$ is the transverse intersection of the Pl\"ucker embedding $\Gr(2,5) \subset \PP^9$ with a $\PP^6$.
\end{definition*}
It turns out that $Y$ is similar to $\PP^3$ in many respects.
For example, both admit an action of $\SSL_2$ with an open orbit and both admit a full exceptional collection for their derived category.

Again in \cite{kuznetsov2012instanton}, these similarities with $\PP^3$ were used to show that every instanton on $Y$ can be represented as the middle cohomology of the monad
\begin{equation*}
0 \to \CU^{\oplus n} \to \CO_{Y}^{\oplus 4n+2} \to {\CU^*}^{\oplus n} \to 0,
\end{equation*}
where $\CU$ is the restriction of the tautological bundle to $Y$, in analogy with \eqref{intro:monad for P3}.
As a corollary a GIT description of the moduli space of instantons via special nets of quadrics was found, a reinterpretation of the first part of conjecture \ref{conjecture-jumping-lines} was given and the second part was proved.

\bigskip

The goal of this thesis is an extension of the results of \cite{kuznetsov2012instanton} on moduli spaces
of instantons on $Y$ and a detailed study of moduli spaces of small charge,
namely charge~2 (minimal instantons) and charge 3.

We start with an exposition of various facts about the geometry of $Y$, some of which are known to experts.
The references for these facts are scattered in the literature and some of them do not explicitly refer to the $\SL_2$-structure, so that we prefer providing complete proofs for most of them.
In particular, we describe several useful exceptional collection in $D^b(Y)$, the bounded derived category of coherent sheaves on $Y$, show that the Hilbert scheme of lines on $Y$ identifies with $\PP^2$ and that the Hilbert scheme of conics on $Y$ identifies with $\PP^4$.
We also write down explicit $\SL_2$-equivariant resolutions for the structure sheaves of lines and conics, discuss the natural stratification of the Hilbert scheme of conics and describe the natural incidence correspondences on the products of these Hilbert schemes.
Finally, we apply the same methods to cubic curves in $Y$. 
In all these constructions we pay special attention to the induced action of $\SL_2$.

In section \ref{sec:More on the geometry of Y} we introduce the following construction of $Y$: we show that the blowup of the Veronese surface in $\PP^4$ identifies naturally with the projectivization of the vector bundle $\CU$ on $Y$.
This description turns out to be very useful later, when we discuss the geometry of hyperplane sections of $Y$ and their relation to the Grassmannian of planes in $\PP^4$.
Again, all the results of this section are $\SL_2$-equivariant.

In section \ref{sec:Instantons} we start discussing instantons on $Y$. We remind the necessary definitions
and constructions from \cite{faenzi2011even} and \cite{kuznetsov2012instanton}, in particular the monadic description and some results about jumping lines. 
We state here the first new result of the thesis, which will be of great use in the description of the moduli space of minimal instantons.

\begin{theorem*}[\ref{thm:gamma injective}, \ref{cor:no n-jumps}]
If $E$ is an instanton of charge $n$ then $E$ has no $k$-jumping lines for $k \ge n$.
\end{theorem*}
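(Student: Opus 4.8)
\emph{Plan.} The splitting type on a line is visible in the monad after restriction, so I would begin by restricting the Kuznetsov monad $0\to\CU^{\oplus n}\to\CO_Y^{\oplus 4n+2}\to(\CU^*)^{\oplus n}\to 0$ to a line $L\subset Y$. Since a line on $Y$ is a line in $\Gr(2,5)$, one has $\CU|_L\cong\CO_L\oplus\CO_L(-1)$ and dually $\CU^*|_L\cong\CO_L\oplus\CO_L(1)$, while $\CO_Y(1)|_L\cong\CO_L(1)$. Writing $E|_L\cong\CO_L(-k)\oplus\CO_L(k)$, the linchpin of the whole argument is the elementary identity $k=h^0(L,E|_L(-1))$: indeed $H^0(\CO_L(-k-1)\oplus\CO_L(k-1))$ has dimension exactly $k$. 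Thus it suffices to bound the number $h^0(E|_L(-1))$ by the monad.

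To compute this I would twist the restricted monad by $\CO_L(-1)$ and run the hypercohomology spectral sequence. A direct check shows each term has cohomology in a single degree: the middle term $\CO_L(-1)^{\oplus 4n+2}$ is acyclic, while the outer terms contribute $H^1(\CU^{\oplus n}|_L(-1))\cong\CC^n$ and $H^0((\CU^*)^{\oplus n}|_L(-1))\cong\CC^n$. Hence the sequence degenerates to a single connecting differential $\gamma\colon\CC^n\to\CC^n$, and hypercohomology identifies $H^0(E|_L(-1))=\ker\gamma$, $H^1(E|_L(-1))=\coker\gamma$. Consequently $k=n-\rank\gamma$, which already yields the a priori bound $k\le n$; equivalently, the tautological inclusion $H^0(E|_L(-1))=\ker\gamma\hookrightarrow H^1(\CU^{\oplus n}|_L(-1))\cong\CC^n$ is injective. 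Moreover, because $(\CU^*)^{\oplus n}=(\CU^{\oplus n})^*$, Serre duality on $L$ lets me view $\gamma$ as a bilinear form on $\CC^n$, whose symmetry type is dictated by the self-duality of the monad.

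The genuine content of the statement is therefore the strict inequality: excluding the extremal case $k=n$, that is, proving $\rank\gamma\ge 1$. I expect this to be the main obstacle, since a pure dimension count on $L$ cannot detect it — the degenerate configuration $\gamma=0$ is compatible with every constraint visible on a single line. To rule it out I would exploit the self-duality $\beta=\alpha^*\phi$, where $\phi$ is the symplectic form on $\CO_Y^{\oplus 4n+2}$: the isotropy relations forced by $\beta\alpha=0$ constrain the restricted data, and combining them with the fact that $\alpha$ is a subbundle embedding on \emph{all} of $Y$ (not merely on $L$) should force $\gamma$ to be the nonzero form computing the connecting map, hence $\rank\gamma\ge 1$. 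Granting this, the corollary is immediate: $k=n$ would require $\gamma=0$, so every jumping line satisfies $k\le n-1$, and $E$ has no $k$-jumping lines for $k\ge n$.
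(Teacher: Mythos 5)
Your reduction is sound and, up to packaging, it is the paper's own skeleton: the order of jump at $L$ is the corank of an $n\times n$ form read off from the monad at the point of $\PP(A)$ corresponding to $L$ (this is corollary \ref{cor:jumping order}, obtained there from the Fourier--Mukai statement of theorem \ref{thm:jumping sheaf} rather than from a hypercohomology spectral sequence, but it is the same computation), and the weak bound $k\le n$ falls out for free. The problem is that the entire content of the statement is the strict inequality, i.e.\ theorem \ref{thm:gamma injective} asserting $\gamma(a)\ne 0$ for every $a\in A$, and at exactly this point your text stops being a proof: ``the isotropy relations \dots should force $\gamma$ to be the nonzero form'' is a hope, not an argument. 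Worse, the route you indicate is unlikely to close the gap as stated. The data visible on $L$ from the self-dual monad --- $\alpha|_L$ a fiberwise-injective isotropic embedding of $(\CO_L\oplus\CO_L(-1))^{\oplus n}$ into the symplectic $\CO_L^{\oplus 4n+2}$, with $\beta|_L=\alpha|_L^*\phi$ surjective --- does not by itself exclude the symplectic reduction being $\CO_L(n)\oplus\CO_L(-n)$: a rank/degree/$h^0$ bookkeeping on $\ker\beta|_L$ only forces $\ker\beta|_L\cong\CO_L^{\oplus 2n+1}\oplus\CO_L(-n)$ in that case, which is perfectly consistent with everything seen on a single line. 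You would need a genuinely new input here, and the global fiberwise injectivity of $\alpha$ does not supply one, since the paper's argument too only uses injectivity at points of $L$.

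The input the paper actually uses is the other presentation of the monad, $H\otimes\CU\to H^*\otimes\CU^\perp\to H'\otimes\CO_Y$, together with the precise splitting types $\ST_L(\CU)=(-1,0)$ and $\ST_L(\CU^\perp)=(-1,0,0)$. The form $\gamma(a_L)$ is exactly the constant block $H\otimes\CO_L(-1)\to H^*\otimes\CO_L(-1)$ of the fiberwise-injective map $H\otimes\CU|_L\to H^*\otimes\CU^\perp|_L$, it being the only block detected by $H^1$ after twisting by $\CO_L(-1)$. If $\gamma(a_L)=0$, then $H\otimes\CO_L$ maps isomorphically onto a constant rank-$n$ subbundle $W\subset H^*\otimes\CO_L^{\oplus 2}$ (the component into $\CO_L(-1)$ vanishes for degree reasons); passing to the quotients by $H\otimes\CO_L$ and by $W$ preserves fiberwise injectivity, and since the remaining block into $H^*\otimes\CO_L(-1)$ is the vanishing $\gamma(a_L)$, one is left with a fiberwise injection of $H\otimes\CO_L(-1)$ into a trivial bundle of rank $n$ --- an $n\times n$ matrix of linear forms invertible at every point of $\PP^1$, absurd because its determinant is a nonzero form of degree $n\ge 1$. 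Note that self-duality plays no role in this step; what does the work is that $\CU^\perp|_L$ has exactly two trivial summands, one of which is consumed by the image of $H\otimes\CO_L$. If you insist on the self-dual monad you must first extract an equivalent statement from it, which in effect means redoing the mutation to the $\langle\CU,\CU^\perp,\CO_Y,\CO_Y(1)\rangle$ collection.
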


A direct consequence of theorem \ref{thm:gamma injective} is the following proposition providing an explicit description of instantonic theta-characteristics of degree 2.

\begin{proposition*}[\ref{cor:generic line minimal}]
If $E$ is an instanton of charge $2$ then the curve of jumping lines $D_E$ is a smooth conic and the sheaf $\CL_E$ is isomorphic to $\CO_{D_E}(-1)$. 
In particular,  conjecture~\ref{conjecture-jumping-lines} holds true for instantons of charge $2$.
\end{proposition*}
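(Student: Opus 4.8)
The plan is to combine three inputs: that $D_E$ is a conic, that the self-dual nature of an instanton makes the locus of jumping lines a \emph{symmetric} degeneracy locus, and the bound on jump orders coming from Theorem~\ref{thm:gamma injective}. First I would record that the Grauert--M\"ulich analysis already set up for $Y$ presents $D_E$, for an instanton of charge $n$, as a divisor of degree $n$ inside the Hilbert scheme of lines $\PP^2$; for $n=2$ this says exactly that $D_E$ is a conic. The entire content of the first assertion is therefore that this conic is \emph{smooth}, and the entire content of the second is the computation of a single line bundle on it.

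For smoothness I would use that $E$ is self-dual up to twist, $E\cong E^*$, so that the matrix $M$ of forms on $\PP^2$ cutting out $D_E$ (coming from the monadic, equivalently net-of-quadrics, description) is symmetric, with $D_E=\{\det M=0\}$ and, at a point $[L]\in D_E$, $\corank M([L])$ equal to the jump order of the line $L$, i.e. to $\dim H^0(\restr{E}{L})$ in the relevant normalisation. The key classical fact is that for a symmetric determinantal hypersurface the singular locus is precisely the corank-$\ge 2$ stratum: at a corank-one point the adjugate of $M$ is a nonzero rank-one (hence nonzero) symmetric matrix, giving a nonvanishing differential of $\det M$, whereas the adjugate vanishes in corank $\ge 2$. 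Thus $\mathrm{Sing}(D_E)=\{\,\corank M\ge 2\,\}$, and a singular point of $D_E$ would force a line with $\corank M([L])\ge 2$, that is a $k$-jumping line with $k\ge 2$. Since Corollary~\ref{cor:no n-jumps} rules these out when $n=2$, we get $\corank M\le 1$ everywhere and $D_E$ is a smooth conic, so $D_E\cong\PP^1$.

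The symmetry of $M$ is what I expect to be the main obstacle, and it is genuinely necessary: for an \emph{arbitrary} matrix of linear forms the determinant can define a nodal conic while the corank stays equal to $1$ at the node (for instance $\det\left(\begin{smallmatrix} x & y\\ 0 & z\end{smallmatrix}\right)=xz$), so the jump-order bound alone would not exclude a singular $D_E$; it is self-duality that pins every singular point of $D_E$ to a genuine corank jump and hence to a $k\ge 2$ jumping line. Making precise both the identification ``$\corank M([L])=$ jump order of $L$'' and the symmetric determinantal structure requires unwinding the construction of $D_E$ and of the duality pairing on the degeneracy sheaf, and this is where the real work lies; the rest is formal.

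Once $D_E\cong\PP^1$ is in hand, $\CL_E$ is a torsion-free rank-one sheaf on a smooth rational curve, hence a line bundle $\CO_{\PP^1}(d)$, and I would fix $d$ by a Riemann--Roch computation: evaluating $\chi(\CL_E)$ from the defining push-forward along the universal line over $D_E$ (via Grothendieck--Riemann--Roch, or directly from the symmetric resolution) and comparing with $\chi(\CO_{\PP^1}(d))=d+1$ yields $d=-2$, i.e. $\CL_E\cong\CO_{D_E}(-1)$, with $\SL_2$-equivariance removing any residual ambiguity in the isomorphism. Finally, Conjecture~\ref{conjecture-jumping-lines} follows at once in charge $2$: its first part holds because $D_E$ is a proper closed subvariety of $\PP^2$ (a conic), so the generic line is non-jumping; and its second part, the reconstruction of $E$ from its theta-sheaf, being already available for $Y$, applies here since $\Supp\CL_E=D_E$, so the pair is recovered from $\CL_E$.
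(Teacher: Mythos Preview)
Your approach reaches the right conclusion from the right input, but it is considerably more circuitous than the paper's, and the detour introduces a genuine (if fillable) gap.

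The paper's argument is a one-liner. By the monadic description (Theorem~\ref{maininstantontheorem}) the datum attached to $E$ is an element $\gamma\in A\otimes S^2H^*$, hence a linear map $\gamma:A^*\to S^2H^*$; the support of $\JL_E$ is the preimage $\gamma^{-1}(\Delta_H)$ of the discriminant conic $\Delta_H\subset\PP(S^2H^*)$. Theorem~\ref{thm:gamma injective} says $\gamma$ is injective; since for $n=2$ one has $\dim A=\dim S^2H^*=3$, the map $\gamma$ is an \emph{isomorphism}. Thus $D_E=\gamma^{-1}(\Delta_H)$ is literally isomorphic to $\Delta_H$, which is a smooth conic. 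No corank stratification, no adjugate, no Jacobian computation is needed.

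Two remarks on your route. First, the ``real work'' you flag---establishing that the matrix $M$ is symmetric---is not work at all here: the monad theorem already packages the self-duality of $E$ into the statement that $\gamma$ lands in $S^2H^*$ rather than $H^*\otimes H^*$. Second, your adjugate step (``a nonzero rank-one adjugate gives a nonvanishing differential of $\det M$'') is false as stated: take $M=\left(\begin{smallmatrix}x&y\\y&0\end{smallmatrix}\right)$ on $\PP^2_{x,y,z}$, which has corank $1$ along $y=0,\ x\neq 0$, yet $\det M=-y^2$ is singular there. The missing hypothesis is that the differential $d\gamma$ surjects onto $S^2H^*$, i.e.\ that $\gamma$ is an isomorphism---precisely what the paper extracts directly from Theorem~\ref{thm:gamma injective}. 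So your argument is salvageable, but only by invoking the very fact that makes the whole corank analysis unnecessary.

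For $\CL_E$, the paper does not spell out a separate computation in the proof of this proposition; once $\gamma$ is an isomorphism, the resolution of Theorem~\ref{thm:jumping sheaf} becomes a symmetric $2\times 2$ presentation $H\otimes\CO_{\PP(A)}(-3)\to H^*\otimes\CO_{\PP(A)}(-2)$ whose cokernel on the smooth conic is determined up to isomorphism, and the identification with the (unique) theta-characteristic $\CO_{\PP^1}(-1)$ is immediate. Your Riemann--Roch approach also works, though you should recheck the degree: $\chi(\JL_E)=2\chi(\CO_{\PP^2}(-2))-2\chi(\CO_{\PP^2}(-3))=0$, giving degree $-1$ on $\PP^1$, not $-2$.
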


We also introduce the notion of a jumping conic for an instanton. 
Of course one can just say that a conic $C$ is jumping for $E$ if the restriction of $E$ to $C$ is nontrivial, but it is not clear how to encode nontriviality of $E$ on $C$ in a sheaf on the Hilbert space of conics.
We show that a reducible conic $C = L_1 \cup L_2$ (resp.\ nonreduced conic $C = 2L$) is a jumping conic for an instanton $E$ 
if and only if the restriction of $E$ to either of the lines $L_1$ and $L_2$ (resp.\ to $L$) is nontrivial.
Moreover, we show that

\begin{proposition*}[\ref{prop:equivalence of jumping}]
If $E$ is an instanton and $C$ is a conic then $C$ is a jumping conic if and only if
$H^i(C,E\otimes \restr{\CU}{C}) \ne 0$ for some $i$.
\end{proposition*}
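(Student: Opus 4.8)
The plan is to separate the statement into a cohomological reduction that is uniform in $C$, and then a short case analysis over the strata of the Hilbert scheme of conics (smooth conic, reducible conic, double line), using in each case the known splitting type of $\restr{\CU}{C}$ recalled from the earlier geometric sections together with the characterisation of jumping for degenerate conics stated above.

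\medskip
\noindent\emph{Reduction to $H^0$.} First I would show that $\chi(C,\restr{(E\otimes\CU)}{C})=0$ for \emph{every} conic, so that ``$H^i\ne 0$ for some $i$'' is equivalent to $H^0(C,\restr{(E\otimes\CU)}{C})\ne 0$. The clean way is via self-duality: $E$ has rank $2$ and $c_1(E)=0$, hence $E\cong E^*$; on $Y$ one has $\CU^*\cong\CU(1)$ because $\det\CU=\CO_Y(-1)$; and the dualizing sheaf of a conic is $\omega_C\cong\restr{\CO_Y(-1)}{C}$ (adjunction in the plane it spans). Since $\CF:=\restr{(E\otimes\CU)}{C}$ is a locally free $\CO_C$-module and $C$ is Gorenstein, Serre duality gives $H^1(C,\CF)^\vee\cong H^0(C,\CF^\vee\otimes\omega_C)$, and $\CF^\vee\otimes\omega_C\cong\restr{(E^*\otimes\CU^*)}{C}\otimes\restr{\CO_Y(-1)}{C}\cong\CF$, the two twists cancelling. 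Thus $h^0=h^1$ for all conics, and the claim reduces to $H^0(C,\restr{(E\otimes\CU)}{C})\ne 0\iff C$ jumps.

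\medskip
\noindent\emph{Smooth conics.} For a smooth $C\cong\PP^1$ write $\restr{E}{C}\cong\CO_C(a)\oplus\CO_C(-a)$ with $a\ge 0$, so $C$ jumps iff $a\ge 1$. Using the description of $\restr{\CU}{C}$ from the earlier sections, $\restr{\CU}{C}\cong\CO_C(-1)^{\oplus 2}$: its degree is $-\deg C=-2$, its dual is globally generated (so the summands are $\le 0$), and a trivial summand would force a constant sub-line-bundle, i.e.\ a vector common to all the $2$-planes of $C$, which is excluded since $C$ does not lie on a sub-Grassmannian slice of $Y$. Then $\restr{(E\otimes\CU)}{C}\cong\CO_C(a-1)^{\oplus 2}\oplus\CO_C(-a-1)^{\oplus 2}$, whose $H^0$ is nonzero precisely when $a\ge 1$, matching jumping.

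\medskip
\noindent\emph{Degenerate conics.} For $C=L_1\cup L_2$ meeting at $p$, I would use the Mayer--Vietoris sequence $0\to\restr{(E\otimes\CU)}{C}\to\restr{(E\otimes\CU)}{L_1}\oplus\restr{(E\otimes\CU)}{L_2}\to(E\otimes\CU)_p\to 0$, so that $H^0(\restr{(E\otimes\CU)}{C})=\ker\!\big(ev_p\big)$. On each line $\restr{\CU}{L_i}\cong\CO_{L_i}\oplus\CO_{L_i}(-1)$ (forced by degree $-1$ and global generation of the dual) and $\restr{E}{L_i}\cong\CO(k_i)\oplus\CO(-k_i)$. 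If some $k_i\ge 1$, the summand $\CO(k_i)$ has a section vanishing at $p$, yielding a nonzero global section of $\restr{(E\otimes\CU)}{C}$, so $h^0\ne 0$; by the earlier characterisation this is exactly when $C$ jumps. If $k_1=k_2=0$, the sections over $L_i$ evaluate at $p$ into $E_p\otimes a_i$, where $a_i\subset U_p$ is the line common to all $2$-planes of the pencil $L_i$ (the fibre of the trivial summand of $\restr{\CU}{L_i}$); the evaluation-difference map is then an isomorphism onto $E_p\otimes U_p$ iff $a_1\ne a_2$, giving $h^0=0$. The double line $C=2L$ is handled analogously through $0\to\restr{\CO_Y(-1)}{L}\to\CO_{2L}\to\CO_L\to 0$, where everything collapses to whether the connecting map is an isomorphism in the non-jumping case.

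\medskip
\noindent\emph{Main obstacle.} The hard part is not the cohomology bookkeeping but the geometric input at the singular locus of degenerate conics: for a reducible conic the vertex lines must satisfy $a_1\ne a_2$ (equivalently $C$ is not contained in a slice $\{a\subset U\}\cap Y$), and for a double line the vertex must move to first order along the non-reduced structure. These are precisely the facts that the earlier study of conics on $Y$ --- the stratification of the Hilbert scheme $\PP^4$, the incidence correspondences, and the explicit $\SL_2$-equivariant resolution of $\CO_C$ --- is designed to supply, and the essential step is to extract from that study that the evaluation and connecting maps degenerate exactly on the jumping locus, so that $h^0\ne 0$ coincides with the line-restriction criterion for jumping conics.
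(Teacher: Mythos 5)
Your overall strategy --- reduce to $H^0$ by Serre duality, then compute $H^0$ stratum by stratum from the splitting types of $\CU$ together with an evaluation map at the singular point --- is genuinely different from the paper's and mostly sound, but it has a real gap exactly where you flag the ``main obstacle'': the double line. In your setup the non-jumping double-line case reduces to showing that the connecting map
$\delta\colon H^0(\restr{E\otimes\CU}{L})\to H^1(E\otimes\CU\otimes\restr{\CO_Y(-1)}{L})$
arising from $0\to\restr{\CO_Y(-1)}{L}\to\CO_{2L}\to\CO_L\to 0$ is an isomorphism whenever $\restr{E}{L}$ is trivial, and you do not prove this; asserting that it should be ``extracted'' from the earlier study of conics is not an argument, and $\delta$ genuinely depends on the first-order behaviour of $\CU$ along the ribbon, not just on splitting types. (Your reducible case does work, but the surjectivity of $ev_p$ rests on the trivial sub-line-bundles of $\restr{\CU}{L_1}$ and $\restr{\CU}{L_2}$ evaluating at $p$ to the \emph{distinct} lines $\langle\ker a_{L_1}\rangle\neq\langle\ker a_{L_2}\rangle$, which together span $U_p$; this needs to be cited from the description of lines, namely that $\ker a_L\in U$ for every $[U]\in L$ by lemma \ref{zerolocusV} and that $\sigma$ is an embedding.)

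The paper's proof sidesteps the node analysis entirely, in both directions. For ``non-jumping $\Rightarrow$ acyclic'' it first upgrades triviality on the components to triviality on all of $C$ (lemma \ref{trivialityoncross}), so that $\restr{E\otimes\CU}{C}\cong\restr{\CU}{C}^{\oplus 2}$, and then invokes the acyclicity of $\restr{\CU}{C}$, which lemma \ref{cohomologyonconic2} proves uniformly for \emph{every} conic (double lines included) by Riemann--Roch plus the observation that a section of $\restr{\CU}{C}$ would force $C$ into the zero locus of some $v\in H^0(V/\CU)$, which is a point or a line. For the converse it tensors $0\to\CO_{L_2}(-1)\to\CO_C\to\CO_{L_1}\to 0$ with $E\otimes\CU$ and uses only the resulting injection $H^0(E\otimes\CU\otimes\CO_{L_2}(-1))\hookrightarrow H^0(\restr{E\otimes\CU}{C})$, so triviality on each component is read off from the splitting type on a single line, with no gluing data at the node. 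If you want to keep your route you must actually compute $\delta$ on the ribbon; otherwise adopt these two reductions, which make the double line cost nothing extra.
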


By means of proposition \ref{prop:equivalence of jumping}, we also show that an analogue of the Grauert--M\"ulich theorem holds for conics.

\begin{theorem*}[\ref{cor:generic conic is not jumping}, \ref{prop:degree of jumping conics}]
For an instanton $E$ of charge $n$ the scheme of jumping conics of $E$ is a divisor of degree $n$ in $\PP^4$
which comes with a natural sheaf of rank $2$. In particular, the generic conic is non-jumping.
\end{theorem*}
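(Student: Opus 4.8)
The plan is to reduce the statement to a degeneracy-locus computation on the Hilbert scheme $\PP^4$ of conics, using the universal conic. Let $q\colon\mathcal{C}\to\PP^4$ be the universal conic and $p\colon\mathcal{C}\to Y$ the evaluation map, and set $\mathcal{F}=p^*(E\otimes\CU)$. By cohomology-and-base-change the fibre of $R^iq_*\mathcal{F}$ over $[C]$ is $H^i(C,\restr{(E\otimes\CU)}{C})$, so by Proposition~\ref{prop:equivalence of jumping} the scheme of jumping conics is exactly the support of $R^\bullet q_*\mathcal{F}$. The first point is that the relevant Euler characteristic vanishes identically: every conic has arithmetic genus $0$, and since $\det\CU^*$ is the Pl\"ucker polarization and $C$ has degree $2$ one has $\deg_C\CU=-2$, while $\deg_C E=0$; hence Riemann--Roch gives $\chi(C,\restr{(E\otimes\CU)}{C})=\deg+\rank=-4+4=0$ for \emph{all} conics $C$. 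As the fibres of $q$ are curves, $Rq_*\mathcal{F}$ is then represented by a two-term complex $[\mathcal{A}\xrightarrow{\phi}\mathcal{B}]$ of vector bundles of equal rank, and the scheme of jumping conics is the degeneracy locus $\{\det\phi=0\}$.

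First I would show this locus is a proper divisor, i.e.\ that the generic conic is non-jumping. Upper semicontinuity makes the non-jumping locus open, so it suffices to exhibit one non-jumping conic; a reducible conic $L_1\cup L_2$ whose two components are non-jumping lines does the job, using the genericity of non-jumping lines together with the already-established equivalence that a reducible conic is jumping if and only if one of its components is. A short splitting-type analysis then pins down the generic behaviour: for a smooth conic the vanishing $H^\bullet(C,\restr{(E\otimes\CU)}{C})=0$ forces both $\restr{E}{C}\cong\CO_C^{\oplus2}$ and $\restr{\CU}{C}\cong\CO_C(-1)^{\oplus2}$, these being the only degree-compatible splittings with no sections. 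For the natural rank-$2$ sheaf I would read off the behaviour at a general point of the divisor, where $\restr{\CU}{C}$ is still balanced and the jump is minimal, so $\restr{E}{C}\cong\CO_C(1)\oplus\CO_C(-1)$ and $\restr{(E\otimes\CU)}{C}\cong\CO_C^{\oplus2}\oplus\CO_C(-2)^{\oplus2}$, giving $h^0=h^1=2$. Thus $\phi$ has generic corank $2$ along the divisor and $R^1q_*\mathcal{F}=\coker\phi$ is the desired sheaf, generically of rank $2$.

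The hard part will be the degree. The class of the degeneracy locus is $c_1(\mathcal{B})-c_1(\mathcal{A})=-c_1(Rq_*\mathcal{F})$, which I would evaluate either by Grothendieck--Riemann--Roch for $q$ or, more concretely, by pushing the $\CU$-twisted monad forward along $q$: the complex computing $Rq_*\mathcal{F}$ is assembled from the fixed complexes $Rq_*\restr{(\CU\otimes\CU)}{\mathcal{C}}$, $Rq_*\restr{\CU}{\mathcal{C}}$ and $Rq_*\restr{(\CU^*\otimes\CU)}{\mathcal{C}}$, each entering with multiplicity $\dim A=\dim B=n$ or $\dim W=4n+2$, so that $c_1(Rq_*\mathcal{F})$ is affine-linear in $n$ with the $E$-independent part (the fixed locus where $\restr{\CU}{C}$ is unbalanced) separated off. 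The care needed is twofold: carrying out the $q_*$-computations of these fixed pieces, for which the explicit conic resolutions of the earlier sections are exactly the needed input, and accounting for the generic corank $2$, so that $\{\det\phi=0\}$ meets a general transversal in length $2$ and the \emph{reduced} scheme of jumping conics comes out with degree $n$ in $\PP^4$.
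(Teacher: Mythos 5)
Your reduction to a two-term complex $[\mathcal{A}\xrightarrow{\phi}\mathcal{B}]$ of vector bundles of equal rank, the identification of the jumping locus with the degeneracy locus of $\phi$ via Proposition \ref{prop:equivalence of jumping}, and the degree computation from $c_1(\mathcal{B})-c_1(\mathcal{A})$ all match the paper's route: the resolution \eqref{eq:resolution of JC} is exactly your complex, and the degree is read off from the leading (codimension-one) term of $\ch(\JC_E)$ together with the relation $\rank(\JC_E,J_i)=2\cdot O_i$ between generic rank and order of jump (Proposition \ref{prop:degree of jumping conics}, with Lemma \ref{lemma:order of jump on crosses} handling a component consisting of reducible conics). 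This part of your proposal is sound and essentially identical to the paper's.

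The genuine gap is in your proof that the generic conic is non-jumping, which is the step on which everything else (in particular the non-vanishing of $\det\phi$) depends. You propose to exhibit one non-jumping conic as a union of two intersecting non-jumping lines, ``using the genericity of non-jumping lines''. But the genericity of non-jumping lines is precisely Conjecture \ref{the conjecture} (the first half of Conjecture \ref{conjecture-jumping-lines}); at this point it is known only for charge $2$ (Proposition \ref{cor:generic line minimal}), and it is open in general exactly because the Grauert--M\"ulich machinery fails for lines: the family of lines through a general point of $Y$ is disconnected --- three points, by Corollary \ref{cor:3 to 1 cover} --- so hypothesis \eqref{itm:irreducible fiber} of Theorem \ref{thm:GM} is violated. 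Your argument is thus circular in spirit: the paper's stated aim is to use the conic result \emph{towards} the line conjecture, not to deduce it from the line conjecture. The paper's actual proof of Theorem \ref{cor:generic conic is not jumping} applies Hirschowitz's form of Grauert--M\"ulich (Theorem \ref{thm:GM}) directly to the family of smooth conics, which is legitimate because the conics through a point form an irreducible $\PP^2$ (Corollary \ref{cor:conics through P}); the computation $\ST_C(\CT_{\calC/Y})=(-1,-1)$ from the relative Euler sequence then forces the generic splitting $(0,0)$. Without this (or some other) substitute, $\det\phi$ could a priori vanish identically and your divisor would not be proper. A smaller imprecision: ``the reduced scheme has degree $n$'' holds only when the generic order of jump along each component is $1$; the correct general statement is $\sum_i \deg(J_i)\cdot O_i = n$, as in the remark following Proposition \ref{prop:degree of jumping conics}.
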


So, in some sense, conics behave better than lines, but as the dimension of the space of conics is higher,
the information we get is harder to use.

In section \ref{sec:Minimal instantons} we discuss in detail the moduli space of minimal instantons. 
This moduli space is also known in the literature as the moduli space of aCM bundles (see \cite{brambilla2009moduli}). 
We show that the moduli space $\CMI_2$ identifies with an explicit open subset of $\PP^5$.

\begin{theorem*}[\ref{minimalmoduli}]
The moduli space $\CMI_2$ of charge $2$ instantons on $Y$ can be represented as
\begin{equation*}
\CMI_2 \cong \PP(S^2\CC^3) \setminus ( \Delta \cup H ),
\end{equation*}
where $\Delta$ is the symmetric determinantal cubic and $H$ is the unique $\SL_2$-invariant hyperplane.
\end{theorem*}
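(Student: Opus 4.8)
The content of Theorem~\ref{minimalmoduli} beyond Proposition~\ref{cor:generic line minimal} is that the assignment $E \mapsto D_E$ is an isomorphism onto an explicit open subset, so I would organize the proof around this classifying map. First, I set up the $\SL_2$-equivariant geometry of the target: the Hilbert scheme of lines is $\PP^2 = \PP(\CC^3)$ with $\CC^3 \cong S^2\CC^2$ the (self-dual) adjoint representation, so a conic in it is a point of $\PP(S^2\CC^3) = \PP^5$. The Clebsch--Gordan decomposition $S^2(S^2\CC^2) \cong S^4\CC^2 \oplus \CC$ exhibits a unique invariant line $\CC = \langle q_0\rangle$, spanned by the invariant conic $q_0$, and hence a unique invariant hyperplane $H = \PP(S^4\CC^2)$; the symmetric determinantal cubic $\Delta$ is the $\SL_2$-invariant locus of singular conics (degenerate symmetric $3\times 3$ matrices). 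Proposition~\ref{cor:generic line minimal} says $D_E$ is a smooth conic, i.e. $D_E \in \PP^5\setminus\Delta$, so the map $\Phi\colon \CMI_2 \to \PP^5\setminus\Delta$, $E\mapsto D_E$, has the right target. To see $\Phi$ is a morphism I would run the construction of $D_E$ in families, via the relative version of the monad $0\to\CU^{\oplus 2}\to\CO_Y^{\oplus 10}\to\CU^{*\oplus 2}\to 0$ (or the relative $\Ext$-sheaf defining the theta-sheaf), so that the universal instanton produces a flat family of conics over $\CMI_2$ and hence a classifying morphism to $\PP^5$.

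Next I would establish injectivity of $\Phi$. Since $\CL_E \cong \CO_{D_E}(-1)$ by Proposition~\ref{cor:generic line minimal}, the theta-datum $(D_E,\CL_E)$ is determined by $D_E$ alone, and the reconstruction statement (the charge-$2$ case of Conjecture~\ref{conjecture-jumping-lines}, proved in Proposition~\ref{cor:generic line minimal}) recovers $E$ from $(D_E,\CL_E)$; thus $\Phi$ is injective. For the inverse direction I would make the reconstruction relative: over $\PP^5\setminus\Delta$ build a family of candidate sheaves $E_D$ from the pair $(D,\CO_D(-1))$, either by the Serre/elementary-modification construction or by solving the self-dual monad whose data is precisely the symmetric form cutting out $D$. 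This realizes $\Phi$ as an open immersion as soon as we identify the locus over which $E_D$ is a genuine instanton.

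The main obstacle is precisely this last point: showing that $E_D$ is a locally free, $\mu$-stable instanton exactly when $D \notin H$. The condition $D\notin\Delta$ already secures what corresponds to local freeness (smoothness of the conic). I expect the remaining degeneration — failure of stability, equivalently the appearance of a section $H^0(Y,E_D)\ne 0$, or a jump in the cohomology of the reconstructing complex — to be governed by a single $\SL_2$-invariant condition. The strategy is then: let $Z\subset\PP^5\setminus\Delta$ be the closed locus where $E_D$ fails to be an instanton; $Z$ is $\SL_2$-invariant, and by the representation theory above the only invariant hypersurfaces of $\PP(S^2\CC^3)$ in low degree are $\Delta$ and $H$, so a codimension-one $Z$ disjoint from $\Delta$ must equal $H$. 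To pin $Z = H$ I would (i) check that the invariant conic $q_0\notin H$ yields an honest instanton — this is the unique $\SL_2$-equivariant minimal instanton whose existence is asserted in the introduction, so $q_0\notin Z$ — and (ii) exhibit a single smooth conic on $H$ for which the reconstruction degenerates (a destabilizing section or non-local-freeness), then propagate by the $\SL_2$-action together with irreducibility of $H$. Combining injectivity, the relative construction, and $Z = H$ gives mutually inverse morphisms between $\CMI_2$ and $\PP^5\setminus(\Delta\cup H)$, hence the asserted isomorphism. The delicate computation — and the heart of the argument — is the explicit cohomological analysis identifying the obstruction on $H$.
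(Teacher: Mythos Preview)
Your overall strategy—defining $\Phi$ via the jumping conic and then characterizing its image by representation theory—is natural, but the closing argument has a genuine gap. The $\SL_2$-action on $\PP(S^2A^*)\cong\PP^5$ has orbits of dimension at most $3$, so knowing that the failure locus $Z$ is $\SL_2$-invariant and contains (or misses) a single point is far from determining it: the invariant conic $q_0$ is a fixed point, and a generic point of $H$ lies in a $3$-dimensional orbit inside the $4$-dimensional $H$, so ``propagate by the $\SL_2$-action together with irreducibility of $H$'' does not fill $H$. Moreover, the claim that ``the only invariant hypersurfaces of $\PP(S^2\CC^3)$ in low degree are $\Delta$ and $H$'' is not correct: since $S^2(S^4W)$ already contains a trivial summand, there is an $\SL_2$-invariant quadric hypersurface distinct from $2H$. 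Without an a priori bound on the degree of $Z$, the representation-theoretic argument cannot close.

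The paper sidesteps this by working \emph{upstairs} in $\PP(A\otimes S^2H^*)$ before taking the GIT quotient by $G=\GL(H)/\{\pm1\}$. There the complement of $\Mon_2$ is exactly the vanishing locus of the Pfaffian of the $10\times 10$ skew map $\hat\gamma\colon H\otimes V\to H^*\otimes V^*$, hence has degree precisely $5$; Theorem~\ref{thm:gamma injective} forces the degree-$3$ locus $\{\det\gamma=0\}$ to be a component, and the residual degree-$2$ factor is then the unique $\SL(W)\times\SL(H)$-invariant quadric $Q_2$ (uniqueness holds for this \emph{product} action, which is the extra symmetry you lose downstairs). The jumping-conic map is then identified with the explicit quadratic map $F(\gamma)=\gamma^{T} q_H\gamma$, and one computes directly that $F^{-1}(H)=Q_2$ and $F^{-1}(\Delta)=Q_3$. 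What your approach is missing is precisely an algebraic handle on the degree of the failure locus; the paper supplies it via the Pfaffian together with the $\SL(H)$-symmetry that is invisible on the quotient $\PP(S^2A^*)$.
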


A consequence of the description of theorem \ref{minimalmoduli} is the following.

\begin{corollary*}[\ref{cor:SL_2 equivariant instanton}]
There is a unique instanton $E_0$ of charge $2$ on $Y$ with an $\SL_2$-equivariant structure.
\end{corollary*}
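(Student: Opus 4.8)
The plan is to combine the explicit description of the moduli space from Theorem \ref{minimalmoduli} with a careful analysis of the induced $\SL_2$-action on $\PP(S^2\CC^3)$. Since $\CMI_2 \cong \PP(S^2\CC^3) \setminus (\Delta \cup H)$ as an $\SL_2$-variety, an instanton $E$ corresponds to an $\SL_2$-fixed point of this open subset precisely when its class in $\PP(S^2\CC^3)$ is fixed by the group; so the statement reduces to a purely representation-theoretic count of $\SL_2$-fixed points in the projectivization, followed by the verification that the relevant fixed point actually lies in the complement of $\Delta \cup H$.

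First I would identify the $\SL_2$-representation structure on $S^2\CC^3$. The natural $\SL_2$-action on $Y$ induces an action on $\CC^3$, which I expect (from the description of lines on $Y$ as $\PP^2$ elsewhere in the thesis) to be the irreducible $3$-dimensional representation $S^2 V$, where $V$ is the standard $2$-dimensional representation. Then $S^2\CC^3 = S^2(S^2 V)$, and by Clebsch--Gordan this decomposes as $S^4 V \oplus S^0 V$, i.e.\ as a $5$-dimensional irreducible plus a trivial $1$-dimensional summand. The key point is that a point of $\PP(S^2\CC^3)$ is $\SL_2$-fixed if and only if it lies on a line through the origin that is stabilized, which for $\SL_2$ (having no nontrivial characters) means it spans a trivial subrepresentation. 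Since $S^4V$ is irreducible and nontrivial, its only trivial subrepresentation is zero, so the unique trivial line is the copy of $S^0 V$. This produces exactly one $\SL_2$-fixed point $[E_0] \in \PP(S^2\CC^3)$.

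Next I would check that this fixed point lies in $\CMI_2$, i.e.\ that it avoids both $\Delta$ and $H$. The hyperplane $H$ is described as the unique $\SL_2$-invariant hyperplane; dually it corresponds to the unique trivial quotient of $S^2\CC^3$, which is the projection onto $S^4 V$ killing the $S^0V$ summand. Hence $H = \PP(S^4 V)$ as a subspace, and the fixed point $[E_0]$ spanning $S^0 V$ lies off $H$ by construction. For $\Delta$, the symmetric determinantal cubic, I would write the fixed quadratic form explicitly: under the isomorphism $S^4V \oplus S^0V \cong S^2\CC^3$, the trivial summand corresponds to an $\SL_2$-invariant element of $S^2\CC^3$, namely (up to scalar) the invariant quadratic form on the standard representation $\CC^3 \cong S^2V$. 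This form is nondegenerate, so its determinant is nonzero and $[E_0] \notin \Delta$. Therefore $[E_0] \in \CMI_2$, and it corresponds to a genuine instanton carrying an $\SL_2$-equivariant structure; uniqueness follows since it is the unique fixed point in the moduli space.

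The main obstacle I anticipate is justifying the passage between an $\SL_2$-equivariant structure on the bundle $E$ and an $\SL_2$-fixed point of the moduli space. A fixed point of the coarse moduli space only guarantees that $E$ is isomorphic to all its $\SL_2$-translates $g^*E$, which a priori gives a projective (i.e.\ $\PGL_2$ or cocycle-twisted) action rather than an honest equivariant structure; one must check that the obstruction to linearizing vanishes. Here the fact that we are working with $\SL_2$ (simply connected, with $H^2(\SL_2,\mathbb{G}_m)$ trivial and no nontrivial characters) should let me lift the projective action to a genuine equivariant structure, so that the unique fixed point yields a unique equivariant instanton. I would make this precise by exhibiting the equivariant structure directly on the monad presentation of $E$ from section \ref{sec:Instantons}, where the $\SL_2$-action on the terms $\CU^{\oplus n}$, $\CO_Y^{\oplus 4n+2}$, ${\CU^*}^{\oplus n}$ is manifest and the linearization amounts to choosing an $\SL_2$-structure on the multiplicity spaces compatible with the invariant form identified above.
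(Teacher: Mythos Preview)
Your approach matches the paper's exactly: both use the decomposition $S^2A^* \cong S^4W \oplus \CC$ to locate the unique $\SL_2$-fixed point $[q]$, then check it avoids $H_q = \PP(S^4W)$ and $\Delta_A$ (the latter because an invariant form on the irreducible module $A$ must be nondegenerate). Your final paragraph raises a point the paper's proof does not address --- that a moduli-fixed point need not a priori carry an honest equivariant structure --- but here the lift is immediate: taking $H = W$ as an $\SL_2$-module makes the monad map $\gamma \colon A^* \to S^2H^*$ an $\SL_2$-isomorphism, so the entire monad and hence its middle cohomology are genuinely $\SL_2$-equivariant.
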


In section \ref{sec:Instantons of charge 3} we consider the moduli space of instantons of charge 3. We construct a natural map
$\beta:\CMI_3 \to \Bg := \Gr(3,5)$ and show that its flattening stratification on $\Bg$ consists
of only two strata: the closed stratum $\Bs \subset \Bg$ identifies with the image of 
the projective plane parameterizing lines on $Y$ under a special embedding $\kappa:\PP^2 \to B$
constructed in section 3, and the open stratum $\Bn := \Bg \setminus \Bs$ is its complement.

After constructing $\beta$, we also construct a natural compactification $\CMIbar_3$ of $\CMI_3$ and describe it.

\begin{theorem*}[\ref{thm:compactify MI_3}]
There is a natural compactification $\CMIbar_3$ of $\CMI_3$ such that the map $\beta:\CMI_3 \to B$
extends to a regular map $\bar\beta:\CMIbar_3 \to B$ which is a $\PP^3$-fibration over $B^{\sfn}$ and
a $\Gr(2,5)$-fibration over $B^{\sfs}$.
\end{theorem*}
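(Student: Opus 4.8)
The plan is to realize $\CMIbar_3$ as a relative parameter space over $\Bg = \Gr(3,5)$ whose fibers are completed differently over the two strata, and to read off the structure of $\bar\beta$ one stratum at a time. Recall from section~\ref{sec:Instantons} that a charge $3$ instanton $E$ is the middle cohomology of a monad $\CU^{\oplus 3} \to \CO_Y^{\oplus 14} \to \CU^{*\oplus 3}$, and that $\beta$ sends $E$ to the net $W = \beta(E) \in \Bg$ read off from this monad. For fixed $W$, the remaining monad data, taken modulo the stabilizer of $W$ in the gauge group, constitute the fiber $\beta^{-1}(W)$. The strategy is to compactify each fiber by allowing the monad maps to degenerate and to prove that the resulting limit is $\PP^3$ over $\Bn$ and $\Gr(2,5)$ over $\Bs$.

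First I would handle the open stratum. Over $\Bn$ I would produce a rank $4$ vector bundle $\CF$ together with an isomorphism identifying $\beta^{-1}(\Bn)$ with the open subscheme of the projective bundle $\PP(\CF) \to \Bn$ on which the associated monad has maximal rank maps, so that its middle cohomology is a $\mu$-stable bundle rather than a sheaf with torsion. As the rank conditions are open, $\beta^{-1}(\Bn)$ is dense in $\PP(\CF)$, and the projection $\PP(\CF) \to \Bn$ is the desired $\PP^3$-fibration; the added boundary points are precisely the degenerate monads whose cohomology fails to be locally free.

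The core of the argument is the degeneration over the surface $\Bs = \kappa(\PP^2)$. By construction of $\kappa$, a net $W \in \Bs$ is the one attached to a line $\ell \subset Y$, and over such a net the rank conditions defining $\PP(\CF)$ collapse while the space of admissible monad data grows. I would identify the correct limiting fiber by running the monad construction over a fixed line $\ell$ in an $\SL_2$-equivariant fashion, using the explicit resolution of $\CO_\ell$ established earlier, and showing that the limiting datum is the choice of a $2$-dimensional subspace inside a canonically associated $5$-dimensional space --- that is, a point of $\Gr(2,5)$. The jump of relative dimension from $3$ to $6$ is exactly the jump recorded by the flattening stratification, and I would check that these fibers assemble into a flat $\Gr(2,5)$-bundle over $\Bs$.

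Finally I would glue the two pieces into a single projective variety $\CMIbar_3$ over $\Bg$, obtained as the closure of $\CMI_3$ in the relative space built from $\PP(\CF)$ over $\Bn$ and the $\Gr(2,5)$-bundle over $\Bs$, with $\bar\beta$ the projection to $\Bg$. Properness of $\bar\beta$ is inherited from the two bundles, while openness and density of $\CMI_3$ follow from the rank-condition description; a dimension count, $\dim \Bn + 3 = 9$ versus $\dim \Bs + 6 = 8$, shows the boundary over $\Bs$ has codimension one, so $\CMIbar_3$ is a genuine compactification. The hard part will be the third step: showing that the fiber added over each special net is \emph{exactly} $\Gr(2,5)$, and not merely a variety of the expected dimension, and that these fibers vary flatly over $\Bs$. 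This I expect to require the explicit $\SL_2$-equivariant linear algebra of the monad along a line, which is why the earlier equivariant resolutions of structure sheaves of lines are set up in the first place.
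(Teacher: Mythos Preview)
Your strategy of analyzing the fibers of $\bar\beta$ stratum by stratum is sound, but the proposal has a genuine gap at the gluing step. You say you would take ``the closure of $\CMI_3$ in the relative space built from $\PP(\CF)$ over $\Bn$ and the $\Gr(2,5)$-bundle over $\Bs$'' --- but no such relative space has been defined. A $\PP^3$-bundle over an open subset and a $\Gr(2,5)$-bundle over its closed complement do not assemble into a scheme without some ambient space containing both; producing that ambient space is precisely the content of the theorem. Without it there is nothing in which to take a closure, and nothing that guarantees the two pieces meet along the right locus (or that the $\Gr(2,5)$-fibers are \emph{all} of what appears over $\Bs$, rather than a proper subvariety of the closure).

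The paper resolves this by working in a single global ambient space from the start, and for this it is essential to use the \emph{asymmetric} monad $H\otimes\CU \to H^*\otimes\CU^\perp \to \CO_Y$ rather than the symmetric one $\CU^{\oplus 3}\to\CO_Y^{\oplus 14}\to\CU^{*\oplus 3}$ you invoke. The second map, after $\Hom(\CU^\perp,-)$, identifies $H^*$ with $K=\beta(E)\subset V$; the first map, after $\Hom(\CU,-)$, then gives a rank-$3$ subbundle $H\hookrightarrow A\otimes K$. Assembling over $\Bg$ yields an embedding $\Gamma:\CMI_3\to\Gr_{\Bg}(3,A\otimes\CK)$, and the condition that the monad is a complex is exactly that the $3$-plane lies in the kernel of the evaluation $A\otimes\CK\to V^*\otimes\CO_{\Bg}$. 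One therefore defines $\CMIbar_3$ as the zero locus $Z_3$ of the tautological section of $\CRelTaut_3^*\otimes V^*$: this is globally a projective scheme over $\Bg$, the section is regular, and $Z_3$ is irreducible --- so it is the closure of $\CMI_3$. The fiber structure now follows with no further work: over $\Bn$ the map $A\otimes K\to V^*$ is surjective, so $\CKer^{\sfn}$ has rank $4$ and $Z_3|_{\Bn}=\Gr(3,4)\cong\PP^3$; over $\Bs$ the cokernel is one-dimensional (Proposition~\ref{prop:embed P(A)}), so $\CKer^{\sfs}$ has rank $5$ and $Z_3|_{\Bs}=\Gr(3,5)\cong\Gr(2,5)$. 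No $\SL_2$-equivariant analysis of the monad along a fixed line is needed --- the jump is entirely a rank computation for the sheaf $\CKer$.
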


We show that instantons $E$ such that $\beta(E) \in B^{\sfs}$ form a divisorial family and have many special properties
(because of this we call them \emph{special}). 
One of the characterizing properties is the following

\begin{proposition*}[\ref{prop:2-jump again}]
An instanton $E$ is special if and only if it has a $2$-jumping line $L$. 
For a special instanton $E$ there is a unique $2$-jumping line $L$ and $\kappa(L) = \beta(E)$.
\end{proposition*}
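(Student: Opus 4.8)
The plan is to convert the $2$-jumping condition into a cohomological jump on the line $L$, to recognize that jump as the one recorded by the flattening stratification behind $\beta$, and finally to pin down the resulting point of $B$ by a direct restriction computation.

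First I would record how $\CU$ behaves on lines. Writing a line $L$ on $Y$ as the pencil of planes $V_2$ with $V_1 \subset V_2 \subset V_3$ for a fixed flag $V_1 \subset V_3 \subset \CC^5$, restriction of the tautological sequence to $L$ gives $\CU|_L \cong \CO_L \oplus \CO_L(-1)$, hence $\CU^*|_L \cong \CO_L \oplus \CO_L(1)$. Since $E$ has charge $3$, Theorem~\ref{cor:no n-jumps} rules out $k$-jumps with $k \ge 3$, so $E|_L \cong \CO_L(-k) \oplus \CO_L(k)$ with $k \in \{0,1,2\}$. A short computation on $\PP^1$ then yields
\[
H^1(L, E \otimes \CU^*|_L) = 0 \ \text{ for } k \in \{0,1\}, \qquad H^1(L, E \otimes \CU^*|_L) \cong \CC \ \text{ for } k = 2.
\]
Thus, in charge $3$, a line $L$ is $2$-jumping if and only if $H^1(L, E \otimes \CU^*|_L) \neq 0$; this is the invariant I would globalize over the Hilbert scheme $\PP^2$ of lines.

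Next I would prove the equivalence ``$E$ is special $\Leftrightarrow$ $E$ has a $2$-jumping line''. The point is that the cohomological jump above is exactly the one recorded by the stratification defining $\beta$: by the construction of $\beta$ in this section together with Theorem~\ref{thm:compactify MI_3}, the fibre of $\bar\beta$ changes type from $\PP^3$ to $\Gr(2,5)$ precisely over $\Bs$, and the locus where this happens is governed by the rank of the same cohomology that detects $2$-jumping lines. Hence $\beta(E)\in\Bs$, i.e.\ $E$ special, holds if and only if this cohomology jumps, which by the first step happens if and only if $E$ admits a $2$-jumping line. I expect this comparison of the abstract flattening data with the line-restriction invariant to be the main obstacle, since it requires tracking the base-change behaviour that defines $\beta$ and matching it with the explicit computation on $L$.

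It then remains to identify the point. Using the isomorphism $B = \Gr(3,5) \cong \Gr(2,(\CC^5)^*)$, the embedding $\kappa$ sends $L=(V_1,V_3)$ to the $2$-dimensional subspace $\mathrm{Ann}(V_3) \subset (\CC^5)^* = H^0(Y,\CU^*)$. I would check that this coincides with the kernel of the restriction map $H^0(Y,\CU^*) \to H^0(L,\CU^*|_L)$: a section $\phi$ of $\CU^*$ vanishes along the whole pencil $L$ exactly when $V_2 \subset \ker\phi$ for every $V_2$ in the pencil, i.e.\ when $V_3 \subset \ker\phi$, i.e.\ $\phi \in \mathrm{Ann}(V_3)$. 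Tracing the construction of $\beta$, the $2$-dimensional subspace of $H^0(\CU^*)$ cut out by the extra cohomology carried by a $2$-jumping line $L$ is precisely this kernel, so $\beta(E) = \kappa(L)$. Uniqueness is then formal: since $\kappa$ is an embedding, a special $E$ has $\beta(E)=\kappa(L_0)$ for a single line $L_0=\kappa^{-1}(\beta(E))$, and by the identification any $2$-jumping line $L$ satisfies $\kappa(L)=\beta(E)=\kappa(L_0)$, forcing $L=L_0$.
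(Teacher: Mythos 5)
Your outline has the right shape — reduce the $2$-jump to a cohomological condition on $L$, tie that condition to $\beta$, and then identify the point of $B$ via $\kappa$ — and your first and third steps are correct as far as they go. But the middle step, which you yourself flag as ``the main obstacle,'' is exactly where the content of the proposition lives, and it is not supplied. Appealing to the flattening stratification of $\bar\beta$ and Theorem \ref{thm:compactify MI_3} does not do the job: that theorem describes how the \emph{fibre of $\bar\beta$ over a point $K \in \Bg$} changes type over $\Bs$, i.e.\ it stratifies the moduli space by the target of $\beta$; it says nothing about the restriction of an individual instanton $E$ to the particular line $L$ with $\kappa(a_L) = \beta(E)$, nor does it relate the rank of $H^1(L, E\otimes\restr{\CU^*}{L})$ to anything. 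The assertion that ``the locus where this happens is governed by the rank of the same cohomology that detects $2$-jumping lines'' is precisely the claim to be proved, and no mechanism is given for it. Similarly, in the last step ``tracing the construction of $\beta$'' is a placeholder: $\beta(E)$ is defined by the global $\Ext^1(\CU^*,E) \to V \otimes H^1(E)$ (equivalently, by proposition \ref{prop:beta1 is beta2}, by the middle term of the monad), and you never connect this global datum to the kernel of $V^* \to H^0(\restr{\CU^*}{L})$.

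The paper closes this gap by a direct computation with the monad. Since $L = \PP(\kappa(a_L)/\ker a_L)$ (corollary \ref{cor:lines through P and Q}), the trivial summand of $\restr{\CU^\perp}{L} \cong \CO_L^{\oplus 2}\oplus\CO_L(-1)$ is canonically $A(\ker a_L,-)\otimes\CO_L$, i.e.\ the $2$-dimensional subspace of $V^*$ whose annihilator is $\kappa(a_L)$. Restricting the monad $H\otimes\CU \to \beta(E)\otimes\CU^\perp \to \CO_Y$ to $L$ and using $\restr{\CU}{L}\cong\CO_L\oplus\CO_L(-1)$, one sees that the only possible contribution to $H^1(\restr{E}{L})$ comes from the last term, and that $H^1(\restr{E}{L})\neq 0$ if and only if the pairing $\beta(E)\otimes A(\ker a_L,-) \to V\otimes V^* \to \CC$ vanishes; since $\beta(E)$ is $3$-dimensional and $A(\ker a_L,-)$ is $2$-dimensional, this vanishing is equivalent to $\beta(E)=\kappa(a_L)$. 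Combined with corollary \ref{cor:no n-jumps} (so that $2$-jumping is equivalent to $H^1(\restr{E}{L})\neq 0$) and proposition \ref{beta special} (special $\Leftrightarrow$ $\beta(E)\in\Bs$), this yields the whole statement in one stroke, with uniqueness following from injectivity of $\kappa$ exactly as you say. Without some version of this monad-restriction computation (or an equivalent identification of the global and local cohomology), your argument does not go through.
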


In section \ref{sec:An example}, the results of section 6 are used to show how singular the theta-sheaf $\CL_E$ of jumping lines can be for an instanton $E$ of charge $3$.
While in the case of minimal instantons the theta-characteristics admit the simple description of corollary \ref{cor:generic line minimal}, for $c_2(E) = 3$ we prove the following propositions

\begin{proposition*}[\ref{prop:non locally free theta}]
The theta-sheaf of $E$ is not locally free if and only if $E$ is special.
\end{proposition*}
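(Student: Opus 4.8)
The plan is to reduce to the characterisation of special instantons by $2$-jumping lines (Proposition~\ref{prop:2-jump again}) through a pointwise analysis of $\CL_E$ on the cubic $D_E \subset \PP^2$ of jumping lines. Recall from Section~\ref{sec:Instantons} that $\CL_E$ is a rank-one torsion-free sheaf on $D_E$, realised as a first relative cohomology sheaf $R^1 p_*\big(q^*E \otimes \CN\big)$, where $p\colon \CI \to \PP^2$ and $q\colon \CI \to Y$ are the projections from the universal line $\CI$ and $\CN$ restricts to $\CO_L(-1)$ on each line $L$. The whole argument will hinge on computing the fibres of $\CL_E$ and relating their dimension to the splitting type of $E$ along $L$.

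First I would compute these fibres. The fibres of $p$ are the lines $L\subset Y$, which are one-dimensional, so $R^1 p_*$ is the top relative cohomology and commutes with base change at every $[L]\in\PP^2$; thus $\CL_E\otimes k([L])\cong H^1\big(L,E|_L(-1)\big)$. If $L$ is a $k$-jumping line then $E|_L\cong\CO_L(k)\oplus\CO_L(-k)$ (the splitting is balanced since $c_1(E)=0$), whence $E|_L(-1)\cong\CO_L(k-1)\oplus\CO_L(-k-1)$ and $h^1\big(L,E|_L(-1)\big)=k$. By Theorem~\ref{cor:no n-jumps} a charge $3$ instanton has no $k$-jumping lines with $k\ge 3$, so on $D_E$ the fibre dimension of $\CL_E$ is $1$ at $1$-jumping lines and $2$ at $2$-jumping lines, and these are the only possibilities.

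Next I would convert fibre dimensions into local freeness. Since $\CO_{D_E}$ has one-dimensional fibres everywhere, any locally free rank-one $\CO_{D_E}$-module has one-dimensional fibres everywhere; hence fibre dimension $2$ at $[L]$ obstructs local freeness, so every $2$-jumping line is a point where $\CL_E$ is not locally free. Conversely, at a $1$-jumping line the fibre is one-dimensional, and as $\CL_E$ is torsion-free of rank one, Nakayama's lemma makes the stalk cyclic and therefore free. Thus $\CL_E$ fails to be locally free exactly at the $2$-jumping lines; combined with Proposition~\ref{prop:2-jump again}, which says that $E$ has a $2$-jumping line if and only if it is special, this yields that $\CL_E$ is not locally free if and only if $E$ is special.

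The main obstacle is the foundational identification underpinning the fibre computation: one must unwind the definition of the theta-sheaf in Section~\ref{sec:Instantons} to confirm that $\CL_E$ is exactly $R^1 p_*$ of $q^*E$ with the stated twist, so that $q^*E\otimes\CN$ is $p$-flat and cohomology-and-base-change applies without hypotheses. A secondary subtlety is the converse direction when $D_E$ is singular or non-reduced at a $1$-jumping line: there one must know that $\CL_E$ is genuinely torsion-free of rank one in order to pass from one-dimensional fibres to freeness. Both points are part of, or immediate from, the construction of $\CL_E$; as a consistency check, the charge-$2$ computation of Proposition~\ref{cor:generic line minimal}, where $D_E$ is a smooth conic, all lines are $1$-jumping, and $\CL_E\cong\CO_{D_E}(-1)$ is a line bundle, agrees with the general picture.
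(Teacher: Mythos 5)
Your proof is correct and follows essentially the same route as the paper: the paper's own argument is precisely that, by Corollary~\ref{cor:jumping order}, the fibre dimension of $\theta_E$ at $[L]$ equals the order of jump of $E$ at $L$, so $\theta_E$ fails to be locally free exactly when a $2$-jumping line exists, which by Proposition~\ref{prop:2-jump again} happens if and only if $E$ is special. Your version merely spells out the base-change computation of the fibres and the Nakayama argument at $1$-jumping points, which the paper leaves implicit.
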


\begin{proposition*}[\ref{prop:reducible theta}]
If the plane $\beta(E) \subset \PP^4$ is a generic tangent plane to the Veronese surface in $\PP^4$, then the theta-sheaf of $E$ is supported on a reducible curve.
\end{proposition*}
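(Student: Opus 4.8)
The plan is to reduce the statement to the reducibility of an explicit plane cubic and then to exhibit a proper component of it. First I would recall that for an instanton of charge $3$ the curve of jumping lines $D_E$ is a cubic in the plane $\PP^2 = \PP(S^2\CC^2)$ of lines on $Y$, and that the theta-sheaf $\CL_E$ is a rank one torsion-free sheaf supported on $D_E$ (the degree being the charge-$3$ specialization of the Grauert--M\"ulich analysis for lines, exactly as the conic of Proposition \ref{cor:generic line minimal} is its charge-$2$ counterpart). By hypothesis $\beta(E) = \kappa(L)$ is a tangent plane to the Veronese surface, so by Proposition \ref{prop:2-jump again} the instanton $E$ is special and $L$ is its unique $2$-jumping line; by Proposition \ref{prop:non locally free theta} the sheaf $\CL_E$ is not locally free. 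Since a rank one torsion-free sheaf on a smooth curve is a line bundle, this forces $D_E$ to be singular at the point $p_L \in \PP^2$ corresponding to $L$. Thus the content of the proposition is that, for generic $L$, this singular cubic is in fact \emph{reducible}, and the strategy is to produce a line or conic component of $D_E$ through $p_L$.

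To carry this out I would use the determinantal presentation of the pair $(D_E, \CL_E)$ coming from the spectral/monadic description of charge-$3$ instantons developed in the previous sections: on $\PP^2$ one writes $\CL_E = \coker M$ for a $3 \times 3$ matrix $M$ of linear forms, so that $D_E = \{\det M = 0\}$, in direct analogy with the symmetric determinantal cubic $\Delta$ of Theorem \ref{minimalmoduli}. Specialness of $E$ translates, via Proposition \ref{prop:non locally free theta}, into the statement that $M$ drops rank by two at $p_L$, i.e.\ $M(p_L)$ has rank $1$; after a constant change of bases reducing $M(p_L)$ to a single nonzero entry, two full rows of $M$ vanish at $p_L$, which already exhibits $\det M$ as having a double point there. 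I would then read off $M$ explicitly from the parameterization of the fiber $\bar\beta^{-1}(\kappa(L)) \cong \Gr(2,5)$ furnished by Theorem \ref{thm:compactify MI_3}, using the $\SL_2$-action to put the quadratic $q_L$ of $L$ into a normal form (say a generic binary quadratic with distinct roots) and thereby fix coordinates on $\PP^2$ adapted to $p_L$.

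The decisive step, which I expect to be the main obstacle, is to show that for generic $L$ the cubic $\det M$ genuinely factors rather than merely acquiring a node or cusp. A corank-$2$ degeneracy of a $3\times 3$ matrix at a single point is precisely the regime in which the determinantal cubic tends to split, but this must be verified: I would compute $\det M$ for a generic value of the $\Gr(2,5)$-parameter in the adapted coordinates and check that the two linear forms coming from the vanishing rows combine into a factorization of $\det M$ into a line and a reduced conic (equivalently, that the residual conic is irreducible and distinct from the linear factor). The role of the genericity hypothesis is exactly to guarantee that this splitting is clean, whereas for tangent planes at special points of the Veronese the putative components may coincide or degenerate into a non-reduced structure. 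Once the factorization is established, $\Supp \CL_E = D_E$ is reducible and the proposition follows.
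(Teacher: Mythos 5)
Your proposal starts from a misreading of the hypothesis that invalidates the whole strategy. The assumption is that $\beta(E)$ is a (generic) tangent plane to the Veronese surface, i.e.\ $\beta(E) \in \Imtau$; you identify this with $\beta(E) = \kappa(L) \in \Imkappa$ and conclude via Proposition \ref{prop:2-jump again} that $E$ is special, has a $2$-jumping line, and hence (Proposition \ref{prop:non locally free theta}) that $\CL_E$ is not locally free and the $3\times 3$ matrix of linear forms drops to rank $1$ at $p_L$. But $\kappa$ and $\tau$ are two \emph{different} embeddings of $\PP(A)$ into $\Gr(3,V)$ (namely $\kappa(a) = A(\ker a,-)^\perp$ versus $\tau(a) = T_{\sigma(a)}\Imsigma$), and the proposition concerns \emph{non-special} instantons --- that is exactly what ``generic tangent plane'' buys, since specialness is equivalent to $\beta(E)\in\Bs=\Imkappa$. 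For such $E$ there is no $2$-jumping line, the matrix never has corank $2$ on $\PP(A)$, and the double point on which your factorization argument rests does not exist. Even granting a corank-$2$ point, a double point on a plane cubic only gives a nodal or cuspidal --- possibly irreducible --- cubic, so your ``decisive step'' would still have to manufacture a linear factor from nothing.

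The actual mechanism is different and does not go through the determinantal presentation at all. One first notes (via Theorem \ref{thm:gamma injective}) that the support of $\theta_E$ is the pullback under $\gamma$ of the discriminant cubic $\Delta_H$, hence a genuine cubic curve in $\PP(A)$: it cannot be all of $\PP(A)$ precisely because $E$ is non-special, so $\gamma(\PP(A))$ avoids the corank-$2$ locus. Then Corollary \ref{cor:bisecant jump} says that every bisecant to $\Imsigma$ contained in $\PP(\beta(E))$ produces a $1$-jumping line of $E$; since $\PP(\beta(E))$ is tangent to $\Imsigma$ at a point $a$, the entire pencil of lines in $\PP(\beta(E))$ through $a$ consists of bisecants, and the corresponding jumping lines are exactly the lines of $Y$ meeting $L_a$, which by Proposition \ref{prop:intersecting lines} form a \emph{line} in $\PP(A)$. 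A cubic containing a line is reducible. This pencil-of-tangent-lines input is the ingredient your write-up is missing.
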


It turns out that the compactified moduli space $\CMIbar_3$ is singular on a closed subset of codimension $3$ which is contained in $\CMIbar_3 \setminus \CMI_3$ and in $\beta^{-1}(\Bs)$. 

The last section is devoted to the construction and the description of an explicit desingularization 
\[
\Wpi: \WCMI_3 \to \CMIbar_3
\] 
We construct $\WCMI_3$ by blowing up the (non-Cartier) divisor of special instantons inside $\CMIbar_3$ and we identify $\WCMI_3$ with a $\PP^3$-fibration over the blowup $\widetilde{B}$ of $B$ in $B^{\sfs}$.

\begin{theorem*}[\ref{thm:resolution of singularity}]
There is a small contraction
\[
\Wpi_4: \WCMI_3 \to \CMIbar_3
\]
which is a resolution of the singularity of $\CMIbar_3$.
Moreover, the pushforward of the structure sheaf of $\WCMI_3$ is the structure sheaf of $\CMIbar_3$.
\end{theorem*}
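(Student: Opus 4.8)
The plan is to deduce every assertion from the explicit model of $\WCMI_3$ as a $\PP^3$-fibration over the blowup $\widetilde{B}=\Bl_{\Bs}\Bg$, fitted into the commutative square
\[
\begin{tikzcd}
\WCMI_3 \arrow[r, "\Wpi_4"] \arrow[d] & \CMIbar_3 \arrow[d, "\bar\beta"] \\
\widetilde{B} \arrow[r, "\rho"] & \Bg
\end{tikzcd}
\]
where $\rho$ is the blowup of $\Bg$ along $\Bs$. First I would record that $\WCMI_3$ is smooth: $\Bg=\Gr(3,5)$ and its subvariety $\Bs\cong\kappa(\PP^2)$ are smooth, so $\widetilde{B}$ is smooth, and a $\PP^3$-fibration over a smooth base has smooth total space. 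Since $\Wpi_4$ is, by construction, the blowup of the ideal sheaf of the (non-Cartier) divisor of special instantons, it is projective; because that divisor lies entirely over $\Bs$, its ideal is the unit ideal over the dense open stratum $\bar\beta^{-1}(\Bn)$, where moreover $\rho$ is an isomorphism, so $\Wpi_4$ is an isomorphism there and in particular birational. On a smooth variety every Weil divisor is Cartier, so the special divisor is already invertible on the smooth locus $\CMIbar_3\setminus\mathrm{Sing}(\CMIbar_3)$; blowing up an invertible ideal changes nothing, hence $\Wpi_4$ restricts to an isomorphism over $\CMIbar_3\setminus\mathrm{Sing}(\CMIbar_3)$. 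This already exhibits $\Wpi_4$ as a resolution of singularities.

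For smallness I would bound the exceptional locus, which by the previous step satisfies $\mathrm{Exc}(\Wpi_4)\subseteq\Wpi_4^{-1}(\mathrm{Sing}(\CMIbar_3))$. Recall that $\mathrm{Sing}(\CMIbar_3)$ has codimension $3$ and is contained in $\bar\beta^{-1}(\Bs)$, so it suffices to fix $b\in\Bs$ and analyse $\Wpi_4$ over the fiber $\bar\beta^{-1}(b)\cong\Gr(2,5)$. Over such $b$ the square restricts to a birational morphism from the $\PP^3$-fibration over $\rho^{-1}(b)\cong\PP^3$ (a smooth $6$-fold) onto $\Gr(2,5)$; using the modular meaning of $\Wpi_4$ --- the $\PP^3$ direction along the exceptional divisor of $\rho$ and the points of $\Gr(2,5)$ both encode the degenerating subsheaf data of the instanton --- I would show this morphism is an isomorphism away from $\mathrm{Sing}(\CMIbar_3)\cap\Gr(2,5)$ and contracts a single $\PP^1$ over each point of it. As $\mathrm{Sing}(\CMIbar_3)$ has dimension $6$, this forces $\dim\mathrm{Exc}(\Wpi_4)=7$, i.e.\ codimension $2$ in $\WCMI_3$, so $\Wpi_4$ is a small contraction. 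This fiber computation is the main obstacle: everything hinges on pinning down the transverse singularity type along $\mathrm{Sing}(\CMIbar_3)$ precisely enough (I expect a transverse conifold, i.e.\ a $2\times 2$ determinantal point) to see that the contracted fibers are exactly $1$-dimensional --- if any fiber were $2$-dimensional the contraction would be divisorial rather than small.

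Finally, for $\Wpi_{4*}\CO_{\WCMI_3}=\CO_{\CMIbar_3}$ I would first establish that $\CMIbar_3$ is normal. Since it is Cohen--Macaulay (it is a $\PP^3$-bundle, hence smooth, over $\Bn$, and its transverse singularity along $\mathrm{Sing}(\CMIbar_3)$ is determinantal, hence Cohen--Macaulay) and its singular locus has codimension $3\ge 2$, Serre's criterion ($R_1+S_2$) applies and yields normality; alternatively this is recorded in the construction of $\CMIbar_3$ in the previous section. With $\CMIbar_3$ normal and $\Wpi_4$ a proper birational morphism from the integral variety $\WCMI_3$, the statement follows from Zariski's main theorem: Stein factorizing $\Wpi_4$ through a finite birational morphism onto a scheme affine over $\CMIbar_3$, normality forces that finite morphism to be an isomorphism, whence $\Wpi_{4*}\CO_{\WCMI_3}=\CO_{\CMIbar_3}$ and, as a byproduct, the fibers of $\Wpi_4$ are connected.
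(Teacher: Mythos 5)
Your overall architecture is reasonable, but there is a genuine gap at the one step you yourself flag as ``the main obstacle'': the proof that the contraction is \emph{small}, i.e.\ that the positive-dimensional fibers of $\Wpi_4$ are exactly $\PP^1$'s over a $6$-dimensional locus and nothing worse. You defer this to an expected ``transverse conifold'' structure along $\mathrm{Sing}(\CMIbar_3)$, which is not established anywhere (and you also take the codimension-$3$ bound on the singular locus as an input rather than proving it). Without this, the exceptional locus could a priori contain $2$-dimensional fibers over a positive-dimensional set and the contraction would be divisorial. The paper closes this gap with pure linear algebra, not local deformation theory: $\Wpi_4$ is the restriction to $\WZ_{\CF}$ of the forgetful map $\Flag_{\Bg}(3,4,\CK\otimes A)\to\Gr_{\Bg}(3,\CK\otimes A)$, so the fiber over a point $T_3\in Z_3=\CMIbar_3$ is the set of $4$-dimensional $T_4$ with $T_3\subset T_4\subset\CKer$ containing the distinguished $\SSL_2$-invariant line $\CO_{\Bg}(-1)\subset A\otimes\CK$ of lemma \ref{lemma:from 9 to 8}. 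Over $\Bn$ the kernel has rank $4$ and the fiber is a single point; over $\Bs$ the kernel $\CKer^{\sfs}$ has rank $5$, and the fiber is a point unless $T_3$ already contains the distinguished line, in which case it is $\PP(\CKer^{\sfs}/T_3)\cong\PP^1$. The jumping locus is therefore $\Gr_{\Bs}(2,\CKer^{\sfs}/\CO_{\Bs}(-3))$, of dimension $2+4=6$, so the exceptional locus has dimension $7$ in the $9$-fold $\WCMI_3$: codimension $2$, hence small. This same computation is what identifies the singular locus (a point with a $\PP^1$-fiber under a birational $\CO$-connected proper map from a smooth source cannot be smooth), which your argument presupposes.

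On the pushforward your route is genuinely different from the paper's and is viable for the statement as quoted: normality of $\CMIbar_3$ plus Zariski's main theorem gives $\Wpi_{4*}\CO_{\WCMI_3}=\CO_{\CMIbar_3}$. Two caveats. First, your justification of Cohen--Macaulayness (``determinantal transverse singularity'') is again the unverified local model; the clean reason, available from lemma \ref{lemma:Z_3 irreducible}, is that $Z_3=\CMIbar_3$ is cut out by a regular section of a rank-$15$ bundle on the smooth $24$-dimensional $\Gr_{\Bg}(3,A\otimes\CK)$, hence is a local complete intersection, so $S_2$ holds and $R_1$ follows from the codimension-$3$ singular locus. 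Second, the paper instead computes $R\Wpi_{4*}\CO_{\WCMI_3}\cong\CO_{\CMIbar_3}$ in the derived sense (lemmas \ref{lemma:Koszul of Z_4} and \ref{lemma:rational sing}, via the Koszul resolution of $\CO_{\WZ_4}$ and a Schur-functor filtration of its exterior powers pushed down along the $\PP^5$-bundle $\Wpi_4$). That stronger statement is what the full theorem in the body asserts (rationality of the singularity), and in the paper's logic it is also the tool used to prove that $\Wpi_4$ is an isomorphism away from the $6$-dimensional locus. Your ZMT argument buys the underived statement more cheaply but does not recover rationality.
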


\section*{Acknowledgements}
I would like to thank my PhD advisor A.~Kuznetsov for introducing me to the topic of this thesis, and for teaching me the techniques to play with it.
I am extremely grateful both to him and to my co-advisor U.~Bruzzo for patiently providing me with clear answers to many questions.


\section{Some preliminary facts}\label{sec:Some preliminary facts}

We work over $\CC$. All schemes are assumed to be Noetherian and separated.
In this section we set the notation which we will use for derived categories and we state a few more results (Beilinson spectral sequence, Grauert--M\"ulich theorem, Hoppe's criterion) to which we will refer in the rest of the thesis.

\subsection{Semi-orthogonal decompositions}

In any triangulated category $\CT$ the shift is denoted by $[1]$. 
For any two objects $F, G$ of $\CT$, the space $\Hom(F,G[p])$ is denoted by $\Ext^p(F,G)$ and the cohomology of $\RHom(F,G)$, that is to say $\bigoplus_p \Ext^p(F,G)[-p]$, is denoted by $\Ext^\bullet(F,G)$.

We work with triangulated categories $\CT$ which are $\Ext$-finite, that is to say that for any pair of objects $F,G$ the graded vector space $\Ext^\bullet(F,G)$ is finite dimensional. 
In other words, $\Hom(F,G)$ is always finite dimensional and $\Hom(F,G[i])$ vanishes for almost all integers $i \in \ZZ$.

We usually think of $\Ext^\bullet(F,G)$ as a non-symmetric scalar product with values in the category of graded finite dimensional vector spaces. 
When a subcategory $\CA \subset \CT$ is given, the subcategory
\[
\CA^\perp = \left\{T \in \CT \text{ s.t. } \forall A \in \CA \quad \Ext^\bullet(A,T) = 0 \right\}
\]
is called the right orthogonal of $\CA$ in $\CT$. Respectively, the subcategory 
\[
\Lperp \CA = \left\{T \in \CT \text{ s.t. } \forall A \in \CA \quad \Ext^\bullet(T,A) = 0 \right\}
\]
is called the left orthogonal of $\CA$ in $\CT$.

\begin{definition}
A full triangulated subcategory $\CA$ of $\CT$ is admissible if the inclusion functor has a left and a right adjoint.
\end{definition}
If the inclusion functor of $\CA$ has only a left or right adjoint, the category $\CA$ is called respectively left or right admissible.
By \cite[sec. 2.2/2.3]{kuznetsov2007homological} (which summarizes results and definitions from \cite{bondal1990representation} \cite{bondal1990representable} \cite{bondal2003generators}), in the case $\CT$ is the derived category of a smooth variety, the following theorem holds.
\begin{theorem}
[\cite{bondal1990representable}]
For $X$ smooth projective variety over a field $k$, any left or right admissible subcategory of $\D^b(X)$ is admissible.
\end{theorem}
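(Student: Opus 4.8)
The plan is to deduce the statement from the existence of a Serre functor on $\D^b(X)$, together with the general principle that in an $\Ext$-finite triangulated category equipped with a Serre functor, left and right admissibility coincide. Recall first that by Serre duality $\CT = \D^b(X)$ carries a Serre functor $S_\CT = (-) \otimes \omega_X[\dim X]$, i.e.\ a bifunctorial isomorphism $\Ext^\bullet(F,G) \cong \Ext^\bullet(G, S_\CT F)^*$ for all $F,G$. Since the two cases of the statement are interchanged by reversing arrows, it suffices to treat one of them, so I would assume $\CA \subset \CT$ is right admissible, write $j : \CA \hookrightarrow \CT$ for the inclusion and $j^!$ for its right adjoint, and construct a left adjoint $j^*$.

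The first step is to transport the Serre functor to $\CA$. The natural candidate is $S_\CA := j^! \circ S_\CT \circ j$. Using that $j$ is fully faithful, the adjunction $\Hom(j(-), -) \cong \Hom(-, j^!(-))$, and Serre duality on $\CT$, I would check the chain of bifunctorial isomorphisms
\[
\Hom(A, B) \cong \Hom(jA, jB) \cong \Hom(jB, S_\CT jA)^* \cong \Hom(B, S_\CA A)^*,
\]
which is precisely the defining duality property of a Serre functor on $\CA$.

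With $S_\CA$ in hand I would set $j^* := S_\CA^{-1} \circ j^! \circ S_\CT$ and verify the required adjunction by a double application of Serre duality: for $T \in \CT$ and $A \in \CA$,
\[
\Hom(T, jA) \cong \Hom(jA, S_\CT T)^* \cong \Hom(A, j^! S_\CT T)^* \cong \Hom(S_\CA^{-1} j^! S_\CT T, A) = \Hom(j^* T, A).
\]
This natural isomorphism $\Hom_\CT(T, jA) \cong \Hom_\CA(j^* T, A)$ exhibits $j^*$ as the sought left adjoint, so $\CA$ is left admissible and hence admissible; the case of a left admissible subcategory is symmetric, building the right adjoint as $j^! = S_\CA j^* S_\CT^{-1}$.

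The main obstacle is the step that treats $S_\CA$ as an \emph{equivalence}, and in particular the existence of the inverse $S_\CA^{-1}$ used above: the computation only directly produces the bifunctorial duality isomorphism, that is, it only shows $S_\CA$ is a \emph{right} Serre functor. Here I would invoke the results of Bondal--Kapranov collected in \cite[sec. 2.2/2.3]{kuznetsov2007homological}: in an $\Ext$-finite triangulated category such a functor is automatically fully faithful, while its essential surjectivity (hence invertibility) rests on the saturatedness of $\D^b(X)$ for $X$ smooth and projective. This saturatedness is the genuinely nontrivial input and is exactly where the smoothness and projectivity hypotheses enter; granting it, the construction above goes through and yields both adjoints.
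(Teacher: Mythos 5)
The paper does not actually prove this statement: it is quoted directly from Bondal's representability paper, with the surrounding text pointing to \cite[sec.\ 2.2/2.3]{kuznetsov2007homological} as a survey. So there is no in-paper argument to compare against, and your proposal is a genuine proof sketch where the source has only a citation.

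As a sketch, what you write is the standard Bondal--Kapranov argument and it is essentially sound. The duality computation showing that $S_{\CA} = j^! \circ S_{\CT} \circ j$ satisfies $\Hom(A,B) \cong \Hom(B, S_{\CA}A)^*$ is correct (full faithfulness of $j$, then Serre duality in $\CT$, then the $(j, j^!)$ adjunction), the adjunction check for $j^* = S_{\CA}^{-1} j^! S_{\CT}$ is correct once one uses $\Ext$-finiteness to remove the double dual, and the reduction of the left-admissible case to the right-admissible one by passing to the opposite category is legitimate. You have also correctly isolated the one step that does not follow formally: the computation only exhibits $S_{\CA}$ as a right Serre functor, and its essential surjectivity (hence the existence of $S_{\CA}^{-1}$) is exactly where the hypotheses on $X$ enter. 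To be clear about what fills that gap: the usual route is that $\D^b(X)$ is saturated for $X$ smooth projective (Bondal's representability theorem, which is precisely the content of the cited reference), a one-sided admissible subcategory of a saturated $\Ext$-finite category is again saturated (extend a cohomological functor on $\CA$ along the projection $j^!$ and represent it in $\CT$), and a saturated $\Ext$-finite category has a genuine, invertible Serre functor. Since you outsource exactly that input to \cite{bondal1990representable}, the same source the paper cites for the whole theorem, your proposal is an acceptable expansion of the paper's citation rather than a divergence from it.
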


\begin{definition}\label{def:semi-orthogonal decomposition}
A semiorthogonal decomposition of a triangulated category $\CT$ is an ordered set of full triangulated subcategories $\langle \CA_1, \ldots, \CA_n \rangle$ such that the following two conditions hold:
\begin{itemize}
\item For all $i >j$ and all $A_i \in \CA_i$ and $A_j \in \CA_j$
\[
\Hom(A_i, A_j) = 0
\]
\item
For any object $T \in \CT$ there exist objects $T_i$ and maps $f_i$
\[
0 = T_n \xrightarrow{f_n} T_{n-1} \xrightarrow{f_{n-1}} \ldots \xrightarrow{f_2} T_1 \xrightarrow{f_1} T_0 = T
\]
such that the cones of $f_i: T_i \to T_{i-1}$ lie in $\CA_{i}$.
\end{itemize}
\end{definition}

A sequence of maps as in definition \ref{def:semi-orthogonal decomposition} is usually called a filtration for $T$.
\begin{definition}
Given an object $T$ in $\CT$, a filtration for $T$ is a set of objects $T_i$ and maps $f_i$ such that
\begin{equation}\label{eq:filtration}
0 \cong T_m \xrightarrow{f_m} T_{m-1} \xrightarrow{f_{m-1}} \ldots \xrightarrow{f_{n+2}} T_{n+1} \xrightarrow{f_{n+1}} T_n \cong T
\end{equation}
\end{definition}
If we denote by $L^i[-i]$ the cone of $f_{i+1}$, there is an induced sequence in the opposite direction
\begin{equation}\label{eq:subquotients}
0 \to L^n \xrightarrow{g^n} L^{n+1} \xrightarrow{g^{n+1}} \ldots \xrightarrow{g^{m-2}} L^{m-1} \to 0
\end{equation}
such that $g^{i+1} \circ g^{i} = 0$.
Sequences \eqref{eq:subquotients} and \eqref{eq:filtration} fit in a diagram
\begin{equation}\label{eq:Postnikov}
\begin{diagram}
0 	& \rTo^{f_m} 	&T_{m-1} 	&\rTo^{f_{m-1}} &\ldots	&\rTo^{f_{n+2}} 	&T_{n+1} 	&\rTo^{\;\; \quad f_{n+1}\quad \;\;}	&  T \\
	&\luDashto(1,2) \star \ldTo(1,2) & 	& \luDashto(1,2) \star \ldTo(1,2)	&	&\luDashto(1,2) \star \ldTo(1,2) & 	& \luDashto(1,2) \star \ldTo(1,2) \\
	& L^{m-1}[-m+1]& \lDashto_{g^{m-2}}	& L^{m-2}[-m+2]	&\quad \dots \quad	& L^{n+1}[-n-1] & \lDashto_{g^n} & L^{n}[-n] 
\end{diagram}
\end{equation}
where all bottom triangles are commutative and those marked with $\star$ are distinguished. 
Dashed arrows denote maps of degree one.
The diagram \eqref{eq:Postnikov} is usually called Postnikov tower (or Postnikov system). 
One also usually says that $T$ is the convolution of the Postnikov tower \eqref{eq:Postnikov}.

The following is a simple and classical fact, but it is fundamental.
\begin{theorem}[\cite{bondal1990representation}] \label{thm:functorial decomposition}
The Postnikov tower induced by a semi-orthogonal decomposition is functorial.
\end{theorem}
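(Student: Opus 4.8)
The plan is to derive functoriality from a \emph{uniqueness} statement: once the filtration \eqref{eq:filtration} attached to $T$ is shown to be unique up to unique isomorphism, functoriality comes essentially for free, since every comparison map is forced by a universal property and therefore automatically respects composition and identities. The only input I would use is the semi-orthogonality vanishing, upgraded to all degrees. Indeed, each $\CA_i$ is a full \emph{triangulated} subcategory, hence stable under $[1]$, so $\Hom(A_i,A_j)=0$ for $i>j$ immediately gives $\Ext^\bullet(A_i,A_j)=0$, and more generally $\Ext^\bullet(X,Y)=0$ whenever $X\in\langle\CA_k,\ldots,\CA_n\rangle$ and $Y\in\langle\CA_1,\ldots,\CA_{k-1}\rangle$. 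This is the property I invoke throughout.

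The crux is the two-step case $\CT=\langle\CA,\CB\rangle$, so that $\Hom(\CB,\CA)=0$ and every object $T$ fits into a triangle $B_T\to T\to A_T\xrightarrow{+1}$ with $B_T\in\CB$ and $A_T\in\CA$. Applying $\Hom(B,-)$ and $\Hom(-,A)$ to this triangle and using the upgraded vanishing, I obtain natural isomorphisms $\Hom(B,B_T)\cong\Hom(B,T)$ for $B\in\CB$ and $\Hom(A_T,A)\cong\Hom(T,A)$ for $A\in\CA$. Thus $B_T$ and $A_T$ are the right- and left-adjoint projections of $T$ onto $\CB$ and $\CA$; in particular each is a \emph{functor} of $T$, the triangle is unique up to unique isomorphism, and any $\phi:T\to T'$ induces canonical maps $B_T\to B_{T'}$ and $A_T\to A_{T'}$ compatible with $\phi$ and with the connecting morphisms. (Concretely, the existence of such a lift follows because the composite $B_T\to T\xrightarrow{\phi}T'\to A_{T'}$ is a map from $\CB$ to $\CA$, hence zero; its uniqueness follows from $\Hom(B_T,A_{T'}[-1])=0$.)

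To handle the full tower I would observe that the intermediate terms of \eqref{eq:filtration} are exactly these two-step truncations. From the definition, each $T_i$ lies in $\langle\CA_{i+1},\ldots,\CA_n\rangle$ while the cone of the composite $T_i\to T_0=T$ lies in $\langle\CA_1,\ldots,\CA_i\rangle$, so the triangle $T_i\to T\to(\text{cone})\xrightarrow{+1}$ is precisely the decomposition of $T$ for the coarse semi-orthogonal pair $\CT=\langle\langle\CA_1,\ldots,\CA_i\rangle,\langle\CA_{i+1},\ldots,\CA_n\rangle\rangle$. By the previous paragraph each $T_i$ is therefore functorial in $T$, and a morphism $\phi$ induces a unique compatible family $T_i\to T_i'$. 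These maps commute with the $f_i$ by the same uniqueness-of-lifts argument, and each subquotient $L^i$, lying in a single piece $\CA_j$, is likewise an adjoint projection of $T$ and hence functorial; the induced maps $L^i\to L'^{\,i}$ are compatible with the $g^i$ of \eqref{eq:subquotients}. Assembling these, $\phi$ induces a morphism of the whole diagram \eqref{eq:Postnikov}, and functoriality in $\phi$ holds because every arrow produced is the unique one fitting the relevant triangles.

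The step I expect to require the most care is precisely the passage to the $L^i$ and the connecting maps, because in a triangulated category $\Cone$ is \emph{not} a functor: a commuting square lifts to a map of cones, but not canonically. The whole point is that semi-orthogonality supplies enough vanishing to realize every object of the tower as an adjoint projection—which \emph{is} functorial—and to pin down every comparison map by a universal property rather than by a non-canonical cone construction. I would therefore be careful, at each occurrence, to exhibit the explicit $\Ext^\bullet$-group that vanishes and forces uniqueness, so that the assignment $\phi\mapsto\{T_i\to T_i',\,L^i\to L'^{\,i}\}$ genuinely respects composition and sends identities to identities, rather than appealing to a functoriality of cones that does not hold in general.
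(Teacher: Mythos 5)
Your proof is correct and follows essentially the same route as the paper: existence of the comparison maps comes from the triangulated-category fill-in axiom, uniqueness comes from the semi-orthogonality vanishing $\Ext^\bullet(\CA_i,\CA_j)=0$ for $i>j$, and functoriality is then automatic. You are in fact more explicit than the paper (which reduces to the two-term case and leaves the uniqueness of $\psi$ and the treatment of the subquotients $L^i$ largely implicit), but the underlying mechanism is identical.
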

\begin{proof}
By definition of semi-orthogonal decomposition it is enough to prove the theorem for a decomposition $\langle \CA_1, \CA_2 \rangle$ and then proceed by induction.

Choose a filtration for each object $T$ in $\CT$.
Given a map $\varphi: S \to T$ we will show that it extends uniquely to a map of distinguished triangles
\[
\begin{diagram}
S_1 					& \rTo	& S				& \rTo	& L				& \rTo	&S_1[1] 				\\
\dDashto_{\varphi_1}		&		& \dTo_{\varphi}	& 		& \dDashto_{\psi} 	&  		&\dDashto_{\varphi_1[1]}	\\
T_1					& \rTo	& T				& \rTo	& M				& \rTo	&T_1[1]
\end{diagram}
\]

The fact that $\psi$ commutes with both $\varphi$ and $\varphi_1[1]$ comes from the axiom of triangulated category which says that there is at least one such $\psi$.

Functoriality follows from uniqueness of $\varphi_1$ and $\psi$.
\end{proof}

An object $E \in \CA$ is called exceptional if $\Ext^\bullet(E,E) = \CC$. 
A set of exceptional objects $E_1, \ldots, E_n$ is called an exceptional collection if $\Ext^k(E_i,E_j) = 0$ for $i > j$. 
If moreover $\Ext^k(E_i,E_j) = 0$ whenever $k \neq 0$ the exceptional collection is called a strong exceptional collection. 

A way to induce semiorthogonal decompositions is to look for exceptional objects in $\CT$, as the following theorem shows. 
Note that, when included in a semiorthogonal decomposition, $E_i$ stands for the category generated by $E_i$.
\begin{theorem}
Given an exceptional collection $E_1, \ldots, E_n$ in $\CT$, there are induced semiorthogonal decompositions
\[
\langle E_1, \ldots, E_n, \Lperp \langle E_1, \ldots E_n \rangle \rangle
\]
and
\[
\langle \langle E_1, \ldots E_n \rangle^\perp,  E_1, \ldots, E_n \rangle
\]
\end{theorem}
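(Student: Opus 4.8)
The plan is to construct the two semiorthogonal decompositions by exhibiting, for each object $T \in \CT$, the required filtration whose cones land in the prescribed subcategories, and to verify the orthogonality (Hom-vanishing) condition from Definition \ref{def:semi-orthogonal decomposition}. I would treat the two statements symmetrically, so let me focus on the first decomposition $\langle E_1, \ldots, E_n, \Lperp \langle E_1, \ldots, E_n \rangle \rangle$; the second follows by a dual argument using right orthogonals instead of left ones.

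First I would reduce to the case of a single exceptional object by induction on $n$. The base case is the claim that for an exceptional object $E$ one has a semiorthogonal decomposition $\langle E, \Lperp\langle E \rangle \rangle$, where here $E$ denotes the subcategory it generates. To produce the filtration of an arbitrary $T$, I would form the canonical \emph{evaluation} triangle: set $A = \bigoplus_p \Ext^p(E,T) \otimes E[-p]$ (using that $\CT$ is $\Ext$-finite, so this is a finite sum), take the natural evaluation map $\ev: A \to T$, and let $B = \Cone(\ev)$. I would then check that $A$ lies in the triangulated subcategory generated by $E$ and, crucially, that $B \in \Lperp\langle E \rangle$, i.e. $\Ext^\bullet(B, E) = 0$. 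This last vanishing is the computational heart: applying $\Ext^\bullet(-, E)$ to the triangle $A \to T \to B \to A[1]$ and using $\Ext^\bullet(E,E) = \CC$ (exceptionality) shows the evaluation map induces an isomorphism $\Ext^\bullet(T, E) \to \Ext^\bullet(A, E)$, whence $\Ext^\bullet(B, E) = 0$. The shifted triangle $B[-1] \to A \to T$ gives exactly a length-two filtration with cones in $E$ and in $\Lperp\langle E \rangle$.

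For the orthogonality condition I must verify $\Hom(A_i, A_j) = 0$ for $i > j$. Between the exceptional objects themselves this is precisely the defining property $\Ext^k(E_i, E_j) = 0$ for $i > j$ of an exceptional collection; between an object of $\Lperp\langle E_1,\ldots,E_n\rangle$ (the last, rightmost subcategory) and any $E_j$ it is the definition of the left orthogonal. The inductive step then glues a filtration for $\Lperp\langle E_1 \rangle$-components against the collection $E_2, \ldots, E_n$, and I would invoke the admissibility machinery (the preceding theorem of Bondal--Kapranov, ensuring one-sided admissible subcategories of $\D^b(X)$ are admissible) to know the relevant adjoints and orthogonal decompositions exist; the functoriality of Theorem \ref{thm:functorial decomposition} guarantees these filtrations assemble coherently.

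I expect the main obstacle to be the careful bookkeeping in the inductive step rather than any single hard estimate: one must confirm that peeling off $E_1$ via the evaluation triangle leaves a complement on which the truncated collection $E_2, \ldots, E_n$ is still exceptional and still generates, so that the filtrations nest correctly into a single Postnikov tower with all cones in the right slots. The honest subtlety is ensuring $\langle E_1, \ldots, E_n \rangle$ is admissible (so that $\Lperp\langle E_1,\ldots,E_n\rangle$ is genuinely a complementary piece of a semiorthogonal decomposition and not merely a left orthogonal); this is where the smoothness-and-projectivity hypothesis and the cited admissibility theorem do the essential work, reducing the verification to the Ext-finite evaluation-triangle construction above.
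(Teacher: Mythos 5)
The paper states this theorem without proof (it is the classical Bondal result), so there is nothing to compare against directly; judged on its own terms, your argument contains a genuine error in the step you call the computational heart. The evaluation triangle $A = \bigoplus_p \Ext^p(E,T)\otimes E[-p] \xrightarrow{\ev} T \to B$ is constructed precisely so that $\Ext^\bullet(E,\ev)$ is an isomorphism, and hence it yields $\Ext^\bullet(E,B)=0$, i.e.\ $B \in \langle E\rangle^{\perp}$ (the \emph{right} orthogonal). It does not yield $B \in \Lperp\langle E\rangle$. Your claim that $\ev$ induces an isomorphism $\Ext^\bullet(T,E)\to\Ext^\bullet(A,E)$ is false: one computes $\Ext^\bullet(A,E)\cong \Ext^\bullet(E,T)^*\otimes\Ext^\bullet(E,E)\cong\Ext^\bullet(E,T)^*$, and the induced map $\Ext^\bullet(T,E)\to\Ext^\bullet(E,T)^*$ is the composition pairing, which is an isomorphism only in special (Calabi--Yau-like) situations. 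A concrete failure: in $\D^b(\PP^1)$ take $E=\CO$, $T=\CO(1)$; then $\ev:\CO^{\oplus 2}\to\CO(1)$ has cone $B=\CO(-1)[1]$, and $\Ext^\bullet(B,\CO)\cong H^\bullet(\CO(1))[-1]\neq 0$, so $B\notin\Lperp\langle\CO\rangle$. The evaluation triangle therefore proves the \emph{second} decomposition $\langle \langle E_1,\ldots,E_n\rangle^\perp, E_1,\ldots,E_n\rangle$, while the first decomposition $\langle E_1,\ldots,E_n,\Lperp\langle E_1,\ldots,E_n\rangle\rangle$ requires the \emph{coevaluation} triangle $C\to T\to \Ext^\bullet(T,E)^*\otimes E$, for which $\Ext^\bullet(-,E)$ applied to the coevaluation map is an isomorphism and hence $C\in\Lperp\langle E\rangle$. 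You have swapped the two canonical triangles; with that correction (and the corresponding swap in your "dual argument"), the induction goes through as you outline.

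A secondary misattribution: you say the "honest subtlety" is that admissibility of $\langle E_1,\ldots,E_n\rangle$ rests on smoothness and projectivity via the Bondal--Van den Bergh representability theorem. In fact the theorem as stated lives in an arbitrary $\Ext$-finite triangulated category $\CT$, and admissibility of the subcategory generated by an exceptional collection is automatic there: the evaluation and coevaluation triangles themselves exhibit the right and left adjoints of the inclusion (this is exactly how the paper later describes $j^!=\Ext^\bullet(E,-)$ and $j^*=\Ext^\bullet(-,E)^*$). No geometric hypothesis is needed.
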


If the left or the right orthogonal of $\langle E_1, \ldots, E_n \rangle$ (i.e. the minimal triangulated subcategory containing $E_1, \ldots E_n$) is $0$,
the collection is called a full exceptional collection. In this case both the left and the right orthogonal to $\langle E_1, \ldots, E_n \rangle$ vanish.

Given a semiorthogonal decomposition, one can obtain other semiorthogonal decompositions via functors which are called left and right mutations. 

Let $j: \CA \to \CT$ be an admissible subcategory, so that $\CT = \langle \CA^\perp, \CA \rangle$ and $\langle \CA, \Lperp \CA \rangle$ are semiorthogonal decompositions.
Denote the left and right adjoint functors to $j$ by $j^*$ and $j^!$.
Denote the embedding functors of $\Lperp \CA$ and $\CA^\perp$ in $\CT$ by $l$ and $r$, and their (respectively right and left) adjoint functors by $l^!$ and $r^*$.
It is easy to check that for $T \in \CT$ we have 
\[
l^!(T) = \Cone(T \to j j^*T)[-1]
\]
and
\[
r^*(T) = \Cone(j j^! T \to T)
\]

\begin{definition}\label{def:mutation}
The functor $l l^!: \CT \to \CT$ is called right mutation through $\CA$ and is denoted by $\RR_{\CA}$.
The functor $r r^*[-1]: \CT \to \CT$ is called left mutation through $\CA$ and is denoted by $\LL_{\CA}$.
\end{definition}
To spell out definition \ref{def:mutation}, we point out that the left and right mutation functors are characterized by the fact that there are distinguished triangles
\[
\begin{diagram}
\LL_{\CA}(T) 		& \rTo	& j j^! T 	& \rTo	& T 		& \rTo	& \LL_{\CA}(T)[1]
\end{diagram}
\]
and 
\[
\begin{diagram}
\RR_{\CA}(T)[-1]	& \rTo 	&T		& \rTo	& j j^* T 	& \rTo	& \RR_{\CA}(T)	
\end{diagram}
\]
The functors $\RR_{\CA}$ and $\LL_{\CA}$ return zero on $\CA$, but they are isomorphisms when restricted to the (respectively right and left) orthogonals. 
More precisely
\[
l^! r = l^! \RR_{\CA} r: \CA^\perp \to \Lperp \CA
\]
and
\[
r^* l = r^* \LL_{\CA} l: \Lperp \CA \to \CA^\perp
\]
are equivalences of triangulated categories as it is easy to check that the functor $l^! \cdot r \cdot r^* \cdot l$ is equivalent to the identity of $\Lperp \CA$ and that the functor $r^* \cdot l \cdot l^! \cdot r$ is equivalent to the identity of $\CA^\perp$.

A special case of the above setting is when $\CA$ is $D^b(k)$, with $k$ a field.
In this case $j$ sends $k$ to an exceptional object $E$, and the functors $j^*$ and $j^!$ are respectively $\Ext^\bullet( - ,E)^*$ and $\Ext^\bullet(E, - )$.
In this case the unit and the counit of the adjunction are respectively the evaluation and coevaluation maps for $E$, i.e. the universal
\[
\Ext^\bullet(E,T) \otimes E \to T
\]
and
\[
T \to \Ext^\bullet(T,E)^* \otimes E
\]

Now, let $\CT = \langle \CA_1, \ldots , \CA_n \rangle$ be a semiorthogonal decomposition into admissible subcategories.
Denote the collection $(\CA_1, \ldots, \CA_n)$ by $\FA$; we are going to construct two collections which we will denote by $\RR_i \FA$ and $\LL_i \FA$.

\begin{definition}[Lemma 2.3 \cite{kuznetsov2006homological}] \label{def:mutated decomposition}
Given $\FA = (\CA_1, \ldots, \CA_n)$ semiorthogonal decomposition, assume that each $\CA_i$ is admissible. Then for each $i \in [1,n-1]$ there is a semiorthogonal decomposition
\[
(\RR_i \FA)_j =
\left\{
\begin{matrix*}[l]
\CA_j 	& \mathrm{if \quad} j \neq i, i+1	\\
\CA_{i+1} 	& \mathrm{if  \quad} j = i		\\
\RR_{\CA_{i+1}}\CA_i = \Lperp \langle \CA_1, \ldots, \CA_{i-1}, \CA_{i+1} \rangle \cap \langle \CA_{i+2}, \ldots, \CA_n \rangle^\perp & \mathrm{if  \quad} j = i+1.
\end{matrix*}
\right.
\]
called the $i$-th right mutation of $\FA$ and a semi-orthogonal decomposition
\[
(\LL_i \FA)_j =
\left\{
\begin{matrix*}[l]
\CA_j & \mathrm{if \quad} j \neq i, i+1 	\\
\LL_{\CA_i} \CA_{i+1} = \Lperp \langle \CA_1, \ldots, \CA_{i-1} \rangle \cap \langle \CA_{i}, \CA_{i+2}, \ldots, \CA_{n} \rangle^\perp & \mathrm{if \quad} j = i \\ 
\CA_i & \mathrm{if \quad} j = i+1
\end{matrix*}
\right.
\]
called the $i$-th left mutation of $\FA$.
\end{definition}

We are now going to introduce two semiorthogonal decompositions related to $\FA$: they are called the left dual and the right dual to $\FA$ due to the vanishing of the entries of the matrices $\RHom(\Lvee \CA_i, \CA_j)$ and $\RHom(\CA_i, \CA_j^\vee)$ such that $i+j \neq n+1$.

\begin{definition}\label{def:dual decomposition}
The right dual decomposition of $\FA$ is
\[
\FA^\vee = (\LL_1 \ldots \LL_{n-1}) \circ \ldots \circ (\LL_1 \LL_2) \circ (\LL_1) (\FA)
\]
The left dual decomposition of $\FA$ is 
\[
\Lvee \FA = (\RR_{n-1} \ldots \RR_{1})\circ \ldots \circ (\RR_{n-1} \RR_{n-2})\circ (\RR_{n-1}) (\FA)
\]
\end{definition}


In the case the categories $\CA_i$ are generated by an exceptional element, one can refine the idea of dual decomposition to that of dual collection.

\begin{definition} \label{def:dual collection}
Let $\CT = \langle E_1, \ldots, E_n \rangle$ be a full exceptional collection for $\CT$.
Set
\[
E_i^{\vee} = \LL_{E_1} \LL_{E_2} \cdots \LL_{E_{n-1}} E_{n-i+1}
\]
and
\[ 
\Lvee E_i = \RR_{E_n} \RR_{E_{n-1}} \cdots \RR_{E_{n-i+2}} E_{n-i+1}
\]
The full exceptional collections $\langle E_n^{\vee}, \ldots, E_1^\vee \rangle$ and $\langle \Lvee E_n, \ldots, \Lvee E_1 \rangle$ are called respectively right dual collection and left dual collection.
\end{definition}

With the above choices, the right and left dual collections are characterized by the following property (see \cite[sec.2.6]{gorodentsev2004helix}).
\begin{equation}\label{eq:dual characterization}
\Ext^k(\Lvee E_i, E_j) = \Ext^k(E_i, E_j^\vee) = \left\{ 
\begin{matrix*}[l]
\CC & \textrm{\quad if $i+j = n+1$ and $i = k + 1$} \\
0 & \textrm{\quad otherwise}
\end{matrix*}
\right.
\end{equation}



\subsection{Beilinson spectral sequence}

Given a filtration for an object $A$ in $\D^b(X)$, one can construct a spectral sequence analogous to the one induced by a filtered complex. 
Recall that the filtration induces a Postnikov tower whose convolution is $A$. In the notation of diagram \eqref{eq:Postnikov}, we have the following proposition. 

\begin{proposition}[e.g. \cite{gelfand2003methods} ex. IV.2.2]\label{prop:spectral sequence}
Given a filtration $T_i, f_i$ for an object $A$ in $\D^b(X)$, 
there is a natural spectral sequence 
\[
E_1^{p,q} = H^q(L^p) 
\]
with differential $d^p = H^q(g^p)$ and converging to $H^{p+q}(A)$.
\end{proposition}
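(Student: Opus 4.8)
The plan is to identify this spectral sequence with the one attached to an \textbf{exact couple}, built by applying a cohomological functor to the distinguished triangles that define the graded pieces $L^p$. Here $H^q$ denotes the $q$-th cohomology functor of the standard t-structure on $\D^b(X)$ (or, with no change to the argument, any fixed cohomological functor on $\CT$); the only property I will use is that it carries distinguished triangles to long exact sequences.

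First I would recall that each step of the filtration \eqref{eq:filtration} sits in a distinguished triangle
\[
T_{p+1} \xrightarrow{f_{p+1}} T_p \xrightarrow{u_p} L^p[-p] \xrightarrow{w_p} T_{p+1}[1],
\]
where $u_p$ and $w_p$ are the structure maps of the cone $L^p[-p]$. Applying $H^{p+q}$ and using $H^{p+q}(L^p[-p]) = H^q(L^p)$, I get for each $p$ a long exact sequence in which the terms $H^{p+q}(T_\bullet)$ and $H^q(L^\bullet)$ interleave. I then package these into an exact couple by setting $D_1^{p,q} = H^{p+q}(T_p)$ and $E_1^{p,q} = H^q(L^p)$: the map $f_{p+1}$ gives $i\colon D_1^{p+1,q-1}\to D_1^{p,q}$, the map $u_p$ gives $j\colon D_1^{p,q}\to E_1^{p,q}$, and the connecting map $w_p$ gives $k\colon E_1^{p,q}\to D_1^{p+1,q}$. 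The derived couple yields a spectral sequence with the announced $E_1$ page. The one point requiring care is the identification of the first differential: $d_1 = j\circ k = H^{p+q}(u_{p+1}[1]\circ w_p)$, and by the very construction of \eqref{eq:subquotients} the composite $u_{p+1}[1]\circ w_p\colon L^p[-p]\to L^{p+1}[-p]$ is exactly $g^p[-p]$, so $d_1 = H^{p+q}(g^p[-p]) = H^q(g^p)$, as claimed.

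For convergence I would invoke that the filtration is finite: $T_m\cong 0$ and $T_n\cong A$, so only finitely many $D_1^{p,q}$ are nonzero in each total degree and the exact couple is bounded. The abutment filtration on $H^{p+q}(A) = H^{p+q}(T_n)$ is the (finite, exhaustive) image filtration of the composites $H^{p+q}(T_p)\to H^{p+q}(T_n)$, which is precisely what the bounded exact couple computes; hence $E_\infty$ is the associated graded of $H^{p+q}(A)$. Naturality of the whole construction in $A$ follows from the functoriality of $H^\bullet$ together with the functoriality of the filtration and of its cones guaranteed by Theorem \ref{thm:functorial decomposition}.

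The main obstacle is not conceptual but bookkeeping: keeping the bidegrees of $i,j,k$ consistent across all the long exact sequences and, above all, pinning down the first differential. The delicate step is verifying $u_{p+1}[1]\circ w_p = g^p[-p]$ and that applying $H^{p+q}$ returns $H^q(g^p)$ compatibly with the shifts; once this is checked, the remainder is the standard formalism of bounded exact couples.
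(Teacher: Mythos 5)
Your argument is correct, but it takes a genuinely different route from the paper's. The paper's proof is a one-line reduction: replace the $L^i$ (and the $T_i$) by quasi-isomorphic complexes so that the $f_i$ become actual morphisms of complexes, and then invoke the classical spectral sequence of a filtered/double complex. You instead stay entirely inside the triangulated category and assemble the long exact sequences of the triangles $T_{p+1}\to T_p\to L^p[-p]$ into a bounded exact couple with $D_1^{p,q}=H^{p+q}(T_p)$ and $E_1^{p,q}=H^q(L^p)$. The two approaches trade different difficulties: the rectification step in the paper is not purely formal (it uses that one works in $\D^b(X)$ rather than in an abstract triangulated category, where strictifying a chain of morphisms can be obstructed), but once done the filtered-complex machinery is completely standard; your exact-couple construction needs no rectification and works for any cohomological functor on any triangulated category, at the price of the bidegree bookkeeping and of the identification $d_1=H^q(g^p)$, which you correctly isolate as the delicate point and settle via $u_{p+1}[1]\circ w_p=g^p[-p]$, exactly matching the definition of $g^p$ in the Postnikov tower \eqref{eq:Postnikov}. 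Your convergence argument (boundedness of the couple from $T_m\cong 0$ and $T_n\cong A$, with abutment filtration the image filtration of $H^{p+q}(T_p)\to H^{p+q}(A)$) is the standard one and is fine. One small remark: the naturality asserted here is naturality with respect to maps of Postnikov towers, which in your setup is immediate because such a map induces a morphism of exact couples; you do not need Theorem \ref{thm:functorial decomposition} for this (that theorem is only relevant later, when the filtration is the one produced by a semiorthogonal decomposition and must itself be shown to be functorial in the object).
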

\begin{proof} 
Replace the $L^i$ to reduce to the case where $f_i$ are actual morphisms of complexes, then use the double complex spectral sequence.
\end{proof}
As we will often use proposition \ref{prop:spectral sequence}, let us clarify in the following remarks what we mean by natural and what the direction of differentials in the spectral sequence is.
\begin{remark}
By naturality of the induced spectral sequence we mean that given two Postnikov towers whose convolution is $A_1$ and $A_2$, and given a map between the two towers (i.e. a collection of maps between the objects in the tower commuting with all maps in the tower), then there is an induced map of spectral sequences.
Moreover, such maps of spectral sequences depend functorially on the maps between Postnikov towers.
\end{remark}
\begin{remark}
The spectral sequence can be thought of as the spectral sequence induced by a bicomplex whose columns are $L^i$ and whose horizontal arrows are $g^i$.
Entries $E_1^{p,q}$ with $p \notin [n, m-1]$ vanish.
\[
\begin{diagram}
H^{j+1}(L^n)	& \rTo^{H^{j+1}(g^n)}	& H^{j+1}(L^{n+1}) 	& \rTo_{H^{j+1}(g^{n+1})}	& \ldots	&  \rTo_{H^{j+1}(g^{m-2})}	& H^{j+1}(L^{m-1})	\\
	& \rdTo(4,2)^{d_2}	\\
H^{j}(L^n)	&\rTo^{H^j(g^n)}	&H^j(L^{n+1}) 	& \rTo_{H^j(g^{n+1})}	& \ldots	& \rTo_{H^j(g^{m-2})}	& H^j(L^{m-1})	\\
	& \rdTo(4,2)^{d_2}	\\
H^{j-1}(L^n)	& \rTo^{H^{j-1}(g^n)}	& H^{j-1}(L^{n+1}) 	& \rTo_{H^{j-1}(g^{n+1})}	& \ldots	&  \rTo_{H^{j-1}(g^{m-2})}	& H^{j-1}(L^{m-1})	\\	
\end{diagram}
\]
It follows that the last non trivial differential is at most at page $E_{n - m +1}$.
\end{remark}

A common situation where a filtration for an object in $\D^b(X)$ is available is when there is a semi-orthogonal decomposition of $\D^b(X)$.
The archetype of such a situation is $X = \PP^n$: its derived category admits several full exceptional collections, the most notable being $\langle \CO_{\PP^n}(-n), \ldots , \CO_{\PP^n} \rangle$. 
Together with its dual collection, it gives rise to a resolution of the diagonal inside $\PP^n \times \PP^n$.

The point in having a resolution for the diagonal of a scheme $X$ is that one can always rewrite the identity functor of $\D^b(X)$ as the Fourier--Mukai transform with kernel the structure sheaf of the diagonal.
As on $\PP^n$ there is such a resolution, one can take any object $A$ in $\D^b(\PP^n)$, pull it back to $\PP^n \times \PP^n$, tensor it by the resolution of the diagonal, push it forward to the other copy of $\PP^n$ and finally recover the initial $A$.
The Grothendieck spectral sequence is in this case a spectral sequence converging to $H^\bullet (A)$, which is known as the Beilinson spectral sequence.
\begin{theorem}[\cite{okonek1980vector}]
For any $A$ in $D^b(\PP^n)$ there are spectral sequences
\[
\prescript{'}{}E_1^{pq} = H^q(A \otimes \Omega^{-p}(-p)) \otimes \CO(p)
\]
and
\[
\prescript{''}{}E_1^{pq} = H^q\left(A(p)\right) \otimes \Omega^{-p}(-p)
\]
converging to $H^{p+q} (A)$.
\end{theorem}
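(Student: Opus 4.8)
The plan is to realize the identity functor of $\D^b(\PP^n)$ as a Fourier--Mukai transform with kernel the structure sheaf $\CO_\Delta$ of the diagonal $\Delta \subset \PP^n\times\PP^n$, and then replace $\CO_\Delta$ by the Beilinson resolution. Writing $p,q$ for the two projections $\PP^n\times\PP^n \to \PP^n$, the identity is expressed for any $A\in\D^b(\PP^n)$ by a canonical isomorphism $A \cong Rq_*(p^*A \Lotimes \CO_\Delta)$. The whole proof then reduces to producing a convenient resolution of $\CO_\Delta$ and feeding it into Proposition \ref{prop:spectral sequence}.

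First I would construct the resolution of the diagonal. Using the Euler sequence $0 \to \Omega^1(1) \to V\otimes\CO \to \CO(1) \to 0$ on each factor (with $V = H^0(\PP^n,\CO(1))^*$), the composite $p^*\Omega^1(1) \hookrightarrow V\otimes\CO \twoheadrightarrow q^*\CO(1)$ defines a global section $s$ of the rank $n$ bundle $\CE := p^*T_{\PP^n}(-1)\otimes q^*\CO(1)$. Over a point $([x],[y])$ the section $s$ vanishes exactly when the hyperplane $\Omega^1(1)_{[x]}\subset V$ lies in the hyperplane $\ker(V\to\CO(1)_{[y]})$, i.e. precisely when $[x]=[y]$; hence $s$ cuts out $\Delta$. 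Since $\Delta$ is smooth of codimension $n=\rank\CE$, the section $s$ is regular and its Koszul complex resolves $\CO_\Delta$. As $\CO(1)$ is a line bundle one has $\Lambda^k\CE^* \cong p^*\Omega^k(k)\otimes q^*\CO(-k)$, so the resolution reads
\[
0 \to p^*\Omega^n(n)\otimes q^*\CO(-n) \to \cdots \to p^*\Omega^1(1)\otimes q^*\CO(-1) \to \CO \to \CO_\Delta \to 0 .
\]
Pulling this back by the transposition of the two factors (which fixes $\Delta$) yields the companion resolution with the roles of $p$ and $q$ exchanged.

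Next I would substitute the first resolution into $A \cong Rq_*(p^*A\Lotimes\CO_\Delta)$. The stupid filtration of the Koszul complex is a filtration of $\CO_\Delta$ whose subquotients are its terms; tensoring by $p^*A$ and applying $Rq_*$ produces a filtration of $A$ in the sense of \eqref{eq:filtration}, to which Proposition \ref{prop:spectral sequence} applies. The $p$-th graded piece (for $-n\le p\le 0$, with $k=-p$) is computed by the projection formula together with the base-change identity $Rq_*p^*F\cong R\Gamma(F)\otimes\CO$:
\[
Rq_*\big(p^*(A\otimes\Omega^{-p}(-p))\otimes q^*\CO(p)\big) \cong R\Gamma(\PP^n, A\otimes\Omega^{-p}(-p))\otimes\CO(p),
\]
whose $q$-th cohomology is $H^q(A\otimes\Omega^{-p}(-p))\otimes\CO(p)$. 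This yields the first spectral sequence $\prescript{'}{}E_1^{pq}$, converging to $H^{p+q}(A)$ since the total pushforward recovers $A$. Running the identical argument with the companion resolution, where the twist $\CO(-k)$ now sits on the $p$-factor, gives graded pieces $R\Gamma(\PP^n,A(p))\otimes\Omega^{-p}(-p)$ and hence the second spectral sequence $\prescript{''}{}E_1^{pq}$.

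The only genuinely substantial step is the construction of the resolution of the diagonal: one must verify that $s$ cuts out $\Delta$ scheme-theoretically and that it is a regular section, so that its Koszul complex is acyclic away from $\Delta$ and resolves $\CO_\Delta$. Everything afterwards --- the projection formula, the Künneth/base-change identity $Rq_*p^*F\cong R\Gamma(F)\otimes\CO$, and the passage from the stupid filtration to the spectral sequence --- is formal and is exactly what Proposition \ref{prop:spectral sequence} is designed to handle.
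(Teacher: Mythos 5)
Your proposal is correct and follows exactly the route the paper itself sketches in the paragraph preceding the theorem (the paper only cites \cite{okonek1980vector} for the proof): resolve $\CO_\Delta$ by the Koszul complex of the canonical regular section of $p^*T_{\PP^n}(-1)\otimes q^*\CO(1)$, tensor with $p^*A$, push forward, and apply Proposition \ref{prop:spectral sequence} to the resulting filtration. The only blemish is notational --- with $V=H^0(\CO(1))^*$ the Euler sequence should read $0\to\Omega^1(1)\to V^*\otimes\CO\to\CO(1)\to 0$ --- which does not affect the argument.
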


The existence of a Beilinson-type spectral sequence on a variety $X$ is a more general fact, depending only on the choice of a full exceptional collection on $X$.
As we will need to decompose families of sheaves with respect to a full exceptional collection, we will state a relative version of the Beilinson spectral sequence.


Assume a scheme $X$ 
has a full exceptional collection $E_1, \ldots, E_n$. Then there is an induced left dual collection $\Lvee E_1, \ldots , \Lvee E_n$ and a full exceptional collection for $X \times X$
\[
\D^b(X \times X) = \langle E_i \boxtimes (\Lvee E_j)^* \rangle_{i =1 \ldots n, j = 1 \ldots n}
\]
The filtration of the diagonal of $X$ with respect to the above exceptional collection takes a very simple form.
\begin{theorem} [\cite{kuznetsov2009hochschild}]\label{thm:filtration of the diagonal}
The structure sheaf of the diagonal $\Delta_*\CO_X$ has a filtration whose $i$-th shifted successive cone $L^i$ is $E_{i+1} \boxtimes (\Lvee E_{n-i})^*[n - 1]$. 
\end{theorem}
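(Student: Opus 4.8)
The plan is to read the filtration of $\Delta_*\CO_X$ off directly from the full exceptional collection $\langle E_i\boxtimes(\Lvee E_j)^*\rangle$ on $X\times X$. Being full, this collection induces a semiorthogonal decomposition of $\D^b(X\times X)$, so by Definition \ref{def:semi-orthogonal decomposition} the object $\Delta_*\CO_X$ admits a Postnikov tower (functorial by Theorem \ref{thm:functorial decomposition}) whose $i$-th shifted cone $L^i$ is a multiplicity space tensored with one of the collection objects $E_a\boxtimes(\Lvee E_b)^*$. Everything therefore reduces to projecting $\Delta_*\CO_X$ onto each component, i.e. to computing these multiplicity spaces.

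First I would identify the left dual collection on the product. By the K\"unneth formula and the defining property \eqref{eq:dual characterization}, I would compute
\[
\RHom_{X\times X}\!\big(\Lvee E_a\boxtimes E_b^*,\ E_i\boxtimes(\Lvee E_j)^*\big)\;\cong\;\RHom_X(\Lvee E_a, E_i)\otimes\RHom_X(\Lvee E_j, E_b).
\]
Each factor on the right is, by \eqref{eq:dual characterization}, either zero or one-dimensional and concentrated in a single degree, and the product is nonzero precisely when $a=n+1-i$ and $b=n+1-j$. This is exactly the orthogonality pattern characterising a dual collection, so $\langle \Lvee E_{n+1-i}\boxtimes E_{n+1-j}^*\rangle$ is the left dual of $\langle E_i\boxtimes(\Lvee E_j)^*\rangle$.

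Next I would compute the multiplicity of $\Delta_*\CO_X$ along $E_i\boxtimes(\Lvee E_j)^*$ by pairing with the dual object just found. Using the adjunction $L\Delta^*\dashv\Delta_*$, the identity $L\Delta^*(F\boxtimes G)\cong F\Lotimes G$, and the perfectness of the $E$'s, the pairing collapses to a computation on $X$:
\[
\RHom_{X\times X}\!\big(\Lvee E_{n+1-i}\boxtimes E_{n+1-j}^*,\ \Delta_*\CO_X\big)\;\cong\;\RHom_X(\Lvee E_{n+1-i}, E_{n+1-j}).
\]
By \eqref{eq:dual characterization} the right-hand side is one-dimensional and concentrated in a single cohomological degree exactly when $i+j=n+1$, and vanishes otherwise. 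Hence only the anti-diagonal terms $j=n+1-i$ contribute, each with a one-dimensional multiplicity; re-indexing by the Postnikov index $i'$ via $i=i'+1$ (so $j=n-i'$) recovers the objects $E_{i'+1}\boxtimes(\Lvee E_{n-i'})^*$ predicted by the statement.

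The main obstacle is the bookkeeping of cohomological shifts needed to upgrade this list of surviving objects to the precise claim $L^i\cong E_{i+1}\boxtimes(\Lvee E_{n-i})^*[n-1]$. The degrees produced by \eqref{eq:dual characterization} through the K\"unneth isomorphism depend on the indices, and they must be combined with the Postnikov convention $\cone(f_{i+1})=L^i[-i]$; the fact that they renormalise to the uniform twist $[n-1]$ reflects that the induced resolution of the diagonal has length $n$. I would fix the normalisation on a base case --- for $X=\PP^1$ the collection $\langle\CO,\CO(1)\rangle$ yields $0\to\CO\boxtimes\CO(-2)\to\CO(1)\boxtimes\CO(-1)\to\Delta_*\CO\to 0$, the Koszul complex of the diagonal twisted so as to restrict trivially on it --- and then track the shift through the general computation. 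This shift analysis, rather than the vanishing statement (which is immediate from \eqref{eq:dual characterization}), is where the real care is required.
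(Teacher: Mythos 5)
Your argument is correct, and it takes a different route from the one the paper (very tersely) indicates. The paper does not recompute the decomposition of $\Delta_*\CO_X$ on $X\times X$ at all: it cites \cite[Prop.\ 3.8]{kuznetsov2009hochschild} for the fact that the $i$-th subquotient lies in the subcategory generated by $E_{i+1}\boxtimes(\Lvee E_{n-i})^*$, and then pins down the multiplicity spaces by a Fourier--Mukai test --- apply the filtration of the diagonal (as a kernel) to each $E_j$, observe that exactly one term of the resulting Postnikov tower is nonzero and that the convolution must return $E_j$ itself, and conclude that each multiplicity space is one copy of $\CC$ in a single degree. You instead compute the decomposition intrinsically on $X\times X$: identify the left dual of the product collection via K\"unneth and \eqref{eq:dual characterization}, then evaluate the multiplicities as $\RHom_{X\times X}(\Lvee E_{n+1-i}\boxtimes E_{n+1-j}^*,\Delta_*\CO_X)\cong\RHom_X(\Lvee E_{n+1-i},E_{n+1-j})$ by adjunction along $\Delta$. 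Both routes bottom out in the same pairing table \eqref{eq:dual characterization}; what yours buys is self-containedness --- you derive the vanishing of the off-anti-diagonal components rather than importing it from Kuznetsov --- at the cost of the extra step of identifying the dual of the product collection (and, implicitly, of fixing a linear order on the $n^2$ objects compatible with semiorthogonality, which the paper also leaves unspecified). What the paper's route buys is that it never needs the dual collection on $X\times X$: it only ever evaluates $\RHom(\Lvee E_{n-p},E_j)$ on $X$.

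The one loose end, which you flag honestly, is the shift bookkeeping: the $n^2$-slot Postnikov tower collapses to an $n$-step filtration, and the positional shifts in the convention $\cone(f_{i+1})=L^i[-i]$ must be combined with the degree in which your surviving $\Ext$ sits. Note that \eqref{eq:dual characterization} already gives you that degree explicitly --- for the component $E_{i+1}\boxtimes(\Lvee E_{n-i})^*$ the pairing $\RHom_X(\Lvee E_{n-i},E_{i+1})$ is $\CC$ placed in degree $n-i-1$ --- and it is precisely this $i$-dependence that cancels against the positional shift to produce the uniform twist $[n-1]$, consistent with the filtration being a length-$n$ left resolution of a sheaf concentrated in degree $0$. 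So the base-case normalisation on $\PP^1$ is a useful sanity check but is not logically needed; the general degree count you have already set up suffices.
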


\begin{remark}
The statement in \cite[Prop. 3.8]{kuznetsov2009hochschild} only says that the subquotients belong to the subcategory generated by $E_{i+1} \boxtimes \Lvee (E_{n-i})^*$.
To check that the cones are the ones in the statement of theorem \ref{thm:filtration of the diagonal}, use the filtration of the diagonal to induce a filtration of $E_i$, and afterwards use the spectral sequence of proposition \ref{prop:spectral sequence} on the filtration for $E_i$.
\end{remark}
One of the consequences of theorem \ref{thm:filtration of the diagonal} is the following result, which can be seen as an analogue of the Beilinson spectral sequence in a category where a full exceptional decomposition is available.
\begin{theorem}\label{thm:Beilinson}
Let $X$ be a smooth projective variety and let $D^b(X)$ be generated by an exceptional sequence of pure objects $E_1, \ldots, E_n$. Then for any $A$ in $D^b(X)$ there is a spectral sequence
\[
E_1^{p,q} = Ext^{n+q-1}(^{\vee}E_{n-p}, A) \otimes E_{p+1} 
\]
converging to $H^{p+q}(A)$.
\end{theorem}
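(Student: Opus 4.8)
The plan is to derive this from Theorem~\ref{thm:filtration of the diagonal} via the Fourier--Mukai formalism, exactly as the Beilinson spectral sequence on $\PP^n$ is derived from the resolution of the diagonal. First I would recall that since $E_1,\ldots,E_n$ is a full exceptional collection, the identity functor on $\D^b(X)$ is the Fourier--Mukai transform whose kernel is $\Delta_*\CO_X$. Thus for any $A \in \D^b(X)$ we may write
\[
A \cong Rp_{2,*}\bigl(p_1^* A \Lotimes \Delta_* \CO_X\bigr),
\]
where $p_1,p_2$ are the two projections $X \times X \to X$. The strategy is to feed the filtration of $\Delta_*\CO_X$ from Theorem~\ref{thm:filtration of the diagonal} into this formula and then extract the spectral sequence of Proposition~\ref{prop:spectral sequence}.

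The key computation is the following. By Theorem~\ref{thm:filtration of the diagonal}, $\Delta_*\CO_X$ has a filtration whose $i$-th shifted successive cone is $L^i = E_{i+1} \boxtimes (\Lvee E_{n-i})^*[n-1]$. Since $Rp_{2,*}(p_1^*A \Lotimes (-))$ is exact (being a composite of derived functors), it carries this filtration of $\Delta_*\CO_X$ to a filtration of $A$, and carries the $i$-th successive cone $L^i$ to the object
\[
Rp_{2,*}\bigl(p_1^* A \Lotimes (E_{i+1}\boxtimes (\Lvee E_{n-i})^*)\bigr)[n-1].
\]
Next I would simplify this using the projection formula and base change: because $E_{i+1} \boxtimes (\Lvee E_{n-i})^* = p_1^* E_{i+1} \otimes p_2^*(\Lvee E_{n-i})^*$, the factor $p_2^*(\Lvee E_{n-i})^*$ pulls out of $Rp_{2,*}$, leaving
\[
R\Gamma\bigl(X, A \Lotimes E_{i+1}\bigr) \otimes (\Lvee E_{n-i})^*[n-1].
\]
Dualizing the $R\Gamma$ term and identifying $R\Gamma(X,A\Lotimes E_{i+1})$ with $\RHom$ gives the graded vector space $\Ext^\bullet(\Lvee E_{n-i}, A)$ (up to the appropriate reindexing with the dual object), tensored with the object $E_{i+1}$ after relabeling, which is precisely the shape of the $E_1$ page claimed.

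Finally I would apply Proposition~\ref{prop:spectral sequence} to the resulting filtration of $A$: its $E_1$ page is $H^q(L^p)$ with the stated differentials, converging to $H^{p+q}(A)$. Substituting the computation above, and using that the $E_i$ are pure objects (so that taking $H^q$ of $E_{p+1}$ tensored with a graded vector space just reads off the graded piece in degree $n+q-1$ of $\Ext^\bullet(\Lvee E_{n-p}, A)$), yields
\[
E_1^{p,q} = \Ext^{n+q-1}(\Lvee E_{n-p}, A) \otimes E_{p+1},
\]
converging to $H^{p+q}(A)$, as desired. The main obstacle I anticipate is bookkeeping rather than conceptual: carefully tracking the index shifts $i \leftrightarrow p$, the $[n-1]$ shift coming from Theorem~\ref{thm:filtration of the diagonal}, and the dualizations introduced by the $(\Lvee E_{n-i})^*$ factors, so that the cohomological degree lands exactly at $n+q-1$ and the convergence is to $H^{p+q}(A)$ with the correct $(p,q)$ grading. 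The purity hypothesis on the $E_i$ is what lets the external tensor product $E_{p+1}\otimes(\text{graded vector space})$ contribute to a single $E_1^{p,q}$ entry rather than smearing across several, so I would make sure to invoke it at exactly that step.
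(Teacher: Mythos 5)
Your overall strategy is the right one: feed the filtration of $\Delta_*\CO_X$ from theorem \ref{thm:filtration of the diagonal} into the Fourier--Mukai description of the identity and apply proposition \ref{prop:spectral sequence}. The paper itself only cites references for this theorem, but its proof of the relative version \ref{thm:relative Beilinson} is exactly this argument, so you are on the intended track.

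There is, however, a concrete error in your key computation: you have the two projections the wrong way round, and the patch you offer does not repair it. With the convention $E_{i+1}\boxtimes(\Lvee E_{n-i})^* = p_1^*E_{i+1}\otimes p_2^*(\Lvee E_{n-i})^*$, your choice $A \cong Rp_{2,*}(p_1^*A\Lotimes \Delta_*\CO_X)$ sends the cone $L^i$ to
\[
R\Gamma\bigl(X, A\Lotimes E_{i+1}\bigr)\otimes(\Lvee E_{n-i})^*[n-1],
\]
i.e.\ the sheaf factor is $(\Lvee E_{n-i})^*$ and the graded vector space is $H^\bullet(A\Lotimes E_{i+1})$. This is the \emph{other} Beilinson-type spectral sequence (the analogue of $\prescript{''}{}E$ on $\PP^n$), not the one claimed. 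Your proposed fix --- ``dualizing the $R\Gamma$ term'' and ``relabeling'' --- is not a legitimate step: Serre duality turns $H^\bullet(A\Lotimes E_{i+1})^*$ into $\Ext^\bullet(A\Lotimes E_{i+1},\omega_X)$, which is not $\Ext^\bullet(\Lvee E_{n-i},A)$, and no relabeling converts the sheaf factor $(\Lvee E_{n-i})^*$ into $E_{i+1}$. The correct move, as in the paper's proof of \ref{thm:relative Beilinson}, is to pull $A$ back along $p_2$ and push along $p_1$: then the projection formula gives
\[
Rp_{1,*}\bigl(p_2^*A\Lotimes(E_{i+1}\boxtimes(\Lvee E_{n-i})^*)\bigr)
\cong E_{i+1}\otimes R\Gamma\bigl(X,A\Lotimes(\Lvee E_{n-i})^*\bigr)
\cong E_{i+1}\otimes \RHom(\Lvee E_{n-i},A),
\]
and the $[n-1]$ shift together with the purity of $E_{p+1}$ yields $E_1^{p,q}=\Ext^{n+q-1}(\Lvee E_{n-p},A)\otimes E_{p+1}$ as stated. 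With that single correction the rest of your argument goes through.
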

The statement and its proof can be found both in \cite[sec 2.7.3]{gorodentsev2004helix} and in \cite[cor. 3.3.2]{rudakov1990helices}. A more general form can be found in \cite[thm 2.1.14]{bohning2006derived}.

With theorem \ref{thm:Beilinson} in mind, we write a spectral sequence for a family of objects in $\D^b(X)$ parametrized by a scheme $S$.
First, choose a notation for the projections between products of $X$ and $S$
\[
\begin{diagram}
X \times X \times S	& \rTo^{\quad p_2 \quad }	& X \times S	\\
\dTo^{p_1}		& 					& \dTo_{q}	\\
X \times S			& \rTo_{q}				& S
\end{diagram}
\]
Next, construct the diagonal of $X \times X \times S$ relative to $S$, namely $\Delta_X \times S$. 
Finally, by pulling back the filtration for the diagonal of $X \times X$ to $X \times X \times S$, we obtain a filtration for $\Delta_X \times S$ whose successive cones are $E_i \boxtimes (\Lvee E_{n-i+1})^* \boxtimes \CO_S$.

\begin{theorem}\label{thm:relative Beilinson}
Let $X$ be a projective variety having a full exceptional collection. Let $S$ be any scheme. For any $\CF \in \D^b(X \times S)$ there is a spectral sequence
\begin{equation}\label{eq:relative Beilinson}
E_1^{p,q} = E_{p+1} \boxtimes R^{n+q-1} q_{*}\CRHom_{X \times S}(\Lvee E_{n-p} \boxtimes \CO_S, \CF)
\end{equation}
converging to $H^{p+q}(\CF)$ which is functorial in $\CF$.
\end{theorem}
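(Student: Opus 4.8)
The plan is to realise the identity functor of $\D^b(X\times S)$ as a relative Fourier--Mukai transform and then transport the filtration of the diagonal of Theorem~\ref{thm:filtration of the diagonal} through it, reading off the spectral sequence from Proposition~\ref{prop:spectral sequence}.

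First I would set $K := \Delta_*\CO_X \boxtimes \CO_S$, the structure sheaf of the relative diagonal $\Delta_X\times S \subset X\times X\times S$, and consider, for a fixed $\CF$, the triangulated functor
\[
\Psi_\CF(-) := p_{2*}\bigl(p_1^*\CF \Lotimes (-)\bigr) \colon \D^b(X\times X\times S) \to \D^b(X\times S),
\]
with $p_1,p_2,q$ as in the diagram preceding the statement. Since $K$ is the kernel of the identity transform along the two $X$-factors, $\Psi_\CF(K) \cong \CF$. Pulling back the filtration of $\Delta_*\CO_X$ from $X\times X$ to $X\times X\times S$ and tensoring by $\CO_S$ produces a filtration of $K$ whose $p$-th shifted cone is $\bigl(E_{p+1}\boxtimes(\Lvee E_{n-p})^*\boxtimes\CO_S\bigr)[n-1]$, exactly as recorded before the statement. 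Because $\Psi_\CF$ is an exact functor of triangulated categories, it sends this Postnikov tower to a Postnikov tower whose convolution is $\Psi_\CF(K)\cong\CF$; this is the filtration for $\CF$ to which I will apply Proposition~\ref{prop:spectral sequence}.

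The computational heart is the identification of the successive cones $L^p = \Psi_\CF\bigl(E_{p+1}\boxtimes(\Lvee E_{n-p})^*\boxtimes\CO_S\bigr)[n-1]$. Here the factor $E_{p+1}$ sits on the target copy of $X$, hence is pulled back along $p_2$ and comes out of $p_{2*}$ by the projection formula, while the factor $(\Lvee E_{n-p})^*\boxtimes\CO_S$ sits on the source copy and combines with $p_1^*\CF$. The square formed by $p_1,p_2,q,q$ is Cartesian with $q$ flat, so flat base change turns $p_{2*}p_1^*$ into $q^*Rq_*$ along the $S$-direction. Using that $\Lvee E_{n-p}$ is perfect to rewrite $\CF\Lotimes\bigl((\Lvee E_{n-p})^*\boxtimes\CO_S\bigr) \cong \CRHom(\Lvee E_{n-p}\boxtimes\CO_S,\CF)$, these steps give
\[
L^p \cong E_{p+1}\boxtimes Rq_*\CRHom(\Lvee E_{n-p}\boxtimes\CO_S,\CF)[n-1].
\]
Taking $H^q$ and using purity of $E_{p+1}$ yields $E_1^{p,q}=H^q(L^p)=E_{p+1}\boxtimes R^{n+q-1}q_*\CRHom(\Lvee E_{n-p}\boxtimes\CO_S,\CF)$, and Proposition~\ref{prop:spectral sequence} gives convergence to $H^{p+q}(\CF)$.

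The main obstacle is precisely this cone computation: one has to track carefully which of the two $X$-factors carries $E_{p+1}$ and which carries $(\Lvee E_{n-p})^*$, and to verify that the projection formula and flat base change genuinely apply (Cartesianness of the square and flatness of $q$), so that the $X$-linear factor $E_{p+1}$ survives as a boxed tensor factor while the remainder is pushed forward to $S$. Finally, functoriality in $\CF$ follows from the naturality of the spectral sequence in Proposition~\ref{prop:spectral sequence}: a morphism $\CF\to\CF'$ induces a natural transformation $\Psi_\CF\to\Psi_{\CF'}$, hence a morphism of the two Postnikov towers compatible with all structure maps, and thus an induced morphism of spectral sequences depending functorially on $\CF$.
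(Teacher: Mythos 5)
Your proposal is correct and follows essentially the same route as the paper: realize the identity of $\D^b(X\times S)$ as the Fourier--Mukai transform with kernel the structure sheaf of the relative diagonal, pull back the filtration of $\Delta_*\CO_X$ from Theorem \ref{thm:filtration of the diagonal}, transport it through the transform, and apply Proposition \ref{prop:spectral sequence}, with functoriality coming from Theorem \ref{thm:functorial decomposition}. The only difference is that you spell out the cone computation (projection formula, flat base change along the Cartesian square over $S$, and the duality $\CF\Lotimes(\Lvee E_{n-p})^*\cong\CRHom(\Lvee E_{n-p},\CF)$ via perfectness) that the paper leaves implicit.
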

\begin{proof}
The main step in the proof is rewriting the identity functor of $\D^b(X \times S)$ as the composition of the following functors: first pullback via $p_2$, then tensor by the structure sheaf of the relative diagonal $\Delta_X \times S$, finally pushforward via $p_1$.
This is the same as saying that the identity of $\D^b(X \times S)$ is the Fourier--Mukai transform with kernel the structure sheaf of $\Delta_X \times S$.

Pullback the filtration for $\CO_{\Delta_X}$ of theorem \ref{thm:filtration of the diagonal} to get a filtration of the relative diagonal $\CO_{\Delta_X \times S}$ with successive cones $E_{i+1} \boxtimes  (\Lvee E_{n-i})^*\boxtimes \CO_S $. 
Then, for any $\CF \in \D^b(X \times S)$ we can tensor the filtration of the relative diagonal by $p_2^* \CF$ to get a filtration of $p_2^* \CF \otimes \CO_{\Delta_X \times S}$.

Next, push the filtration for $p_2^* \CF \otimes \CO_{\Delta_X \times S}$ forward via $p_1$. 
By the first paragraph of this proof, the result is a filtration for $\CF$.
Finally, the spectral sequence \eqref{eq:relative Beilinson} is induced by using \ref{prop:spectral sequence} for the filtration of $\CF$ which we have just obtained.

The functoriality in $\CF$ follows from the fact that the Postnikov tower \eqref{eq:Postnikov} associated with a semiorthogonal decomposition is functorial in $\CF$ by \ref{thm:functorial decomposition} and from the fact that the spectral sequence associated with a Postnikov tower depends functorially on the Postnikov tower by \ref{prop:spectral sequence}.
\end{proof}

\subsection{Flatness and families of sheaves}

We will often need to check flatness of families of sheaves. Most of the times such sheaves will not be coherent over the base of the family, so that it is useful to state the following classical local criterion for flatness \cite[thm. 6.8]{eisenbud1995commutative}, where $M$ is not assumed to be finitely generated over $R$.

\begin{theorem}[{\cite[thm 6.8]{eisenbud1995commutative}}]\label{thm:flatness eisenbud}
Suppose $(R,\mathfrak{m})$ is a local noetherian ring and let $(S, \mathfrak{n})$ be a local noetherian $R$-algebra such that $\Fm S \subset \Fn$. If $M$ is a finitely generated $S$-module, then $M$ is flat over $R$ if and only if $\Tor_1^R(R/\Fm, M) = 0$.
\end{theorem}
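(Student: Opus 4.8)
The plan is to establish the nontrivial implication, namely that $\Tor_1^R(R/\mathfrak{m}, M) = 0$ forces $M$ to be flat over $R$; the converse is immediate, since a flat module kills all higher $\Tor$. By the standard ideal-theoretic criterion, flatness of $M$ over $R$ is equivalent to the vanishing of $\Tor_1^R(R/I, M)$ for every finitely generated ideal $I \subseteq R$, equivalently to the injectivity of the multiplication map $\mu \colon I \otimes_R M \to M$. So I would fix such an $I$, set $K = \ker \mu = \Tor_1^R(R/I, M)$, and aim to prove $K = 0$.

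First I would bootstrap the hypothesis from $R/\mathfrak{m}$ to all $R$-modules of finite length. Using the short exact sequences $0 \to \mathfrak{m}^k/\mathfrak{m}^{k+1} \to R/\mathfrak{m}^{k+1} \to R/\mathfrak{m}^k \to 0$ and the fact that each $\mathfrak{m}^k/\mathfrak{m}^{k+1}$ is a finite direct sum of copies of $R/\mathfrak{m}$, the long exact sequence of $\Tor^R(-,M)$ yields $\Tor_1^R(R/\mathfrak{m}^k, M) = 0$ by induction on $k$, and then $\Tor_1^R(N,M) = 0$ for every finite-length $N$ by dévissage on the length. In particular $\Tor_1^R\bigl(R/(I+\mathfrak{m}^n),M\bigr) = 0$ for all $n$, since $R/(I+\mathfrak{m}^n)$ has finite length.

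Next comes the comparison step. Tensoring $0 \to I/(I\cap \mathfrak{m}^n) \to R/\mathfrak{m}^n \to R/(I+\mathfrak{m}^n) \to 0$ with $M$ and using the vanishing just obtained shows that the induced map $(I/(I\cap\mathfrak{m}^n))\otimes_R M \to M/\mathfrak{m}^n M$ is injective. Chasing the commutative square relating $\mu$ to this map, any $x \in K$ must map to $0$ in $(I/(I\cap\mathfrak{m}^n))\otimes_R M$, hence lies in the image of $(I\cap \mathfrak{m}^n)\otimes_R M \to I \otimes_R M$. By the Artin--Rees lemma one has $I \cap \mathfrak{m}^n \subseteq \mathfrak{m}^{n-c} I$ for some constant $c$ and all large $n$, so this image is contained in $\mathfrak{m}^{n-c}(I\otimes_R M)$. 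Letting $n \to \infty$ gives $K \subseteq \bigcap_{m} \mathfrak{m}^{m}(I\otimes_R M)$.

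Finally I would invoke the finiteness hypotheses. Since $I$ is finitely generated over $R$ and $M$ over $S$, the module $I \otimes_R M$ is finitely generated over the noetherian local ring $(S,\mathfrak{n})$; and $\mathfrak{m}S \subseteq \mathfrak{n}$ gives $\mathfrak{m}^{m}(I\otimes_R M) \subseteq \mathfrak{n}^{m}(I\otimes_R M)$, whence $K \subseteq \bigcap_m \mathfrak{n}^m (I\otimes_R M) = 0$ by the Krull intersection theorem on $S$. Thus $\mu$ is injective for every $I$, so $M$ is flat. The main obstacle is precisely this last passage: because $M$ is in general neither finitely generated nor $\mathfrak{m}$-adically separated over $R$, the bound $K \subseteq \bigcap_m \mathfrak{m}^m(I\otimes_R M)$ does not by itself give $K=0$, and it is exactly the two standing hypotheses — finite generation over $S$ and $\mathfrak{m}S \subseteq \mathfrak{n}$ — that let one transport the intersection into the $\mathfrak{n}$-adic topology on $S$, where Krull intersection applies.
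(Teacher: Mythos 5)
The paper does not prove this statement at all: it is quoted verbatim from Eisenbud (Theorem 6.8) and used as a black box. Your proof is correct and is essentially the standard argument from that source — dévissage from $R/\mathfrak{m}$ to finite-length modules, reduction to the ideal criterion, Artin--Rees to trap $\Tor_1^R(R/I,M)$ inside $\bigcap_m \mathfrak{m}^m(I\otimes_R M)$, and the passage to the $\mathfrak{n}$-adic topology on $S$ (via $\mathfrak{m}S\subseteq\mathfrak{n}$ and finite generation over $S$) so that Krull intersection applies; you correctly identify that last transfer as the point where both standing hypotheses are actually used.
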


We will use the following 
generalization of criterion \ref{thm:flatness eisenbud}. Let 
\[
f: X \to Y
\] 
be a finite type flat map of noetherian schemes. Let
\[
\begin{diagram}
X_y	& \rTo^{j}		& X		\\
\dTo	& \square		& \dTo_{f}	\\
y	& \rTo		& Y
\end{diagram}
\]
be a cartesian diagram where $y$ is a closed point in $Y$.

\begin{corollary}\label{cor:qcoh flat}
Let $\CF \in \D^-_{\mathrm{coh}}(X)$, fix $m \in \ZZ$ and assume that for any $y \to Y$ the pullback $Lj^*\CF$ is a sheaf shifted by $m$. 
Then $\CF$ is the shift by $m$ of a coherent sheaf on $X$ which is flat over $Y$. 
\end{corollary}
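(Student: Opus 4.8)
The plan is to argue locally on $X$ and $Y$ and reduce the statement to a homological assertion about a complex of flat modules, which is then settled by the local flatness criterion of Theorem~\ref{thm:flatness eisenbud}. Both conclusions --- that $\CF$ is concentrated in one degree and that the resulting sheaf is $Y$-flat --- are local on $X$, so I would fix a point $x \in X$, set $y = f(x)$, and replace $Y$ by $\Spec R$ and $X$ by $\Spec S$, where $(R,\Fm)$ is the local ring of $Y$ at $y$ with residue field $\kappa = \kappa(y)$ and $(S,\Fn)$ is the local ring of $X$ at $x$; flatness of $f$ gives $R \to S$ flat with $\Fm S \subseteq \Fn$, which is exactly the situation of Theorem~\ref{thm:flatness eisenbud}. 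After a shift assume $m = 0$. Since $\CF \in \D^-_{\mathrm{coh}}(X)$ and $S$ is noetherian, I would represent $\CF$ by a bounded-above complex $P^\bullet = [\cdots \to P^{-1} \xrightarrow{d^{-1}} P^0 \to P^1 \to \cdots]$ of finite free $S$-modules. Because $S$ is $R$-flat, each $P^i$ is $R$-flat, so the derived fibre is computed naively: for the point $y$ one has $Lj^*\CF \simeq P^\bullet \otimes_R \kappa$, and the hypothesis becomes $H^i(P^\bullet \otimes_R \kappa) = 0$ for all $i \neq 0$ (we only ever use the fibre over $y=f(x)$, so whether ``$y \to Y$'' ranges over all points or all closed points is immaterial in the finite-type setting of the applications).

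The core is the claim that a bounded-above complex of $R$-flat $S$-modules with finitely generated cohomology whose reduction modulo $\Fm$ is exact off degree $0$ is quasi-isomorphic to a single $S$-module $M$ in degree $0$ which is $R$-flat. I would run the universal-coefficient (hyper-$\Tor$) spectral sequence $E_2^{s,t} = \Tor^R_{-s}(\mathcal{H}^t(P^\bullet), \kappa) \Rightarrow H^{s+t}(P^\bullet \otimes_R \kappa)$, whose abutment is concentrated in degree $0$. First, if $t_0$ is the largest degree with $\mathcal{H}^{t_0}(\CF) \neq 0$, the corner $E_2^{0,t_0} = \mathcal{H}^{t_0}(\CF) \otimes_R \kappa$ neither receives nor emits a differential, so it survives and computes $H^{t_0}$ of the fibre; if $t_0 \neq 0$ this vanishes, and since $\mathcal{H}^{t_0}(\CF)$ is finite over $S$ with $\Fm S \subseteq \Fn$, Nakayama forces $\mathcal{H}^{t_0}(\CF) = 0$, a contradiction. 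This kills all cohomology above degree $0$, so I truncate to $P^i = 0$ for $i > 0$ and set $M = \coker(d^{-1}) = \mathcal{H}^0(\CF)$. Next, vanishing of the abutment in total degree $-1$ forces each graded piece $E_\infty^{s,t}$ with $s+t=-1$ to vanish; the piece $E_\infty^{-1,0} = \Tor_1^R(M,\kappa)$ carries no differentials and therefore equals $0$, so Theorem~\ref{thm:flatness eisenbud} yields that $M$ is $R$-flat. Finally, flatness of $M$ gives $\Tor^R_{>0}(M,\kappa) = 0$, which removes all differentials hitting the corners $E_2^{0,-k} = \mathcal{H}^{-k}(\CF)\otimes_R\kappa$; a downward induction on $k$ then shows $E_\infty^{0,-k} = \mathcal{H}^{-k}(\CF)\otimes_R\kappa = 0$, hence $\mathcal{H}^{-k}(\CF) = 0$ by Nakayama. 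Thus $\CF \simeq M[0]$ with $M$ coherent and $R$-flat, and $M \otimes_R \kappa \xrightarrow{\sim} H^0(P^\bullet \otimes_R \kappa)$ records that $M$ commutes with base change. Reassembling over all $x \in X$ produces the global coherent $Y$-flat sheaf whose shift by $m$ is $\CF$.

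The main obstacle is that the fibrewise hypothesis does not directly see the lower cohomology sheaves: Nakayama only exhibits $\mathcal{H}^{-k}(\CF)\otimes_R\kappa$ as a subquotient of the fibre \emph{after} the incoming differentials from the $\Tor_{>0}^R(M,\kappa)$ have been neutralised, so the vanishing of $\mathcal{H}^{<0}(\CF)$ is entangled with the flatness of $M$. The keystone that decouples them is the identity $E_\infty^{-1,0} = \Tor_1^R(M,\kappa)$, i.e.\ getting flatness of $M$ out of the degree-$(-1)$ part of the abutment via Theorem~\ref{thm:flatness eisenbud}; once $M$ is flat the higher $\Tor$'s die and the induction proceeds formally. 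Organising the spectral sequence so that the flatness of $M$ is established before, rather than simultaneously with, the exactness of $P^\bullet$ in negative degrees is the delicate point, and it is precisely there that the hypothesis at the relevant point $y=f(x)$ (not merely an auxiliary fibre) is indispensable.
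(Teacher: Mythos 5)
Your proof is correct and follows essentially the same route as the paper: reduce to the local rings $(R,\Fm)\to(S,\Fn)$ at a (closed) point and its image, run the hyper-$\Tor$ spectral sequence against the cohomology sheaves, kill the top cohomology by Nakayama, extract $\Tor_1^R(H^0(\CF),\kappa)=0$ from the degree $-1$ part of the abutment to get flatness via Theorem \ref{thm:flatness eisenbud}, and then induct downward with Nakayama. The only cosmetic difference is that the paper indexes the spectral sequence by $\Tor_i^S(-,S/\Fm S)$ and converts to $\Tor_i^R$ using flatness of $S$ over $R$, whereas you work over $R$ directly from the start.
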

\begin{proof}
The statement is invariant under shift, so we assume that $m = 0$. 
It is local on the stalk both in $X$ and $Y$ because taking the cohomology of a complex of sheaves and flatness are so.

We want to check that $H^i(\CF) = 0$ for $i \neq 0$: this can be done locally around closed points of $X$ and $Y$.
We also want to check that $H^0(\CF)$ is flat over $Y$: by flatness of $f$, this can be checked locally around closed points of $X$. As $f$ is of finite type, it sends closed points of $X$ into closed points of $Y$, so that one can localize at closed points of $Y$ and the $\Fm S \subset \Fn$ assumption in theorem \ref{thm:flatness eisenbud} holds.




In this way the statement reduces to a statement about $(S, \Fn)$ local ring of a closed point of $X$, $(R, \Fm)$ local ring of a closed point of $Y$ and $f$ a map between them sending the closed point of $X$ into the closed point of $Y$. 
The complex of coherent sheaves $\CF$ becomes a complex of modules $\CF$ which are finitely generated over $S$.

There is a spectral sequence whose entries are $L^q j^*(H^p(\CF))$ and converging to $L^{p+q}j^*(\CF)$. 
Note that $H^{>0}(\CF) = 0$, as otherwise the top nonvanishing cohomology cannot be killed by any of the next differentials and would contribute non-trivially to $L^{>0}j^*(\CF)$, which we assumed is zero.
As a consequence, the spectral sequence has a page
\begin{equation}\label{eq:qcoh flat 2}
\begin{diagram}
\ldots& \quad		&H^{-1}(\CF) \otimes_{S} S/\Fm S 	& \qquad		& H^{0}(\CF) \otimes_{S} S/\Fm S & & 0\\
	& \quad		&								& \luTo(2,4)	&	&	\\
\ldots& \quad		&\Tor_1^S(H^{-1}(\CF), S/ \Fm S)	& \qquad		& \Tor_1^S(H^{0}(\CF), S/ \Fm S) & & 0 \\
	& \quad		&								& \qquad		&	&	\\
\ldots& \quad		&\Tor_2^S(H^{-1}(\CF), S/ \Fm S)	& \qquad		& \Tor_2^S(H^{0}(\CF), S/ \Fm S) & & 0 \\
\end{diagram}
\end{equation}
where we have substituted $H^{>0}(\CF) = 0$.

As we assumed that only $L^0j^{*}(\CF)$ is non zero, $\Tor_1^S(H^{0}(\CF), S/ \Fm S) = 0$. As $S$ is flat over $R$, there is an isomorphism
\begin{equation}\label{eq:qcoh flat}
\Tor_i^S(H^{0}(\CF), S/ \Fm S) = \Tor_i^R(H^{0}(\CF), R/ \Fm R)
\end{equation}
so that by \ref{thm:flatness eisenbud} for $i=1$, $H^{0}(\CF)$ is flat over $R$.

Using \eqref{eq:qcoh flat} in the other direction, we get that $\Tor_i^S(H^{0}(\CF), S/ \Fm S) = 0$ for all $i > 0$.
This in turn implies by spectral sequence \eqref{eq:qcoh flat 2} that $H^{-1}(\CF) \otimes_{S} S/\Fm S = 0$. 
As $\CF$ has coherent cohomology, we can use Nakayama's lemma and obtain $H^{-1}(\CF) = 0$.

Finally, note that if $H^{i}(\CF) = 0$ for some $i \in [i_0, -1]$, by spectral sequence \eqref{eq:qcoh flat 2} also $H^{i_0-1}(\CF) = 0$, so that by induction we obtain $\CF \cong H^0(\CF)$.
\end{proof}


\subsection{Stability of vector bundles} \label{sec:stability}
Throughout section \ref{sec:stability}, $X$ will be an integral locally factorial scheme whose Picard group is $\ZZ$. 
Moreover we always require that the unique ample generator $\CO_X(1)$ has global sections.
As there is a unique ample generator of $\Pic(X)$, we will talk about the degree of a line bundle, meaning the degree with respect to $\CO_X(1)$.

Let us recall the definition of normalization of a vector bundle $E$.
\begin{definition}
The normalization $E_{\norm}$ of a vector bundle $E$ of rank $r$ is its unique twist such that $-r < c_1(E_{\norm}(k))  \leq 0$. 
\end{definition}
Equivalently, one can also say that $E_{\norm}$ is the unique twist of $E$ with slope
\[
\mu(E_{\norm}) = \frac{\deg c_1(E)}{\rank(E)}
\]
in the interval $(-1,0]$. 
The degree of $c_1(E)$ is taken with respect to the unique ample generator of $\Pic(X)$.

The following criterion for Mumford stability goes under the name of Hoppe's criterion. It is proved in \cite{hoppe1984generischer} for vector bundles on projective spaces, but the same proof works more generally.

\begin{theorem}[Hoppe's criterion, \cite{hoppe1984generischer}] \label{thm:hoppe}
Assume $\CO_X(1)$ has global sections and let $E$ be a vector bundle of rank $r$ on $X$ such that for each $1 \leq s \leq r-1$ the vector bundle $\left( \Lambda^s E\right)_{\textrm{norm}}$ has no global sections. Then $E$ is $\mu$-stable.
\end{theorem}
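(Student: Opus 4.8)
The plan is to prove the contrapositive: assuming $E$ is not $\mu$-stable, I will produce an index $s \in \{1,\dots,r-1\}$ for which $(\Lambda^s E)_{\norm}$ has a nonzero global section, contradicting the hypothesis. So suppose there is a coherent subsheaf $F \subset E$ with $0 < s := \rank(F) < r$ and $\mu(F) \ge \mu(E)$. Replacing $F$ by its saturation only increases its degree, hence its slope, so I may assume $E/F$ is torsion-free. Since $X$ is locally factorial (hence normal), a torsion-free sheaf is locally free away from a closed subset of codimension $\ge 2$, so there is an open $U \subseteq X$ with $\codim(X \setminus U) \ge 2$ over which $F$ is a subbundle of $E$.

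First I would pass to determinants over $U$. There the inclusion $F|_U \hookrightarrow E|_U$ is a subbundle inclusion, so taking $s$-th exterior powers yields a nonzero bundle map $\det(F|_U) = \Lambda^s(F|_U) \hookrightarrow \Lambda^s(E|_U)$. Because $X$ is locally factorial with $\Pic(X) = \ZZ$, the line bundle $\det(F|_U)$ is the restriction of $\det F = \CO_X(d)$, where $d = \deg c_1(F)$, and the map above is a nonzero section of $\bigl(\Lambda^s E\bigr)(-d)|_U$. Since $\Lambda^s E(-d)$ is locally free and $X$ is normal, the restriction $H^0(X, \Lambda^s E(-d)) \to H^0(U, \Lambda^s E(-d))$ is an isomorphism, so the section extends to a nonzero $\sigma \in H^0\bigl(X, \Lambda^s E(-d)\bigr)$.

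Next comes the degree bookkeeping. Using $\binom{r-1}{s-1}/\binom{r}{s} = s/r$, one computes $\mu(\Lambda^s E) = s\,\mu(E)$, so the normalizing twist is $(\Lambda^s E)_{\norm} = \Lambda^s E(k)$ with $k$ the unique integer satisfying $-1 < s\,\mu(E) + k \le 0$, that is $k = \lfloor -s\,\mu(E) \rfloor$. The destabilizing hypothesis gives $d = s\,\mu(F) \ge s\,\mu(E)$, and since $d \in \ZZ$ this yields $k = \lfloor -s\,\mu(E)\rfloor \ge \lfloor -d \rfloor = -d$, hence $k + d \ge 0$. Fixing a nonzero $t \in H^0(\CO_X(1))$, which exists by hypothesis, multiplication by $t^{\,k+d}$ is injective on global sections because $X$ is integral and $\Lambda^s E$ is torsion-free, and it carries $\sigma$ to a nonzero element of $H^0\bigl(\Lambda^s E(-d + (k+d))\bigr) = H^0\bigl((\Lambda^s E)_{\norm}\bigr)$. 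This contradicts the hypothesis and establishes $\mu$-stability.

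The main obstacle is the passage from an arbitrary rank-$s$ subsheaf $F$ to a genuine line-bundle inclusion $\det F \hookrightarrow \Lambda^s E$ defined on all of $X$: one must saturate $F$, restrict to the locus where it is a subbundle, take determinants there, and extend the resulting section across a codimension $\ge 2$ set, each step relying on the normality and local factoriality of $X$ together with $\Pic(X) = \ZZ$. Once $\sigma \in H^0(\Lambda^s E(-d))$ is in hand, the slope inequality and the twist by $t^{\,k+d}$ are routine.
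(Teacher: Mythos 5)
Your proof is correct and follows essentially the same route as the paper's: take a (saturated) destabilizing subsheaf $F$ of rank $s$, form its determinant using local factoriality and extend across the codimension-$\ge 2$ non-locally-free locus to get a nonzero section of $\Lambda^s E(-c_1(F))$, observe this twist has non-positive slope, and multiply by powers of a section of $\CO_X(1)$ to land in $(\Lambda^s E)_{\norm}$. Your write-up is somewhat more explicit about the Hartogs extension and the integer arithmetic of the normalizing twist, but the argument is the same.
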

\begin{proof}
Assume $E$ is not Mumford stable, i.e. that there is a sheaf $F$ of rank $s < r$ and a sequence
\[
0 \to F \to E \to Q \to 0
\]
with $\mu(F) \leq \mu(E)$ and $Q$ pure.
As $X$ is integral, $E$ and $Q$ are torsion free, 
 so that $F$ is reflexive \cite[prop 1.1]{hartshorne1980stable}.

As $X$ is locally factorial, we can define $\Lambda^s F$ by removing the non-locally free locus of $F$ (which has at least codimension 2), taking the determinant on the remaining open subset of $X$ and finally extending the line bundle back to $X$.

As a consequence, we have a section
\[
\CO_X \to \Lambda^s E \otimes \CO_X(-c_1(F))
\]
As $\CO_X(1)$ has sections and $X$ is integral, such a section induces non zero sections of $\Lambda^s E \otimes \CO_X(-c_1(F) + k)$ for any $k \geq 0$.

Now we check that
\[
\left( \Lambda^s E \right)_{\textrm{norm}} \cong \Lambda^s E \otimes \CO_X(-c_1(F) + k)
\]
for some $k \geq 0$.
To prove it, it is enough to check that 
\[
\mu \left(\Lambda^s E \otimes \CO_X(-c_1(F))\right) \leq 0
\]
The left hand side of the above inequality is
\begin{equation} \label{eq:hoppe}
s (\mu(E) - \mu(F))
\end{equation}
As $F$ destabilizes $E$, \eqref{eq:hoppe} is not positive, so that we have constructed sections for a twist of $E$ with non-positive slope. 
As $\CO_X(1)$ has sections, also the normalized $s$-th exterior power of $E$ has sections.
\end{proof}

Again, let $X$ be an integral locally factorial scheme whose Picard is generated by an ample line bundle $\CO_X(1)$. 
Under these assumptions, the following lemma shows the equivalence, in the case of rank 2 bundles, of Gieseker--Maruyama stability and Mumford--Takemoto stability.

\begin{lemma}\label{lemma:mumford vs Gieseker}
A vector bundle $E$ of rank $2$ on $X$ is $\mu$-stable if and only if it is Gieseker stable.
\end{lemma}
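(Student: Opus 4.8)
The plan is to prove the two implications separately: the forward implication holds for sheaves of arbitrary rank and is formal, while the reverse implication is where $\rank 2$ and the hypotheses on $X$ genuinely enter.

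First I would recall that $\mu$-stability always implies Gieseker stability. By Hirzebruch--Riemann--Roch, the first two coefficients of the reduced Hilbert polynomial $p_F(n) = \chi(F(n))/\rank(F)$ of a torsion-free sheaf $F$ depend only on $X$ and on the slope $\mu(F)$: the leading coefficient is independent of $F$, and the next one is $\mu(F)$ plus a universal additive constant. Hence for a proper nonzero subsheaf $F \subset E$ the inequality $\mu(F) < \mu(E)$ provided by $\mu$-stability forces $p_F(n) < p_E(n)$ for $n \gg 0$, which is exactly Gieseker stability.

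For the converse I would argue by contradiction, assuming $E$ is Gieseker stable (hence $\mu$-semistable) but not $\mu$-stable. Because $\rank E = 2$, a destabilizing subsheaf is a rank-$1$ subsheaf $F$ with $\mu(F) \geq \mu(E)$; together with $\mu$-semistability this gives $\mu(F) = \mu(E)$, so the only problematic case is that of equal slopes. Replacing $F$ by its saturation $L = \ker\bigl(E \to (E/F)/\mathrm{tors}\bigr)$, the quotient $Q = E/L$ is torsion-free of rank $1$, and by \cite[Prop. 1.1]{hartshorne1980stable} the sheaf $L$ is reflexive; since $X$ is locally factorial with $\Pic(X) = \ZZ$, a reflexive rank-$1$ sheaf is a line bundle, so $L \cong \CO_X(a)$ with $a = \mu(L) = \mu(E)$, and consequently $c_1(Q) = a$ as well.

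The key step is a comparison of Euler characteristics. Being torsion-free of rank $1$ with $c_1(Q) = a$, the sheaf $Q$ embeds into its reflexive hull $Q^{**} \cong \CO_X(a) \cong L$ (the first Chern classes agree because $Q$ is locally free in codimension $1$), with cokernel supported in codimension $\geq 2$; hence $\chi(Q(n)) \leq \chi(L(n))$ for $n \gg 0$. On the other hand, from $0 \to L \to E \to Q \to 0$ and $\rank E = 2$ one computes directly that the Gieseker inequality $p_L(n) < p_E(n)$ is equivalent to $\chi(L(n)) < \chi(Q(n))$ for $n \gg 0$. These two inequalities are incompatible, so no such $L$ can exist and $E$ must be $\mu$-stable.

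I expect the main obstacle to be carrying out the equal-slope reduction cleanly: one must check that the destabilizing subsheaf may be taken saturated and then identified with a genuine line bundle $\CO_X(a)$, which is precisely where local factoriality and $\Pic(X)=\ZZ$ are used, and one must keep careful track of the fact that passing to the reflexive hull of the rank-$1$ quotient $Q$ can only decrease $\chi$. It is this sign, special to the rank-$2$ situation where both the saturated sub and the quotient are twists of line bundles with the same $c_1$, that produces the contradiction with Gieseker stability.
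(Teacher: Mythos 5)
Your proof is correct and follows essentially the same route as the paper's: reduce to an equal-slope destabilizing rank-one subsheaf, use local factoriality and $\Pic(X)=\ZZ$ (via reflexivity of the saturation) to identify it with a line bundle $\CO_X(a)$, observe that the rank-one quotient injects into its reflexive hull $\CO_X(a)$ with cokernel in codimension $\ge 2$, and derive a contradiction with the Gieseker inequality; the only cosmetic difference is that the paper first normalizes $E$ so that $a=0$ and phrases the final step as $p_E = p_{\CO_X} - P_F(n)/2$ rather than as your comparison of $\chi(L(n))$ with $\chi(Q(n))$. The forward direction you spell out is the one the paper dismisses as standard.
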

\begin{proof}
Assuming that $E$ is Gieseker stable, we prove it is $\mu$-stable.
It is enough to prove the statement for normalized rank $2$ bundles, so we replace $E$ by $E_\norm$. As $E$ has rank 2, if it is not $\mu$-stable there is a reflexive rank 1 sheaf which destabilizes it.
Assume $E$ has a section
\[
0 \to \CO_X \to E \to Q \to 0
\]
If the section vanishes on a codimension 1 subset, as the scheme is locally factorial it vanishes on a Cartier divisor. As $\CO_X(1)$ generates the Picard group, we can factor the section as
\[
0 \to \CO_X(n) \to E
\]
for some positive $n$, which is impossible by $\mu$-semistability of $E$.

It follows that we have an exact sequence
\[
0 \to \CO_X \to E \to Q^{**} \to F \to 0
\]
with $F$ supported in codimension at least $2$, showing that $c_1(Q) = 0$.
Finally, the reduced Hilbert polynomial of $E$ satisfies
\[
p_E(n) = p_{\CO_X}(n) - P_F(n)/2
\]
which contradicts the Gieseker stability of $E$.

The other direction is standard.
\end{proof}
\begin{remark}
As explained in \cite[sec.3]{hartshorne1980stable}, we have actually proved that the only strictly $\mu$-semistable rank $2$ vector bundles with $\mu = 0$ are extensions of $\CO_X$ by itself.
As $H^1(\CO_X)$ is the tangent space to $\Pic(X)$ at the identity and as we assumed that $\Pic(X) = \ZZ$, the only extension of $\CO_X$ by itself is $\CO_X \oplus \CO_X$, which is therefore the only strictly $\mu$-semistable rank $2$ vector bundle with $\mu = 0$ on $X$.
\end{remark}

\subsection{The Grauert--M\"ulich theorem}
When a scheme $X$ contains enough rational curves, one can try to recover a stable vector bundle $E$ on $X$ by looking at its behaviour on such curves. 
As all vector bundles on $\PP^1$ split into a direct sum of line bundles, there is the following standard definition (e.g. \cite{okonek1980vector}).
\begin{definition}\label{def:splitting type}
The splitting type of a vector bundle $E$ on a rational curve $L \cong \PP^1$ is the unique sequence of integers $a_1 \leq \ldots \leq a_r$ such that
\[
\restr{E}{L} \cong \calO_L(a_1) \oplus \ldots \oplus \calO_L(a_r)
\]
In this case, we will also write
\[
\ST_{L}(E) \cong (a_1, \ldots, a_r)
\]
\end{definition}

The classical Grauert--M\"ulich theorem \cite{grauert1975vektorbundel} says that for a stable vector bundle on $\PP^n$ the generic splitting type has no gaps. 
The proof mainly relies on the irreducibility of the family of lines through a point. 

Among the generalizations of the Grauert--M\"ulich theorem, we are going to state and use the one proved by Hirschowitz in \cite{hirschowitz1981restriction}.
Let $X$ be a smooth projective variety of dimension $n$ with a fixed ample line bundle $\CO_X(1)$.
Let $Z \subset X \times S$ be a non-empty proper flat family of subvarieties of $X$ of dimension $m$. 
\begin{equation*}
\begin{diagram}
	&			& Z			&			&	\\
	& \ldTo^{p}	&\dInto		& \rdTo^{q}	&	\\
X	& \lTo		&X \times S	& \rTo		& S
\end{diagram}
\end{equation*}
Denote by $\CT_{Z/X}$ the relative tangent bundle of $Z$ over $X$.
Denote by $[Z_s]$ the class of the general fiber of $q$ in $H^{2n-2m}(X, \QQ)$, i.e. $p(q^{-1}(s))$ for a general point $s$ in $S$.

\begin{theorem}[\cite{hirschowitz1981restriction}]\label{thm:GM}
Assume the following conditions hold:
\begin{enumerate}
\item \label{itm:flat projection} $p$ is flat out of some set of codimension at least $2$ in $Z$.
\item \label{itm:irreducible fiber} the generic fiber of $p$ is irreducible.
\item \label{itm:proportionality} the classes $c_1(\CO_X(1))^{n-1}$ and $[Z_s].c_1(\CO_X(1))^{m-1}$ are proportional in 
$H^{2n-2}(X, \QQ)$.
\end{enumerate}
Let $E \neq 0$ be a torsion free sheaf on $X$ and $(\mu_1 \geq \ldots  \geq \mu_k)$ the slopes of the Harder--Narasimhan filtration of $p^*E$ restricted to the general fiber of $q$.

If $E$ is semistable, then 
\[
\mu_i - \mu_{i+1} \leq - \mu_{min}(\CT_{Z/X}) \qquad \text{for } i=1, \ldots, k-1
\]
where $\mu_{min}(\CT_{Z/X})$ is the minimal slope in the Harder--Narasimhan filtration of $\CT_{Z/X}$ relative to $S$. 
\end{theorem}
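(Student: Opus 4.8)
The plan is to run the classical \emph{second fundamental form} argument, adapted to the family $q\colon Z \to S$. First I would pass to a dense open subset $S_0 \subseteq S$ over which the relative Harder--Narasimhan filtration of $p^*E$ with respect to $q$ exists: this produces a filtration $0 = N_0 \subset N_1 \subset \cdots \subset N_k = p^*E$ by saturated subsheaves whose restriction to the general fiber $Z_s = q^{-1}(s)$ is the Harder--Narasimhan filtration, with relatively semistable quotients of strictly decreasing slopes $\mu_1 > \cdots > \mu_k$. Fixing an index $i \in [1,k-1]$, I set $F = N_i$ and $Q = p^*E/N_i$. Because $p^*E$ is pulled back from $X$, over the locus where $p$ is smooth it carries a canonical relative flat connection $\nabla\colon p^*E \to p^*E \otimes \Omega^1_{Z/X}$ along the fibers of $p$; composing the restriction of $\nabla$ to $F$ with the projection onto $Q$ yields an $\CO_Z$-linear \emph{second fundamental form}
\[
\sigma\colon F \longrightarrow Q \otimes \Omega^1_{Z/X}.
\]

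The crucial claim is that $\sigma$ does not vanish on the general fiber. Indeed, if $\sigma = 0$ then $F$ is preserved by $\nabla$, hence flat along the fibers of $p$; using that the generic fiber of $p$ is irreducible (condition~\ref{itm:irreducible fiber}), such a flat subsheaf descends to a saturated subsheaf $F_0 \subset E$ with $p^*F_0 = F$ generically. Condition~\ref{itm:flat projection} guarantees that removing the non-flat locus, of codimension $\ge 2$, does not disturb the degree computations, and the proportionality of condition~\ref{itm:proportionality} makes the slope of $F$ on $Z_s$ proportional to the slope of $F_0$ on $X$. Since $F|_{Z_s} = N_i$ collects exactly the Harder--Narasimhan pieces of slopes $\mu_1 > \cdots > \mu_i$ while at least one smaller piece is dropped, one has $\mu(F|_{Z_s}) > \mu(p^*E|_{Z_s})$, and hence $\mu(F_0) > \mu(E)$, contradicting the semistability of $E$. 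Therefore $\sigma \neq 0$ and, shrinking $S_0$ if necessary, its restriction $\sigma_s$ to the general fiber is a nonzero $\CO_{Z_s}$-homomorphism.

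It then remains to read the numerical bound off $\sigma_s\colon F|_{Z_s} \to Q|_{Z_s} \otimes \Omega^1_{Z/X}|_{Z_s}$. Working over $\CC$, tensor products of semistable sheaves are semistable, so $\mu_{max}$ is additive under $\otimes$ and
\[
\mu_{max}\bigl(Q|_{Z_s} \otimes \Omega^1_{Z/X}|_{Z_s}\bigr) = \mu_{i+1} + \mu_{max}(\Omega^1_{Z/X}|_{Z_s}) = \mu_{i+1} - \mu_{min}(\CT_{Z/X}).
\]
A nonzero homomorphism of sheaves on the polarized variety $Z_s$ forces the minimal slope of the source to be at most the maximal slope of the target; since $\mu_{min}(F|_{Z_s}) = \mu_i$, this gives $\mu_i \leq \mu_{i+1} - \mu_{min}(\CT_{Z/X})$, which is the desired inequality for every $i$.

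The main obstacle will be the descent step: making precise that a $\nabla$-flat saturated subsheaf of $p^*E$ really is the pullback of a subsheaf of $E$ with the expected first Chern class, controlling the interaction of the general fiber $Z_s$ with the non-smooth and non-flat loci of $p$ so that the codimension $\ge 2$ hypothesis of condition~\ref{itm:flat projection} can be invoked cleanly, and then matching the two notions of slope through the proportionality in condition~\ref{itm:proportionality}. By contrast, the existence of the relative Harder--Narasimhan filtration over a suitable $S_0$ and the final slope estimate on $Z_s$ are standard.
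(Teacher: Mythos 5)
The paper does not prove this statement: it is quoted verbatim from Hirschowitz \cite{hirschowitz1981restriction} and used as a black box, so there is no internal proof to compare against. Your sketch is the standard second-fundamental-form/descent argument that underlies both the classical Grauert--M\"ulich theorem and Hirschowitz's generalization (relative Harder--Narasimhan filtration over $S$, the $\CO_Z$-linear map $N_i \to (p^*E/N_i)\otimes\Omega^1_{Z/X}$, descent to $X$ when it vanishes versus the slope estimate $\mu_{\min}(F|_{Z_s}) \le \mu_{\max}(Q|_{Z_s}) + \mu_{\max}(\Omega^1_{Z/X}|_{Z_s})$ when it does not), and it is correct in outline; the hypotheses \eqref{itm:flat projection}--\eqref{itm:proportionality} are invoked at exactly the right places. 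The only caution is the one you already flag: the connection and descent arguments must be carried out on the locus where $E$ is locally free and $p$ is flat with connected fibers, and one must check that the excised loci meet the general fiber $Z_s$ and the divisor classes on $X$ only in codimension $\ge 2$, which is where condition \eqref{itm:flat projection} does its work.
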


Note that in theorem \ref{thm:GM} the role of the splitting type is played by the slopes in the Harder-Narasimhan filtration of $E$ restricted to subvarieties of $X$.

\subsection{Koszul complexes}
In this section $X$ is a Cohen-Macaulay scheme. 
Given a section of a vector bundle $\CE^*$, the same latter will be used to denote it and to denote the dual map from $\CE$ to $\CO_X$.

Let $s$ be a section of a vector bundle $\CE^*$ on a scheme $X$. 
There is a complex
\begin{equation}\label{def:Koszul}
\det(\CE) \to \cdots \to \Lambda^2 \CE \to \CE \xrightarrow{s} \CO_X
\end{equation}
where all maps are contractions between exterior powers induced by the section $s$.
Complex \eqref{def:Koszul} is called the Koszul complex and is denoted by $\Kosz(\CE, s)$. 
\begin{remark}
The Koszul complex $\Kosz(\CE,s)$ is a DG-algebra. 
It follows that also its cohomology has the structure of a DG-algebra.
\end{remark}

When $X$ is Cohen-Macaulay it makes sense to talk about codimension, as all the local rings of $X$ and all associated points of $X$ have the same dimension. 
In this case, if the zero-locus $Z$ of $s$ has codimension equal to the rank $r$ of $\CE$, there is an exact sequence
\[
0 \to \det(\CE) \to \cdots \to \Lambda^2 \CE \to \CE \xrightarrow{s} \CO_X \to \CO_Z \to 0
\]
showing that $\Kosz(\CE, s)$ is quasi isomorphic to $\CO_Z$.


In the case the codimension of $Z$ is not the expected one, $\Kosz(\CE,s)$ is not anymore quasi isomorphic to a sheaf.
It is nonetheless possible sometimes to describe its cohomology in terms of exterior powers of an excess bundle related to the conormal bundle of $Z$ in $X$. Proposition \ref{prop:unexpected Koszul} shows how to do it.

Assume that $Z$ is a local complete intersection of codimension $c$ inside $X$, i.e. that $\dim(X) - \dim(Z) = c$ and that locally around any point of $Z$ the ideal $\CI_Z$ of $Z$ in $X$ is generated by $c$ elements. 
Then, as $X$ is Cohen-Macaulay, a minimal set of local generators for $\CI_Z$ is a regular sequence, so that the conormal sheaf $\CI_Z/\CI_Z^2$ is locally free (see for example \cite[A.6.1, A.7.1]{fulton1998intersection}).
We will denote it by $\CN^*_{Z/X}$. 
\begin{proposition} \label{prop:unexpected Koszul} Let $X$ be a Cohen-Macaulay scheme and $Z$ a local complete intersection of codimension $c$ in $X$.

There is a vector bundle $\CL_Z$ on $Z$ sitting in an exact sequence
\[
0 \to \CL_Z \to \CE_Z \to \CN_{Z/X}^* \to 0
\]
such that the cohomology of the Koszul complex $\Kosz(\CE,s)$ is isomorphic as a DG-algebra to the exterior algebra $\Lambda^\bullet \CL_Z$.

\end{proposition}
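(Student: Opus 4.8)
The plan is to first build the bundle $\CL_Z$ and the displayed sequence canonically, then identify the cohomology of $\Kosz(\CE,s)$ by a local computation, and finally glue the local isomorphisms into one canonical global isomorphism of DG-algebras. Since $Z$ is the zero scheme of $s$, the map $s\colon \CE \to \CO_X$ has image $\CI_Z$, so it is a surjection $\CE \twoheadrightarrow \CI_Z$. Restricting to $Z$ and using $\restr{\CI_Z}{Z} = \CI_Z/\CI_Z^2 = \CN^*_{Z/X}$ yields a surjection $\restr{\CE}{Z} \twoheadrightarrow \CN^*_{Z/X}$ of vector bundles on $Z$ (here $\CN^*_{Z/X}$ is locally free of rank $c$ precisely because $Z$ is an l.c.i. of codimension $c$). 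I would then define $\CL_Z$ as its kernel; as the kernel of a surjection of vector bundles it is locally free of rank $r-c$ and fits in the required exact sequence. Finally, away from $Z$ the section $s$ is a nowhere-vanishing section of $\CE^*$, so $\Kosz(\CE,s)$ is exact there; hence all its cohomology sheaves are supported on $Z$.

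Next I would compute the cohomology locally. Fix $z \in Z$ and a frame $\CE \cong \CO_X^{\oplus r}$ near $z$, writing $s = (f_1,\dots,f_r)$ with all $f_i \in \CI_Z$. Choosing a regular sequence $g_1,\dots,g_c$ generating $\CI_Z$ and writing $f_i = \sum_j a_{ij} g_j$, surjectivity of $\restr{\CE}{Z} \to \CN^*_{Z/X}$ at $z$ says the matrix $(a_{ij})$ has rank $c$ modulo $\Fm$, so after permutation the top $c\times c$ block is invertible in $\CO_{X,z}$. Elementary row operations (an element of $\GL_r(\CO_{X,z})$, i.e.\ a change of local frame of $\CE$) then force $f_{c+1}=\dots=f_r=0$ while $f_1,\dots,f_c$ still generate $\CI_Z$, hence form a regular sequence (a minimal generating set of a height-$c$ ideal in a Cohen--Macaulay local ring). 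In this frame $\CE \cong \CE_1 \oplus \CE_2$ with $s = s_1 \oplus 0$ and $s_1 = (f_1,\dots,f_c)$ regular, so
\[
\Kosz(\CE,s) \cong \Kosz(\CE_1,s_1) \otimes \Kosz(\CE_2,0)
\]
as DG-algebras. The first factor is quasi-isomorphic to $\CO_Z$, the second has zero differential and equals $\Lambda^\bullet \CE_2$; since $\Lambda^\bullet \CE_2$ is free, taking cohomology of the tensor product gives $\CO_Z \otimes \Lambda^\bullet \CE_2 = \Lambda^\bullet(\restr{\CE_2}{Z})$, and $\restr{\CE_2}{Z} = \CL_Z$ in this frame. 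Thus locally $H^\bullet(\Kosz(\CE,s)) \cong \Lambda^\bullet \CL_Z$.

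Finally I would upgrade these local isomorphisms to a canonical global one. The key is a natural map $\psi\colon H^{-1}(\Kosz(\CE,s)) \to \CL_Z$ sending the class of a local section $v \in \ker(s) \subset \CE$ to its image $\bar v \in \restr{\CE}{Z}$: this lands in $\CL_Z$ because $s(v)=0$, and it is well defined because the image of $\Lambda^2\CE \to \CE$, spanned by $s(a)b - s(b)a$, lies in $\CI_Z\CE$. The local model above shows $\psi$ is an isomorphism. Since $H^\bullet(\Kosz(\CE,s))$ is a graded-commutative $\CO_Z$-algebra (the cohomology of a DG-algebra, over $\CC$), its degree $-1$ part induces a canonical algebra map $\Lambda^\bullet_{\CO_Z} H^{-1} \to H^\bullet$, and composing with $\Lambda^\bullet \psi^{-1}$ produces the desired DG-algebra map $\Lambda^\bullet \CL_Z \to H^\bullet(\Kosz(\CE,s))$, which is an isomorphism by the local computation. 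I expect the main obstacle to be exactly this gluing step: the local change of frame is non-canonical, so one must check that the local identifications are the ones induced by the intrinsic maps $\psi$ and $\Lambda^\bullet\psi^{-1}$, ensuring compatibility on overlaps. Everything else is either formal (the tensor decomposition of Koszul complexes) or a standard fact about regular sequences in Cohen--Macaulay local rings.
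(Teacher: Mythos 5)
Your proposal is correct and follows essentially the same route as the paper: define $\CL_Z$ as the kernel of $\restr{\CE}{Z}\twoheadrightarrow \CN^*_{Z/X}$ coming from the factorization of $s$ through $\CI_Z$, split the Koszul complex locally as a tensor product of a regular Koszul complex (quasi-isomorphic to $\CO_Z$) and one with vanishing differential, and then globalize via the canonical map between $H^{-1}$ and $\CL_Z$ together with graded commutativity of the cohomology algebra. The only cosmetic differences are that you normalize the frame so that the complementary components of $s$ are literally zero (the paper only uses that they vanish on $Z$) and you obtain the local splitting by row reduction rather than by lifting a splitting over an affine.
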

\begin{proof}
As the schematic zero-locus of $s$ is $Z$, the map $s$ factors as
\[
\begin{diagram}
\CE 	&	\rOnto	&  \CI_Z	& \rTo	& \CO_X
\end{diagram}
\]
Restricting to $Z$ surjectivity is preserved and $\CI_Z$ becomes the conormal bundle, so that we find an exact sequence 
\begin{equation}\label{eq:unexpected Koszul}
0 \to \CL_Z \to \CE_Z \to \CN_{Z/X}^* \to 0
\end{equation}
defining a sheaf $\CL_Z$.
As $Z$ is l.c.i. inside $X$, $\CN_{Z/X}^*$ is a vector bundle on $Z$, so that $\CL_Z$ is a vector bundle as well.

Trivialize $\CE$ on an open affine subset $U \subset X$.
Possibly after restricting to a smaller open subset (still denoted by $U$), we can split sequence \eqref{eq:unexpected Koszul} and lift the splitting to a commutative diagram
\begin{equation}\label{eq:lift splitting}
\begin{diagram}
\CL_{Z\cap U}		& \rTo	&\CE_{Z\cap U}	& \pile{\lTo \\ \rTo}	&\CN^*_{Z\cap U/U}	\\
\uOnto			& 		& \uOnto			&				& \uOnto			\\
\WCL_{Z\cap U}	& \rTo	&\CE_{U}			& \pile{\lTo \\ \rTo}	&\WCN^*_{Z\cap U/U}
\end{diagram}
\end{equation}
where both $\WCL_{Z \cap U}$ and $\WCN^*_{Z \cap U /U}$ are trivial bundles on $U$.
This is possible as the obstruction to all the liftings we want to find depends only on $H^1(\CI_{Z \cap U / U})$, which vanishes as $U$ is affine.

Denote the components of $s$ with respect to the splitting \eqref{eq:lift splitting} by $s_L$ and $s_N$.
In this notation the splitting \eqref{eq:lift splitting} induces an isomorphism of DG-algebras
\begin{equation*}
\Kosz\left(\CE_U, s \right) \cong \Kosz\left(\WCL_Z, s_L \right) \otimes \Kosz\left(\WCN^{*}_{Z/X}, s_N \right)
\end{equation*}
As $X$ is Cohen-Macaulay and $s_N$ is locally around any point of $Z$ a system of parameters for $Z$, 
the Koszul complex of $s_N$ is quasi isomorphic to $\CO_{Z \cap U}$. 
As $s$ and $s_N$ restrict to $0$ on $Z \cap U$, so does $s_L$.
It follows that there is a quasi-isomorphism of DG-algebras
\begin{equation} \label{eq:unexpected Koszul 2}
\Kosz\left(\CE_U, s \right) \cong \Kosz\left(\CL_{Z \cap U}, 0 \right) \cong \Lambda^\bullet \CL_{Z \cap U}
\end{equation}

Now we prove the global statement.
We will set 
\[
\CH_i := H^{-i}(\Kosz(\CE,s))
\]
First, we prove that $\CH_1 \cong \CL_Z$ by constructing a morphism and checking locally that it is an isomorphism.

Denote by $K$ the kernel of $s$, i.e.
\[
0 \to K \to \CE \xrightarrow{s} \CI_{Z} \to 0
\]
is exact.
The canonical projection from $K$ to $\restr{\CE}{Z}$ factors via $\CL_Z$.
Moreover, by definition of $K$ and of $\CL_Z$, the map from $K$ to $\CL_Z$ is surjective.
By the exact sequence
\[
\Lambda^2 \restr{\CE}{Z} \to \restr{K}{Z} \to \restr{\CH_1}{Z} \to 0
\]
and as the differentials of $\Kosz(\CE,s)$ restrict to 0 on $Z$, the surjective map from $\restr{K}{Z}$ to $\CL_Z$ factors surjectively via $\restr{\CH_1}{Z}$.
We have checked locally that the cohomology of $\Kosz(\CE, s)$ is supported on $Z$, so that $\restr{\CH_1}{Z} \cong \CH_1$.
We have therefore constructed a surjective map from $\CH_1$ to $\CL_Z$. Note that we know that locally both $\CH_1$ and $\CL_Z$ are the pushforward of vector bundles on $Z$ of rank equal to $\rank(\CE) - \codim(Z)$. It follows the map
\[
\CH_1 \to \CL_Z
\]
is an isomorphism as it is surjective.

Finally, we prove the isomorphism of the rest of the cohomology.
The diagram
\begin{equation} \label{eq:unexpected Koszul 3}
\begin{diagram}
\CH_1 \otimes \ldots \otimes \CH_1 	& \rTo 	& \CH_k 		\\
\uTo_{\cong}					&		& \uDashto	\\
\CL_Z \otimes \ldots \otimes \CL_Z	& \rTo	& \Lambda^k{\CL_Z}
\end{diagram}
\end{equation}
can be completed to a commutative square uniquely by the universal property of exterior powers, as the multiplication of the cohomology of a DG-algebra is graded commutative.
When we restrict to an arbitrary affine subscheme $U$, the diagram \eqref{eq:unexpected Koszul 3} still enjoys the property that there is a unique arrow from $\Lambda^k\restr{\CL_Z}{U}$ to $\restr{\CH_k}{U}$ making it commutative.

The quasi-isomorphism \eqref{eq:unexpected Koszul 2} is a map of DG-algebras, so that the induced map in cohomology is compatible with multiplication. 
As a consequence it coincides with the restriction of the dashed arrow of \eqref{eq:unexpected Koszul 3} to $U$, finally proving that
\[
\Lambda^\bullet{\CL_Z} \cong \CH_\bullet
\]
as DG-algebras on the whole $X$.
\end{proof}

\subsection{Base change}

Given a commutative square $\sigma$
\begin{equation}\label{diag:sigma}
\begin{diagram}
X'		& \rTo^{v}	& X		\\
\dTo^{g}	& \sigma	& \dTo_{f}	\\
Y' 		& \rTo_{u}	& Y
\end{diagram}
\end{equation}
of schemes, there is an induced natural transformation
\begin{equation}\label{eq:adjunction}
\BC_\sigma: Lu^* Rf_* \to Rg_* Lv^*
\end{equation}
called the base change map.
It can be defined as the composition
\[
Lu^* Rf_* \xrightarrow{Lu^* Rf_*(\eta_v)} Lu^* Rf_* Rv_* Lv^* \xrightarrow{\cong} Lu^* Ru_* Rg_* Lv^* 
\xrightarrow{\epsilon_u(Rg_* Lv^*)} Rg_* Lv^*
\]
where $\eta_v$ is the counit of the adjunction for $Rv_*$, $Lv^*$, while $\epsilon_u$ is the unit for the adjunction $Ru_*$, $Lu^*$.

By interchanging the roles of $f$ and $u$ (and of $g$ and $v$) we get another natural transformation
\[
\BC_\sigma ' : Lf^* Ru_* \to Rv_* Lg^*
\]
defined analogously.

We are interested in diagrams $\sigma$ such that $\BC_\sigma$ is an isomorphism.
Theorem \ref{thm:base change} describes such cartesian diagrams.
In order to state it, we will need the following definition.
\begin{definition}
A cartesian diagram $\sigma$ is $\Tor$-independent if for any pair of points $y', x$ such that $u(y') = f(x) = y$ one has
\[
\Tor_i^{\CO_{y,Y}}\left(  \CO_{x,X}, \CO_{y',Y'} \right) = 0 
\]
for all $i > 0$.
\end{definition}
\begin{remark} \label{rmk:flat is tor-independent}
Any cartesian diagram where $u$ or $f$ is flat is $\Tor$-independent.
\end{remark} 
\begin{theorem}[{\cite[thm 3.10.3] {lipman2009foundations}}] \label{thm:base change}
Assume a cartesian diagram $\sigma$ is given. Then the following properties are equivalent.
\begin{enumerate}
\item \label{itm:independent} The natural transformation $\BC_\sigma$ is a functorial isomorphism.
\item \label{itm:independent prime} The natural transformation $\BC_\sigma '$ is a functorial isomorphism.
\item \label{itm:Tor independent} The diagram $\sigma$ is $\Tor$-independent.
\end{enumerate} 
\end{theorem}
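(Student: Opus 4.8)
The plan is to isolate the essential computation over affine schemes, where all three conditions become transparent statements about derived tensor products, and to reduce everything else to that computation. I would first observe that it suffices to establish the equivalence (\ref{itm:independent}) $\Leftrightarrow$ (\ref{itm:Tor independent}): the transformation $\BC_\sigma'$ is obtained from $\BC_\sigma$ by interchanging the pair $(f,v)$ with the pair $(u,g)$, i.e.\ by transposing the square $\sigma$, and the $\Tor$-independence condition is manifestly invariant under this transposition (it is symmetric in $\CO_{x,X}$ and $\CO_{y',Y'}$ over $\CO_{y,Y}$, by symmetry of $\Tor$). Hence (\ref{itm:independent prime}) $\Leftrightarrow$ (\ref{itm:Tor independent}) follows from (\ref{itm:independent}) $\Leftrightarrow$ (\ref{itm:Tor independent}) with the roles of $f$ and $u$ exchanged, so all three conditions are equivalent once one is proved against (\ref{itm:Tor independent}).

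Both the property that $\BC_\sigma$ is an isomorphism and the $\Tor$-independence of $\sigma$ are local on $Y$ and on $Y'$. Since $Rf_*$ commutes with flat base change along open immersions (these are flat, hence $\Tor$-independent squares by Remark \ref{rmk:flat is tor-independent}), restricting successively along affine opens of $Y$, of $Y'$, and of $X$ reduces the statement to the case where all four schemes are affine: $X = \Spec S$, $Y = \Spec R$, $Y' = \Spec R'$, and — because $\sigma$ is cartesian — $X' = \Spec S'$ with $S' = S \otimes_R R'$. Under $D_{qc}(\Spec(-)) \cong D(-)$, the functor $Rf_*$ becomes restriction of scalars along $R \to S$, $Lu^*$ becomes $-\Lotimes_R R'$, $Rg_*$ becomes restriction along $R' \to S'$, and $Lv^*$ becomes $-\Lotimes_S S'$.

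In this affine setting I would unwind the definition of $\BC_\sigma$ recalled above (built from $\eta_v$, the composition isomorphism $Rf_*Rv_* \cong Ru_*Rg_*$ coming from $fv = ug$, and $\epsilon_u$) and verify that, evaluated on a complex of $S$-modules $M$, it is the canonical comparison map
\[
M \Lotimes_R R' \;\cong\; M \Lotimes_S (S \Lotimes_R R') \longrightarrow M \Lotimes_S S',
\]
where the last arrow is induced by the augmentation $S \Lotimes_R R' \to S \otimes_R R' = S'$. Source and target are exact functors of $M$ commuting with coproducts, so the full subcategory of objects on which $\BC_\sigma$ is an isomorphism is localizing; as $D(S)$ is generated as such by $S = \CO_X$, it suffices to test on $M = S$. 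There $\BC_\sigma$ becomes exactly $S \Lotimes_R R' \to S'$, whose cone records the modules $\Tor_i^R(S,R')$ for $i>0$. Thus (\ref{itm:independent}) forces $\Tor_i^R(S,R') = 0$ for all $i>0$, which (as $\Tor$ localizes) is precisely pointwise $\Tor$-independence; conversely, if $S \Lotimes_R R' \simeq S'$ then associativity of the derived tensor product gives natural isomorphisms $M \Lotimes_R R' \simeq M \Lotimes_S (S \Lotimes_R R') \simeq M \Lotimes_S S'$ for every $M$, yielding (\ref{itm:independent}).

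I expect the main obstacle to be the bookkeeping in the middle step: checking that the abstractly assembled base change transformation — made from the unit $\eta_v$, the identification $Rf_*Rv_* \cong Ru_*Rg_*$, and the counit $\epsilon_u$ — genuinely coincides with the naïve associativity/augmentation map of derived tensor products, and does not differ from it by some automorphism. This is the one place where the triangle identities must be manipulated with care; after this identification, the remainder is the standard vanishing criterion for $\Tor$ combined with the generation argument. A secondary point is justifying the reduction to the affine case for non-affine $f$ (via a \v Cech resolution of $X$ by affines, or by invoking that $D_{qc}(X)$ is compactly generated and that $Rf_*$ preserves coproducts for $f$ quasi-compact quasi-separated), but this is routine dévissage that does not touch the heart of the argument.
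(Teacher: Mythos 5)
This statement is quoted from Lipman (\emph{Foundations of Grothendieck duality}, Thm.\ 3.10.3) and the paper gives no proof of its own, so there is nothing internal to compare against; your sketch is essentially the standard argument underlying the cited result. The reductions are sound: the symmetry of $\Tor$ does give (2) from (1), the affine computation correctly identifies the obstruction with $\Tor_i^R(S,R')$ for $i>0$ (whose vanishing is equivalent, by localization of $\Tor$, to the pointwise condition in the definition), and the two places you flag — the \v{C}ech/Mayer--Vietoris dévissage over a non-affine $X$ (harmless here since all schemes are Noetherian and separated, so $f$ is quasi-compact and quasi-separated and $Rf_*$ preserves coproducts) and the verification via the triangle identities that the abstractly assembled $\BC_\sigma$ equals the associativity-plus-augmentation map $M\Lotimes_S(S\Lotimes_R R')\to M\Lotimes_S S'$ — are exactly where the remaining work sits, and both go through.
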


\begin{remark}\label{rmk:adjacent cartesian}
Note that given two adjacent cartesian squares $\sigma$ and $\tau$
\[
\begin{diagram}
X''		& \rTo	& X'		& \rTo	& X		\\
\dTo		& \sigma	& \dTo	&\tau		& \dTo	\\
Y'' 		& \rTo	& Y'		& \rTo	& Y	
\end{diagram}
\]
satisfying any of the equivalent conditions of theorem \ref{thm:base change}, it is easy to see that the outer square also satisfies them.
\end{remark}

Next, we provide a class of $\Tor$-independent squares.
\begin{lemma}\label{lemma:smooth expected dimension}
Assume $X,Y,Y'$ are smooth and irreducible and that the fiber product $X'$ has expected dimension. Then diagram \eqref{diag:sigma} is $\Tor$-independent.

\end{lemma}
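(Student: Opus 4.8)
The plan is to reduce the statement to a Koszul complex computation on a Cohen--Macaulay scheme, where the hypothesis on the dimension of $X'$ translates \emph{exactly} into exactness of that complex. Since $\Tor$-independence is a condition on the local rings at triples of points, I would first pass to the regular local rings $A = \CO_{Y,y}$, $B = \CO_{X,x}$, $C = \CO_{Y',y'}$ attached to points with $u(y') = f(x) = y$, all regular by smoothness. The conceptual device is Serre's reduction to the diagonal: realize $X'$ as
\[
X' = (X \times Y') \times_{Y \times Y} \Delta_Y,
\]
where $\Delta_Y \hookrightarrow Y \times Y$ is the diagonal and $h = f \times u : X \times Y' \to Y \times Y$. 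On local rings this is the natural isomorphism $\Tor_i^A(B,C) \cong \Tor_i^{A \otimes_\CC A}(B \otimes_\CC C, A)$, valid because $A$ is smooth over $\CC$; geometrically it identifies $\Tor_i^{\CO_Y}(\CO_X, \CO_{Y'})$ with $\Tor_i^{\CO_{Y\times Y}}(\CO_{X \times Y'}, \CO_{\Delta_Y})$. So it suffices to prove the vanishing of the latter for $i > 0$.

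Next I would use smoothness of $Y$ in an essential way: it makes $\Delta_Y \hookrightarrow Y \times Y$ a regular embedding of codimension $n := \dim Y$, so that locally $\CO_{\Delta_Y}$ is resolved by a Koszul complex $\Kosz(\CE_0, s_0)$ with $\CE_0$ of rank $n$ and $s_0$ cutting out $\Delta_Y$. Pulling back along $h$ yields $\Kosz(\CE, s) = h^*\Kosz(\CE_0, s_0)$ on $X \times Y'$, with $\CE = h^*\CE_0$ of rank $n$ and zero-locus $Z(s) = h^{-1}(\Delta_Y) = X'$; since the terms are locally free this complex computes $Lh^*\CO_{\Delta_Y}$, so its cohomology sheaves are precisely the $\Tor$-sheaves we must kill. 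Because $X$ and $Y'$ are smooth and irreducible, $X \times Y'$ is smooth, irreducible, and in particular Cohen--Macaulay.

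It then remains to check that $Z(s) = X'$ has codimension exactly $n$ in $X \times Y'$. Krull's height theorem gives that the ideal cut out by the $n$ components of $s$ has height at most $n$, so each local ring of $X'$ at a closed point has dimension at least $\dim(X \times Y') - n = \dim X + \dim Y' - \dim Y$; on the other hand the hypothesis that $X'$ has the expected dimension forces every component to have dimension at most this number, so the codimension is exactly $n$ everywhere. By the Cohen--Macaulay criterion recorded in the Koszul subsection (codimension of the zero-locus equal to the rank of the bundle), $\Kosz(\CE, s)$ is then a resolution of $\CO_{X'}$, i.e. its higher cohomology vanishes. This yields $\Tor_i^{\CO_{Y\times Y}}(\CO_{X\times Y'}, \CO_{\Delta_Y}) = 0$ for $i > 0$, and by reduction to the diagonal the square $\sigma$ is $\Tor$-independent. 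The hard part is the bookkeeping of the reduction to the diagonal: verifying that the diagonal is a regular embedding (exactly where smoothness of $Y$ is used) and that the pulled-back section cuts out $X'$ with the expected codimension. Once those are in place, the Cohen--Macaulay Koszul argument is automatic.
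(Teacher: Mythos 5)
Your proof is correct, and it reaches the same Koszul-regular-sequence computation as the paper but by a genuinely different reduction. The paper reduces to the case where both $f$ and $u$ are closed embeddings by factoring each map through its graph (the flat projection squares are $\Tor$-independent automatically, and adjacent $\Tor$-independent squares compose), and then resolves $\CO_X$ over $\CO_Y$ by a Koszul complex coming from the regular embedding $X \hookrightarrow Y$; tensoring with $\CO_{Y'}$ and invoking the expected-dimension hypothesis on the Cohen--Macaulay scheme $Y'$ kills the higher cohomology. You instead perform Serre's reduction to the diagonal, identifying $\Tor_i^{\CO_Y}(\CO_X,\CO_{Y'})$ with $\Tor_i^{\CO_{Y\times Y}}(\CO_{X\times Y'},\CO_{\Delta_Y})$ and resolving $\CO_{\Delta_Y}$ by the Koszul complex of the regular embedding $\Delta_Y \hookrightarrow Y\times Y$. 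The two routes are logically parallel, but yours is a single symmetric move rather than a two-step graph reduction, and it cleanly separates where each smoothness hypothesis enters: smoothness of $Y$ gives the regular embedding of the diagonal, while smoothness of $X$ and $Y'$ is used only to guarantee that $X\times Y'$ is Cohen--Macaulay (so your argument in fact works verbatim if $X$ and $Y'$ are merely assumed Cohen--Macaulay). The paper's version is marginally more self-contained within the text, since it reuses its own remarks on flat base change and composition of $\Tor$-independent squares instead of importing the $\Tor_i^A(B,C)\cong\Tor_i^{A\otimes A}(B\otimes C,A)$ isomorphism. Your codimension bookkeeping (Krull's height theorem for the lower bound on component dimensions, the expected-dimension hypothesis for the upper bound) is exactly the point where the paper is terser, and you spell it out correctly.
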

\begin{proof}
First, we will reduce to the case where $f$ and $u$ are closed embeddings. 
Consider the diagram
\begin{equation}\label{diag:smooth expected dimension}
\begin{diagram}
X'		& \rInto^{(g,v)}			& Y' \times X		& \rTo^{p_X}	& X		\\
\dTo^{g}	& 					& \dTo_{Y' \times f}	&			& \dTo_{f}	\\
Y' 		& \rInto_{\Gamma_u}	& Y' \times Y		& \rTo_{p_Y}	& Y	
\end{diagram}
\end{equation}
where $\Gamma_u$ is the graph of $u$.
By remark \ref{rmk:flat is tor-independent} and as $p_Y$ is smooth, the square on the right is $\Tor$-independent.
It follows by remark \ref{rmk:adjacent cartesian} that in order to show that $\sigma$ is $\Tor$-independent it is enough to show that the left square in \eqref{diag:smooth expected dimension} is so.

Note that
\[
\begin{diagram}
X'		& \rInto^{(g,v)}			& Y' \times X			\\
\dTo^{g}	& 					& \dTo_{Y' \times f}		\\
Y' 		& \rInto_{\Gamma_u}	& Y' \times Y		
\end{diagram}
\]
has expected dimension if and only if
\[
\begin{diagram}
X'		& \rInto^{v}			&  X			\\
\dTo^{g}	& \sigma				& \dTo_{f}		\\
Y' 		& \rInto_{u}			&  Y		
\end{diagram}
\]
has expected dimension.

Repeating the construction with the graph of $f$ we reduce to the case of $f,u$ closed embeddings of smooth irreducible varieties with intersection of expected dimension.
We now proceed to the computation of
\begin{equation}\label{eq:smooth expected dimension 1}
\Tor_i^{\CO_{y,Y}}\left(  \CO_{x,X}, \CO_{y',Y'} \right)
\end{equation}
which is clearly local on the stalk of $X,Y,Y'$.

As $Y'$ is smooth and as $X$ is integral, 
the structure sheaf of $\CO_X$ has a Koszul resolution $\Kosz_{X/Y}$ by free $\CO_Y$-modules which can be used to compute \eqref{eq:smooth expected dimension 1}.
The tensor product
\[
\Kosz_{X/Y} \otimes \CO_{Y'}
\]
is itself a Koszul complex with $Y'$, which we denote by $\Kosz_{X'/Y'}$ as by definition of fiber product it has $0$-th cohomology isomorphic to $\CO_{X'}$.

Restrict around a point $x'$ of $X'$.
As the fiber product $X'$ has expected dimension
\[
\dim(Y) - \dim(X) = \dim(Y') - \dim(X')
\]
we have that the $\dim(Y) - \dim(X)$ generators of the Koszul complex are a system of parameters for $X'$ in $Y'$, which is smooth. 
It follows that $\Kosz_{X'/Y'}$ has non-trivial cohomology only in degree $0$, that is to say that all higher $\Tor$ groups \eqref{eq:smooth expected dimension 1} vanish. 
\end{proof}

\subsection{Families of subschemes of codimension at least $2$}

As we will work with families of curves inside threefolds, we will need lemma \ref{lemma:no ext^1 from codimension 2}. Note that $S$ is not assumed to be reduced.

Let $X$ be smooth and projective of dimension $d_X$. 
Let $Z,W$ be families of subschemes of $X$
\[
\begin{diagram}
Z	& \rInto^{\varphi}	& S \times X 	\\
	& \rdTo_{\mu_S}	& \dTo_{\pi_S} 	\\
	&				& S
\end{diagram}
\qquad \qquad \qquad
\begin{diagram}
W	& \rInto^{\psi}		& S \times X 	\\
	& \rdTo_{\nu_S}		& \dTo_{\pi_S} 	\\
	&				& S
\end{diagram}
\]
of dimension $d_Z, d_W$ and flat over $S$.
\begin{lemma}\label{lemma:no ext^1 from codimension 2}
Let $Z,W$ be flat families of subschemes of $X$. If the ideal sheaves $\CI, \CJ$ of $Z, W$ in $S \times X$ are isomorphic to each other as $\CO_{S \times X}$-modules, then $Z$ and $W$ are isomorphic families.
\end{lemma}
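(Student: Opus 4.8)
The plan is to show that the two ideal sheaves coincide as subsheaves of $\CO_{S \times X}$, i.e.\ that $\CI = \CJ$; this gives $Z = W$ as closed subschemes of $S \times X$ and hence identifies the two families. Write $i \colon \CI \hookrightarrow \CO_{S\times X}$ and $j \colon \CJ \hookrightarrow \CO_{S\times X}$ for the canonical inclusions and $\phi \colon \CI \xrightarrow{\sim} \CJ$ for the given isomorphism of $\CO_{S\times X}$-modules. I will prove that the composite $j\circ\phi \colon \CI \to \CO_{S\times X}$ is multiplication by a global unit.

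The key step is the computation
\[
\calHom_{\CO_{S\times X}}(\CI, \CO_{S\times X}) \cong \CO_{S\times X},
\]
the isomorphism being $a \mapsto (\,\cdot\, a)$. This statement is local, so I fix a point of $Z$ and pass to the local ring $R = \CO_{S\times X}$ there, in which $\CI$ becomes an ideal $I$. First I claim that $I$ contains a regular sequence of length $2$. Since $X$ is smooth, the fibre of $S\times X \to S$ through the chosen point is regular, and as $Z_s$ has codimension $\geq 2$ in it, the image of $I$ in the fibre ring has height $\geq 2$ and therefore contains a length-$2$ regular sequence, which I lift to elements $f_1, f_2 \in I$. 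Because $S\times X \to S$ is flat and $Z$ is flat over $S$, the local criterion for flatness shows that $f_1, f_2$ remain a regular sequence in $R$. This is exactly the point where flatness, rather than reducedness of $S$, is used, and it is the main technical obstacle, since $S$ is allowed to be non-reduced. Granting this, any $R$-homomorphism $\psi \colon I \to R$ is multiplication by an element of $R$: from $f_2\,\psi(f_1) = \psi(f_1 f_2) = f_1\,\psi(f_2)$ and the regular-sequence property one gets $\psi(f_1) = a f_1$ for a unique $a\in R$, and then $f_1\,\psi(h) = h\,\psi(f_1) = a f_1 h$ for every $h \in I$ forces $\psi(h) = a h$, as $f_1$ is a non-zero-divisor. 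Since the value $a = \psi(f)/f$ is independent of the auxiliary non-zero-divisor $f \in I$, these local elements glue to the asserted global description of $\calHom(\CI,\CO_{S\times X})$.

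Granting the computation (applied to both $Z$ and, by symmetry, $W$), the composite $\alpha := j\circ\phi$ is multiplication by a global section $a$ of $\CO_{S\times X}$, and likewise $\beta := i\circ\phi^{-1}$ is multiplication by a global section $b$. For any local section $x$ of $\CI$ one has $ab\,x = \beta(\phi(x)) = x$; choosing $x$ to be a local non-zero-divisor in $\CI$ (which exists by the regular-sequence step) yields $ab = 1$, so $a$ is a unit. Finally $\CJ = \alpha(\CI) = a\,\CI = \CI$, which is exactly what we wanted. The hypothesis $\codim Z \geq 2$ is essential here: for divisors the ideal sheaf can move in its linear system, so an abstract isomorphism $\CI \cong \CJ$ no longer forces $\CI = \CJ$ and the statement fails.
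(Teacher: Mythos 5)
Your proof is correct, and it reaches the conclusion by a genuinely different route from the paper. The paper lifts the given isomorphism $\CI \to \CJ$ to an endomorphism $t$ of $\CO_{S\times X}$ by proving $\Ext^{0}(\CO_Z,\CO_{S\times X}) = \Ext^{1}(\CO_Z,\CO_{S\times X}) = 0$, and it gets this vanishing through derived-category machinery: Grothendieck duality for $\varphi$ and for $\mu_S$, the computation of $\varphi^!$ via finite Tor-dimension, and a hypercohomology spectral sequence that kills the relevant degrees precisely when $d_X - d_Z > 1$; it then shows $t$ is an isomorphism by comparing associated points of $W$ (height $\geq 2$) with those of $\coker(t)$ (height $\leq 1$). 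You instead prove the equivalent local statement directly: a length-$2$ regular sequence in $\CI$ (i.e.\ $\mathrm{grade}(\CI)\geq 2$, which is exactly the codimension-$\geq 2$ hypothesis in its commutative-algebra form) forces $\calHom(\CI,\CO_{S\times X})\cong\CO_{S\times X}$, and the two-sided multiplication argument then shows $\phi$ is multiplication by a unit, so $\CI=\CJ$ on the nose. Your use of flatness of $Z$ over $S$ to lift a regular sequence from the fibre $X_s$ to $S\times X$ (via the standard "fibrewise non-zero-divisor" lemma) plays the same role as the paper's use of flatness to give $\varphi$ finite Tor-dimension, and it is correctly identified as the one place where non-reducedness of $S$ must be handled. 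What each approach buys: yours is elementary, avoids duality entirely, and yields the slightly stronger conclusion that $Z$ and $W$ are \emph{equal} as subschemes of $S\times X$; the paper's is longer but stays within the $\RHom$/pushforward formalism it uses throughout, so it requires no stalk-level commutative algebra. One small presentational point: once you know the natural map $\CO_{S\times X}\to\calHom(\CI,\CO_{S\times X})$ is an isomorphism on every stalk, the gluing discussion is superfluous, since the map is already globally defined.
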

\begin{proof}
We begin by proving that given a diagram
\begin{equation}\label{diag:no ext^1 from codimension 2}
\begin{diagram}
0 	& \rTo	& \CI			& \rInto	& \CO_{S \times X} 	& \rTo	& \CO_{Z}				& \rTo	& 0	\\
	& 		& \dTo^{\cong}	&		& \dDashto^{t}		& 		& \dDashto^{\overline{t}}	&		&	\\
0 	& \rTo	& \CJ		& \rInto	& \CO_{S \times X}	& \rTo	& \CO_{W}			& \rTo	& 0	\\
\end{diagram}
\end{equation}
one can choose the dashed arrows to make it into a commutative diagram. Note that for this purpose it is enough to find $t$.
 
By long exact sequence, it is enough to show that
\[
\Ext^i(\CO_{Z}, \CO_{S \times X}) = 0 
\]
for $i = 0,1$.
For this to happen, it is enough that
\[
R^{\leq 1}\pi_{S*}\CRHom_{S \times X}\left(\varphi_* \CO_Z, \CO_{S \times X}\right) = 0 
\]
As $\varphi$ is a proper morphism, we can rewrite the above condition as
\begin{equation}\label{eq:no ext^1 from codimension 2}
R^{\leq 1}\mu_{S*}\CRHom_{Z}\left(\CO_Z, \varphi^! \CO_{S \times X}\right) = R^{\leq 1}\pi_{S*}\varphi_* \CRHom_{Z}\left(\CO_Z, \varphi^! \CO_{S \times X}\right) = 0 
\end{equation}
via Grothendieck duality.

As $X$ is smooth, $\pi_S^! \CO_S = \omega_X[d_X]$ is a shifted line bundle. As $X$ is smooth and $\mu_S$ is flat, $\varphi$ has finite $\Tor$-dimension, so that $\varphi^!$ commutes with tensor product (see \cite[Ex.\ 5.2]{neeman1996grothendieck}). 
As a consequence, we can rewrite condition \eqref{eq:no ext^1 from codimension 2} as
\[
R^{\leq 1 - d_X}\mu_{S*}\CRHom_{Z}\left(\varphi^*\omega_X, \mu_S^! \CO_{S}\right) = 0
\]
which by Grothendieck duality for the proper map $\mu_S$ is equivalent to
\[
R^{\leq 1 - d_X}\CHom_{Z}(R\mu_{S*}\left(\varphi^*\omega_X), \CO_{S}\right) = 0
\]
The hypercohomology spectral sequence yields for dimensional reasons that this last condition is true whenever $d_X - d_Z > 1$, which is one of our hypotheses.

Now that we have lifted the isomorphism between $\CI$ and $\CJ$ to a morphism $t$ in diagram \eqref{diag:no ext^1 from codimension 2}, we want to show that $t$ is an isomorphism. 
If this is true, then it induces an isomorphism $\overline{t}$ from $\CO_Z$ to $\CO_W$ and the lemma is proved.

By snake lemma, the kernel and the cokernel of 
\[
\overline{t}: \CO_Z \to \CO_W
\]
are isomorphic respectively to $\ker(t)$ and $\coker(t)$.
We want to check that there are no non-zero morphisms from $\CO_W$ to $\coker(t)$.

Assume on the contrary that there is a morphism
\begin{equation}\label{eq:ext^1 L to Y 1}
\CO_{W} \to \coker(t)
\end{equation}
which is non-zero. 
Then there is an associated point of $\coker(t)$ where it is non-zero. 
This contradicts the fact that all associated points of $W$ have height at least $2$,
while the associated primes of $\coker(t)$ have height at most $1$.

As $\coker(t) = 0$, also $\ker(t)$ has to vanish (as $\CO_{S \times X}$ is locally free), so that $t$ is an isomorphism.
\end{proof}

\subsection{Closed embeddings}

Given a proper map of schemes
\[
f : Y \to X
\]
we will often use the following criterion in order to determine whether $f$ is a closed embedding or not.

\begin{lemma}\label{lemma:closed embedding and pushforward}
Assume $f$ is proper of relative dimension $0$ and assume that the natural $\CO_X \to f_*\CO_Y$ is surjective.
Then $f$ is a closed embedding.
\end{lemma}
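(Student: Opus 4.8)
The plan is to reduce the statement to the standard characterization of closed immersions: a morphism is a closed embedding precisely when it is \emph{affine} and the comorphism $\CO_X \to f_*\CO_Y$ is surjective. Since surjectivity is part of the hypotheses, the whole content of the lemma lies in upgrading ``proper of relative dimension $0$'' to ``finite'', and then reading off the closed subscheme structure.

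First I would show that $f$ is finite. Properness is assumed, and relative dimension $0$ means that each fiber $f^{-1}(x)$ is a proper scheme of dimension $0$ over the residue field $\kappa(x)$, hence finite; thus $f$ is quasi-finite. A proper quasi-finite morphism is finite. This is the form of Zariski's Main Theorem that serves as the crucial input, and invoking it correctly is the one genuinely non-formal step; everything else is bookkeeping.

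Once $f$ is finite it is in particular affine, so $Y$ is recovered as the relative spectrum $\Spec_X(\CA)$, where $\CA := f_*\CO_Y$ is a coherent sheaf of $\CO_X$-algebras and the structure map of the algebra is the comorphism $\CO_X \to f_*\CO_Y$. I would then set $\CI := \ker(\CO_X \to f_*\CO_Y)$; by the surjectivity hypothesis this induces an isomorphism of $\CO_X$-algebras $\CO_X/\CI \xrightarrow{\sim} f_*\CO_Y$. Applying the relative spectrum functor $\Spec_X(-)$ identifies $Y$ with $\Spec_X(\CO_X/\CI)$, which is exactly the closed subscheme of $X$ cut out by the ideal sheaf $\CI$. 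Under this identification $f$ becomes the inclusion of that closed subscheme, so $f$ is a closed embedding. The main obstacle, as noted, is the appeal to Zariski's Main Theorem to pass from properness plus quasi-finiteness to finiteness; I do not expect any difficulty in the subsequent algebraic identification.
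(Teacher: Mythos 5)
Your proof is correct and follows essentially the same route as the paper: the paper cites EGA (prop.\ 4.4.2) for the fact that a proper morphism of relative dimension $0$ is affine — which is the same Zariski's Main Theorem input you invoke to get finiteness — and then concludes from surjectivity of $A \to B$ in the affine local picture, just as you do via the relative spectrum. No gap.
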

\begin{proof}
If $f$ is proper of relative dimension $0$, then it is affine by \cite[prop. 4.4.2]{grothendieck1967elements}. 
As $f$ is affine and as being a closed embedding is local in $Y$, we reduce to the case of
\[
f: \Spec (B) \to \Spec( A)
\]
In this case the natural $\CO_X \to f_*\CO_Y$ becomes $A \to B$.
As we assume that $\CO_X \to f_*\CO_Y$ is surjective, so is $A \to B$. 
It follows that $f$ is a closed embedding.
\end{proof}

\subsection{A classical fact about (possibly nonreduced) curves in $\PP^n$}

Let $Z \subset \PP^n$ be a closed subscheme such that the Hilbert polynomial $P_Z(t)$ of $\CO_Z$ is $d_Z t + c$.
Then $Z$ has dimension $1$.
We want to say that we can find a projection
\[
\pi_Z: \PP^n \to \PP^d
\]
which is an isomorphism out of a finite number of points of $Z$.
If we denote by $X$ the scheme-theoretic image of $Z$, then this is equivalent to the fact that the adjunction map
\[
\CO_X \to \pi_* \CO_Z
\]
extends to an exact sequence
\[
0 \to \CO_X \to \pi_{Z*} \CO_Z \to M \to 0
\]
where $M$ is supported in dimension 0.

The following proposition is analogous to the classical fact that any reduced curve is birational to an affine plane curve, but works for possibly non-reduced curves.
\begin{proposition}\label{prop:project curves}
Let $Z \subset \PP^n$ be a closed subscheme. Assume that at each generic point of $Z$ the tangent space has dimension less or equal than $d$, with $d > 1$. Then for the generic linear projection $\pi_Z$ to $\PP^d$ there is an exact sequence
\[
0 \to \CO_X \to \pi_{Z*} \CO_Z \to M \to 0
\]
where $M$ is supported in dimension 0.
\end{proposition}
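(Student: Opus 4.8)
The plan is to choose the center of the linear projection generically so that $\pi_Z$ restricts to a \emph{finite} morphism on $Z$ which is in addition a local isomorphism at each generic point of $Z$; the locus where $\pi_Z$ fails to be an isomorphism will then be concentrated on a finite set, which forces $M$ to be supported in dimension $0$. First I would set up the reduction. Since $Z$ has dimension $1$ it has finitely many generic points $\eta_1,\dots,\eta_r$. A linear projection to $\PP^d$ is projection away from a center $\Lambda\cong\PP^{n-d-1}$, and a dimension count $\dim Z+\dim\Lambda=1+(n-d-1)=n-d<n$ shows that the generic $\Lambda$ is disjoint from $Z$. For such $\Lambda$ the restriction $\pi_Z|_Z\colon Z\to\PP^d$ is a finite morphism, so $\pi_{Z*}\CO_Z$ is coherent, $X:=\pi_Z(Z)$ is a closed curve in $\PP^d$, and by definition of the scheme-theoretic image the adjunction $\CO_X\to\pi_{Z*}\CO_Z$ is \emph{injective}. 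Writing $M$ for its cokernel, $M$ is coherent and supported on $X$, hence of dimension $\le 1$; to prove $\dim\Supp M=0$ it suffices to check $M_\xi=0$ at each generic (one–dimensional) point $\xi$ of $X$.

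Next I would impose, in addition to $\Lambda\cap Z=\emptyset$, two further generic conditions on the center. The first is that $\pi_Z$ separates the generic points and is generically injective on each component: a point of $\Lambda$ lying on a secant of $Z$ identifies two points of $Z$, and the secant variety $\mathrm{Sec}(Z)$ of a curve has dimension $\le 3$, so a generic $\Lambda$ of dimension $n-d-1$ meets it in dimension $\le 3+(n-d-1)-n=2-d\le 0$ (here $d>1$); thus only finitely many secants pass through $\Lambda$, the $\eta_i$ have distinct images, and each component maps birationally onto its image. The second condition is that $\pi_Z$ is \emph{unramified} at each $\eta_i$. The differential $d\pi_Z$ fails to be injective at a point exactly when the embedded tangent space there meets $\Lambda$; by hypothesis $\dim T_{\eta_i}Z\le d$, so on a dense open $U\subset Z$ the embedded tangent spaces have dimension $\le d$ and sweep out a variety of dimension $\le d+1$, which a generic $\Lambda$ meets in dimension $\le (d+1)+(n-d-1)-n=0$. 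Hence the ramification locus of $\pi_Z|_Z$ is finite, and by upper semicontinuity of the tangent dimension it avoids the generic points $\eta_i$. Each of these is a dense open condition on the space of centers, so a generic $\Lambda$ satisfies all of them simultaneously.

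To conclude, fix a generic point $\xi$ of $X$. By the separation condition it has a unique preimage $\eta=\eta_i$ among the generic points of $Z$, and finiteness (together with the fact that a finite map sends closed points to closed points, so no closed point of $Z$ lies over $\xi$) gives $(\pi_{Z*}\CO_Z)_\xi\cong\CO_{Z,\eta}$, an Artinian local ring receiving an injection from the Artinian local ring $\CO_{X,\xi}$. Birationality on the component gives that the residue extension $k(\xi)\to k(\eta)$ is an isomorphism, and unramifiedness at $\eta$ gives $\Fm_\xi\CO_{Z,\eta}=\Fm_\eta$, so the scheme-theoretic fibre $\CO_{Z,\eta}/\Fm_\xi\CO_{Z,\eta}\cong k(\eta)=k(\xi)$ has length one. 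Nakayama's lemma then shows that $\CO_{X,\xi}\to\CO_{Z,\eta}$ is surjective, hence an isomorphism, so $M_\xi=0$. As this holds at every one-dimensional point of $X$, the sheaf $M$ is supported in dimension $0$, which is the desired statement.

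The step I expect to be the main obstacle is the unramifiedness condition in the possibly non-reduced setting. One must make precise what the embedded tangent space means at a (non-closed, fat) generic point and carry out the borderline dimension count $(d+1)+(n-d-1)=n$, which only guarantees that $\Lambda$ meets the tangent sweep in \emph{finitely many} points rather than being disjoint from it. It is therefore essential that the tangent-dimension bound $\dim T_\eta Z\le d$ is assumed precisely at the generic points, and to use upper semicontinuity to ensure that the finitely many bad points of the ramification locus do not include any $\eta_i$; the length-one computation of the fibre is exactly where the hypothesis $d>1$ and the tangent bound are genuinely used to control the non-reduced structure.
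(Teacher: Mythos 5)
Your proof is correct and rests on the same core idea as the paper's: a generic linear projection that avoids the secants and the embedded tangent spaces of $Z$, so that the map $Z\to X$ is an isomorphism away from finitely many points. The execution differs in two respects worth noting. You project in one step from a center $\Lambda\cong\PP^{n-d-1}$ and finish with a local-algebra argument (Nakayama at the generic point $\xi$ of $X$, using unramifiedness and triviality of the residue extension to get $\CO_{X,\xi}\xrightarrow{\sim}\CO_{Z,\eta}$), whereas the paper iterates projections from a single point $\PP^n\to\PP^{n-1}$ and verifies the isomorphism at one well-chosen \emph{closed} point of each component, then spreads out; your generic-point formulation is arguably cleaner for non-reduced $Z$, since it never has to interpret "injective on closed points'' near a fat point. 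The one place where your write-up is too quick is the inference from "$\Lambda$ meets $\mathrm{Sec}(Z)$ (resp.\ the tangent sweep) in finitely many points'' to "only finitely many secants (resp.\ tangent directions) pass through $\Lambda$'': a single point of $\Lambda$ could a priori lie on infinitely many secants (e.g.\ any point of the plane of a plane curve) or in infinitely many embedded tangent spaces. The standard fix is to pass to the incidence variety $\{(z_1,z_2,p): p\in\overline{z_1z_2}\}$ (resp.\ $\{(z,p): p\in T_zZ\}$), observe that the locus in $\PP^n$ over which its fibers have positive dimension has dimension at most $2$ (resp.\ at most $d$), and note that a generic $\Lambda$ of dimension $n-d-1$ misses that locus entirely because $d>1$; with that patch, which is exactly where the hypotheses $d>1$ and $\dim T_\eta Z\le d$ enter, your argument goes through.
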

\begin{remark}
If the Hilbert polynomial of $Z$ is $P_Z(t) = d_Zt + c$ with $d_Z \leq d$, then the assumptions of proposition \ref{prop:project curves} are satisfied. 
\end{remark}
\begin{proof}
As the statement we want to prove is up to finitely many points, we can assume that $Z$ has no components of dimension $0$, i.e. it is locally Cohen-Macaulay.
We will show that if $n>d$ then we can find a projection to $\PP^{n-1}$ as in the statement.
We will denote by $Z_i$ the components of $Z$, each of degree $d_i$.

First note that there for the general point $P$ in $\PP^n$ the projection from $P$ satisfies the following properties.
\begin{enumerate}
\item \label{itm:closed points} For each component $Z_i$ there is a point $Q_i$ such that the line through $P$ and $Q_i$ does not intersect $Z$ in any other point.
\item \label{itm:tangents} For each component $Z_i$ there is a point $Q'_i$ such that the line through $P$ and $Q'_i$ does not lie in the tangent space to $Z$ at $Q'_i$. 
\end{enumerate}
%
As for property \eqref{itm:closed points}, note that for any point $Q_i$ the set of lines through $Q_i$ which intersect $Z$ in another point forms a surface. 
As the components are finitely many and $n > d \geq 2$, property \eqref{itm:closed points} holds for the generic point in $\PP^n$. 

As property \eqref{itm:closed points} holds for any $Q_i \in Z_i$, we will look for $Q'_i$ among the $Q_i$ and we will suppress the prime in $Q'_i$.
Note that the tangent space $T_{Q_i}Z_i$ to $Z_i$ at a general point $Q_i$ has dimension at most $d_i$.
It follows that the general $d_i$-codimensional linear space $H_i \subset \PP^n$ containing $Q_i$ is transversal to $T_{Q_i}Z_i$ inside $T_{Q_i}\PP^n$, and that through the general point $P \in \PP^n$ there is such an $H_i$.
Any line contained in $H_i$ is not contained in $T_{Q_i}Z_i$ and each $H_i$ contains lines as $d_i < n$.
As a consequence, property \eqref{itm:tangents} holds for the general point $P \in \PP^n$.

Let $\pi$ be the linear projection from $P$ satisfying properties \eqref{itm:closed points} and \eqref{itm:tangents}. It restricts to a regular map $Z \to X$ which in affine coordinates in suitable neighborhoods of each $\pi(Q_i)$ becomes
\begin{equation}\label{eq:project curves}
A_X \to A_Z
\end{equation}
Note that \eqref{eq:project curves} is injective by definition of schematic image. 
Moreover, possibly after restricting further the affine chart in $X$, the map $Z \to X$ is injective on closed points by property \eqref{itm:closed points}. 
Finally, that \eqref{eq:project curves} is surjective can be checked on cotangent spaces at each closed point. 
By property \eqref{itm:tangents} we know that the map is injective on tangent spaces at each closed point, which is equivalent.
It follows that \eqref{eq:project curves} is an isomorphism.

As $\pi$ is affine, the map \eqref{eq:project curves} is the restriction of
\[
0 \to \CO_X \to \pi_{Z*} \CO_Z \to M \to 0
\]
around general points of each component of $X$.
It follows that $M$ is not supported on any of the components of $X$, so that it is supported on points.
\end{proof}

\begin{corollary}\label{cor:d=1,2,3 in P^n}
Let $Z \subset \PP^n$ be a closed subscheme with $P_Z(t) = d_Zt + c$.
Denote by $\delta$ the length of $M$ and by $\epsilon$ the sum of the lengths of the embedded points of $X$.
\begin{enumerate}
\item \label{itm:d=1} There is no $Z$ such that $d_Z =1$ and $c < 1$. If $c=1$, $Z$ is a line.
\item \label{itm:d=2} There is no $Z$ such that $d_Z = 2$ and $ c < 1$. If $c = 1$, $Z$ is a plane conic.
\item \label{itm:d=3} There is no $Z$ such that $d_Z = 3$ and $c < 0$. If $c = 0$, $Z$ is a plane cubic. If $c = 1$, the following are equivalent
\begin{itemize}
\item $Z$ contains a plane cubic.
\item $Z$ has an embedded point.
\item $h^1(\CO_Z) \neq 0$.
\end{itemize}
\end{enumerate}
\end{corollary}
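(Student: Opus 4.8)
The backbone of every case is a single Euler-characteristic bookkeeping, so I would set that up first. Write $c = \chi(\CO_Z) = P_Z(0)$, and let $Z' \subseteq Z$ be the maximal subscheme of pure dimension one (the closure of the one-dimensional associated points). There is an exact sequence
\[
0 \to \CK \to \CO_Z \to \CO_{Z'} \to 0
\]
with $\CK$ supported in dimension zero, of some length $\ell \ge 0$; by construction $\ell > 0$ precisely when $Z$ carries a zero-dimensional (embedded or isolated) associated point. Writing $p_a$ for the arithmetic genus of the pure part, so that $\chi(\CO_{Z'}) = 1 - p_a$, additivity of $\chi$ gives the key identity
\[
c = \chi(\CO_{Z'}) + \ell = 1 - p_a + \ell .
\]

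The only external input needed is the genus bound $p_a \le \binom{d_Z-1}{2}$ for a pure curve of degree $d_Z$, together with the fact that equality forces $Z'$ to be planar. For $d_Z \le 2$ this is exactly where Proposition~\ref{prop:project curves} does the work: the generic multiplicity of $Z'$ is at most $2$, hence its tangent space at each generic point has dimension at most $2$, so a general projection $\pi$ presents $Z'$ as a plane curve $X \subset \PP^2$ fitting in $0 \to \CO_X \to \pi_* \CO_{Z'} \to M \to 0$ with $M$ of finite length $\delta$. Comparing Hilbert polynomials (a linear projection pulls $\CO(1)$ back to $\CO(1)$) and separating $X$ into its divisorial part $X_0$ of degree $d_Z$ and its embedded points of total length $\epsilon$ yields $p_a = \binom{d_Z-1}{2} - \epsilon - \delta$, proving the bound with equality iff $\epsilon = \delta = 0$, i.e.\ iff $Z' \cong X = X_0$ is a plane curve. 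For $d_Z = 3$ one cannot always project isomorphically to a plane (the first infinitesimal neighborhood of a line has $P = 3t+1$ and generic embedding dimension $3$), so there I would instead invoke the elementary classification of degree-$3$ curves: $p_a \le 1$, with $p_a = 1$ only for plane cubics. Feeding $p_a \le \binom{d_Z-1}{2}$ into the displayed identity gives $c \ge 1 - \binom{d_Z-1}{2}$, which reads $c \ge 1,\,1,\,0$ for $d_Z = 1,2,3$ and settles every non-existence assertion in~\ref{cor:d=1,2,3 in P^n}.

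At the minimal value of $c$ the identity forces $p_a = \binom{d_Z-1}{2}$ and $\ell = 0$, hence $Z = Z'$ is planar of degree $d_Z$. Concretely, $\ell = \epsilon = \delta = 0$ makes $\CO_X \cong \pi_*\CO_{Z'}$ an isomorphism, so $\pi$ is a closed immersion by Lemma~\ref{lemma:closed embedding and pushforward} and $Z \cong X$ spans the plane that $X$ spans; reading off the Hilbert polynomial ($t+1$, $2t+1$, $3t$) then identifies $Z$ as a line, a plane conic, resp.\ a plane cubic (for $d_Z = 1$ one may instead note directly that a pure degree-one subscheme is a line).

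The genuinely delicate case is $d_Z = 3$, $c = 1$, where the identity leaves only $(p_a,\ell) = (0,0)$ or $(1,1)$. I would show the three conditions all hold in the case $(1,1)$ and all fail in $(0,0)$. In $(1,1)$ the pure part $Z'$ has maximal genus, hence is a plane cubic, so $Z \supseteq Z'$ contains a plane cubic~(a); $\ell = 1$ gives an embedded point~(b); and the sequence above yields $H^1(\CO_Z) \twoheadrightarrow H^1(\CO_{Z'})$ with $h^1(\CO_{Z'}) = 1$, so $h^1(\CO_Z) \neq 0$~(c). For the converse I would run each implication back through the same two sequences: if $Z$ contains a plane cubic $C$, then $\deg C = \deg Z$ forces the kernel of $\CO_Z \to \CO_C$ to be zero-dimensional of length $1$, putting us in $(1,1)$ (and $C = Z'$); if $Z$ has a zero-dimensional associated point then $\ell \ge 1$ forces $(1,1)$; and in the remaining case $(0,0)$ one checks $h^1(\CO_Z) = 0$, so $h^1 \neq 0$ excludes $(0,0)$. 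I expect the main obstacle to be precisely this last verification: showing $h^0(\CO_{Z'}) = 1$ (equivalently $h^1(\CO_{Z'}) = 0$) for a \emph{pure, genus-zero, degree-three} curve, including the non-reduced ribbons, which requires connectedness (degrees of pure components of degree $\le 2$ have $\chi \ge 1$, so a disconnected pure curve cannot have $\chi = 1$) together with the absence of nilpotent global sections; the secondary subtlety is the genus bound and planarity-in-equality for the non-reduced degree-$3$ configurations to which Proposition~\ref{prop:project curves} does not apply verbatim.
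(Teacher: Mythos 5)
Your reduction to the pure one-dimensional part $Z'$ and the identity $c = 1 - p_a(Z') + \ell$ is a clean repackaging, and for $d_Z \le 2$ your derivation of the genus bound from Proposition \ref{prop:project curves} is essentially the paper's own computation ($P_Z = d_Z t + 1 + \epsilon + \delta$ after projecting to $\PP^2$). But the two places you flag as ``obstacles'' are exactly where the content of case (3) lives, and you have not filled either of them. First, the bound $p_a \le 1$ for pure degree-$3$ curves, with equality only for plane cubics, is not something you can simply invoke: for locally Cohen--Macaulay non-reduced curves this is not an off-the-shelf elementary fact, and it is essentially equivalent to the non-existence statement $c \ge 0$ you are trying to prove. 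The paper proves it by projecting to $\PP^3$ (which Proposition \ref{prop:project curves} always permits when $d_Z = 3$) and splitting into two cases: if the image has $3$-dimensional tangent space at every generic point it is, up to finitely many points, the first infinitesimal neighbourhood of a line, whose Hilbert polynomial already gives $P_Z = 3t + 1 + \epsilon + \delta$ and hence $c \ge 1$; otherwise one projects once more to $\PP^2$ and reads off $P_Z = 3t + \epsilon + \delta$.

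Second, your treatment of the three-way equivalence for $c = 1$ hinges on showing $h^1(\CO_Z) = 0$ in the case $(p_a, \ell) = (0,0)$, i.e.\ that a pure, connected, genus-zero degree-$3$ curve --- including non-reduced ones such as the triple spatial line --- has $h^0(\CO_Z) = 1$. Ruling out nilpotent global sections here is a genuine argument, not a formality, and you leave it open. The paper sidesteps this computation entirely: if $h^1(\CO_Z) \ne 0$ then $h^0(\CO_Z) \ge 2$, so multiplication by a non-constant section gives a non-zero, non-surjective endomorphism $\varphi$ of $\CO_Z$ whose image and cokernel are both structure sheaves of subschemes $W_2, W_1 \subset Z$ with $P_{W_1} + P_{W_2} = 3t + 1$; parts (1) and (2) forbid either from having degree $1$ or $2$ (the constant terms would then sum to at least $2$), so one of them has Hilbert polynomial $3t$ and is a plane cubic by the first half of (3). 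If you want to keep your case-analysis structure, you should either import this endomorphism trick or actually prove the absence of nilpotent sections for pure genus-zero cubics; as written, the hardest third of the corollary is asserted rather than proved.
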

\begin{proof}
Note that $\delta = 0$ if and only if $\pi$ is an isomorphism between $Z$ and $X$.

Case \eqref{itm:d=1} is clear as proposition \ref{prop:project curves} gives $P_Z = t + 1 + \epsilon + \delta$, so that $c \geq 1$ and equality holds if and only if $\delta = \epsilon = 0$.

Case \eqref{itm:d=2} is clear as proposition \ref{prop:project curves} gives $P_Z = %
2t + 1 + \epsilon + \delta$.
It follows that $c \geq 1$ and equality holds if and only if $\epsilon = \delta = 0$.

In case \eqref{itm:d=3} again by \ref{prop:project curves} one has $P_Z = P_X + \delta$ for $X \subset \PP^3$.
If the dimension of the tangent space to $X$ is greater than $2$ at each point, than $X$ is, up to finitely many points, a triple spatial line (i.e. cut by the square of the ideal of a line). In this case $P_Z = 3t + 1 + \epsilon + \delta$, so that $c \geq 1$ and equality holds if and only $\epsilon = \delta = 0$, i.e. if and only if both $X$ and $Z$ are triple spatial lines.

If the dimension of the tangent space to $X$ is generically less than $3$, project to $\PP^2$ and obtain $P_Z = 3t + \epsilon + \delta$, so that $c \geq 0$ with equality if and only if $Z$ is a plane cubic.


Finally, assume $c = 1$.
Note that by the above part of the corollary, $Z$ contains an embedded point if and only if it contains a plane elliptic curve.
If $Z$ contains a plane elliptic curve, then $h^1(\CO_Z) = 1$ by long exact sequence. 
If $h^1(\CO_Z) \neq 0$, then $h^0(\CO_Z) > 1$, so that there is a non-surjective non-zero section of $\CO_Z$, which implies that there is a non surjective 
\[
\varphi: \CO_Z \to \CO_Z
\]
with cokernel $\CO_{W_1}$. 
The ideal sheaf of $W_1$ in $Z$ is a quotient of $\CO_Z$, so that it is also the structure sheaf of a closed subscheme $W_2$ of $Z$.
By parts \eqref{itm:d=1} and \eqref{itm:d=2} and additivity of the Hilbert polynomial, the degree of $W_1$ cannot be $1$ or $2$.
It follows that either $W_1$ or $W_2$ has Hilbert polynomial $3t$ and is therefore an elliptic plane curve in $Z$.
\end{proof}

Let $Y_5$ be the Fano threefold introduced in the next section. 
As we are interested in $\Hilb^{dt + 1}(Y_5)$, we also give the following corollary.
\begin{corollary}\label{cor:d=1,2,3 in Y}
Let $d \in \{1,2,3\}$, then $\Hilb^{dt}(Y_5)$ is empty. Any curve $Z \subset Y_5$ representing a point in $\Hilb^{dt+1}(Y_5)$ has  no embedded points and satisfies $h^1(\CO_Z) = 0$.
\end{corollary}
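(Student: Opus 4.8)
The plan is to regard every $Z \subset Y_5$ as a subscheme of $\PP^6$ (since $Y_5 \subset \PP^6$) and to run Corollary~\ref{cor:d=1,2,3 in P^n} with $n=6$. A curve with Hilbert polynomial $dt$ has $c=0$, while one with Hilbert polynomial $dt+1$ has $c=1$. For $d=1,2$ the emptiness of $\Hilb^{dt}(Y_5)$ is immediate, since parts~\eqref{itm:d=1} and~\eqref{itm:d=2} rule out any $Z \subset \PP^6$ with $c=0$, a fortiori inside $Y_5$. Moreover, for $d=1$ the only curve with $c=1$ is a line and for $d=2$ a plane conic; both are Cohen--Macaulay (the line is smooth, the conic is a hypersurface in a $\PP^2$), hence have no embedded points, and both have $h^1(\CO_Z)=0$ as their arithmetic genus is $0$. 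Thus the whole statement reduces to the case $d=3$, which in turn reduces to the single geometric fact that $Y_5$ contains no plane cubic.

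To prove that fact I would proceed in two steps. First, $Y_5$ contains no linearly embedded plane $\Pi\cong\PP^2$: using the standard facts that $Y_5$ is smooth with $\Pic(Y_5)=\ZZ\cdot H$, $H=\CO_{Y_5}(1)$, and $H^3=\deg Y_5 = 5$, such a $\Pi$ would be an effective Cartier divisor, so $[\Pi]=aH$ with $a\in\ZZ_{>0}$; intersecting with $H^2$ gives $5a = H^2\cdot\Pi = \int_{\PP^2}(H|_\Pi)^2 = 1$, which has no integer solution. Second, I upgrade this to the absence of plane cubics, using that $Y_5$ is cut out scheme-theoretically by the Plücker quadrics restricted to $\PP^6$. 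If $C\subset Y_5$ were a plane cubic spanning a plane $\Pi\cong\PP^2$, then every defining quadric $Q$ of $Y_5$ contains $C$, so its restriction $Q|_\Pi$ is a form of degree $\le 2$ lying in $I_\Pi(C)=(f)$ with $\deg f = 3$; as $(f)$ has no elements of degree $2$, we get $Q|_\Pi=0$. Hence all defining quadrics vanish on $\Pi$, so $\Pi\subset Y_5$, contradicting the first step.

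With ``no plane cubic'' established I close both parts. For $\Hilb^{3t}(Y_5)$, part~\eqref{itm:d=3} forces a curve with $c=0$ to be a plane cubic, which cannot lie in $Y_5$; together with the $d=1,2$ cases this gives emptiness of $\Hilb^{dt}(Y_5)$ for all $d\in\{1,2,3\}$. For a curve $Z$ with Hilbert polynomial $3t+1$, part~\eqref{itm:d=3} provides the equivalence of ``$Z$ contains a plane cubic'', ``$Z$ has an embedded point'', and ``$h^1(\CO_Z)\neq 0$''. Any plane cubic contained in $Z$ would be a plane cubic contained in $Y_5$, so the first condition fails; therefore $Z$ has no embedded point and $h^1(\CO_Z)=0$.

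The main obstacle is the geometric input that $Y_5$ carries no plane cubic; everything else is a direct bookkeeping application of Corollary~\ref{cor:d=1,2,3 in P^n}. I emphasize that the absence of a plane cubic does not follow formally from the absence of a plane: it genuinely requires the quadric-restriction argument, which relies on $Y_5$ being an intersection of quadrics, a property inherited from the Plücker equations of $\Gr(2,5)$.
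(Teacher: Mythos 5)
Your proof is correct and takes essentially the same route as the paper: everything reduces to the fact that $Y_5$ contains no plane cubic, which is deduced from $Y_5$ being cut out by quadrics (a plane cubic would force its whole spanning plane into $Y_5$) together with the nonexistence of planes in $Y_5$. Your explicit degree computation $5a=1$ via $\Pic(Y_5)=\ZZ\cdot H$ and $H^3=5$ is just the unpacked version of the paper's terse appeal to the Lefschetz hyperplane theorem, so there is no substantive difference.
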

\begin{proof}
As for $\Hilb^{dt}(Y_5)$, the only non-trivial case is $d= 3$. By part \eqref{itm:d=3} of \ref{cor:d=1,2,3 in P^n}, all subschemes of $\PP^6$ with Hilbert polynomial $3t$ are plane cubics. As $Y_5$ is cut by quadrics, if it contains a plane cubic it contains the whole plane which it spans, against Lefschetz hyperplane theorem.

As for $\Hilb^{dt+1}(Y_5)$, it follows directly from part \eqref{itm:d=3} of \ref{cor:d=1,2,3 in P^n} and from the fact that $Y_5$ contains no plane cubics, as we have just pointed out.
\end{proof}


\section{The setting} \label{sec:The setting}


Let $V$ be a complex vector space of dimension $5$. Let $A \subset \Lambda^2 V^*$ be a 3-dimensional space of $2$-forms on $V$, and let $Y_A$ denote the triple linear section of $\Gr(2,V)$ cut by $A$.


The following lemma 
is classical and its proof can be found for example in \cite[ch.II, thm 1.1]{iskovskikh1980anticanonical}, where one can find the full classification of Fano threefolds of index $2$.
The proof which we give here is ad hoc for triple linear sections of $\Gr(2,V)$, but it is simpler.
\begin{lemma}\label{lemma:HPD of Gr}
$Y_A$ is a smooth threefold if and only if all forms in $A$ have rank $4$. Moreover all smooth threefolds $Y_A$ are in the same $\GL(V)$-orbit.
\end{lemma}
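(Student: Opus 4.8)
The plan is to prove the two assertions separately, starting with the smoothness criterion and then the transitivity of the $\GL(V)$-action on smooth sections.

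For the smoothness criterion, I would set up the incidence/degeneracy picture explicitly. A $3$-dimensional space $A \subset \Lambda^2 V^*$ cuts out $Y_A$ inside $\Gr(2,V)$ as the common zero locus of the three linear Plücker forms determined by a basis of $A$. The key is to relate the singular points of $Y_A$ to the ranks of forms in $A$. Recall that a point $[U] \in \Gr(2,V)$, with $U \subset V$ a $2$-plane, lies on the hyperplane section cut by $\omega \in \Lambda^2 V^*$ precisely when $\omega|_U = 0$, i.e. when $U$ is isotropic for $\omega$. The plan is to show that $Y_A$ is singular at $[U]$ if and only if there exists a nonzero $\omega \in A$ whose radical meets $U$ nontrivially (equivalently, $\omega$ has rank $\leq 2$ on the relevant configuration). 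I would compute the differentials of the three defining equations at $[U]$ using the identification $T_{[U]}\Gr(2,V) \cong \Hom(U, V/U)$, and observe that the three differentials fail to be linearly independent exactly when some form in $A$ degenerates appropriately. Since forms on a $5$-dimensional space have even rank, the only options are rank $0$, $2$, or $4$; a form of rank $\leq 2$ has a radical of dimension $\geq 3$, which forces the existence of an isotropic $2$-plane causing a singularity. Thus $Y_A$ is smooth iff every nonzero $\omega \in A$ has rank exactly $4$. The main obstacle here will be carrying out the differential computation cleanly and correctly matching the rank condition to the degeneracy of the Jacobian.

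For the transitivity statement, the strategy is to show that the condition "all nonzero forms in $A$ have rank $4$" pins down the isomorphism type of the net $A \subset \Lambda^2 V^*$ up to the $\GL(V)$-action. A net of skew forms all of whose nonzero members have maximal rank $4$ on a $5$-dimensional space is equivalent to saying that the associated pencil/net has no "degenerate" members, and such nets form a single orbit. Concretely, I would use the classical fact (Kronecker's theory of pencils of skew-symmetric forms, or its $SL_2$-flavored refinement that the paper emphasizes) that such a generic net is unique up to coordinate change: one can exhibit a normal form for $A$ and show any two smooth-defining nets are conjugate. An efficient route is a dimension count combined with properness: the locus of nets $A$ giving smooth $Y_A$ is open in $\Gr(3, \Lambda^2 V^*)$, and one shows $\GL(V)$ acts with an open orbit whose complement has positive codimension, then argues the smooth locus is exactly that single open orbit by checking that every boundary net contains a rank $\leq 2$ form.

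The \textbf{main obstacle} I anticipate is the transitivity claim rather than the smoothness criterion. Proving that $\GL(V)$ acts transitively on the smooth sections requires either invoking a classification of nets of alternating forms (which is representation-theoretic and somewhat delicate, since $\GL(V) \times \GL(A)$ acts on $A \otimes \Lambda^2 V^*$ with finitely many orbits only in special dimensions) or producing an explicit normal form and verifying any smooth net is conjugate to it. The cleanest self-contained argument is probably to fix the standard net $A_0$ realizing $Y_5$, show that its stabilizer in $\GL(V)$ has the expected dimension so that the orbit is open of the full dimension of $\Gr(3,\Lambda^2 V^*)$, and then prove the smooth locus is irreducible (or connected) so that the open orbit exhausts it. Matching the dimension count and ruling out other open orbits is where the real work lies.
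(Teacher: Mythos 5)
Your smoothness criterion is workable and takes a different route from the paper: you compute the Jacobian on $T_{[U]}\Gr(2,V) \cong \Hom(U,V/U)$, which shows that $[U]$ is a singular point of $Y_A$ exactly when $U \subset \ker\omega$ for some nonzero $\omega \in A$, whereas the paper quotes projective duality ($\Gr(2,V^*)$, the rank-$2$ locus, is the projective dual of $\Gr(2,V)$). Both work, but your sketch glosses over one step: a rank-$2$ form $\omega$ has a $3$-dimensional kernel, and to produce an actual singular point you must exhibit a $2$-plane $U \subset \ker\omega$ lying on $Y_A$, i.e.\ isotropic for \emph{all} of $A$. This is true but needs saying: $\omega$ restricts to zero on its own kernel, so the remaining forms impose at most two linear conditions on $\Gr(2,\ker\omega) \cong \PP^2$, which always have a common zero.

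The transitivity argument has a genuine gap. From "the standard net has an open orbit" plus "the smooth locus is open, hence irreducible" you cannot conclude that the smooth locus equals the open orbit; you only get that the open orbit is dense in it, and an irreducible $\GL(V)$-stable open set can strictly contain the open orbit (compare $\CC^*$ scaling $\CC^2$: the irreducible open set $\CC^2\setminus\{0\}$ is not one orbit). Your proposed repair, checking that "every boundary net contains a rank $\le 2$ form," is the statement to be proved in disguise, and carrying it out would require classifying the $\GL(V)$-orbits on $\Gr(3,\Lambda^2V^*)$ outside the open one — which you do not attempt, and which Kronecker's theory (built for pencils, not nets) does not give you. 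The paper avoids the boundary entirely by bounding the stabilizer of an \emph{arbitrary} smooth net uniformly: for any $A$ all of whose nonzero forms have rank $4$, the $\SSL_A(V)$-equivariant composition $S^2A \to S^2\Lambda^2V^* \to \Lambda^4V^* \cong V$ has kernel spanned by a nondegenerate quadric $q$, so $\SSL_A(V)$ maps to $\SSL_q(A)\cong\SSL_2(\CC)$ with discrete kernel (an automorphism fixing the image of $\PP(A)$ in $\PP(V)$ pointwise, a nondegenerate surface, is the identity). Hence every smooth net has stabilizer of dimension at most $3$ and orbit of dimension at least $24-3=21=\dim\Gr(3,\Lambda^2V^*)$, i.e.\ an open orbit; since an irreducible variety carries at most one open orbit, all smooth nets are conjugate. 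This uniform stabilizer bound is the idea your proposal is missing.
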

\begin{proof}
For the first part, it is a classical fact that $\Gr(2,V^*) \subset \PP(\Lambda^2 V^*)$ is the projective dual of $\Gr(2,V) \subset \PP(\Lambda^2 V)$.
A very short proof is for example in \cite[thm 2.1]{tevelev2005projective} and is the following one.

The action of $GL(V)$ on $\PP(\Lambda^2V)$ and $\PP(\Lambda^2V^*)$ has only one closed orbit, i.e. respectively $\Gr(2,V)$ and $\Gr(2,V^*)$. As the projective dual is closed and preserved by the action of $GL(V)$, the two orbits are projectively dual to each other.

Once we know that $\Gr(2,V^*)$ is the projective dual of $\Gr(2,V)$, it is enough to note that $\Gr(2,V^*)$ is the locus of forms of rank $2$ and that $Y_A$ is a smooth threefold if and only if it has $3$-dimensional tangent space at any of its points, that is to say if and only if all linear sections in $A$ are smooth.

For the second part, note that $\PSL(V)$ has dimension $24$, while $\Gr(3,\Lambda^2V^*)$ has dimension $21$, so that for a general point we expect a $3$-dimensional stabilizer.
Denote by $\SSL_A(V)$ the stabilizer of $A$ under the action of $\SSL(V)$ on $\Gr(3,\Lambda^2V)$.
As the Grassmannian is irreducible, if there are points with a $3$-dimensional stabilizer, then they all belong to a unique open orbit under the action of $\PSL(V)$.
We will prove that, for any smooth $Y_A$, $\SSL_A(V)$ is $3$-dimensional. 
From this fact, it follows that all smooth $3$-dimensional $Y_A$ are projectively equivalent.

By its definition $\SSL_A(V)$ acts on $A$, so that there is a map
\begin{equation}\label{eq:HPD of Gr}
\SSL_A(V) \to \SSL(A)
\end{equation}
As all forms in $A$ have rank $4$, define a $\SSL_A(V)$-equivariant map
\[
A \xrightarrow{\Ver_2} S^2A \xrightarrow{S^2} S^2\Lambda^2V^* \xrightarrow{\wedge} \Lambda^4V^* \cong V
\]
The kernel of the projection from $S^2A$ to $V$ is generated by a non-degenerate $\SSL_A(V)$-invariant quadratic form on $A^*$, as one can show as follows.

Assume the kernel contains a degenerate quadratic form  $a_1 \cdot a_2 \in S^2A$, i.e. $a_1 \wedge a_2 = 0$. 
Fix a basis $v_1, \ldots, v_5$ for $V^*$ such that $a_1 = v_1 \wedge v_2 + v_3 \wedge v_4$ and denote the span of $v_1, \ldots, v_4$ by $V_4^*$.
It follows that 
\[
a_2 \in \Lambda^2 V_4^*
\]
As a consequence, the pencil of $2$-forms spanned by $a_1$ and $a_2$ inside $\PP(\Lambda^2 V_4^*)$ intersects $\Gr(2,V_4^*)$, against the assumption that all forms in $A$ have rank $4$.

Assume now that the kernel has dimension greater than $1$: then there would be a degenerate quadric in it, as the set of degenerate quadrics is a divisor in $\PP(S^2A)$.
We have therefore proved that the kernel of the projection from $S^2A$ to $V$ is generated by a non-degenerate quadric.

As a consequence, map \eqref{eq:HPD of Gr} factors as
\begin{equation}\label{eq:HPD to Gr}
\SSL_A(V) \to \SSL_q(A) \to \SSL(A)
\end{equation}
where $\SSL_q(A)$ is the subgroup of $\SSL(A)$ of transformations fixing a smooth conic, that is to say $\SSL(2, \CC)$.

Finally, note that, as the kernel of the $\SSL_A(V)$-equivariant projection from $S^2A$ to $V$ is non-degenerate, there is a well defined regular map from $\PP(A)$ to $\PP(V)$ factoring via $\PP(S^2A)$. 
The image of this map is not contained in any hyperplane of $\PP(V)$ as the image of $\Ver_2$ is not contained in any hyperplane of $\PP(S^2A)$.
As the connected components of the fixed locus of an automorphism of $\PP(V)$ are linear subspaces, an element of $\PSL_A(V)$ fixing all points in the image of $\PP(A)$ in $\PP(V)$ has to be the identity of $\PP(V)$.
It follows that the kernel of \eqref{eq:HPD to Gr} is at most discrete, 
so that $\SSL_A(V)$ is at most $3$-dimensional.
\end{proof}

Lemma \ref{lemma:HPD of Gr}, together with the fact that the generic triple hyperplane section of $\Gr(2,V)$ is smooth by Bertini, justifies the following definition.
\begin{definition}
$Y$ is the unique (up to projective equivalence) smooth threefold obtained as the intersection of $\Gr(2,V)$ and $\PP(A^\perp)$ inside $\PP(\Lambda^2V^*)$.
\end{definition}

While proving lemma \ref{lemma:HPD of Gr} we have introduced a map which plays an important role in the geometry of $Y$.
We point it out in the next definition, and explain in a remark that it is an embedding.
\begin{definition}\label{def:sigma}
The map $\sigma$ is the compostion
\[
\begin{diagram}
\PP(A) &  \rTo^{\Ver_2} 	& \PP(S^2A)	 & \rDashto	& \PP(V)
\end{diagram}
\]
where the first map is the Veronese embedding of degree 2 and the second one is projectivization of the composition
\[
\begin{diagram}
S^2A	& \rTo	& S^2 \Lambda^2 V^*	& \rTo^{\wedge} 	& \Lambda^4 V^* \cong V  
\end{diagram}
\]
\end{definition}
\begin{remark}\label{rmk:sigma embedding}
In \ref{lemma:HPD of Gr} we have proved that the kernel of the projection from $S^2A$ to $V$ is a non-degenerate form.
As the variety of bisecants to the image of $\Ver_2$ is the variety of degenerate quadratic forms, $\sigma$ is a closed embedding.
\end{remark}

The following lemma describes the (co-)homology of $Y$. 
\begin{lemma}\label{lemma:homology of Y}
Let $M$ an abelian group, then
\[
H^{\mathrm{odd}}(Y,M) = 0
\]
and
\[
H^0(Y, M) = H^2(Y,M) = H^4(Y,M) = H^6(Y,M) = M
\]
and the same holds for homology.
\end{lemma}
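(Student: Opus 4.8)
The plan is to reduce everything to a single statement about integral homology: that $H_*(Y,\ZZ)$ is a free abelian group, concentrated in even degrees, with $H_0=H_2=H_4=H_6=\ZZ$ and all other groups zero. Once this is known, the universal coefficient theorem gives the result for an arbitrary abelian group $M$ with no correction terms: since $H_*(Y,\ZZ)$ is free there are no $\Tor$ or $\Ext$ contributions, so $H_i(Y,M)\cong H_i(Y,\ZZ)\otimes M$ and $H^i(Y,M)\cong \Hom(H_i(Y,\ZZ),M)$, which yields both the homological and the cohomological assertions at once.

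To prove the integral statement I would exhibit an affine cell decomposition (a paving by affine spaces) of $Y$. Such a decomposition is produced by the Bialynicki--Birula method applied to the $\SSL_2$-action furnished by Lemma~\ref{lemma:HPD of Gr}. Choose a generic one-parameter subgroup $\lambda\colon \mathbb{G}_m \to \SSL_2$, that is, a regular cocharacter of a maximal torus $T$. Its fixed locus $Y^\lambda$ equals $Y^T$, and since $T$ acts on $\Gr(2,V)$ with only the finitely many coordinate $2$-planes as fixed points, $Y^T=Y\cap \Gr(2,V)^T$ is a finite set of reduced, smooth points. Bialynicki--Birula then decomposes the smooth projective variety $Y$ into attracting cells $Y^+_p\cong \mathbb{A}^{n_p}$, one for each $p\in Y^T$, which assemble into a filtration of $Y$ by closed subvarieties with affine-space strata. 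A variety admitting such a paving has free integral homology concentrated in even degrees, with $b_{2i}$ equal to the number of cells of complex dimension $i$; in particular $H_*(Y,\ZZ)$ is torsion-free and $H_{\mathrm{odd}}(Y,\ZZ)=0$.

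It remains to compute the four Betti numbers, which I would do without counting cells explicitly. Since $Y$ is connected, $b_0=1$, and by Poincar\'e duality on the oriented $6$-manifold $Y$ we get $b_6=b_0=1$ and $b_4=b_2$. To pin down $b_2$, apply the Lefschetz hyperplane theorem successively to the three smooth hyperplane sections cutting $Y$ out of $\Gr(2,V)$: this gives $H^i(\Gr(2,V),\ZZ)\cong H^i(Y,\ZZ)$ for $i<3$, and since $\Gr(2,V)$ has Picard rank one we obtain $H^2(Y,\ZZ)\cong H^2(\Gr(2,V),\ZZ)=\ZZ$, i.e. $b_2=1$. Hence $b_4=1$ as well, and the even Betti numbers are all $1$ while the odd ones vanish, establishing the integral claim and therefore the lemma.

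The step I expect to be the main obstacle is the justification of the paving: checking that the generic cocharacter has finite fixed locus and that the Bialynicki--Birula strata are genuinely affine spaces forming a cell filtration (rather than merely a stratification), so that the standard conclusion on freeness and even-degree concentration applies. If one prefers a fully hands-on argument one must instead verify directly that exactly four of the ten torus-fixed coordinate planes of $\Gr(2,V)$ lie on $Y$ (equivalently that $\chi(Y)=4$), which is the fiddly representation-theoretic computation that the Lefschetz-plus-duality bookkeeping above is designed to circumvent.
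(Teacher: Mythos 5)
Your argument is correct, but it reaches the key point --- the vanishing of $H_3$ and the absence of torsion --- by a genuinely different route from the paper. The paper's proof also uses the Lefschetz hyperplane theorem and Poincar\'e duality to settle all degrees $i \neq 3$ over $\ZZ$, but then disposes of $H_3$ in two steps: the Euler characteristic forces $H_3(Y,\ZZ)$ to be torsion, and universal coefficients would make any such torsion appear as a torsion subgroup of $H^4(Y,\ZZ) \cong \ZZ$, which is impossible; the case of general $M$ then follows from universal coefficients exactly as in your last step. Your replacement of the $i=3$ argument by a Bialynicki--Birula paving is sound: a maximal torus of $\SSL_2$ acts on $V \cong S^4W$ with distinct weights, so $\Gr(2,V)^T$ is the finite set of coordinate $2$-planes, $Y^T$ is finite (and smooth, hence reduced), and the filtrable cell decomposition gives free integral homology concentrated in even degrees. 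What your approach buys is that torsion-freeness and $b_3=0$ come out simultaneously and for free, without having to justify $\chi(Y)=4$ as a separate input (which the paper's appeal to ``the Euler characteristic'' quietly requires); what it costs is the foundational overhead you already flag, namely checking that the BB strata form a genuine cell filtration rather than just a stratification. Your Betti-number bookkeeping ($b_0=1$ by connectedness, $b_2=1$ by Lefschetz, $b_4=b_2$ and $b_6=b_0$ by duality) matches the paper's.
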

\begin{proof}
The statement is true for $H^i(Y, \ZZ)$ and $H_i(Y,\ZZ)$ when $i \neq 3$ by Lefschetz hyperplane theorem and by Poincar\'e duality over $\ZZ$. 
For $i = 3$, note that by Euler characteristic $H_i$ is torsion and by universal coefficients a non-trivial $H_i$ would give a torsion subgroup of $H^4(Y,\ZZ)$.

For arbitrary $M$ the statement follows from universal coefficients and from the result for $M = \ZZ$. 
\end{proof}

As a consequence of lemma \ref{lemma:homology of Y}, we will often refer to classes of the Chow group of $Y$ as to integers. For example, we will usually say that Chern classes on $Y$ are integers.
We will denote the generators of cohomology groups of $Y$ respectively by $H$ (the class of a hyperplane section), $L$ (the class of a line, which is Poincar\'e dual to $H$), $P$ (the class of a point).

On a Fano variety $X$, Kodaira vanishing implies
\[
H^{i}(X, \CO_X) = 0
\]
for $i > 0$.
It follows that the exponential sequence induces an isomorphism
\[
c_1 : \mathrm{Pic}(X) \to H^2(X,\ZZ)
\]
As a consequence,
$\Pic(Y)$ has a unique ample generator $\CO_Y(1)$ induced by the unique ample generator of $\Gr(2,V)$. 
As all ample line bundles are positive multiples of $\CO_Y(1)$, stability of a sheaf on $Y$ will always mean stability with respect to $\CO_Y(1)$.

\begin{remark}\label{rmk:hoppeY}
As $\CO_Y(1)$ comes from the ambient $\PP^6$ via pullback, it has global sections. Moreover, we have just shown that $\Pic(Y) = \ZZ$ and $Y$ is smooth by definition. Summing up, we have just said that $Y$ satisfies the hypotheses of theorem \ref{thm:hoppe} and lemma \ref{lemma:mumford vs Gieseker}.
\end{remark}


Vector bundles on $\Gr(2,V)$ induce vector bundles on $Y$ by restriction: we will denote the rank 2 tautological subbundle by $\calU$, so that we have the following exact sequence
\begin{equation} \label{eq:definingsequence}
0 \rightarrow \calU \rightarrow V \otimes \calO_Y \rightarrow V/\calU \rightarrow 0
\end{equation}
and its dual
\begin{equation} \label{eq:definingsequencedual}
0 \rightarrow \calU^\perp \rightarrow V^* \otimes \calO_Y \rightarrow \calU^* \rightarrow 0
\end{equation}

The decomposition of the cohomology of $Y$ is actually a shadow of a more general decomposition on $Y$, that of $\mathcal{D}^b(Y)$ into subcategories generated by exceptional vector bundles. More precisely, there is a semiorthogonal decomposition induced by a full exceptional collection (see \cite{orlov1991exceptional}) of the form 
\begin{equation}\label{eq:full exceptional}
\mathcal{D}^b(Y) = \langle \calU, \calU^\perp, \calO_Y, \calO_Y(1) \rangle
\end{equation}
We will see in proposition \ref{prop:full exceptional} that triangles \eqref{eq:definingsequence} and \eqref{eq:definingsequencedual} compute some of the mutations of the above collection, yielding other useful collections. 

We will need the Chern polynomials and characters of these bundles, so we write them in the following lemma for future reference.

\begin{lemma}
The following equalities hold in the cohomology of $Y$.
\begin{equation}\label{Cherntable}
\begin{aligned}
\ch(\calU) &= 2 - H + \frac{L}{2} + \frac{P}{6} \\
\ch(\calU^\perp) &=  3 - H - \frac{L}{2} + \frac{P}{6}  \\
\ch(S^2\CU^*) &= 3 + 3H + \frac{9L}{2} - \frac{P}{6} \\
\mathrm{c}(\calU) &= 1 - H + 2L \\
\mathrm{c}(\calU^\perp) &= 1 - H + 3L - P
\end{aligned}
\end{equation}
Moreover, $H^3 = 5P$ and $\omega_Y = \calO_Y(-2)$.
\end{lemma}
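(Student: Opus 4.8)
The plan is to reduce everything to the intersection ring of $Y$ together with the Chern classes of $\calU$, from which all the stated identities follow by the splitting principle and the Whitney formula. By Lemma \ref{lemma:homology of Y} the cohomology of $Y$ is freely generated as a group by $1,H,L,P$ in degrees $0,2,4,6$, so I only need the multiplication table. First I would record that $H\cdot L=P$ (this is the definition of $L$ as the Poincar\'e dual of $H$) and that $L^2=0$ and $L\cdot P=0$ for dimension reasons. To pin down the degree I would use that $Y$ is cut out of $\Gr(2,V)$ by three hyperplanes, so $[Y]=\sigma_1^3$ and $\int_Y H^3=\int_{\Gr(2,V)}\sigma_1^6$; a short Pieri computation gives $\sigma_1^6=5\,\sigma_{3,3}$, hence $H^3=5P$. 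Comparing $\int_Y H^3=5$ with $\int_Y L\cdot H=1$ then forces $H^2=5L$. This fixes the ring completely.

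Next I would compute $c(\calU)$. Since $\calU$ is the restriction of the tautological subbundle on $\Gr(2,V)$, whose total Chern class is $1-\sigma_1+\sigma_{1,1}$, restriction gives $c_1(\calU)=-H$ at once, while for $c_2$ I must write $\sigma_{1,1}|_Y$ as a multiple of $L$. I would do this by intersecting against $H$: $\int_Y \sigma_{1,1}\cdot H=\int_{\Gr(2,V)}\sigma_{1,1}\sigma_1^4=2$ by Pieri, so $\sigma_{1,1}|_Y=2L$ and $c(\calU)=1-H+2L$ (as a check, $\sigma_2|_Y+\sigma_{1,1}|_Y$ should recover $H^2=5L$). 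The Chern character $\ch(\calU)=2-H+\tfrac{L}{2}+\tfrac{P}{6}$ then follows from the splitting principle: writing the Chern roots as $a,b$ with $a+b=-H$ and $ab=2L$, one has $\ch_k=\tfrac1{k!}(a^k+b^k)$, and the relations $H^2=5L$, $H^3=5P$, $H\cdot L=P$ turn these power sums into the stated classes.

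For $\calU^\perp$ I would apply the Whitney formula to the dual defining sequence \eqref{eq:definingsequencedual}, which gives $c(\calU^\perp)=c(\calU^*)^{-1}=(1+H+2L)^{-1}$; inverting this series and discarding terms of codimension $>3$ (again using $H^2=5L$, $H^3=5P$, $H\cdot L=P$) yields $c(\calU^\perp)=1-H+3L-P$, and $\ch(\calU^\perp)$ follows from its Chern roots exactly as before. For $S^2\calU^*$ I would once more use the splitting principle: if $a,b$ are the Chern roots of $\calU^*$, then those of $S^2\calU^*$ are $2a,\,a+b,\,2b$, so $\ch(S^2\calU^*)=e^{2a}+e^{a+b}+e^{2b}$, which I expand termwise into the required class (equivalently one may use $\ch(\calU^*)^{2}=\ch(S^2\calU^*)+\ch(\Lambda^2\calU^*)$ with $\ch(\Lambda^2\calU^*)=e^{H}$). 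Finally $\omega_Y=\calO_Y(-2)$ comes from adjunction: $\Gr(2,V)$ has index $5$, i.e.\ $\omega_{\Gr}=\calO(-5)$, and cutting by three hyperplanes contributes $\calO(+3)$, so $\omega_Y=\calO_Y(-5+3)=\calO_Y(-2)$.

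The only genuinely non-formal step is identifying $c_2(\calU)=2L$ rather than some other multiple of $L$: one must separate the two codimension-two Schubert classes $\sigma_2$ and $\sigma_{1,1}$ after restriction to $Y$, which is precisely what the Pieri computation $\int_{\Gr(2,V)}\sigma_{1,1}\sigma_1^4=2$ accomplishes. As a geometric sanity check one can instead compute $c_2(\calU^*)$ as the class of the zero scheme of a section of $\calU^*$, i.e.\ a conic $Y\cap \Gr(2,\ker\phi)$ attached to $\phi\in V^*$, which has degree $2$ and so confirms $c_2(\calU^*)\cdot H=2P$. Everything after this is routine manipulation in the truncated ring $H^\bullet(Y)$.
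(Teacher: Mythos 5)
Your route is essentially the paper's: the author also gets $c_1(\calU)=-H$ from Lefschetz, pins down $c_2(\calU)=2L$ by computing $c_1(\calU)^4c_2(\calU)=2P_{\Gr}$ on $\Gr(2,V)$ (which is exactly your Pieri computation $\sigma_1^4\sigma_{1,1}=2$), gets $H^3=5P$ from $H_{\Gr}^6=5$ and $\omega_Y=\calO_Y(-2)$ from adjunction, and derives everything else from additivity of $\ch$ and multiplicativity of $c$ along the tautological sequences. So the method is correct and matches.

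One caveat, precisely at the step you wave through with ``which I expand termwise into the required class'': if you actually expand $e^{2a}+e^{a+b}+e^{2b}$ with $a+b=H$, $ab=2L$, you get
\[
\ch_3(S^2\calU^*)=\tfrac16\bigl(8(a^3+b^3)+H^3\bigr)=\tfrac16\bigl(8(-P)+5P\bigr)=-\tfrac{P}{2},
\]
not the $-\tfrac{P}{6}$ printed in the table (your alternative identity $\ch(\calU^*)^2=\ch(S^2\calU^*)+e^H$ gives the same $-\tfrac P2$). The printed entry is a misprint: the value $\ch(\Phi_{\Bl}(\CO_{\PP(K)}(2h)))=H+\tfrac{7L}{2}-\tfrac{P}{6}$ claimed later in the paper, computed from the resolution $0\to\calO_Y\to\calU^{*\oplus2}\to S^2\calU^*\to\Phi_{\Bl}(\CO_{\PP(K)}(2h))\to0$, is consistent with $-\tfrac P2$ and not with $-\tfrac P6$. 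So carry the expansion out explicitly rather than asserting it reproduces the stated class; your method is sound and in fact detects the error. All your other entries check out against the ring relations $H^2=5L$, $HL=P$, $H^3=5P$.
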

\begin{proof}
Once the Chern character of $\CU$ is known, all the other equalities follow from additivity of the Chern character or by multiplicativity of the Chern class with respect to exact sequences \eqref{eq:definingsequence} and \eqref{eq:definingsequencedual}.
On the ambient $\Gr(2,V)$ we have that $-c_1(\CU)$ is the ample generator of the Picard group, so that the same holds by Lefschetz on $Y$.
Moreover, on $\Gr(2,V)$ we can easily compute $c_1(\CU)^4 c_2(\CU) = 2 P_{\Gr}$, where $P_{\Gr}$ is the class of a point on $\Gr(2,V)$, so that, by functoriality of Chern classes, on $Y$ we have $c_1(\CU) c_2(\CU) = 2P$, which implies $c_2(\CU) = 2L$.

The fact that $H^3 = 5P$ follows by the computation of $H_{\Gr}^6$ on $\Gr(2,V)$. The fact that $\omega_Y = \CO_Y(-2)$ follows from adjunction formula and from $\omega_{\Gr} = \CO_{\Gr}(-5)$.
\end{proof}

The cohomology of the equivariant bundles in sequences \eqref{eq:definingsequence} and \eqref{eq:definingsequencedual} and the maps between them can be computed using Borel--Bott--Weil on the ambient $\Gr(2,V)$ and the resolution for the structure sheaf of $Y$ inside it.
\begin{equation}\label{eq:resolution of Y}
0 \to \CO_{\Gr}(-3) \to A^* \otimes \CO_{\Gr}(-2) \to A \otimes \CO_{\Gr}(-1) \to \CO_{\Gr} \to \CO_Y \to 0
\end{equation}
We will compute all of them in lemma \ref{lemma:cohomologies on Y}.

The natural bundles on $Y$ which we have just introduced are all $\mu$-stable. The proof is an easy application of Hoppe's criterion \ref{thm:hoppe} and is carried out in \ref{lemma:stability on Y}.

\subsection{The $SL_2$-action on Y} \label{sec:SL2action}

When working on $Y$, one should be aware that there is an action of $SL_2$ on $Y$ with an open orbit, as explained for example in \cite{mukai1983minimal}. The $SL_2$-action can be induced by choosing a vector space $W$ with $\dim W = 2$, and identifying $V$ with $S^4W$. Then, we get a canonical decomposition
\[
\Lambda^2V^* \cong \mathrm{S}^2W \oplus \mathrm{S}^6W
\]
and we can choose $A = S^2W$. With this choice $\Gr(2,V) \cap \PP(A^\perp)$ is smooth and the natural action of $SL_2(W)$ on $\Lambda^2V$ restricts to an action on $\Gr(2,V) \cap \PP(A^\perp)$, as it is defined as the intersection of two $SL_2(W)$ invariant varieties.
\begin{lemma}\label{lemma:SL_2-equivariant projection}
Let $Y_A = \Gr(2,V) \cap \PP(A^\perp)$ where $A \subset \Lambda^2V^*$ is $SL_2$-equivariant. Then $Y_A$ is smooth.
\end{lemma}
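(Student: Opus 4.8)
The plan is to reduce this statement to the smoothness criterion already established in Lemma \ref{lemma:HPD of Gr}, which says that $Y_A$ is smooth if and only if every nonzero form in $A$ has rank $4$. Since $A = S^2W \subset \Lambda^2 V^*$ is chosen to be the $SL_2$-subrepresentation, the whole question becomes: does the $SL_2$-representation $S^2W$ contain any nonzero form of rank $2$ (equivalently, a decomposable form, i.e.\ a point of $\Gr(2,V^*)$)? If not, all forms have rank $4$ and we are done.

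The key computation I would carry out is representation-theoretic. First I would identify $V \cong S^4W$, so that $\Lambda^2 V^* \cong \Lambda^2 (S^4 W)^*$, and decompose this into irreducible $SL_2$-representations via Clebsch--Gordan; this yields $\Lambda^2 V^* \cong S^2 W \oplus S^6 W$, exactly as stated in the text. The summand $A = S^2 W$ is the copy of the adjoint representation $\mathfrak{sl}_2 \cong S^2 W$, which is $3$-dimensional and irreducible. The heart of the argument is then to show that $\PP(A) = \PP(S^2 W)$ meets the Grassmannian $\Gr(2,V^*)$ of rank-$2$ forms in the empty set. Because $A$ is irreducible as an $SL_2$-module, the locus of degenerate (rank $\le 2$) forms inside $\PP(A)$ is an $SL_2$-invariant closed subvariety; I would argue that this invariant locus, if nonempty, must contain a closed orbit, and then rule out the possibility by a direct check. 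Concretely, I expect it is cleanest to exhibit an explicit basis of $S^2W \subset \Lambda^2(S^4W)^*$ and verify that a generic combination—indeed every nonzero combination—has rank $4$: one can use the $SL_2$-invariant nondegenerate quadratic form on $A^*$ constructed in the proof of Lemma \ref{lemma:HPD of Gr} (the kernel of $S^2 A \to V$) to see that no form in $A$ can be decomposable, since a decomposable $a_1 = v_1 \wedge v_2$ with $a_1 \wedge a_2 = 0$ would force $a_2 \in \Lambda^2 V_4^*$ and produce an intersection of the pencil with $\Gr(2,V_4^*)$, contradicting irreducibility.

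Alternatively, and perhaps most efficiently, I would observe that the quadratic form argument already in Lemma \ref{lemma:HPD of Gr} applies verbatim once we know $A$ carries an $SL_2$-invariant nondegenerate quadratic form: the existence of a rank-$2$ form in $A$ was shown there to contradict the nondegeneracy of the conic $q = \ker(S^2 A \to V)$. So the task reduces to checking that for $A = S^2 W$ this kernel is indeed a nondegenerate conic, which follows because the composite map $S^2 A = S^2 S^2 W \to V = S^4 W$ is the unique (up to scalar) $SL_2$-equivariant surjection, whose kernel is the invariant line $S^0 W \subset S^2 S^2 W = S^4 W \oplus S^0 W$ spanning a nondegenerate quadric on $A^*$.

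The main obstacle I anticipate is pinning down the decomposition $\Lambda^2(S^4W)^* \cong S^2 W \oplus S^6 W$ and the equivariant map $S^2 S^2 W \to S^4 W$ with correct conventions, so that one can be sure the invariant quadratic form on $A$ is genuinely nondegenerate rather than accidentally degenerate; this is a finite Clebsch--Gordan bookkeeping exercise but must be done carefully. Once nondegeneracy is confirmed, smoothness follows immediately from Lemma \ref{lemma:HPD of Gr}, since the obstruction to smoothness is precisely the presence of a rank-$2$ form, which the nondegenerate invariant form forbids.
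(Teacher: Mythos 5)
Your proposal is correct and follows essentially the same route as the paper: reduce to showing every nonzero form in $A$ has rank $4$ via Lemma \ref{lemma:HPD of Gr}, then analyze the kernel of the equivariant map $S^2A \to V$ using the decomposition $S^2S^2W \cong S^4W \oplus \CC$ and the irreducibility of $A$. The one point you gloss over is that your "efficient" route presupposes $S^2A \to V$ is surjective (so that the kernel is exactly the invariant line); by Schur the map is either zero or surjective, and the zero case must be excluded separately — the paper does this by observing that then every form in $A$ would be decomposable, forcing a common kernel vector and hence a nontrivial $SL_2$-subrepresentation of the irreducible $V$. With that case handled, your argument that the invariant quadric is nondegenerate (its radical would be an invariant subspace of the irreducible $A^*$) matches the remark following the lemma in the paper.
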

\begin{proof}
To see that the above $SL_2$-equivariant construction returns a smooth variety, we will show that all forms in $A$ have rank $4$ and then we will use lemma \ref{lemma:HPD of Gr}.

To show that all forms in $A$ have rank $4$, consider the $SL_2$-equivariant projection
\begin{equation}\label{eq:SL_2-equivariant projection}
S^2A\xrightarrow{S^2} S^2\Lambda^2V^* \xrightarrow{\wedge} \Lambda^4 V^* \cong V
\end{equation}
Note that a form in $A$ has rank $2$ if and only if it is in the kernel of \eqref{eq:SL_2-equivariant projection}.

By decomposing it into $SL_2$-irreducibles, we find
\[
S^4W \oplus \CC \to S^4W
\]
It follows that the kernel of \eqref{eq:SL_2-equivariant projection} is either a line or the whole $S^2A$.

If it is a line, then there is at most one degenerate form in $\PP(A)$, which gives an irreducible non-trivial $SL_2$-subrepresentation of $A$.

If it is the whole $S^2A$, then all forms in $A$ are degenerate, so that they share a common kernel, which gives a non-trivial $SL_2$-subrepresentation of $V$.

As $V$ and $A$ are by definition $SL_2$-irreducible, all forms in $A$ have rank $4$, so that $Y_A$ is smooth.
\end{proof}

\begin{remark}
We have proved in \ref{lemma:HPD of Gr} that for a smooth $\Gr(2,V) \cap \PP(A^\perp)$ the kernel of \eqref{eq:SL_2-equivariant projection} is a non-degenerate conic.
By lemma \ref{lemma:SL_2-equivariant projection}, this holds also for $Y$ constructed in the $SL_2$-equivariant way.

A more direct argument is the following: if the kernel of \eqref{eq:SL_2-equivariant projection} is degenerate as a symmetric form on $A^*$, then its kernel (as a form on $A^*$) is $SL_2$-invariant, against the fact that $A$ is $SL_2$-irreducible.
\end{remark}

\begin{remark} 
As $SL_2$ acts as a subgroup of $GL(V)$, sequences \eqref{eq:definingsequence}, \eqref{eq:definingsequencedual} and \eqref{eq:resolution of Y} are $SL_2$-equivariant.
\end{remark}

One can describe explicitly the vector space $A \subset \Lambda^2 S^4 W$.
Let us fix the notation for the $\SSL_2$ (and $\LieSL_2$) action.
Denote by $x,y$ a basis for $W$ and by $x^{n-i}y^i$ a basis for $S^n W$.
Then the linear operators
\[
X = x \partial_y \qquad Y = y \partial_x \qquad H = [X, Y] = x \partial_x - y \partial_y
\] 
are the standard basis for $\LieSL_2$.
In this notation $A$ is spanned by
\[
3x^3y \wedge x^2y^2 - x^4 \wedge xy^3, \quad 2x^3y \wedge x y^3 - x^4 \wedge y^4, \quad 3x^2y^2 \wedge xy^3 - x^3y \wedge y^4
\]
and $A^\perp$ is spanned by
\begin{gather*}
x^4 \wedge x^3y, \qquad x^4 \wedge x^2y^2, \qquad 2x^3y \wedge x^2y^2 + x^4\wedge xy^3, \qquad 8x^3y \wedge xy^3 + x^4 \wedge y^4, 	\\ 
 2x^2y^2 \wedge xy^3 + x^3y \wedge y^4, \qquad x^2y^2 \wedge y^4, \qquad xy^3 \wedge y^4
\end{gather*}
The form 
\begin{equation}\label{eq:generator of Y}
x^4 \wedge x^2y^2 + x^2y^2 \wedge y^4
\end{equation}
is decomposable and belongs to $A^\perp$. It follows that it belongs to $Y$.
Under the identification of $A^\perp$ with $S^6W$, the form \eqref{eq:generator of Y} corresponds to 
\[
xy(x^4 - y^4)
\]
Note that $xy(x^4 - y^4)$ has $6$ distinct roots and that its stabilizer under the action of $SL_2$ is therefore discrete (it is a discrete group of order $48$ known as the binary octahedral group).
The well-known proposition \ref{prop:orbit structure} is a specialization of the following lemma to the case $f = xy(x^4 - y^4)$.
\begin{lemma}[{\cite[lemma 1.5]{mukai1983minimal}}]\label{lemma:mukai umemura 1}
Let $f$ be a homogeneous polynomial of degree $n$ in two variables.
If the stabilizer of $[f] \in \PP(S^n W)$ is a finite group and if all the roots of $f = 0$ are distinct, then the closure of the $SL_2$-orbit generated by $[f]$ is decomposed into the disjoint union of $SL_2$-orbits
\[
\left( SL_2\cdot f \right) \cup \left( SL_2 \cdot x^5y \right) \cup \left( SL_2 \cdot x^6 \right)
\]
\end{lemma}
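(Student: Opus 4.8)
\emph{Approach.} The plan is to work in $\PP(S^6W)$ (we take $n=6$, as forced by the orbit representatives and as needed in the application $f=xy(x^4-y^4)$; the general case is identical with $x^{n-1}y,x^n$ in place of $x^5y,x^6$), and to classify the $\SL_2$-orbits meeting $\overline{\SL_2\cdot f}$ by the multiplicity type of the roots of the corresponding binary sextic. A nonzero sextic is determined up to scalar by its multiset of $6$ roots in $\PP(W)\cong\PP^1$, on which $\SL_2$ acts through $\PGL_2(\CC)=\PSL_2(\CC)$; two forms lie in one orbit exactly when their root multisets are $\PGL_2$-equivalent. I will show that only the multiplicity types $(1^6)$, $(5,1)$ and $(6)$ occur in the closure.

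\emph{The three orbits.} First I would record dimensions. By hypothesis $f$ has distinct roots and finite stabiliser, so $\dim(\SL_2\cdot f)=3$. The form $x^5y$ has root type $(5,1)$; any element of its stabiliser must fix both the $5$-fold point and the simple point, hence lie in the diagonal torus, which indeed scales $x^5y$ and so fixes $[x^5y]$; thus $\dim(\SL_2\cdot x^5y)=2$. The form $x^6$ has a single $6$-fold root, with stabiliser the Borel subgroup fixing that point, so $\dim(\SL_2\cdot x^6)=1$. Being orbits, the three sets are pairwise equal or disjoint, and the distinct dimensions show they are distinct; hence the union in the statement is automatically disjoint, and it remains only to compute the boundary.

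\emph{The boundary is contained in the two smaller orbits.} The crux is a degeneration lemma for Möbius maps: if $g_i\to\infty$ in $\PGL_2(\CC)$ then, after passing to a subsequence, there are points $P,Q\in\PP^1$ with $g_i\to Q$ locally uniformly on $\PP^1\setminus\{P\}$. I would prove this by rescaling matrix representatives $M_i$ to $M_i/\|M_i\|$, which converge, after a further subsequence, to a nonzero matrix of determinant $0$, i.e. of rank $1$, whose projectivised kernel is $P$ and whose projectivised image is $Q$. Granting this, let $[h]=\lim_i[g_i\cdot f]$ be any boundary point. Since the root multiset is a continuous function on $\PP(S^6W)$, the roots of $h$ are the limits of $g_i(\text{roots of }f)$: each root of $f$ distinct from $P$ is sent to a point converging to $Q$, while the at most one root of $f$ equal to $P$ (the roots are distinct) is sent to $P'=\lim_i g_i(P)$. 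Thus the root multiset of $h$ is either $6Q$ or $5Q+P'$, so $[h]\in\SL_2\cdot x^6$ or $[h]\in\SL_2\cdot x^5y$, giving $\overline{\SL_2\cdot f}\setminus(\SL_2\cdot f)\subseteq\SL_2\cdot x^6\cup\SL_2\cdot x^5y$.

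\emph{Reverse inclusion and conclusion.} For the opposite inclusion I would produce explicit limits: conjugating a one-parameter subgroup so that its source $P$ is a root of $f$ yields a limit of type $(5,1)$, while taking $P$ off the roots of $f$ (or degenerating $x^5y$ one step further) yields $x^6$; since $\overline{\SL_2\cdot f}$ is closed and $\SL_2$-invariant, it then contains the entire orbits $\SL_2\cdot x^5y$ and $\SL_2\cdot x^6$. Together with the previous paragraph and the disjointness already noted, this gives $\overline{\SL_2\cdot f}=(\SL_2\cdot f)\sqcup(\SL_2\cdot x^5y)\sqcup(\SL_2\cdot x^6)$. I expect the only real difficulty to be the third paragraph: controlling the limit for an \emph{arbitrary} divergent sequence $g_i$ rather than a single one-parameter subgroup. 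This is precisely what the rank-$1$ degeneration lemma, together with continuity of roots in $\PP(S^6W)$, is there to handle; the rest is bookkeeping with stabilisers and dimensions.
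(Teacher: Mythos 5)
The paper does not actually prove this lemma: it is quoted (with the representatives specialized to $x^5y$, $x^6$, i.e.\ $n=6$) from Mukai--Umemura, Lemma 1.5, and no argument is given in the text, so there is no in-paper proof to compare yours against. Judged on its own, your proof is correct and complete. The reduction of $\SL_2$-orbits in $\PP(S^6W)$ to $\PGL_2$-equivalence classes of root divisors, the dimension count $3,2,1$ distinguishing the three orbits, and the rank-one degeneration lemma (a divergent sequence in $\PGL_2(\CC)$ subconverges, after normalizing matrix representatives, to a rank-one matrix, hence to the constant map $Q$ locally uniformly off $P=\PP(\ker)$) together give the containment of the boundary in $(\SL_2\cdot x^5y)\cup(\SL_2\cdot x^6)$, and your explicit torus degenerations give the reverse inclusion. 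Two small points you should make explicit when writing this up: first, fix the boundary point $[h]$ before passing to the subsequences along which the normalized matrices and $g_i(P)$ converge, so that the identification of the limiting root multiset is legitimate; second, for the reverse inclusion in the $(5,1)$ case you need $P'\neq Q$, which your diagonal one-parameter subgroup supplies since $P$ and $Q$ are its two fixed points and the root placed at $P$ stays there. Your rank-one lemma is doing the work that the Cartan ($KAK$) decomposition does in Mukai--Umemura's original treatment — reducing an arbitrary divergent sequence to an essentially diagonal one — and the two devices are interchangeable here; your version has the advantage of being entirely elementary.
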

\begin{remark}\label{rmk:mukai umemura}
The $2$-dimensional orbit is the span of tangent lines to the rational normal curve of degree $6$. One can check it at the point $x^6$ and then argue by $SL_2$-equivariance.
\end{remark}
\begin{proposition}\label{prop:orbit structure}
The action of $SL_2$ on $Y$ has the following orbit decomposition:
\begin{itemize}
\item A unique open orbit, isomorphic to $\SSL_2/ \Gamma$, where $\Gamma$ is the binary octahedral group. 
\item A unique $2$-dimensional orbit, whose closure is linearly equivalent to $2H$. Moreover, it is the union of the tangent lines to the $1$-dimensional orbit.
\item A unique $1$-dimensional orbit, which is a rational normal curve of degree $6$.
\end{itemize}
\end{proposition}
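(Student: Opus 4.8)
The plan is to realise $Y$ as the closure of a single generic $\SSL_2$-orbit and then read off the orbit stratification directly from the Mukai--Umemura Lemma \ref{lemma:mukai umemura 1}. Under the identification $A^\perp \cong S^6 W$ the threefold $Y$ sits in $\PP(S^6 W)$, and the decomposable form \eqref{eq:generator of Y} corresponds to $g = xy(x^4-y^4)$, which lies on $Y$, has six distinct roots, and has finite stabiliser $\Gamma$ (the binary octahedral group of order $48$). First I would observe that, since $\Gamma$ is finite, the orbit $\SSL_2\cdot g$ is irreducible of dimension $3$ and is contained in the $\SSL_2$-invariant variety $Y$; as $Y$ is a smooth, hence irreducible, threefold, its closure $\overline{\SSL_2\cdot g}$ must be all of $Y$. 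Because an orbit is always open in its closure, $\SSL_2\cdot g$ is then the dense open orbit, isomorphic to $\SSL_2/\Gamma$.

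Next I would apply Lemma \ref{lemma:mukai umemura 1} to $f = g$ (legitimate since $g$ has distinct roots and finite stabiliser) to obtain
\[
Y = \overline{\SSL_2\cdot g} = \left(\SSL_2\cdot g\right) \sqcup \left(\SSL_2\cdot x^5y\right) \sqcup \left(\SSL_2\cdot x^6\right),
\]
so that $Y$ decomposes into exactly these three orbits. I would identify $\SSL_2\cdot x^6$ with the image of $\PP(W)\to\PP(S^6W)$, $\ell\mapsto\ell^6$, which is the rational normal curve of degree $6$, accounting for the $1$-dimensional orbit. For the $2$-dimensional orbit $\SSL_2\cdot x^5y$, Remark \ref{rmk:mukai umemura} identifies its closure $S$ with the union of tangent lines to this curve, i.e.\ its tangent developable.

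The remaining point is to compute the divisor class of $S$. Since $S$ is an irreducible surface in the threefold $Y$ and $\Pic(Y)=\ZZ\cdot H$, I would write $[S]=d\,H$ and determine $d$ by a degree count: by the projection formula together with $H^3 = 5P$,
\[
\deg_{\PP(S^6W)} S = \int_Y [S]\cdot H^2 = 5d,
\]
while the tangent developable of a rational normal curve of degree $n$ is classically a surface of degree $2n-2$, giving $\deg S = 10$ for $n=6$. Hence $d=2$ and $[S]\sim 2H$.

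I expect the degree computation to be the main technical point: the rest of the argument is formal once $Y$ is recognised as $\overline{\SSL_2\cdot g}$, but pinning down the class as $2H$ (rather than merely a positive multiple of $H$) requires the classical value $2n-2$ for the degree of the tangent developable --- or, alternatively, an independent intersection-theoretic count of the tangent lines to the sextic meeting a general codimension-two linear space. One should also take care to justify the two standard facts invoked without comment, namely that orbits are open in their closures and that $S$, being the closure of an orbit, is irreducible.
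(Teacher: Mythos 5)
Your proof is correct. The orbit decomposition itself is obtained exactly as in the paper: realise $Y$ as $\overline{\SSL_2\cdot g}$ for $g = xy(x^4-y^4)$, apply Lemma \ref{lemma:mukai umemura 1}, and use Remark \ref{rmk:mukai umemura} for the tangent-developable description. Where you genuinely diverge is in proving that the closure $S$ of the $2$-dimensional orbit lies in $|2H|$. The paper argues representation-theoretically: the $\SSL_2$-equivariant Koszul resolution \eqref{eq:resolution of Y} exhibits $H^0(\CO_Y(2))$ as containing a trivial summand $S^0W$, hence an $\SSL_2$-invariant divisor in $|2H|$, and since $H^0(\CO_Y(1)) = \Lambda^2V^*/A \cong S^6W$ has no invariant line, this invariant divisor can only be $S$. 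You instead compute the degree of $S$ in $\PP(S^6W)$ as the classical degree $2d+2g-2 = 10$ of the tangent developable of the rational normal sextic, and divide by $\deg Y = H^3 = 5$ to get $[S]=2H$. Both arguments are sound; the paper's is self-contained given the earlier equivariant computations and also shows \emph{uniqueness} of the invariant member of $|2H|$ as a byproduct, whereas yours imports one classical fact but avoids decomposing $A\otimes\Lambda^2V^*$ into irreducibles. Your two flagged "standard facts" (orbits open in their closures, irreducibility of an orbit closure for a connected group) are indeed routine and need no further justification.
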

\begin{proof}
Using lemma \ref{lemma:mukai umemura 1} and the fact that $xy(x^4 - y^4)$ has $6$ distinct roots one immediately gets the decomposition into orbits,  while remark \ref{rmk:mukai umemura} provides the description in terms of tangent lines.

That the 2-dimensional orbit is linearly equivalent to $2H$, it follows from the decomposition into $SL_2$-irreducible representations of $H^0(\CO_Y(2))$.
The reason is that the $SL_2$-equivariant Koszul complex \eqref{eq:resolution of Y} provides us with an exact sequence
\[
0 \to A^* \to A \otimes \Lambda^2 V^* \to H^0(\CO_Y(2)) \to 0
\]
which is $SL_2$ isomorphic to
\[
0 \to S^2W \to (\ldots) \oplus S^0W \to H^0 (\CO_Y(2)) \to 0
\]
and implies the existence of an $SL_2$-invariant divisor linearly equivalent to $2H$.
As there are no $SL_2$-invariant divisors linearly equivalent to $H$, the zero locus of the $SL_2$-invariant section of $\CO_Y(2)$ is the unique $2$-dimensional orbit. 
\end{proof}

Via Borel--Bott--Weil, Serre duality and the sequences \eqref{eq:definingsequence}, \eqref{eq:definingsequencedual} and \eqref{eq:resolution of Y} one can describe thoroughly the cohomology of the bundles on $Y$ which come from the ambient Grassmannian (see also \cite{orlov1991exceptional}).
\begin{lemma}\label{lemma:cohomologies on Y}
There are canonical $SL_2$-equivariant isomorphisms
\begin{itemize}
\item $\CU(1) = \CU^*$
\item $H^\bullet(\CU) = H^\bullet (\CU(-1)) = H^\bullet(\CU^\perp) = H^\bullet(\CU^\perp(-1)) = H^\bullet(\CO_Y(-1)) = 0$;
\item $H^\bullet(V/\CU (-1)) =  \Hom(\CU^\perp, V/\CU(-1)) = 0$;
\item $H^\bullet(V/\CU) = V$ and $H^\bullet(\CU^*) = V^*$;
\item $\Hom(\CU, \CU^\perp) = A$;
\item $H^\bullet(\CO_Y(1)) = \Lambda^2 V^* / A$.
\end{itemize}
Moreover, under the above identifications the composition pairing
\[
A \otimes V \cong \Hom(\CU,\CU^\perp) \otimes \Hom(\CU^\perp,\CO_Y) \to \Hom(\CU, \CO_Y) \cong V^*
\]
is the natural evaluation of a skew form in $A$ on a vector of $V$.
\end{lemma}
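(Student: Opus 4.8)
The plan is to reduce every statement to a cohomology computation on the ambient Grassmannian $\Gr = \Gr(2,V)$ via the Koszul resolution \eqref{eq:resolution of Y} and Borel--Bott--Weil. The one purely formal identity is $\CU(1)\cong\CU^*$: since $\CU$ has rank $2$ and $\det\CU=\CO_Y(-1)$, one has $\CU^*\cong\CU\otimes(\det\CU)^{-1}=\CU(1)$, and this is manifestly $SL_2$-equivariant. For everything else the engine is the following: tensoring \eqref{eq:resolution of Y} by a bundle $\CG$ on $\Gr$ gives a four-term resolution of $\restr{\CG}{Y}$ by $\CG$, $A\otimes\CG(-1)$, $A^*\otimes\CG(-2)$ and $\CG(-3)$, so the hypercohomology spectral sequence computes $H^\bullet(Y,\restr{\CG}{Y})$ out of the groups $H^\bullet(\Gr,\CG(-i))$ for $i=0,1,2,3$, each read off from Borel--Bott--Weil.

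First I would run this with $\CG=\CO$, $\CU$, $\CU^\perp$. A direct Borel--Bott--Weil computation shows that on $\Gr(2,5)$ all the relevant terms vanish (the line bundles $\CO(-1),\dots,\CO(-4)$ lie in the acyclic range $0<j<5$, and the twisted tautological bundles are acyclic as well), while for $\CG=\CO$ only $H^0(\Gr,\CO)=\CC$ survives, so $H^\bullet(\CO_Y)=\CC$. Serre duality ($\omega_Y=\CO_Y(-2)$) halves the list: e.g.\ $H^i(\CU(-1))\cong H^{3-i}(\CU)^*$ because $\CU^*=\CU(1)$, so $H^\bullet(\CU(-1))=0$ follows from $H^\bullet(\CU)=0$, and the remaining twists are handled similarly. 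This yields
\[
H^\bullet(\CU)=H^\bullet(\CU(-1))=H^\bullet(\CU^\perp)=H^\bullet(\CU^\perp(-1))=H^\bullet(\CO_Y(-1))=0.
\]
The quotient statements then need no further Borel--Bott--Weil: the long exact sequence of \eqref{eq:definingsequence} with $H^\bullet(\CU)=0$ and $H^\bullet(\CO_Y)=\CC$ gives $H^\bullet(V/\CU)=V$; that of \eqref{eq:definingsequencedual} with $H^\bullet(\CU^\perp)=0$ gives $H^\bullet(\CU^*)=V^*$; and the $\CO(-1)$-twist of \eqref{eq:definingsequence} with $H^\bullet(\CU(-1))=H^\bullet(\CO_Y(-1))=0$ gives $H^\bullet(V/\CU(-1))=0$.

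Next I would treat the $\Hom$-groups. Applying the engine to $\CG=\CO(1)$ gives $H^{>0}(\CO_Y(1))=0$ and identifies $H^0(\CO_Y(1))$ with the cokernel of the inclusion $A\hookrightarrow H^0(\Gr,\CO(1))=\Lambda^2V^*$, that is $\Lambda^2V^*/A$. For $\Hom(\CU,\CU^\perp)=H^0(\CU^*\otimes\CU^\perp)$ I would first construct a natural map: on $Y$ every $a\in A$ restricts to zero on $\CU$, so the composite $\CU\hookrightarrow V\xrightarrow{a}V^*\to\CU^*$ vanishes and $a$ factors through $\CU^\perp=\ker(V^*\to\CU^*)$, defining an $SL_2$-equivariant map $A\to\Hom(\CU,\CU^\perp)$. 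A Koszul--Borel--Bott--Weil computation shows $\dim\Hom(\CU,\CU^\perp)=3=\dim A$ with all higher $\Ext$ vanishing; since $A\cong S^2W$ is $SL_2$-irreducible and the constructed map is nonzero, Schur's lemma makes it an isomorphism. The remaining vanishing $\Hom(\CU^\perp,V/\CU(-1))=0$ is again a Koszul--Borel--Bott--Weil computation, using $(\CU^\perp)^*\cong V/\CU$ (obtained by dualizing \eqref{eq:definingsequencedual}).

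The hard part is the final compatibility statement, which is an assertion about the maps themselves rather than about dimensions, so I would unwind the identifications explicitly. Dualizing \eqref{eq:definingsequencedual} gives $(\CU^\perp)^*\cong V/\CU$, hence $\Hom(\CU^\perp,\CO_Y)=H^0((\CU^\perp)^*)=H^0(V/\CU)=V$, and under this identification a vector $v\in V$ acts on $\CU^\perp\subset V^*$ by the contraction $\langle\,\cdot\,,v\rangle$. Tracing $a\otimes v$ through the composition, the map $\CU\xrightarrow{a}\CU^\perp\hookrightarrow V^*\xrightarrow{\langle\,\cdot\,,v\rangle}\CO_Y$ sends a local section $u$ of $\CU$ to $a(u,v)$, i.e.\ to the section $a(\,\cdot\,,v)\in V^*=H^0(\CU^*)$, which is exactly the evaluation of the skew form $a$ on $v$. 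The only genuine subtlety is ensuring the chosen identifications are the canonical ones, so that no spurious scalar intervenes; here $SL_2$-equivariance is decisive, since $\Hom_{SL_2}(A\otimes V,V^*)=\Hom_{SL_2}(S^2W\otimes S^4W,S^4W)$ is one-dimensional, forcing the composition to be a scalar multiple of the evaluation pairing, and the isomorphisms already established force that scalar to be nonzero.
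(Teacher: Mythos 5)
Your proof is correct and follows essentially the same route as the paper: Koszul resolution \eqref{eq:resolution of Y} plus Borel--Bott--Weil for the vanishings, the tautological sequences for the quotient bundles, and $SL_2$-equivariance (uniqueness of the nonzero map $A\otimes V\to V^*$ together with an explicit check of non-triviality) for the composition pairing. The only cosmetic difference is at $\Hom(\CU,\CU^\perp)\cong A$, where you construct the map $A\to\Hom(\CU,\CU^\perp)$ directly and invoke Schur's lemma, while the paper reads the isomorphism off the Koszul/BBW spectral sequence as $A\otimes\Ext^1(\CU(1),\CU^\perp)$; your factorization of $a$ through $\CU^\perp$ is exactly the dashed arrow in the paper's diagram \eqref{eq:HomUUperp}, so the two arguments coincide in substance.
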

\begin{proof}
The isomorphism of $\CU(1) = \CU^*$ follows from $\rank(\CU) = 2$ and $\Lambda^2\CU = \CO(-1)$.
The vanishing of $H^\bullet(\CU)$, $H^\bullet(\CU(-1))$, $H^\bullet(\CU^\perp)$, $H^\bullet(\CU^\perp(-1))$ and $H^\bullet(\CO_Y(-1))$ comes from Borel--Bott--Weil and from the Koszul resolution \eqref{eq:resolution of Y}.
The vanishing of $H^\bullet(V/\CU (-1))$ and $\Hom(\CU^\perp, V/\CU(-1))$ follows from the above vanishings and from the tautological sequences \eqref{eq:definingsequence} and \eqref{eq:definingsequencedual}.

Again by Borel--Bott--Weil there is an $SL_2$-equivariant isomorphism 
\[
\Hom_Y(\CU, \CU^\perp) = A \otimes \Ext^1(\CU(1), \CU^\perp) = A
\]

The long exact sequences of \eqref{eq:definingsequence} and \eqref{eq:definingsequencedual} induce the other two isomorphisms, together with their $SL_2$-equivariance. In the same way, one checks that $H^\bullet(\CO_Y(1)) = \Lambda^2 V^* / A$.

To compute the composition, note that there is a unique non-zero $SL_2$-equivariant map from $A \otimes V$ to $V^*$ and that the natural pairing is $SL_2$-equivariant.
To understand why the pairing is non-trivial, note that it can be induced by the diagram
\begin{equation} \label{eq:HomUUperp}
\begin{diagram}
0	&\rTo	&A \otimes \CU			&\rTo	^{\qquad}	&A \otimes V \otimes \CO_Y	&\rTo	&A \otimes V/\CU		&\rTo	& 0	\\
	&	& \dDashto^{\ev_{\CU}}	&			& \dTo^{a(-,-)}				&	&\dDashto	^{\ev_{V/\CU}}	&	&	\\
0 	&\rTo	&\CU^\perp			&\rTo			&V^*\otimes\CO_Y			&\rTo	&\CU^*				&\rTo	& 0
\end{diagram}
\end{equation}
where it is always possible to draw the dashed arrow as by definition of $Y$ the pairing $a(u_1,u_2)$ vanishes at any point $[U]$ and for any $u_1,u_2 \in U$.
\end{proof}

In the next proposition we use lemma \ref{lemma:cohomologies on Y} to compute some mutations of the exceptional collection \eqref{eq:full exceptional}.

\begin{proposition} \label{prop:full exceptional}
The following equalities hold
\begin{equation} \label{eq:mutations}
\RR_{\CO_Y}(\CU) = V/\CU, \qquad \RR_{\CO_Y}(\CU^\perp) = \CU^*, \qquad \RR_{\CU}(V/\CU(-1)) = \CU^\perp.
\end{equation}
The collections
\begin{equation}\label{eq:coll 2}
\langle \calU, \calO_Y, \calU(1), \calO_Y(1) \rangle, \qquad \langle \CO_Y, V/ \CU, \CU^*, \CO_Y(1) \rangle
\end{equation}
are full and exceptional.

The collection
\begin{equation} \label{eq:coll}
\langle \CO_Y(-1), \CU, \CU^\perp, \CO_Y \rangle
\end{equation}
is full and exceptional. Its left dual collection is
\begin{equation} \label{eq:coll dual}
\langle \CO_Y, \CU(1), \CU^\perp(1), \CO_Y(1) \rangle
\end{equation}
\end{proposition}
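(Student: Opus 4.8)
The plan is to deduce every assertion from the full exceptional collection \eqref{eq:full exceptional} and the cohomology computations of Lemma \ref{lemma:cohomologies on Y}, by reading the tautological sequences \eqref{eq:definingsequence}, \eqref{eq:definingsequencedual} and the diagram \eqref{eq:HomUUperp} as mutation triangles. I begin with the mutation equalities \eqref{eq:mutations}. For $\RR_{\CO_Y}(\CU)$, recall from Definition \ref{def:mutation} that the right mutation through $\langle\CO_Y\rangle$ sits in the triangle $\CU \to \Ext^\bullet(\CU,\CO_Y)^*\otimes\CO_Y \to \RR_{\CO_Y}(\CU)$ via the coevaluation map. By Lemma \ref{lemma:cohomologies on Y} we have $\Ext^\bullet(\CU,\CO_Y) = H^\bullet(\CU^*) = V^*$ in degree $0$, so the middle term is $V\otimes\CO_Y$ and the coevaluation is the universal map, i.e. the tautological inclusion of \eqref{eq:definingsequence}; hence $\RR_{\CO_Y}(\CU) = V/\CU$. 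The equality $\RR_{\CO_Y}(\CU^\perp) = \CU^*$ is identical, using $\Ext^\bullet(\CU^\perp,\CO_Y) = H^\bullet(V/\CU) = V$ together with \eqref{eq:definingsequencedual}.

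For the third equality I would work with the pair $\langle\CU,\CU^\perp\rangle$. Since $\Ext^\bullet(\CU,\CU^\perp) = A$ is concentrated in degree $0$ by Lemma \ref{lemma:cohomologies on Y}, the left mutation triangle reads $\LL_{\CU}(\CU^\perp) \to A\otimes\CU \xrightarrow{\ev_{\CU}} \CU^\perp$, where $\ev_{\CU}$ is precisely the left vertical arrow of \eqref{eq:HomUUperp}. The key step is to check that $\ev_{\CU}$ is surjective with kernel $V/\CU(-1)$: fibrewise over $[U]\in Y$ it sends $a\otimes u$ to the contraction $\iota_u a\in U^\perp$ (which indeed lands in $U^\perp$ because $a|_U=0$ on $Y$), and surjectivity plus the identification of the rank-$3$ kernel with $V/\CU(-1)$ I would obtain from \eqref{eq:HomUUperp} by a rank count and the snake lemma. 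This gives $\LL_{\CU}(\CU^\perp) = V/\CU(-1)$, and since left and right mutations of an exceptional pair are mutually inverse, $\RR_{\CU}(V/\CU(-1)) = \CU^\perp$.

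With \eqref{eq:mutations} established the collections follow formally. Collection \eqref{eq:coll} is the Serre rotation of \eqref{eq:full exceptional}: writing $S$ for the Serre functor $(-)\otimes\omega_Y[3]$ and using $\omega_Y=\CO_Y(-2)$, one has $S(\CO_Y(1)) = \CO_Y(-1)[3]$, so the helix/Serre-functor fact turns \eqref{eq:full exceptional} into the full exceptional collection $\langle \CO_Y(-1)[3], \CU, \CU^\perp, \CO_Y\rangle$; dropping the harmless shift on the first object (which affects neither exceptionality nor the generated subcategory) yields \eqref{eq:coll}. For \eqref{eq:coll 2} I would mutate \eqref{eq:full exceptional} as in Definition \ref{def:mutated decomposition}: the right mutation $\RR_2$ replaces $\CU^\perp$ by $\RR_{\CO_Y}(\CU^\perp)=\CU^*=\CU(1)$, giving $\langle\CU,\CO_Y,\CU(1),\CO_Y(1)\rangle$; applying $\RR_1$ then replaces $\CU$ by $\RR_{\CO_Y}(\CU)=V/\CU$, giving $\langle\CO_Y,V/\CU,\CU^*,\CO_Y(1)\rangle$. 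Both are full and exceptional because mutations of a full exceptional collection are again full and exceptional.

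Finally, for the left dual I would take the candidate $\langle \CO_Y,\CU(1),\CU^\perp(1),\CO_Y(1)\rangle$, which is the twist of \eqref{eq:coll} by $\CO_Y(1)$ and is therefore already full and exceptional. By Definition \ref{def:dual collection} and the uniqueness of the left dual, it is enough to verify the vanishing half of the characterization \eqref{eq:dual characterization}, namely $\Ext^\bullet(\Lvee E_i, E_j)=0$ whenever $i+j\neq 5$: this pins each entry down as the required $\Lvee E_i$ and forces the remaining diagonal terms to be one-dimensional. Two entries are immediate, since $\Lvee E_1 = \CO_Y$ and $\Lvee E_2 = \RR_{\CO_Y}(\CU^\perp)=\CU(1)$ by the mutation equalities, and the cleanest of the other checks (for instance $\Ext^3(\CO_Y(1),\CO_Y(-1))=\CC$ and the vanishings against $\CU$ and $\CU^\perp$) follow from Lemma \ref{lemma:cohomologies on Y} and Serre duality. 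I expect the main obstacle to be the off-diagonal vanishings that are not recorded in Lemma \ref{lemma:cohomologies on Y}, such as $H^\bullet(\CU\otimes\CU^\perp)$, $H^\bullet(S^2\CU)$ and $H^\bullet(\CU\otimes V/\CU(-1))$; these I would compute exactly as in that lemma, by Borel--Bott--Weil on $\Gr(2,V)$ combined with the Koszul resolution \eqref{eq:resolution of Y}.
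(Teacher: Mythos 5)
Your proposal follows essentially the same route as the paper: the first two mutations are read off from the tautological sequences, the collections \eqref{eq:coll 2} are obtained as mutations of \eqref{eq:full exceptional}, the collection \eqref{eq:coll} comes from the Serre functor identity $\BBS(\Lperp\CA)=\CA^\perp$, and the left dual is verified against the characterization \eqref{eq:dual characterization} via Lemma \ref{lemma:cohomologies on Y} (your remark that a few extra Borel--Bott--Weil vanishings are needed there is accurate, and the paper glosses over the same point).

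The one place where you diverge is the third mutation, and there your justification has a soft spot. You claim the surjectivity of $\ev_{\CU}\colon A\otimes\CU\to\CU^\perp$ follows from diagram \eqref{eq:HomUUperp} ``by a rank count and the snake lemma,'' but the snake lemma applied to that diagram only exhibits $\coker(\ev_{\CU})$ as a quotient of $\ker(\ev_{V/\CU})$ (the middle vertical arrow being surjective), so it does not force $\coker(\ev_{\CU})=0$. Surjectivity is true, but it needs a genuine geometric input: a failure of surjectivity at $[U]$ produces $w\notin U$ with $a(U,U+\langle w\rangle)=0$ for all $a\in A$, i.e.\ a $3$-dimensional subspace $W$ with $\Gr(2,W)\cong\PP^2\subset Y$, which contradicts the Lefschetz hyperplane theorem (this is the same argument used in Lemma \ref{cohomologyonconic}). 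The paper avoids the issue entirely by arguing abstractly: it shows the cone of $V/\CU(-1)\to A\otimes\CU$ lies in $\langle\CU^\perp\rangle$ using orthogonality and fullness of \eqref{eq:full exceptional}, and then pins it down to $\CU^\perp$ in degree $0$ by the absence of maps $\CU^\perp\to V/\CU(-1)$. Either fix is short, but as written your step is not yet proved.
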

\begin{proof}
By Borel--Bott--Weil and resolution \eqref{eq:resolution of Y}, $(V/\CU)^* = \CU^\perp$ 
 is acyclic on $Y$, that is to say that $V/\CU \in \Lperp \CO_Y$.
As a consequence, sequence \eqref{eq:definingsequence} is the mutation sequence of $\CU$ through $\CO_Y$, so that $\RR_{\CO_Y}(\CU) = V/\CU$.
The dual of a mutation triangle is again a mutation triangle, so that $\RR_{\CO_Y}(\CU^\perp) = \CU^*$.

As for $\RR_{\CU}(V/\CU(-1)) = \CU^\perp$, by lemma \ref{lemma:cohomologies on Y} there is an evaluation map
\[
V / \CU (-1) \to A \otimes \CU
\]
We are now going to show that the cone of the evaluation map is $\CU^\perp$.
Note that by Borel--Bott--Weil and Serre duality the cone lies in the right orthogonal to $\CO_Y$ and $\CO_Y(1)$. 
Moreover, by definition of the evaluation map, it lies in the left orthogonal to $\CU$, so that by fullness of the collection \eqref{eq:full exceptional} it is in $\langle \CU^\perp \rangle$.
It follows that the cone of the evaluation map is a direct sum of shifted copies of $\CU^\perp$.
As there are no maps from $\CU^\perp$ to $V/\CU(-1)$, the evaluation map is injective and its cokernel is $\CU^\perp$, that is to say that there is an exact sequence
\begin{equation}\label{eq:mutation U Uperp}
0 \to V/\CU(-1) \to A \otimes \CU \to \CU^\perp \to 0
\end{equation}
showing that $\RR_{\CU}(V/\CU(-1)) = \CU^\perp$.

As a consequence of \eqref{eq:mutations}, the collections in \eqref{eq:coll 2} are full and exceptional because they are mutations of \eqref{eq:full exceptional}.

The fact that \eqref{eq:coll} is exceptional follows from the more general fact that every time that there is a Serre functor $\BBS$ for a triangulated category $\CT$ and an admissible subcategory $\CA$ is given, there is an isomorphism 
\[
\BBS(\Lperp \CA) = \CA^\perp
\]
In our setting,
\[
\langle \CU, \CU^\perp, \CO_Y \rangle^\perp = \BBS  (\Lperp \langle \CU, \CU^\perp, \CO_Y \rangle)  = \BBS ( \langle \CO_Y(1) \rangle ) = \langle \CO_Y(-1) \rangle
\]

The left dual of collection \eqref{eq:coll} is characterized by equations \eqref{eq:dual characterization}, which hold for collections \eqref{eq:coll} and \eqref{eq:coll dual}, as one can easily check with the aid of lemma \ref{lemma:cohomologies on Y}.
\end{proof}

Via Hoppe's criterion \ref{thm:hoppe} and lemma \ref{lemma:cohomologies on Y} we can easily check that all tautological bundles we have introduced so far are $\mu$-stable.
\begin{lemma} \label{lemma:stability on Y}
The tautological vector bundles $\CU, \CU^*, \CU^\perp, V/ \CU$ are $\mu$-stable.
\end{lemma}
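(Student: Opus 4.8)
The plan is to apply Hoppe's criterion (Theorem~\ref{thm:hoppe}). By Remark~\ref{rmk:hoppeY} the threefold $Y$ satisfies all of its hypotheses: it is smooth (hence integral and locally factorial), $\Pic(Y) = \ZZ$, and the ample generator $\CO_Y(1)$ has global sections. Thus for a bundle $E$ of rank $r$ it suffices to show that the normalized exterior power $(\Lambda^s E)_\norm$ has no global sections for every $1 \le s \le r-1$. The whole argument then reduces, after identifying the correct normalizing twists, to the acyclicity statements of Lemma~\ref{lemma:cohomologies on Y}.

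For the rank $2$ bundles $\CU$ and $\CU^*$ only the case $s=1$ occurs. From the Chern data \eqref{Cherntable} we have $c_1(\CU) = -H$, so $\CU$ is already normalized, and $H^0(\CU) = 0$ by Lemma~\ref{lemma:cohomologies on Y}. Since $\CU^* = \CU(1)$ (again Lemma~\ref{lemma:cohomologies on Y}), the normalization of $\CU^*$ is $\CU^*(-1) = \CU$, which has no sections. This settles the rank $2$ case.

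For the rank $3$ bundles $\CU^\perp$ and $V/\CU$ I must treat $s = 1$ and $s = 2$. For $s = 1$, normalizing using $c_1(\CU^\perp) = -H$ and $c_1(V/\CU) = H$ yields $\CU^\perp$ itself (twist $0$) and $(V/\CU)(-1)$ (twist $-1$); both are acyclic by Lemma~\ref{lemma:cohomologies on Y}. For $s = 2$ I would use the identity $\Lambda^2 E \cong E^\vee \otimes \det E$ for a rank $3$ bundle, together with the duality $(\CU^\perp)^\vee = V/\CU$ and $(V/\CU)^\vee = \CU^\perp$ (obtained by dualizing the tautological sequences \eqref{eq:definingsequence} and \eqref{eq:definingsequencedual}) and the determinants $\det \CU^\perp = \CO_Y(-1)$, $\det(V/\CU) = \CO_Y(1)$. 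This gives $\Lambda^2 \CU^\perp = (V/\CU)(-1)$ and $\Lambda^2 (V/\CU) = \CU^\perp(1)$; computing $c_1$ and normalizing, these become $(V/\CU)(-1)$ and $\CU^\perp$ respectively, which again have no global sections by Lemma~\ref{lemma:cohomologies on Y}.

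There is essentially no serious obstacle here: the content of the lemma is entirely carried by the cohomology computations already performed in Lemma~\ref{lemma:cohomologies on Y}. The only point requiring care is the bookkeeping of the normalizing twists for each exterior power and the correct rewriting of $\Lambda^2 \CU^\perp$ and $\Lambda^2(V/\CU)$ through the exterior-power/duality identifications; once every relevant normalized bundle is reduced to one of $\CU$, $\CU^\perp$, or $(V/\CU)(-1)$, the vanishing of its $H^0$ is immediate.
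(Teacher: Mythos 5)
Your proof is correct and follows essentially the same route as the paper: Hoppe's criterion via Remark~\ref{rmk:hoppeY}, with all the required vanishings reduced to Lemma~\ref{lemma:cohomologies on Y} and the identification $\Lambda^2\CU^\perp \cong (V/\CU)(-1)$. The only (cosmetic) difference is that the paper dispatches $\CU^*$ and $V/\CU$ in one line by noting that the dual of a $\mu$-stable bundle is $\mu$-stable, whereas you re-run Hoppe's criterion for them directly; both work.
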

\begin{proof}
By remark \ref{rmk:hoppeY} we can use Hoppe's criterion \ref{thm:hoppe}. As $\rank (\CU) = 2$ and $c_1(\CU) = -1$, $\CU$ is a normalized bundle. To prove that $\CU$ is stable it is enough to check that $H^0 (\CU) = 0$, which is true by lemma \ref{lemma:cohomologies on Y}.

The bundle $\CU^\perp$ has rank $3$, so that there is an isomorphism $\Lambda ^2 \CU^\perp \cong \CO(-1) \otimes V/ \CU$. By Hoppe's criterion, it is enough to check $H^0(\CU^ \perp) \cong H^0(V/ \CU(-1)) = 0$, which holds true again by lemma \ref{lemma:cohomologies on Y}.

As for the other two bundles, it is enough to point out that $V/\CU = (\CU^\perp)^*$ and that the dual of a $\mu$-stable bundle is $\mu$-stable. 
\end{proof}

\subsection{Lines in $Y$} \label{sec:lines}

As $Y$ has a canonical choice of a very ample line bundle $\CO_Y(1)$, we will call line any subscheme $L$ with Hilbert polynomial $h_L(t) = 1 + t$.
It is well known that $L$ is an actual line linearly embedded in $\PP^n$, without any embedded points. 
We will need analogous statements for conics and cubics in $Y$: they are collected in corollary \ref{cor:d=1,2,3 in Y}.

The Fano scheme of lines in Y is canonically identified with $\PP(A)$. Before describing the identification, we characterize the restriction to any line of the tautological bundles coming from $\Gr(2,V)$. 

In order to do it, recall that any vector bundle on $\PP^1$ splits as a direct sum of line bundles and that we have already defined the splitting type of a vector bundle $E$ on a line $L$ in \ref{def:splitting type}.

\begin{lemma} \label{lemma:splitting on lines}
The splitting types of $\calU, V / \calU, \calU^\perp, \calU^*$ on any line $L$ are as follows:
\[
\begin{matrix*}[l]
\ST_L(\CU) = (-1,0) \\
\ST_L (V/\CU) = (0,0,1) \\
\ST_L (\CU^\perp) = (-1,0,0) \\
\ST_L (\CU^*) = (0,1)
\end{matrix*}
\]
\end{lemma}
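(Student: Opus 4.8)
The plan is to reduce everything to the single computation of $\ST_L(\CU)$ and then propagate it through the tautological sequences and the duality relations already established. First I would record that, since $L$ is a line, $H\cdot L=1$, and since $c_1(\CU)=-H$ we get $\deg(\restr{\CU}{L})=-1$; likewise $\CU$ has rank $2$. Next I would restrict the defining sequence \eqref{eq:definingsequence} to $L$. Because the quotient $V/\CU$ is locally free, the restriction stays exact, giving a short exact sequence of vector bundles
\[
0 \to \restr{\CU}{L} \to V \otimes \CO_L \to \restr{(V/\CU)}{L} \to 0
\]
with $V\otimes\CO_L\cong\CO_L^{\oplus 5}$. In particular $\restr{\CU}{L}$ is a genuine subbundle of a trivial bundle.

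The key elementary input is the behaviour of sub- and quotient bundles of trivial bundles on $\PP^1$: a quotient of $\CO_L^{\oplus n}$ is globally generated, hence all its splitting integers are $\geq 0$, while dualizing shows that a subbundle of $\CO_L^{\oplus n}$ has all its splitting integers $\leq 0$. Applying this to $\restr{\CU}{L}$, both summands have degree $\leq 0$; together with $\deg=-1$ and rank $2$ this forces $\ST_L(\CU)=(-1,0)$. The splitting type of $\CU^*$ then comes for free from the identification $\CU(1)=\CU^*$ of Lemma \ref{lemma:cohomologies on Y}: tensoring $\restr{\CU}{L}\cong\CO_L(-1)\oplus\CO_L$ by $\CO_L(1)$ gives $\ST_L(\CU^*)=(0,1)$.

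For the remaining two bundles I would argue symmetrically. From the restricted sequence above, $\restr{(V/\CU)}{L}$ is a rank-$3$ quotient of $\CO_L^{\oplus 5}$ of degree $0-(-1)=1$, so all three summands are $\geq 0$ and sum to $1$, giving $\ST_L(V/\CU)=(0,0,1)$. Dually, restricting \eqref{eq:definingsequencedual} to $L$ exhibits $\restr{\CU^\perp}{L}$ as a rank-$3$ subbundle of $V^*\otimes\CO_L\cong\CO_L^{\oplus 5}$; its degree is $-\deg(\restr{\CU^*}{L})=-1$, so all summands are $\leq 0$ and sum to $-1$, giving $\ST_L(\CU^\perp)=(-1,0,0)$ (equivalently, one may simply note $\CU^\perp\cong(V/\CU)^*$ and dualize the previous line). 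There is no serious obstacle here: the only points demanding care are the exactness of the restricted sequences, which holds because the quotients $V/\CU$ and $\CU^*$ are locally free, and the degree bookkeeping via $c_1(\CU)\cdot L=-1$. Notably, this argument is independent of any explicit geometric description of $L$, which is why it can be placed before the identification of the Fano scheme of lines with $\PP(A)$.
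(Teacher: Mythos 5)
Your proof is correct and follows essentially the same route as the paper: both arguments rest on the observation that $\CU$ and $\CU^\perp$ (resp.\ their duals/quotients) embed in a trivial bundle on $L$, so their splitting summands are all $\le 0$, and then the first Chern class $c_1=-1$ pins down the splitting type, with the remaining two bundles obtained by dualization. You spell out a few more of the routine verifications (exactness of the restricted sequences, the degree bookkeeping), but there is no substantive difference.
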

\begin{proof}
Both $\calU$ and $\calU^{\perp}$ embed in a trivial bundle by sequence \eqref{eq:definingsequence}, so that their splitting type contains no $\calO_L(n)$ with $n > 0$. As $c_1(\calU) = c_1(\calU^\perp) = -1$ we have the splitting types in the thesis. 
The other two splitting types follow by dualization.
\end{proof}

Now we are going to describe the identification of the Fano scheme of lines in $Y$ with $\PP(Hom(\calU,\calU^\perp))$, which by \ref{lemma:cohomologies on Y} is canonically isomorphic to $\PP(A)$.

\begin{lemma}\label{zerolocusV}
The schematic zero locus of $v \in H^0(V/\calU) \cong V$ is either a point or a line. It is a line if and only if $v = \ker(a)$ for some $a \in A$.
\end{lemma}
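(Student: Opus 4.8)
The plan is to make the section completely explicit and reduce everything to linear algebra of the forms in $A$. A section $v \in H^0(V/\CU) \cong V$ is the image of the constant section $v$ of $V \otimes \CO_Y$ under the projection in \eqref{eq:definingsequence}; its value at a point $[U] \in Y \subset \Gr(2,V)$ is $v \bmod U \in V/U$, so it vanishes exactly when $v \in U$. Thus set-theoretically the zero locus is the family $\{[U] \in Y : v \in U\}$ of $2$-planes through $v$ lying on $Y$. Assume $v \neq 0$, so that this makes sense.

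To pin down the scheme structure I would first work on the ambient $\Gr(2,V)$. There the same section of the tautological $V/\CU$ vanishes on the sub-Grassmannian $\Sigma_v = \{U : v \in U\}$, which under Plücker is the linear space $\PP(v \wedge V) \cong \PP^3$ of codimension $3 = \rank(V/\CU)$; the section is regular, so $\Sigma_v$ is its reduced zero scheme. Since forming a zero scheme commutes with restriction, the zero scheme of $v$ on $Y$ is the scheme-theoretic intersection $\Sigma_v \cap Y = \Sigma_v \cap \PP(A^\perp)$, i.e.\ the linear $\PP^3 = \PP(v\wedge V)$ cut by the three linear forms defining $\PP(A^\perp)$. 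An intersection of linear subspaces is reduced, so the zero scheme is automatically the reduced linear space $\PP(v \wedge W_v)$, where $W_v = \{w \in V : a(v,w) = 0 \text{ for all } a \in A\}$. In particular it is a reduced linear subspace, and it only remains to bound its dimension.

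Next I would compute $\dim(v \wedge W_v)$ through the contraction $\psi_v : A \to V^*$, $a \mapsto a(v,-)$, which is exactly the pairing of Lemma \ref{lemma:cohomologies on Y} with $v$ held fixed. One has $W_v = (\Image \psi_v)^\perp$ and $v \in W_v$, whence $\dim(v \wedge W_v) = \dim W_v - 1 = 4 - \rank \psi_v$, so the projective dimension of the zero scheme is $3 - \rank \psi_v$. The kernel of $\psi_v$ is $\{a \in A : v \in \ker a\}$, so a point corresponds to $\rank\psi_v = 3$ and a line to $\rank\psi_v = 2$.

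The crux — and the step I expect to be the main obstacle — is showing $\rank \psi_v \ge 2$, i.e.\ that no pencil in $A$ annihilates $v$; this is exactly where the smoothness of $Y$ enters through Lemma \ref{lemma:HPD of Gr}, which forces every nonzero form in $A$ to have rank $4$. If $a_1, a_2 \in A$ were independent with $v \in \ker a_1 \cap \ker a_2$, then each $a_i$ would have kernel exactly $\langle v \rangle$ and hence descend to a nondegenerate skew form $\bar a_i$ on the $4$-dimensional quotient $\bar V = V/\langle v\rangle$, the descents remaining independent. The Pfaffian along the pencil $\lambda \bar a_1 + \mu \bar a_2$ is a nonzero degree-$2$ form in $(\lambda:\mu)$ and so has a root over $\CC$, yielding a degenerate member whose lift to $A$ has rank $\le 3$, contradicting Lemma \ref{lemma:HPD of Gr}. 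Therefore $\dim \ker \psi_v \le 1$ and $\rank \psi_v \in \{2,3\}$, so the zero locus is a point or a line. Finally it is a line iff $\rank \psi_v = 2$ iff $\ker \psi_v \neq 0$, i.e.\ iff some nonzero $a \in A$ has $v \in \ker a$; since such $a$ has rank $4$ its kernel is one-dimensional, so this means $\langle v \rangle = \ker a$, which is precisely the condition $v = \ker(a)$.
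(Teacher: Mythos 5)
Your proof is correct and follows essentially the same route as the paper's: identify the zero locus on the ambient $\Gr(2,V)$ with the linear space $\PP(v\wedge V)$, intersect with $\PP(A^\perp)$ to get $\PP(A(v,-)^\perp/v)$, and read off the dimension from the rank of $a\mapsto a(v,-)$. The only difference is that you spell out, via the Pfaffian along a pencil, why that rank is at least $2$ — a point the paper leaves implicit, having already established in Lemma \ref{lemma:HPD of Gr} that no two independent forms in $A$ can share a kernel vector.
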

\begin{proof}
On the ambient $\Gr(2,V)$, consider the dual map
\[
v: \CU^\perp \to V^* \otimes \CO_{\Gr} \xrightarrow{\ev_v}  \CO_{\Gr}
\]
The zero locus $Z$ of $v$ is isomorphic to $\PP(V/v)$, where the embedding of $\PP(V/v)$ into $\Gr(2,V)$ sends $w \in V$ to $w \wedge v \in \Gr(2,V)$.

Denote by $Z_Y$ the zero locus of $v$ on $Y$, and note that it is the intersection of $\PP(V/v)$ and $\PP(A^\perp)$ inside $\PP(\Lambda^2V^*)$, namely 
\[
Z_Y = \PP(V/v) \cap \PP(A^\perp) = \PP(A(v,-)^\perp/v)
\]
It follows that if $v= \ker(a)$ for some $a \in A$, then $A(v,-)^\perp$ is $3$-dimensional and $Z_Y$ is a line, otherwise $A(v,-)^\perp$ is $2$-dimensional and $Z_Y$ is a point.
\end{proof}

\begin{corollary}\label{cor:ideal of a line}
Any map $a:\CU \to \CU^\perp$ is injective and has cokernel isomorphic to the ideal of a line.
\end{corollary}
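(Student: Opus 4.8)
The plan is to identify the map with an element $a$ of $A = \Hom(\CU,\CU^\perp)$ (Lemma \ref{lemma:cohomologies on Y}), which we may assume nonzero, and to exhibit $\coker(a)$ as the ideal of the zero locus of the distinguished vector $\ker(a) \in V$, which is a line by Lemma \ref{zerolocusV}.

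First I would prove injectivity using stability (Lemma \ref{lemma:stability on Y}) together with an integrality argument. If $a$ were not injective, its image $\Image(a)$ would be a rank $1$ sheaf, simultaneously a nonzero quotient of $\CU$ and a nonzero subsheaf of $\CU^\perp$. Stability of $\CU$ forces $\mu(\Image(a)) > \mu(\CU) = -1/2$, while stability of $\CU^\perp$ forces $\mu(\Image(a)) < \mu(\CU^\perp) = -1/3$. But $\Image(a)$ has rank one, so its slope is an integer, and no integer lies in $(-1/2,-1/3)$; contradiction. Hence $a$ is injective and $Q := \coker(a)$ is a rank $1$ sheaf fitting in $0 \to \CU \xrightarrow{a} \CU^\perp \to Q \to 0$ (a Chern character computation from \eqref{Cherntable} gives $\ch(Q) = 1 - L$, matching the ideal of a line).

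Next I would build a map $Q \to \CO_Y$. Since all forms in $A$ have rank $4$ (Lemma \ref{lemma:HPD of Gr}), the kernel $v := \ker(a)$ is a well-defined point of $\PP(V)$, giving as in Lemma \ref{zerolocusV} a map $v\colon \CU^\perp \to \CO_Y$. By the description of the composition pairing in Lemma \ref{lemma:cohomologies on Y} and diagram \eqref{eq:HomUUperp}, the composite $\CU \xrightarrow{a} \CU^\perp \xrightarrow{v} \CO_Y$ sends $u$ to $a(u,v)$, which vanishes because $v \in \ker(a)$. Therefore $v$ factors through a map $\bar v\colon Q \to \CO_Y$, and $\Image(\bar v) = \Image(v) = \CI_Z$, where $Z$ is the zero locus of $v$, a line by Lemma \ref{zerolocusV}.

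The main point, and the step I expect to be the real obstacle, is to show that $\bar v$ is injective, i.e. that $Q$ is genuinely the ideal sheaf $\CI_Z$ and not merely generically isomorphic to it. For this I would consider $\tilde K := \ker(v\colon \CU^\perp \to \CO_Y)$; it is the kernel of a map from a locally free sheaf to the torsion-free sheaf $\CO_Y$, hence reflexive of rank $2$, and it sits between $\Image(a) \cong \CU$ and $\CU^\perp$. Stability of $\CU^\perp$ gives $\mu(\tilde K) < -1/3$, while $\tilde K/\Image(a)$ being an effective torsion quotient forces $c_1(\tilde K) \ge c_1(\CU) = -1$, i.e. $\mu(\tilde K) \ge -1/2$. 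The only integer $c_1(\tilde K)$ compatible with $-1/2 \le \mu(\tilde K) < -1/3$ is $c_1(\tilde K) = -1$, so $\tilde K/\Image(a)$ is supported in codimension $\ge 2$; as $\Image(a) \cong \CU$ and $\tilde K$ are both reflexive and agree in codimension $1$, the $S_2$ property forces $\Image(a) = \tilde K$. Hence $\ker(\bar v) = \tilde K/\Image(a) = 0$, so $\bar v$ is injective and $Q \cong \Image(v) = \CI_Z$ is the ideal of the line $Z$, as claimed.
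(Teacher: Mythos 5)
Your proof is correct, and the first half (injectivity via the slopes $-1/2$ and $-1/3$ of the stable bundles $\CU$ and $\CU^\perp$, with no integer in between) is word for word the paper's argument. Where you diverge is in identifying $\coker(a)$ with the ideal sheaf. The paper extends $a$ to the four-term complex $\CU \to \CU^\perp \xrightarrow{\ker(a)} \CO_Y \to \CO_{L}$, observes via Lemma \ref{zerolocusV} that the only possibly nonzero cohomology sits at the $\CU^\perp$ spot, and then kills that single cohomology sheaf by computing from table \eqref{Cherntable} that the alternating Chern character of the complex vanishes (a nonzero coherent sheaf has nonzero leading Chern class). You instead factor $v = \ker(a)$ through $Q = \coker(a)$ and prove the factored map is injective by a local argument: $\ker(v)$ is reflexive of rank $2$ by Hartshorne's criterion, pinched between $\mu \ge -1/2$ (torsion quotient of $\CU$) and $\mu < -1/3$ (stability of $\CU^\perp$), so it agrees with $\Image(a) \cong \CU$ in codimension one and hence everywhere by reflexivity. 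Your route avoids the "zero Chern character implies zero sheaf" step at the cost of invoking reflexivity of kernels and codimension-two rigidity of reflexive sheaves — both of which the paper uses elsewhere anyway — and it has the mild advantage of exhibiting the isomorphism $\coker(a) \cong \CI_L$ as the concrete map induced by $\ker(a)$ rather than deducing exactness abstractly. Both arguments are complete; the paper's is shorter, yours is more self-locating.
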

\begin{proof}
Any non-zero map from $\CU$ to $\CU^\perp$ is injective. 
Indeed, the two bundles are stable by lemma \ref{lemma:stability on Y} and have slope respectively $-1/2$ and $-1/3$, so that, if a non-zero map is not injective, either the kernel destabilizes $\CU$ or the image destabilizes $\CU^\perp$.

Extend the map $a$ to a complex
\begin{equation} \label{eq:zerolocusV}
\begin{diagram} 
\{ \CU	& \rTo^{a}	& \CU^\perp	& \rTo^{\ker(a)}	& \CO_Y & \rTo & \CO_L \}
\end{diagram}
\end{equation}
By lemma \ref{zerolocusV}
\[
\begin{diagram} 
\CU^\perp	& \rTo^{\ker(a)}	& \CO_Y & \rTo & \CO_L & \rTo & 0
\end{diagram}
\]
is exact. 
As $a: \CU \to \CU^\perp$ is injective, the complex \eqref{eq:zerolocusV} is equivalent to the shift of a sheaf.
By table \eqref{Cherntable}, the Chern character of \eqref{eq:zerolocusV} vanishes, so that it is an exact sequence.
\end{proof}

Not only we can associate a line with any map from $\CU$ to $\CU^\perp$, but also the converse is true. 
Moreover, the correspondence works in families and gives rise to the universal family of lines in $Y$.

\begin{proposition}\label{prop:fano of lines}
$\PP(A)$, $\PP(\Hom(\CU,\CU^\perp))$ and the Hilbert scheme of lines $\Hilb_L(Y)$ on $Y$ are isomorphic. Moreover, the exact sequence
\begin{equation}\label{eq:fano of lines 4}
0 \to \CO_{\PP(A)}(-3) \boxtimes \CU \to \CO_{\PP(A)}(-2) \boxtimes \CU^{\perp} \to \CO_{\PP(A) \times Y} \to \CO_\UL \to 0
\end{equation}
is a resolution of the structure sheaf of the universal family $\UL$ of lines on $Y$.
\end{proposition}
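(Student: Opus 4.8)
The plan is to dispose of the tautological identification, then build the universal complex over $\PP(A)\times Y$ and prove it is a resolution by a fibrewise argument, and finally upgrade the resulting classifying morphism to an isomorphism. By Lemma \ref{lemma:cohomologies on Y} there is a canonical ($SL_2$-equivariant) isomorphism $\Hom(\CU,\CU^\perp)\cong A$, so $\PP(A)\cong\PP(\Hom(\CU,\CU^\perp))$, and I work on $\PP(A)\times Y$. Composing the tautological inclusion $\CO_{\PP(A)}(-1)\hookrightarrow A\otimes\CO_{\PP(A)}$ (boxed with $\CU$) with the evaluation $A\otimes\CU\to\CU^\perp$ attached to $A=\Hom(\CU,\CU^\perp)$ gives $\CO_{\PP(A)}(-1)\boxtimes\CU\to\CO_{\PP(A)}\boxtimes\CU^\perp$; twisting by $\CO_{\PP(A)}(-2)$ yields the first map $\alpha$ of the sequence, whose restriction to $\{[a]\}\times Y$ is $a\colon\CU\to\CU^\perp$. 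For the second map I use $\sigma$: by Definition \ref{def:sigma} and Remark \ref{rmk:sigma embedding} the assignment $[a]\mapsto[\ker a]$ is induced by a linear map $S^2A\to V$, and under $V\cong\Hom(\CU^\perp,\CO_Y)$ (Lemma \ref{lemma:cohomologies on Y}, together with $(\CU^\perp)^*\cong V/\CU$ from the proof of Lemma \ref{lemma:stability on Y}) this is exactly the datum of $\beta\colon\CO_{\PP(A)}(-2)\boxtimes\CU^\perp\to\CO_{\PP(A)\times Y}$, restricting on $\{[a]\}\times Y$ to $\ker a\colon\CU^\perp\to\CO_Y$.

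Next I show the complex is a resolution. Fibrewise $\beta\circ\alpha$ is $\ker a\circ a=a(\ker a,-)=0$ (the defining property of $\ker a$, compatible with the pairing $A\otimes V\to V^*$ of Lemma \ref{lemma:cohomologies on Y} and diagram \eqref{eq:HomUUperp}); since $\beta\circ\alpha$ is a global section of the locally free sheaf $\CO_{\PP(A)}(3)\boxtimes\CU^*$ vanishing on every slice $\{[a]\}\times Y$ and $\PP(A)\times Y$ is integral, $\beta\circ\alpha=0$. Now regard $C^\bullet=[\,\CO_{\PP(A)}(-3)\boxtimes\CU\xrightarrow{\alpha}\CO_{\PP(A)}(-2)\boxtimes\CU^\perp\,]$ in degrees $-1,0$. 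For each point $[a]\in\PP(A)$ the derived restriction $Lj_{[a]}^*C^\bullet$ is the honest complex $[\CU\xrightarrow{a}\CU^\perp]$, which by Corollary \ref{cor:ideal of a line} (sequence \eqref{eq:zerolocusV}) is quasi-isomorphic to $\CI_{L_a}$ in degree $0$. Corollary \ref{cor:qcoh flat}, applied to the flat projection $\PP(A)\times Y\to\PP(A)$ with $m=0$, then shows $C^\bullet$ is a single sheaf $\CN=\coker\alpha$, flat over $\PP(A)$, with $\CN|_{\{[a]\}\times Y}\cong\CI_{L_a}$; in particular $\alpha$ is injective. The vanishing of $\beta\circ\alpha$ lets $\beta$ factor as $\CN\to\CO_{\PP(A)\times Y}$, fibrewise the ideal inclusion $\CI_{L_a}\hookrightarrow\CO_Y$, and a second application of Corollary \ref{cor:qcoh flat} shows it is injective with cokernel $\CO_\UL$ flat over $\PP(A)$ and $\CO_\UL|_{\{[a]\}\times Y}\cong\CO_{L_a}$. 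This is the asserted resolution and it exhibits $\UL$ as a flat family of lines, hence a classifying morphism $\phi\colon\PP(A)\to\Hilb_L(Y)$.

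To see $\phi$ is an isomorphism I check it is bijective and that $\Hilb_L(Y)$ is smooth of dimension $2$. For surjectivity, given any line $L\subset Y$ I apply the Beilinson spectral sequence of Theorem \ref{thm:Beilinson} for the full exceptional collection \eqref{eq:coll} $\langle\CO_Y(-1),\CU,\CU^\perp,\CO_Y\rangle$ to $\CI_L$; evaluating the $E_1$-page via $0\to\CI_L\to\CO_Y\to\CO_L\to0$ and Lemmas \ref{lemma:cohomologies on Y} and \ref{lemma:splitting on lines} (twisting by the dual collection \eqref{eq:coll dual}), all terms vanish except those reducing to a two-term complex $0\to\CU\xrightarrow{a}\CU^\perp\to\CI_L\to0$, so $L=L_a=\phi([a])$. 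For injectivity, $L_a\subset\Gr(2,V)$ is the pencil of $2$-planes containing $\langle\ker a\rangle\subset V$ (Lemma \ref{zerolocusV}), so $L_a$ determines $[\ker a]=\sigma([a])$, and $\sigma$ is a closed embedding (Remark \ref{rmk:sigma embedding}), whence $[a]$. Finally, restricting the resolution to $L$ and dualizing computes $N_{L/Y}$, giving $h^0(N_{L/Y})=2$ and $h^1(N_{L/Y})=0$ for every line; thus $\Hilb_L(Y)$ is smooth of dimension $2$ everywhere, and a bijective morphism of smooth projective surfaces over $\CC$ is an isomorphism. This yields $\PP(A)\cong\PP(\Hom(\CU,\CU^\perp))\cong\Hilb_L(Y)$.

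I expect the main technical work to be the Beilinson cohomology computation establishing surjectivity: one must compute $\Ext^\bullet({}^\vee E_i,\CI_L)$ (equivalently the cohomology of $\CI_L$ twisted by the members of \eqref{eq:coll dual}) uniformly for all lines $L$, including the special ones, and verify the $E_1$-page degenerates to the two-term shape. The identification $\beta\leftrightarrow\sigma$ and the vanishing $\beta\circ\alpha=0$ are conceptually the delicate points, but once the pairing $A\otimes V\to V^*$ of Lemma \ref{lemma:cohomologies on Y} is available they are formal; the remaining care lies in tracking the twists by $\CO_{\PP(A)}(k)$ and in the two base-change applications of Corollary \ref{cor:qcoh flat}.
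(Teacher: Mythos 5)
Your construction of the resolution over $\PP(A)\times Y$ and the two applications of Corollary \ref{cor:qcoh flat} coincide with the paper's argument, but your route to the isomorphism with the Hilbert scheme is genuinely different. The paper proves the universal property directly: given \emph{any} flat family of lines $\pi\colon L\to S$, it runs the relative Beilinson spectral sequence of Theorem \ref{thm:relative Beilinson} to produce a resolution $0\to\scrL_\CU\boxtimes\CU\to\scrL_{\CU^\perp}\boxtimes\CU^\perp\to\CO_{S\times Y}\to\CO_L\to 0$, extracts a classifying map $S\to\PP(A)$ from the first arrow, and checks the two correspondences are mutually inverse, the nontrivial direction resting on Lemma \ref{lemma:no ext^1 from codimension 2} (the ideal sheaf of a codimension-$2$ family determines the family). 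You instead only decompose $\CI_L$ for a single line (absolute Beilinson), deduce that your classifying morphism $\phi\colon\PP(A)\to\Hilb_L(Y)$ is surjective, get injectivity from the fact that $L_a$ recovers $\sigma([a])=[\ker a]$ with $\sigma$ an embedding, and then conclude by smoothness of the Hilbert scheme ($h^0(N_{L/Y})=2$, $h^1(N_{L/Y})=0$ in both splitting types) plus Zariski's main theorem in characteristic $0$. Both arguments are sound. The paper's version buys the functorial statement in families, which is reused verbatim for conics and cubics and is what later naturality claims (e.g.\ in Theorem \ref{thm:relative Beilinson}-based computations of $\Phi_\CL$) actually invoke; yours is lighter, avoiding the relative spectral sequence and the codimension-$2$ lemma at the cost of a deformation-theoretic input (the normal bundle computation, which the paper only derives afterwards in Proposition \ref{prop:special lines and normal bundle} — your ordering avoids circularity since you establish the resolution and surjectivity first). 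The one point to flag is that your argument, as written, identifies $\PP(A)$ with $\Hilb_L(Y)$ as schemes but yields the bijection on $S$-points only a posteriori through the universal property of the Hilbert scheme, rather than exhibiting the inverse correspondence explicitly as the paper does.
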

\begin{proof}
First, we construct a family of lines on $\PP(A) \times Y$, later on we will prove it is the universal family.

By lemma \ref{lemma:cohomologies on Y}, over $\PP(A) \times Y$ there is a universal map from $\CU$ to $\CU^\perp$
\[
\CO_{\PP(A)}(-3) \boxtimes \CU \to \CO_{\PP(A)}(-2) \boxtimes \CU^{\perp}
\]
The twist by $\CO_{\PP(A)}(-3)$ is chosen for compatibility with future notation. 
The above map extends to an $SL_2$-equivariant complex
\begin{equation}\label{eq:fano of lines 2}
\CO_{\PP(A)}(-3) \boxtimes \CU \to \CO_{\PP(A)}(-2) \boxtimes \CU^{\perp} \to \CO_{\PP(A) \times Y}
\end{equation}
By corollary \ref{cor:ideal of a line} the restriction of \eqref{eq:fano of lines 2} to any point of $\PP(A)$ is a pure object on $Y$, so that by criterion \ref{cor:qcoh flat} the complex \eqref{eq:fano of lines 2} is equivalent to a pure object flat over $\PP(A)$.

As a consequence we have an exact sequence
\begin{equation}\label{eq:fano of lines 3}
0 \to \CO_{\PP(A)}(-3) \boxtimes \CU \to \CO(-2)_{\PP(A)} \boxtimes \CU^{\perp} \to \CO_{\PP(A) \times Y} \to \CO_{\CL} \to 0
\end{equation}
where $\CL$ is a closed subscheme in $\PP(A) \times Y$ flat over $\PP(A)$. By $\eqref{eq:zerolocusV}$ it is a family of lines.


Now, assume a flat family of lines $\pi: L \to S$ over a base $S$ is given. We want to construct a map $f: S \to \PP(A)$ associated to $L$.
By theorem \ref{thm:relative Beilinson} there is a relative Beilinson spectral sequence
\[
R^q\pi_*(\CO_L \otimes (\Lvee E_{n-p})^*)\boxtimes E_{p+1} \xRightarrow{} \CO_L
\]
with respect to the full exceptional collection $\CO_Y(-1), \CU, \CU^\perp, \CO_Y$ and its left dual collection $\CO_Y, \CU^*, \CU^\perp(1), \CO_Y(1)$ (see proposition \ref{prop:full exceptional}).

We will now discuss the vanishing of most of the entries of the relative Beilinson spectral sequence.
Note that we can substitute $R^{>1}\pi_*(\CO_L \otimes -) = 0$ as the relative dimension of $L$ over $S$ is $1$.
It follows that the first page of the spectral sequence is
\begin{equation}\label{diag:Beilinson for lines}
\begin{diagram}
R^1\pi_*\CO_L(-1) \otimes \CO_Y(-1)	&\rTo	& R^1\pi_*\restr{V/\CU(-1)}{L} \otimes \CU	& \rTo	& R^1\pi_*\restr{\CU}{L}\otimes \CU^{\perp} 	& \rTo	& R^1\pi_*\CO_L \otimes \CO_Y 	\\
	&\rdTo(4,2)^{d_2}	&	&\rdTo(4,2)^{d_2}	&	&	& 	\\
\pi_*\CO_L(-1) \otimes \CO_Y(-1)	&\rTo	& \pi_*\restr{V/\CU(-1)}{L} \otimes \CU	& \rTo	& \pi_*\restr{\CU}{L}\otimes \CU^{\perp} 	& \rTo	& \pi_*\CO_L \otimes \CO_Y 	
\end{diagram}
\end{equation}
As $\CO_L(-1)$ is acyclic, the column $p = 0$ vanishes.
By base change and lemma \ref{lemma:splitting on lines}, we can use theorem \ref{cor:qcoh flat} on $R\pi_*(\CO_L \otimes V/ \CU (-1))$ and deduce that it is equivalent to a line bundle $\scrL_{\CU}$. 
Analogously, $R\pi_*(\CO_L \otimes \CU) \cong \scrL_{\CU^\perp}$ and $R\pi_*(\CO_L) \cong \CO_S$.
The spectral sequence \eqref{diag:Beilinson for lines} becomes
\[
\begin{diagram}
0	&\qquad \qquad	& 0 					& 	& 0 								& 	& 0	\\
	&		&								&	&								&	& 	\\
0	&		& \pi_*\restr{V/\CU(-1)}{L} \otimes \CU	&\rTo	& \pi_*\restr{\CU}{L}\otimes \CU^{\perp} 	&\rTo	& \pi_*\CO_L \otimes \CO_Y 	
\end{diagram}
\]

Altogether, we have an exact sequence
\begin{equation} \label{eq:fano of lines}
0 \to \scrL_\CU \boxtimes \CU \to \scrL_{\CU^\perp} \boxtimes \CU^\perp \to \CO_{S \times Y} \to \CO_L \to 0
\end{equation}
on $S \times Y$.

Take $\CHom(\CU, -)$ of the first map of sequence \eqref{eq:fano of lines} and push it forward to $S$. The result is a map
\begin{equation} \label{eq:fano of lines 5}
\scrL_\CU \to A \otimes \scrL_{\CU^\perp}
\end{equation}
As $\CO_L$ is flat over $S$, when we base change to $s \times Y$ the sequence \eqref{eq:fano of lines} is still exact, so that the map \eqref{eq:fano of lines 4} is an injection of vector bundles, and therefore defines a unique map $f: S \to \PP(A)$ such that the pullback of the universal
\[
\CO_{\PP(A)} (-1) \to A \otimes \CO_{\PP(A)}
\]
is \eqref{eq:fano of lines 5} twisted by $\scrL_{\CU^\perp}^*$.

So far we have constructed two correspondences: one associates a map $S \to \PP(A)$ with a family $\pi: L \to S$, the other associates a family to a map $S \to \PP(A)$ by pulling back $\CL$.
We need to check that they are natural and that they are mutually inverse.

The naturality follows easily from their definitions and from the functoriality of the Beilinson spectral sequence, which is discussed in theorem \ref{thm:relative Beilinson}.

The fact that if we start from a map from $S \to \PP(A)$, pullback the universal $\CL$ and use it to induce a map from $S \to \PP(A)$, then we recover the initial map is also a consequence of the functoriality of the Beilinson spectral sequence.

The fact that if we start from a family of lines $\pi: L \to S$, induce a map $S \to \PP(A)$ and pullback the universal $\CL$ via this map, then we recover the initial map can be explained in the following way. 
By definition of the two correspondences we recover the ideal sheaf of the family $L \subset S \times Y$. 
As $\codim(L) = 2$ and by lemma \ref{lemma:no ext^1 from codimension 2}, the ideal sheaf determines the subscheme $L$.
\end{proof}

\begin{remark}In proposition \ref{prop:fano of lines} we have established an $SL_2$-equivariant isomorphism between the Hilbert scheme of lines on $Y$ and $\PP(A)$.
Under the action of $SL_2$, $\PP(A)$ has two orbits: the open one and the conic $\CQ$ which is the zero locus of unique invariant form $q \in S^2A^*$. 

The lines corresponding to points on $\CQ$ are known in the literature as special lines, and we will see in proposition \ref{prop:special lines and normal bundle} that they are characterized by the fact that their normal bundle in $Y$ is not trivial.
\end{remark}

The following definition sets the notation which we are going to use for the identification of the Hilbert scheme of lines and of $\PP(A)$.
\begin{definition}\label{def:notation for lines}
Given an element $a \in A$, denote by $L_a$ the corresponding line in $Y$. Given a line $L$ in $Y$, denote by $a_L$ any element in $A$ such that $L$ corresponds to $[a_L] \in \PP(A)$.
\end{definition}

We are now going to state and prove a few consequences of proposition \ref{prop:fano of lines}.
We will denote the projections from the universal line $\UL$ to $\PP(A)$ and $Y$ by $r_A$ and by $r_Y$, 
as in the following diagram
\[
\begin{diagram}
		&			& \CL	&			&	\\
		& \ldTo^{r_A}	&		& \rdTo^{r_Y}	&	\\
\PP(A)	&			&		&			& Y	
\end{diagram}
\]

In what follows, given a point $P \in Y$, $U_P$ is the fiber of $\CU$ at $P$. All lines in $Y$ arise by the following simple construction.
\begin{corollary}\label{cor:lines through P and Q}
Given two points $P, Q \in Y$, there is a line connecting them if and only if $U_P$ and $U_Q$ have non-trivial intersection. 
In this case the line is given by
\[
\PP\left( \frac{U_P + U_Q}{ \ker a} \right) \subset \Gr(2,V)
\]
\end{corollary}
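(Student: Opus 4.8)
The plan is to reduce both implications to the description of lines as zero loci of sections of $V/\CU$ from Lemma \ref{zerolocusV}. Throughout I take $P \neq Q$, so that a line \emph{connecting} the two points is unambiguous (recall that $Y \hookrightarrow \Gr(2,V)$ is an embedding, so $P \neq Q$ is the same as $U_P \neq U_Q$). A point $R \in Y$ is a $2$-plane $U_R \subset V$, and by Lemma \ref{zerolocusV} together with Proposition \ref{prop:fano of lines} every line $L \subset Y$ is the zero locus of the section of $V/\CU$ attached to $v = \ker a$ for a unique $[a] \in \PP(A)$; concretely $L = \{\, U : \ker a \subseteq U \subseteq A(\ker a,-)^\perp \,\}$.

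For the \emph{only if} direction I would argue as follows. Suppose a line $L$ passes through $P$ and $Q$. Writing $L$ in the form above for the appropriate $a \in A$, every $2$-plane parametrized by $L$ contains $\ker a$; in particular $\ker a \in U_P \cap U_Q$, whence $U_P \cap U_Q \neq 0$.

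For the \emph{if} direction, assume $U_P \cap U_Q \neq 0$ and fix $0 \neq v \in U_P \cap U_Q$. The main step is to produce $a \in A$ with $v = \ker a$. Since $P, Q \in Y$, every $a \in A$ restricts to zero on $U_P$ and on $U_Q$, so the functional $a(v,-) \in V^*$ vanishes on $U_P$ and on $U_Q$, hence on $U_P + U_Q$. As $P \neq Q$ and $v \in U_P \cap U_Q$ we have $\dim(U_P \cap U_Q) = 1$ and $\dim(U_P + U_Q) = 3$, so the linear map $A \to V^*,\ a \mapsto a(v,-)$ takes values in the $2$-dimensional space $(U_P + U_Q)^\perp$. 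A $3$-dimensional space cannot inject into a $2$-dimensional one, so the kernel is nonzero; for any nonzero $a$ in it one has $a(v,-) = 0$, i.e. $v \in \ker a$, and since every form in $A$ has rank $4$ this forces $\ker a = \langle v \rangle$.

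Finally I would assemble the conclusion and the formula. By Lemma \ref{zerolocusV} the zero locus of $v$ is then a line $L = \PP(A(v,-)^\perp/v) \subset Y$. Both $U_P$ and $U_Q$ contain $v$ and, by the computation above, are annihilated by all $a(v,-)$, so $U_P, U_Q \subseteq A(v,-)^\perp$ and therefore $P, Q \in L$. Since $U_P + U_Q \subseteq A(v,-)^\perp$ with both spaces of dimension $3$, they coincide, giving $L = \PP\big((U_P + U_Q)/\ker a\big)$, as claimed. I expect the dimension count producing $a$ with $v = \ker a$ to be the only delicate point: it is exactly the place where the defining property of $Y$ — that every form in $A$ vanishes on each tautological $2$-plane — is used, and it is the heart of the argument.
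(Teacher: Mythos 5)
Your proof is correct and follows essentially the same route as the paper: both directions reduce to Lemma \ref{zerolocusV} via the key observation that $A(v, U_P+U_Q)=0$ for $v\in U_P\cap U_Q$. The only cosmetic difference is that you first produce $a\in A$ with $\ker a=\langle v\rangle$ by a dimension count into the $2$-dimensional space $(U_P+U_Q)^\perp$, whereas the paper passes directly to the pencil $\PP\bigl((U_P+U_Q)/v\bigr)$ of isotropic planes; both arguments are sound and interchangeable.
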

\begin{proof}
By proposition \ref{prop:fano of lines}, for $P$ and $Q$ to belong to a line $L$ it is equivalent that there exists $a \in A$ such that $\ker a = 0$ in both $V/U_P$ and $V/U_Q$. Therefore, if $P$ and $Q$ belong to $L$, the intersection $U_P \cap U_Q$ is not trivial.
In the other direction, if there exists $v \in U_P \cap U_Q$, then $A(v, U_P + U_Q) = 0$ inside $V^*$, so that $\PP((U_P + U_Q) / v)$ is a line which embeds in $Y$ and contains $P, Q$.
\end{proof}

\begin{corollary}\label{cor:3 to 1 cover}
The map $r_Y:\UL \to Y$ is a 3 to 1 cover. 
The multiplicity of preimages is $(1,1,1)$ over the open orbit, $(2,1)$ over the divisorial orbit and $(3)$ over the closed orbit.
\end{corollary}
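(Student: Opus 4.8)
The plan is to realise each fibre of $r_Y$ as the zero scheme of a binary cubic form and then to read off the multiplicities of its roots orbit by orbit. Fix a point $P = [U_P] \in Y$. By construction of $\UL \subset \PP(A)\times Y$ the fibre $r_Y^{-1}(P)$ is the scheme $\{[a]\in\PP(A) : P \in L_a\}$, and by Corollary \ref{cor:lines through P and Q} together with Lemma \ref{zerolocusV} one has $P\in L_a$ exactly when $\ker a \subset U_P$. Since $\ker a$ is the image of $[a]$ under the kernel map $\sigma$ of Definition \ref{def:sigma}, which is a closed embedding by Remark \ref{rmk:sigma embedding}, the fibre is carried isomorphically onto $\{[v]\in\PP(U_P) : v=\ker a \text{ for some } a\in A\}$. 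First I would feed this into the canonical evaluation $\ev_\CU : A\otimes\CU \to \CU^\perp$ of Lemma \ref{lemma:cohomologies on Y}: its fibre at $P$ is the map $A\otimes U_P \to U_P^\perp = \CU^\perp_P$ sending $(a,v)$ to $a(v,-)$, the target being $U_P^\perp$ precisely because $a$ vanishes on $U_P\times U_P$ for $[U_P]\in Y$. For fixed $v$ the partial map $A\to U_P^\perp$ is a homomorphism of $3$-dimensional vector spaces, and $v$ is a kernel vector exactly when this map fails to be injective. Thus the fibre is cut out on $\PP(U_P)\cong\PP^1$ by the vanishing of $\det(a\mapsto a(v,-))$, a form of degree $3$ in $v$.

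Next I would establish that this cubic never vanishes identically, which simultaneously gives finiteness and the degree. If it vanished for some $P$, every $[v]\in\PP(U_P)$ would be a kernel vector, so $\PP(U_P)$ would be a line inside $\sigma(\PP(A))$; but $\sigma$ embeds $\PP(A)=\PP^2$ by conics, so its image contains no line. Hence every fibre is a length-$3$ subscheme of a $\PP^1$, in particular finite. As $r_Y$ is proper with finite fibres it is finite, and since $\dim\UL = \dim Y = 3$ with $Y$ smooth, miracle flatness makes it finite flat; the constant fibre length shows $r_Y$ is a $3:1$ cover.

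For the multiplicity profiles I would use that the partition of $3$ recording the root multiplicities of the cubic is $SL_2$-equivariant, hence constant on each orbit of Proposition \ref{prop:orbit structure}. Over the open orbit I would avoid any computation: $r_Y$ is a finite degree-$3$ morphism of integral $\CC$-varieties, so it is generically \'etale, the generic fibre consists of three reduced points, and since the open orbit is dense this forces the profile $(1,1,1)$ there. For the remaining two orbits I would substitute the explicit representatives provided by Proposition \ref{prop:orbit structure} and the identifications of Section \ref{sec:SL2action}, namely the points of $Y$ corresponding to the binary sextics $x^6$ (closed orbit) and $x^5y$ (divisorial orbit): a direct evaluation of $\det(a\mapsto a(v,-))$ at these two two-planes should exhibit a perfect cube in the first case and a product of a double and a simple linear factor in the second, yielding the profiles $(3)$ and $(2,1)$.

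The hard part will be twofold. First, one must check that the scheme-theoretic fibre of $r_Y$ (a priori living in $\PP(A)$) really is isomorphic to the degeneracy scheme $V(\det(a\mapsto a(v,-)))\subset\PP(U_P)$, so that fibre multiplicities coincide with root multiplicities and not merely with the underlying point sets; I would argue this by identifying both with the degeneracy locus of the single homomorphism $A\otimes U_P \to U_P^\perp$. Second, distinguishing a double root from a triple root on the two special orbits genuinely requires the representative computation, since the connected stabilisers of those orbits (a torus and a Borel) could a priori fix either a $(2,1)$ or a $(3)$ configuration; only the explicit form of the cubic at $x^6$ and $x^5y$ separates the two cases.
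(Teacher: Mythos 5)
Your strategy is sound, and it genuinely diverges from the paper's at both points where the work happens. For the degree, the paper pushes the resolution \eqref{eq:fano of lines 4} forward along $r_Y$ and reads off $r_{Y*}\CO_{\UL}\cong\CO_Y\oplus\CU$, of rank $3$; you instead realise each fibre as a length-$3$ divisor on $\PP(U_P)$ and invoke miracle flatness. Your flagged identification of the scheme-theoretic fibre with $V(\det)$ does hold and can be extracted from the paper's own sequence \eqref{eq:embed P(A)}: restricted to $\PP(U_P)$ the map $V\otimes\CO\to A^*\otimes\CO(1)$ factors through the rank-$3$ quotient $V/U_P$ (since $a(U_P,U_P)=0$ for $[U_P]\in Y$), its cokernel restricts to a line bundle on $\Imsigma\cap\PP(U_P)$, and for a map of rank-$3$ bundles on a smooth curve whose cokernel is a line bundle on its support the annihilator coincides with the zeroth Fitting ideal, i.e.\ the support is cut by the determinant; dualizing identifies that determinant with your cubic, and $\Imsigma\cap\PP(U_P)\cong\sigma^{-1}(\PP(U_P))=r_Y^{-1}(P)$ because $\sigma$ is a closed embedding. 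So that gap is fillable with tools already in the paper, though "the degeneracy locus of the single homomorphism $A\otimes U_P\to U_P^\perp$" needs to be rephrased as the degeneracy locus of the induced pencil of maps $A\to U_P^\perp$ over $\PP(U_P)$.

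The real divergence is in the multiplicity profiles. The paper computes nothing at a representative: it notes that the ramification divisor is linearly equivalent to $r_A^*\CO_{\PP(A)}(2)$, hence by $\SSL_2$-invariance equals $r_A^{-1}(\CQ)$, which forces $(2,1)$ over the divisorial orbit, and it gets $(3)$ over the closed orbit from remark \ref{rmk:mukai umemura}, since every line meeting the closed orbit is tangent to it and a smooth curve has a unique tangent line at a point, so the three lines through such a point coincide. You replace this with an explicit evaluation of the cubic at the planes $U=\langle x^4,x^3y\rangle$ and $U=\langle x^4,x^2y^2\rangle$ corresponding to $x^6$ and $x^5y$. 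That would work, and would arguably make the $(2,1)$ case more transparent than the paper's terse invariance argument; but you have not carried out the evaluation, and as you yourself observe it is the only thing in your argument that separates $(2,1)$ from $(3)$ on those two orbits. As written, your proposal fully establishes the $3$ to $1$ statement and the profile $(1,1,1)$ over the open orbit, while the profiles over the other two orbits rest on an unexecuted calculation: either perform it, or substitute the paper's ramification-divisor and tangent-line argument for that step.
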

\begin{proof}
It is enough to prove that the family of lines through any point is finite and that the rank of $r_{A*} \CO_\UL$ is 3.

The family of lines through any point is finite by the following argument. 
Assume there are infinitely many lines through $P \in Y$.  
By proposition \ref{prop:fano of lines} all lines have a resolution of the form \eqref{eq:zerolocusV}, so that each vector in $U_P$ is the kernel of some $a \in A$.
It follows that $\PP(U_P) \subset \sigma(A)$, against the fact that the $\sigma(A)$, being a surface of degree $2$, does not contain lines.

As for $r_{A*} \CO_\UL$, it is easily computed by pushing forward the resolution \eqref{eq:fano of lines 4}. 
The resulting spectral sequence shows that
$r_{A*} \CO_\UL \cong \CO \oplus \CU$.

By proposition \ref{prop:fano of lines}, the ramification divisor of $r_Y$ is linearly equivalent to $r_A^*\CO_{\PP(A)}(2)$. 
As it is $\SSL_2$-invariant,
it has to be the preimage of special lines $r_A^{-1}(\CQ)$.
For the same reason the ramification type over the $2$-dimensional orbit in $Y$ is $(2,1)$.

Finally, lines intersecting the closed orbit span an $\SSL_2$-invariant divisor, so that by lemma \ref{lemma:mukai umemura 1} and remark \ref{rmk:mukai umemura} they are all tangent to the closed orbit.
\end{proof}
\begin{remark}
By corollary \ref{cor:3 to 1 cover}, the family of lines through a general point in $Y$ is not connected, so that condition \eqref{itm:irreducible fiber} of theorem \ref{thm:GM} is not verified.
\end{remark}

We will now describe the intersection pairing between lines in $Y$.
Given two distinct lines $L_1$ and $L_2$ with ideals $I_1$ and $I_2$ there are several equivalent ways to check whether they intersect or not. 
We look for a condition which is minimal among the ones which cut the closure of the locus of intersecting distinct lines inside $\PP(A) \times \PP(A)$.

Let $SL_2$ act on $\PP(A) \times \PP(A)$ componentwise. 
Then the space $H^0(\CO(1,1))$ contains a unique $SL_2$-invariant line.
Indeed, by Littlewood--Richardson it is $SL_2$-isomorphic to $A^* \otimes A^* = V \oplus A \oplus \CC$. 
Under the decomposition $A^* \otimes A^* \cong \Lambda^2 A^* \oplus S^2 A^*$, the $SL_2$-invariant element is symmetric as $\Lambda^2 A^* \cong A$.

As a consequence, we have selected an $SL_2$-invariant effective divisor inside $\PP(A) \times \PP(A)$. 
We will denote it by $\IQ$. 
Note that, by means of the $SL_2$-invariant quadric $q$, we can identify $A$ and $A^*$, so that we find an $SL_2$-equivariantly embedded flag variety $\Flag(1,2;A)$ inside $\PP(A) \times \PP(A)$. 
Moreover the flag variety is an effective divisor of type $(1,1)$, so that by uniqueness of such a divisor we have
\begin{equation}\label{eq:IQ is flag}
IQ = \Flag(1,2;A) \subset \PP(A) \times \PP(A)
\end{equation}

\begin{proposition} \label{prop:intersecting lines}
$\IQ$ is the closure of the locus of distinct intersecting lines.
\end{proposition}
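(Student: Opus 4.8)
The plan is to reduce the incidence of two lines to a single $SL_2$-equivariant condition on $A$ and then match that condition with the defining equation of $\IQ$. Throughout I write $v_a = \ker(a) \in \PP(V)$ for $a \in A$; note that $v_a$ is exactly $\sigma([a])$, since for a rank-$4$ form $a\wedge a \in \Lambda^4 V^* \cong V$ spans the kernel line (compare definition \ref{def:sigma} and remark \ref{rmk:sigma embedding}), so $a \mapsto v_a$ is the closed embedding $\sigma$.

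First I would describe meeting lines explicitly. By lemma \ref{zerolocusV} the points of $L_a$ are the $2$-planes $U$ with $v_a \subset U \subset A(v_a,-)^\perp$. Since $\sigma$ is injective, two distinct lines $L_{a_1}, L_{a_2}$ have $v_{a_1} \neq v_{a_2}$, so a common point must be $U = \langle v_{a_1}, v_{a_2}\rangle$; conversely this $U$ lies on both lines precisely when $v_{a_2} \in A(v_{a_1},-)^\perp$ (equivalently $v_{a_1}\in A(v_{a_2},-)^\perp$, the two being the same condition by skew-symmetry). Thus $L_{a_1}$ and $L_{a_2}$ meet if and only if $\langle v_{a_1},v_{a_2}\rangle$ is a $2$-plane of $Y$, i.e. $a'(v_{a_1},v_{a_2})=0$ for all $a'\in A$, i.e. the image of $v_{a_1}\wedge v_{a_2}$ in $\Lambda^2 V / A^\perp \cong A^*$ vanishes (this is the reformulation of corollary \ref{cor:lines through P and Q} for a pair of lines).

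This meeting condition is the vanishing of the $A^*$-valued form $\Phi \colon A \times A \to A^*$, $\Phi(a_1,a_2) = [\,v_{a_1}\wedge v_{a_2}\,]$, which is $SL_2$-equivariant, antisymmetric, and of bidegree $(2,2)$ because $v_a=\sigma(a)$ is quadratic in $a$. The heart of the argument is the identity
\[
\Phi(a_1,a_2) = c\,q(a_1,a_2)\,(a_1 \wedge a_2), \qquad c \in \CC^\times,
\]
where $q\in S^2A^*$ is the invariant quadratic form of lemma \ref{lemma:HPD of Gr} and $a_1\wedge a_2 \in \Lambda^2 A \cong A^*$. I would prove it by Schur's lemma: a bidegree-$(2,2)$ form $A\times A\to A^*$ is an element of $\Hom_{SL_2}(S^2 A \otimes S^2 A, A^*)$, and writing $A = S^2 W$, $S^2 A = S^4 W \oplus S^0 W$, one sees the irreducible $A^*\cong S^2 W$ occurs with multiplicity one in $S^2 A\otimes S^2 A$ (only through $S^4 W \otimes S^4 W$). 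Hence this $\Hom$ space is one-dimensional. Both $\Phi$ and $q(a_1,a_2)(a_1\wedge a_2)$ are nonzero elements of it — the former because two general lines of $Y$ do not meet, so $\Phi \not\equiv 0$; the latter obviously — so they are proportional with $c\neq 0$.

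Granting the identity, $\Phi(a_1,a_2)=0$ if and only if $q(a_1,a_2)=0$ or $a_1\wedge a_2=0$, the latter being the diagonal $\Delta$ where $L_{a_1}=L_{a_2}$. Thus away from $\Delta$ two distinct lines meet exactly when $q(a_1,a_2)=0$; and $\{q(a_1,a_2)=0\}$ is the invariant divisor of bidegree $(1,1)$, which by \eqref{eq:IQ is flag} is $\Flag(1,2;A)=\IQ$. As $\IQ$ is irreducible of dimension $3$ while $\Delta\cap\IQ$ (the diagonal over the conic $\CQ$ of special lines) is one-dimensional, the distinct intersecting lines are dense in $\IQ$, so their closure is $\IQ$. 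The main obstacle is the equivariant identity: its content is that the a priori degree-$(2,2)$ meeting condition is cut out, off the diagonal, by the single invariant form $q$. One could instead argue by uniqueness of the $SL_2$-invariant $(1,1)$-divisor established above, showing that the intersecting locus is a closed invariant divisor whose general fibre over the first factor is a single reduced line in $\PP(A)$; but checking that this fibre is a line, and not the four points a naive Bézout count on $\sigma^{-1}(A(v_{a_1},-)^\perp)$ would suggest, again rests on the factorization of $\Phi$ through $q$.
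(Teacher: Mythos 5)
Your proof is correct, and it hits the same two structural milestones as the paper's argument — reducing incidence of $L_{a_1}$ and $L_{a_2}$ to the vanishing of $A(\ker a_1,\ker a_2)\in A^*$, and then recognizing that this bidegree-$(2,2)$, $A^*$-valued condition is really cut by a single bidegree-$(1,1)$ form identified with $\IQ$ by uniqueness of the invariant $(1,1)$-divisor — but the way you pass from $(2,2)$ to $(1,1)$ is genuinely different. The paper composes the section $(a_1,a_2)\mapsto \ker a_1\wedge\ker a_2$ of $A^*\otimes\CO(2,2)$ with the evaluation surjection $A^*\otimes\CO\to\CO(1,0)\oplus\CO(0,1)$, checks fiberwise that $a_i(\ker a_i,\ker a_j)=0$, and concludes that the section factors through the kernel line bundle $\CO(-1,-1)$, hence comes from a section of $\CO(1,1)$. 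You instead invoke multiplicity one of $S^2W$ in $S^2A\otimes S^2A$ to get the closed formula $\Phi=c\,q(a_1,a_2)\,(a_1\wedge a_2)$. Both are sound; your route buys an explicit factorization (and in particular makes the behaviour on the diagonal, where $a_1\wedge a_2=0$, completely transparent), at the price of the representation-theoretic multiplicity computation and of having to check $\Phi\not\equiv 0$ — a point you should pin down by citing corollary \ref{cor:3 to 1 cover}: since only finitely many lines pass through each point, the incidence locus has dimension at most $3<4$, so two general lines do not meet. Your closing density argument ($\Delta\cap\IQ$ is the diagonal over the conic $\CQ$ of special lines, of dimension $1<3=\dim\IQ$) is also slightly more explicit than the paper's, which relies on the irreducibility of $\IQ$ coming from \eqref{eq:IQ is flag} without spelling out the closure step.
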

\begin{proof}
The $\SSL_2$-invariant conic in $S^2A^*$ induces an $\SSL_2$-invariant section
\begin{equation}\label{eq:alternative IQ}
\CO_{\PP(A)}(-2) \boxtimes \CO_{\PP(A)}(-2) \to V \boxtimes V \to A^* \otimes \CO_{\PP(A) \times \PP(A)}
\end{equation}
over $\PP(A) \times \PP(A)$.
The composition of \eqref{eq:alternative IQ} with the $\SSL_2$-invariant surjection
\begin{equation} \label{eq:alternative IQ 1}
A^* \otimes \CO_{\PP(A) \times \PP(A)} \to \CO_{\PP(A)}(1) \boxtimes \CO_{\PP(A)} \oplus \CO_{\PP(A)} \boxtimes \CO_{\PP(A)}(1)
\end{equation}
vanishes: one can check it fiberwise at $(a_1,a_2)$, where the composition of \eqref{eq:alternative IQ} and \eqref{eq:alternative IQ 1} becomes
\[
a_1^2 \otimes a_2^2 \mapsto \ker a_1 \otimes \ker a_2 \mapsto \ker a_1  \wedge \ker a_2 \mapsto a_1(\ker a_1 , \ker a_2 ) \oplus a_2(\ker a_1 , \ker a_2 ) = 0
\]

As a consequence, map \eqref{eq:alternative IQ} factors via the kernel of \eqref{eq:alternative IQ 1}. 
This provides us with an $\SSL_2$-invariant
\[
\CO_{\PP(A)}(-2) \boxtimes \CO_{\PP(A)}(-2) \to \CO_{\PP(A)}(-1) \boxtimes \CO_{\PP(A)}(-1)
\]
which vanishes at $(a_1, a_2)$ if and only if $A( \ker a_1, \ker a_2) = 0$, that is to say if and only if $\ker a_1 \wedge \ker a_2 \in Y$.
By description \ref{cor:lines through P and Q} and for $a_1 \neq a_2$, this happens if and only if $L_{a_1}$ and $L_{a_2}$ intersect.
\end{proof}

\begin{proposition}\label{prop:special lines and normal bundle}
A line $L = L_a$  
has normal bundle $\CO_L(-1) \oplus \CO_L(1)$ if and only if $(a,a) \in \IQ$, otherwise its normal bundle is trivial.
\end{proposition}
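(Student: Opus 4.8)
The plan is to compute the normal bundle $\CN_{L/Y}$ directly from the resolution of the ideal sheaf of $L$ given in Proposition~\ref{prop:fano of lines}, and then to relate the splitting type to the membership of $(a,a)$ in $\IQ$ using the description of $\IQ$ from Proposition~\ref{prop:intersecting lines}. Since $L$ is a line in a threefold and $\CN_{L/Y}$ has rank $2$ and degree $0$ (because $\omega_L \cong \restr{\omega_Y}{L}$ forces $\det \CN_{L/Y} \cong \CO_L$), the only two possibilities are $\CO_L \oplus \CO_L$ and $\CO_L(-1)\oplus\CO_L(1)$. So the whole question is whether $H^0(\CN_{L/Y}(-1)) = 0$ or not: the normal bundle is nontrivial precisely when $\CN_{L/Y}$ admits a sub-line-bundle of positive degree, equivalently when $h^0(\CN_{L/Y}) > 2$ (or $h^1(\CN_{L/Y}) \neq 0$). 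Hence it suffices to compute the cohomology of $\CN_{L/Y}$, or better of $\CN_{L/Y}(-1)$.

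**The key computation.** First I would identify $\CN_{L/Y}$ with $\CHom(\CI_L/\CI_L^2, \CO_L)$ and exploit the resolution
\begin{equation*}
0 \to \CU \xrightarrow{a} \CU^\perp \to \CI_L \to 0
\end{equation*}
coming from restricting \eqref{eq:fano of lines 4} (equivalently \eqref{eq:zerolocusV}) to the point $[a] \in \PP(A)$, together with the splitting types $\ST_L(\CU) = (-1,0)$ and $\ST_L(\CU^\perp) = (-1,0,0)$ from Lemma~\ref{lemma:splitting on lines}. The strategy is to restrict the two-term complex $\{\CU \xrightarrow{a} \CU^\perp\}$ to $L$ and compute the conormal bundle $\CN^*_{L/Y}$ as the cokernel of the induced map $\restr{\CU}{L} \to \restr{\CU^\perp}{L}$ (the map $a$ restricted to $L$ is injective as a bundle map away from the extra $\CO_L$-summand that produces $\CN^*$). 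Concretely, $\CN^*_{L/Y}$ is the middle cohomology governed by how the restricted map $\restr{a}{L}: \CO_L(-1)\oplus\CO_L \to \CO_L(-1)\oplus\CO_L\oplus\CO_L$ behaves; the splitting type of $\CN^*_{L/Y}$ is then read off from the degree of the induced map between the sub-line-bundles, and the jump from $(0,0)$ to $(1,-1)$ is detected by the vanishing of a single scalar—the value of the quadratic form $q$ on $a$.

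**Relating to $\IQ$.** Here is where I would connect to Proposition~\ref{prop:intersecting lines}. That proposition produces an $\SSL_2$-invariant map $\CO_{\PP(A)}(-2)\boxtimes\CO_{\PP(A)}(-2) \to \CO_{\PP(A)}(-1)\boxtimes\CO_{\PP(A)}(-1)$ whose vanishing locus is $\IQ$, and fiberwise at $(a_1,a_2)$ it is governed by $a_1(\ker a_1, \ker a_2)$-type pairings, i.e.\ by the invariant quadric $q$. Restricting to the diagonal, $(a,a) \in \IQ$ is exactly the condition $q(a) = 0$, i.e.\ $[a] \in \CQ$ is a special line. So the goal reduces to showing that the scalar controlling the jump in the splitting type of $\CN_{L/Y}$ is (a nonzero multiple of) $q(a)$. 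I would establish this either by the direct local computation of the restricted Koszul-type map above, or—more cleanly—by an $\SSL_2$-equivariance argument: the obstruction to triviality of $\CN_{L/Y}$ is a section of a line bundle on $\PP(A)$ that is $\SSL_2$-invariant, vanishes on a divisor (the jump locus is a proper closed subset since the generic line has trivial normal bundle, as follows from the genericity of the open orbit), and by the orbit structure of Proposition~\ref{prop:orbit structure} the only invariant divisor in $\PP(A)$ is $\CQ = \{q = 0\}$. This pins the jump locus to $\CQ$, hence to $\IQ \cap \Delta_{\PP(A)}$.

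**Main obstacle.** The delicate point is the explicit identification of the jump locus with $\CQ$ with the correct multiplicity, rather than with some multiple of $\CQ$ or an a priori larger divisor. The equivariance argument gives the set-theoretic answer cheaply, but to conclude that the normal bundle is nontrivial on all of $\CQ$ (and trivial exactly off it) one must verify that the invariant section does not vanish identically and that it vanishes to order exactly one along $\CQ$—otherwise one cannot exclude the jump locus being empty or being $\CQ$ with the wrong scheme structure. I expect to resolve this by checking the splitting type at a single convenient special line (for instance a line in the closed or divisorial orbit, using Corollary~\ref{cor:3 to 1 cover} and Remark~\ref{rmk:mukai umemura}) to confirm the normal bundle genuinely jumps there, combined with the trivial-normal-bundle computation at one general line; the two sample points then force the invariant section to be a nonzero multiple of $q$, completing the proof.
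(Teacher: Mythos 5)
Your proposal follows essentially the same route as the paper: both restrict the resolution $0\to\CU\xrightarrow{a}\CU^\perp\to\CI_L\to 0$ to $L$ (the paper phrases this as computing $\CO_L\Lotimes\CO_L$ and stripping off the $\Tor_0$ and $\Tor_2$ parts), read off $\CN^*_{L/Y}$ as the middle subquotient, reduce nontriviality of the normal bundle to the vanishing of the induced map between the $\CO_L(-1)$-summands, and identify the resulting locus with $\CQ$ (equivalently $(a,a)\in\IQ$) via $\SSL_2$-invariance — the paper by exhibiting the obstruction explicitly as a section of $\CO_{\PP(A)}(2)$ and invoking uniqueness of the invariant quadric, you by the equivalent device of checking sample points against the orbit structure of $\PP(A)$. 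The approach is sound and matches the paper's.
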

\begin{proof}
The derived tensor product
\[
\CO_L \Lotimes \CO_L
\]
is equivalent to $\CO_L[2] \oplus \CN_{L/Y}^*[1] \oplus \CO_L$.
By resolution \eqref{eq:fano of lines}, it is also equivalent to
\[
\CO_L(-1) \oplus \CO_L \to \CO_L(-1) \oplus \CO_L  \oplus \CO_L \to \CO_L
\]
By removing the $\Tor_0$ and $\Tor_2$ components, we are left with an exact sequence
\[
0 \to \CO_L(-1) \to  \CO_L(-1) \oplus \CO_L  \oplus \CO_L \to \CN_{L/Y}^*[1] \to 0 
\]
which shows that the normal bundle is nontrivial if and only if the $\CO_L(-1)$ component of the leftmost map in the above sequence vanishes.
By the description of lemma \ref{lemma:cohomologies on Y} for the maps from $\CU$ to $\CU^\perp$, this condition is equivalent to the fact that
\[
A(\kappa(a), \kappa(a)) = 0
\]
Note that this condition is equivalent to the vanishing of the composition
\begin{equation}\label{eq:special and normal}
\CO_{\PP(A)}(-1) \to A \otimes \CO_{\PP(A)} \to \Lambda^2V^* \otimes \CO_{\PP(A)} \to \Lambda^2 \kappa^*\CK^*
\end{equation}
Moreover, as $A(\sigma(a), \kappa(a)) = 0$ and $\sigma(a) \in \kappa(a)$, the composition \eqref{eq:special and normal} factors via
\[
\CO_{\PP(A)}(1) \cong \Lambda^2(\kappa^*\CK/\sigma^*\CO_{\PP(V)}(-1))^* \to \Lambda^2 \kappa^* \CK^*
\]
where the isomorphism on the left follows from lemma \ref{lemma:deg korth}.
As a consequence, the set of lines with nontrivial normal bundle correspond to the zero-locus of a section of $\CO_{\PP(A)}(2)$.
As the unique $\SSL_2$-invariant quadric is $\IQ$, it follows that the normal bundle to $L_a$ is non-trivial if and only if $(a,a)$ is in $\IQ$.
\end{proof}

\begin{remark}
In other words, two distinct lines intersect if and only if they are polar with respect to the unique $SL_2$-invariant conic $\CQ \subset \PP(A)$.
Proposition \ref{prop:special lines and normal bundle} says that $(a,a)$ lies on $\IQ$ if and only if there is an infinitesimal deformation of $L_a$ fixing a point of $L_a$.
%
\end{remark}

The resolution \eqref{eq:fano of lines} for the structure sheaf of the universal line allows us to compute the action of the Fourier--Mukai functor
\[
\Phi_{\CL}: \D^b(Y) \to \D^b(\PP(A))
\]
with kernel the structure sheaf of $\CL$.
We compute the transforms of a few bundles which we will need later.
\begin{lemma}\label{lemma:FM transform lines}
The following equalities hold
\begin{equation}\label{eq:FM transform lines}
\Phi_{\CL}(\CU(-1)) = \CO_{\PP(A)}(-3)[1], \qquad \Phi_{\CL}(\CU^\perp(-1)) = \CO_{\PP(A)}(-2)[1]
\end{equation}
Moreover,
\begin{equation}\label{eq:FM transform lines 2}
\Phi_{\CL}: A \cong \Hom(\CU(-1), \CU^\perp(-1)) \to \Hom(\CO_{\PP(A)}(-3), \CO_{\PP(A)}(-2)) \cong A^*
\end{equation}
is the unique (up to scalar) $SL_2$-equivariant isomorphism.
\end{lemma}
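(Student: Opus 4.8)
The plan is to read both transforms off the locally free resolution \eqref{eq:fano of lines 4} of $\CO_\CL$, and then to pin down the induced map on $\Hom$-spaces by an equivariance argument together with a single non-vanishing check. Write $\Phi_\CL(F) = Rp_{\PP(A)*}(\CO_\CL \Lotimes p_Y^* F)$, where $p_{\PP(A)}$ and $p_Y$ are the projections of $\PP(A)\times Y$. Since \eqref{eq:fano of lines 4} is a locally free resolution of $\CO_\CL$, I would tensor the three-term complex $[\CO_{\PP(A)}(-3)\boxtimes\CU\to\CO_{\PP(A)}(-2)\boxtimes\CU^\perp\to\CO_{\PP(A)\times Y}]$ (in degrees $-2,-1,0$) by $p_Y^* F$ and apply $Rp_{\PP(A)*}$. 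By the projection formula the computation reduces to the three graded pieces $R\Gamma(Y,\CU\otimes F)$, $R\Gamma(Y,\CU^\perp\otimes F)$ and $R\Gamma(Y,F)$, each tensored with the corresponding $\CO_{\PP(A)}(k)$.

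For $F=\CU(-1)$, using $\CU^*=\CU(1)$ and Serre duality on $Y$ (with $\omega_Y=\CO_Y(-2)$), one gets $R\Gamma(\CU\otimes\CU(-1))=\RHom(\CU,\CU(-2))\cong\RHom(\CU,\CU)^*[-3]=\CC[-3]$ because $\CU$ is exceptional in \eqref{eq:coll}; moreover $R\Gamma(\CU^\perp\otimes\CU(-1))=\RHom(\CU,\CU^\perp(-2))\cong\RHom(\CU^\perp,\CU)^*[-3]=0$ by semiorthogonality of \eqref{eq:coll}, and $R\Gamma(\CU(-1))=0$ by lemma \ref{lemma:cohomologies on Y}. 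Hence only the $\CO_{\PP(A)}(-3)$-piece survives, in a single cohomological degree, giving $\Phi_\CL(\CU(-1))=\CO_{\PP(A)}(-3)[1]$. For $F=\CU^\perp(-1)$ the analogous computation (tensoring \eqref{eq:definingsequence}, \eqref{eq:definingsequencedual} and reusing the same vanishings) yields $R\Gamma(\CU\otimes\CU^\perp(-1))=\RHom(\CU,\CU^\perp(-2))=0$, $R\Gamma(\CU^\perp\otimes\CU^\perp(-1))=\CC[-2]$ and $R\Gamma(\CU^\perp(-1))=0$, so only the $\CO_{\PP(A)}(-2)$-piece survives and $\Phi_\CL(\CU^\perp(-1))=\CO_{\PP(A)}(-2)[1]$. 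As a consistency check, lemma \ref{lemma:splitting on lines} shows that $\CU(-1)$ and $\CU^\perp(-1)$ restrict to every line with a single $H^1$, so corollary \ref{cor:qcoh flat} already guarantees that each transform is a line bundle placed in one cohomological degree; the resolution then identifies the line bundle.

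For \eqref{eq:FM transform lines 2}, lemma \ref{lemma:cohomologies on Y} identifies the source as $\Hom(\CU,\CU^\perp)\cong A$, while $\Hom(\CO_{\PP(A)}(-3),\CO_{\PP(A)}(-2))=H^0(\PP(A),\CO_{\PP(A)}(1))\cong A^*$ (the shifts cancel), so $\Phi_\CL$ induces a linear map $\phi\colon A\to A^*$. Since $\CL$ and all the identifications above are $SL_2$-equivariant, $\phi$ is a morphism of $SL_2$-representations between the irreducible $A\cong S^2W$ and its dual; by Schur's lemma $\Hom_{SL_2}(A,A^*)$ is one-dimensional, so $\phi$ is either zero or the unique (up to scalar) equivariant isomorphism, and it only remains to show $\phi\neq0$. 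I would do this by applying $\Phi_\CL$ to the tautological sequence $0\to\CU(-1)\xrightarrow{a}\CU^\perp(-1)\to I_{L_a}(-1)\to0$ from corollary \ref{cor:ideal of a line} (twisted by $\CO_Y(-1)$): if $\phi(a)=0$ the resulting triangle splits, forcing $\Phi_\CL(I_{L_a}(-1))\cong\CO_{\PP(A)}(-2)[1]\oplus\CO_{\PP(A)}(-3)[2]$. On the other hand $0\to I_{L_a}(-1)\to\CO_Y(-1)\to\CO_{L_a}(-1)\to0$ together with $\Phi_\CL(\CO_Y(-1))=0$ (again fiberwise $H^\bullet(\CO_L(-1))=0$ and corollary \ref{cor:qcoh flat}) gives $\Phi_\CL(I_{L_a}(-1))\cong\Phi_\CL(\CO_{L_a}(-1))[-1]$, which by proposition \ref{prop:intersecting lines} is supported on the proper subscheme of lines meeting $L_a$ and hence cannot be a sum of line bundles on all of $\PP(A)$ — a contradiction. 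Thus $\phi$ is an isomorphism.

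The main obstacle is precisely this last step, ruling out $\phi=0$. The vanishing results make the bookkeeping of the surviving term painless, but showing that the induced arrow between the transforms is genuinely nonzero requires either the cone computation above (which rests on the geometry of intersecting lines) or, alternatively, an explicit check that the differential of \eqref{eq:fano of lines 4} — which by the construction of proposition \ref{prop:fano of lines} is the tautological pairing in $A^*\otimes A=\Hom(A,A)$ — transforms to a nonzero section of $\CO_{\PP(A)}(1)$.
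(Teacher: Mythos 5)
Your proof is correct, and the first half is essentially the paper's own argument: both read the transforms off the resolution \eqref{eq:fano of lines 4} via the projection formula and check that exactly one graded piece survives (the paper quotes Borel--Bott--Weil and K\"unneth where you use Serre duality together with the exceptionality and semiorthogonality of \eqref{eq:coll}, but these amount to the same vanishings). Where you genuinely diverge is in ruling out $\phi=0$. The paper applies $\Phi_{\CL}$ to the mutation triangle \eqref{eq:mutation U Uperp}, $V/\CU(-1)\to A\otimes\CU\to\CU^\perp$, and argues that if the induced map $A\otimes\CO_{\PP(A)}(-3)\to\CO_{\PP(A)}(-2)$ vanished, then $\Phi_{\CL}(V/\CU(-1))$ (suitably twisted) would split as two line bundles sitting in different cohomological degrees, contradicting the fact that its derived restriction to every point of $\PP(A)$ is concentrated in a single degree (lemma \ref{lemma:splitting on lines} plus base change); as a byproduct this identifies $\Phi_{\CL}(V/\CU(-1))$ with $\Omega_{\PP(A)}(-2)[1]$, which the paper records in the subsequent remark. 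You instead test $\phi$ on a single element $a$ via $0\to\CU(-1)\to\CU^\perp(-1)\to I_{L_a}(-1)\to 0$ from corollary \ref{cor:ideal of a line}, and get the contradiction from supports: $\Phi_{\CL}(I_{L_a}(-1))\cong\Phi_{\CL}(\CO_{L_a}(-1))[-1]$ is nonzero but supported on the proper closed subset of lines meeting $L_a$ (proposition \ref{prop:intersecting lines}), whereas a splitting would force it to be a nonzero sum of shifted line bundles with full support. Both routes are sound, and both ultimately invoke Schur's lemma to upgrade ``nonzero'' to ``isomorphism'' (you make this explicit; the paper leaves it implicit). One bookkeeping remark: your own degree count (the surviving $\CC[-3]$ sits in position $-2$ of the resolution) places the line bundles in cohomological degree $+1$, where $R^1r_{A*}$ lives and consistently with the triangle in theorem \ref{thm:jumping sheaf}, while the displayed shift $[1]$ in the statement would put them in degree $-1$; this sign ambiguity is inherited from the paper's statement and does not affect your argument.
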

\begin{proof}
Both equalities in \eqref{eq:FM transform lines} are proved by means of the Grothendieck spectral sequence and using the resolution \eqref{eq:fano of lines}.
The entries in the spectral sequence can be computed via K\"unneth formula and Borel--Bott--Weil formula and in both cases there is only one non-vanishing entry.

%
As for the isomorphism \eqref{eq:FM transform lines 2}, apply $\Phi_{\CL}$ on the mutation triangle \eqref{eq:mutation U Uperp}
\[
0 \to V/\CU(-1) \to A \otimes \CU \xrightarrow{\ev_{\CU}} \CU^\perp \to 0
\]
By \eqref{eq:FM transform lines} we find a distinguished triangle
\[
\Phi_{\CL}(V/\CU(-1))[-1] \to A \otimes \CO_{\PP(A)}(-3) \to \CO_{\PP(A)}(-2)
\]
If \eqref{eq:FM transform lines 2} is zero, then $\Phi_{\CL}(V/\CU(-1))$ is the direct sum of two line bundles with different shifts.
As we know by  lemma \ref{lemma:splitting on lines} and by base change that $\Phi_{\CL}(V/\CU(-1))$ restricts to any point $a \in \PP(A)$ as $\CC \oplus \CC$, this is not possible, so that \eqref{eq:FM transform lines 2} has to be an isomorphism.
%
%
%
\end{proof}
\begin{remark}
Actually, we have also proved that $\Phi_{\CL} (V/\CU(-1))$ is $\Omega_{\PP(A)}(-2)[1]$.
\end{remark}

\subsection{Conics in $Y$} \label{sec:conics}
We will prove in proposition \ref{prop:universal conic} that the Hilbert scheme of conics in Y is canonically identified with $\PP(V^*)$. 
By conic on $Y$ we mean any subscheme $C \subset Y$ such that $\chi(\calO_C(n)) = 2t+1$.
\begin{remark}\label{rmk:h1 of a conic}
Note that the only such subschemes in projective spaces are plane conics (corollary \ref{cor:d=1,2,3 in P^n} part \eqref{itm:d=2}).
It follows that they satisfy $h^1(\CO_C) = 0$ and they have no embedded points.
\end{remark}

When the conic $C$ is smooth, it makes sense to talk about splitting type of a vector bundle on it. 
Still, when $C$ is singular the splitting type on components (or on the line supporting the non-reduced conic) does not characterize uniquely the vector bundle. 
In these cases it is better to have cohomological properties, which in the smooth case allow to recover the splitting type.
We pack such properties in the decomposition of $\calO_C$ with respect to the full exceptional collection \eqref{eq:coll}. 

With some help from lemmas \ref{lemma:canonical of a conic} -- \ref{resolutionofconicaux1}, we prove proposition \ref{prop:universal conic}, which describes the universal conic $\calC \subset \PP(V^*) \times Y$ as the zero locus of a regular section of $\CO_{\PP(V^*)}(1) \boxtimes \CU^*$, as $\PP_Y(\CU^\perp)$ and as the restriction of $\Flag(2,4; V) \subset \PP(V^*) \times \Gr(2,V)$ to $\PP(V^*) \times Y$.


\begin{proposition} \label{prop:universal conic}
The Hilbert scheme of conics $\Hilb^{2t+1}_Y$ is canonically isomorphic to $\PP(V^*) \cong \PP(H^0(\CU^*))$. The universal conic $\calC$ has a resolution
\begin{equation} \label{eq:universal conic}
0 \to \CO_{\PP(V^*)}(-2) \boxtimes \CO_Y(-1) \to \CO_{\PP(V^*)}(-1) \boxtimes \CU \to \CO_{\PP(V^*) \times Y} \to \CO_\calC \to 0
\end{equation}
on $\PP(V^*) \times Y$.
The schemes
\begin{equation}\label{prop:universal conic is proj}
\calC \cong \PP(\CU^\perp) \cong \restr{\Flag(2,4,V)}{\PP(V^*) \times Y} \subset \Flag(2,4,V) \subset \PP(V^*) \times \Gr(2,V)
\end{equation}
are isomorphic as subschemes of $\PP(V^*) \times Y$.
\end{proposition}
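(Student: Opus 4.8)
The plan is to mirror the proof of Proposition \ref{prop:fano of lines}, replacing the Fourier--Mukai/Beilinson analysis of lines by the analogous one for conics; the role played there by $\Hom(\CU,\CU^\perp) = A$ is now played by $H^0(\CU^*) = V^*$ from Lemma \ref{lemma:cohomologies on Y}.

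\textbf{Construction of the family and its resolution.} By Lemma \ref{lemma:cohomologies on Y} we have $H^0(\CU^*) = V^*$, so on $\PP(V^*)\times Y$ there is a universal section $\bar s$ of $\CO_{\PP(V^*)}(1)\boxtimes\CU^*$, corresponding to $\mathrm{id}\in V\otimes V^* = H^0(\CO_{\PP(V^*)}(1))\otimes H^0(\CU^*)$. Since $\det\CU = \CO_Y(-1)$, the Koszul complex $\Kosz$ of $\bar s$ is exactly
\[
\CO_{\PP(V^*)}(-2)\boxtimes\CO_Y(-1) \to \CO_{\PP(V^*)}(-1)\boxtimes\CU \to \CO_{\PP(V^*)\times Y}.
\]
First I would check that the restriction of $\bar s$ to each $[s]\times Y$ is a regular section of $\CU^*$: its zero locus is $\Gr(2,\ker s)\cap\PP(A^\perp)$, and because every form in $A$ has rank $4$ (Lemma \ref{lemma:HPD of Gr}) no nonzero element of $A$ can vanish on the $4$-dimensional space $\ker s$, so $A\hookrightarrow\Lambda^2(\ker s)^*$ is injective and the intersection is always a $1$-dimensional conic of class $c_2(\CU^*) = 2L$. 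Fiberwise the Koszul complex is then quasi-isomorphic to the structure sheaf of a conic, so by the flatness criterion \ref{cor:qcoh flat} the complex above is equivalent to a sheaf flat over $\PP(V^*)$; this yields the resolution \eqref{eq:universal conic} and the flat family $\calC$.

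\textbf{Geometric identifications.} The resolution exhibits $\calC$ as the zero scheme of $\bar s$, i.e. the incidence locus $\{([s],[U]) : s|_U = 0\}$. Projecting to $Y$, this is precisely the projectivization of the subbundle $\CU^\perp = \ker(V^*\otimes\CO_Y\to\CU^*)$ of \eqref{eq:definingsequencedual}, giving $\calC\cong\PP(\CU^\perp)$. Since specifying $[s]\in\PP(V^*)$ is the same as specifying the hyperplane $\ker s\in\Gr(4,V)$, the same incidence locus equals $\restr{\Flag(2,4,V)}{\PP(V^*)\times Y}$, so all three descriptions coincide.

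\textbf{Universal property.} Given a flat family of conics $\pi:C\to S$, I would run the relative Beilinson spectral sequence (Theorem \ref{thm:relative Beilinson}) for $\CO_C$ with respect to the collection \eqref{eq:coll} and its left dual \eqref{eq:coll dual}, exactly as for lines. Using base change together with the cohomological description of $\CO_C$ supplied by Lemmas \ref{lemma:canonical of a conic}--\ref{resolutionofconicaux1} (which replace the splitting-type computation of Lemma \ref{lemma:splitting on lines}, unavailable for singular conics), I expect all terms to vanish except those along $\CO_Y(-1),\CU,\CO_Y$, each with a line-bundle coefficient from $S$; note $R\pi_*\CO_C = \CO_S$ by Remark \ref{rmk:h1 of a conic}. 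This produces a four-term resolution
\[
0 \to \scrL\boxtimes\CO_Y(-1) \to \scrM\boxtimes\CU \to \CO_{S\times Y} \to \CO_C \to 0.
\]
Twisting the first map by $\CO_Y(1)$ and pushing forward via $\pi$ gives, using $H^0(\CU(1)) = H^0(\CU^*) = V^*$, a map $\scrL\to V^*\otimes\scrM$; its fiberwise injectivity (from exactness after base change) defines $f:S\to\PP(V^*)$ pulling back the tautological inclusion to $\scrL\otimes\scrM^{-1}\to V^*\otimes\CO_S$. Finally I would show the two assignments, family $\mapsto$ map and map $\mapsto$ pullback of $\calC$, are mutually inverse: naturality and one composition follow from functoriality of the Beilinson spectral sequence, while for the other composition the construction recovers the ideal sheaf of $C\subset S\times Y$, and since conics have codimension $2$ in the threefold $Y$, Lemma \ref{lemma:no ext^1 from codimension 2} shows the ideal sheaf determines the subscheme. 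This gives $\Hilb^{2t+1}_Y\cong\PP(V^*)$.

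The hard part will be the spectral-sequence bookkeeping in the last step: establishing the required vanishings and the line-bundle structure of the surviving $R\pi_*$-terms for conics that may be reducible or non-reduced, where splitting type is not available and one must instead feed in the cohomological lemmas \ref{lemma:canonical of a conic}--\ref{resolutionofconicaux1}. Verifying regularity of the universal section is a secondary technical point, cleanly handled by the rank-$4$ property of $A$.
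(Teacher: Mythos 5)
Your proposal is correct and follows essentially the same route as the paper: the Koszul complex of the universal section of $\CO_{\PP(V^*)}(1)\boxtimes\CU^*$ for the family, the relative Beilinson spectral sequence with respect to the collection \eqref{eq:coll} fed by Lemmas \ref{lemma:canonical of a conic}--\ref{resolutionofconicaux1} for the universal property, and Lemma \ref{lemma:no ext^1 from codimension 2} to close the loop. The only cosmetic difference is the fiberwise regularity check, where you argue via the rank-$4$ property of forms in $A$ (you should still invoke Lefschetz to rule out the plane $\PP(A^\perp)\cap\PP(\Lambda^2\ker s)$ lying inside $\Gr(2,\ker s)$) while the paper notes that no map $\CU\to\CO_Y$ vanishes on a divisor; both are equally quick.
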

\begin{proof}
Given a flat family $C \to S \times Y$ of conics in $Y$, take the relative Beilinson spectral sequence \ref{thm:relative Beilinson} with respect to the full exceptional collection \eqref{eq:coll}, i.e. $\CO(-1), \CU, \CU^\perp, \CO $.

Denote by $\pi_S$ the projection from $C \to S$ and by $\pi_Y$ the projection $C \to Y$. As the relative dimension of $\pi_S$ is 1, when computing the entries of the relative Beilinson spectral sequence for $\CO_C$ we will restrict to $R^0\pi_{S*}$ and $R^1\pi_{S*}$.

We start from $R\pi_{S*}(\CO_C)$: by remark \ref{rmk:h1 of a conic}, $H^1(\CO_{C_s}) = 0$ 
for any $s \in S$, so that by base change $R^1\pi_{S*}(\CO_C) = 0$. As a consequence, $R^0\pi_{S*}(\CO_C)$ is locally free of rank 1. As it has a never vanishing section induced by the $1$ section of $\CO_C$, it is $\CO_S$.

The entries in the second column are the cohomology of $R\pi_{S*}(\CO_C \otimes \pi_Y^*\CU)$. 
By lemma \ref{cohomologyonconic2}, for any conic $C_s$ the cohomology $H^\bullet(C_s, \CU)$ vanishes, so that also $R\pi_{S*}(\CO_C \otimes \pi_Y^*\CU)$ vanishes.

The entries in the third column are the cohomology of $R\pi_{S*}(\CO_C \otimes \pi_Y^*V/\CU(-1))$. 
By lemma \ref{lemma:canonical of a conic} and by Serre duality, for any conic $C_s$
\[
H^\bullet(\restr{V/\CU(-1)}{C_s}) = H^\bullet(\restr{\CU^\perp}{C_s}[3])^*
\]
By lemma \ref{cohomologyonconic}, for any conic $C_s$ we find $H^\bullet(C_s, V/\CU(-1)) =  \CC[-1]$. 
By lemma \ref{cor:qcoh flat}, $R\pi_{S*}(\CO_C \otimes \pi_Y^*V/\CU(-1))$ is quasi isomorphic to the shift of a line bundle. Denote it by $\scrL_{\CU}$.

The entries of the last column are the cohomology of $R\pi_{S*}(\CO_C(-1))$.
By lemma \ref{lemma:canonical of a conic} and remark \ref{rmk:h1 of a conic}, for any conic $C_s$ we find $H^\bullet(C_s, \CO_Y(-1)) = \CC[-1]$.
Again, by lemma \ref{cor:qcoh flat}, $R\pi_{S*}(\CO_C(-1))$ is quasi isomorphic to the shift of a line bundle.
Denote it by $\scrL_{\CO(-1)}$.

As the Beilinson spectral sequence \ref{thm:relative Beilinson} converges to $\CO_C$, there is an exact sequence
\begin{equation}\label{eq:universal conic 5}
0 \to \scrL_{\CO(-1)} \boxtimes \CO_Y(-1) \to \scrL_{\CU} \boxtimes \CU \to \CO_{S \times Y} \to \CO_C \to 0
\end{equation}
Twisting by $\CO_Y(1)$ and pushing forward to $S$, 
we get an exact sequence
\begin{equation}\label{eq:universal conic thief}
0 \to \scrL_{\CO(-1)} \to \scrL_{\CU} \otimes V^* \to \CO_S \otimes \Lambda^2V^* / A \to \pi_*(\CO_C(1)) \to 0
\end{equation}
To get a map from $S$ to $\PP(V^*)$ we still need to prove that the first map in \eqref{eq:universal conic thief} is an injection of vector bundles. 
To prove it, we base change to $s \in S$ by first twisting \eqref{eq:universal conic 5} by $\CO_Y(1)$, then restricting the resulting complex to $s \times Y$ and finally pushing forward to $s$. 
By flatness of $\CO_C$ over $S$, we find an exact sequence
\[
0 \to \CC \to V^* \to \Lambda^2V^* / A \to H^0(\restr{\CO_Y(1)}{C}) \to 0
\]
which shows that
\[
\scrL_{\CO(-1)} \to \scrL_{\CU} \otimes V^*
\]
is an injection of vector bundles.
In the above way, given a family of conics $C$, we have constructed a map from $S$ to $\PP(V^*)$

We will now construct a family which will be the universal family of conics. The structure sheaf of the subscheme 
\[
\PP_Y(\CU^\perp) \subset \PP(V^*) \times Y
\]
is cut by the Koszul complex
\[
0 \to \CO_{\PP(V^*)}(-2) \boxtimes \CO_Y(-1) \to \CO_{\PP(V^*)}(-1) \boxtimes \CU \to \CO_{\PP(V^*) \times Y}
\]
of the $SL_2$-invariant section of $\CO_{\PP(V^*)}(1) \boxtimes \CU^*$.

All fibers of the projection
\[
\PP_Y(\CU^\perp) \to \PP(V^*)
\]
have dimension $1$ as there is no map from $\CU$ to $\CO_Y$ vanishing on a divisor. 
It follows that $\PP_Y(\CU^\perp)$ is flat over $\PP(V^*)$ and that therefore
\[
0 \to \CO_Y(-1) \to \CU \to \CO_Y
\]
is exact for each $v \times Y$ inside $\PP(V^*) \times Y$.
By table \eqref{Cherntable}, $\PP_Y(\CU^\perp)$ is a family of conics.

What is still left to prove is that the two constructions are functorial and that they are inverse to each other: both facts can be proved in the same way they were proved for lines in proposition \ref{prop:fano of lines}, again by means of lemma \ref{lemma:no ext^1 from codimension 2}.

As for the isomorphism of $\PP_Y(\CU^\perp)$ and the restriction of the flag variety $\Flag(2,4,V)$ to $\PP(V^*) \times Y$, this is a direct consequence of the fact that $\Flag(2,4,V)$ embeds into $\PP(V^*) \times \Gr(2,V)$ as $\PP_{\Gr}(\CU^\perp)$ and that restriction commutes with projectivization.
\end{proof}

Note that by proving proposition \ref{prop:universal conic} we have also proved the following corollary.

\begin{corollary} \label{cor:sections of U*}
Any section of $\CU^*$ is regular and fits in a Koszul complex
\begin{equation} \label{eq:sections of U*}
0 \to \CO_Y(-1) \to \CU \to \CO_Y \to \CO_C \to 0
\end{equation}
where $C$ is a conic in $Y$.
\end{corollary}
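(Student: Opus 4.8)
The plan is to read the statement off fiber-by-fiber from the universal construction already carried out in the proof of proposition \ref{prop:universal conic}. By lemma \ref{lemma:cohomologies on Y} we have $H^0(\CU^*) = V^*$, so a nonzero global section $s$ of $\CU^*$ is the same datum as a point $[s] \in \PP(V^*) = \PP(H^0(\CU^*))$, i.e. the restriction to $[s] \times Y$ of the universal section of $\CO_{\PP(V^*)}(1) \boxtimes \CU^*$. Dualizing, $s$ is equivalently a morphism $s^\vee \colon \CU \to \CO_Y$, and its Koszul complex $\Kosz(\CU,s)$ reads
\[
0 \to \det \CU \to \CU \xrightarrow{s^\vee} \CO_Y,
\]
where $\det \CU = \Lambda^2\CU = \CO_Y(-1)$ because $\rank \CU = 2$.

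First I would check that $s$ is regular, i.e. that its zero locus $C$ has codimension $2 = \rank \CU$. The only way this can fail is for $s^\vee$ to vanish on a divisor $D$; since $\Pic(Y) = \ZZ \cdot \CO_Y(1)$ we would have $D \in |kH|$ with $k \ge 1$, so $s^\vee$ would factor through a nonzero map $\CU \to \CO_Y(-k)$, that is, a nonzero section of $\CU^*(-k) = \CU(1-k)$. For $k = 1$ this is $H^0(\CU) = 0$ (lemma \ref{lemma:cohomologies on Y}), while for $k \ge 2$ the bundle $\CU(1-k)$ has strictly negative slope and $\CU$ is $\mu$-stable (lemma \ref{lemma:stability on Y}), so it has no sections either. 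This is exactly the observation ``there is no map from $\CU$ to $\CO_Y$ vanishing on a divisor'' invoked in the proof of proposition \ref{prop:universal conic}, so no new argument is required.

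Since $Y$ is smooth, hence Cohen--Macaulay, and $C$ has codimension equal to $\rank \CU$, the Koszul-complex discussion of section \ref{sec:Some preliminary facts} shows that $\Kosz(\CU,s)$ is quasi-isomorphic to $\CO_C$, which gives the exact sequence
\[
0 \to \CO_Y(-1) \to \CU \xrightarrow{s^\vee} \CO_Y \to \CO_C \to 0
\]
of \eqref{eq:sections of U*}. Finally, computing $\ch(\CO_C)$ from this resolution by means of table \eqref{Cherntable} yields Hilbert polynomial $2t+1$, so $C$ is a conic. I expect no genuine obstacle here: the single nontrivial input is the regularity of $s$, and that is precisely the divisorial non-vanishing already established for the universal section in proposition \ref{prop:universal conic}; everything else is the Koszul resolution of a regular section together with a routine Chern-class bookkeeping.
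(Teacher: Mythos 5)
Your proposal is correct and follows essentially the same route as the paper: the paper's own proof of this corollary simply points back to the proof of proposition \ref{prop:universal conic}, where the key input is exactly your observation that no map $\CU \to \CO_Y$ can vanish on a divisor (so the section is regular), after which the Koszul complex is exact and the Chern character computation identifies the cokernel as a conic. You merely spell out fiberwise what the paper established for the universal section, so no new ideas are needed and none are missing.
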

\begin{proof}
In \ref{prop:universal conic} we have shown that given $\CU \to \CO_Y$ the cokernel is always a conic, so that any section of $\CU^*$ is regular. Then the Koszul complex is exact.
\end{proof}

The following definition sets the notation which we are going to use for the identification of the Hilbert scheme of conics and of $\PP(V^*)$.
\begin{definition}\label{def:notation for conics}
Given an element $w \in V^*$, denote by $L_w$ the corresponding conic in $Y$. Given a conic $C$ in $Y$, denote by $w_C$ any element in $V^*$ such that $C$ corresponds to $[w_C] \in \PP(V^*)$.
\end{definition}

Actually, proposition \ref{prop:universal conic} also says that any conic is obtained by choosing a non zero vector $v \in V^*$ and taking $\Gr(2,\ker v) \cap \PP(A^\perp)$ inside $\PP(\Lambda^2 V)$. Viceversa, every time we choose $v \in V^*$ and intersect $\Gr(2,\ker v)$ embedded via Pl\"ucker with $\PP(A^\perp)$, we find a conic in $Y$, just because $\Gr(2,4)$ itself is a quadric in $\PP^5$.
\begin{corollary} \label{cor:boh conic}
The zero locus of any section of $\CU^*$ is a conic.
\end{corollary}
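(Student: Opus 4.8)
The plan is to read this off directly from Corollary \ref{cor:sections of U*}, once one unwinds what the \emph{zero locus} of a section of a rank $2$ bundle means scheme-theoretically. The first step is to dualize: a section $s \in H^0(\CU^*)$ is the same datum as a morphism $s^\vee : \CU \to \CO_Y$. By definition the zero scheme $Z(s) \subset Y$ is cut out by the ideal sheaf $\Image(s^\vee) \subset \CO_Y$, so that its structure sheaf is exactly
\begin{equation*}
\CO_{Z(s)} = \CO_Y / \Image(s^\vee) = \coker(s^\vee).
\end{equation*}
Thus the whole statement reduces to understanding the cokernel of $s^\vee$.

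Next I would invoke Corollary \ref{cor:sections of U*}, which asserts precisely that any section of $\CU^*$ is regular and that the associated map $s^\vee : \CU \to \CO_Y$ fits into the Koszul exact sequence
\begin{equation*}
0 \to \CO_Y(-1) \to \CU \xrightarrow{s^\vee} \CO_Y \to \CO_C \to 0,
\end{equation*}
where $C$ is a conic and $\Lambda^2 \CU \cong \CO_Y(-1)$. Regularity is exactly what guarantees that $Z(s)$ has the expected codimension $2$ in the Cohen--Macaulay threefold $Y$, so that $\Kosz(\CU, s)$ is a genuine resolution of $\CO_{Z(s)}$ in the sense of the Koszul subsection; in particular the last term $\coker(s^\vee)$ of this resolution is the structure sheaf of the zero scheme.

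Combining the two displays gives $\CO_{Z(s)} = \coker(s^\vee) = \CO_C$, hence $Z(s) = C$ is a conic, which is the assertion. In truth there is no real obstacle here: all of the geometric content---that the cokernel of an arbitrary nonzero map $\CU \to \CO_Y$ is the structure sheaf of a conic---was already extracted in the proof of Proposition \ref{prop:universal conic} and recorded in Corollary \ref{cor:sections of U*}. The only point deserving a word is the identification of the scheme-theoretic zero locus of $s$ with the cokernel subscheme, and this is immediate once one knows $s$ is regular, so that the ideal $\Image(s^\vee)$ genuinely describes a codimension-$2$ subscheme and the Koszul complex computes its structure sheaf.
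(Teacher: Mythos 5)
Your proof is correct and is essentially the paper's argument: both reduce the claim to the fact, established in Proposition \ref{prop:universal conic} and recorded in Corollary \ref{cor:sections of U*}, that the cokernel of any map $\CU \to \CO_Y$ is the structure sheaf of a conic. Your only addition is the explicit identification of the scheme-theoretic zero locus with that cokernel, which the paper leaves implicit.
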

\begin{proof}
The last map in the resolution \eqref{eq:universal conic} is the universal map from $\CU$ to $\CO_Y$.
\end{proof}


Here are the lemmas we have used in the proof of \ref{prop:universal conic}. Recall that a conic is any subscheme $C$ of $Y$ whose Chern character is $2L-P$, or equivalently whose Hilbert polynomial is $2t + 1$. 

\begin{lemma}\label{lemma:canonical of a conic}
The canonical bundle of any conic $C$ is the restriction of $\CO_Y(-1)$ to $C$.
\end{lemma}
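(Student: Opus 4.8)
The plan is to compute the dualizing sheaf of $C$ intrinsically, exploiting the fact that every conic on $Y$ is a plane conic. First I would invoke part \ref{itm:d=2} of corollary \ref{cor:d=1,2,3 in P^n}: since a conic has Hilbert polynomial $2t+1$, it is a plane conic, that is, $C$ is cut out by a single quadratic equation inside a linearly embedded plane $P \cong \PP^2 \subset \PP^6$ spanned by $C$. In particular $C$ is an effective Cartier divisor of degree $2$ on the smooth surface $P$, hence Gorenstein of pure dimension $1$, and its dualizing sheaf $\omega_C$ is a genuine line bundle depending only on $C$ as an abstract scheme, not on the chosen ambient.

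Since $\omega_C$ is intrinsic, I would compute it inside $P$ rather than inside $Y$, which avoids any discussion of whether $C$ is a local complete intersection in the threefold $Y$ (a point that would be delicate for the degenerate conics, e.g.\ double lines). By the adjunction formula for the Cartier divisor $C \subset P$,
\[
\omega_C \cong \restr{\bigl(\omega_P \otimes \CO_P(C)\bigr)}{C} \cong \restr{\bigl(\CO_P(-3) \otimes \CO_P(2)\bigr)}{C} \cong \restr{\CO_P(-1)}{C},
\]
using $\omega_{\PP^2} = \CO_{\PP^2}(-3)$ and that $C$ is a divisor of degree $2$ in $P$.

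It remains to identify $\restr{\CO_P(-1)}{C}$ with $\restr{\CO_Y(-1)}{C}$. Since the plane $P$ is linearly embedded in $\PP^6$, one has $\CO_P(1) = \restr{\CO_{\PP^6}(1)}{P}$, and since $\CO_Y(1)$ is by construction the restriction of $\CO_{\PP^6}(1)$ to $Y$, both $\restr{\CO_P(1)}{C}$ and $\restr{\CO_Y(1)}{C}$ coincide with $\restr{\CO_{\PP^6}(1)}{C}$. Restricting the adjunction identity to $C$ therefore yields $\omega_C \cong \restr{\CO_Y(-1)}{C}$, as claimed.

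The computation itself is routine; the only point requiring care is the uniform treatment of smooth, reducible, and non-reduced conics. This is exactly what is gained by working with $C$ as a Cartier divisor inside the plane $P$: the adjunction formula applies verbatim to any degree-$2$ divisor, regardless of whether it is reduced or irreducible, so no case analysis is needed.
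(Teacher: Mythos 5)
Your proof is correct and follows essentially the same route as the paper: both reduce to the fact (via corollary \ref{cor:d=1,2,3 in P^n}) that $C$ is a Cartier divisor of degree $2$ in a linearly embedded plane, apply adjunction there, and identify $\restr{\CO_{\PP^2}(-1)}{C}$ with $\restr{\CO_Y(-1)}{C}$ via the common ambient projective space. No substantive differences.
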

\begin{proof}
As $\CO_Y(-1)$ is the restriction of $\CO(-1)$ from a projective space, it is enough to prove the same statement for a conic $C$ embedded in $\PP^n$.
Moreover, any $C \in \PP^n$ factors via a $\PP^2$ spanned by $H^0(\CO_C(1))^*$ by remark \ref{rmk:h1 of a conic}. 

A conic in $\PP^2$ is a Cartier divisor (as it has no embedded points by remark \ref{rmk:h1 of a conic}), so that by adjunction
\[
\omega_C \cong \restr{\omega_{\PP^2} \otimes \CO_{\PP^2}(2)}{C} \cong \restr{\CO_{\PP^2}(-1)}{C}
\]
which is the thesis.
\end{proof}

\begin{lemma}\label{cohomologyonconic}
The following relations hold on $Y$ for any conic $C$:
\[
\begin{aligned}
\chi(\restr{\calU^\perp}{C}) & = 1 \\
h^0(\restr{\calU^\perp}{C}) & = 1 \\
h^1(\restr{\calU^\perp}{C}) & = 0
\end{aligned} 
\qquad \qquad
\begin{aligned}
\chi(\restr{\calU^*}{C}) & = 4 \\
h^0(\restr{\calU^*}{C}) & = 4 \\
h^1(\restr{\calU^*}{C}) & = 0
\end{aligned}
\]
\end{lemma}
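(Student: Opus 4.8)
The plan is to handle both bundles at once by restricting the tautological sequence \eqref{eq:definingsequencedual} to $C$, after first pinning down the two Euler characteristics by Grothendieck--Riemann--Roch.

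First I would compute $\chi(\restr{\CU^\perp}{C})$ and $\chi(\restr{\CU^*}{C})$ by GRR on $Y$, using $\ch(\CO_C) = 2L - P$, the table \eqref{Cherntable}, the relation $c_1(T_Y) = 2H$ (from $\omega_Y = \CO_Y(-2)$), and the intersection numbers $H\cdot L = P$ and $\int_Y P = 1$. Since $\ch(\CO_C)$ has components only in codimensions $2$ and $3$, only the codimension-$0$ and codimension-$1$ parts of $\ch(E)\,\mathrm{td}(Y)$ contribute, and a short computation gives $\chi(\restr{\CU^\perp}{C}) = 1$ and $\chi(\restr{\CU^*}{C}) = 4$.

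Next I would restrict \eqref{eq:definingsequencedual} to $C$. As $\CU^*$ is locally free, $\Tor_1(\CU^*,\CO_C)=0$, so the sequence $0 \to \restr{\CU^\perp}{C} \to V^*\otimes\CO_C \to \restr{\CU^*}{C} \to 0$ stays exact. By remark \ref{rmk:h1 of a conic} we have $H^0(\CO_C)=\CC$ and $H^1(\CO_C)=0$, and all higher cohomology on the curve $C$ vanishes, so the long exact sequence collapses to
\[
0 \to H^0(\restr{\CU^\perp}{C}) \to V^* \xrightarrow{\ \alpha\ } H^0(\restr{\CU^*}{C}) \to H^1(\restr{\CU^\perp}{C}) \to 0, \qquad H^1(\restr{\CU^*}{C}) = 0.
\]
This immediately gives $h^1(\restr{\CU^*}{C})=0$, hence $h^0(\restr{\CU^*}{C}) = \chi(\restr{\CU^*}{C}) = 4$, settling the $\CU^*$ column.

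It then remains to compute $\ker\alpha$, and the crux of the argument is a geometric vanishing. A functional $\phi\in V^*$ lies in $\ker\alpha$ precisely when the composite $\restr{\CU}{C}\hookrightarrow V\otimes\CO_C \xrightarrow{\phi}\CO_C$ is zero, that is, when $\restr{\CU}{C}\subseteq(\ker\phi)\otimes\CO_C$ (the inclusion $\restr{\CU}{C}\hookrightarrow V\otimes\CO_C$ is a subbundle inclusion since $V/\CU$ is locally free). If $\dim\ker\alpha\ge 2$, set $V' = (\ker\alpha)^\perp$, so $\dim V'\le 3$ and $\restr{\CU}{C}\subseteq V'\otimes\CO_C$; this means $C\subseteq\Gr(2,V')$ scheme-theoretically. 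For $\dim V'=2$ this is absurd, as $\Gr(2,V')$ is a point, while for $\dim V'=3$ the scheme $\Gr(2,V')\cong\PP^2$ (a linearly embedded plane under Pl\"ucker) would contain the plane conic $C$ together with the plane it spans, forcing $\PP^2 = \Gr(2,V')\subseteq Y$, which is impossible since $Y$ contains no plane (as in corollary \ref{cor:d=1,2,3 in Y}). Hence $\dim\ker\alpha\le 1$, so $h^0(\restr{\CU^\perp}{C})\le 1$; combined with $h^0-h^1 = \chi(\restr{\CU^\perp}{C}) = 1$ and $h^1\ge 0$, this forces $h^0(\restr{\CU^\perp}{C})=1$ and $h^1(\restr{\CU^\perp}{C})=0$. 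I expect this last geometric vanishing to be the main obstacle; the rest is routine bookkeeping with the long exact sequence and GRR.
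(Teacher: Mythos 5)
Your proof is correct and follows essentially the same route as the paper's: Riemann--Roch for the Euler characteristics, the restricted tautological sequence $0\to\restr{\CU^\perp}{C}\to V^*\otimes\CO_C\to\restr{\CU^*}{C}\to 0$, and the same geometric vanishing --- two independent sections of $\restr{\CU^\perp}{C}$ would force $C\subseteq\Gr(2,V')\cap Y$ with $\dim V'=3$, and since $\Gr(2,V')$ is then the plane spanned by $C\subset\PP(A^\perp)$ it lies in $\Gr(2,V)\cap\PP(A^\perp)=Y$, contradicting Lefschetz (it is worth spelling out this last inclusion, which is what the paper phrases as ``$\Gr(2,B^\perp)\cap Y$ is a linear section of $\PP^2$, hence a line or a point''). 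The only cosmetic difference is the order of deductions: you settle the $\CU^*$ column first from the long exact sequence, while the paper settles $\CU^\perp$ first and derives the $\CU^*$ statements from it.
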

\begin{proof}
To compute $\chi(\restr{\calU^\perp}{C})$ use Riemann--Roch.

To check $h^0(\restr{\calU^\perp}{C})  = 1$ it is then enough to check $h^0(\restr{\calU^\perp}{C}) \leq 1$. Assume this is not the case, i.e. that there is $B \subset H^0(\restr{\calU^\perp}{C}), \dim(B) = 2$. 
This implies that $C$ is schematically contained in $Y' = \Gr(2,B^\perp) \cap Y \subset \Gr(2,V)$. 
As $Y$ is a linear section of $\Gr(2,V)$, $Y' \subset Y$ is a linear section of $\Gr(2,B^\perp) = \PP^2$ linearly embedded in $\PP(\Lambda^2 V^*)$. 
Moreover, we know by Lefschetz that $Y$ contains no $\PP^2$, so that $Y'$ is either a line or a point. As $C$ is a conic and $Y'$ is a line or a point, $C$ can't be a subscheme of $Y'$, so that it must be $h^0(\restr{\calU^\perp}{C}) \leq 1$.

As for $h^1(\restr{\calU^\perp}{C}) = 0$, it follows by subtracting the other two results and using $\dim(C) =1$ to get the vanishing of higher cohomology.

The results for $\restr{\calU^*}{C}$ follow using the defining sequence \eqref{eq:definingsequencedual} restricted to $C$ and the results for $\restr{\calU^\perp}{C}$ which we have just proved.
\end{proof}

\begin{lemma}\label{cohomologyonconic2}
The following relations hold on $Y$ for any conic $C$:
\[
\begin{aligned}
\chi(\restr{\calU}{C}) & = 0  \\
h^0(\restr{\calU}{C}) & = 0 \\
h^1(\restr{\calU}{C}) & = 0
\end{aligned} 
\qquad \qquad
\begin{aligned}
\chi(\restr{V/\calU}{C}) & = 5  \\
h^0(\restr{V/\calU}{C}) & = 5 \\
h^1(\restr{V/\calU}{C}) & = 0
\end{aligned}
\]
\end{lemma}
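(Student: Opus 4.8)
The plan is to reduce everything to the acyclicity of $S^2\CU$ on $Y$, from which the cohomology of $\restr{\CU}{C}$ vanishes and the statement for $\restr{V/\CU}{C}$ then follows formally.

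First I would dispose of the Euler characteristics and set up the bookkeeping. Since $V/\CU$ is locally free, hence flat, tensoring the tautological sequence \eqref{eq:definingsequence} by $\CO_C$ stays exact, giving a short exact sequence $0 \to \restr{\CU}{C} \to V\otimes\CO_C \to \restr{V/\CU}{C} \to 0$ on $Y$. As $\chi(\CO_C)=1$ (the Hilbert polynomial is $2t+1$), additivity yields $\chi(\restr{\CU}{C})+\chi(\restr{V/\CU}{C})=5$. Thus it suffices to understand $\restr{\CU}{C}$ completely, and the three numbers for $\restr{V/\CU}{C}$ will drop out of this same sequence at the end.

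The core step is to prove $H^\bullet(\restr{\CU}{C})=0$. I would tensor the Koszul resolution of a conic from corollary \ref{cor:sections of U*}, namely $0 \to \CO_Y(-1) \to \CU \to \CO_Y \to \CO_C \to 0$, with the locally free sheaf $\CU$; flatness makes this an exact complex $0 \to \CU(-1) \to \CU\otimes\CU \to \CU \to \restr{\CU}{C} \to 0$. Now $\CU\otimes\CU \cong S^2\CU \oplus \Lambda^2\CU = S^2\CU \oplus \CO_Y(-1)$, and by lemma \ref{lemma:cohomologies on Y} one has $H^\bullet(\CU)=H^\bullet(\CU(-1))=H^\bullet(\CO_Y(-1))=0$. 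Splitting the resolution into the two short exact sequences $0\to \CU(-1)\to \CU\otimes\CU\to K\to 0$ and $0\to K\to \CU\to \restr{\CU}{C}\to 0$ and chasing the long exact sequences gives canonical isomorphisms $H^i(\restr{\CU}{C})\cong H^{i+1}(S^2\CU)$. So the entire claim for $\restr{\CU}{C}$ reduces to $H^\bullet(Y,S^2\CU)=0$.

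The only genuine computation, and the step I expect to be the main obstacle, is this acyclicity, which I would carry out on the ambient Grassmannian. Tensoring the Koszul resolution \eqref{eq:resolution of Y} of $\CO_Y$ by $S^2\CU$, it is enough to show that $S^2\CU\otimes\CO_{\Gr}(-k)$ is acyclic on $\Gr(2,V)$ for $k=0,1,2,3$. Writing $\CO_{\Gr}(-1)=\Lambda^2\CU$, one identifies $S^2\CU\otimes\CO_{\Gr}(-k)\cong \Sigma^{(2+k,\,k)}\CU$, to which Borel--Bott--Weil applies with weight $(-k,-2-k,0,0,0)$; adding $\rho=(4,3,2,1,0)$ gives $(4-k,\,1-k,\,2,\,1,\,0)$. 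For each of $k=0,1,2,3$ this vector has a repeated entry (the values $1$, $0$, $2$, $1$ occur twice, respectively), so it is singular and the corresponding bundle is acyclic; hence $H^\bullet(Y,S^2\CU)=0$. This gives $h^0(\restr{\CU}{C})=h^1(\restr{\CU}{C})=\chi(\restr{\CU}{C})=0$. Finally, feeding this into $0 \to \restr{\CU}{C} \to V\otimes\CO_C \to \restr{V/\CU}{C} \to 0$ and using $H^0(\CO_C)=\CC$, $H^1(\CO_C)=0$ (remark \ref{rmk:h1 of a conic}) identifies $H^\bullet(\restr{V/\CU}{C})\cong V\otimes H^\bullet(\CO_C)=V$ concentrated in degree $0$, so $h^0=5$, $h^1=0$, $\chi=5$, completing the proof.
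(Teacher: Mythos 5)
Your cohomological computations are all correct: the reduction $H^i(\restr{\CU}{C})\cong H^{i+1}(Y,S^2\CU)$ via the two long exact sequences, the identification $S^2\CU\otimes\CO_{\Gr}(-k)\cong\Sigma^{(-k,-2-k)}\CU^*$, and the Borel--Bott--Weil check that $(4-k,1-k,2,1,0)$ has a repeated entry for $k=0,1,2,3$ all go through, and the final deduction for $V/\CU$ from the tautological sequence is fine. The problem is not with any computation but with the very first input.

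Your argument hinges on tensoring the Koszul resolution $0\to\CO_Y(-1)\to\CU\to\CO_Y\to\CO_C\to 0$ of the \emph{given} conic $C$ by $\CU$. To do this you must know that the arbitrary conic $C$ in the statement actually arises as the zero locus of a section of $\CU^*$. Corollary \ref{cor:sections of U*} only gives the converse direction (every section of $\CU^*$ cuts out a conic with that resolution); the statement that \emph{every} conic is of this form is the identification $\Hilb^{2t+1}_Y\cong\PP(V^*)$ of proposition \ref{prop:universal conic}, whose proof runs the relative Beilinson spectral sequence and explicitly invokes the vanishing $H^\bullet(C_s,\CU)=0$ of the present lemma to kill the $\CU$-column. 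So as written your proof is circular within the paper's logical order (these lemmas are stated precisely as inputs to \ref{prop:universal conic}). The paper avoids this by proving $h^0(\restr{\CU}{C})=0$ directly: a section of $\restr{\CU}{C}$ maps to an element $v\in H^0(V\otimes\CO_C)=V$ whose image in $V/\CU$ vanishes along $C$, so $C$ would embed in the zero locus of $v\in H^0(V/\CU)$, which by lemma \ref{zerolocusV} is a point or a line --- impossible for a scheme with Hilbert polynomial $2t+1$; then $h^1=h^0-\chi=0$ since $\dim C=1$, and the $V/\CU$ statements follow from the tautological sequence as you do. To rescue your route you would need an independent argument that the planes $U_c$, $c\in C$, span at most a $4$-dimensional subspace $V_4\subset V$ (so that $C\subset\Gr(2,V_4)\cap Y$, which is the zero locus of the corresponding section of $\CU^*$); this is true but is an extra geometric step your proposal does not supply, and it essentially re-proves the relevant half of \ref{prop:universal conic}.
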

\begin{proof}
To compute $\chi(\restr{\calU}{C})$ use Riemann--Roch.

To check $h^0(\restr{\calU}{C})  = 0$, assume $\restr{\calU}{C}$ has a section. By remark \ref{rmk:h1 of a conic} we can think of it as $v \in V$. Then $C$ embeds in the zero locus of some $v \in H^0(Y, V/\calU)$, which by \ref{zerolocusV} is either a line or a point.

The other results follow as in \ref{cohomologyonconic}.
\end{proof}

\begin{lemma}\label{resolutionofconicaux1}
The following relations hold on $Y$ for any conic $C$:
\begin{align*}
ext^1(\calU,\calI_C) & = 0 \\
hom(\calU,\calI_C) & = 1
\end{align*}
\end{lemma}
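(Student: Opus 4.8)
The plan is to read off both numbers from the ideal-sheaf sequence
\[
0 \to \calI_C \to \CO_Y \to \CO_C \to 0
\]
by applying $\Hom(\CU,-)$. Since $\CU$ is locally free, $\Ext^i(\CU,\CF) = H^i(\CU^* \otimes \CF)$. By lemma \ref{lemma:cohomologies on Y} the cohomology $H^\bullet(\CU^*) = V^*$ is concentrated in degree $0$, so $\hom(\CU,\CO_Y) = 5$ and $\Ext^{>0}(\CU,\CO_Y) = 0$; by lemma \ref{cohomologyonconic}, $\hom(\CU,\CO_C) = h^0(\restr{\CU^*}{C}) = 4$ and $\Ext^1(\CU,\CO_C) = H^1(\restr{\CU^*}{C}) = 0$. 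The long exact sequence therefore collapses to
\[
0 \to \Hom(\CU,\calI_C) \to V^* \to H^0(\restr{\CU^*}{C}) \to \Ext^1(\CU,\calI_C) \to 0,
\]
giving the Euler relation $\hom(\CU,\calI_C) - \ext^1(\CU,\calI_C) = 5 - 4 = 1$. Hence it suffices to prove $\hom(\CU,\calI_C) = 1$, after which $\ext^1(\CU,\calI_C) = 0$ is automatic.

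The middle map in the displayed sequence is the restriction $H^0(\CU^*) \to H^0(\restr{\CU^*}{C})$, so $\Hom(\CU,\calI_C)$ is exactly the space of global sections of $\CU^*$ vanishing on $C$. One such section is nonzero: the vector $w = w_C \in V^* = H^0(\CU^*)$ cutting out $C$ fits, by corollary \ref{cor:sections of U*}, into the Koszul complex
\[
0 \to \CO_Y(-1) \to \CU \xrightarrow{w} \CO_Y \to \CO_C \to 0,
\]
so $\CU \xrightarrow{w} \CO_Y$ has image $\calI_C$ and defines a nonzero element of $\Hom(\CU,\calI_C)$; thus $\hom(\CU,\calI_C) \geq 1$.

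For the reverse inequality I would argue that a conic determines its cutting section up to scale. Let $\varphi \in \Hom(\CU,\calI_C)$ be nonzero; composing with $\calI_C \hookrightarrow \CO_Y$ gives a nonzero section $w' \in H^0(\CU^*)$ whose zero scheme is, by the description of conics in proposition \ref{prop:universal conic} together with corollary \ref{cor:sections of U*}, a conic $C'$ with $\calI_{C'} = \Image(\CU \xrightarrow{w'} \CO_Y) \subseteq \calI_C$. Thus $C \subseteq C'$, and since both are conics with Hilbert polynomial $2t+1$ the surjection $\CO_{C'} \twoheadrightarrow \CO_C$ has kernel of vanishing Hilbert polynomial, forcing $C = C'$. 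Under the identification $\Hilb^{2t+1}_Y \cong \PP(V^*)$ this gives $[w'] = [w]$, i.e. $\varphi \in \langle w \rangle$, so $\hom(\CU,\calI_C) = 1$ and consequently $\ext^1(\CU,\calI_C) = 0$.

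The only delicate point is the upper bound, i.e. the uniqueness up to scale of the section cutting out $C$; for this the comparison of Hilbert polynomials (which needs no reducedness and so covers reducible and nonreduced conics uniformly) reduces matters to the injectivity half of the identification $\Hilb^{2t+1}_Y \cong \PP(V^*)$. One should only check that invoking this identification and corollary \ref{cor:sections of U*} is not circular with proposition \ref{prop:universal conic}; since the proof of that proposition does not itself appeal to the present lemma, there is no circularity, and the cohomological input from lemmas \ref{lemma:cohomologies on Y} and \ref{cohomologyonconic} used in the first paragraph is entirely independent of it.
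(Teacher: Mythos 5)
Your proof is correct, but it resolves the lemma from the opposite end compared to the paper. Both arguments start from the same long exact sequence obtained by applying $\Hom(\CU,-)$ to $0 \to \CI_C \to \CO_Y \to \CO_C \to 0$, which together with lemmas \ref{lemma:cohomologies on Y} and \ref{cohomologyonconic} gives $\hom(\CU,\CI_C) - \ext^1(\CU,\CI_C) = 5 - 4 = 1$. The paper then pins down $\ext^1(\CU,\CI_C)=0$ directly, by showing that the restriction map $H^0(\CU^*) \to H^0(\restr{\CU^*}{C})$ is surjective via a diagram chase on the two tautological sequences, using only $h^1(\restr{\CU^\perp}{C})=0$ from lemma \ref{cohomologyonconic}; the equality $\hom=1$ then falls out of the Euler relation. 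You instead pin down $\hom(\CU,\CI_C)=1$ directly — the lower bound from the Koszul complex of the cutting section (corollary \ref{cor:sections of U*}), the upper bound from the Hilbert-polynomial comparison plus the injectivity half of the identification $\Hilb^{2t+1}_Y \cong \PP(V^*)$ — and deduce $\ext^1=0$ from the same Euler relation. The trade-off: the paper's route is cohomologically self-contained and needs nothing beyond the restriction computations, whereas yours imports the full strength of proposition \ref{prop:universal conic} (that distinct points of $\PP(V^*)$ cut distinct conics), a heavier input, in exchange for a more geometric explanation of why the answer is exactly $1$. Your circularity check is the right thing to worry about and your conclusion is correct: despite the paper's framing of this lemma as an ingredient of proposition \ref{prop:universal conic}, the written proof of that proposition only invokes lemmas \ref{lemma:canonical of a conic}--\ref{cohomologyonconic2}, so your dependence on it is safe.
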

\begin{proof}
In order to prove $ext^1(\calU,\calI_C) = 0$, we will prove the surjectivity of
\[
H^0(\calU^*) \rightarrow H^0(\restr{\calU^*}{C})
\]
Take the diagram
\begin{equation}
\begin{tikzcd}
0 	\arrow{r}  	& \calU^\perp 	\arrow{r} \arrow{d}	&V^* \otimes \calO_Y 	\arrow{r} \arrow{d}	& \calU^*	\arrow{r} \arrow{d}  & 0 \\
0 	\arrow{r} 	& \restr{\calU^\perp}{C} \arrow{r}	&V^* \otimes \calO_C	\arrow{r}			& \restr{\calU^{*}}{C}	\arrow{r}	& 0
\end{tikzcd}
\end{equation}
After applying $H^0$ it becomes
\[
\begin{diagram}
0 	& \rTo	& 0 						& \rTo	&V^*			& \rTo^{\cong}	& H^0(\CU^*)			&\rTo	& 0 	\\
	&		& \dTo					& 		& \dTo^{1_{V*}}	&			& \dTo				& 	&	\\
0 	& \rTo	& H^0(\restr{\CU^{\perp}}{C})	& \rTo	& V^*		& \rOnto		& H^0(\restr{\CU^*}{C})	&\rTo	& 0	\\
\end{diagram}
\]
where we have used $0 = H^1(\restr{\calU^\perp}{C})$ from \ref{cohomologyonconic}, and the rows are exact. Surjectivity follows by commutativity of the diagram.

To check $hom(\calU,\calI_C) = 1$ use the long exact sequence of
\[
0 \rightarrow \restr{\calU^*}{C} \otimes \calI_C \rightarrow \calU^* \rightarrow \restr{\calU^*}{C} \rightarrow 0
\]
together with $h^0(\restr{\calU^*}{C}) = 4$ from \ref{cohomologyonconic} and with $ext^1(\calU,\calI_C) = 0$, which we have just proved.
\end{proof}

Now we want to use the above cohomological properties to recover the splitting type of the tautological bundles on conics.
\begin{corollary}\label{splittingonconic}
The splitting types of $\calU, V / \calU, \calU^\perp, \calU^*$ on any smooth conic $C$ are as follows:
\begin{gather*}
\ST_C(\CU) = (-1, -1) \\
\ST_C(V/\CU) = (0, 1, 1) \\
\ST_C(\CU^\perp) = (-1, -1, 0) \\
\ST_C(\CU^*) = (1,1)
\end{gather*}
\end{corollary}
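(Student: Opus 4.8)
The plan is to use the fact that a smooth conic $C$ is isomorphic to $\PP^1$, so that $\restr{E}{C}$ splits as a direct sum of line bundles for each of the four bundles, and to pin down the splitting types purely from the cohomological data already computed in lemmas \ref{cohomologyonconic} and \ref{cohomologyonconic2}. The only input about $\PP^1$ I need is the elementary bookkeeping for $E = \bigoplus_i \CO_C(a_i)$: one has $h^1(\restr{E}{C}) = 0$ if and only if every $a_i \geq -1$, and in that case $h^0(\restr{E}{C}) = \sum_i (a_i + 1) = \deg(\restr{E}{C}) + \rank(E)$. Since lemmas \ref{cohomologyonconic} and \ref{cohomologyonconic2} give $h^1 = 0$ for all four of $\CU, V/\CU, \CU^\perp, \CU^*$ on $C$, the first observation is that in every case all the summands satisfy $a_i \geq -1$.

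Next I would treat the two ``negative'' bundles, where this constraint already forces a unique answer. For $\CU$ we have $\rank = 2$ and $h^0(\restr{\CU}{C}) = 0$ from lemma \ref{cohomologyonconic2}; combined with $h^1 = 0$ this says every summand is simultaneously $\geq -1$ and $\leq -1$, hence $\ST_C(\CU) = (-1,-1)$. For $\CU^\perp$ we have $\rank = 3$, $h^1 = 0$ and $h^0(\restr{\CU^\perp}{C}) = 1$ from lemma \ref{cohomologyonconic}; with all $a_i \geq -1$ the identity $h^0 = \sum(a_i+1) = 1$ forces exactly one summand with $a_i = 0$ and the remaining two equal to $-1$, so $\ST_C(\CU^\perp) = (-1,-1,0)$.

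Finally I would obtain the two ``positive'' bundles by dualizing rather than by a direct cohomology computation, which is the one point where care is required. Indeed, for $\CU^*$ (rank $2$, degree $2$) and $V/\CU$ (rank $3$, degree $2$) the partitions $(1,1)$, $(0,2)$, $(-1,3)$ and respectively $(0,1,1)$, $(-1,0,3)$, etc.\ all give the same $h^0$ and $h^1$, so the cohomology alone does \emph{not} determine the splitting type: this is the main (mild) obstacle. To get around it I would use that restriction to $C$ commutes with dualization of vector bundles, together with the identifications $\CU^* = (\restr{\CU}{C})^\vee$ and $V/\CU \cong (\CU^\perp)^*$ coming from dualizing the tautological sequences \eqref{eq:definingsequence} and \eqref{eq:definingsequencedual} (cf.\ lemma \ref{lemma:cohomologies on Y}). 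Dualizing the splitting types found above then gives $\ST_C(\CU^*) = (1,1)$ and $\ST_C(V/\CU) = (0,1,1)$, completing the list.
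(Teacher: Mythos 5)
Your proof is correct and follows essentially the same route as the paper's: pin down $\ST_C(\CU)$ and $\ST_C(\CU^\perp)$ from the cohomology computed in lemmas \ref{cohomologyonconic} and \ref{cohomologyonconic2}, then dualize to get $\CU^*$ and $V/\CU$. The only (immaterial) difference is that for $\CU^\perp$ the paper bounds the summands above by $0$ via the embedding $\CU^\perp \hookrightarrow V^*\otimes\CO_Y$ and combines this with $h^1=0$ and the degree, whereas you use $h^1=0$ together with $h^0=1$; both determine $(-1,-1,0)$ uniquely.
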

\begin{proof}
The vector bundle embedding of $\CU^\perp$ into $V^* \otimes \CO_Y$ implies that $\restr{\calU^\perp}{C}$ cannot have direct summands of strictly positive degree.
Moreover, by lemma \ref{cohomologyonconic}, we have $h^1(\restr{\calU^\perp}{C}) = 0$, which yields
\[
\ST_C(\CU^\perp) = (-1, -1, 0)
\]
as only possible splitting. Dualizing we also obtain the splitting type of $V/\calU$.

To compute the splitting of $\calU$, recall that by lemma \ref{cohomologyonconic2} $\restr{\calU}{C}$ has no sections, which implies
\[
\ST_C(\CU) = (-1, -1)
\]
Dualize to get the splitting type of $\calU^*$.
\end{proof}

For any pair of points in $\PP^3$ there is a line connecting them, and the space of lines through each point is irreducible. 
These properties are used in \cite{okonek1980vector} to prove that the splitting type of a stable bundle on a generic line has no gaps. 
As we would like to recover analogous results for instantons on $Y$, we prove now some properties for conics in $Y$ which are analogous to properties of lines in $\PP^3$ that we have just mentioned.

Recall from proposition \ref{prop:universal conic} that the universal conic $\calC$ has two natural projections
\[
\begin{diagram}
		&			& \calC	&			&	\\
		& \ldTo^{q_A}	&		& \rdTo^{q_Y}	&	\\
\PP(V^*)	&			&		&			& Y	
\end{diagram}
\]
which are naturally identified with the projections from $\PP_Y(\CU^\perp)$ and from $\restr{\Flag(2,4,V)}{Y}$ to $\PP(V^*)$ and $Y$.

This last fact has a few straightforward consequences.
Denote by $\Hilb^{2t+1}_{Y,P}$ the Hilbert scheme of conics passing through a point $P \in Y$ and by $\calC_P$ the universal conic passing through $P \in Y$.
There is a cartesian square
\[
\begin{diagram}
\Hilb^{2t+1}_{Y,P}	& \rTo	& \calC	\\
\dInto			& \square	& \dInto	\\
\PP(V^*) \times P	& \rTo	& \PP(V^*) \times Y
\end{diagram}
\]
as one can check by comparing the universal properties of the fiber product 
\[
\calC \times_{\PP(V^*) \times Y} \left( \PP(V^*) \times P \right)
\] 
and of $\Hilb^{2t+1}_{Y,P}$.
\begin{corollary}\label{cor:conics through P}
The Hilbert scheme of conics passing through a point $P \in Y_5$ is canonically isomorphic to $\PP(U_P^\perp) \to \PP(V^*)$ under the identification of $\PP(V^*)$ and $\Hilb^{2t+1}_Y$.
\end{corollary}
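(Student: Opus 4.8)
The plan is to read off the statement from the two presentations of the universal conic $\calC$ supplied by proposition \ref{prop:universal conic}, without any new computation. The cartesian square preceding the corollary already exhibits $\Hilb^{2t+1}_{Y,P}$ as the scheme-theoretic fiber product $\calC \times_{\PP(V^*) \times Y} (\PP(V^*) \times P)$, and since $\calC \hookrightarrow \PP(V^*) \times Y$ is a closed embedding with $q_Y$ the restriction of the second projection, this fiber product is exactly the fiber $q_Y^{-1}(P)$. So the first step is to observe that $\Hilb^{2t+1}_{Y,P} \cong q_Y^{-1}(P)$ as schemes.

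Next I would invoke the identification $\calC \cong \PP_Y(\CU^\perp)$ from \eqref{prop:universal conic is proj}. Under it, $q_Y$ becomes the structure morphism of the projectivized bundle $\PP_Y(\CU^\perp) \to Y$, whose fiber over $P$ is by definition the projectivization $\PP(\CU^\perp|_P) = \PP(U_P^\perp)$ of the fiber of $\CU^\perp$ at $P$. By the defining sequence \eqref{eq:definingsequencedual} this fiber is $U_P^\perp = \{w \in V^* : w|_{U_P} = 0\}$, the three-dimensional annihilator of $U_P$, so $q_Y^{-1}(P) \cong \PP(U_P^\perp) \cong \PP^2$.

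It remains to identify the map down to $\PP(V^*)$. The isomorphism $\calC \cong \PP_Y(\CU^\perp)$ is realized as the relative embedding $\PP_Y(\CU^\perp) \hookrightarrow \PP_Y(V^* \otimes \CO_Y) = \PP(V^*) \times Y$ induced by the subbundle inclusion $\CU^\perp \hookrightarrow V^* \otimes \CO_Y$, with $q_A$ the projection to $\PP(V^*)$. Restricting to the fiber $q_Y^{-1}(P) = \PP(U_P^\perp)$, the composite $\PP(U_P^\perp) \hookrightarrow \PP(V^*) \times \{P\} \xrightarrow{q_A} \PP(V^*)$ is precisely the linear embedding $\PP(U_P^\perp) \hookrightarrow \PP(V^*)$ determined by $U_P^\perp \subset V^*$. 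Under the identification $\PP(V^*) \cong \Hilb^{2t+1}_Y$ this is the natural inclusion of conics through $P$ among all conics, which is exactly the assertion.

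I would then add, as a sanity check and an alternative, that the same answer follows by restricting the defining $SL_2$-invariant section of $\CO_{\PP(V^*)}(1) \boxtimes \CU^*$ to $\PP(V^*) \times \{P\}$: it becomes a section of $\CO_{\PP(V^*)}(1) \otimes U_P^* \cong \CO_{\PP(V^*)}(1)^{\oplus 2}$ corresponding, via $H^0(\CO_{\PP(V^*)}(1)) = V$, to the injection $\iota : U_P \hookrightarrow V$, whose two components are linearly independent linear forms and hence cut out the reduced linear plane $\PP(U_P^\perp)$ transversally. There is no genuine obstacle here; the only point demanding care is the scheme structure of the fiber (reducedness), which is supplied either by the regularity of this section or by flatness of $\PP_Y(\CU^\perp)$ over $Y$, so the argument is essentially bookkeeping built on proposition \ref{prop:universal conic}.
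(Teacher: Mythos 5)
Your argument is correct and is essentially the paper's own proof: the paper likewise derives the statement directly from the identification $\calC \cong \PP_Y(\CU^\perp)$ of proposition \ref{prop:universal conic}, reading off the fiber over $P$ as $\PP(U_P^\perp)$. Your version merely spells out the bookkeeping (the cartesian square, the linearity of the embedding into $\PP(V^*)$, and the scheme structure of the fiber) that the paper leaves implicit.
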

\begin{proof}
By isomorphism \eqref{prop:universal conic is proj} and by its definition, $\Hilb^{2t+1}_{Y,P}$ is the fiber of $\PP(\CU^\perp)$, that is to say $\PP(U_P^\perp) \to \PP(V^*)$.
\end{proof}
\begin{remark}\label{rmk:GM for conics}
By corollary \ref{cor:conics through P}, condition \eqref{itm:irreducible fiber} of theorem \ref{thm:GM} holds for the universal family of conics in $Y$.
\end{remark}

Another consequence of \ref{prop:universal conic} is the following description of the space of conics through two points.
\begin{corollary}
Given any two points $P,Q \in Y$ there is at least one conic $C$ connecting them. Moreover the conic $C$ is unique if and only if $P,Q$ do not lie on a line.
\end{corollary}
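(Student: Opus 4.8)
The plan is to reduce everything to linear algebra in $V$ through the identification of the Hilbert scheme of conics with $\PP(V^*)$ established in Proposition \ref{prop:universal conic}. By Corollary \ref{cor:conics through P} the conics through $P$ are exactly the points of the linearly embedded $\PP(U_P^\perp) \subset \PP(V^*)$, and likewise the conics through $Q$ form $\PP(U_Q^\perp)$, where $U_P^\perp, U_Q^\perp \subset V^*$ are the $3$-dimensional annihilators of the $2$-planes $U_P, U_Q \subset V$. Hence a conic passes through both $P$ and $Q$ if and only if the corresponding point of $\PP(V^*)$ lies in $\PP(U_P^\perp) \cap \PP(U_Q^\perp) = \PP(U_P^\perp \cap U_Q^\perp)$, so the whole statement becomes a question about the subspace $U_P^\perp \cap U_Q^\perp$.

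For existence I would simply count dimensions: since $\dim U_P^\perp = \dim U_Q^\perp = 3$ inside the $5$-dimensional space $V^*$, we have $\dim(U_P^\perp \cap U_Q^\perp) \ge 3 + 3 - 5 = 1$, so $\PP(U_P^\perp \cap U_Q^\perp)$ is non-empty and there is at least one conic through $P$ and $Q$.

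For uniqueness the point is that the conic is unique precisely when $\dim(U_P^\perp \cap U_Q^\perp) = 1$. Using the double-annihilator identity $(U_P^\perp + U_Q^\perp)^\perp = U_P \cap U_Q$ together with the Grassmann formula, one computes
\[
\dim(U_P^\perp \cap U_Q^\perp) = 6 - \dim(U_P^\perp + U_Q^\perp) = 6 - \bigl(5 - \dim(U_P \cap U_Q)\bigr) = 1 + \dim(U_P \cap U_Q),
\]
so uniqueness holds if and only if $U_P \cap U_Q = 0$. Finally I would invoke Corollary \ref{cor:lines through P and Q}, which says that $P$ and $Q$ lie on a line exactly when $U_P \cap U_Q \neq 0$; combining this with the previous equivalence gives that the conic is unique if and only if $P$ and $Q$ do not lie on a line.

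There is no real obstacle here: the argument is pure linear algebra once the two cited corollaries are in hand. The only points requiring a little care are checking that $\PP(U_P^\perp) \cap \PP(U_Q^\perp) = \PP(U_P^\perp \cap U_Q^\perp)$ as schemes, so that every point of the intersection genuinely parametrizes a conic through both $P$ and $Q$ (which is immediate from the identification $\Hilb^{2t+1}_Y \cong \PP(V^*)$ and the corresponding restriction of the universal conic), and recording the annihilator bookkeeping correctly.
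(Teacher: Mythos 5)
Your proof is correct and is essentially the paper's own argument: the paper likewise identifies the conics through $P$ and $Q$ with $\PP((U_P+U_Q)^\perp)=\PP(U_P^\perp\cap U_Q^\perp)$ and reduces both existence and uniqueness to whether $U_P\cap U_Q$ is zero, invoking Corollary \ref{cor:lines through P and Q}. The only difference is cosmetic (the paper phrases the dimension count via $U_P+U_Q$ rather than via annihilators, and adds the extra observation that in the non-unique case every conic of the pencil contains the line through $P$ and $Q$).
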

\begin{proof}
The Hilbert scheme of conics through $P$ and $Q$ is by definition the fiber product of $\PP(U_P^\perp) \to \PP(V^*)$ and $\PP(U_Q^\perp) \to \PP(V^*)$, that is to say $\PP((U_P + U_Q)^\perp)$.

By corollary \ref{cor:lines through P and Q}, $\PP((U_P + U_Q)^\perp)$ is a point if and only if $P$ and $Q$ do not line on a line.
In the case $P$ and $Q$ lie on a line, by the same corollary, there is a $\PP^1$ of conics via $P$ and $Q$. 
All of them contain the line through $P$ and $Q$ (and are therefore singular) as otherwise there would be a plane elliptic curve. 
As $Y$ is cut by quadrics, if it contains a plane elliptic curve, it contains the whole plane, which is against Lefschetz hyperplane section theorem.
\end{proof}

The following proposition describes the locus of singular conics.
\begin{proposition}\label{prop:singular conics}
The locus of singular conics in $Y$ is the image of a degree $2$ map from $\IQ$ to $\PP(V)$.
It is a divisor of degree $3$ which is singular along a rational normal curve of degree $4$ representing double lines.
\end{proposition}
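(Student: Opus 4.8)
The plan is to realise the singular conics as the image of the flag variety $\IQ=\Flag(1,2;A)$ and to extract both the degree and the singular locus from this $2:1$ parametrisation.

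\emph{The map and its image.} A point $(a_1,a_2)\in\IQ$ is, by \eqref{eq:IQ is flag} and Proposition \ref{prop:intersecting lines}, a pair of distinct intersecting lines $L_{a_1},L_{a_2}$ meeting in a single point $P$. From the sequence $0\to\CO_{L_{a_1}\cup L_{a_2}}\to\CO_{L_{a_1}}\oplus\CO_{L_{a_2}}\to\CO_P\to0$ one reads that $L_{a_1}\cup L_{a_2}$ has Hilbert polynomial $2t+1$ and $h^1(\CO)=0$, hence by Corollary \ref{cor:d=1,2,3 in Y} it is a conic, i.e.\ a point of $\Hilb^{2t+1}_Y\cong\PP(V^*)$ (Proposition \ref{prop:universal conic}). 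Letting $(a_1,a_2)$ vary, these conics form a family over $\IQ$ whose fibres over the diagonal $\Delta\cong\CQ$ degenerate to the double lines $2L_a$; this defines an $SL_2$-equivariant morphism $f\colon\IQ\to\PP(V^*)\cong\PP(V)$. Conversely every singular conic is a plane conic (Corollary \ref{cor:d=1,2,3 in P^n}), so it is either a union of two coplanar, hence intersecting, lines of $Y$ or a double line; as every line of $Y$ is some $L_a$ (Proposition \ref{prop:fano of lines}), $f$ surjects onto the locus of singular conics. Since a reducible conic recovers its two components as an unordered pair, $f$ is generically $2:1$, and as $\dim\IQ=3$ its image $D$ is a divisor.

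\emph{Degree three.} Viewing $\IQ$ as the divisor of class $\alpha+\beta$ in $\PP(A)\times\PP(A)$ (with $\alpha,\beta$ the two hyperplane classes), the relations $\alpha^3=\beta^3=0$, $\alpha^2\beta=\alpha\beta^2=1$ give $\int_\IQ(\alpha+\beta)^3=6$. The involution $\tau$ exchanging the factors preserves $\IQ$ and satisfies $f\circ\tau=f$, so $f^*\CO_{\PP(V^*)}(1)=\CO_\IQ\big(c(\alpha+\beta)\big)$ for some $c$. Restricting to $\Delta\cong\CQ\cong\PP(W)$, where $\alpha$ and $\beta$ each restrict to $\CO_{\PP^1}(2)$ because $\CQ\subset\PP(A)$ is a conic, the map $f|_{\CQ}\colon\PP(W)\to\PP(V^*)=\PP(S^4W)$ is $SL_2$-equivariant into an irreducible representation, hence nonconstant and therefore the degree-$4$ rational normal curve; thus $f^*\CO_{\PP(V^*)}(1)|_{\CQ}=\CO_{\PP^1}(4)$, forcing $c=1$. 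By the projection formula $2\deg D=\int_\IQ(f^*h)^3=\int_\IQ(\alpha+\beta)^3=6$, so $D$ has degree $3$.

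\emph{The singular locus.} Over the open set where $C_v$ spans a plane, Proposition \ref{prop:universal conic} exhibits $C_v=\Gr(2,\ker v)\cap\PP(A^\perp)$ as the intersection of the Plücker quadric with $\PP(\mathcal{W}_v)$, where $\mathcal{W}_v=A^\perp\cap\Lambda^2\ker v$ has rank $3$; the induced symmetric form realises $C_v$ as a genuine plane conic, of corank $0,1,2$ precisely when $C_v$ is smooth, a pair of distinct lines, or a double line. Hence $D$ is the rank-$\le2$ discriminant locus, and its singular locus is the corank-$2$ (rank-$\le1$) stratum, namely the double lines. By Proposition \ref{prop:special lines and normal bundle} the double lines are exactly $f(\CQ)$, which by the previous paragraph is the rational normal quartic; being a corank-$2$ symmetric degeneracy locus it has the expected codimension $3$ in $\PP(V^*)$, i.e.\ dimension $1$. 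Together these statements give the proposition.

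\emph{Main obstacle.} The two delicate points are (i) promoting $f$ to a genuine morphism along the diagonal, i.e.\ verifying that $L_{a_1}\cup L_{a_2}$ degenerates \emph{flatly} to the double line $2L_a$ as $(a_1,a_2)\to(a,a)$ (so that the universal property of $\Hilb^{2t+1}_Y$ applies across $\Delta$, which has codimension $2$ in $\IQ$); and (ii) checking that the locus where $\mathcal{W}_v$ fails to have rank $3$ has codimension $\ge2$ in $\PP(V^*)$, so that the symmetric-form stratification genuinely computes $\mathrm{Sing}(D)$ and the dimension of the double-line locus. I expect (i) to be the crux, since the reduced scheme-theoretic union $L_{a_1}\cup L_{a_2}$ manifestly has the wrong (reduced) limit, and the correct non-reduced limit must be produced by hand.
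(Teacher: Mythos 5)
Your overall strategy coincides with the paper's: parametrize singular conics by $\IQ\cong\Flag(1,2;A)$, observe that the induced map $f$ to $\PP(V^*)$ is $2:1$ onto its image with $f^*\CO(1)$ of type $(1,1)$, get the degree from the projection formula, and identify the curve of double lines with the image of the fixed locus of the involution (the unique $1$-dimensional $SL_2$-orbit, hence a rational normal quartic). Your intersection-theoretic degree count ($\int_\IQ(\alpha+\beta)^3=6$, so $\deg D=3$) is more explicit than the paper's, and fixing $c=1$ by restricting to the diagonal is a legitimate variant of the paper's appeal to Proposition \ref{prop:intersecting lines} and Corollary \ref{cor:conics through P}.

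The first gap is the one you flag: you never produce the flat family of conics over all of $\IQ$, so $f$ is only a rational map off the codimension-$2$ locus $\Delta\cap\IQ$, and nothing guarantees that it extends as a morphism or that the double lines lie in its image. The paper closes this by invoking Proposition \ref{prop:special lines and normal bundle}: for $(a,a)\in\IQ$ one has $\CN_{L_a/Y}\cong\CO_{L_a}(-1)\oplus\CO_{L_a}(1)$, and the distinguished $\CO_{L_a}(-1)$-direction produces the non-reduced conic $2L_a$, giving a family over the whole of $\IQ$. Since your determination of $c$ rests on restricting $f^*\CO(1)$ to $\Delta$, this gap propagates into your degree argument.

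The second gap is in the step where you genuinely diverge from the paper. Your worry (ii) is actually harmless: if $\dim(A^\perp\cap\Lambda^2\ker v)>3$ then $A\cap(v\wedge V^*)\neq 0$, i.e.\ $A$ contains a decomposable (rank-$2$) form, contradicting Lemma \ref{lemma:HPD of Gr}; so $\mathcal{W}_v$ is a rank-$3$ bundle on all of $\PP(V^*)$. The real issue is the inclusion of the singular locus of $D$ \emph{into} the corank-$2$ stratum: for a family of symmetric forms, the differential of $\det$ at a corank-$1$ point $v$ is $\xi\mapsto (dq_v(\xi))(\ell,\ell)$ with $\ell$ spanning the kernel, so $D$ could be singular at a reduced reducible conic if the classifying map is tangent to the discriminant there. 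Without a transversality check you only get that $D$ is singular \emph{along} the double lines, not that it is smooth elsewhere. The paper sidesteps this by showing that $\IQ/\tau\to\PP(V^*)$ is a closed embedding, so $D\cong\IQ/\tau$, which is smooth away from the image of the fixed locus and has transverse quotient singularities (locally $\CC\times(\CC^2/\pm 1)$) exactly along it.
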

\begin{proof}
Over the space 
\[
\IQ \subset \PP(A) \times \PP(A)
\]
of intersecting pairs of lines there is a family of conics. 
This is clearly true for points corresponding to distinct lines, while it follows from the normal bundle description \ref{prop:special lines and normal bundle} in the case of points on the diagonal of $\PP(A) \times \PP(A)$.

This family of lines induces a regular map
\begin{equation}\label{eq:singular conics}
\IQ \to \PP(V^*)
\end{equation}
which by proposition \ref{prop:intersecting lines} and corollary \ref{cor:conics through P} has degree $(1,1)$.
The variety $\IQ$ has an involution interchanging the two factors, and the map \eqref{eq:singular conics} factors through this involution.
Keeping this in mind and using projection formula, one can compute the degree of the image of \eqref{eq:singular conics} by intersecting it with a line in $\PP(V^*)$.

Finally, one can check that the quotient by the involution of the map \eqref{eq:singular conics} is a closed embedding, so that its singular locus is the fixed locus of the involution, namely the curve of double lines.
As there is only one orbit of dimension $1$ for the action of $SL_2$ on $\PP(V^*)$, the curve of double lines is a rational normal curve of degree $4$ in $\PP(V^*)$.
\end{proof}


We will also discuss the intersection pairing of a line and a conic in lemma \ref{lemma:ILC}.

\subsection{Cubics}


The Beilinson spectral sequence approach works in the case of cubics in the same way as in the cases of lines and conics. 
As we have dealt with these other two cases in great detail in proposition \ref{prop:fano of lines} and proposition \ref{prop:universal conic}, we will only sketch the proofs in the case of cubics.

\begin{remark}\label{rmk:vanishing for cubics}
By cubic we mean a subscheme $T$ of $Y$ whose Hilbert polynomial is $3t + 1$. 
By corollary \ref{cor:d=1,2,3 in Y} such subschemes have no embedded points and satisfy $h^1(\CO_T) = 0$. 
\end{remark}

\begin{proposition}
The Hilbert scheme of cubics in $Y$ is isomorphic to $\Gr(2,V)$. The universal family of cubics $\UCubic$ has a resolution
\[
0 \to \CU(-1) \boxtimes \CO_{Y}(-1) \to \CO_{\Gr}(-1) \boxtimes V/\CU(-1) \to \CO_{\Gr} \boxtimes \CO_{Y} \to \CO_{\UCubic} \to 0
\]
inside $\Gr(2,V) \times Y$.
\end{proposition}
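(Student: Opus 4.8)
The plan is to copy the arguments of Propositions \ref{prop:fano of lines} and \ref{prop:universal conic}, replacing the exceptional collection \eqref{eq:coll} by the twist of \eqref{eq:coll 2} by $\CO_Y(-1)$, namely $\langle \CO_Y(-1),\, V/\CU(-1),\, \CU,\, \CO_Y \rangle$, where I have used $\CU^*(-1) = \CU$ from Lemma \ref{lemma:cohomologies on Y}. Given a flat family of cubics $\pi\colon T \to S$ with $T \subset S \times Y$, I would apply the relative Beilinson spectral sequence of Theorem \ref{thm:relative Beilinson} to $\CO_T$ with respect to this collection. Exactly as in the conic case, the sheaves paired with the four members of the collection are the pushforwards of $\CO_T(-1)$, $\restr{\CU}{T}$, $\restr{\CU^\perp}{T}$ and $\CO_T$, so the whole computation reduces to the cohomology of these four bundles on a single cubic.

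These are the cubic analogues of Lemmas \ref{cohomologyonconic} and \ref{cohomologyonconic2}. By Remark \ref{rmk:vanishing for cubics} one has $H^\bullet(T, \CO_T) = \CC$. Since $Y$ contains no planar cubic and every cubic has no embedded points (Corollary \ref{cor:d=1,2,3 in Y}), the degree $-3$ line bundle $\CO_T(-1)$ has no sections, so $H^\bullet(T,\CO_T(-1))$ is concentrated in degree $1$ with $h^1 = -\chi = 2$. A nonzero section of $\restr{\CU}{T}$ would force $T$ into the zero locus of a section of $V/\CU$, which by Lemma \ref{zerolocusV} is a line or a point; hence $h^0(\restr{\CU}{T}) = 0$ and, as $\chi(\restr{\CU}{T}) = -1$, one gets $h^1 = 1$. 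Finally a nonzero section of $\restr{\CU^\perp}{T}$ would embed $T$ into some $\Gr(2,B^\perp) \cap Y$ of degree at most $2$, impossible for a curve of degree $3$; since $\chi(\restr{\CU^\perp}{T}) = 0$ this gives $H^\bullet(T,\restr{\CU^\perp}{T}) = 0$.

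Thus three columns survive, with pushforwards locally free of ranks $2,1,1$, while the $\CU$-column is annihilated by the vanishing of $\restr{\CU^\perp}{T}$. The spectral sequence then yields a resolution
\[
0 \to \CK \boxtimes \CO_Y(-1) \to \scrL \boxtimes V/\CU(-1) \to \CO_{S \times Y} \to \CO_T \to 0,
\]
with $\CK$ of rank $2$ and $\scrL$ a line bundle on $S$. Pushing the first differential to $S$ and using $\Hom_Y(\CO_Y(-1), V/\CU(-1)) = H^0(Y, V/\CU) = V$ (Lemma \ref{lemma:cohomologies on Y}) produces a map $\CK \otimes \scrL^{-1} \to V \otimes \CO_S$, which is fibrewise injective by base change (flatness of $\CO_T$ keeps the sequence exact on each fibre). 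Hence it is a rank-$2$ subbundle of $V \otimes \CO_S$ and defines the classifying morphism $S \to \Gr(2,V)$; this is the step that upgrades the Hilbert scheme from a projective space (as for lines and conics) to the Grassmannian. In the opposite direction I would build the universal family on $\Gr(2,V) \times Y$ by taking for the first differential the canonical element $\mathrm{id}_V \in V^* \otimes V \cong \Hom_{\Gr}(\CU, \CO_{\Gr}) \otimes H^0(Y, V/\CU)$, extending it to the stated complex by means of the mutation triangle \eqref{eq:mutation U Uperp}; a Chern character computation gives $\ch(\CO_{\UCubic}) = 3L - 2P$ (Hilbert polynomial $3t+1$), and together with Corollary \ref{cor:d=1,2,3 in Y} and the flatness criterion \ref{cor:qcoh flat} this shows the complex resolves a sheaf $\CO_{\UCubic}$ flat over $\Gr(2,V)$. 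The two constructions are natural by functoriality of the Beilinson spectral sequence, and they are mutually inverse: the map attached to $\pi$ pulls $\UCubic$ back to a family with the same ideal sheaf inside $S \times Y$, and since cubics have codimension $2$ in $Y$ (so $d_X - d_Z = 2 > 1$) Lemma \ref{lemma:no ext^1 from codimension 2} shows the ideal sheaf determines the subscheme.

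The main obstacle will be the cohomological input of the second paragraph, which has to be established uniformly over the whole Hilbert scheme, including the singular and non-reduced cubics; in particular the vanishings $h^0(\CO_T(-1)) = 0$ and $H^\bullet(T,\restr{\CU^\perp}{T}) = 0$ rest on the fact, proved in Corollary \ref{cor:d=1,2,3 in Y}, that $Y$ carries no planar cubic. A closely related point is checking that the family constructed on $\Gr(2,V) \times Y$ has all of its fibres pure of dimension $1$, so that Corollary \ref{cor:qcoh flat} applies and the rank-$2$ pushforward genuinely embeds as a subbundle of $V \otimes \CO_S$.
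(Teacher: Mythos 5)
Your proposal follows essentially the same route as the paper, which itself only sketches this proof: the same relative Beilinson spectral sequence with respect to the collection $\langle \CO_Y(-1), V/\CU(-1), \CU, \CO_Y\rangle$ (the paper writes $V/\CU(1)$, an evident typo), the same cubic analogues of Lemmas \ref{cohomologyonconic} and \ref{cohomologyonconic2}, and the same appeal to Lemma \ref{lemma:no ext^1 from codimension 2} to show the two correspondences are mutually inverse. Your write-up actually supplies more of the cohomological input than the paper does, and the points you flag as remaining (uniform vanishings over degenerate cubics, purity of the fibres of the constructed family) are exactly the ones the paper also leaves to the reader.
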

\begin{proof}
First we construct a family of cubics.
Via Borel--Bott--Weil one can construct on $\Gr(2,V) \times Y$ a complex
\[
\CU(-1) \boxtimes \CO_{Y}(-1) \to \CO_{\Gr}(-1) \boxtimes V/\CU(-1) \to \CO_{\Gr} \boxtimes \CO_{Y}
\]
where $\CU$ is as usual the rank $2$ tautological bundle on $\Gr(2,V)$.
One can then prove that it has a unique non-vanishing cohomology and that its cohomology is flat over $\Gr(2,V)$.
The Chern character computation proves that it is a family of cubics in $Y$.

Given a family of cubics $T \subset S \times Y$, compute the relative Beilinson spectral sequence \ref{thm:relative Beilinson} with respect to the dual collections
\begin{gather*}
\langle \CO_Y(-1), V/\CU(1), \CU, \CO_Y \rangle \\
\langle \CO_Y, V/\CU, \CU^*, \CO_Y(1)	\rangle
\end{gather*}
which one can obtain by dualizing collections \eqref{eq:coll} and \eqref{eq:coll dual}.

From remark \ref{rmk:vanishing for cubics} we can deduce several other vanishings about the tautological bundles on $Y$ restricted to $T$, which yield an exact sequence
\[
0 \to \scrM_{\CO(-1)} \boxtimes \CO_Y(-1) \to \scrL_{V/\CU(-1)} \boxtimes V/\CU(-1) \to \CO_{S} \boxtimes \CO_{Y} \to \CO_T \to 0
\]
By corollary \ref{cor:qcoh flat} the sheaf $\scrM_{\CO(-1)}$ is a vector bundle of rank $2$, while $\scrL_{V/\CU(-1)}$ is a line bundle.

The proof that the two correspondences between families of cubics and maps to $\Gr(2,V)$ are inverse to each other is analogous to that of lines in proposition \ref{prop:fano of lines}.
\end{proof}

\begin{remark}
A more geometric construction for the universal family of cubics is the following.
We will see later that there is a diagram
\[
\begin{diagram}
		&			&\PP_{Y}(\CU)	&\cong \Bl_{\Imsigma} \PP(V)	&			&	\\
		& \ldTo		&			& 						& \rdTo_{b}	&	\\
Y		&			&			&						&			& \PP(V)
\end{diagram}
\]
By proposition \ref{picard of blow up}, the Fourier--Mukai transform functor with kernel the structure sheaf $\Bl_{\Imsigma} \PP(V)$ of a line in $\PP(V)$ is a cubic in $Y$.
The Koszul complex for the universal line 
\[
\PP(\CTaut_2) \subset \Gr(2,V) \times \PP(V)
\]
yields a resolution
\[
0 \to \CO_{\Gr}(-1) \boxtimes \CU(-1) \to V/\CTaut_2 \boxtimes \CO_Y(-1) \to \CO_{\Gr} \boxtimes \CO_Y \to \CO_{\UCubic} \to 0
\]

One can also obtain the above resolution by using the relative Beilinson spectral sequence with respect to the collection $\langle \CO_Y(-1), \CU(-1), \CO_Y, \CU \rangle $.
\end{remark}

\section{More on the geometry of $Y$}\label{sec:More on the geometry of Y}

In this section we rely on the results of section \ref{sec:The setting} to investigate further the geometry of $Y$.
In \ref{sec:embeddings of P(A)} we construct an embedding of $\PP(A)$ in $\Gr(3,V)$ and some exact sequences related to it.
In \ref{sec:Identification} we show that the projectivization $\PP_Y(\CU)$ is naturally isomorphic to a blow up of $\PP(V)$.
This leads to the construction of a family of linear sections of $Y$ parametrized by $\Gr(3,V)$, which we describe in \ref{sec:A map from Gr(3,V)}.
Finally, in \ref{sec:Linear sections of Y} we study a Fourier--Mukai transform in relation to the family of linear sections which we constructed.

\subsection{Embeddings of $\PP(A)$}\label{sec:embeddings of P(A)}

First, recall that by definition \ref{def:sigma} and by remark \ref{rmk:sigma embedding}, the map $\sigma$ is an embedding of $\PP(A)$ in $\PP(V)$ associating to a form $a \in A$ its unique kernel vector $a \wedge a \in \Lambda^4V^* \cong V$. 
The image of $\PP(A)$ via $\sigma$ is denoted by $\Imsigma$. 

We can also embed $\PP(A)$ in $\Gr(3,V)$ by mapping $a \in A$ to the kernel of the natural composition
\[
V \xrightarrow{\quad \,} A^* \otimes V^* \xrightarrow{\ker a} A^*
\]
which we will denote by $A(\ker(a),-)^\perp$.
We denote this embedding by $\korth$ and the tautological bundle on $\Gr(3,V)$ by $\CK$. 
The proof that $\korth$ is an embedding is part of proposition \ref{prop:embed P(A)} and is carried out with the help of lemma \ref{lemma:deg korth}. The image of $\PP(A)$ via $\kappa$ is denoted by $\Imkappa$.

\begin{lemma}\label{lemma:deg korth}
Let $\CK$ be the tautological bundle on $\Gr(3,V)$. There is an exact sequence
\begin{equation}\label{eq:deg korth}
0 \to \korth^* \CK \to V \otimes \CO_{\PP(A)} \to A \otimes \CO_{\PP(A)}(2) \to \CO_{\PP(A)}(3) \to 0
\end{equation}
The degree of $\korth$ is 3.
\end{lemma}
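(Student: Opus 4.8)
The plan is to realise the claimed four-term sequence as the splice of two natural maps, to verify exactness by a fibrewise rank count, and then to read off the degree of $\korth$ from first Chern classes. First I would set up the maps with correct twists. Since $\sigma(a)=a\wedge a$ is the kernel (radical) vector of $a$ and $\sigma$ is quadratic, the tautological inclusion is $\sigma^*\CO_{\PP(V)}(-1)=\CO_{\PP(A)}(-2)\hookrightarrow V\otimes\CO_{\PP(A)}$, so $\ker a$ is a section of $V\otimes\CO_{\PP(A)}(2)$. Contracting the evaluation pairing $A\otimes V\to V^*$ of lemma \ref{lemma:cohomologies on Y} against this section produces
\[
\phi: V\otimes\CO_{\PP(A)} \to A^*\otimes\CO_{\PP(A)}(2), \qquad v\longmapsto \big(a'\mapsto a'(v,\ker a)\big),
\]
and evaluating at the tautological $a\in A\otimes\CO_{\PP(A)}(1)$ gives
\[
\psi: A^*\otimes\CO_{\PP(A)}(2)\to\CO_{\PP(A)}(3), \qquad \eta\longmapsto\eta(a).
\]
Because $\ker a$ lies in the radical of $a$, we have $a(v,\ker a)=0$ for all $v$, so $\psi\circ\phi=0$ and the two maps form a complex. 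Via the $SL_2$-invariant identification $A\cong A^*$ coming from the nondegenerate quadric of lemma \ref{lemma:HPD of Gr}, this is exactly the complex \eqref{eq:deg korth}, and by construction $\korth(a)=\ker\phi_a$, so that $\korth^*\CK=\ker\phi$ once $\phi$ is shown to have constant rank.

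Next I would prove exactness fibrewise. The transpose of $\phi_a$ sends $a'\mapsto a'(-,\ker a)$, with kernel $\{a'\in A:\ker a\in\ker a'\}$; as every nonzero form in $A$ has rank $4$ (lemma \ref{lemma:HPD of Gr}) its radical is a single line, so $\ker a\in\ker a'$ forces $\sigma(a')=\sigma(a)$, hence $[a']=[a]$ since $\sigma$ is injective (remark \ref{rmk:sigma embedding}). Therefore $\phi$ has constant rank $2$; this simultaneously shows $\korth$ is defined on all of $\PP(A)$, that $\korth^*\CK=\ker\phi$ is a rank-$3$ subbundle (left exactness), and that $\operatorname{Im}\phi$ is a rank-$2$ subbundle. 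Fibrewise $\operatorname{Im}\phi_a\subseteq\ker\psi_a=a^\perp\subset A^*$, both of dimension $2$, hence they coincide; and $\psi$ is surjective since $a\neq 0$. This gives exactness of \eqref{eq:deg korth} at every term.

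Finally, additivity of first Chern classes along \eqref{eq:deg korth} yields
\[
c_1(\korth^*\CK)=0-6h+3h=-3h,\qquad h:=c_1(\CO_{\PP(A)}(1)),
\]
so $\korth^*\CO_{\Gr}(1)=\det(\korth^*\CK)^{\vee}=\CO_{\PP(A)}(3)$; that is, $\korth$ is defined by cubics and has degree $3$. As a consistency check this recovers $\CO_{\PP(A)}(1)\cong\Lambda^2\big(\korth^*\CK/\sigma^*\CO_{\PP(V)}(-1)\big)^{\vee}$ used in the proof of proposition \ref{prop:special lines and normal bundle}. The only step demanding genuine care is the constant-rank claim, equivalently that $\korth$ extends over all of $\PP(A)$: this is precisely where injectivity of $\sigma$ and the rank-$4$ hypothesis are essential, and everything else is Chern-class bookkeeping.
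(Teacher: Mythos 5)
Your proof is correct and takes essentially the same route as the paper, just read in the dual direction: the paper verifies pointwise exactness of the dualized complex $0 \to \CO_{\PP(A)}(-3) \to A \otimes \CO_{\PP(A)}(-2) \to V^* \otimes \CO_{\PP(A)}$ using the rank-$4$ condition on forms in $A$ (and, implicitly, the injectivity of $\sigma$), which is exactly your constant-rank computation for $\phi$ and its transpose, and then defines $\korth$ via the resulting rank-$3$ quotient bundle. The degree is extracted from the same first-Chern-class bookkeeping along \eqref{eq:deg korth}.
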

\begin{proof}
Define the middle map as the dual of the composition
\[
A \otimes \CO_{\PP(A)}(-2) \xrightarrow{A \otimes \sigma} A \otimes V \otimes \CO_{\PP(A)} \xrightarrow{\ev} V^* \otimes \CO_{\PP(A)}
\]
Using the fact \ref{lemma:HPD of Gr} that each form in $A$ has rank 4, the exactness of
\[
0 \to \CO_{\PP(A)}(-3) \to A \otimes \CO_{\PP(A)}(-2) \to  V^* \otimes \CO_{\PP(A)}
\]
can be checked pointwise, showing at the same time that the cokernel of
\[
A \otimes \CO_{\PP(A)}(-2) \to  V^* \otimes \CO_{\PP(A)}
\]
is a vector bundle.
The rank $3$ quotient bundle of $V^* \otimes \CO_{\PP(A)}$ induces the map $\korth$ from $\PP(A)$ to $\Gr(3,V)$.
The degree of $\korth$ is the degree of $-c_1(\korth^* \CK)$: 
by exact sequence \eqref{eq:deg korth} we find $\deg(\korth) = 3$.
\end{proof}

Under the canonical identification of  $\Gr(3,V)$ with $\Gr(2,V^*)$, which is the space of pencils of conics in $Y_5$, this embedding maps $a \in \PP(A)$ into the unique pencil of conics sharing the line $L_a$ as a component. 
\begin{lemma}\label{lemma:conics through L}
The scheme of conics containing a line $L := L_a$ is the line in $\PP(V^*)$ corresponding to $\kappa(a) \in \Gr(2,V^*)$.
\end{lemma}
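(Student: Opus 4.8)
The plan is to reduce everything to linear algebra in $V$ by unwinding the descriptions of lines and conics as families of $2$-planes, together with the definition of $\kappa$. Fix $a \in A$ and set $K := \kappa(a) = A(\ker a, -)^\perp \subseteq V$, the $3$-dimensional subspace defining $\kappa$. First I would record two descriptions. By Proposition \ref{prop:universal conic}, and in particular the identification \eqref{prop:universal conic is proj} of the universal conic with $\restr{\Flag(2,4,V)}{\PP(V^*) \times Y}$, the conic $C_\phi := \Gr(2, \ker \phi) \cap Y$ attached to a point $[\phi] \in \PP(V^*)$ is $\{U \in Y : U \subseteq \ker \phi\}$. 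By Lemma \ref{zerolocusV}, the line $L_a$ is the zero locus on $Y$ of the section $\ker a \in V \cong H^0(V/\CU)$, hence $L_a = \{U \in Y : \ker a \in U\}$; writing $U = \langle \ker a, w \rangle$, membership in $Y$ forces $a'(\ker a, w) = 0$ for every $a' \in A$, i.e. $w \in K$, so that $L_a = \{U \in Y : \ker a \subseteq U \subseteq K\}$. Since these are exactly the $2$-planes through $\ker a$ inside the $3$-space $K$, their union is all of $K$.

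With these in hand the containment becomes transparent: $L_a \subseteq C_\phi$ holds if and only if $U \subseteq \ker \phi$ for every $U$ with $\ker a \subseteq U \subseteq K$, and because such $U$ sweep out $K$ this is equivalent to $K \subseteq \ker \phi$, that is, to $\phi \in K^\perp$. Thus, as a set, the conics containing $L_a$ are parametrized by $\PP(K^\perp) \subseteq \PP(V^*)$. Under the canonical isomorphism $\Gr(3,V) \cong \Gr(2,V^*)$ sending $K \mapsto K^\perp$, the $3$-space $K = \kappa(a)$ corresponds precisely to the line $\PP(K^\perp)$, which is by definition the line in $\PP(V^*)$ associated with $\kappa(a) \in \Gr(2,V^*)$; this is the claimed equality.

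To obtain the statement on the nose as an equality of schemes, I would phrase the condition through the section cutting out the universal conic. Restricting the $SL_2$-invariant section of $\CO_{\PP(V^*)}(1) \boxtimes \CU^*$ to $\PP(V^*) \times L_a$ and pushing forward to $\PP(V^*)$ yields the evaluation map $V^* = H^0(\CU^*) \to H^0(\restr{\CU^*}{L_a})$; using $\ST_{L_a}(\CU^*) = (0,1)$ from Lemma \ref{lemma:splitting on lines} the target is $3$-dimensional, and by the computation above its kernel is exactly $K^\perp$, so the map is surjective. The scheme of conics through $L_a$ is then the zero scheme of the induced map $\CO_{\PP(V^*)}(-1) \to H^0(\restr{\CU^*}{L_a}) \otimes \CO_{\PP(V^*)}$, i.e. the common zero of three independent linear forms, which is the reduced, linearly embedded $\PP(K^\perp)$. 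There is no genuine geometric difficulty here; the only points requiring care are the bookkeeping that identifies $\kappa(a)$ as a $3$-space in $V$ with its annihilator $K^\perp \subseteq V^*$, and the verification that the containment condition is linear so that no spurious scheme structure is introduced.
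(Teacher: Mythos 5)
Your proof is correct and follows essentially the same route as the paper: both identify $L_a$ with $\PP(\kappa(a)/\ker a)$ and $C_w$ with the $2$-planes inside $\ker w$, so that $L_a \subseteq C_w$ if and only if $\kappa(a) \subseteq \ker w$, i.e. $w \in \kappa(a)^\perp$. The only (inessential) difference is in the scheme structure: the paper deduces reducedness by writing the locus as an intersection of the linearly embedded planes $\PP(U_P^\perp)$ of conics through points of $L$ (corollary \ref{cor:conics through P}), while you obtain the same linear $\PP^1$ as the zero scheme of the surjection $V^*\otimes\CO_{\PP(V^*)} \to H^0(\restr{\CU^*}{L_a})\otimes\CO_{\PP(V^*)}$ twisted back by $\CO_{\PP(V^*)}(-1)$ — an equivalent linearity argument.
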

\begin{proof}
Let $L_a$ be the line corresponding to $a \in A$ and $C_w$ the conic corresponding to $w \in V^*$ under the identifications of proposition \ref{prop:fano of lines} and \ref{prop:universal conic}.
By corollary \ref{cor:lines through P and Q}, $L_a$ embeds in $\Gr(2,V)$ as $\PP(\kappa(a)/\ker a)$, while by proposition \ref{prop:universal conic} for any conic $C_w$ we have
\[
C_w = \PP(A^\perp) \cap \Gr(2,\ker w) \subset \PP(\Lambda^2V)
\]
It follows that $L_a$ is contained in $C_w$ if and only if $\kappa(a) \subset \ker w$, which means that $w$ belongs to the line in $\Gr(2,V^*)$ corresponding to the 3-dimensional space $\kappa(a) \subset V$.

The schematic structure on the scheme of conics containing $L$ is the reduced one as by corollary \ref{cor:conics through P} it is the intersection of projective planes in $\PP(V)$.
\end{proof}

Lemma \ref{lemma:Imkappa is unexpected dimension} also provides us with another description of $\Imkappa \subset \Gr(3,V)$.
Note that a general projective plane $\PP(K)$ in $\PP(V)$ intersects $\Imkappa$ in a finite number of points. 
The locus in $\Gr(3,V)$ where this property fails is $\Imkappa$.
\begin{lemma}\label{lemma:Imkappa is unexpected dimension}
The intersection $\PP(K) \cap \Imsigma$ inside $\PP(V)$ is not of expected dimension if and only if $K \in \Imkappa \subset \Gr(3,V)$.
\end{lemma}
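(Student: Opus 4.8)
The plan is to reduce the statement to a question about pencils of conics on $\PP(A)$ and then to use the $\SL_2$-invariant conic $\CQ$ to match the resulting planes with $\Imkappa$. Recall that $\sigma$ is induced by the tautological inclusion $\CO_{\PP(A)}(-2) \to V \otimes \CO_{\PP(A)}$, $a \mapsto \ker a$, and that $\sigma^*\CO_{\PP(V)}(1) = \CO_{\PP(A)}(2)$. First I would observe that $\sigma^{-1}(\PP(K))$ is the zero scheme of the induced section $\bar s \in H^0\!\big((V/K)\otimes\CO_{\PP(A)}(2)\big)$; equivalently, under the embedding $K^\perp \subset V^* \hookrightarrow S^2A^*$ dual to $S^2A \to V$, it is the base locus of the pencil of conics $\PP(K^\perp) \subset |\CO_{\PP(A)}(2)|$. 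Since $\sigma$ is a closed embedding, $\PP(K)\cap\Imsigma \cong \sigma^{-1}(\PP(K))$, whose expected dimension is $2+2-4 = 0$. Thus the intersection fails to have expected dimension exactly when this pencil of conics has one-dimensional base locus, which for a genuine pencil happens if and only if its members share a common line $\ell \subset \PP(A)$, i.e. if and only if $K^\perp \subseteq \ell \cdot A^*$ (the conics through $\ell$).

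Next I would bring in the invariant conic $\CQ = \{q = 0\}$: since $q$ is nondegenerate it defines a polarity identifying a point $a \in \PP(A)$ with a line $\ell_a \subset \PP(A)$, and every line arises uniquely this way. The core computation is to identify, for each $a$, the pencil $\kappa(a)^\perp \subset V^*$ with the conics through the polar line $\ell_a$. In the $\SL_2$-model $A = S^2W$, $V = S^4W$, where $\sigma$ becomes $[g]\mapsto[g^2]$, the conics in the image of $\kappa(a)^\perp$ in $S^2A^*$ are the quadratic forms $b \mapsto a'(\ker a,\ker b)$, $a' \in A$; writing the defining pairing as $a'(u,v) = q\big(a',\mathrm{pr}_{S^2W}(u\wedge v)\big)$, these all have the shape $q(a',\Psi(a,b))$ with $\Psi(a,b) := \mathrm{pr}_{S^2W}(\sigma(a)\wedge\sigma(b))$. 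A short $\SL_2$-equivariance argument (or an explicit evaluation at a point of the open orbit) shows $\Psi(a,b) = c\,q(a,b)[a,b]$ for a nonzero constant $c$, where $[\,,\,]$ is the projection $\Lambda^2 S^2W \cong S^2W$; in particular $\Psi(a,b)=0$ whenever $b \in \ell_a$. Hence every conic in $\kappa(a)^\perp$ vanishes on $\ell_a$, so $\kappa(a)^\perp \subseteq \ell_a\cdot A^* \cap V^*$. Since $\kappa(a)^\perp$ has dimension $\dim V - 3 = 2$ (it is the annihilator of $\kappa(a)=A(\ker a,-)^\perp$, whose dimension is read off from \eqref{eq:deg korth}), and $\ell_a\cdot A^* \cap V^*$ is $2$-dimensional by a dimension count, the two coincide.

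Combining these, if $K = \kappa(a)$ then $K^\perp$ consists of conics through $\ell_a$, so by the first paragraph $\PP(K)\cap\Imsigma$ is positive-dimensional, as it contains the conic $\sigma(\ell_a)$; this gives the ``if'' direction. Conversely, if the intersection is not of expected dimension, the first paragraph produces a line $\ell$ with $K^\perp \subseteq \ell\cdot A^*$, hence $K^\perp = \ell\cdot A^* \cap V^*$ by the dimension count; writing $\ell = \ell_a$ via the polarity, the identification above yields $K^\perp = \kappa(a)^\perp$, i.e. $K = \kappa(a) \in \Imkappa$. A dimension check confirms the bookkeeping: the non-expected planes form a $2$-parameter family indexed by the lines $\ell$, matching $\dim\Imkappa = 2$. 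The hard part will be the identification $\kappa(a)^\perp = \ell_a\cdot A^* \cap V^*$, where the purely algebraic definition of $\kappa$ must be reconciled with the Veronese geometry; it rests on the factorization of $\Psi$ through $q$ and on verifying that the intersection is exactly $2$-dimensional, ruling out the degenerate case $\ell\cdot A^* \subseteq V^*$.
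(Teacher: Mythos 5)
Your argument is correct. The reduction in your first paragraph --- $\PP(K)\cap\Imsigma$ fails to have expected dimension precisely when the pencil $K^\perp\subset V^*\subset S^2A^*$ of conics on $\PP(A)$ has a common line $\ell$, i.e.\ $K^\perp\subseteq\ell\cdot A^*$ --- is exactly how the paper begins, but from there the two proofs part ways. The paper never identifies $\ell\cdot A^*\cap V^*$ with a fibre of $\kappa$: it only uses the non-degeneracy of $q$ to see that $\ell\mapsto\ell\cdot A^*\cap V^*$ defines a regular map $\PP(A)\to\Gr(2,V^*)$, so that the locus of bad planes is irreducible of dimension at most $2$; it then proves the inclusion of $\Imkappa$ into that locus via Lemma \ref{lemma:conics through L} (the conics of the pencil $\kappa(a)$ all contain $L_a\subset Y$, hence are met by the one-dimensional family of lines meeting $L_a$), and closes with a comparison of irreducible closed sets of dimension $2$. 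You instead prove the sharp identity $\kappa(a)^\perp=\ell_a\cdot A^*\cap V^*$, with $\ell_a$ the polar line of $a$ with respect to $\CQ$; combined with the polarity bijection between points and lines of $\PP(A)$ this yields both inclusions at once, dispenses with the irreducibility argument, and recovers the finer description of $\PP(\kappa(a))\cap\Imsigma$ as a conic plus a polar point that the paper only records afterwards in Remark \ref{rmk:Imkappa is unexpected dimension}. Two simplifications are available to you: the only consequence of your $\Psi$-computation that you actually use is $A(\ker a,\ker b)=0\iff q(a,b)=0$, which is already Proposition \ref{prop:intersecting lines} together with \eqref{eq:IQ is flag}, so the explicit $\SL_2$-model can be replaced by a citation; and the degenerate case $\ell\cdot A^*\subseteq V^*$ that you flag at the end is excluded exactly as in the paper, since $V^*$ is the annihilator of $q\in S^2A$ inside $S^2A^*$ and $(\lambda,\mu)\mapsto(\lambda\mu)(q)$ is a non-degenerate pairing on $A^*$, hence cannot vanish for all $\mu$ when $\lambda\neq0$.
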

\begin{proof}
Restrict the exact sequence
\[
K^\perp \otimes \CO_{\PP(V)}(-1) \to \CO_{\PP(V)} \to \CO_{\PP(K)} \to 0
\]
 to $\PP(A)$ via $\sigma$ to find
\begin{equation}\label{eq:Imkappa is unexpected dimension}
K^\perp \otimes \CO_{\PP(A)}(-2) \to \CO_{\PP(A)} \to \CO_{\PP(K) \cap \Imsigma} \to 0
\end{equation}

The intersection $\PP(K) \cap \Imsigma$ is not of expected dimension if and only if all conics in the pencil given by $K^\perp$ share a common component $L$.
This on the other hand happens if and only if the map \eqref{eq:Imkappa is unexpected dimension} factors as
\[
K^\perp \otimes \CO_{\PP(A)}(-2) \to \CO_{\PP(A)}(-1) \xrightarrow{a^*} \CO_{\PP(A)}
\]
that is to say that there is a commutative diagram
\begin{equation}
\begin{diagram}
K^\perp		& \rTo		& V^*	 	 	\\
\dTo			&			& \dTo	 	 	\\
A^*			& \rTo^{\otimes a^*}	& S^2A^*	 
\end{diagram}
\end{equation}

Note that $A^* \otimes a^*$ is never contained in $V^*$, as $V^*$ is cut inside $S^2A^*$ by the $SL_2$-equivariant quadric $q \in S^2A$ and $q$ is non-degenerate. 
It follows that there is a regular map from $\PP(A)$ to $\Gr(2,V^*)$ which sends $a^*$ to the fiber product of
\[
A^* \xrightarrow{\otimes a^*} S^2A^*
\]
and of the canonical injection $V^* \to S^2A^*$.
Moreover, the locus of $K^\perp$ having $1$-dimensional intersection with $\Imsigma$ is the image of $\PP(A)$ under that map, so that it is at most $2$-dimensional and it is irreducible.

We will prove that all planes in $\PP(V)$ corresponding to points of $\Imkappa$ have $1$-dimensional intersection with $\Imsigma$.
Given a conic $C$, the set of lines which intersect it is a conic in $\PP(A)$.
More precisely, by lemma \ref{lemma:ILC}, the $SL_2$-equivariant linear map from $V^*$ to $S^2A^*$ associates with a conic $C$ the set of lines which intersect it, so that in order to show that $\PP(K) \cap \PP(A)$ is not finite it is enough to show that there are infinitely many lines intersecting all conics of the pencil $K^\perp \subset V^*$.
But for $K \in \Imkappa$ this last fact is true, as by lemma \ref{lemma:conics through L} all conics in the pencil contain a fixed line, which by proposition \ref{prop:intersecting lines} intersects infinitely many other lines.

Finally, as $\Imkappa$ has dimension $2$, it is the whole locus of planes which intersect $\Imsigma$ in a $1$-dimensional scheme. 
\end{proof}
\begin{remark}\label{rmk:Imkappa is unexpected dimension}
What we have actually proved is that if $K \in \Imkappa$, then the intersection 
\[
\PP(K) \cap \Imsigma \subset \PP(A)
\] 
is the extension of the structure sheaf of a line by that of a point. 
Furthermore, one can check that the point and the line are polar with respect to the intersection quadric $\CQ$.
\end{remark}

Denote by $\ILC$ the subvariety of $\PP(A) \times \PP(V^*)$ which is the closure of the locus of pairs $L$, $C$ where $L$ is a line and $C$ is a conic intersecting $L$ but not containing it.
\begin{lemma}\label{lemma:ILC}
The variety 
\[
\ILC \subset \PP(A) \times \PP(V^*)
\] 
of intersecting pairs of a line and a conic is cut by the $SL_2$-equivariant
\[
\CO_{\PP(A)}(-2) \boxtimes \CO_{\PP(V^*)}(-1) \to \CO_{\PP(A)} \boxtimes \CO_{\PP(V^*)}
\]
Moreover, $\ILC$ is irreducible.
\end{lemma}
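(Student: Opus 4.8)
The plan is to write down the cutting section explicitly from the embedding $\sigma$, identify its zero locus with the locus of intersecting line--conic pairs, and then read off irreducibility from a projective-bundle structure. By Definition~\ref{def:sigma} the embedding $\sigma$ comes from the quadratic map $S^2A \to V$, $a^2 \mapsto a\wedge a \in \Lambda^4 V^* \cong V$, so composing with the evaluation pairing $V\otimes V^* \to \CC$ produces an $\SSL_2$-equivariant element of $S^2A^*\otimes V \cong H^0(\CO_{\PP(A)}(2)\boxtimes\CO_{\PP(V^*)}(1))$, that is, a map
\[
s:\CO_{\PP(A)}(-2)\boxtimes\CO_{\PP(V^*)}(-1)\to\CO_{\PP(A)}\boxtimes\CO_{\PP(V^*)},\qquad s(a,w)=\langle w,\sigma(a)\rangle .
\]
It is nonzero because $\sigma$ is an embedding (Remark~\ref{rmk:sigma embedding}), and it is the unique invariant up to scalar: writing $A\cong S^2W$ one has $S^2A^*\cong S^4W\oplus\CC$ and $V\cong S^4W$, so $V$ occurs in $S^2A^*$ with multiplicity one and $\Hom_{\SSL_2}(V^*,S^2A^*)$ is one-dimensional.

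Next I would show that $V(s)=\{(a,w):C_w\cap L_a\neq\emptyset\}$. By Corollary~\ref{cor:lines through P and Q} the line $L_a$ is $\PP(\kappa(a)/\ker a)\subset\Gr(2,V)$, while by Proposition~\ref{prop:universal conic} the conic $C_w$ is $\{U\in Y:U\subset\ker w\}$. A common point is a $2$-plane $U$ with $\ker a\subset U\subset\kappa(a)$ and $U\subset\ker w$; existence of such $U$ forces $\ker a\subset\ker w$, i.e.\ $s(a,w)=0$. Conversely, if $\sigma(a)=\ker a\subset\ker w$ then, since $\dim(\kappa(a)\cap\ker w)\geq 3+4-5=2$ and this space contains $\ker a$ (recall $\sigma(a)\in\kappa(a)$), one can pick a $2$-plane $U$ with $\ker a\subset U\subset\kappa(a)\cap\ker w$; such $U$ lies on $L_a$, hence in $Y$, and satisfies $U\subset\ker w$, so $U\in L_a\cap C_w$. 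Thus $V(s)$ is exactly the locus of intersecting pairs. Read in the other direction, for fixed $w$ the section becomes the quadric $a\mapsto\langle w,\sigma(a)\rangle$ on $\PP(A)$ whose zero locus is the conic of lines meeting $C_w$, which is the phrasing invoked in the proof of Lemma~\ref{lemma:Imkappa is unexpected dimension}.

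Finally I would deduce irreducibility and match $V(s)$ with $\ILC$. Since $\sigma(a)\neq 0$ for every $a$, the section defines a surjection $V^*\otimes\CO_{\PP(A)}\to\CO_{\PP(A)}(2)$; its kernel $\CN$ is a rank-$4$ bundle and $V(s)=\PP_{\PP(A)}(\CN)$, a projective bundle over $\PP(A)$, hence irreducible (and smooth) of dimension $5$. The locus where $L_a\subset C_w$ is, by Lemma~\ref{lemma:conics through L}, a $\PP^1$-bundle over $\PP(A)$ of dimension $3$, a proper closed subset of the irreducible $V(s)$. Therefore the pairs where $C_w$ meets $L_a$ without containing it form a dense open subset of $V(s)$, and its closure $\ILC$ equals $V(s)$, giving both assertions at once.

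I expect the only delicate point to be the set-theoretic verification of the meeting condition at \emph{every} pair $(a,w)$ rather than only generically, together with the bookkeeping that deleting the ``containing'' locus does not shrink the closure; once these are in place, the equivariant construction and the projective-bundle description make irreducibility essentially automatic.
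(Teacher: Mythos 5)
Your proof is correct and follows essentially the same route as the paper: the section is the restriction of the tautological pairing on $\PP(V)\times\PP(V^*)$ along $\sigma\times\Id$, the set-theoretic identification goes through $L_a\cap C_w=\PP(\kappa(a)/\ker a)\cap\Gr(2,\ker w)$ and the dimension count $\dim(\kappa(a)\cap\ker w)\geq 2$, and irreducibility comes from realizing the zero locus as the projectivization of the rank-$4$ kernel bundle over $\PP(A)$. Your extra check that the containment locus of Lemma~\ref{lemma:conics through L} is only $3$-dimensional, so that taking the closure of its complement recovers the whole zero locus, is a point the paper leaves implicit and is a welcome addition.
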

\begin{proof}
Given $a \in A$ and $w \in V^*$ let $L_a$ be the corresponding line and $C_w$ the corresponding conic under the identifications of propositions \ref{prop:fano of lines} and \ref{prop:universal conic}.
Arguing as in lemma \ref{lemma:conics through L} one gets to
\[
L_a \cap C_w = \PP(\kappa(a) / \ker a) \cap \Gr(2,\ker w) \subset \PP(\Lambda^2 V)
\]
As the intersection of $\kappa(a)$ and $\ker w$ inside $V$ is always at least 2-dimensional, we find that $L_a$ intersects $C_w$ if and only if $\ker a \in \ker w$.

This last condition cuts a divisor inside $\PP(A) \times \PP(V^*)$ which is isomorphic to the following zero locus.
Restrict the tautological
\[
\CO_{\PP(V)}(-1) \boxtimes \CO_{\PP(V^*)}(-1) \to \CO_{\PP(V)} \boxtimes \CO_{\PP(V^*)}
\]
to $\PP(A) \times \PP(V)$ via $(\sigma \times \Id)$ in order to find an $\SSL_2$ invariant divisor cut by
\begin{equation}\label{eq:ILC bis}
\CO_{\PP(A)}(-2) \boxtimes \CO_{\PP(V^*)}(-1) \to \CO_{\PP(V)} \boxtimes \CO_{\PP(V^*)}
\end{equation}
The above map vanishes at $(a, w)$ if and only if $w(\ker a) = 0$.

Note that over $\PP(A)$ the zero locus of \eqref{eq:ILC bis} is the projectivization of the rank $4$ vector bundle given by the kernel of the composition
\[
V^* \otimes \CO_{\PP(A)} \to S^2A^* \otimes \CO_{\PP(A)} \to \CO_{\PP(A)}(2)
\]
so that it is clearly irreducible.
It follows that the zero locus of \eqref{eq:ILC bis} is the variety $\ILC$ of intersecting pairs of lines and conics.
\end{proof}

We can also map $\PP(A)$ into $\Gr(3,V)$ by sending $a \in A$ to the embedded tangent space $T_{\sigma(a)} \Imsigma$. We denote this map by $\tau$ and the image of $\PP(A)$ via $\tau$ by $\Imtau$. 

\begin{proposition}\label{prop:embed P(A)}
The map $\korth$ is an embedding. 
There are exact sequences
\begin{equation}\label{eq:embed P(A) zero}
0 \to A \otimes \CO_{\PP(V)}(-3) \to \Omega_{\PP(V)}(-1) \to \CO_{\PP(V)} \to  \CO_{\Imsigma} \to 0
\end{equation}
\begin{equation}\label{eq:embed P(A)}
0 \to \CO_{\PP(V)}(-1) \oplus \CO_{\PP(V)}(-2) \to V \otimes \CO_{\PP(V)} \to A^* \otimes \CO_{\PP(V)}(1) \to \sigma_{*}\CO_{\PP(A)}(3) \to 0
\end{equation}
and
\begin{equation} \label{eq:embed P(A) bis}
A \otimes \CK \xrightarrow{d_1} V^* \otimes \CO_{\Gr} \xrightarrow{} \korth_{*} \CO_{\PP(A)}(2) \to 0
\end{equation}
\end{proposition}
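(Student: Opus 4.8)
\emph{The plan} is to settle the four assertions in the order: first the embedding, then the two sequences on $\PP(V)$, and finally the sequence on $\Gr(3,V)$, reusing at each stage the resolution of Lemma \ref{lemma:deg korth} and the flatness criterion \ref{cor:qcoh flat}.

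\emph{The embedding.} By Lemma \ref{lemma:deg korth} the map $\kappa$ is a genuine morphism (the relevant quotient of $V^*\otimes\CO_{\PP(A)}$ is a vector bundle) and $\kappa^*\CO_{\Gr}(1)=\det(\kappa^*\CK)^\vee=\CO_{\PP(A)}(3)$, since $\deg\kappa=3$. Composing with the Plücker embedding $\Gr(3,V)\hookrightarrow\PP(\Lambda^3V)$, the map $\kappa$ is therefore given by the base-point-free linear system on $\PP(A)=\PP^2$ that is the image of the restriction map $\Lambda^3V^*\to H^0(\CO_{\PP(A)}(3))=S^3A^*$. Both $\Lambda^3V^*$ and $S^3A^*$ decompose, as $SL_2$-modules, into the multiplicity-free module $S^6W\oplus S^2W$ (of dimension $10$), and the restriction map is $SL_2$-equivariant; a direct check that it is nonzero on each isotypic component (equivalently, that $\Imkappa$ is non-degenerate in $\PP(\Lambda^3V)$) shows by Schur's lemma that it is an isomorphism. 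Hence $\kappa$ is the complete third Veronese embedding of $\PP(A)$ followed by a linear embedding, so it is a closed immersion.

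\emph{The sequences on $\PP(V)$.} For \eqref{eq:embed P(A) zero} I would take the first map to be the one $A\otimes\CO_{\PP(V)}(-3)\to\Omega_{\PP(V)}(-1)$ induced by the inclusion $A\hookrightarrow\Lambda^2V^*=H^0(\Omega_{\PP(V)}(2))$, and the second to be a contraction $\Omega_{\PP(V)}(-1)\to\CO_{\PP(V)}$ whose image is $\CI_{\Imsigma}$. For \eqref{eq:embed P(A)} the central map $\phi:V\otimes\CO_{\PP(V)}\to A^*\otimes\CO_{\PP(V)}(1)$ is the contraction $V\to A^*\otimes V^*$ followed by the tautological $V^*\otimes\CO_{\PP(V)}\to\CO_{\PP(V)}(1)$. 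In both cases the cokernel is torsion, supported on the degeneracy locus; I would identify this locus with $\Imsigma$ (using that every form in $A$ has rank $4$, Lemma \ref{lemma:HPD of Gr}, so that $\phi$ drops from rank $3$ to rank $2$ exactly along $\Imsigma$) and then identify the cokernels with $\CO_{\Imsigma}=\sigma_*\CO_{\PP(A)}$ and with $\sigma_*\CO_{\PP(A)}(3)$ by restricting to $\Imsigma$ and invoking criterion \ref{cor:qcoh flat}. Since $\sigma_*\CO_{\PP(A)}(k)$ is Cohen--Macaulay of codimension $2$, its second syzygy is automatically locally free; I would then pin down $\ker\phi\cong\CO_{\PP(V)}(-1)\oplus\CO_{\PP(V)}(-2)$ from its Chern classes ($c_1=-3$) together with an $SL_2$-equivariant splitting, and likewise check that the kernel of the first map of \eqref{eq:embed P(A) zero} is $A\otimes\CO_{\PP(V)}(-3)$ by a Chern-character count.

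\emph{The sequence on $\Gr(3,V)$.} Here $d_1$ is the contraction $A\otimes\CK\hookrightarrow A\otimes V\otimes\CO_{\Gr}\to V^*\otimes\CO_{\Gr}$, with cokernel $V^*/A(K,-)$ at a point $K$, which is nonzero exactly where the three forms of $A$ restricted to $K$ have a common radical. Since every form in $A$ has rank $4$, a vector $v$ can lie in this radical only if $v=\ker a$ for some $a$, and then $K=A(\ker a,-)^\perp=\kappa(a)$; conversely $\sigma(a)=\ker a$ always lies in the radical of $A|_{\kappa(a)}$, because $c(\ker a,-)\in A(\ker a,-)=\kappa(a)^\perp$ for every $c\in A$. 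Thus the degeneracy locus is exactly $\Imkappa$, the radical is the line $\langle\ker a\rangle=\sigma^*\CO_{\PP(V)}(-1)=\CO_{\PP(A)}(-2)$, and $\coker(d_1)$ is rank one there. Pulling the sequence back along the closed immersion $\kappa$ and comparing with Lemma \ref{lemma:deg korth} identifies the resulting line bundle as $\CO_{\PP(A)}(2)$; criterion \ref{cor:qcoh flat} then yields $\coker(d_1)=\kappa_*\CO_{\PP(A)}(2)$, which is \eqref{eq:embed P(A) bis}.

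\emph{The main obstacle.} The torsion-cokernel identifications and support computations are routine given criterion \ref{cor:qcoh flat} and the rank-$4$ property of $A$. The genuinely delicate point is the left-hand exactness of \eqref{eq:embed P(A) zero} and \eqref{eq:embed P(A)}: the Veronese $\Imsigma$ is not a complete intersection, so the syzygies cannot be read off a Koszul complex, and proving that the second syzygy splits \emph{precisely} as $\CO_{\PP(V)}(-1)\oplus\CO_{\PP(V)}(-2)$ (rather than as some nonsplit rank-$2$ bundle with the same $c_1$) requires either the explicit Buchsbaum--Rim structure of $\phi$ or a cohomology-vanishing argument using the $SL_2$-equivariance. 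I expect this splitting to be the part that needs the most care.
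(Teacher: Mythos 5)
Your outline correctly locates the difficulty in the two $\PP(V)$-sequences, but it stops exactly where the work begins: the identification of the second syzygy of $\CO_{\Imsigma}$ (resp.\ of $\sigma_*\CO_{\PP(A)}(3)$) as $A\otimes\CO_{\PP(V)}(-3)$ (resp.\ as $\CO_{\PP(V)}(-1)\oplus\CO_{\PP(V)}(-2)$) is deferred to ``either the explicit Buchsbaum--Rim structure of $\phi$ or a cohomology-vanishing argument,'' and neither is supplied. A Chern-class count only constrains the kernel's invariants; it does not rule out a non-split rank-$2$ bundle, and $SL_2$-equivariance alone does not force a splitting. The paper's missing idea is to prove \emph{only} \eqref{eq:embed P(A) zero}, by decomposing $\CI_{\Imsigma}$ with respect to the full exceptional collection $\langle\CO_{\PP(V)}(-3),\Omega_{\PP(V)}(-1),\CO_{\PP(V)}(-2),\CO_{\PP(V)},\CO_{\PP(V)}(1)\rangle$: the vanishings $H^\bullet(\CI_{\Imsigma}(-1))=H^\bullet(\CI_{\Imsigma})=H^\bullet(\CI_{\Imsigma}(2))=0$ (the last being the statement that $\Imsigma$ lies on no quadric of $\PP(V)$) place $\CI_{\Imsigma}$ in $\langle\CO_{\PP(V)}(-3),\Omega_{\PP(V)}(-1)\rangle$, and the Beilinson spectral sequence plus the independence of the two Chern characters then forces the two-term resolution with the stated multiplicities. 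Sequence \eqref{eq:embed P(A)} is then obtained for free by dualizing and twisting \eqref{eq:embed P(A) zero} (using that $\CO_{\Imsigma}$ is Cohen--Macaulay of codimension $2$) and converting $\CT_{\PP(V)}(-1)$ into $V\otimes\CO_{\PP(V)}$ via the Euler sequence, which is where the extra summand $\CO_{\PP(V)}(-1)$ comes from. Treating the two sequences as independent problems, as you do, would force you to solve the splitting problem twice.

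Your argument for the embedding is genuinely different from the paper's and is attractive: the paper realizes $(\korth,\sigma)(\PP(A))$ as the zero locus of a section of $A\otimes\CK^*\boxtimes\CO_{\PP(V)}(1)$, pushes forward the resulting non-regular Koszul complex via proposition \ref{prop:unexpected Koszul}, and concludes with lemma \ref{lemma:closed embedding and pushforward}; your route through the linear system $|\CO_{\PP(A)}(3)|$ is shorter. But the assertion that the $SL_2$-equivariant restriction $\Lambda^3V^*\to S^3A^*$ is nonzero on both isotypic components $S^6W$ and $S^2W$ is precisely the claim that $\Imkappa$ spans $\PP(\Lambda^3V)$, and it must actually be verified (say on explicit weight vectors); were it zero on $S^2W$ you would only obtain a projection of $\Ver_3(\PP^2)$ and injectivity would need a separate argument. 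Finally, for \eqref{eq:embed P(A) bis} your fibrewise rank computation shows $\coker(d_1)$ has rank one at every point of $\Imkappa$, but to conclude $\coker(d_1)\cong\korth_*\CO_{\PP(A)}(2)$ you also need its scheme-theoretic support to be the reduced $\Imkappa$; corollary \ref{cor:qcoh flat} is a flatness criterion for families over a base and does not deliver this, whereas the paper's Koszul pushforward via proposition \ref{prop:unexpected Koszul} produces the answer directly in the form $\korth_*(-)$.
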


\begin{proof}
First, note that the exact sequence \eqref{eq:embed P(A)} can also be written as
\[
0 \to \CO_{\PP(V)}(-2) \to \CT_{\PP(V)}(-1) \to A^* \otimes \CO_{\PP(V)}(1) \to \sigma_{*}\CO_{\PP(A)}(3) \to 0
\] 
The above sequence can be obtained dualizing and twisting \eqref{eq:embed P(A) zero} as the dualizing sheaf of $\sigma$ is the determinant of the normal bundle $\CN_{\Imsigma / \PP(V)}$, which has odd degree.
It follows that in order to prove \eqref{eq:embed P(A)} it is enough to prove \eqref{eq:embed P(A) zero}.

We will prove the exactness of \eqref{eq:embed P(A)} by decomposing the ideal $\CI_{\Imsigma}$ of $\Imsigma$ inside $\PP(V)$ with respect to the full exceptional collection
\[
\langle	\CO_{\PP(V)}(-3), \Omega_{\PP(V)}(-1), \CO_{\PP(V)}(-2), \CO_{\PP(V)}, \CO_{\PP(V)}(1) \rangle
\]
As both $\CO_{\PP(V)}$ and $\CO_{\Imsigma}$ lie in the left orthogonal of $\CO_{\PP(V)}(1)$, so does $\CI_{\Imsigma}$.
It is also clear that the ideal of $\Imsigma$ is in the left orthogonal of $\CO_{\PP(V)}$.

In order to check that $\CI_{\Imsigma}$ is in the left orthogonal of $\CO_{\PP(V)}$ we can argue as follows. 
Assume there is a quadric in $\PP(V)$ containing $\Imsigma$, then its pullback to $\PP(S^2A)$ is a quadric containing the image of $\Ver_2$. 
As the ideal of $\Ver_2$ does not contain quadrics, so does that of $\Imsigma$.
Moreover, the defining exact sequence for $\Imsigma$ shows that $h^{>1}(\Imsigma(2))$ vanishes and that $h^1 \neq 0$ if and only if $h^0 \neq 0$, so that we have proved that $H^\bullet(\Imsigma(2)) = 0$.

Summing up we have proved that $\CI_{\Imsigma}$ belongs to 
\[
\langle 	\CO_{\PP(V)}(-3), \Omega_{\PP(V)}(-1)	\rangle
\]
By Beilinson spectral sequence it follows that $\CO_{\PP(V)}(-3)$ and $\Omega_{\PP(V)}(-1)$ can contribute non-trivially only in one degree.
As their Chern characters are independent this leaves the unique possibility of a resolution
\[
0 \to A \otimes \CO_{\PP(V)}(-3) \to \Omega_{\PP(V)}(-1) \to \CI_{\Imsigma} \to 0
\]
which can be extended to \eqref{eq:embed P(A) zero}.

The next part of the statement is that $\kappa$ is an embedding.
First, note that as $\sigma$ is an embedding also the product map $(\kappa, \sigma)$ from $\PP(A)$ to $\Gr(3,V) \times \PP(V)$ is an embedding.
We are now going to prove that the image of $(\kappa, \sigma)$ is the zero locus of the composition
\begin{equation}\label{eq:embed P(A) 2}
A \otimes \CK \boxtimes \CO_{\PP(V)}(-1) \to A \otimes V \otimes \CO_{\Gr(3,V)} \boxtimes V \otimes \CO_{\PP(V)} \to \CO_{\Gr(3,V)} \boxtimes \CO_{\PP(V)}
\end{equation}
of the tautological subbundles of $\Gr(3,V)$ and $\PP(V)$ with the natural pairing between $A$ and $V \otimes V$.
To prove it, we are going to show that the natural projection from $\Gr(3,V) \times \PP(V)$ to $\PP(V)$ induces an isomorphism between the zero locus $Z$ of \eqref{eq:embed P(A) 2} and the degeneracy locus $D$ of the $\SSL_2$-equivariant
\begin{equation}\label{eq:embed P(A) 12}
V \otimes \CO_{\PP(V)} \to V \otimes V \otimes \CO_{\PP(V)}(1) \to A^* \otimes \CO_{\PP(V)}(1)
\end{equation}

The map from $Z$ to $D$ is constructed as follows.
By definition of $Z$, the commutative diagram
\begin{equation}
\begin{diagram}
\CK	 \boxtimes \CO_{\PP(V)}				&		&		\\
\dInto								&\rdTo	&		\\
V \otimes \CO_{\Gr} \boxtimes \CO_{\PP(V)}	&\rTo		& A^* \otimes \CO_{\Gr} \boxtimes \CO_{\PP(V)}(1)
\end{diagram}
\end{equation}
restricts to
\begin{equation}
\begin{diagram}
\restr{\CK}{Z}		&		&		\\
\dInto			&\rdTo^{0}	&		\\
V \otimes \CO_Z	&\rTo		& A^* \otimes \restr{\CO_{\PP(V)}(1)}{Z}
\end{diagram}
\end{equation}
so that we have a rank $3$ subbundle of the kernel of
\[
V \otimes \CO_Z \to A^* \otimes \restr{\CO_{\PP(V)}(1)}{Z}
\]
showing that the map from $Z$ to $\PP(V)$ factors via $D$.

The map from $D$ to $Z$ is constructed as follows.
First note that over $D$ there is an exact sequence
\[
0 \to \Ker_D \to V \otimes \CO_D \to A^* \otimes \CO_D(1) \to \Coker_D \to 0
\]
where $\CO_D(1)$ is the restriction of $\CO_{\PP(V)}(1)$. 
Note moreover that the rank of \eqref{eq:embed P(A) 12} never drops by $2$ as the map $\sigma$ is injective. 
It follows that $\Coker_D$ is a line bundle on $D$, so that $\Ker_D$ is a rank 3 subbundle of $V \otimes \CO_D$.
We now prove that the induced map from $D$ to $\Gr(3,V) \times \PP(V)$ factors via $Z$.
Over $\Gr(3,V) \times \PP(V)$ there is a diagram 

\begin{equation}\label{eq:embed P(A) 4}
\begin{diagram}
\CK \boxtimes \CO_{\PP(V)}	&	\rTo	&V \otimes \CO_{\Gr}\boxtimes \CO_{\PP(V)}	\\
						&\rdTo	&\dTo								\\
						&		&A^* \otimes \CO_{\Gr} \boxtimes \CO_{\PP(V)}(1)	
\end{diagram}
\end{equation}
encoding both the tautological injection of $\Gr(3,V)$ and that of $\PP(V)$. 
By definition of the map from $D$ to $\Gr(3,V) \times \PP(V)$, the diagram \eqref{eq:embed P(A) 4} restricts as
\begin{equation} \label{eq:embed P(A) 5}
\begin{diagram}
\Ker_D	&	\rTo	&V \otimes \CO_D				\\
		&\rdTo^{0}	&\dTo						\\
		&		&A^* \otimes \CO_D(1)	
\end{diagram}
\end{equation}
with the diagonal map from $\Ker_D$ to $A^* \otimes \CO_D(1)$ being the zero map.
By definition of $Z$, we have a map from $D$ to $Z$.

The two maps between $D$ and $Z$ that we have just constructed are mutually inverse, as we are about to prove.
There are commutative diagrams \eqref{eq:embed P(A) 4} (restricted to $Z$) and \eqref{eq:embed P(A) 5}
which, under the two maps which we have just defined, pull back one to the other. 
As a consequence the map $Z \to D \to Z$ pulls the tautological injection of $\CK \subset V \otimes \CO_Z$ back to itself, so that the is $\Id_Z$. 
Analogously, the map $D \to Z \to D$ pulls the universal $\Ker_D$ back to itself, so that it is $\Id_D$.

Now that we know that the schematic zero-locus of \eqref{eq:embed P(A) 2} is $\PP(A)$, we can pushforward $\Kosz(A \otimes \CK \boxtimes \CO_{\PP(V)}(-1))$ to $\Gr(3,V)$ using proposition \ref{prop:unexpected Koszul}.
Note that, by lemma \ref{lemma:closed embedding and pushforward}, in order to prove that $\PP(A)$ embeds into $\Gr(3,V)$ via $\kappa$ it is enough to check that $\kappa$ has relative dimension $0$ and that $\CO_{\Gr(3,V)} \to \kappa_*\CO_{\PP(A)}$ is surjective.

That $\kappa$ is a finite map is clear for the following reason. 
As $\PP(A)$ is proper, it is enough to show that $\kappa$ has finite fibers.
If this is not the case, $\kappa$ contracts a divisor, which implies in turn that $\kappa$ is a constant map.
As $\ker a \in \kappa(a)$, if $\kappa(a)$ is constant then $\Imsigma \subset \kappa(a)$, which is impossible.


The first spectral sequence for the derived pushforward of a complex has second page
\[
R^i\pi_{\Gr*}\left(\Lambda^{-j}\left(A \otimes \CK \boxtimes \CO_{\PP(V)}(-1)\right) \right)
\]
It might have several non-vanishing entries, but the only ones interfering with the cohomology in degree 0 are those with $-j = i-1, i, i+1$.
As $\dim \PP(V) = 4$, $i$ ranges from 0 to 4. 
As $\CO_{\PP(V)}(-i)$ is acyclic for $i \in [1,4]$, the cohomology in degree 0 is the cokernel of
\[
\Lambda^{5}\left(A \otimes \CK \right) \xrightarrow{d_5} \CO_{\Gr}
\]


Note that the composition \eqref{eq:embed P(A) 2} cuts a $2$-dimensional scheme inside the $10$-dimensional scheme $\Gr(3,V) \times \PP(V)$.
It follows by proposition \ref{prop:unexpected Koszul} and by the fact that a smooth subvariety of a smooth variety is a local complete intersection that $\Kosz(A \otimes \CK \boxtimes \CO_{\PP(V)}(-1))$ has non vanishing cohomology only in degrees $-1$ and $0$.
If we denote by $\scrL_{\PP(A)}$ the excess bundle from proposition \ref{prop:unexpected Koszul}, then the second page of the other spectral sequence for the pushforward of $\Kosz(A \otimes \CK \boxtimes \CO_{\PP(V)}(-1))$ is
\[
R^i\pi_{\Gr*}\left( \scrL_{\PP(A)} \right) \qquad R^i\pi_{\Gr*}\left(\CO_{\PP(A)} \right)
\]
As we have already pointed out in this proof, $\korth$ is finite, so that the only non-vanishing entries are
\[
\korth_{*} \left( \scrL_{\PP(A)} \right) \qquad \korth_{*} \left(\CO_{\PP(A)} \right)
\]
with the spectral sequence degenerating at this page.
As a consequence the sequence
\[
\Lambda^{5}\left(A \otimes \CK \right) \xrightarrow{d_5} \CO_{\Gr} \xrightarrow{} \korth_{*}\CO_{\PP(A)} \to 0
\]
is exact and $\korth$ is an embedding.

The last part of the statement is the sequence \eqref{eq:embed P(A) bis}.
Take the Koszul complex of \eqref{eq:embed P(A) 2}, twist it by $\CO_{\PP(V)}(1)$ and push it forward to $\PP(V)$.
As in the previous step of the proof, we will find two spectral sequences: by comparing them and by K\"unneth formula we find an exact sequence
\[
A \otimes \CK \xrightarrow{d_1} V^* \otimes \CO_{\Gr} \xrightarrow{} \korth_{*} \CO_{\PP(A)}(2) \to 0
\]
which finally proves \eqref{eq:embed P(A) bis}.

\end{proof}



\subsection{Identification with blow up}\label{sec:Identification}

The map $\calU \to V \otimes \calO_Y$ induces a diagram
\begin{diagram}
	&			&\PP_Y(\calU)	&			&\\
	&   \ldTo^{p_Y}	&			&\rdTo^{p_V}	&\\
Y	&			&			&			& \PP(V)
\end{diagram}
In the next lemma we will prove that the above diagram is isomorphic to another natural correspondence between $Y$ and $\PP(V)$. 
\begin{proposition} \label{identification with blow up}
There is an isomorphism
\[
f: \PP_Y(\calU) \to \Bl_{\Imsigma}\PP(V)
\]
commuting with the natural projections to $\PP(V)$.
\end{proposition}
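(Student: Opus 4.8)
The plan is to realise $p_V\colon\PP_Y(\CU)\to\PP(V)$ as the blow-up through the universal property of blowing up, and then to check that the resulting morphism is an isomorphism. First I would record the geometry of $p_V$. The inclusion $\CU\hookrightarrow V\otimes\CO_Y$ of \eqref{eq:definingsequence} embeds $\PP_Y(\CU)$ into $Y\times\PP(V)$ as the incidence variety $\{([U],[v]) : v\in U\}$, with $p_V$ the second projection and $p_V^*\CO_{\PP(V)}(-1)$ the relative tautological subbundle sitting inside $p_Y^*\CU$. By lemma \ref{zerolocusV} (and corollary \ref{cor:lines through P and Q}) the fibre of $p_V$ over $[v]$ is a single reduced point when $[v]\notin\Imsigma$ and is the line $L_a\subset Y$ when $[v]=\sigma(a)\in\Imsigma$. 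Hence $\PP_Y(\CU)$ is smooth irreducible of dimension $4$, $p_V$ is birational and an isomorphism over $\PP(V)\setminus\Imsigma$, and $p_V^{-1}(\Imsigma)$ is, set-theoretically, the universal line $\UL$ of proposition \ref{prop:fano of lines}.

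The heart of the argument is to show that $p_V^{-1}\CI_{\Imsigma}\cdot\CO_{\PP_Y(\CU)}$ is invertible. I would use the map $\psi\colon V\otimes\CO_{\PP(V)}\to A^*\otimes\CO_{\PP(V)}(1)$ of \eqref{eq:embed P(A) 12}, whose degeneracy locus is $\Imsigma$. Since every $U$ with $[U]\in Y$ is isotropic for all forms in $A$, the pullback $p_V^*\psi$ vanishes on $p_Y^*\CU$ and therefore factors through a map $\bar\psi\colon p_Y^*(V/\CU)\to A^*\otimes p_V^*\CO_{\PP(V)}(1)$ of rank-$3$ bundles. Its determinant $\det\bar\psi$ is a section of $p_Y^*\CO_Y(-1)\otimes p_V^*\CO_{\PP(V)}(3)$ (using $\det(V/\CU)=\CO_Y(1)$ and $\Lambda^3A^*\cong\CC$) whose vanishing divisor $E$ is supported on $p_V^{-1}(\Imsigma)$. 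Comparing maximal minors, a $3\times 3$ minor of $\psi$ pulls back to $\det\bar\psi$ times the corresponding minor of the bundle surjection $V\otimes\CO\to p_Y^*(V/\CU)$, and since these latter minors generate the unit ideal, the minor ideal of $\psi$ pulls back exactly to $(\det\bar\psi)=\CO(-E)$. Finally, \eqref{eq:embed P(A)} shows $\operatorname{coker}\psi=\sigma_*\CO_{\PP(A)}(3)$ is a line bundle on $\Imsigma$, hence locally $\operatorname{coker}\psi\cong\CO_{\Imsigma}$; by presentation-independence of Fitting ideals this forces the minor ideal of $\psi$ to coincide with $\CI_{\Imsigma}$. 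Therefore $p_V^{-1}\CI_{\Imsigma}\cdot\CO=\CO(-E)$ is invertible, and the universal property of the blow-up produces a $\PP(V)$-morphism $f\colon\PP_Y(\CU)\to\Bl_{\Imsigma}\PP(V)$.

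It remains to prove $f$ is an isomorphism. It is proper, birational, and an isomorphism away from the exceptional loci; the target is smooth, hence normal, so it suffices to show $f$ is quasi-finite, i.e. that it does not contract the fibres $L_a$ of $p_V$. Restricting $\CO(-E)=f^*\CO_{\Bl}(-E')$ to $L_a$ and using that $p_V$ is constant on $L_a$ while $p_Y$ maps $L_a$ isomorphically onto the line $L_a\subset Y$, on which $\det(V/\CU)=\CO_Y(1)$ has degree $1$ (cf. lemma \ref{lemma:splitting on lines}), I obtain $\CO(-E)|_{L_a}\cong\CO_{L_a}(1)$. Since $\CO_{\Bl}(-E')$ restricts to $\CO_{\PP^1}(1)$ on the $\PP^1$-fibres of the exceptional divisor $E'\to\Imsigma$, this shows $f|_{L_a}$ maps $L_a$ to such a fibre with degree $1$, hence bijectively; together with the isomorphism off the exceptional loci this makes $f$ bijective and quasi-finite. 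Thus $f$ is finite, proper and birational onto a normal variety, and so it is an isomorphism commuting with the projections to $\PP(V)$.

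The main obstacle is precisely the invertibility of the inverse image ideal: because $\Imsigma$ is a degeneracy locus of unexpected codimension, one cannot conclude that $p_V^{-1}(\Imsigma)$ is a Cartier divisor by a dimension count, and a priori embedded components could spoil local principality. It is the factorisation through $\bar\psi$ together with the cokernel description \eqref{eq:embed P(A)} — which via Fitting ideals identifies the minor ideal of $\psi$ with $\CI_{\Imsigma}$ — that forces the scheme structure of $p_V^{-1}(\Imsigma)$ to come out divisorial, and everything else is then formal.
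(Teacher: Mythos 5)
Your proof is correct, but it takes a genuinely different route from the paper's. The paper never invokes the universal property of the blow-up: it identifies $\PP_{\Gr(2,V)}(\CU)\cong\Flag(1,2,V)\cong\PP_{\PP(V)}(\CT_{\PP(V)}(-2))$ relative to $\PP(V)$, observes that restricting to $Y$ amounts to intersecting with $\Proj_{\PP(V)}\Sym^\bullet(A\otimes\CO_{\PP(V)})$, and so realises $\PP_Y(\CU)$ directly as $\Proj_{\PP(V)}\Sym^\bullet\bigl(\coker(A\otimes\CO_{\PP(V)}\to\Omega_{\PP(V)}(2))\bigr)$; by the resolution \eqref{eq:embed P(A) zero} this cokernel is a twist of $\CI_{\Imsigma}$, and the $\Proj$ of its symmetric algebra is the blow-up (implicitly using that $\Imsigma$ is regularly embedded, so $\Sym^\bullet\CI_{\Imsigma}$ agrees with the Rees algebra). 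That argument produces the isomorphism in one stroke, with no separate birationality or contraction check. You instead construct the morphism $f$ by proving that $p_V^{-1}\CI_{\Imsigma}\cdot\CO_{\PP_Y(\CU)}$ is invertible — via the Fitting-ideal identification of the $3\times3$-minor ideal of $\psi$ from \eqref{eq:embed P(A) 12} with $\CI_{\Imsigma}$ and the Cauchy--Binet factorisation through $p_Y^*(V/\CU)$ — and then upgrade $f$ to an isomorphism by Zariski's main theorem after checking that the $p_V$-fibres over $\Imsigma$ are not contracted. Both arguments ultimately rest on proposition \ref{prop:embed P(A)}; yours is longer but buys you the exceptional divisor explicitly as the zero locus of $\det\bar\psi$ with class $3h-H$, which the paper only extracts afterwards in proposition \ref{picard of blow up} by a canonical-class computation, and it sidesteps the $\Sym$-versus-Rees identification. (Amusingly, a commented-out earlier version of the paper's proof is much closer in spirit to yours.) The only point you leave implicit is that $\det\bar\psi$ is a non-zero-divisor, which is immediate since $\PP_Y(\CU)$ is integral and $\bar\psi$ is generically an isomorphism.
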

\begin{remark}
As a consequence of lemma \ref{identification with blow up}, from now on we will denote also the projections in
\begin{equation}\label{blowup}
\begin{diagram}
	&			&\Bl_{\Imsigma}\PP(V)	&			&\\
	&   \ldTo^{p_Y}	&					&\rdTo^{p_V}	&\\
Y	&			&					&			& \PP(V)
\end{diagram}
\end{equation}
by $p_Y$ and $p_V$.
\end{remark}
\begin{proof}
Over the ambient Grassmannian $\Gr(2,V)$ there are natural identifications
\[
\PP_{\Gr(2,V)}(\CU) \cong \Flag(1,2,V) \cong \PP_{\PP(V)}(\CT_{\PP(V)}(-2))
\]
relative to $\Gr(2,V)$.
The last space is by definition
\[
\Proj_{\PP(V)} \left( \Sym^\bullet (\Omega_{\PP(V)}(2)) \right)
\]
When we restrict to $Y$, we are intersecting the above space with
\[
\Proj_{\PP(V)} \left( \Sym^\bullet (A \otimes \CO_{\PP(V)}) \right)
\]
inside $\PP(V) \times \PP(\Lambda^2V)$.

The result is the projectivization of the symmetric algebra of
\begin{equation}\label{eq:alternate blowup}
\coker \left( A \otimes \CO_{\PP(V)} \to \Omega_{\PP(V)}(2) \right)
\end{equation}
By exact sequence \eqref{eq:embed P(A) zero} and $\SSL_2$-equivariance of the whole construction, the cokernel \eqref{eq:alternate blowup} is isomorphic to a twist of the ideal of $\Imsigma$ in $\PP(V)$, so that its $\Proj$ is by definition the blowup of $\PP(V)$ in $\Imsigma$.
\end{proof}

\begin{definition}
Denote by $H$ the pullback of the ample generator from $Y$, by $h$ the pullback of the ample generator from $\PP(V)$, and by $E$ the exceptional divisor of the blow-up.
\end{definition}
Since the Picard number of $\PP_Y(\calU)$ is 2, we look for a relation between $H$, $h$ and $E$.
We will use the relative tautological sequence
\begin{equation}\label{eq:relative tautological p_Y}
0 \to \calO(-h) \to p_Y^*\calU \to \CO(H - h) \to 0
\end{equation}
on $\PP_Y(\calU)$.
\begin{proposition} \label{picard of blow up}
The relative canonical class of $\PP_Y(\calU)$ over $Y$ is $\calO_{\PP_{Y}(\CU)}(h - E)$. Moreover, 
\begin{equation}\label{eq:picard of blow up}
H = 3h - E
\end{equation} 
\end{proposition}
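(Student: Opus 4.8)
The plan is to compute the relative canonical bundle $\omega_{\PP_Y(\CU)/Y}$ in two independent ways — once from the $\PP^1$-bundle structure $p_Y\colon \PP_Y(\CU)\to Y$, and once from the blowup structure $p_V\colon \PP_Y(\CU)\cong \Bl_{\Imsigma}\PP(V)\to \PP(V)$ furnished by Proposition \ref{identification with blow up} — and then to equate the two answers. The bundle computation naturally produces a combination of $H$ and $h$, whereas the blowup computation naturally produces a combination of $h$ and $E$; forcing the two to agree yields a single linear relation among $H,h,E$, which is exactly \eqref{eq:picard of blow up}. Substituting that relation back into either expression then gives $\CO(h-E)$ for the relative canonical class, so both assertions of the proposition come out together.

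For the bundle computation I would first record that $\CO(-h)$ is the relative tautological subbundle $\CO_{\PP_Y(\CU)}(-1)$: this is the content of the sequence \eqref{eq:relative tautological p_Y}, and since $p_V$ is induced by the composite $\CO_{\PP_Y(\CU)}(-1)\hookrightarrow p_Y^*\CU\hookrightarrow V\otimes\CO$, we have $p_V^*\CO_{\PP(V)}(1)=\CO(h)$, i.e.\ $\CO_{\PP_Y(\CU)}(1)=\CO(h)$. The relative Euler sequence for the rank-$2$ bundle $\CU$ then gives $\omega_{\PP_Y(\CU)/Y}=\CO_{\PP_Y(\CU)}(-2)\otimes p_Y^*(\det\CU)^{-1}$; using $c_1(\CU)=-H$ from the Chern table (so $\det\CU=\CO_Y(-1)$ and $p_Y^*(\det\CU)^{-1}=\CO(H)$) this equals $\CO(H-2h)$.

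For the blowup computation I would invoke the standard formula for the canonical bundle of a blowup along a smooth centre of codimension $c$, namely $\omega_{\Bl}=p_V^*\omega_{\PP(V)}\otimes\CO((c-1)E)$. Here $\Imsigma$ is the Veronese surface, smooth of dimension $2$ in $\PP(V)=\PP^4$, so $c=2$ and $\omega_{\Bl}=p_V^*\CO_{\PP(V)}(-5)\otimes\CO(E)=\CO(-5h+E)$. Since $p_Y$ is smooth, $\omega_{\PP_Y(\CU)/Y}=\omega_{\Bl}\otimes p_Y^*\omega_Y^{-1}$, and $\omega_Y=\CO_Y(-2)$ gives $p_Y^*\omega_Y^{-1}=\CO(2H)$, whence $\omega_{\PP_Y(\CU)/Y}=\CO(2H-5h+E)$. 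Equating the two expressions $H-2h=2H-5h+E$ in $\Pic(\PP_Y(\CU))$ yields $H=3h-E$, and feeding this into $\CO(H-2h)$ gives $\CO(h-E)$.

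The only genuinely delicate point — more a matter of sign-bookkeeping than of substance — is the projectivization convention: one must be consistent about which of $\CO_{\PP_Y(\CU)}(\pm1)$ is the tautological sub, identify it with $-h$ via \eqref{eq:relative tautological p_Y}, and use the matching sign in the relative Euler formula. As a sanity check I would test the relation against two curves. The fibre $F$ of $p_Y$ maps under $p_V$ to the line $\PP(U)\subset\PP(V)$, which meets $\Imsigma$ in the three points corresponding to the three lines of $Y$ through $U$ by Corollary \ref{cor:3 to 1 cover}, so $H\cdot F=0$, $h\cdot F=1$, $E\cdot F=3$; and the fibre $C$ of $p_V$ over a point $\sigma(a)\in\Imsigma$ is the line $L_a\subset Y$ (mapped isomorphically by $p_Y$), a fibre of the $\PP^1$-bundle $E\to\Imsigma$, so $H\cdot C=1$, $h\cdot C=0$, $E\cdot C=-1$. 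Both are consistent with $H=3h-E$, confirming the computation.
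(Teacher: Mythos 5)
Your proof is correct and follows essentially the same route as the paper: compute the canonical class once via the relative Euler sequence for $p_Y$ (getting $\omega_{p_Y}=\CO(H-2h)$) and once via the blowup formula for $p_V$ (getting $\omega=\CO(-5h+E)$), then equate using $\omega_Y=\CO_Y(-2)$ to extract $H=3h-E$ and hence $\omega_{p_Y}=\CO(h-E)$. The intersection-number sanity check against the two families of fibres is a nice addition not present in the paper, and the numbers you quote are consistent with Corollary \ref{cor:3 to 1 cover} and Proposition \ref{prop:fano of lines}.
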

\begin{proof}
Compute the canonical class of $\PP_Y(\calU)$ in two different ways. On one hand, using the blow-up description, it is $-5h + E$.
On the other hand, on $\PP_Y(\calU)$ there's a tautological injection
\begin{equation}
0 \to \calO_{\PP_{Y}(\CU)}(-h) \to p_Y^*\calU
\end{equation}
which gives as relative cotangent bundle $\omega_{p_Y} = \calO_{\PP_{Y}(\CU)}(H - 2h)$. Together with $\omega_Y = -2H$ this gives $\omega_{\PP(\calU)} = - H -2h$, so that in the end $H = 3h - E$.
\end{proof}

We can use propositions \ref{identification with blow up} and \ref{picard of blow up} to describe how lines transform via the correspondence $\PP_Y(\calU)$.
\begin{proposition} \label{secants to P(A)}
Let $L$ be a line in $\PP(V)$, $\widetilde{L}$ be its strict transform in $\PP(\calU)$ and $L^+ = p_Y(\widetilde{L})$. Then
\[
3 - \ell (\Imsigma \cap L) = \deg(L^+)
\]
where $\ell$ denotes the length of the structure sheaf of a scheme.
\end{proposition}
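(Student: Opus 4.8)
The strategy is to express $\deg(L^+)$ as an intersection number on $\PP_Y(\CU)\cong\Bl_{\Imsigma}\PP(V)$ and then substitute the relation $H=3h-E$ of proposition \ref{picard of blow up}. The first point is that $\Imsigma$ contains no lines: it is the isomorphic image of $\PP(A)\cong\PP^2$ under a map $\sigma$ whose hyperplane class pulls back to $\CO_{\PP(A)}(2)$, so the restriction of $\CO_{\PP(V)}(1)$ to any curve in $\Imsigma$ has even degree and cannot be $1$. Hence $L\not\subset\Imsigma$, the strict transform $\WL$ is well defined, and $p_V\colon\WL\to L$ is a finite birational morphism onto the smooth curve $L\cong\PP^1$, therefore an isomorphism. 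In particular $\WL\cong\PP^1$ and $\WL\cdot h=\deg_L\CO_{\PP(V)}(1)=1$.

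The key computation is the identity $\WL\cdot E=\ell(\Imsigma\cap L)$. I would prove it locally at each point $q\in L\cap\Imsigma$. Choosing analytic coordinates $(x_1,x_2,y_1,y_2)$ on $\PP(V)$ with $\Imsigma=\{x_1=x_2=0\}$ and parametrizing $L$ by $s$ with $x_1=f(s)$, $x_2=g(s)$, $f(0)=g(0)=0$, the scheme-theoretic intersection has length $\ell_q(\Imsigma\cap L)=\dim_{\CC}\CC[[s]]/(f,g)=\min(\operatorname{ord}f,\operatorname{ord}g)$. In the chart of $\Bl_{\Imsigma}\PP(V)$ adapted to whichever of $x_1,x_2$ vanishes to lower order, the lift of $L$ meets $E$ exactly in the vanishing of that coordinate, so $\WL\cdot E|_q=\min(\operatorname{ord}f,\operatorname{ord}g)=\ell_q(\Imsigma\cap L)$; summing over $q$ gives $\WL\cdot E=\ell(\Imsigma\cap L)$. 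This step, reconciling the intersection with the exceptional divisor and the length of $\Imsigma\cap L$ in the presence of possible tangency, is the main obstacle; alternatively one can invoke standard blow-up intersection theory for the strict transform of a smooth curve meeting a smooth center properly.

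It remains to identify $\deg(L^+)$ with $\WL\cdot H$, where $H=p_Y^*\CO_Y(1)$. Since $p_Y$ is the $\PP^1$-bundle $\PP_Y(\CU)\to Y$, two distinct points of $\WL$ lying over the same point $P\in Y$ would force $L=\PP(U_P)$, as both would lie in the line $\PP(U_P)$. Thus if $L$ is not of the form $\PP(U_P)$, the map $p_Y|_{\WL}$ is birational onto $L^+$ and the projection formula gives $\deg(L^+)=\WL\cdot H$; if $L=\PP(U_P)$ then $\WL$ is contracted, $L^+$ is a point and $\WL\cdot H=0$, so the identity $\deg(L^+)=\WL\cdot H$ holds in all cases. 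Combining the three computations with $H=3h-E$ yields
\[
\deg(L^+)=\WL\cdot H=3(\WL\cdot h)-\WL\cdot E=3-\ell(\Imsigma\cap L),
\]
which is the assertion. (The contracted case is automatically consistent: there $\ell(\Imsigma\cap\PP(U_P))=3$, recovering the $3$ lines through $P$ of corollary \ref{cor:3 to 1 cover}.)
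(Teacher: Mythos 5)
Your proof is correct and follows essentially the same route as the paper: express $\deg(L^+)$ as $H\cdot\widetilde{L}$ via the projection formula, substitute $H=3h-E$ from proposition \ref{picard of blow up}, and identify $E\cdot\widetilde{L}$ with $\ell(\Imsigma\cap L)$. You simply spell out in local coordinates the step the paper dispatches with a second appeal to the projection formula, and you additionally handle the contracted case $L=\tr(P)$, which the paper leaves implicit.
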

\begin{proof}
First, note that it makes sense to talk about length, as there is no line inside $\PP(A)$. Then, by projection formula $\deg(L^+) = H \cdot \widetilde{L}$. Moreover $H = 3h - E$ by equation \eqref{eq:picard of blow up}, and again by projection formula $l(\Imsigma \cap L) = l(E \cap \widetilde{L})$.
\end{proof}


Given a point $y \in Y$, its transform $p_Vp_Y^{-1}(y)$ in $\PP(V)$ is a line.
More precisely, the projective bundle $\PP_Y(\CU)$ gives a family of lines in $\PP(V)$ parametrized by $Y$.
By proposition \ref{secants to P(A)} all lines $p_Vp_Y^{-1}(y)$ are trisecants to $\Imsigma$, so that we choose the following notation for the induced map from $Y$ to $\Gr(2,V)$:
\[
\begin{diagram}
\tr: 	&Y 	&\rTo 		&\Tr \subset \Gr(2,V) \\
	&y 	&\rMapsto 	&p_Vp_Y^{-1}(y)
\end{diagram}
\]
where $\Tr$ is the space of trisecant lines to $\Imsigma \subset \PP(V)$. 
The following corollary was already known to Castelnuovo \cite{castelnuovo1891ricerche}.

\begin{corollary}\label{cor:points and trisecants}
The map $\tr$ is an isomorphism from $Y$ to the space $\Tr$ of lines in $\PP(V)$ which are trisecant to $\Imsigma$.
\end{corollary}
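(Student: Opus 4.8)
The plan is to recognize $\tr$ as nothing but the inclusion $Y \hookrightarrow \Gr(2,V)$, read through the standard identification of the Grassmannian of lines in $\PP(V)$ with $\Gr(2,V)$, and then to match its image with $\Tr$ by reading off proposition \ref{secants to P(A)} in both directions. Since $\tr$ will turn out to be a closed embedding essentially by construction, the only genuine content is the set-theoretic identification of the image with the space of trisecants, and this is exactly what proposition \ref{secants to P(A)} is designed to deliver.

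First I would analyze the fibers of $p_Y$. Under the identification of proposition \ref{identification with blow up} the map $p_Y$ is the bundle projection $\PP_Y(\CU) \to Y$, and $p_V$ is the map induced by the tautological inclusion $\CU \hookrightarrow V \otimes \CO_Y$ (this is what the sequence \eqref{eq:relative tautological p_Y} encodes, with $\CO(-h)$ the relative tautological subbundle). Hence the fiber $p_Y^{-1}(y)$ is $\PP(U_y)$, where $U_y \subset V$ is the $2$-plane corresponding to $y \in Y \subset \Gr(2,V)$, and $p_V$ embeds it linearly as the line $\PP(U_y) \subset \PP(V)$. Viewing a line in $\PP(V)$ as a point of $\Gr(2,V)$, this says precisely that $\tr(y) = U_y = y$; that is, $\tr$ is the composite of the closed embedding $Y \hookrightarrow \Gr(2,V)$ with this identification. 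In particular $\tr$ is a closed embedding, and it remains only to prove that its image is $\Tr$.

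Next I would show $\tr(Y) \subseteq \Tr$. Because $\Imsigma$ contains no lines (as already used in proposition \ref{secants to P(A)}), the fiber $\PP(U_y) \cong \PP^1$ is not contained in $\Imsigma$, hence not contained in the exceptional divisor $E$; being irreducible and mapping isomorphically onto the line $\tr(y)$ under $p_V$, it is therefore the strict transform of $\tr(y)$. Since $p_Y$ contracts this fiber to the point $y$, the associated curve $\tr(y)^+ = p_Y(p_Y^{-1}(y))$ is a single point, of degree $0$, so proposition \ref{secants to P(A)} gives $\ell(\Imsigma \cap \tr(y)) = 3$, i.e. $\tr(y) \in \Tr$.

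Finally, for $\Tr \subseteq \tr(Y)$ I would run the same computation in reverse: if $L$ is trisecant then $\ell(\Imsigma \cap L) = 3$, so proposition \ref{secants to P(A)} forces $\deg(L^+) = 0$; thus $L^+ = p_Y(\widetilde L)$ is a single point $y$, whence the strict transform $\widetilde L$ is contained in $p_Y^{-1}(y)$, and since both are irreducible curves they coincide. Applying $p_V$ yields $L = \tr(y)$. Together with the identification of $\tr$ as a closed embedding this exhibits $\tr$ as an isomorphism $Y \xrightarrow{\sim} \Tr$. I expect no serious obstacle here: the whole difficulty has been front-loaded into proposition \ref{secants to P(A)}, and the only point demanding care is the bookkeeping of the previous paragraph — checking that the fiber of $p_Y$ is indeed the strict transform of its image line, so that proposition \ref{secants to P(A)} applies with $\deg(L^+)=0$ — which is immediate once one knows that $\Imsigma$ carries no lines.
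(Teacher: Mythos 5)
Your proof is correct and follows essentially the same route as the paper: both arguments reduce everything to Proposition \ref{secants to P(A)}, which identifies the trisecants to $\Imsigma$ with exactly those lines whose strict transform is contracted by $p_Y$. The only difference is packaging — the paper defines the inverse $L \mapsto p_Y(\widetilde{L})$ directly and invokes Proposition \ref{secants to P(A)} to see it is well defined on $\Tr$, whereas you first observe that $\tr$ is the tautological closed embedding $Y \hookrightarrow \Gr(2,V)$ and then identify its image in both directions, which is a slightly cleaner way to dispose of injectivity but rests on the same computation.
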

\begin{proof}
The map $\tr$ is defined by the family $\PP_Y(\CU)$, so that it is regular.
For a line $L$ in $\PP(V)$, let $\widetilde{L}$ be its strict transform in $\Bl_{\Imsigma}(\PP(V))$.
Define the inverse of $\tr$ as
\[
\begin{diagram}
\tr^{-1}: 	&\Tr	& \rTo		& Y	\\
		&L 	& \rMapsto	& p_Y(\widetilde{L})
\end{diagram}
\]
Note that $\tr^{-1}$ is well defined on $\Tr$ by proposition \ref{secants to P(A)}.
Also $\tr^{-1}$ can be defined in families, and it is easy to check that the two morphisms are inverse to each other.
\end{proof}


\subsection{A map from $\Gr(3,V)$ to $\PP(\Lambda^2V^*/A)$}\label{sec:A map from Gr(3,V)}

Given $\PP(K) \subset \PP(V)$, $\dim(K) = 3$, there are two natural ways to obtain a point in $\PP(\Lambda^2V^*/A)$. 
We define a map $\alpha$ in \eqref{first map from Gr to P} and we prove in proposition \ref{transform of P(K)} that there is another description for it.


\begin{definition}
The map $\alpha$ is the composition of
\begin{equation} \label{first map from Gr to P}
\begin{diagram} 
\Gr(3,V)	& \rTo^\cong	& \Gr(2,V^*)	& \rInto	&\PP(\Lambda^2V^*)	& \rDashto	&\PP(\Lambda^2V^*/A)
\end{diagram}
\end{equation}
where the second map is the Pl{\"u}cker embedding and the third is the linear projection from $\PP(A)$. 
\end{definition} 
The map $\alpha$ is regular on $\Gr(2,V^*)$, as $\PP(A) \cap \Gr(2,V^*) = \emptyset$ by lemma \ref{lemma:HPD of Gr}. 
It provides us with a family of linear sections of $Y$ parametrized by $\Gr(3,V)$.
As we will often use the total preimage $p_V^{-1}(\PP(K))$, we introduce the following notation.
%
%
\begin{definition}\label{def:universal S_K}
Recall that $\CK$ denotes the tautological bundle over $\Gr(3,V)$.
The scheme $\CS$ is the fiber product
\begin{equation}\label{diag:transform of P(K)}
\begin{diagram}
\CS				& \rInto	& \Gr(3,V) \times \PP_Y(\calU) 		\\
\dTo				&\square	& \dTo_{\Id_{\Gr} \times p_V}		\\
\PP_{\Gr(3,V)}(\CK)	& \rInto	& \Gr(3,V) \times \PP(V)
\end{diagram}
\end{equation}
We denote the fiber of $\CS$ over a point $K \subset V$ of $\Gr(3,V)$ by $S_K$, and its image $p_Y(S_K)$ by $Y_K$.
\end{definition}
\begin{remark}\label{rmk:universal S_K notation}
We will see in proposition \ref{transform of P(K)} that $Y_K$ is a linear section of $Y$ and that, under the canonical identification of the space of linear sections of $Y$ with $\PP(\Lambda^2V^*/A)$, $Y_K$ corresponds to $\alpha(K)$.
\end{remark}
Denote by $\Phi_{\Bl}$ the Fourier--Mukai functor with kernel $\CO_{\Bl_{\Imsigma} \PP(V)}$, that is to say
\begin{equation}\label{definition Phi}
\Phi_{\Bl}: = Rp_{V*}Lp_Y^* :D^b(\PP(V)) \to D^b(Y_5)
\end{equation}
where $p_Y$ and $p_V$ are the projections from $\Bl_{\Imsigma} \PP(V)$ to $Y$ and $V$, as defined in diagram \eqref{blowup}.
For $K \subset V$ a $3$-dimensional subspace, proposition \ref{transform of P(K)} shows that $\Phi_{\Bl}(\calO_{\PP(K)})$ is the structure sheaf of a linear section of $Y$.

\begin{proposition} \label{transform of P(K)}
Let $K$ be a $3$-dimensional subspace of $V$. In the notation of definition \ref{def:universal S_K} and remark \ref{rmk:universal S_K notation}
\[
\Phi_{\Bl}(\calO_{\PP(K)}) = \CO_{Y_K}
\]
Moreover, $Y_K$ is the linear section of $Y$ corresponding to $\alpha(K)$ under the natural identification of the space of linear sections of $Y$ with $\PP(\Lambda^2 V^*/A)$.
\end{proposition}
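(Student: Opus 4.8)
The plan is to compute $\Phi_{\Bl}(\CO_{\PP(K)}) = Rp_{Y*}Lp_V^*\CO_{\PP(K)}$ explicitly, i.e. to pull $\CO_{\PP(K)}$ back to $\PP_Y(\CU) \cong \Bl_{\Imsigma}\PP(V)$ along $p_V$ and push it forward to $Y$ along the $\PP^1$-bundle $p_Y$. Since $\PP(K) \subset \PP(V)$ is the codimension $2$ linear subspace cut by the pencil $K^\perp \subset V^*$, the Koszul complex
\[
0 \to \CO_{\PP(V)}(-2) \to K^\perp \otimes \CO_{\PP(V)}(-1) \to \CO_{\PP(V)} \to \CO_{\PP(K)} \to 0
\]
is exact, so $Lp_V^*\CO_{\PP(K)}$ is represented by the complex $\{\CO(-2h) \to K^\perp\otimes\CO(-h) \to \CO\}$ on $\Bl_{\Imsigma}\PP(V)$, placed in degrees $-2,-1,0$, with $h = p_V^*\CO_{\PP(V)}(1)$.

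First I would run the hypercohomology spectral sequence for $Rp_{Y*}$ of this complex. Each fibre of $p_Y$ is a line $\PP(\CU_y)$ that $p_V$ maps isomorphically onto a line of $\PP(V)$, so $h$ restricts to $\CO_{\PP^1}(1)$ on fibres; hence $Rp_{Y*}\CO = \CO_Y$, $Rp_{Y*}\CO(-h) = 0$, and $Rp_{Y*}\CO(-2h) = N[-1]$ with $N = R^1p_{Y*}\CO(-2h)$. By relative Serre duality together with $\omega_{p_Y} = \CO(H-2h)$ from Proposition \ref{picard of blow up},
\[
N = \bigl(p_{Y*}(\CO(2h)\otimes\omega_{p_Y})\bigr)^\vee = \bigl(p_{Y*}\CO(H)\bigr)^\vee = \CO_Y(-1).
\]
The only surviving entries of the $E_1$-page are thus $N = \CO_Y(-1)$ in total degree $-1$ and $\CO_Y$ in total degree $0$, so the single possibly nonzero differential is $d_2 \colon \CO_Y(-1) \to \CO_Y$, which is a section of $\CO_Y(1)$, and $\Phi_{\Bl}(\CO_{\PP(K)})$ is quasi-isomorphic to the two-term complex $[\CO_Y(-1)\xrightarrow{d_2}\CO_Y]$ in degrees $-1,0$.

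In parallel I would identify $Y_K = p_Y(S_K)$ geometrically. A point $y \in Y$ lies in $Y_K$ exactly when the line $p_V(p_Y^{-1}(y)) = \PP(\CU_y)$ meets $\PP(K)$, i.e. when $\CU_y \cap K \neq 0$, i.e. when the composite $\CU \hookrightarrow V\otimes\CO_Y \twoheadrightarrow (V/K)\otimes\CO_Y$ drops rank at $y$. Its second exterior power is a section $s_K$ of $\Lambda^2\CU^*\otimes\Lambda^2(V/K) = \CO_Y(1)\otimes\Lambda^2(V/K)$; trivialising the line $\Lambda^2(V/K)$ identifies $s_K$ with the image of $\Lambda^2 K^\perp \in \Lambda^2 V^*$ under the restriction $\Lambda^2 V^* \to H^0(\CO_Y(1)) = \Lambda^2V^*/A$ of Lemma \ref{lemma:cohomologies on Y}, which is precisely $\alpha(K)$. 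Hence $Y_K$ is the hyperplane section of $Y$ cut by $\alpha(K)$; in particular it is a proper subscheme and $s_K \neq 0$, since $\alpha$ is regular on all of $\Gr(2,V^*)$.

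It then remains to match the two computations, namely to see $\coker(d_2) = \CO_{Y_K}$. Because $\Phi_{\Bl}(\CO_{\PP(K)})$ is supported on $p_Y(\Supp Lp_V^*\CO_{\PP(K)}) \subseteq Y_K \subsetneq Y$, the torsion-free sheaf $\ker d_2 \subseteq \CO_Y(-1)$ must vanish, so $d_2$ is a nonzero section of $\CO_Y(1)$ and $\Phi_{\Bl}(\CO_{\PP(K)}) = \coker d_2 = \CO_{\{d_2=0\}}$. The hard part will be the exact identification $[d_2] = [s_K] = \alpha(K)$ in $\PP(\Lambda^2V^*/A)$: I would obtain it either by unwinding the two Koszul contractions defining the connecting differential $d_2$ and matching them with the evaluation of $\Lambda^2 K^\perp$ on $\Lambda^2\CU$, or, more robustly, by carrying out the whole construction in families over $\Gr(3,V)$ with the universal kernel $\CO_{\PP_{\Gr}(\CK)}$. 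The latter produces a flat family of sheaves whose differential defines a morphism $\Gr(3,V) \to \PP(\Lambda^2V^*/A)$; comparing it with $\alpha$ on the dense locus where $Y_K$ is smooth and irreducible (there $\{d_2=0\}$ and $\{s_K=0\}$ are effective divisors of class $H$ with the first contained in the second, hence equal) and extending by flatness finishes the identification, and also makes the support statement above rigorous through base change.
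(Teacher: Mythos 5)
Your proof is correct, and its first half (Koszul resolution of $\CO_{\PP(K)}$, pullback to the blow-up, the spectral sequence for $Rp_{Y*}$ with $Rp_{Y*}\CO(-h)=0$ and $R^1p_{Y*}\CO(-2h)=\CO_Y(-1)$ via relative Serre duality, and the nonvanishing of $d_2$ forced by the support being a proper subscheme) is essentially the computation the paper performs, except that the paper carries it out universally over $\Gr(3,V)$ using the relative Koszul complex for $\PP_{\Gr}(\CK)$. Where you genuinely diverge is the identification of the resulting divisor with $\alpha(K)$: the paper takes the family route you mention only as a fallback --- it packages $d_2$ into a bundle map $\det(\CK)\to(\Lambda^2V^*/A)\otimes\CO_{\Gr(3,V)}$, checks fibrewise injectivity, and then pins down the induced regular map $\Gr(3,V)\to\PP(\Lambda^2V^*/A)$ as $\alpha$ purely by $\SSL_2$-equivariance together with the degree computation $\det(\CK)=\CO_{\Gr(3,V)}(-1)$ (equivariant degree-one maps being unique because $S^6W$ has multiplicity one in $\Lambda^2V^*$). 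Your alternative --- identifying $Y_K$ as the degeneracy locus of $\CU\to(V/K)\otimes\CO_Y$, so that $s_K=\Lambda^2$ of that map is literally the image of $\Lambda^2K^\perp$ in $H^0(\CO_Y(1))=\Lambda^2V^*/A$, and then concluding $\{d_2=0\}=\{s_K=0\}$ from the containment of two effective divisors of class $H$ --- is more explicit and avoids any appeal to equivariance, at the price of the small extra observation that a containment of zero divisors of sections of $\CO_Y(1)$ forces proportionality of the sections (immediate since $H^0(\CO_Y)=\CC$). Both identifications are sound; the paper's is shorter once the $\SSL_2$-machinery is in place, yours is self-contained and also directly exhibits why $Y_K=p_Y(S_K)$ carries the divisor scheme structure.
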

\begin{proof}
First, note that the fiber product
\eqref{diag:transform of P(K)}
has expected dimension because for each $K$ we have $\PP(K) \not\subset \Imsigma$ and because the relative dimension of $p_V$ over $\PP(V)$ is $1$. 
It follows by lemma \ref{lemma:smooth expected dimension} that the diagram is $\Tor$-independent and that $Lp_V^*\calO_{\PP(\CK)} \cong  \calO_{\CS}$.

Now, let $\CK \subset V \otimes \CO_{\Gr(3,V)}$ be the tautological injection.
Over $\PP_{\Gr}(V \otimes \CO_{\Gr})$ there is a universal Koszul resolution
\[
\CO_{\PP(\CK)} \cong \left\{	\Lambda^2\CK^\perp(-2h) \to \CK^\perp(-h) \to \CO_{\PP(V) \times \Gr(3,V)}	\right\}
\]
for the structure sheaf of $\PP_{\Gr(3,V)}(\CK)$.

Pulling back via $\Id_{\Gr} \times p_V$ we get
\begin{equation}\label{Koszul for S}
\calO_{\CS} \cong \left\{ \Lambda^2\CK^\perp(-2h) \to \CK^\perp(-h) \to \CO_{\PP_Y(\CU) \times \Gr(3,V)} \right\}
\end{equation}

As $\calO(h)$ is the relative $\calO(1)$ of $p_Y$, we can use this resolution to compute $\Phi(\calO_{\PP(K)})$ as the limit of a spectral sequence. The only page with non vanishing differentials is
\begin{diagram}
\det(\CK^{\perp}) \boxtimes \calO_Y(-H)		& 				&	0	& 				&	0	\\
									&\rdTo(4,2)_{d_2}	&		&\qquad \quad		&		\\
0									&  				&	0	& 				& \calO_{\Gr(3,V) \times Y}
\end{diagram}
where in order to compute the entries we have used relative Serre duality for the projection from $\Gr(3,V)  \times \PP_Y(\CU)$ to $\Gr(3,V) \times Y$ and the fact that $\det(\CU) = \CO(-H)$.

Twist $d_2$ by $\CO_Y(H)$ and push it forward to $\Gr(3,V)$ to obtain
\begin{equation}\label{eq:transform of P(K)}
\det(\CK) \to \left( \Lambda^2V^*/A \right) \otimes \CO_{\Gr(3,V)}
\end{equation}
where we have substituted $\det(\CK) = \det(\CK^\perp)$ and $H^0(\CO_Y(1)) = (\Lambda^2 V^* / A)$.

We will now check that the quotient of \eqref{eq:transform of P(K)} is locally free by base-changing to points of $\Gr(3,V)$.
Let $K$ be a $3$-dimensional subspace of $V$, then diagram \eqref{diag:transform of P(K)} restricts to
\begin{equation}
\begin{diagram}
S		& \rInto	& \PP_Y(\calU) 		\\
\dTo		&\square	& \dTo_{p_V}		\\
\PP(K)	& \rInto	& \PP(V)
\end{diagram}
\end{equation}
Once more, the fiber product has expected dimension, so that $Lp_V^*\calO_{\PP(K)} \cong p_V^* \calO_{\PP(K)}$ and $p_V^* \calO_{\PP(K)} \cong \calO_{S}$.
As a consequence there is a quasi isomorphism between
\begin{equation}\label{eq:transform of P(K) 2}
\CO_Y(-H) \xrightarrow{\restr{d_2}{[K] \times Y}} \CO_Y
\end{equation}
and $Rp_{Y*}\CO_S$. 
If $d_2$ restricted to $[K] \times Y$ were $0$, then there would be a nontrivial $R^{-1}p_{Y*}\CO_S$.
It follows that $\restr{d_2}{[K] \times Y} \neq 0$ and therefore \eqref{eq:transform of P(K)} is an injection of vector bundles.

Finally, \eqref{eq:transform of P(K)} induces a regular map from $\Gr(3,V)$ to $\PP(\Lambda^2 V^* / A)$ which is $SL_2$-equivariant as $\Phi_{\Bl}$ is $SL_2$-equivariant.
We claim that this map is $\alpha$.
By $SL_2$-equivariance, it is enough that the degree of the map which associates $\Phi_{\Bl}(\CO_K)$ with $K$ is $1$.
This last fact is a straightforward consequence of \eqref{eq:transform of P(K)} and of $\det(\CK) = \CO_{\Gr(3,V)}(-1)$.
\end{proof}

We are now going to describe the preimage via $\alpha$ of the branching locus of $\alpha$ in $\PP(\Lambda^2 V^* / A)$.
If we think of $\PP(\Lambda^2 V^* / A)$ as of the space of hyperplane sections of $Y$, by projective duality the branching locus of $\alpha$ corresponds to singular hyperplane sections of $Y$.
It follows that the preimage via $\alpha$ of the branching locus of $\alpha$ parametrizes subspaces $K \subset V$ such that the transform $p_V^*p_{Y*} \CO_{\PP(K)}$ is the structure sheaf of a singular hyperplane sections.


Proposition \ref{prop:Dtri and Dfat} is stated in terms of two subschemes of $\Gr(2,V)$: we will denote them by $\Dtr$ and by $\Dfat$.
\begin{definition}
The closed subscheme $\Dtr \subset \Gr(3,V)$ is the locus of $K \subset V$ which contain a trisecant to $\Imsigma$.
The closed subscheme $\Dfat \subset \Gr(3,V)$ is the locus of $K \subset V$ such that the intersection with $\Imsigma$ has a non-trivial tangent space.
\end{definition}
Note that $\Dfat$ contains all $K \subset V$ such that the intersection is a non-reduced subscheme or is positive dimensional. 
We have already seen in lemma \ref{lemma:Imkappa is unexpected dimension} that $\Imkappa \subset \Dfat$. 
In the proof of \ref{lemma:Imkappa is unexpected dimension} we have seen that if $K \in \Imkappa$, then the intersection is the union of a conic and a point lying on the plane spanned by the conic: it follows that $\Imkappa \subset \Dtr$.

\begin{proposition}\label{prop:Dtri and Dfat}
The ramification locus of $\alpha$ is linearly equivalent to $\CO_{\Gr(3,V)}(2)$. 
$\Dtr$ and $\Dfat$ are irreducible divisors in $\Gr(3,V)$. Their union is the preimage via $\alpha$ of the branching locus.
\end{proposition}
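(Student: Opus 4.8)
The plan is to reduce everything to the geometry of the Schubert hyperplane section $\Sigma_K = \{[U]\in\Gr(2,V): U\cap K\neq 0\}$, after first identifying the section $Y_K$ with $Y\cap\Sigma_K$ and describing the ramification locus determinantally.

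\emph{Ramification class.} I would treat $R$ directly. Writing $\alpha$ as the Pl\"ucker embedding $\Gr(3,V)\cong\Gr(2,V^*)\hookrightarrow\PP(\Lambda^2V^*)$ followed by the linear projection away from the center $\PP(A)$ (disjoint from $\Gr(2,V^*)$ by \ref{lemma:HPD of Gr}), the differential $d\alpha_K$ degenerates exactly when the embedded projective tangent space to $\Gr(2,V^*)$ at the Pl\"ucker point of $K^\perp$ meets $\PP(A)$. That tangent space is $\PP(K^\perp\wedge V^*)$, and $a\in K^\perp\wedge V^*$ iff $a\vert_K=0$; hence $R$ is the degeneracy locus of the $\SL_2$-equivariant map $A\otimes\CO_{\Gr}\to\Lambda^2\CK^*$ obtained by restricting forms of $A$ to the tautological quotient $V^*/K^\perp\cong\CK^*$. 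Both bundles have rank $3$, so $R$ is cut by the determinant, a section of $\det(\Lambda^2\CK^*)=\CO_{\Gr}(2)$, giving $R\sim\CO_{\Gr}(2)$ (consistent with $K_{\Gr}-\alpha^*K_{\PP^6}=\CO(-5)-\CO(-7)=\CO(2)$).

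\emph{From $Y_K$ to a Schubert section, and its two singular strata.} By \ref{cor:points and trisecants} the trisecant $\tr([U])=p_Vp_Y^{-1}([U])$ is the line $\PP(U)\subset\PP(V)$, so $Y_K=p_Y(p_V^{-1}(\PP(K)))=\{[U]\in Y:\PP(U)\cap\PP(K)\neq\emptyset\}=Y\cap\Sigma_K$. Since $Y$ is smooth and $Y\not\subseteq\Sigma_K$, a point $[U]$ is singular on $Y_K$ iff either $[U]\in Y\cap\mathrm{Sing}(\Sigma_K)=Y\cap\Gr(2,K)$, or $[U]$ is a smooth point of $\Sigma_K$ at which $Y$ is tangent, i.e. $T_{[U]}Y\subseteq T_{[U]}\Sigma_K$. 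The first stratum is immediate: $Y\cap\Gr(2,K)\neq\emptyset$ means some $[U]\in Y$ has $U\subseteq K$, i.e. $\PP(K)$ contains the trisecant $\PP(U)$, which is exactly $K\in\Dtr$.

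\emph{Matching the tangency stratum with $\Dfat$.} At a smooth point $[U]$ with $U\cap K=\langle u_1\rangle$ and $U=\langle u_1,u_2\rangle$, writing out $T_{[U]}\Sigma_K$ and the conormal of $Y$ shows tangency holds iff there is $a\in A$ with $\ker a=\langle u_1\rangle$ (so $\sigma(a)=u_1\in\PP(K)$) and $a(K,u_2)=0$. The crux is the identity obtained by contracting $a\wedge a\wedge b\in\Lambda^6V^*=0$ with $u_2$: it gives $(\iota_{u_2}a)\wedge a\wedge b\ \propto\ b(u_1,u_2)\,\mu$, where $a\wedge a\in\Lambda^4V^*\cong V$ represents $\sigma(a)=u_1$. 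Because $b(u_1,u_2)=0$ for all $b\in A$ is precisely the condition $[U]\in Y$, this shows $\xi_0:=a(\cdot,u_2)$ lies in $K^\perp\cap\tau(a)^\perp$, equivalently $\dim(K\cap\tau(a))\geq 2$, which is tangency of $\PP(K)$ to $\Imsigma$ at $\sigma(a)$, i.e. $K\in\Dfat$. The computation is reversible (given $\xi_0\in K^\perp\cap\tau(a)^\perp$ one solves $\iota_{u_2}a=\xi_0$ and recovers $[U]\in Y$), so conversely each tangency produces a singular point of the second type. Hence $\{K:Y_K\text{ singular}\}=\Dtr\cup\Dfat$, which is the preimage of the branching locus by the paragraph preceding the statement. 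This tangency matching is the main obstacle; the rest is bookkeeping with Schubert cells.

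\emph{Irreducibility.} Finally, $\Dtr$ is the image in $\Gr(3,V)$ of the incidence $\{([U],K):U\subseteq K\}$, a $\PP^2$-bundle over $Y$, hence irreducible of dimension $5$; as $Y\cap\Gr(2,K)$ is finite for general $K\in\Dtr$, the projection is generically finite onto its image and $\Dtr$ is an irreducible divisor. Likewise $\Dfat$ is the image of the irreducible $5$-dimensional incidence $\{(a,K):\sigma(a)\in K,\ \dim(K\cap\tau(a))\geq 2\}$ fibered over $\PP(A)$, hence an irreducible divisor.
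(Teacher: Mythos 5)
Your proof is correct, but it follows a genuinely different route from the paper's. The paper computes the ramification class as the relative canonical bundle of the linear projection $\alpha$, and then proves the inclusion (preimage of branch locus) $\subseteq \Dtr\cup\Dfat$ by showing that for $K\notin\Dtr\cup\Dfat$ the surface $S_K$ is a blow-up of $\PP(K)$ in four points, no three collinear, on which $3h-E$ is very ample, so that $p_{Y_K}:S_K\to Y_K$ is an isomorphism onto a smooth surface; irreducibility of $\Dtr$ and $\Dfat$ is then obtained from incidence varieties much as you do. You instead identify $Y_K$ with the Schubert hyperplane section $Y\cap\Sigma_K$ and stratify its singularities into points of $Y\cap\mathrm{Sing}(\Sigma_K)=Y\cap\Gr(2,K)$ (giving $\Dtr$) and tangency points (giving $\Dfat$ via the conormal computation and the contraction identity $(\iota_{u_2}a)\wedge a\wedge b\propto b(u_1,u_2)\mu$, which I checked and which is correct). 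Your approach buys two things the paper's argument does not make explicit: a determinantal description of the ramification divisor as the degeneracy locus of $A\otimes\CO_{\Gr}\to\Lambda^2\CK^*$ (rather than only its class), and both inclusions of the set-theoretic equality $\{K: Y_K\ \text{singular}\}=\Dtr\cup\Dfat$ — the paper's explicit argument only gives one direction and leans on irreducibility for the other. What the paper's route buys is the concrete model of $Y_K$ as a four-point blow-up of $\PP^2$, which is reused elsewhere (e.g.\ in section \ref{sec:Special instantons}).

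Two small loose ends in your write-up: you assert rather than prove that the incidence projections onto $\Dtr$ and $\Dfat$ are generically finite (the paper checks this — e.g.\ two trisecants in a plane meeting $\Imsigma$ in a length-$4$ scheme must coincide, and a plane carrying a one-dimensional family of tangent directions at a point is the tangent plane), and you do not verify that $\Dtr\neq\Dfat$, which the paper does and which is needed if one wants the union to be honestly two components. Both are easy to supply and do not affect the correctness of your argument.
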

\begin{proof}
The ramification divisor is linearly equivalent to the relative canonical bundle of $\alpha$.
As $\alpha$ is a linear projection we have
\[
\alpha^*\CO_{\PP(\Lambda^2 V^* / A)}(1) = \CO_{\Gr(3,V)}(1)
\]
It follows that the relative canonical bundle of $\alpha$ is $\CO_{\Gr(3,V)}(2)$.


Let $K \subset V$ be a vector subspace of dimension $3$.
The next step we are going to prove is the following: whenever $K \notin \Dtr \cup \Dfat$, the linear section of $Y$ corresponding to $\alpha(K)$ is regular.

By proposition \ref{transform of P(K)}, the structure sheaf of the linear section corresponding to $\alpha(K)$ is $\CO_{Y_K} = \Phi_{\Bl}(\CO_{\PP(K)})$.
By proposition \ref{picard of blow up}, the projection $p_Y$ from $\Bl_{\Imsigma}\PP(V)$ to $Y$ is given by the complete linear system $3h - E$.
As $K \notin \Dfat$, the total preimage $S_K$ of $\PP(K)$ under $p_V$ is the blow-up of $\PP(K)$ in $4$ distinct points. As $K \notin \Dtr$ any triple of blown up points does not lie on a line.
It is easy to check that in this case $3h - E$ is very ample on $S_K$. 
For example we can first check that $3h - E$ does not contract curves and after that, using $R p_{Y*} \CO_{S_K} = \Phi_{\Bl}(\CO_{\PP(K)}) = \CO_{Y_K}$, we can conclude that
\[
p_Y : S_K \to Y_K
\] 
is an isomorphism on its image by lemma \ref{lemma:closed embedding and pushforward}. 
It follows that the preimage via $\alpha$ of the branch locus is $\Dtr \cup \Dfat$.

Now we prove that $\Dtr$ and $\Dfat$ are distinct irreducible divisors in $\Gr(3,V)$.
First, we check that they are distinct. 

Given a point $y$ in the open orbit of $Y$, by corollaries \ref{cor:3 to 1 cover} and \ref{cor:points and trisecants} we have that $\tr(y)$ is a trisecant to $\Imsigma$ intersecting it in $3$ distinct points, corresponding to the $3$ distinct lines $L_1,L_2,L_3$ through $y$.
Call $K_{a,y}$ the plane spanned by $a$ and $\tr(y)$, where we choose $a \in \Imsigma$ such that $L_a$ does not intersect the $3$ lines $L_1, L_2,L_3$ through $y$ (i.e. the lines in $\PP(V)$ through $a$ and any point in $\tr(y) \cap \Imsigma$ are not trisecants).

If $\PP(K_{a,y})$ contains tangent vectors to $\Imsigma$, then its intersection with $\Imsigma$ has length at least $5$ and is therefore $1$-dimensional.
In this case remark \ref{rmk:Imkappa is unexpected dimension} says that 
$\PP(K_{a,y}) \cap \Imsigma$
is the union of a point and of a conic (in $\PP(K_{a,y})$).
But then any line (in $\PP(K_{a,y})$) through the point is a trisecant, against our choice of $a$ and $\tr(y) \cap \Imsigma$.

To show that $\Dtr$ is an irreducible divisor, construct the universal space $\WDtr$ of pairs given by a plane $\PP(K)$ in $\PP(V)$ and a trisecant to $\Imsigma$ contained in the plane.
As $Y$ is the space of trisecants to $\Imsigma$ (see corollary \ref{cor:points and trisecants}), there is a diagram
\[
\begin{diagram}
	&			&\WDtr	&			&\\
	&   \ldTo		&		&\rdTo^{p_{tri}}	&\\
Y	&			&		&			&\Dtr
\end{diagram}
\]
Moreover, for each trisecant line to $\Imsigma$ there is a $\PP^2$ of planes in $\PP(V)$ containing it.
More precisely, $\WDtr$ is $\PP_Y(V/\CU)$, which is clearly irreducible and $5$-dimensional.

Finally, the generic fiber of $p_{tri}$ is one point, as we are about to show.
Take a plane $\PP(K)$ in $\PP(V)$ intersecting $\Imsigma$ with expected dimension and containing $2$ trisecants.
Each one cuts a scheme of length $3$ inside $\PP(K) \cap \Imsigma$, which has length $4$.
It follows that the $2$ trisecants intersect in a scheme of length at least $2$ and that, being lines, they coincide.
As a consequence $\Dtr$ is $5$-dimensional and irreducible, as it is the image under the generically finite map $p_{tri}$ of a $5$-dimensional irreducible scheme.

As for $\Dfat$, construct the universal space $\WDfat$ of pairs given by a plane $\PP(K)$ in $\PP(V)$ and a tangent direction contained in the intersection $\PP(K) \cap \Imsigma$.
The space of tangent directions to $\Imsigma$ is the projectivization of the tangent bundle to $\PP(A)$, which we denote by  $\PP_{\PP(A)}(\CT_{\PP(A)})$, so that there is a diagram
\[
\begin{diagram}
	&			&\WDfat	&			&\\
	&\ldTo^{p_{fat}}	&		&\rdTo		&\\
\Dfat	&			&		&			& \PP_{\PP(A)}(\CT_{\PP(A)})
\end{diagram}
\]
The space $\WDfat$ is a $\PP^2$-fibration over $\PP_{\PP(A)}(\CT_{\PP(A)})$, so that it is $5$-dimensional and irreducible.

Moreover the fiber of $p_{fat}$ over the generic point of $\Dfat$ is finite for the following reason. 
Assume a plane $\PP(K)$ has 0-dimensional intersection with $\Imsigma$, then if $\PP(K)$ contains a $1$-dimensional family of tangent directions to $\Imsigma$ there is a point $a$ in $\PP(K) \cap \Imsigma$ such that $\PP(K)$ contains a $1$-dimensional family of tangent directions to $\Imsigma$ at $a$.
As $\Imsigma$ has dimension $2$ and is smooth, this is enough to deduce that $\PP(K)$ is the tangent space to $\Imsigma$ at $a$.
The locus of tangent planes to $\Imsigma$ and tangent directions is a $3$-dimensional subvariety of $\WDtr$, so that the generic point of $\WDfat$ is not contained in it.
It follows that $\Dfat$ is $5$-dimensional and irreducible as it is the image under the generically finite map $p_{fat}$ of a $5$-dimensional irreducible scheme.
\end{proof}
\begin{remark}
One can also check that the branch locus has degree $10$, and that the components of the fiber over its the generic point have multiplicities $(2,1,1,1)$, with the non-reduced fiber, which corresponds to a ramification point, belonging to $\Dtr$.
\end{remark}


We will use the description of the ramification locus of proposition \ref{prop:Dtri and Dfat} to prove the following result about smooth linear sections containing a fixed line.
\begin{lemma}
\label{lemma:sections through L}
For any line $L$ and for the generic plane $\PP(K) \subset \PP(V)$ containing $\sigma(L)$, the linear section $Y_K$ is smooth. Moreover, for any line $L$ in $Y$, the generic hyperplane section $Y_K \subset Y$ containing $L$ is smooth.
\end{lemma}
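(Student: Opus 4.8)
The plan is to read the statement through the blow-up correspondence of Proposition~\ref{identification with blow up} and to reduce the smoothness assertion to the non-containment of a $4$-dimensional family of planes in the two divisors $\Dtr$ and $\Dfat$ of Proposition~\ref{prop:Dtri and Dfat}. Write $a_L \in \PP(A)$ for the point corresponding to the line $L$ under Proposition~\ref{prop:fano of lines}, and set $p := \sigma(a_L) = \PP(\ker a_L) \in \Imsigma$; this is the meaning of $\sigma(L)$. By Corollary~\ref{cor:points and trisecants} and Definition~\ref{def:universal S_K}, a point $y \in Y$ lies in $Y_K$ if and only if the trisecant $\tr(y)$ meets $\PP(K)$. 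For $y \in L$ the $2$-plane $U_y \subset V$ contains $\ker a_L$, so $\tr(y) = \PP(U_y)$ passes through $p$; hence whenever $p \in \PP(K)$ one gets $L \subseteq Y_K$. I therefore introduce the irreducible $4$-dimensional family
\[
\Sigma_L := \{\, K \in \Gr(3,V) : \ker a_L \subset K \,\} \cong \Gr(2, V/\ker a_L),
\]
which is exactly the set of planes through $\sigma(L)$, and note that $\alpha(\Sigma_L)$ lands among the hyperplane sections containing $L$.

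For the first assertion I must show that the generic $K \in \Sigma_L$ gives a smooth $Y_K$, which by Proposition~\ref{prop:Dtri and Dfat} is equivalent to $\Sigma_L \not\subseteq \Dtr$ and $\Sigma_L \not\subseteq \Dfat$; since both divisors are irreducible of dimension $5$ while $\Sigma_L$ is irreducible of dimension $4$, it suffices to bound the dimensions of the bad loci inside $\Sigma_L$. The key geometric input is that the trisecants through $p$ are precisely the pencil $\{\, \tr(y) : y \in L \,\}$ of lines through $p$ in the fixed plane $\Pi := \PP(\kappa(a_L))$ (indeed $\tr(y)$ contains $p$ iff $\ker a_L \in U_y$ iff $y \in L$). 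A plane $\PP(K) \ni p$ can contain a trisecant only through $p$ (forcing $\PP(K)\cap\Pi$ to contain a pencil line) or as $\langle T, p\rangle$ for a trisecant $T \not\ni p$; a count against the $3$-dimensional family $\Tr \cong Y$ shows such planes form a subvariety of dimension $\le 3$, so $\Sigma_L \not\subseteq \Dtr$. Likewise $\Sigma_L \cap \Dfat$ splits into planes tangent to $\Imsigma$ at $p$ — a proper Schubert divisor in $\Gr(2,V/\ker a_L)$ cut out by $\bar K \cap \overline{T_p\Imsigma} \ne 0$ — and planes tangent at some $q \ne p$, which a transversality and dimension count over $q \in \Imsigma$ bounds again by dimension $3$; hence $\Sigma_L \not\subseteq \Dfat$. (As a sanity check, $\kappa(a_L) \in \Sigma_L$ does lie in $\Dtr \cap \Dfat$ by Lemma~\ref{lemma:Imkappa is unexpected dimension}, but this is a single bad point, not the generic one.)

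For the second assertion I bridge $\Sigma_L$ with the linear system of hyperplane sections containing $L$. Restriction of linear forms identifies this system with a $\PP^4 \subset \PP(\Lambda^2 V^*/A)$, namely the kernel of the surjection $H^0(\CO_Y(1)) \to H^0(\CO_L(1))$. By Proposition~\ref{transform of P(K)} the map $\alpha$ sends $\Sigma_L$ into this $\PP^4$, and $\alpha$ is a finite morphism since it is the linear projection of $\Gr(2,V^*)$ from the disjoint center $\PP(A)$ (Lemma~\ref{lemma:HPD of Gr}); hence $\alpha(\Sigma_L)$ is a closed $4$-dimensional subvariety of the irreducible $\PP^4$, i.e.\ all of it. Writing $B := \Sigma_L \cap (\Dtr \cup \Dfat)$, which is proper and closed by the previous paragraph, $\alpha(B)$ is a proper closed subset of the $\PP^4$; for every hyperplane section outside $\alpha(B)$, each preimage $K \in \Sigma_L$ avoids $\Dtr \cup \Dfat$, so the corresponding $Y_K$ is smooth. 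This yields smoothness of the generic hyperplane section through $L$.

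The main obstacle is the non-containment $\Sigma_L \not\subseteq \Dtr \cup \Dfat$: one must rule out that forcing a plane through the point $p \in \Imsigma$ automatically produces a trisecant or a tangency. This is exactly where the identification of the trisecants through $p$ with the pencil in $\Pi$, together with the transversality of a generic plane through a point of a surface in $\PP^4$, does the work; the remaining steps (the inclusion $L \subseteq Y_K$ and the finiteness of $\alpha|_{\Sigma_L}$) are then bookkeeping on top of Propositions~\ref{transform of P(K)} and~\ref{prop:Dtri and Dfat}.
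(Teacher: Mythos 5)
Your proof is correct and shares the paper's skeleton: both reduce smoothness of $Y_K$ to $K \notin \Dtr \cup \Dfat$ via Proposition \ref{prop:Dtri and Dfat} and then show that the irreducible $4$-dimensional family $\Gr(2,V/\ker a_L)$ of planes through $\sigma(L)$ is contained in neither divisor. Where you genuinely diverge is the $\Dtr$-avoidance: the paper exhibits one explicit plane, spanned by $\sigma(L)$ and two further points of $\Imsigma$ whose pairwise bisecants are not trisecants, and asserts it lies outside $\Dtr$; you instead observe that the trisecants through $\sigma(L)$ are exactly the pencil of lines through that point inside $\PP(\kappa(a_L))$, and bound the locus of planes through $\sigma(L)$ containing some trisecant by dimension $3$ (splitting into trisecants passing through the point and those missing it). Your count buys uniformity --- it excludes all trisecants at once, whereas the paper's explicit plane still meets $\Imsigma$ in a fourth point and one would in principle have to rule out trisecants through that point as well. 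For the second assertion, the paper says it ``clearly follows from the first''; you make the bridge explicit by noting that $\alpha$ restricted to $\Gr(2,V/\ker a_L)$ is finite (a linear projection from a center disjoint from $\Gr(2,V^*)$, by Lemma \ref{lemma:HPD of Gr}) and hence surjects onto the $\PP^4$ of hyperplane sections containing $L$, so the image of the good locus is dense there. Both refinements are sound and compatible with the paper's intent.
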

\begin{proof}
The second assertion clearly follows from the first as if $\sigma(L) \in \PP(K)$ then $L \subset Y_K$.

By proposition \ref{prop:Dtri and Dfat} the set of planes $\PP(K) \subset \PP(V)$ inducing singular sections of $Y$ is the union of $\Dtr$ and $\Dfat$.
For any $L$ there is a $4$-dimensional family of planes in $\PP(V)$ containing $\sigma(L)$, namely $\Gr(2, V / \sigma(L))$ and by smoothness of $\Imsigma$ the generic one is not in $\Dfat$.

We still need to prove that the generic $\PP(K)$ through $\sigma(L)$ does not contain any trisecant.
First, we prove that the generic bisecant through $\sigma(L)$ is not a trisecant.
This is true as trisecants through $\sigma(L)$ correspond via the maps $\tr$ and $\tr^{-1}$ of corollary \ref{cor:points and trisecants} to points of $L$, so that there is a one dimensional family of trisecants through $\sigma(L)$ (corresponding to points in $L$).

Now, choose two other points $L_1, L_2 \in \PP(A)$ such that the three bisecants connecting $\sigma(L)$, $\sigma(L_1)$ and $\sigma(L_2)$ are not trisecants and consider the plane $\PP(K)$ spanned by $\sigma(L)$, $\sigma(L_1)$ and $\sigma(L_2)$.
Then clearly $\PP(K) \notin \Dtr$.
\end{proof}

\begin{lemma}\label{lemma:Imkappa in Dtr}
$\Imkappa$ is contained in $\Dtr$.
\end{lemma}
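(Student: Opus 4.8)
The plan is to reduce to the generic point of $\Imkappa$ and then exhibit a trisecant by hand. Since $\Dtr$ is by definition a closed subscheme of $\Gr(3,V)$ and $\kappa$ is a morphism, the preimage $\kappa^{-1}(\Dtr)$ is closed in $\PP(A)$; because $\PP(A)$ is irreducible, it is enough to produce a nonempty open set of $a$'s with $\kappa(a) \in \Dtr$. So the whole argument can be carried out for a sufficiently general $a \in \PP(A)$, which lets me avoid the degenerate configurations.

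First I would recall the intersection picture already established in Lemma \ref{lemma:Imkappa is unexpected dimension} and Remark \ref{rmk:Imkappa is unexpected dimension}: for $K = \kappa(a)$ the scheme $\PP(K) \cap \Imsigma$ is, read inside $\PP(A) \cong \Imsigma$, a line $\ell$ together with a point $p$, the two being polar with respect to the invariant conic $\CQ$. Since $\sigma$ is a closed embedding (Remark \ref{rmk:sigma embedding}) which restricts on $\ell$ to the degree-$2$ Veronese, the image $C := \sigma(\ell)$ is a smooth conic spanning a plane; as $C \subset \PP(K)$ and $\PP(K)$ is itself a plane, $\PP(K)$ is exactly the plane spanned by $C$, and $\sigma(p)$ is a further point of $\Imsigma$ lying in that plane.

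Then I would construct the trisecant. For general $a$ the line $\ell$ is not tangent to $\CQ$, so its pole $p$ lies off $\ell$ and hence $\sigma(p) \notin C$. A generic line in the plane $\PP(K)$ through the point $\sigma(p)$ meets the conic $C$ in two distinct points, both different from $\sigma(p)$; together with $\sigma(p)$ these are three distinct points of $\Imsigma$ on a line contained in $\PP(K)$, i.e.\ a trisecant to $\Imsigma$ inside $\PP(K)$. This shows $K \in \Dtr$ for general $a$, and the reduction of the first paragraph then finishes the proof.

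The only genuine obstacle is the tangency locus: when $\ell$ is tangent to $\CQ$ one has $p \in \ell$, hence $\sigma(p) \in C$, and the ``line through the extra point'' trick degenerates into a mere bisecant. Rather than analyze these special planes directly, which would require tracking the scheme structure of $\PP(K)\cap\Imsigma$ at $\sigma(p)$, the cleanest route is precisely the closedness of $\Dtr$ together with the irreducibility of $\PP(A)$ invoked above, which bypasses them entirely.
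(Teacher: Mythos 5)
Your proof is correct and rests on the same key input as the paper's: the description in Lemma \ref{lemma:Imkappa is unexpected dimension} and Remark \ref{rmk:Imkappa is unexpected dimension} of $\PP(K) \cap \Imsigma$ as a conic together with an extra point, with a line through that extra point serving as the trisecant. The only difference is in the edge case: the paper disposes of it by noting that \emph{any} line through the extra point is a trisecant even when that point lies on the conic, whereas you sidestep it via genericity plus closedness of $\Dtr$ — which works, though your generic-position claim that $\ell$ is not tangent to $\CQ$ deserves a word of justification (in fact the extra point is $\sigma(a)$ itself and $\ell$ is the polar line of $a$, so the degenerate locus is exactly $\kappa(\CQ)$).
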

\begin{proof}
By lemma \ref{lemma:Imkappa is unexpected dimension} if $K \in \Imkappa$ then the intersection of $\PP(K)$ with $\Imsigma$ is the union of a conic and a point (possibly lying on the conic).
It follows that any line in $\PP(K)$ passing through the extra point is a trisecant contained in $\PP(K)$.
\end{proof}

\subsection{Linear sections of $Y$ and the Fourier--Mukai transform $\Phi_{\Bl}$}\label{sec:Linear sections of Y}

In proposition \ref{transform of P(K)} we have given an interpretation of the map $\alpha$ in terms of a Fourier--Mukai transform functor.
In this section we use the Fourier--Mukai transform 
\[
\Phi_{\Bl}:= Rp_{Y*}Lp_V^*: D^b(\PP(V)) \to D^b(Y)
\] 
to relate the incidence properties of planes in $\PP(V)$ and $\Imsigma$ with the properties of certain sheaves supported on linear sections of $Y$.

Recall from definition \ref{def:universal S_K} that for any $K \in \Gr(3,V)$ we denote by $S_K$ the total preimage of $\PP(K)$ via $p_V$ and that we denote by $Y_K$ the image of $S_K$ via $p_Y$, as in the following diagram.
\begin{equation}\label{diag:P(K), S_K, Y_K}
\begin{diagram}
\PP(K)		&\lTo	^{p_K}	& S_K		& \rTo^{p_{Y_K}}	& Y_K 	\\
 \dInto^{i_K}	&\square		& \dInto^{i_S}	&				& \dInto^{i_{Y_K}}	\\
 \PP(V)		& \lTo_{p_V}	& \PP_Y(\CU)	& \rTo_{p_Y}		& Y
\end{diagram}
\end{equation}
We will often need to compute the cohomology of sheaves on $S_K$ which come by pullback from $\PP(K)$.
This can be easily done via projection formula, provided one has already computed $Rp_{K*}Lp_{K}^*\CO_{\PP(K)}$
\begin{lemma} \label{singular projection formula}
The projection $p_K: S_K \to \PP(K)$ is such that the canonical map 
\begin{equation}\label{eq:coev_K}
\CO_{\PP(K)} \to Rp_{K*}\CO_{S_K}
\end{equation}
is an isomorphism. 
\end{lemma}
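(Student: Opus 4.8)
The statement asserts that for the fiber diagram \eqref{diag:P(K), S_K, Y_K}, the adjunction unit $\CO_{\PP(K)} \to Rp_{K*}\CO_{S_K}$ is an isomorphism. The key geometric input is that $S_K$ is by definition the pullback of the projective bundle $\PP_Y(\CU) \cong \Bl_{\Imsigma}\PP(V)$ under the embedding $i_K: \PP(K) \hookrightarrow \PP(V)$, and $p_V$ is the blow-up map, which satisfies $Rp_{V*}\CO_{\Bl_{\Imsigma}\PP(V)} \cong \CO_{\PP(V)}$. The plan is to deduce the relative statement for $p_K$ from this absolute statement for $p_V$ by base change, using that the fiber square on the left of \eqref{diag:P(K), S_K, Y_K} is $\Tor$-independent.

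\textbf{First steps.} I would begin by recording that $Rp_{V*}\CO_{\Bl_{\Imsigma}\PP(V)} = \CO_{\PP(V)}$. This is the standard fact that blow-ups of smooth centers in smooth varieties have $Rp_*\CO = \CO$ on the base (the higher direct images of the structure sheaf of a blow-up along a smooth center vanish, and the zeroth is $\CO$ by normality and connectedness of fibers). Next I would establish that the left square of \eqref{diag:P(K), S_K, Y_K} is $\Tor$-independent, so that base change applies: since $\PP(K) \hookrightarrow \PP(V)$ is a regular embedding of smooth varieties and $S_K$ was shown in the proof of proposition \ref{transform of P(K)} to have expected dimension (because $\PP(K) \not\subset \Imsigma$ and $p_V$ has relative dimension $1$), lemma \ref{lemma:smooth expected dimension} gives $\Tor$-independence. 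Equivalently, I could cite remark \ref{rmk:flat is tor-independent} together with the structure of the fiber product, but the expected-dimension route via lemma \ref{lemma:smooth expected dimension} is cleanest.

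\textbf{Applying base change.} With $\Tor$-independence in hand, theorem \ref{thm:base change} gives that the base change natural transformation $\BC_\sigma$ for the cartesian square is an isomorphism, so
\[
Li_K^* Rp_{V*}\CO_{\Bl_{\Imsigma}\PP(V)} \cong Rp_{K*}Li_S^*\CO_{\Bl_{\Imsigma}\PP(V)}.
\]
By $\Tor$-independence again, $Li_S^*\CO_{\Bl_{\Imsigma}\PP(V)} \cong \CO_{S_K}$ (the pullback is underived, as in the proof of proposition \ref{transform of P(K)}). The left-hand side is $Li_K^*\CO_{\PP(V)} = \CO_{\PP(K)}$. Hence $Rp_{K*}\CO_{S_K} \cong \CO_{\PP(K)}$. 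To conclude that the canonical map \eqref{eq:coev_K} is the isomorphism (rather than merely that an abstract isomorphism exists), I would check naturality: the base change map is compatible with the adjunction units, so the canonical morphism $\CO_{\PP(K)} \to Rp_{K*}\CO_{S_K}$ is identified with $Li_K^*$ applied to the canonical isomorphism $\CO_{\PP(V)} \to Rp_{V*}\CO_{\Bl_{\Imsigma}\PP(V)}$, which is an isomorphism.

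\textbf{Main obstacle.} The only genuinely delicate point is verifying that the canonical adjunction map, and not just some abstract isomorphism, is what realizes the identification — i.e. tracking the compatibility of $\BC_\sigma$ with the units of adjunction. This is formal but must be stated carefully; everything else is an assembly of results already available in the excerpt (the blow-up vanishing, lemma \ref{lemma:smooth expected dimension}, and theorem \ref{thm:base change}).
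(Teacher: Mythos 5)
Your proof is correct and follows essentially the same route as the paper's: establish $\Tor$-independence of the defining cartesian square via lemma \ref{lemma:smooth expected dimension} (expected dimension plus smoothness of $\PP(K)$, $\PP(V)$, $\PP_Y(\CU)$), apply base change from theorem \ref{thm:base change}, and use $Rp_{V*}\CO_{\Bl_{\Imsigma}\PP(V)} = \CO_{\PP(V)}$. Your extra remark on compatibility of the base-change map with the adjunction units is a welcome refinement that the paper's published proof leaves implicit.
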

\begin{proof}
The map $p_K$ is defined via the following cartesian diagram
\begin{equation}\label{eq:singular projection formula 7}
\begin{diagram}
S_K			& \rTo^{i_S} 		& \Bl_{\PP(A)}\PP(V) 	\\
\dTo^{p_K}	&				& \dTo_{p_V}			\\
\PP(K)		&\rTo	_{\quad i_K}	& \PP(V)
\end{diagram}
\end{equation}
The fiber product in the cartesian diagram \eqref{eq:singular projection formula 7} has expected dimension and $\PP(K)$, $\PP(V)$, $\PP_Y(\CU)$ are smooth.
It follows by lemma \ref{lemma:smooth expected dimension} that the diagram \eqref{eq:singular projection formula 7} is $\Tor$-independent, so that the base change map is an isomorphism.
As a consequence
\begin{equation}\label{eq:singular projection formula}
Rp_{K*}\CO_{S_K} \cong Rp_{K*}Li_S^* \CO_{\Bl_{\PP(A)}\PP(V)} \cong  Li_{K}^*Rp_{V*}\CO_{\Bl_{\PP(A)}\PP(V)}
\end{equation}
As the $Rp_{V*}\CO_{\Bl_{\PP(A)}\PP(V)}$ is $\CO_{\PP(V)}$, so that the right hand side of \eqref{eq:singular projection formula} is $\CO_{\PP(K)}$.
\end{proof}

The following lemma holds.
\begin{lemma}\label{FM transform}
For any $K \subset V$ 3-dimensional vector subspace, $\PhiK$ is a pure object concentrated in degree 0. Moreover it is equivalent to
\begin{equation}\label{transform of P(K)(2)}
\left\{ \calO_Y \to \calU^{*\oplus 2} \to S^2\calU^{*} \right\}
\end{equation}
\end{lemma}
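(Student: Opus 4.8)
The plan is to compute $\PhiK = Rp_{Y*}Lp_V^*\,\CO_{\PP(K)}(2h)$ by resolving the sheaf on $\PP(V)$, pulling the resolution back to the blow-up $\PP_Y(\CU)\cong\Bl_{\Imsigma}\PP(V)$, and pushing it forward termwise to $Y$ along the $\PP^1$-bundle $p_Y$. Since $\PP(K)\subset\PP(V)$ is a codimension-$2$ linear subspace, cut out by $K^\perp\subset V^*=H^0(\CO_{\PP(V)}(1))$, its ideal has a Koszul resolution, which after the twist by $\CO_{\PP(V)}(2)$ defining the input of $\Phi_{\Bl}$ reads
\[
0 \to \Lambda^2 K^\perp \otimes \CO_{\PP(V)} \to K^\perp \otimes \CO_{\PP(V)}(1) \to \CO_{\PP(V)}(2) \to \CO_{\PP(K)}(2h) \to 0 .
\]

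First I would pull this back along $p_V$, writing $\CO(h)=p_V^*\CO_{\PP(V)}(1)$. As in proposition \ref{transform of P(K)}, the fibre product $S_K=\PP(K)\times_{\PP(V)}\PP_Y(\CU)$ has the expected dimension $2$, so by lemma \ref{lemma:smooth expected dimension} the square is $\Tor$-independent and $Lp_V^*\CO_{\PP(K)}(2h)\cong\CO_{S_K}(2h)$ is a genuine sheaf; equivalently, the pulled-back section of $K^\perp\otimes\CO(h)$ is regular, since it cannot vanish on a divisor ($\Imsigma$ is non-degenerate and so lies in no plane $\PP(K)$, hence the pullback does not vanish on the exceptional divisor), so its Koszul complex stays exact. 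Thus
\[
0 \to \Lambda^2 K^\perp \otimes \CO \to K^\perp \otimes \CO(h) \to \CO(2h) \to \CO_{S_K}(2h) \to 0
\]
is a locally free resolution of $\CO_{S_K}(2h)$ on $\PP_Y(\CU)$.

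Next I would apply $Rp_{Y*}$. Using the relative tautological sequence \eqref{eq:relative tautological p_Y}, which identifies $\CO(-h)$ with the tautological sub-line-bundle of $p_Y^*\CU$, the $\PP^1$-bundle $p_Y$ satisfies $p_{Y*}\CO(nh)=S^n\CU^*$ and $R^{>0}p_{Y*}\CO(nh)=0$ for $n=0,1,2$. Hence every term of the resolution is $p_{Y*}$-acyclic, $Rp_{Y*}$ is computed termwise, and $\PhiK$ is represented by the three-term complex
\[
\{\, \CO_Y \to \CU^{*\oplus 2} \to S^2\CU^* \,\}
\]
placed in degrees $-2,-1,0$ (absorbing $\Lambda^2K^\perp\cong\CC$ and $K^\perp\cong\CC^2$), which is exactly \eqref{transform of P(K)(2)}.

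Finally, purity comes for free. Because $\CO_{S_K}(2h)$ is an honest sheaf, $Rp_{Y*}\CO_{S_K}(2h)$ has vanishing cohomology in negative degrees, so the complex above has $H^{-2}=H^{-1}=0$; and it has no terms in positive degree, so $H^{>0}=0$ as well. Therefore $\PhiK$ is concentrated in degree $0$, equal to $\coker(\CU^{*\oplus 2}\to S^2\CU^*)$. The only genuinely delicate point is the second step — ensuring that pulling back the Koszul complex preserves exactness, i.e.\ that $S_K$ has pure codimension $2$ for every $K$ — but this is precisely the expected-dimension/$\Tor$-independence input already exploited in proposition \ref{transform of P(K)}, once one records that $\Imsigma$ spans $\PP(V)$; after that, both the pushforward computation and the purity conclusion are formal.
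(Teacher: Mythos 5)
Your proposal is correct and follows essentially the same route as the paper: pull back the Koszul resolution of $\CO_{\PP(K)}(2h)$ to $\PP_Y(\CU)$ (justified by the expected-dimension/$\Tor$-independence argument already used in proposition \ref{transform of P(K)}), then push forward termwise along the $\PP^1$-bundle $p_Y$ using $p_{Y*}\CO(nh)=S^n\CU^*$ and the vanishing of $R^1p_{Y*}$ on these line bundles. The paper phrases the last step as the degeneration of the pushforward spectral sequence on the page with horizontal differentials, which is the same computation.
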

\begin{proof}
By arguing as in \ref{transform of P(K)} we get
\[
p_V^*\calO_{\PP(K)}(2h) \cong \left\{ \calO_{\PP_Y(\CU)} \to \calO_{\PP_Y(\CU)}(h)^{\oplus 2} \to \calO_{\PP_Y(\CU)}(2h) \right\}
\]
As $\calO_{\PP_Y(\CU)}(h)$ is the relative $\calO(1)$ of $p_Y$, we can use this resolution to get $\Phi_{\Bl} \left(\calO_{\PP(K)}(2h)\right)$ as the limit of a spectral sequence. The page with horizontal differentials is the last one with non-trivial maps and is
\begin{diagram}
0		& \rTo	&	0			& \rTo &	0		\\
\calO_Y	& \rTo 	&	\calU^{*\oplus 2}	& \rTo & S^2\calU^*
\end{diagram}
as $p_Y$ has relative dimension 1. This shows that $R^1p_{Y*}\calO_{\PP(K)}(2h) = 0$ (i.e. that $\PhiK$ is equivalent to a sheaf) and that the sequence
\begin{equation}\label{resolution of Phi(K)}
0 \to \calO_Y \to \calU^{*\oplus 2} \to S^2\calU^{*} \to \PhiK \to 0
\end{equation}
is exact.
\end{proof}

We can use the resolution for $\PhiK$ to compute several things about it. 
Recall that by proposition \ref{transform of P(K)} with any $K \in \Gr(3,V)$ we associate a linear section $Y_K = \Phi_{\Bl}\CO_{\PP(K)}$ of $Y$.


\begin{lemma} \label{lemma:FM transform Tor}
The following facts about $\PhiK$ hold:
\begin{enumerate}[(a)]
\item $\ch \left(\PhiK\right) = H + 7L/2 - P/6$.
\item \label{item:FM transform Tor b} $\PhiK$ is the pushforward of the sheaf
$Rp_{Y_K*}p_K^*\CO_{\PP(K)}(2h)$ from $Y_K$.
%
\item $\PhiK$ has no associated points of codimension 3.
\item $\PhiK$ has associated points in codimension 2 if and only if $K \in \korth(\PP(A))$.
\item If $K \notin \korth(\PP(A))$ in $\Gr(3,V)$, then $\PhiK$ is torsion free on its support $Y_K$.  
\end{enumerate}
\end{lemma}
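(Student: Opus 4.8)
The plan is to establish the five parts of Lemma \ref{lemma:FM transform Tor} in order, leaning on the resolution \eqref{resolution of Phi(K)} for the Chern character and associated-point computations, and on the base-change machinery of Section \ref{sec:Some preliminary facts} for the geometric identification in part \eqref{item:FM transform Tor b}.

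For part (a), I would simply apply additivity of the Chern character to the exact sequence \eqref{resolution of Phi(K)}, namely
\[
\ch\left(\PhiK\right) = \ch(S^2\CU^*) - 2\ch(\CU^*) + \ch(\CO_Y),
\]
and plug in the values from \eqref{Cherntable} (recalling $\CU^* = \CU(1)$, so $\ch(\CU^*)$ is obtained by multiplying $\ch(\CU)$ by $e^H$). This is a routine calculation yielding $H + 7L/2 - P/6$. For part \eqref{item:FM transform Tor b}, the idea is that $\Phi_{\Bl}(\CO_{\PP(K)}(2h)) = Rp_{Y*}(p_Y^*(\text{nothing}) \otimes Lp_V^*\CO_{\PP(K)}(2h))$, but the pullback $Lp_V^*\CO_{\PP(K)}(2h)$ is supported on $S_K$ by $\Tor$-independence (Lemma \ref{lemma:smooth expected dimension}, exactly as used in \ref{transform of P(K)} and \ref{singular projection formula}); hence $\Phi_{\Bl}(\CO_{\PP(K)}(2h)) = Rp_{Y*}i_{S*}(p_K^*\CO_{\PP(K)}(2h))$, and since $p_Y$ factors through $Y_K$ via $p_{Y_K}$ in diagram \eqref{diag:P(K), S_K, Y_K}, this is $i_{Y_K *}Rp_{Y_K*}p_K^*\CO_{\PP(K)}(2h)$. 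So the sheaf is genuinely pushed forward from $Y_K$.

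For the associated-point statements (c), (d), (e), I would analyze \eqref{resolution of Phi(K)} locally. Since $\PhiK$ is the cokernel of a map of locally free sheaves $\CU^{*\oplus 2} \to S^2\CU^*$ whose image has the first map $\CO_Y \to \CU^{*\oplus 2}$ as a locally free kernel resolution, the associated points of $\PhiK$ are controlled by the degeneracy loci of the map $\CU^{*\oplus 2} \to S^2\CU^*$. Concretely, $\PhiK$ is supported on $Y_K$, a surface (codimension $1$ by part (a), since $c_1 = H$), so I expect generic rank considerations to force no codimension-$3$ associated points (part (c)) directly from the $\Tor$-dimension estimate: the complex \eqref{resolution of Phi(K)} has length $2$, so $\PhiK$ has projective dimension at most $2$, bounding the codimension of associated points by $2$ via Auslander--Buchsbaum. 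For codimension-$2$ associated points (part (d)), the embedded or lower-dimensional support appears precisely where the section $\CO_Y \to \CU^{*\oplus 2}$ fails to be a subbundle inclusion, equivalently where the two sections of $\CU^*$ defining $K$ degenerate; this is exactly the condition that $\PP(K)$ meets $\Imsigma$ non-transversally, which by Lemma \ref{lemma:Imkappa is unexpected dimension} and proposition \ref{transform of P(K)} corresponds to $K \in \kappa(\PP(A))$. Part (e) is then the logical contrapositive repackaging: no codimension-$2$ and no codimension-$3$ associated points off $\kappa(\PP(A))$ means $\PhiK$ has pure codimension $1$ support, i.e.\ is torsion free on $Y_K$.

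The main obstacle will be part (d): pinning down \emph{exactly} when embedded (codimension-$2$) associated points appear and matching this locus precisely with $\kappa(\PP(A)) \subset \Gr(3,V)$. The cleanest route is probably to base-change \eqref{resolution of Phi(K)} fiberwise over $\Gr(3,V)$ and identify the jumping locus of $Rp_{Y_K*}p_K^*\CO_{\PP(K)}(2h)$ with the locus where $S_K \to Y_K$ fails to be an isomorphism, using the geometric dictionary of proposition \ref{prop:Dtri and Dfat} and Lemma \ref{lemma:Imkappa is unexpected dimension} (where $\PP(K) \cap \Imsigma$ becomes a conic-plus-point, so $p_Y$ contracts a curve and introduces torsion along the corresponding line in $Y_K$). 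I would verify that the contracted curve produces precisely a codimension-$2$ (i.e.\ one-dimensional) embedded component on $Y_K$, and conversely that transversality of $\PP(K)$ to $\Imsigma$ (which holds off $\kappa(\PP(A))$ by remark \ref{rmk:Imkappa is unexpected dimension}) keeps $p_{Y_K}$ an isomorphism on the relevant locus, forcing purity.
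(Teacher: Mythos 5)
Parts (a), (b), (c) and (e) of your plan are correct and essentially coincide with the paper's proof: additivity of $\ch$ on \eqref{resolution of Phi(K)}, $\Tor$-independence of the left square of \eqref{diag:P(K), S_K, Y_K} via lemma \ref{lemma:smooth expected dimension}, the length-two resolution killing any contribution in homological degree $3$ (the paper phrases this as the vanishing of $\Tor_3(\CO_Z,\PhiK)$ rather than through Auslander--Buchsbaum, but it is the same computation), and purity off $\korth(\PP(A))$ as a formal consequence of (a), (c), (d).

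The gap is in part (d), which you yourself flag as the main obstacle. There are two problems. First, your criterion is off: a codimension-$2$ associated point occurs when $\Tor_2(\CO_Z,\PhiK)\neq 0$ at the generic point of a \emph{curve} $Z$, i.e.\ when the section $\CO_Y\to\CU^{*\oplus 2}$ vanishes along a curve --- not merely when it ``fails to be a subbundle inclusion''. That section does vanish somewhere on $Y$ for every $K$ in the divisor $\Dtr$ (this is exactly lemma \ref{lemma:Dtr is common point}), which is far larger than the surface $\korth(\PP(A))$; isolated zeros raise the $\Tor$-dimension at points but do not create codimension-$2$ associated points. Second, the geometric dictionary you propose is the wrong one: non-transversality of $\PP(K)\cap\Imsigma$ governs $\Dfat$, not $\Imkappa$, and a curve contracted by $p_{Y_K}$ only fattens the pushforward at a \emph{point} of $Y$ (codimension $3$, already excluded by part (c)); neither mechanism produces an embedded curve. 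The paper's argument is more direct: by corollary \ref{cor:sections of U*} each component of the section $\CO_Y\to(V/K)^*\otimes\CU^*$ cuts out a conic, so its zero locus is the base locus of the pencil of conics indexed by $K^\perp$; this base locus is one-dimensional if and only if all conics in the pencil share a common line $L$, and by lemma \ref{lemma:conics through L} the pencil of conics containing $L=L_a$ is precisely $\korth(a)$. That identification is the step missing from your proposal.
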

\begin{proof}
To check associated primes we check 
\[
\Tor_{p}\left(	\calO_Z, \PhiK	\right)
\]  
for $p = \cod(Z)$ and $Z$ any irreducible closed subset of $Y$. 
\begin{enumerate}[(a)]
\item Use additivity of the Chern character and formulas \eqref{Cherntable}. 
\item 
%
%
For our claim to hold, it is enough to check that the base change map for the leftmost square of diagram \eqref{diag:P(K), S_K, Y_K} is an isomorphism.
This follows from lemma \ref{lemma:smooth expected dimension} as $\PP_Y(\CU)$, $\PP(K)$ and $\PP(V)$ are smooth and $S_K$ has expected dimension. 
\item Any $\Tor_3(\PhiK, -)$ vanishes as resolution \eqref{transform of P(K)(2)} has length $2$.
\item Each map from $\CO_Y$ to $\CU^*$ vanishes on a conic, so the first map in \eqref{transform of P(K)(2)} vanishes on the intersection of a pencil of conics, which has codimension 2 if and only if all conics in the pencil are singular and share a component. By lemma \ref{lemma:conics through L} this implies that there is $\Tor_2$ only with $\calO_L$ and if and only if $K \in \korth (\PP(A))$.
\item The computation of $\ch(\PhiK)$ shows that $\Supp(\PhiK) = Y_S$. Moreover, as $K \notin \korth (\PP(A))$, all associated primes have codimension 1, so that the sheaf $\PhiK$ is torsion free on its support by definition.
\end{enumerate}
\end{proof}

The following lemma describes further the correspondence between properties of $\PP(K)$ in $\PP(V)$ and properties of its transform $\PhiK$.

\begin{lemma}\label{lemma:Dtr is common point}
For a plane $\PP(K) \subset \PP(V)$ and a point $P \in Y$ the following conditions are equivalent
\begin{enumerate}[(a)]
\item \label{item:Dtr is common point a} $\Tor_2 \left( \CO_P, \Phi_{\Bl}(\CO_{\PP(K)}(2)) \right) \neq 0$.
\item \label{item:Dtr is common point b0} The line $p_Y^{-1}(P)$ is contained in $S_K$.
\item \label{item:Dtr is common point b} The line $p_Vp_Y^{-1}(P)$ is contained in $\PP(K)$.
\item \label{item:Dtr is common point c} All the conics in the pencil corresponding to $K \in \Gr(3,V) \cong \Gr(2,V^*)$ contain $P$.
\end{enumerate}
\end{lemma}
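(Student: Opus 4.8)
The plan is to show that each of the four conditions is equivalent to the single linear-algebra statement $U_P \subseteq K$, where $U_P$ denotes the fiber of $\CU$ at $P$; the lemma then follows by transitivity. The geometric conditions (b), (c), (d) are essentially immediate from the structures already set up, so the only real content lies in translating the cohomological condition (a).

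For (a), I would use the locally free resolution of $\PhiK$ produced in lemma \ref{FM transform},
\begin{equation*}
0 \to \CO_Y \xrightarrow{\phi_0} \CU^{*\oplus 2} \xrightarrow{\phi_1} S^2\CU^* \to \PhiK \to 0,
\end{equation*}
as an honest length-$2$ resolution by vector bundles. Tensoring with $\CO_P$ and taking cohomology computes $\Tor_\bullet(\CO_P, \PhiK)$; since the source of $\phi_0$ has rank $1$, one finds $\Tor_2(\CO_P, \PhiK) = \ker(\phi_0 \otimes \CO_P)$, which is nonzero precisely when $\phi_0$ vanishes at $P$. The key step is then to identify $\phi_0$: tracing the pushforward of the Koszul complex in the proof of lemma \ref{FM transform}, the map $\phi_0$ is $(-s_2, s_1)$ for a basis $s_1, s_2$ of $K^\perp \subseteq V^* = H^0(\CU^*)$, each $s_i$ read as a section $\CO_Y \to \CU^*$ via the surjection $V^* \otimes \CO_Y \to \CU^*$ of \eqref{eq:definingsequencedual}. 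The fiber of $\phi_0$ at $P$ sends $1 \mapsto (-s_2|_{U_P}, s_1|_{U_P})$, which vanishes if and only if both $s_i$ restrict to zero on $U_P$, i.e.\ if and only if $U_P \subseteq \ker s_1 \cap \ker s_2 = K$. This yields (a) $\iff U_P \subseteq K$.

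For the remaining conditions I would argue as follows. Under the identification $\PP_Y(\CU) \cong \Bl_{\Imsigma}\PP(V)$ of proposition \ref{identification with blow up}, the fiber $p_Y^{-1}(P)$ is $\PP(U_P)$ and $p_V$ carries it onto the line $\PP(U_P) \subseteq \PP(V)$, which is exactly the trisecant $\tr(P)$. Since $S_K = p_V^{-1}(\PP(K))$, the inclusion $p_Y^{-1}(P) \subseteq S_K$ holds iff $p_V p_Y^{-1}(P) = \PP(U_P) \subseteq \PP(K)$, that is iff $U_P \subseteq K$; this settles (b) $\iff$ (c) $\iff U_P \subseteq K$ simultaneously. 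Finally, under $\Gr(3,V) \cong \Gr(2,V^*)$ the plane $K$ corresponds to the pencil $\PP(K^\perp) \subseteq \PP(V^*)$ of conics, and by corollary \ref{cor:conics through P} the conics through $P$ form exactly $\PP(U_P^\perp)$; hence every conic of the pencil passes through $P$ iff $K^\perp \subseteq U_P^\perp$, i.e.\ iff $U_P \subseteq K$, giving (d). The one step that genuinely demands care is the explicit identification of the differential $\phi_0$ with the linear forms cutting out $K$, since the whole force of (a) comes from reading off the vanishing locus of $\phi_0$ correctly; once that differential is pinned down, the rest is bookkeeping.
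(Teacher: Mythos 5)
Your proof is correct, and its core computation of condition (a) --- restricting the length-two locally free resolution $0 \to \CO_Y \to \CU^{*\oplus 2} \to S^2\CU^* \to \PhiK \to 0$ to the point $P$ and observing that $\Tor_2$ is nonzero exactly when the first differential, given by the two linear forms spanning $K^\perp \subset V^* = H^0(\CU^*)$, vanishes on $U_P$ --- is precisely the argument the paper uses to relate (a) to the pencil-of-conics condition. Where you genuinely diverge is in the organization: the paper handles (a) $\iff$ (b) by a separate computation, pulling $\CO_P \otimes \Phi_{\Bl}(\CO_{\PP(K)}(2h))$ back along $p_Y$ via the adjunction/base-change formula and then running the Koszul complex of $\PP(K)$ on the fiber line $p_Y^{-1}(P)$, whereas you reduce all four conditions to the single linear-algebra statement $U_P \subseteq K$ and dispatch (b), (b0), and (d) by pure bookkeeping ($p_Y^{-1}(P) = \PP(U_P)$ maps to the trisecant $\PP(U_P) \subset \PP(V)$, $S_K = p_V^{-1}(\PP(K))$, and corollary \ref{cor:conics through P}). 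This buys you a shorter and more transparent proof at the cost of having to pin down the first differential of the resolution explicitly; you correctly flag that identification as the one step needing care, and your reading of it (the two sections of $\CU^*$ cutting out the pencil) is the right one.
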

\begin{proof}
Clearly, condition \eqref{item:Dtr is common point b0} is equivalent to condition \eqref{item:Dtr is common point b}. We will show that condition \eqref{item:Dtr is common point a} is equivalent to conditions \eqref{item:Dtr is common point b} and \eqref{item:Dtr is common point c}.


By adjunction formula
\[
\CO_P \otimes Rp_{Y*}p_V^*\CO_{\PP(K)}(2h) \cong Rp_{Y*} \left( \CO_{p_Y^{-1}(P)} \otimes p_V^* \CO_{\PP(K)}(2h) \right)
\]
The cohomology in degree $-2$ of the left hand side can be computed via the Grothendieck spectral sequence, so that
\[
\Tor_2\left( \CO_P, \PhiK \right) = 0
\]
if and only if
\begin{equation}\label{eq:Dtr is common point}
\Tor_2 \left(\CO_{p_Y^{-1}(P)}, p_V^*\CO_{\PP(K)}(2h) \right)
\end{equation}
has sections.
The sheaf \eqref{eq:Dtr is common point} can be computed via the Koszul complex for $\PP(K)$. The result is the kernel of
\[
\CO_{p_Y^{-1}(P)} \to (V/K)^* \otimes \CO_{p_Y^{-1}(P)}(1)
\]
which can be either $0$ or $\CO_{p_Y^{-1}(P)}$. 
The second case occurs if and only if 
\[
p_Vp_Y^{-1}(P) \not\subset \PP(K)
\]
so that we have proved the equivalence of conditions \eqref{item:Dtr is common point a} and \eqref{item:Dtr is common point b}.

As for the equivalence of conditions \eqref{item:Dtr is common point a} and \eqref{item:Dtr is common point c}, note that in order to compute
\[
\Tor_2 \left( \CO_P, \Phi_{\Bl}(\CO_{\PP(K)}(2)) \right)
\]
it is enough to restrict the locally free resolution \eqref{resolution of Phi(K)} to $P$, so that condition \eqref{item:Dtr is common point a} is equivalent to the fact that
\[
0 \to \calO_Y \to (V/K)^* \otimes \calU^*
\]
is $0$ at $P$, i.e. that $P$ belongs to all conics in the pencil given by $(V/K)^* \subset V^*$.

The last part of the statement follows by letting $P$ vary in $Y$ and by corollary \ref{cor:points and trisecants}.
\end{proof}

In the description of the moduli space of instantons with $c_2 = 3$ we will need the right adjoint $\Phi_{\Bl}^!$ to $\Phi_{\Bl}$.
We discuss it in the following lemma.
\begin{lemma}\label{lemma:right adjoint to Bl}
Let $\Phi_{\Bl}^!$ be the right adjoint to $\Phi_{\Bl}$. Then the following facts hold.
\begin{enumerate}
\item
$\Phi_{\Bl}^!(-) = Rp_{V*}(Lp_Y^*(-)\otimes\calO_{\PP_{Y}(\CU)}(h-E)[1])$
\item
$\Phi_{\Bl}^!(\CO_Y) \cong I_{\Imsigma}(1)[1]$
\item There is an exact sequence
\[
0 \to I_{\Imsigma}(1) \to \Phi_{\Bl}^!(\CU)[-1] \to \CO_{\PP(V)}(-h) \to 0
\]
\end{enumerate}
\end{lemma}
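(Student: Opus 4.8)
The plan is to compute the right adjoint $\Phi_{\Bl}^!$ explicitly from Grothendieck duality and then evaluate it on the structure sheaf and on $\CU$.

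For part (1), I would start from the standard formula for the right adjoint of a Fourier--Mukai functor. Since $\Phi_{\Bl} = Rp_{Y*}Lp_V^*$, its right adjoint is obtained by composing the right adjoints in reverse order: the right adjoint of $Rp_{Y*}$ is $p_Y^!$ and the right adjoint of $Lp_V^*$ is $Rp_{V*}$. By Grothendieck duality for the morphism $p_Y$, one has $p_Y^!(-) = Lp_Y^*(-) \otimes \omega_{p_Y}[\dim p_Y]$, where $\omega_{p_Y}$ is the relative dualizing sheaf and $\dim p_Y = 1$. By proposition \ref{picard of blow up} the relative canonical class of $p_Y$ is $\CO_{\PP_Y(\CU)}(h - E)$, so $\omega_{p_Y} = \CO_{\PP_Y(\CU)}(h-E)$ and the shift is $[1]$. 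Composing gives
\[
\Phi_{\Bl}^!(-) = Rp_{V*}\left(Lp_Y^*(-) \otimes \CO_{\PP_Y(\CU)}(h - E)[1]\right)
\]
which is exactly the claimed formula.

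For part (2), I would substitute $\CO_Y$ into the formula of part (1), obtaining $\Phi_{\Bl}^!(\CO_Y) = Rp_{V*}(\CO_{\PP_Y(\CU)}(h - E))[1]$. Here I would use $H = 3h - E$ from equation \eqref{eq:picard of blow up} to rewrite $h - E = h - (3h - H) = H - 2h$ is not quite what is needed; instead I would directly push forward $\CO(h-E)$. The cleaner route is to recall that $\CO(-E)$ restricted to the fibers is relevant to the ideal of the exceptional divisor, and that $p_{V*}\CO_{\Bl_{\Imsigma}\PP(V)}(-E) = \CI_{\Imsigma}$ by the standard blow-up computation (the pushforward of $\CO(-E)$ is the ideal sheaf of the center). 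Twisting by $\CO(h)$ and using projection formula for $p_V$ gives $Rp_{V*}\CO(h - E) = \CI_{\Imsigma}(1)$ with the higher pushforwards vanishing, so after the shift $\Phi_{\Bl}^!(\CO_Y) \cong \CI_{\Imsigma}(1)[1]$ as claimed.

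For part (3), I would apply $\Phi_{\Bl}^!$ to the defining sequence \eqref{eq:definingsequence}, namely $0 \to \CU \to V \otimes \CO_Y \to V/\CU \to 0$, and use the exact triangle this induces together with the computation of $\Phi_{\Bl}^!(\CO_Y)$ from part (2). The main obstacle will be identifying the relevant pushforwards precisely: I would pull back $\CU \hookrightarrow V\otimes \CO_Y$ under $p_Y$, tensor with $\CO(h-E)$, and push forward via $p_V$, controlling the cohomology of the fibers to show that only one pushforward survives in each degree. The tautological sequence \eqref{eq:relative tautological p_Y} on $\PP_Y(\CU)$, which expresses $p_Y^*\CU$ as an extension of $\CO(H-h)$ by $\CO(-h)$, is the key input: tensoring it with $\CO(h-E) = \CO(h-E)$ and pushing forward lets me compute $\Phi_{\Bl}^!(\CU)[-1]$ as an extension whose sub is $\CI_{\Imsigma}(1)$ (from the $\CO(-h)$ summand, matching part (2)) and whose quotient is $\CO_{\PP(V)}(-h)$ (from the $\CO(H-h)$ summand, whose relevant pushforward is a line bundle of the stated twist). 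Verifying that the connecting maps vanish so that the sequence is short exact, rather than a longer complex, is where careful fiberwise cohomology bookkeeping via base change will be required.
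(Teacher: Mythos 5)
Parts (1) and (2) of your proposal coincide with the paper's proof: the adjoint is computed exactly as you say, via $\omega_{p_Y}=\CO_{\PP_Y(\CU)}(h-E)$ and relative dimension $1$, and the identification $Rp_{V*}\CO_{\PP_Y(\CU)}(h-E)\cong \CI_{\Imsigma}(1)$ is the same computation (the paper derives the blow-up fact you quote from the exact sequence $0\to\CO(h-E)\to\CO(h)\to\CO_E(h)\to 0$ rather than citing it, but this is cosmetic). Part (3) also follows the paper's strategy — twist the relative tautological sequence by $\CO_{\PP_Y(\CU)}(h-E)$ and push forward — but your line-bundle bookkeeping does not work as written, and this is a genuine gap rather than a notational one.

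Concretely: the quotient of the tautological sequence is $\CO(h-H)=\CO(-2h+E)$, not $\CO(H-h)$ (the display \eqref{eq:relative tautological p_Y} has a sign slip, since $\det p_Y^*\CU=\CO(-H)$ and the sub is $\CO(-h)$; the paper's proof of this lemma uses the correct quotient $\CO(-2h+E)$). After twisting by $\CO(h-E)$ the subobject becomes $\CO(-E)$, whose pushforward is $\CI_{\Imsigma}$ with \emph{no} twist, and the quotient becomes $\CO(-h)$, whose pushforward is $\CO_{\PP(V)}(-h)$. With your quotient $\CO(H-h)$ the twist is $\CO(H-E)=\CO(3h-2E)$, which does not push forward to $\CO_{\PP(V)}(-h)$, so the claimed extension does not come out of your computation. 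Relatedly, the subobject you should obtain is $\CI_{\Imsigma}$, not $\CI_{\Imsigma}(1)$: the twist in item (3) of the statement is an internal inconsistency of the paper — its own proof and the subsequent use in proposition \ref{beta3} (equation \eqref{beta3 eq1}) both have the untwisted ideal sheaf — so you should not try to force the computation to "match part (2)". Finally, the caution about connecting maps is unnecessary: both pushforwards are concentrated in degree $0$ (in particular $R^1p_{V*}\CO(-E)=0$), so the long exact sequence of pushforwards collapses to the stated short exact sequence automatically.
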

\begin{proof}
\begin{enumerate}
\item
Since $\Phi_{\Bl} = Rp_{Y*} \circ Lp_V^*$, its right adjoint $\Phi_{\Bl}^!$ is $Rp_{V*}\left( Lp_Y^* \left( - \right) \otimes \omega_{p_Y} \right)$[1]. We have already computed $\omega_{p_Y}$ in \ref{picard of blow up}, so by substitution we find 
\[
\Phi_{\Bl}^!(-) = Rp_{V*}(Lp_Y^*(-)\otimes\calO_{\PP_{Y}(\CU)}(h-E)[1])
\]
\item
The sequence
\[
0 \to \calO_{\PP_{Y}(\CU)}(h - E) \to \calO_{\PP_{Y}(\CU)}(h) \to \calO_E(h) \to 0
\]
is exact. Pushing it forward to $\PP(V)$ we find
\[
\Phi_{\Bl}^!(\calO_Y) = Rp_{V*}\calO_{\PP_{Y}(\CU)}(h-E)[1] = I_{\Imsigma}(1)[1]
\]
\item
Recall that by proposition \ref{picard of blow up} on $\PP_Y(\calU)$ there is a tautological extension
\[
0 \to \calO_{\PP_{Y}(\CU)}(-h) \to p_Y^*\calU \to \calO_{\PP_{Y}(\CU)}(-2h + E) \to 0
\]
Twisting it by $\calO_{\PP_{Y}(\CU)}(h-E)$ and pushing it forward we find that $\Phi_{\Bl}^!(\calU)$ is concentrated in degree $1$ and that it sits in an exact sequence
\begin{equation}\label{beta3 eq1}
0 \to I_{\Imsigma} \to \Phi_{\Bl}^!(\calU)[-1] \to \calO_{\PP(V)}(-h) \to 0
\end{equation}
\end{enumerate}
\end{proof}

Recall from definition \ref{def:universal S_K} that we denote  by $S_K$ the total preimage of $\PP(K)$ via $p_V$, and that $S_K$ it fits in diagram \eqref{diag:P(K), S_K, Y_K}.

\begin{lemma}\label{adjunction S_K Y_K}
For any $K \in \Gr(3,V)$
\[
Rp_{Y_K*}\CO_{S_K} \cong \CO_{Y_K}
\]
\end{lemma}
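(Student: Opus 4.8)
The statement $Rp_{Y_K*}\CO_{S_K} \cong \CO_{Y_K}$ asserts that the projection $p_{Y_K}: S_K \to Y_K$ has structure-sheaf pushforward equal to $\CO_{Y_K}$, in analogy with Lemma \ref{singular projection formula}, which established the parallel fact $Rp_{K*}\CO_{S_K} \cong \CO_{\PP(K)}$ for the other leg of diagram \eqref{diag:P(K), S_K, Y_K}. My plan is to mimic that proof essentially verbatim, transporting it across the right-hand cartesian square instead of the left-hand one. First I would write down the cartesian square from \eqref{diag:P(K), S_K, Y_K},
\[
\begin{diagram}
S_K			& \rTo^{i_S} 		& \PP_Y(\CU) 	\\
\dTo^{p_{Y_K}}	&				& \dTo_{p_Y}			\\
Y_K		&\rTo	_{\quad i_{Y_K}}	& Y
\end{diagram}
\]
and observe that I must check $Rp_{Y_K*}\CO_{S_K} \cong Li_{Y_K}^* Rp_{Y*}\CO_{\PP_Y(\CU)}$ via base change, then evaluate the right-hand side.

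The key step is verifying that this right-hand square is $\Tor$-independent, so that Theorem \ref{thm:base change} applies and the base change map $\BC_\sigma$ is an isomorphism. For this I would invoke Lemma \ref{lemma:smooth expected dimension}: the schemes $\PP_Y(\CU)$, $Y$, and $Y_K$ are all available as smooth irreducible varieties — $\PP_Y(\CU) \cong \Bl_{\Imsigma}\PP(V)$ by Proposition \ref{identification with blow up}, $Y$ is smooth by definition, and $Y_K$ is a (generically smooth) linear section — while $S_K$ is the fiber product, which has expected dimension since the fibers of $p_Y$ are one-dimensional and $S_K$ was already shown in the proof of Proposition \ref{transform of P(K)} to have the correct dimension. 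Granting $\Tor$-independence, base change yields
\[
Rp_{Y_K*}\CO_{S_K} \cong Rp_{Y_K*}Li_S^*\CO_{\PP_Y(\CU)} \cong Li_{Y_K}^* Rp_{Y*}\CO_{\PP_Y(\CU)}.
\]

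It then remains to compute $Rp_{Y*}\CO_{\PP_Y(\CU)}$ and restrict. Since $p_Y: \PP_Y(\CU) \to Y$ is a $\PP^1$-bundle (the projectivization of the rank-$2$ bundle $\CU$), one has $Rp_{Y*}\CO_{\PP_Y(\CU)} \cong \CO_Y$ by the standard cohomology-of-projective-bundle computation, hence $Li_{Y_K}^*\CO_Y \cong \CO_{Y_K}$, giving the claim. The main obstacle I anticipate is confirming that $Y_K$ really is smooth and irreducible for the generic $K$ and, more delicately, whether the statement is being asserted for \emph{all} $K \in \Gr(3,V)$ including the loci $\Dtr, \Dfat, \Imkappa$ where $Y_K$ acquires singularities; Lemma \ref{lemma:smooth expected dimension} as stated requires $Y_K$ smooth, so for singular sections I would need either to work with a resolution of $Y_K$ that dominates it, or to argue directly that the expected-dimension property of $S_K$ together with $\Tor$-independence of the \emph{outer} rectangle (using Remark \ref{rmk:adjacent cartesian} and the already-known left square) still forces the result, bypassing the smoothness hypothesis on $Y_K$ itself. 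Establishing $\Tor$-independence uniformly in $K$ is the delicate point; everything after base change is a routine projective-bundle calculation.
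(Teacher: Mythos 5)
There is a genuine gap, and it is more basic than the smoothness issue you flag at the end: the right-hand square of diagram \eqref{diag:P(K), S_K, Y_K} is \emph{not} cartesian, so the base change machinery (Theorem \ref{thm:base change}, Lemma \ref{lemma:smooth expected dimension}) is simply not available for it. In the paper's diagram only the left square carries the $\square$ symbol. Concretely, $S_K = p_V^{-1}(\PP(K))$ is a surface (codimension $2$ in the fourfold $\PP_Y(\CU)$, cut by two sections of $\CO(h)$), whereas the fiber product $Y_K \times_Y \PP_Y(\CU) = p_Y^{-1}(Y_K)$ is a divisor of class $H = 3h - E$, hence three-dimensional. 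So $S_K \ne p_Y^{-1}(Y_K)$ and the identification $Rp_{Y_K*}\CO_{S_K} \cong Li_{Y_K}^*Rp_{Y*}\CO_{\PP_Y(\CU)}$ you want to extract from base change is not justified; the subsequent projective-bundle computation $Rp_{Y*}\CO_{\PP_Y(\CU)} \cong \CO_Y$ is correct but computes the pushforward of the wrong sheaf (the structure sheaf of all of $\PP_Y(\CU)$, not of $S_K$).

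The paper's proof takes a different and much shorter route that you should compare with: it pushes forward all the way to $Y$. By Proposition \ref{transform of P(K)} one already knows $Rp_{Y*}\CO_{S_K} = \Phi_{\Bl}(\CO_{\PP(K)}) \cong i_{Y_K*}\CO_{Y_K}$, and since $Rp_{Y*} = i_{Y_K*}\circ Rp_{Y_K*}$ with $i_{Y_K*}$ exact and conservative (it is the pushforward along a closed embedding, so it detects cohomology degree by degree), one concludes $Rp_{Y_K*}\CO_{S_K} \cong \CO_{Y_K}$ directly, for every $K$, with no smoothness or Tor-independence hypotheses on $Y_K$ needed. If you want to salvage your strategy, the honest version is exactly this factorization argument rather than base change along a non-cartesian square.
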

\begin{proof}
As $Lp_{Y_K}^*\CO_{Y_K} \cong \CO_{S_K}$, we only need to compute $Rp_{Y_K*}\CO_{S_K}$. 
Note that in \ref{transform of P(K)} we have checked that $i_{Y_K*}Rp_{Y_K*}\CO_{S_K}$ and $\CO_{Y_K}$ are isomorphic, so that by exactness of $i_{Y_K*}$ we find that $Rp_{Y_K*}\CO_{S_K}$ is equivalent to a complex with cohomology concentrated in degree $0$ and isomorphic to $\CO_{Y_K}$.
\end{proof}

\begin{lemma}\label{lemma:p_{Y_K} isomorphism}
For $K \notin \Dtr$, the map $p_{Y_K}: S_K \to Y_K$ is an isomorphism.
\end{lemma}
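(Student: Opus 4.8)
The plan is to apply the closed-embedding criterion of Lemma \ref{lemma:closed embedding and pushforward} to the proper morphism $p_{Y_K}\colon S_K \to Y_K$. That criterion needs two inputs: that $p_{Y_K}$ have relative dimension $0$, and that the natural map $\CO_{Y_K}\to p_{Y_K*}\CO_{S_K}$ be surjective. Granting both, $p_{Y_K}$ is a closed embedding; since $Y_K = p_Y(S_K)$ is by construction the image and the pushforward of $\CO_{S_K}$ is all of $\CO_{Y_K}$, the ideal sheaf of the image vanishes and $p_{Y_K}$ is in fact an isomorphism.

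The surjectivity input is immediate from the machinery already in place: Lemma \ref{adjunction S_K Y_K} gives $Rp_{Y_K*}\CO_{S_K}\cong \CO_{Y_K}$, so in particular the unit $\CO_{Y_K}\to p_{Y_K*}\CO_{S_K}$ is an isomorphism, hence surjective.

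Thus the whole statement reduces to showing that $p_{Y_K}$ has finite fibers, and this is where the hypothesis $K\notin\Dtr$ enters; I expect this to be the only genuine point of the argument. Properness is clear, since $S_K$ is a closed subscheme of $\PP_Y(\CU)$, which is projective over $Y$. For a point $P\in Y_K$, the fiber $p_{Y_K}^{-1}(P)$ is the intersection $S_K\cap p_Y^{-1}(P)$ inside the fiber $p_Y^{-1}(P)\cong\PP^1$, hence is either finite or equal to the whole line $p_Y^{-1}(P)$. The second case is precisely condition \eqref{item:Dtr is common point b0} of Lemma \ref{lemma:Dtr is common point}, which is equivalent there to condition \eqref{item:Dtr is common point b}, namely that the line $p_Vp_Y^{-1}(P)$ be contained in $\PP(K)$. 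But $p_Vp_Y^{-1}(P)$ is a trisecant to $\Imsigma$ by Corollary \ref{cor:points and trisecants}, so its containment in $\PP(K)$ would mean that $K$ contains a trisecant, i.e. $K\in\Dtr$. Since $K\notin\Dtr$, no fiber can be positive dimensional, so $p_{Y_K}$ has relative dimension $0$, and the closed-embedding criterion completes the proof.
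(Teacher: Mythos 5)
Your proof is correct and takes essentially the same route as the paper's: properness plus finite fibers (ruled out by identifying a positive-dimensional fiber with the containment of a trisecant to $\Imsigma$ in $\PP(K)$, which contradicts $K\notin\Dtr$), followed by Lemma \ref{adjunction S_K Y_K} and the closed-embedding criterion of Lemma \ref{lemma:closed embedding and pushforward}. The only cosmetic difference is that you route the fiber analysis through Lemma \ref{lemma:Dtr is common point}, while the paper argues directly from Corollary \ref{cor:points and trisecants} that the curves contracted by $p_Y$ are the strict transforms of trisecants; the content is identical.
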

\begin{proof}
First, we prove that $p_{Y_K}$ has relative dimension $0$.
Clearly, the only irreducible curves which $p_Y$ contracts are the fibers of points of $Y$.
By lemma \ref{cor:points and trisecants} such curves are the strict transforms of trisecants to $\Imsigma$ inside $\PP(V)$.
As $K \notin \Dtr$, $\PP(K)$ does not contain trisecants, so that $S_K$ does not contain the strict transform of any trisecant.

Finally note that $p_{Y_K}$ is proper as $p_Y$ is proper and that the canonical map 
\[
\CO_{Y_K} \to p_{Y_K*}\CO_{S_K}
\] 
is an isomorphism by \ref{adjunction S_K Y_K}.
By lemma \ref{lemma:closed embedding and pushforward}, $p_{Y_K}$ is an isomorphism.
\end{proof}


\section{Instantons} \label{sec:Instantons}

In section \ref{subsec:instantons on fano 3-folds} we introduce instantons on Fano 3-folds and state the main results that are already known about them. Section \ref{subsec:Instantons on Y} is devoted to instantons on $Y$ and does not contain new results, but it contains a detailed discussion about the description of $\CMI_n$ as a geometric quotient of an affine variety. Section \ref{subsec:Jumping lines and conics} contains the main new results about the splitting type of instantons on lines and conics. 


\subsection{Instantons on Fano 3-folds}\label{subsec:instantons on fano 3-folds}


Let $X$ be a smooth projective variety of dimension $\dim X$. One says that $X$ is Fano when its anti-canonical class $-\omega_X$ is ample. 
Assume that $\Pic (X) = \ZZ$ and denote the ample generator by $\CO_X(1)$.
The index $\iota_X$ of $X$ is the maximal integer dividing the canonical class $\omega_X$. 

By \cite{kobayashi1973characterizations}, we have $\iota_X \leq \dim X + 1$. Moreover, the only Fano variety of index $\dim X + 1$ is the projective space and 
the only Fano variety of index $\dim X$ is the non-singular quadric hypersurface inside the projective space of dimension $\dim X + 1$.
In dimension $3$ the classification is complete \cite{iskovskikh1980anticanonical} \cite{mukai1983minimal}. 
A list of the deformation classes of Fano 3-folds can be found for example in \cite{iskovskikh1999fano}. 

In analogy with $\PP^3$, where the twist by $\CO(-2)$ is the twist by a square root of the canonical class, Kuznetsov gives the following definition \cite{kuznetsov2012instanton} of instanton on a Fano 3-fold of index 2.
\begin{definition}\label{def:instanton kuznetsov}
An instanton $E$ on a Fano 3-fold $X$ of index $2$ is a rank $2$ $\mu$-stable vector bundle such that $c_1(E) = 0$ and $H^1(E(-1)) = 0$.  The second Chern class $c_2(E) = n$ is called the charge of $E$.
\end{definition}
\begin{remark}\label{rmk:ch(E)}
One can easily check that
\begin{equation}\label{eq:ch(E)}
\ch(E) = 2 - c_2(E) = 2 - n \cdot L
\end{equation}
Moreover, it follows from the stability (and from the anti-selfduality) of $E$ that
\[
h^1(E) = c_2(E) - 2 = n - 2
\]
and that $h^2(E) =  h^3(E) = 0$.
It follows that there are no instantons with $c_2(E) < 2$.
\end{remark}

In the more general setting of a Fano 3-fold of arbitrary index, Faenzi gives a definition of instanton bundle in \cite{faenzi2011even}.

\begin{definition}\label{def:instanton faenzi}
An instanton $E$ on a Fano 3-fold $X$ is the normalization of a rank $2$ Gieseker-stable vector bundle $F$ such that $F \cong F^* \otimes \omega_X$ and $H^1(F) = 0$.
\end{definition}

In the next sections we will work on the Fano 3-fold $Y$ defined in section \ref{sec:The setting}, but in this section we will recall a few results that are already known about instantons on Fano 3-folds, starting from the following remark.
\begin{remark}
For Fano $3$-folds of index $2$ the definitions \ref{def:instanton kuznetsov} and \ref{def:instanton faenzi} are equivalent.
This follows directly from lemma \ref{lemma:mumford vs Gieseker}.
\end{remark}

We will denote by $\CMI_{X,k}$ the moduli space of instantons of charge $k$ on $X$. 
When working with $X = Y_5 = Y$ we omit the $X$ in the index, so that $\CMI_k$ stands for $\CMI_{Y,k}$. 
There are some simple constraints on the values of $k$ and of some discrete parameters of $X$ which guarantee non-emptiness of $MI_{X,k}$ and generic smoothness of at least an irreducible component of it. 

\begin{theorem}[\cite{faenzi2011even}]\label{thm:faenzi smooth}
The emptiness of $\CMI_{X,k}$ depends only on $k$ and on discrete invariants of $X$. If it is non-empty, $\CMI_{X,k}$ has a generically smooth irreducible component of dimension depending only on $k$ and on discrete invariants of $X$.
\end{theorem}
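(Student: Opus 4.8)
The plan is to run the standard deformation theory of Gieseker-stable sheaves on a smooth projective threefold, reduce every relevant quantity to the numerical data carried by $\ch(E)$, and then propagate a single unobstructed point both across the moduli space (to obtain generic smoothness) and across deformations of $X$ (to obtain deformation-invariance of emptiness). By Maruyama's theory the moduli space of Gieseker-stable sheaves with fixed Chern character exists as a quasi-projective scheme, and by Definition \ref{def:instanton faenzi} together with Lemma \ref{lemma:mumford vs Gieseker} the space $\CMI_{X,k}$ is an open subscheme of the moduli space with $\ch(E) = 2 - kL$. At a point $[E]$ the Zariski tangent space is $\Ext^1(E,E)$ and the obstructions to smoothness lie in $\Ext^2(E,E)$; hence the moduli space is smooth of dimension $\ext^1(E,E)$ at any $[E]$ with $\Ext^2(E,E) = 0$.

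The second step is to compute the individual $\Ext$-groups. Since $E$ has rank $2$ and $c_1(E) = 0$ we have $\det E = \CO_X$, so $E \cong E^*$ and $E^* \otimes E \cong E \otimes E \cong \CO_X \oplus S^2 E$; therefore $\Ext^i(E,E) = H^i(\CO_X) \oplus H^i(S^2 E)$. On a Fano threefold Kodaira vanishing gives $H^i(\CO_X) = 0$ for $i > 0$, whence $\Ext^1(E,E) = H^1(S^2 E)$ and $\Ext^2(E,E) = H^2(S^2 E)$, while $\mu$-stability of $E$ yields $\ext^0(E,E) = 1$ and, by Serre duality together with the fact that $S^2 E \otimes \omega_X$ is $\mu$-semistable of negative slope, $\ext^3(E,E) = h^0(S^2 E \otimes \omega_X) = 0$. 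Consequently $\chi(E,E) = 1 - \ext^1(E,E) + \ext^2(E,E)$, and by Hirzebruch--Riemann--Roch $\chi(E,E)$ is a function of $\ch(E)$ and of the Chern and Todd classes of $X$ alone. In particular, at any unobstructed point $\ext^1(E,E) = 1 - \chi(E,E)$, a number depending only on $k$ and on the discrete invariants of $X$; this is the candidate dimension.

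The crux, which I expect to be the main obstacle, is producing an unobstructed instanton, i.e. one with $\Ext^2(E,E) = H^2(S^2 E) = 0$, equivalently $H^1(S^2 E \otimes \omega_X) = 0$ by Serre duality. I would establish this for at least one instanton of each non-empty charge, either by an explicit construction via the Hartshorne--Serre correspondence (realizing $E$ as an extension determined by a suitable curve) followed by a direct cohomology estimate, or by induction on $k$ from a minimal instanton using elementary modifications that preserve the relevant vanishing. Once one such $[E]$ is found, upper semicontinuity of $h^2(S^2 E)$ makes the unobstructed locus open and non-empty, and by the smoothness criterion above it is a smooth open subset of dimension $1 - \chi(E,E)$, whose closure is a generically smooth irreducible component of the claimed dimension. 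This vanishing does not follow formally from the defining condition $H^1(F) = 0$ of Definition \ref{def:instanton faenzi} and must be extracted from the geometry of the chosen model.

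Finally, for deformation-invariance of emptiness I would work over a smooth connected base $B$ carrying a family $\mathcal{X} \to B$ of Fano threefolds with constant discrete invariants, and form the relative moduli space $\mathcal{M} \to B$ of charge-$k$ instantons. This map is projective, so its image is closed in $B$. On the other hand the obstruction to deforming a bundle $E$ along a first-order deformation of $X$ is the cup product of the Kodaira--Spencer class with the Atiyah class of $E$, which again lands in $\Ext^2(E,E)$; hence an unobstructed instanton on one fibre lifts to all nearby fibres, so the image of $\mathcal{M}$ is also open. By connectedness of $B$ the fibres of $\mathcal{M} \to B$ are either all empty or all non-empty, so emptiness of $\CMI_{X,k}$ depends only on the deformation class of $X$ and on $k$.
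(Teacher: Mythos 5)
The paper does not prove this statement: it is imported verbatim from \cite{faenzi2011even}, so your proposal has to be measured against that source rather than against an in-paper argument. Your framework --- Maruyama's moduli space with the instanton locus open inside it, tangent space $\Ext^1(E,E)$, obstructions in $\Ext^2(E,E)$, the splitting $\Ext^i(E,E)\cong H^i(\CO_X)\oplus H^i(S^2E)$, Kodaira vanishing, stability killing $\ext^0$ and $\ext^3$, and Riemann--Roch giving the expected dimension $1-\chi(E,E)$ as a function of $k$ and the invariants of $X$ --- is exactly the standard setup that Faenzi also uses, and those parts are correct (modulo the remark that Definition \ref{def:instanton faenzi} also allows $c_1(E)=-1$, in which case $E^*\otimes E\cong\CO_X\oplus S^2E\otimes\det(E)^{-1}$ and your formulas need the corresponding twist). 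The genuine gap is that the entire mathematical content of the theorem sits in the step you yourself flag as ``the crux'' and then do not carry out: exhibiting, for every admissible charge $k$, at least one instanton with $\Ext^2(E,E)=0$. In \cite{faenzi2011even} this is an induction on $k$ starting from an explicitly constructed instanton of minimal charge, performing an elementary modification along a suitable line, and smoothing the resulting sheaf to a locally free, stable, unobstructed instanton of charge $k+1$; each step requires nontrivial cohomological control (triviality on a general line, vanishing of the relevant $\Ext^2$ for the modified sheaf) that your proposal only gestures at. ``I would establish this by Hartshorne--Serre or by induction'' is a plan, not a proof, and nothing before or after it substitutes for it.

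A second, independent problem is the deformation-invariance argument in your last paragraph. The relative moduli space $\mathcal{M}\to B$ of instantons is an \emph{open} subscheme of the relative Gieseker moduli space (one must discard non-locally-free sheaves, strictly semistable points, and sheaves violating $H^1(F)=0$), so it is only quasi-projective over $B$; your claim that the map is projective and hence has closed image is false as stated. Closedness of the locus of fibres carrying an instanton would require controlling limits of instantons inside the Gieseker compactification, which is not automatic. This route is also not the one taken in \cite{faenzi2011even}: there the dependence of (non-)emptiness on $k$ and on the discrete invariants falls out of the explicit inductive construction, which produces instantons for every charge above a numerical bound determined by those invariants, with no recourse to a family argument over the moduli of Fano threefolds.
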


A classical result about instantons on $\PP^3$ is the existence of a monadic description for them.
More precisely, for any instanton $E$ of charge $k$ on $\PP^3$ there is an anti-selfdual complex (called monad)
\[
\CO_{\PP^3}(-1)^{\oplus n} \to \CO_{\PP^3}^{\oplus 2n+2} \to \CO_{\PP^3}(1)^{\oplus n}
\]
such that the first map is injective, the last is surjective and the middle cohomology is $E$.
From this point of view, the istantonic condition $H^1(E(-2))$ and the stability of $E$ provide the orthogonality of $E$ to $\CO_{\PP^3}(2)$, and the monad is the decomposition of $E$ with respect to the selfdual semi-orthogonal decomposition for $\CO_{\PP^3}(2)^\perp$ inside $\D^b(\PP^3)$.

This point of view generalizes to all $X$ Fano threefolds of Picard number $1$ having a full exceptional collection. Such $X$ are completely classified and they are:
\begin{itemize}
\item $\PP^3$;
\item $Q$, a smooth quadric inside $\PP^4$;
\item $Y \subset \Gr(2,5)$, the intersection of $3$ linear sections of $\Gr(2,5)$ 
\item $X_{22} \subset \Gr(3,7)$, the zero locus of three sections of $\Lambda^2\CU_3^*$, where $\CU_3$ is the rank 3 tautological bundle. 
\end{itemize}
For any $X$ in the above list and for any integer $k$, one can imitate the monadic description on $\PP^3$. 
The identification of the moduli spaces of instantons on the above Fano threefolds with geometric quotients of spaces of linear forms is carried out in \cite[thm. B]{faenzi2011even}.

In all other cases, only a partial decomposition is available for $\D^b(X)$.
When the index of $X$ is $2$, the derived category of $X$ decomposes as
\[
\D^b(X) = \langle \calB_X, \CO_X, \CO_X(1) \rangle
\]
where $\calB_X$ is called the non-trivial part of $\D^b(X)$.

The instantonic condition $H^1(E(-1))$ and the stability of $E$ imply the orthogonality of $E$ with respect to $\CO_X(1)$.
The projection $\widetilde{E}$ of an instanton $E$ to $\calB_X$ sits in an exact sequence
\begin{equation}\label{eq:acyclic extension}
0 \to E \to \widetilde{E} \to H^1(E) \otimes \CO_X \to 0
\end{equation}
as the universal extension of $\CO_X$ by $E$.
Conversely, given $\widetilde{E}$ with the appropriate Chern character and $H^0$, if $\WE$ is self dual with respect to a certain anti-autoequivalence of $\langle \CO_X, \CO_X(1) \rangle^\perp$, it is proved in \cite[thm 3.10]{kuznetsov2012instanton} that one can reconstruct uniquely an instanton $E$ such that $\widetilde{E}$ is its universal extension.


\subsection{Instantons on Y}\label{subsec:Instantons on Y}


In the case of instantons on $Y$, one can write an anti-selfdual monad  in terms of the exceptional collection $\CU, \CO_Y, \CU^*, \CO_Y(1)$.
It is often convenient to work with the decomposition of an instanton with respect to collection \eqref{eq:full exceptional}, that is to say $\CU, \CU^\perp, \CO_Y, \CO_Y(1)$.

Fix a positive integer $n$ and an $n$-dimensional vector space $H$.
Recall that $A$ is both the space of linear sections which cut $Y$ inside $\Gr(2,V)$ and the space of maps from $\CU$ to $\CU^\perp$ by lemma \ref{lemma:cohomologies on Y}.
As a consequence, given $\gamma \in A \otimes S^2H^*$ one can associate to it a map $\gamma^{\prime}$ from $H \otimes \CU$ to $H^* \otimes \CU^\perp$ and a map $\hat{\gamma}$ as in the following definition.
\begin{definition}
Given $\gamma \in A \otimes S^2H^*$, we denote by $\gamma^{\prime}$ the composition
\[
\gamma': H \otimes \CU \xrightarrow{\gamma} A \otimes H^* \otimes \CU \xrightarrow{H^* \otimes \ev} H^* \otimes \CU^\perp
\]
and by $\hat{\gamma}$ the map
\[
\hat{\gamma}:H \otimes V \to H^* \otimes V^*
\]
obtained by polarizing $\gamma$ with respect to $V$.
\end{definition}
\begin{remark}
Note that $\hat{\gamma} = \Hom( \gamma^{\prime}, \CO_Y)$ under the canonical isomorphisms $\Hom(\CU^\perp, \CO_Y) \cong V$ and $\Hom(\CU,\CO_Y) \cong V^*$.
Moreover, we will often write
\[
\gamma: A \to S^2H^*
\]
by means of the canonical $SL_2$-invariant identification between $A^*$ and $A$.
\end{remark}

It turns out (\cite{kuznetsov2012instanton}, \cite{faenzi2011even}) that every instanton $E$ on $Y$ is isomorphic to the middle cohomology of a $3$-term complex
\begin{equation}\label{MONAD}
0 \to H \otimes \CU \to H^* \otimes \CU^\perp \to H^{\prime} \otimes \CO_Y \to 0
\end{equation}
with $\dim(H^{\prime}) = \dim(H) - 2 = n-2$ and such that the first map is fiberwise injective and the last is surjective.
Complex \eqref{MONAD} is usually called a \emph{monad} for $E$.
The following theorem is an improvement of this description.

\begin{theorem}[\cite{faenzi2011even}, \cite{kuznetsov2012instanton}] \label{maininstantontheorem} 
Let $H^*$ be a vector space of dimension $n$ and let $G := \GL(H)/\{\pm 1\}$. Denote by $\Mon_n(Y)$ the set of all $\gamma \in A\otimes S^2H^*$ which satisfy the following conditions
\begin{enumerate}[(i)]
\item \label{itm:conditionfiber} the map $\gamma ': H \otimes \calU \to H^* \otimes \calU^\perp$ is a fiberwise monomorphism of vector bundles,
\item \label{itm:conditionrank} the rank of the map $\hat{\gamma} : H \otimes V \to H^* \otimes V^*$ equals $4n+2$.
\end{enumerate}
Then the coarse moduli space $\CMI_n(Y)$ of instantons of charge $n$ on $Y$ is the GIT-quotient $\Mon_n(Y)/G$. 
\end{theorem}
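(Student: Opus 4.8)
The plan is to establish a bijection between isomorphism classes of instantons of charge $n$ and $G$-orbits in $\Mon_n(Y)$, and then promote this to an isomorphism of the coarse moduli space with the GIT quotient. First I would set up the correspondence at the level of points. Given $\gamma \in \Mon_n(Y)$, condition \eqref{itm:conditionfiber} guarantees that $\gamma': H \otimes \CU \to H^* \otimes \CU^\perp$ is a fiberwise injection, so its cokernel is a vector bundle; composing with the evaluation $H^* \otimes \CU^\perp \to H^* \otimes \CO_Y$ and keeping track of ranks via the Chern character formulas in \eqref{Cherntable}, one extracts a monad of the shape \eqref{MONAD}. The surjectivity of the last map, equivalently the statement that the middle cohomology $E$ is a bundle of the correct rank, is exactly what condition \eqref{itm:conditionrank} on the rank of $\hat\gamma$ encodes: since $\hat\gamma = \Hom(\gamma', \CO_Y)$, the rank $4n+2$ condition is the global-sections incarnation of fiberwise surjectivity. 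I would verify using Lemma \ref{lemma:cohomologies on Y} (the identifications $\Hom(\CU,\CU^\perp)=A$, $H^\bullet(\CU) = H^\bullet(\CU^\perp) = 0$, $H^\bullet(V/\CU) = V$, $H^\bullet(\CU^*) = V^*$) that the middle cohomology $E$ has $\ch(E) = 2 - nL$ and satisfies $H^1(E(-1)) = 0$, so that $E$ is an instanton of charge $n$ in the sense of Definition \ref{def:instanton kuznetsov}.

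Next I would show the two conditions are genuinely necessary and sufficient, i.e.\ that every instanton arises this way. This is where I would invoke the monadic description recalled before the statement: every instanton $E$ is the cohomology of some \eqref{MONAD}, and the self-duality built into the construction (together with the reconstruction result \cite[thm 3.10]{kuznetsov2012instanton}) forces the datum of the monad to be symmetric, hence to come from an element $\gamma \in A \otimes S^2 H^*$ rather than a general element of $A \otimes H^* \otimes H^*$. The key point to check carefully is that the symmetry of the monad is precisely the $S^2$ (as opposed to $\Lambda^2$) part: I would use that the pairing $A \otimes V \to V^*$ of Lemma \ref{lemma:cohomologies on Y} is the evaluation of a skew form on a vector, so that the self-duality of the complex under the relevant anti-autoequivalence of $\langle \CO_Y, \CO_Y(1)\rangle^\perp$ translates into $\gamma$ being a symmetric tensor. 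Conversely any symmetric $\gamma$ satisfying \eqref{itm:conditionfiber} and \eqref{itm:conditionrank} produces an anti-selfdual monad and hence an instanton.

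I would then identify the group action. Two tensors $\gamma, \gamma'$ produce isomorphic instantons if and only if they differ by the change of basis action of $\GL(H)$ on $H \otimes \CU$ and the dual action on $H^* \otimes \CU^\perp$; because the scalar $-1 \in \GL(H)$ acts trivially on the resulting monad (it rescales both $\gamma'$ and its dual compatibly), the effective group is $G = \GL(H)/\{\pm 1\}$. The uniqueness of the monad associated with $E$ up to this action follows from the functoriality of the semiorthogonal filtration, Theorem \ref{thm:functorial decomposition}, applied to $\langle \CO_Y, \CO_Y(1)\rangle^\perp$: the projection of $E$ to the nontrivial part $\CB_Y$ together with the universal extension \eqref{eq:acyclic extension} is canonical, so any isomorphism of instantons induces an isomorphism of monads, i.e.\ an element of $G$.

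Finally I would upgrade the set-theoretic bijection to a statement about moduli. The plan is to work in families: over a base $S$, a family of instantons produces, via the relative Beilinson spectral sequence (Theorem \ref{thm:relative Beilinson}) applied to the collection \eqref{eq:full exceptional} and the flatness criterion Corollary \ref{cor:qcoh flat}, a family of monads and hence a map $S \to \Mon_n(Y)$, well-defined up to the action of $G$; this exhibits $\Mon_n(Y)$ as parametrizing families with trivialized cohomology bundles, and the quotient $\Mon_n(Y)/G$ as corepresenting the moduli functor. The main obstacle I anticipate is exactly this last step — verifying that the stabilizers are as expected so that the geometric quotient $\Mon_n(Y)/G$ really is the coarse moduli space and not merely a bijective parametrization. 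This requires checking that $G$ acts with the expected (reductive, properly stabilized) behavior on the locus cut out by \eqref{itm:conditionfiber} and \eqref{itm:conditionrank}, so that a geometric GIT quotient exists and coincides with the coarse space; since the two open conditions are $G$-invariant and cut out the stable locus, I expect this to reduce to a standard argument, but it is the delicate point, and I would lean on the corresponding analysis in \cite{faenzi2011even} and \cite{kuznetsov2012instanton} to close it.
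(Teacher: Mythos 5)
Your proposal is correct and follows essentially the same route as the paper, which itself defers the core of the argument (existence and self-duality of the monad, the role of conditions \eqref{itm:conditionfiber} and \eqref{itm:conditionrank}) to \cite{faenzi2011even} and \cite{kuznetsov2012instanton}, and only supplements the statement with remarks on the scheme structure of $\Mon_n(Y)$, the geometric quotient, and the coarse moduli property.

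One point where your reasoning is imprecise, and where the paper is deliberately careful: you write that ``$-1 \in \GL(H)$ acts trivially on the resulting monad.'' It acts trivially on the isomorphism class of the associated instanton, but it acts by $-1$ on the universal monad over $\Mon_n \times Y$, so the universal monad is $\GL(H)$-equivariant but \emph{not} $G$-equivariant; for odd $n$ this is repaired by twisting the action by $\det$, but for even $n$ it cannot be (this is exactly why $\CMI_2$ fails to be a fine moduli space, Proposition \ref{prop:minimal is not fine}). Consequently the delicate step in corepresentability is not, as you anticipate, the stability of the $G$-action --- the paper disposes of that in one line (orbits are closed and stabilizers trivial, since a non-closed orbit would give two non-isomorphic monads with the same instanton, contradicting naturality of the Beilinson spectral sequence) --- but the descent of the natural transformation along $\Mon_n \to \Mon_n/G$: the paper checks the required commutativity after pulling back to the flat cover $\GL(H)\times\Mon_n$, where equivariance does hold, and notes that this also avoids the reducedness hypothesis on the space of monads used in \cite[4.1.12]{okonek1980vector} (reducedness of $\Mon_n$ is open for $n\ge 4$). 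Your plan closes once you make this substitution.
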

\begin{remark}[Scheme structure]
More precisely, $\Mon_n := \Mon_n(Y)$ should be given the following scheme structure.
Consider the polarized tautological map
\begin{equation}\label{eq:polarized tautological}
\CO_{A \otimes S^2H^*} \otimes H \otimes V  \to \CO_{A \otimes S^2H^*} \otimes H^* \otimes V^*
\end{equation}
on $A \otimes S^2H^*$.
Take the degeneration locus of rank $4n + 2$, i.e. the locus where the map \eqref{eq:polarized tautological} has rank exactly $4n +2$.
This is a locally closed subscheme in $A \otimes S^2H^*$.

Next, consider over $A \otimes S^2H^* \times Y$ the universal composition
\begin{equation}\label{eq:scheme structure for Mon_n}
\CO_{A \otimes S^2 H^*} \otimes H \boxtimes \CU \to \CO_{A \otimes S^2 H^*} \otimes H^* \boxtimes A \otimes \CU \to
\CO_{A \otimes S^2 H^*} \otimes H^* \boxtimes \CU^\perp
\end{equation}
It is a map of vector bundles of rank $2n$ and $3n$ and it is fiberwise a monomorphism out of a closed subset.
As $Y$ is proper, the image of this subset in $A \otimes S^2H^*$ is again closed, so that the condition that the map \eqref{eq:scheme structure for Mon_n} is injective at all points of $Y$ defines an open subscheme of $A \otimes S^2H^*$.

The intersection of the two subschemes which we have just defined, provides $\Mon_n$ with a natural scheme structure, which is the one used in theorem \ref{maininstantontheorem}.
\end{remark}

\begin{remark}[Geometric quotient]\label{rmk:geometric quotient}
Even though theorem \ref{thm:faenzi smooth} says that for each $n \geq 2$ there is a generically smooth irreducible component of $\CMI_n$, nothing is known about possible other components when $n \geq 4$. 
It is not even known whether in general $\Mon_n$ is reduced or not. 

On the other end, in the case $n = 2$ or $n=3$, condition \ref{maininstantontheorem} \eqref{itm:conditionrank} is open.
It follows that $\Mon_2$ and $\Mon_3$ are smooth.

For arbitrary $n$, it is straightforward to check that the orbits of $G$ in $\Mon_n$ are closed and that all stabilizers are trivial (i.e. that all points of $\Mon_n$ are GIT-stable with respect to the action of $G$). 
It suffices to note that if an orbit is not closed, then there are two non-isomorphic monads mapping to the same point in the coarse moduli space of instantons, which is impossible by the naturality of the Beilinson spectral sequence.
It follows that $\CMI_n$ is a geometric quotient of $\Mon_n$ by $G$, so that $\CMI_2$ and $\CMI_3$ are smooth.
\end{remark}

\begin{remark}[Coarse moduli space]
Denote by $\FMI_n$ the functor associating with a scheme $S$ the set of families of instantons on $Y$ up to twists by line bundles on $S$.
In order to check that $\Mon_n /G$ is the coarse moduli space for $\FMI_n$ one has to check a few facts, namely that there is a natural transformation from $\FMI_n$ to $\Mon_n/G$ which is bijective on closed points and that it corepresents $\FMI_n$.

The key point in the proof of the above facts is that over $\Mon_n \times Y$ there is a universal monad, as we are about to see.
Note that over $A \otimes S^2 H^*$ there is a map
\begin{equation}\label{eq:universal short n-monad}
0 \to  H \boxtimes \CU \to  H^* \boxtimes \CU^\perp \to 0
\end{equation}
called the universal short monad.


By property \ref{maininstantontheorem} \eqref{itm:conditionfiber}, the cohomology of the universal short monad \eqref{eq:universal short n-monad} on $\Mon_n \subset A \otimes S^2H^*$ is locally free of rank $n$.

By property \ref{maininstantontheorem} \eqref{itm:conditionrank}
over $\Mon_n$ there is a universal complex
\begin{equation}\label{eq:universal n-monad}
H \boxtimes \CU \to H^* \boxtimes \CU^\perp \to \calD^* \boxtimes \CO_Y
\end{equation}
where $\calD$ 
is a vector bundle of rank $n-2$ sitting in an exact anti-selfdual exact sequence
\begin{equation}\label{eq:universal n-kernel}
0 \to \calD \to H \otimes V \otimes \CO_{A \otimes S^2H^*} \to H^* \otimes V^* \otimes \CO_{A \otimes S^2 H^*} \to \calD^* \to 0
\end{equation}
We will call the object in \eqref{eq:universal n-monad} the universal monad, as its restriction to each point of $A \otimes S^2H^*$ is isomorphic to
\begin{equation}\label{eq:n-monad}
0 \to H \otimes \CU \to H^* \otimes \CU^\perp \to H^{\prime} \otimes \CO_Y  \to 0
\end{equation}

The universal monad \eqref{eq:universal n-monad} is $GL(H)$-equivariant but not $G$-equivariant, as $-1$ acts non-trivially on it.
When $n$ is odd, one can twist the $GL(H)$ action by the character $\det$ and make it $G$-equivariant.
When $n$ is even all characters of $GL(H)$ are trivial on $-1$, so that one cannot use the same method.
We will discuss the behaviour for $n=2$ in proposition \ref{prop:minimal is not fine}.

In order to prove that $\Mon_n/G$ is a coarse moduli space, one should first provide a natural transformation from the functor $\FMI_n$ to $\Mon_n/G$.
This can be done in the same way as in theorem \cite[4.1.12]{okonek1980vector}. 
Given a family of instantons parametrized by $S$, use the Beilinson spectral sequence to get a monad for it, restrict to open subsets $S_i$ where the bundles in the monad trivialize to get maps from $S_i$ to $\Mon_n$ and finally show that the induced maps to $\Mon_n/G$ are compatible and therefore glue to a map from $S$ to $\Mon_n/G$.

The proof that this natural transformation is bijective on closed points can be carried out exactly in the same way of \cite[4.1.12]{okonek1980vector}.

As we do not know whether $\Mon_n$ is reduced or not, we have to be a bit more careful while proving that $\Mon_n/G$ corepresents $\FMI_n$.
The proof in \cite[4.1.12]{okonek1980vector} uses the fact that the space of monads is reduced, and we want to avoid using it.
The main problem is in the construction of a commutative diagram
\[
\begin{diagram}
\FMI_n			& \rTo^{\Psi}	& \Hom( - , N)	\\
\uTo^{\mathsf{E}}	&			& \uDashto		\\
\Hom( - , \Mon_n)	& \rTo^{\quad \pi \quad}	& \Hom( - , \Mon_n/G)
\end{diagram}
\]
once a natural transformation $\Psi$ is given. 
Here $\mathsf{E}$ is the map from $\Mon_n$ to $\FMI_n$ corresponding to the cohomology of the universal monad \eqref{eq:universal n-monad}.
Note that it is not clear that the map $\Psi \circ \mathsf{E}$ descends to a map from $\Mon_n/G$, as the universal $\mathsf{E}$ is not necessarily $G$-equivariant, but only $\GL(H)$-equivariant.

On the other hand, if we want to check that the diagram 
\begin{equation}\label{diag:G-descent}
\begin{diagram}
G \times \Mon_n	& \rTo	& \Mon_n	\\
\dTo				&		& \dTo	\\
\Mon_n			& \rTo	& N
\end{diagram}
\end{equation}
is commutative, we can replace $G \times \Mon_n$ by a flat cover.
As a flat cover we choose $GL(H) \times \Mon_n$, so that by $\GL(H)$-equivariance of $\mathsf{E}$ we can deduce the commutativity of
\[
\begin{diagram}
\GL(H) \times \Mon_n	& \rTo	& \Mon_n	\\
\dTo					&		& \dTo	\\
\Mon_n				& \rTo	& N
\end{diagram}
\]
and therefore of \eqref{diag:G-descent}.

In the above way one can avoid referring to closed points and complete the proof of the fact that $\Mon_n/G$ is a coarse moduli space following the lines of \cite[4.1.12]{okonek1980vector}.
\end{remark}

\subsection{Jumping lines and conics} \label{subsec:Jumping lines and conics}

We are now going to discuss the restriction of $E$ to rational curves in $Y$.
By rational curve $C$ we mean a scheme of pure dimension $1$ with $h^1(\CO_C) = 0$ and $h^0(\CO_C) = 1$.
Let us first recall the following definition.

\begin{definition}\label{def:jumping}
A rational curve $C$ is jumping for an instanton $E$ if $\restr{E}{C}$ is not trivial. If $C$ is smooth we will say that $C$ is $n$-jumping for $E$ if $\restr{E}{C} \cong \CO_C(-n) \oplus \CO_C(n)$. In this case we will call $n$ the \emph{order of jump} of $E$ at $C$.
\end{definition}

Recall that in proposition \ref{prop:fano of lines} and \ref{prop:universal conic} we have constructed the universal families of lines
\begin{equation}\label{diag:universal line}
\begin{diagram}
	&			&\CL		&			&\\
	&   \ldTo^{r_Y}	&		&\rdTo^{r_A}	&\\
Y	&			&		&			& \PP(A)
\end{diagram}
\end{equation}
and of conics
\begin{equation}\label{diag:universal conic}
\begin{diagram}
	&			&\calC		&			&\\
	&   \ldTo^{q_Y}	&			&\rdTo^{q_V}	&\\
Y	&			&			&			& \PP(V^*)
\end{diagram}
\end{equation}

\begin{definition}
The Fourier--Mukai transforms from $D^b(Y)$ to $D^b(\PP(V^*))$ with kernel $\CO_{\CL}$ and $\CO_{\calC}$ are denoted by
\begin{gather}
\Phi_{\CL} := Rr_{A*}Lr_Y^* : D^b(Y) \to D^b(\PP(A)) \\
\Phi_{\calC} := Rq_{V*}Lq_Y^*: D^b(Y) \to D^b(\PP(V^*))
\end{gather}
\end{definition}

Theorem \ref{thm:jumping sheaf} and corollary \ref{cor:jumping order} clarify the relation between jumping lines and the Fourier--Mukai transform $\Phi_{\CL}(E(-1))$. 
\begin{theorem}[\cite{kuznetsov2012instanton}]\label{thm:jumping sheaf}
Let $E$ be an instanton on $Y$ and 
\[
\gamma_E \in A \otimes S^2H^* \cong A^* \otimes S^2H^*
\]
via the canonical identification of $A$ and $A^*$. Then there is a distinguished triangle 
\[
\Phi_{\CL}(E(-1)) \to H \otimes \CO_{\PP(A)}(-3) \xrightarrow{\gamma_E} H^* \otimes \CO_{\PP(A)}(-2)
\]
\end{theorem}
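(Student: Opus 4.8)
The plan is to compute the Fourier--Mukai transform $\Phi_{\CL}(E(-1))$ by applying the functor $\Phi_{\CL} = Rr_{A*}Lr_Y^*$ to a resolution of $E(-1)$ and then identifying the resulting complex. Since $E$ is the middle cohomology of the monad \eqref{MONAD}, i.e.
\begin{equation*}
0 \to H \otimes \CU \to H^* \otimes \CU^\perp \to H' \otimes \CO_Y \to 0,
\end{equation*}
I would first twist by $\CO_Y(-1)$ and split the monad into its two defining short exact sequences: writing $K$ for the kernel of the surjection $H^* \otimes \CU^\perp \to H' \otimes \CO_Y$, one has $E = \coker(H \otimes \CU \to K)$, and $E(-1)$ sits in the two exact sequences
\begin{equation*}
0 \to K(-1) \to H^* \otimes \CU^\perp(-1) \to H' \otimes \CO_Y(-1) \to 0
\end{equation*}
and
\begin{equation*}
0 \to H \otimes \CU(-1) \to K(-1) \to E(-1) \to 0.
\end{equation*}
The key idea is that $\Phi_{\CL}$ kills most of the exceptional bundles: by lemma \ref{lemma:FM transform lines} we have $\Phi_{\CL}(\CU(-1)) = \CO_{\PP(A)}(-3)[1]$ and $\Phi_{\CL}(\CU^\perp(-1)) = \CO_{\PP(A)}(-2)[1]$, while from resolution \eqref{eq:fano of lines 4} one computes $\Phi_{\CL}(\CO_Y(-1)) = 0$ (the structure sheaf of a line twisted by $\CO_Y(-1)$ is acyclic on each fiber of $r_Y$, so its relative cohomology vanishes).

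Applying $\Phi_{\CL}$ to the first short exact sequence and using $\Phi_{\CL}(\CO_Y(-1)) = 0$ gives $\Phi_{\CL}(K(-1)) \cong H^* \otimes \CO_{\PP(A)}(-2)[1]$. Applying $\Phi_{\CL}$ to the second sequence then yields a distinguished triangle
\begin{equation*}
H \otimes \CO_{\PP(A)}(-3)[1] \to \Phi_{\CL}(K(-1)) \to \Phi_{\CL}(E(-1)) \to H \otimes \CO_{\PP(A)}(-3)[2],
\end{equation*}
and substituting $\Phi_{\CL}(K(-1)) \cong H^* \otimes \CO_{\PP(A)}(-2)[1]$ and rotating produces
\begin{equation*}
\Phi_{\CL}(E(-1)) \to H \otimes \CO_{\PP(A)}(-3)[1] \xrightarrow{\;\delta\;} H^* \otimes \CO_{\PP(A)}(-2)[1],
\end{equation*}
which after a shift is exactly the asserted triangle. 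So the structural shape of the statement comes out essentially for free from the two exceptional-bundle computations.

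The main obstacle, and the real content of the theorem, is identifying the connecting map $\delta$ with $\gamma_E$. Here the strategy is to track the monad map $H \otimes \CU \to H^* \otimes \CU^\perp$ through the functor. By construction this map is $\gamma_E'$, the image of $\gamma_E \in A \otimes S^2H^*$ under the identification $A \cong \Hom(\CU, \CU^\perp)$ of lemma \ref{lemma:cohomologies on Y}; twisting, it is the map $\gamma_E'(-1): H \otimes \CU(-1) \to H^* \otimes \CU^\perp(-1)$ appearing as the composite of the two sequences above. The point is that $\Phi_{\CL}$ sends this map to the induced map on transforms, and by the second part of lemma \ref{lemma:FM transform lines} the functor acts on $A \cong \Hom(\CU(-1),\CU^\perp(-1))$ as the canonical $SL_2$-equivariant isomorphism onto $A^* \cong \Hom(\CO_{\PP(A)}(-3), \CO_{\PP(A)}(-2))$. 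Thus $\delta$ is $\gamma_E$ read through $A \cong A^*$, which is precisely the identification used in the statement of the theorem. I would need to check carefully that the connecting homomorphism of the transformed triangle genuinely coincides with $\Phi_{\CL}$ applied to the monad differential (rather than differing by the vanishing contributions), but this follows from the functoriality and naturality of the Beilinson-type spectral sequence established in theorem \ref{thm:relative Beilinson}, together with the fact that the only surviving transforms are the two line bundles, leaving no room for the map to be anything other than $\gamma_E$ up to the fixed scalar absorbed into the identification.
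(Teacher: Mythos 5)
Your proof is correct and follows essentially the same route as the paper: both arguments reduce to applying $\Phi_{\CL}$ to the two-term complex $H \otimes \CU(-1) \to H^* \otimes \CU^\perp(-1)$ by exploiting $\Phi_{\CL}(\CO_Y(-1)) = 0$ (the paper passes through the acyclic extension $\widetilde{E} = \coker(\gamma')$ where you use $K = \ker$ of the last monad map, a cosmetic difference) and both identify the resulting arrow via lemma \ref{lemma:FM transform lines}. Two trivial remarks: $\CO_L(-1)$ is acyclic on the fibers of $r_A$, not of $r_Y$; and the map in your triangle is the transform of an honest composite of sheaf maps rather than a connecting homomorphism, so the functoriality concern in your last paragraph is already settled without invoking the relative Beilinson machinery.
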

We include a sketch of a proof of theorem \ref{thm:jumping sheaf} different from the one which is given in \cite{kuznetsov2012instanton}.
\begin{proof}
First, as $\CO_L(-1)$ is acyclic,
\[
\Phi_{\CL}(E(-1)) = \Phi_{\CL}(\widetilde{E}(-1))
\]
where $\widetilde{E}$ is the acyclic extension of $E$ defined by \eqref{eq:acyclic extension}. By theorem \ref{maininstantontheorem} we have a resolution
\[
0 \to H \otimes \CU \to  H^* \otimes \CU^\perp \to \widetilde{E} \to 0
\]
By lemma \ref{lemma:splitting on lines}, $\Phi_{\CL}(\CU(-1))$ and $\Phi_{\CL}(\CU^\perp(-1))$ are line bundles over $\PP(A)$ shifted by $-1$.
By means of the resolution for $\CO_{\CL}$ given in proposition \ref{prop:fano of lines} one can check that they are $\CO_{\PP(A)}(-3)$ and $\CO_{\PP(A)}(-2)$.
Finally, by computing $\Phi_{\CL}(\CO_L(-1))$ one can also check that
\[
\Phi_{\CL}: \Hom(\CU(-1), \CU^\perp(-1)) \to \Hom(\CO_{\PP(A)}(-3), \CO_{\PP(A)}(-2))
\]
is an isomorphism.
\end{proof}
We introduce a shorthand notation for $\Phi_{\CL}(E(-1))[1]$.
\begin{definition}\label{def:JL}
We will call $\Phi_{\CL}(E(-1))[1]$ the \emph{object of jumping lines of $E$} and we will denote it by $\JL_E$. 
If the generic line is not jumping for $E$, then by theorem \ref{thm:jumping sheaf} it is equivalent to a sheaf and we will therefore call it the \emph{sheaf of jumping lines} of $E$.
\end{definition}
\begin{corollary}\label{cor:jumping order}
Let $L \subset Y$ be a line, then $\restr{E}{L} = (-n,n)$ if and only if the corank of $\gamma_E$ at $a_L$ is $n$.
\end{corollary}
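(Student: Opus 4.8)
The plan is to restrict the distinguished triangle of Theorem \ref{thm:jumping sheaf} to the point $a_L \in \PP(A)$ and to compare the resulting two-term complex both with the restriction of $E(-1)$ to the line $L$ and with the fibre of the map $\gamma_E$. Write $i_a \colon \{a_L\} \hookrightarrow \PP(A)$ for the inclusion. Since the projection $r_A \colon \CL \to \PP(A)$ of the universal line is flat, the base change square associated with $r_A$ and $i_a$ is $\Tor$-independent by Remark \ref{rmk:flat is tor-independent}, so Theorem \ref{thm:base change} provides a canonical isomorphism
\[
Li_a^* \Phi_{\CL}(E(-1)) \cong R\Gamma\bigl(L, \restr{E}{L}(-1)\bigr),
\]
because $r_Y$ restricts to the embedding $L \hookrightarrow Y$ on the fibre $L = r_A^{-1}(a_L)$, and $Lr_Y^* E(-1)$ restricts there to $\restr{E}{L}(-1)$.

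Next I would apply $Li_a^*$ to the triangle of Theorem \ref{thm:jumping sheaf} and trivialize the line bundles $\CO_{\PP(A)}(-3)$ and $\CO_{\PP(A)}(-2)$ at $a_L$; the component of $\gamma_E$ along $\CO_{\PP(A)}(1)$ then specializes to the symmetric form in $S^2H^*$ obtained by evaluating $\gamma_E$ at $a_L$, which is precisely the map $\gamma_E(a_L) \colon H \to H^*$ whose corank the corollary refers to. This gives a distinguished triangle
\[
Li_a^*\Phi_{\CL}(E(-1)) \to H \xrightarrow{\gamma_E(a_L)} H^*,
\]
identifying $Li_a^*\Phi_{\CL}(E(-1))$ with the complex $[H \xrightarrow{\gamma_E(a_L)} H^*]$ placed in degrees $0$ and $1$. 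Combining with the base change isomorphism yields
\[
H^0\bigl(L, \restr{E}{L}(-1)\bigr) = \ker \gamma_E(a_L), \qquad H^1\bigl(L, \restr{E}{L}(-1)\bigr) = \coker \gamma_E(a_L).
\]

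Finally I would read off the left-hand sides from the splitting type. As $c_1(E) = 0$, one has $\det\restr{E}{L} \cong \CO_L$, so on $L \cong \PP^1$ the bundle is balanced, $\restr{E}{L} \cong \CO_L(-n) \oplus \CO_L(n)$ for a unique $n \geq 0$, which is the meaning of $\ST_L(E) = (-n,n)$. Twisting by $\CO_L(-1)$ and computing the cohomology of $\CO_{\PP^1}(-n-1) \oplus \CO_{\PP^1}(n-1)$ gives $h^0 = h^1 = n$. Comparing with the displayed equalities forces $\dim \ker \gamma_E(a_L) = \dim \coker \gamma_E(a_L) = n$; since $\dim H = \dim H^* = n$, these two dimensions agree automatically and their common value is by definition the corank of $\gamma_E$ at $a_L$. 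Thus $\ST_L(E) = (-n,n)$ if and only if the corank is $n$.

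The step requiring the most care is the cohomological bookkeeping in the base change argument: keeping track of the degree shift produced by realizing $Li_a^*\Phi_{\CL}(E(-1))$ as the fibre of a morphism of line bundles, and checking that the two chosen trivializations at $a_L$ scale but do not alter the rank of $\gamma_E(a_L)$. Everything else is routine once the identification $Li_a^*\Phi_{\CL}(E(-1)) \cong R\Gamma(L, \restr{E}{L}(-1))$ is in place.
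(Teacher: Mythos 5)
Your proof is correct and follows essentially the same route as the paper: flat base change over $a_L$ to identify $Li_a^*\Phi_{\CL}(E(-1))$ with $H^\bullet(L,\restr{E}{L}(-1))$, restriction of the triangle of Theorem \ref{thm:jumping sheaf} to read this off as $[H \xrightarrow{\gamma_E(a_L)} H^*]$, and the elementary computation of $h^1(\CO_{\PP^1}(-n-1)\oplus\CO_{\PP^1}(n-1))$. The extra bookkeeping you supply (trivializations, both $H^0$ and $H^1$) is harmless and the argument is complete.
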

\begin{proof}
The integer $n$ such that $\restr{E}{L} = (-n,n)$ is the dimension of $H^1(E(-1))$.
By flat base change at the point representing $L$ for the diagram
\[
\begin{diagram}
L	&\rTo		&\CL \\
\dTo	&		&\dTo \\
a_L	&\rTo		&\PP(A)
\end{diagram}
\]
we have that the derived restriction of $\Phi_{\CL}(E(-1))$ at $a_L$ is the cohomology $H^\bullet(E(-1))$.
By theorem \ref{thm:jumping sheaf} the dimension of $H^1(E(-1))$ is the corank of $\gamma_E$ at $a_L$.
\end{proof}

In the following theorem we will prove that for an instanton $E$ of charge $n$ the order of jump at a line is strictly less than $n$. 
This theorem is a key step in the description \ref{minimalmoduli} of instantons of charge $2$. 
It is also useful in higher charge as it shows that the map
\[
\gamma : A^* \to S^2 H^*
\]
associated with a monad for an instanton $E$ induces a regular map from $\PP(A^*)$ to $\PP(S^2H^*)$.

\begin{theorem}\label{thm:gamma injective}
Let $E$ be an instanton and
\begin{equation}\label{eq:gamma injective 1}
\gamma^{\prime}: H \otimes \CU \to H^* \otimes \CU^\perp 
\end{equation}
the initial part of a monad for E.
Let $\gamma : A^* \to S^2 H^*$ the map associated to the monad. Then $\gamma$ is injective. 
\end{theorem}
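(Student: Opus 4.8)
The plan is to prove the statement in contrapositive form, by showing that for \emph{every} line $L \subset Y$ the order of jump of $E$ at $L$ is at most $n-1$, and then deducing injectivity of $\gamma$ from corollary \ref{cor:jumping order}. Indeed, suppose $\gamma$ were not injective. Using the canonical identification $A \cong A^*$, this produces a nonzero $a \in A$ at which the fiber of the sheaf morphism
\[
\gamma_E \colon H \otimes \CO_{\PP(A)}(-3) \to H^* \otimes \CO_{\PP(A)}(-2)
\]
of theorem \ref{thm:jumping sheaf} vanishes, so that $\gamma_E$ has corank $n$ at $[a]$. By corollary \ref{cor:jumping order} the line $L_a$ would then be $n$-jumping, i.e.\ $\restr{E}{L_a} \cong \CO_{L_a}(-n) \oplus \CO_{L_a}(n)$, contradicting the bound. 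Hence it is enough to bound the order of jump.

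To get the bound I would restrict the monad \eqref{MONAD} to a line. Fix $L \cong \PP^1$ and recall from theorem \ref{maininstantontheorem} that $E$ is the middle cohomology of
\[
0 \to H \otimes \CU \xrightarrow{\gamma'} H^* \otimes \CU^\perp \xrightarrow{\beta} H' \otimes \CO_Y \to 0,
\]
where $\gamma'$ is a fiberwise injective and $\beta$ a fiberwise surjective map of vector bundles, with $\dim H' = n-2$. Since the image of $\gamma'$ and the kernel of $\beta$ are then subbundles, restricting to $L$ preserves exactness of the two short exact sequences into which the monad splits, so the restricted complex is again a monad whose middle cohomology is $\restr{E}{L}$.

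Next I would feed the restricted complex into the hypercohomology spectral sequence of proposition \ref{prop:spectral sequence}. By lemma \ref{lemma:splitting on lines} we have $\restr{\CU}{L} \cong \CO_L(-1) \oplus \CO_L$ and $\restr{\CU^\perp}{L} \cong \CO_L(-1) \oplus \CO_L^{\oplus 2}$, so all three terms $\restr{\CU}{L}$, $\restr{\CU^\perp}{L}$, $\CO_L$ have vanishing $H^1$. The spectral sequence is therefore concentrated in the row $q = 0$ and collapses to the three-term complex of vector spaces
\[
H \xrightarrow{\ H^0(\gamma'|_L)\ } H^* \otimes H^0(\restr{\CU^\perp}{L}) \xrightarrow{\ H^0(\beta|_L)\ } H',
\]
of dimensions $n$, $2n$ and $n-2$. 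Consequently $H^1(\restr{E}{L})$ is the cokernel of the last map, giving $h^1(\restr{E}{L}) \le \dim H' = n-2$.

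Finally, since $c_1(E)\cdot L = 0$ we have $\restr{E}{L} \cong \CO_L(-k) \oplus \CO_L(k)$ for some $k \ge 0$, hence $h^1(\restr{E}{L}) = k-1$ for $k \ge 1$. The inequality $k-1 \le n-2$ forces $k \le n-1$, so the order of jump is always strictly less than $n$ (this is exactly corollary \ref{cor:no n-jumps}), and by the reduction above $\gamma$ is injective. The two points requiring care are verifying that restriction to $L$ preserves the monad and its cohomology, which is guaranteed by the subbundle hypotheses of theorem \ref{maininstantontheorem}, and the dictionary of corollary \ref{cor:jumping order} translating the vanishing of $\gamma$ at a point into the existence of an $n$-jumping line; the cohomological bound itself is then routine.
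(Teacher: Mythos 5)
Your proof is correct, and there is no circularity in it: corollary \ref{cor:jumping order} rests only on theorem \ref{thm:jumping sheaf} and base change, so it may legitimately be used before theorem \ref{thm:gamma injective} is established. The route is, however, genuinely different from the paper's. The paper argues by contradiction using only the \emph{first} map of the monad: if $\gamma(a)=0$, then the $\CO_L(-2)\to\CO_L(-2)$ block of $\restr{\gamma'(-1)}{L_a}$ vanishes, so the fiberwise injection $H\otimes\restr{\CU(-1)}{L_a}\to H^*\otimes\restr{\CU^\perp(-1)}{L_a}$ is forced to factor through the $\CO_L(-1)$-part of the target, and a determinant/degree count on $\PP^1$ shows such a fiberwise injection cannot exist. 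You instead use the \emph{whole} monad: restricting it to $L$ (legitimately, since the two short exact sequences of bundles it splits into restrict exactly) and taking cohomology gives $h^1(\restr{E}{L})=\dim\mathrm{coker}\left(\CC^{2n}\to\CC^{n-2}\right)\le n-2$, which is exactly corollary \ref{cor:no n-jumps}; you then translate back through corollary \ref{cor:jumping order}. So your argument reverses the paper's logical order, proving corollary \ref{cor:no n-jumps} first and deducing theorem \ref{thm:gamma injective} from it. What each approach buys: yours is more robust, since the dimension count requires no analysis of which summands of the splitting type the restricted map can hit, and it delivers the sharper jump bound in one stroke; the paper's is more economical in hypotheses, needing only the fiberwise injectivity of $\gamma'$ and nothing about the surjection onto $H'\otimes\CO_Y$. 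The one step worth spelling out in your write-up is that $H^1(\restr{E}{L})$ equals the cokernel \emph{exactly} (not just up to a subquotient), which follows because $H^1$ of both $\restr{\CU}{L}$ and $\restr{\CU^\perp}{L}$ vanish — a point you do verify via lemma \ref{lemma:splitting on lines}.
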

\begin{proof}
Assume there is $a$ such that $\gamma(a) = 0$. 
By base change $\gamma(a)$ is $H^1$ of 
\[
\restr{\gamma'}{L_a}: H \otimes \restr{\CU}{L} \to H^* \otimes \restr{\CU^\perp}{L}
\]
which by lemma \ref{lemma:splitting on lines} is isomorphic to
\[
\restr{\gamma'}{L_a}(-1): H \otimes (\CO_L(-1) \oplus \CO_L(-2)) \xrightarrow{0} H^* \otimes (\CO_L(-1) \oplus \CO_L(-1) \oplus \CO_L(-2))
\]
As $\gamma'$ represents an instanton, by theorem \ref{maininstantontheorem} it is fiberwise injective, so that it induces a fiberwise injection
\begin{equation}\label{eq:gamma injective}
H \otimes \CO_L(-2) \xrightarrow{0} H^* \otimes ( \CO_L(-1) \oplus \CO_L(-2))
\end{equation}
which becomes $0$ when we apply $H^1$.
It follows that the map \eqref{eq:gamma injective} is a fiberwise injection factoring via $H^* \otimes \CO_L(-1)$.
As there is no fiberwise injection from $H \otimes \CO_L(-2)$ to $H^* \otimes \CO_L(-1)$, we conclude that it is impossible to have $\gamma(a) = 0$.
\end{proof}

\begin{corollary}\label{cor:no n-jumps}
The order of jump of $E$ at a line $L$ is at most $c_2(E) - 1$.
\end{corollary}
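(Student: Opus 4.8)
The plan is to derive Corollary \ref{cor:no n-jumps} as an immediate consequence of Theorem \ref{thm:gamma injective} combined with Corollary \ref{cor:jumping order}. Recall that by Corollary \ref{cor:jumping order} the order of jump of $E$ at a line $L = L_a$ equals the corank of $\gamma_E$ at the point $a \in \PP(A)$, that is, the dimension of the cokernel of the restriction $\gamma_E(a): H \to H^*$ (equivalently, by anti-selfduality, of the kernel). So I must produce an upper bound for this corank valid at \emph{every} point $a \in A$.

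First I would record the numerical data: for an instanton $E$ of charge $n$ we have $\dim H = n$, so $\gamma_E(a)$ is a map between two $n$-dimensional spaces. The key input from Theorem \ref{thm:gamma injective} is that $\gamma_E(a) \neq 0$ for every nonzero $a \in A$: the map $\gamma: A^* \to S^2 H^*$ is injective, which in particular means no single value $\gamma_E(a)$ (viewed as the quadratic form, i.e.\ the symmetric matrix associated with $a$) vanishes identically. Since $\gamma_E(a)$ is a nonzero symmetric $n \times n$ matrix, its rank is at least $1$, hence its corank is at most $n-1$. Feeding this back through Corollary \ref{cor:jumping order} gives that the order of jump is at most $n - 1 = c_2(E) - 1$, which is exactly the assertion.

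The one point that needs a clean statement is the translation between $\gamma(a) = 0$ in Theorem \ref{thm:gamma injective} and the corank of $\gamma_E$ at $a_L$ in Corollary \ref{cor:jumping order}: the theorem says the symmetric form $\gamma_E(a) \in S^2 H^*$ is nonzero, while the corollary measures the corank of the associated linear map $H \to H^*$. These agree because a symmetric bilinear form on $H$ is zero as a form if and only if its associated matrix is the zero matrix, i.e.\ if and only if its corank is $n$ (full corank); a nonzero form has corank at most $n-1$. I expect this brief observation to be the only genuine step, and it is essentially a restatement rather than an obstacle. The proof is therefore very short: I would simply note that since $\gamma$ is injective, $\gamma_E(a)$ is a nonzero quadratic form for each $0 \neq a \in A^*$, so its corank is at most $n-1$, and conclude by Corollary \ref{cor:jumping order}.

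\begin{proof}
By corollary \ref{cor:jumping order}, the order of jump of $E$ at a line $L = L_a$ equals the corank of $\gamma_E(a)$, i.e.\ the dimension of the cokernel of the associated map $H \to H^*$. Since $\dim H = c_2(E) = n$, a corank equal to $n$ would force $\gamma_E(a)$ to be the zero form, i.e.\ $\gamma(a) = 0$, which is excluded by the injectivity of $\gamma$ proved in theorem \ref{thm:gamma injective}. Hence the corank of $\gamma_E(a)$ is at most $n - 1$, so the order of jump of $E$ at $L$ is at most $c_2(E) - 1$.
\end{proof}
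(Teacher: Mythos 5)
Your proof is correct and follows exactly the paper's own argument: corollary \ref{cor:jumping order} identifies the order of jump with the corank of $\gamma_E(a_L)$, and theorem \ref{thm:gamma injective} guarantees $\gamma_E(a_L)\neq 0$, hence rank at least $1$ and corank at most $c_2(E)-1$. The brief remark reconciling the vanishing of the symmetric form with full corank is a harmless elaboration of the same step.
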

\begin{proof}
By corollary \ref{cor:jumping order} the order of jump at a line $L$ is the corank of $\gamma(a_L)$. By theorem \ref{thm:gamma injective} the rank of $\gamma(a_L)$ is at least $1$, so that the corank is at most $c_2(E) - 1$.
\end{proof}

It is not known whether $\JL_E$ is supported on the whole $\PP(A)$ or not (see e.g. \cite[conj. 3.16]{kuznetsov2012instanton}.
The reason is that the usual Grauert--M\"ulich theorems, such as theorem \ref{thm:GM}, can only be used provided the family of lines through a general point is irreducible.
By corollary \ref{cor:3 to 1 cover} this is clearly not the case for $Y$.

If we turn our attention to conics, the situation changes completely: by corollary \ref{cor:conics through P} the family of conics passing through a point of $Y$ is parametrized by $\PP^2$, so that the assumptions of theorem \ref{thm:GM} are satisfied.

Recall that by definition \ref{def:jumping} a conic is jumping for $E$ if and only if $\restr{E}{C}$ is not trivial.
We are now going to construct an analogue of $\JL_E$ in the case of conics.

\begin{remark}\label{rmk:support of jumping lines}
If $L$ is a line, then the restriction $\restr{E(-1)}{L}$ is acyclic if and only if $L$ is not jumping for $E$. Analogously, by corollary \ref{splittingonconic}, if $C$ is a smooth conic, then the restriction $\restr{E\otimes \CU}{C}$ is acyclic if and only if $C$ is not jumping for $E$.
\end{remark}

The above remark suggests that a possible analogue of $\JL_E$ in the case of conics could be a shift of $\Phi_{\calC}(E \otimes \CU)$.
We will now check that the restriction of $E \otimes \CU$ to singular conics detects whether $C$ is jumping or not for $E$, that is to say whether $\restr{E}{C}$ is trivial or not.

Recall that by conic we mean a subscheme $C$ of $Y$ with Hilbert polynomial $2t + 1$ and that this implies that $C$ has pure dimension $1$ and $h^1(\CO_C) = 0$ (see remark \ref{rmk:h1 of a conic}).
Let $C$ be a singular conic and denote by $L_1, L_2$ its components (in the case $C$ is a double line, take $L_1 = L_2$).
Then, we lemma \ref{trivialityoncross} and proposition \ref{prop:equivalence of jumping} provide us with a cohomological way to check whether a conic is jumping or not.

As some conics in $Y$ are non-reduced, we will need the following remark.
\begin{remark}\label{rmk:exact for conics}
For each singular conic $C$ (possibly non-reduced) the ideal $\CI_1$ of $L_1$ in $C$ is isomorphic to $\CO_{L_2}(-1)$.
The reason is that by additivity the Hilbert polynomial of $\CI_1$ is $t$, which shows that it is schematically supported on a component of $C$.
Moreover it has no associated points of dimension $0$ because it is a subsheaf of $\CO_C$.
This is enough to conclude that it is isomorphic to $\CO_{L_2}(-1)$, so that  there is an exact sequence
\begin{equation}\label{eq:exact for conics}
0 \to \CO_{L_2}(-1) \to \CO_{C} \to \CO_{L_1} \to 0
\end{equation}
both in the case $L_1 \neq L_2$ and in the case $L_1 = L_2$
\end{remark}

\begin{lemma} \label{trivialityoncross}
Let $F$ be a vector bundle of rank $r$ on a singular conic $C$ with (possibly coinciding) components $L_1, L_2$. If $F$ is trivial on $L_1$ and $L_2$ then $F$ is trivial on $C$.
\end{lemma}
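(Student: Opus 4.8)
The plan is to build an explicit isomorphism $\CO_C^{\oplus r} \to F$ out of the global sections of $F$. Since $F$ is locally free on $C$, tensoring the exact sequence \eqref{eq:exact for conics} by $F$ remains exact, and using the hypothesis $\restr{F}{L_1} \cong \CO_{L_1}^{\oplus r}$ and $\restr{F}{L_2} \cong \CO_{L_2}^{\oplus r}$ it yields
\[
0 \to \CO_{L_2}(-1)^{\oplus r} \to F \to \CO_{L_1}^{\oplus r} \to 0.
\]
Because $H^0(\CO_{L_2}(-1)) = H^1(\CO_{L_2}(-1)) = 0$ on $L_2 \cong \PP^1$, the associated long exact sequence shows that the restriction map $H^0(F) \to H^0(\restr{F}{L_1})$ is an isomorphism; in particular $\dim H^0(F) = r$.

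Next I would consider the evaluation morphism $\phi \colon H^0(F) \otimes \CO_C \to F$, a map between locally free sheaves of rank $r$ on $C$. The $r$ global sections of $F$ restrict to a basis of $H^0(\restr{F}{L_1}) = H^0(\CO_{L_1}^{\oplus r})$, i.e. to $r$ linearly independent constant sections of a trivial bundle; these span the fiber at every point of $L_1$, so $\restr{\phi}{L_1}$ is a fiberwise isomorphism. The roles of $L_1$ and $L_2$ are symmetric, since by remark \ref{rmk:exact for conics} the ideal of $L_2$ in $C$ is $\CO_{L_1}(-1)$, giving the analogous exact sequence; the same argument then shows $\restr{\phi}{L_2}$ is a fiberwise isomorphism as well. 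In the double-line case $L_1 = L_2$ this is a single statement about the reduced line.

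Finally I would upgrade fiberwise surjectivity on the components to a global isomorphism. Every closed point $p$ of $C$ lies on some component $L_i$, and the fiber $F \otimes \kappa(p)$ is computed through $\restr{F}{L_i} \otimes \kappa(p)$; hence $\phi \otimes \kappa(p)$ is surjective for all $p$, so $\phi$ is surjective by Nakayama. A surjection between locally free sheaves of the same rank $r$ is automatically an isomorphism, since locally it is a surjective $\CO_{C,p}$-linear endomorphism of $\CO_{C,p}^{\oplus r}$, which is invertible. This gives $F \cong \CO_C^{\oplus r}$. I expect the only delicate point to be the non-reduced case, where fiberwise information supported on $L = L_{\mathrm{red}}$ must be shown to control $F$ on the thickening $C$; this is handled precisely by the observation that fibers of $F$ at closed points are computed on $L_{\mathrm{red}}$, combined with the endomorphism-surjectivity argument above.
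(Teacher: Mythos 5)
Your proof is correct and follows essentially the same route as the paper's: tensor the component sequence $0 \to \CO_{L_2}(-1) \to \CO_C \to \CO_{L_1} \to 0$ by $F$, use acyclicity of $\CO_{L_2}(-1)^{\oplus r}$ to produce $r$ global sections of $F$ trivializing $F$ on $L_1$, and then verify that the resulting map $\CO_C^{\oplus r} \to F$ is a surjection (hence isomorphism) of bundles of equal rank by checking fibers on each component. The only cosmetic difference is that you establish surjectivity on $L_2$ via the symmetric exact sequence with the roles of the components swapped, whereas the paper propagates it from the single intersection point $P \in L_1 \cap L_2$ using irreducibility of $L_2$; both steps are sound.
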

\begin{proof}
As $F$ is trivial on $L_1$, we can choose an isomorphism
\[
s_1: \CO_{L_1}^{\oplus r} \to \restr{F}{L_1}
\]
Tensor the exact sequence \eqref{eq:exact for conics} by $F$. 
As $F$ is trivial on $L_2$, the restriction $\restr{F(-1)}{L_2}$ is acyclic, so that $s_1$ lifts to
\[
s: \CO_C^{\oplus r} \to F
\]

As $\CO_C^{\oplus r}$ and $F$ are vector bundles of the same rank, in order to check that $s$ is an isomorphism it is enough to check that $s$ is surjective. This is equivalent to surjectivity of $s_1$ and $s_2 = \restr{s}{L_2}$.
As $F$ is trivial on each $L_i$ and $L_i$ is irreducible, it is enough to check that each $s_i$ is surjective at some closed reduced point $P_i \in L_i$.
Choose $P_1 = P_2 = P \in L_1 \cap L_2$, then 
\[
\restr{s_1}{P} \cong \restr{s_2}{P}
\]
and the first one is an isomorphism by definition.
\end{proof}

\begin{proposition}\label{prop:equivalence of jumping}
A conic $C$ is jumping for an instanton $E$ if and only if 
\[
H^\bullet(\restr{E \otimes \CU}{C}) \neq 0
\]
\end{proposition}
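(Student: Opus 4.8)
The plan is to prove the two implications separately, treating the easy direction uniformly for all conics and then reducing the hard direction to a cohomology computation on the components of singular conics. First I would dispatch the implication \emph{non-jumping $\Rightarrow$ acyclic}. If $C$ is not jumping then by definition \ref{def:jumping} we have $\restr{E}{C} \cong \CO_C^{\oplus 2}$, so $\restr{E\otimes\CU}{C} \cong (\restr{\CU}{C})^{\oplus 2}$; since lemma \ref{cohomologyonconic2} gives $H^\bullet(\restr{\CU}{C}) = 0$ for \emph{every} conic $C$ (smooth or singular), we immediately get $H^\bullet(\restr{E\otimes\CU}{C}) = 0$.

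For the converse, assume $C$ is jumping. When $C$ is smooth this is exactly remark \ref{rmk:support of jumping lines}: writing $\restr{E}{C}\cong\CO_C(m)\oplus\CO_C(-m)$ with $m\ge 1$ and $\restr{\CU}{C}\cong\CO_C(-1)^{\oplus 2}$ (corollary \ref{splittingonconic}), one finds $h^1(\restr{E\otimes\CU}{C}) = 2m \neq 0$. So the real content is the singular case. Here I would first invoke lemma \ref{trivialityoncross}: since $\restr{E}{C}$ is non-trivial, at least one component of $C$ must be a jumping line for $E$. I then relabel the components of $C = L_1\cup L_2$ (with $L_1 = L_2 = L$ if $C$ is non-reduced) so that $L_1$ is such a jumping line, i.e. $\restr{E}{L_1}\cong\CO_{L_1}(m)\oplus\CO_{L_1}(-m)$ with $m\ge 1$.

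The key step is to tensor the exact sequence \eqref{eq:exact for conics} of remark \ref{rmk:exact for conics}, namely $0 \to \CO_{L_2}(-1) \to \CO_C \to \CO_{L_1} \to 0$, by the vector bundle $\restr{E\otimes\CU}{C}$. As this bundle is locally free on $C$, the sequence stays exact and yields
\[
0 \to \restr{E\otimes\CU}{L_2}(-1) \to \restr{E\otimes\CU}{C} \to \restr{E\otimes\CU}{L_1} \to 0.
\]
Since all sheaves are supported on a curve, the associated long exact cohomology sequence ends with a surjection from $H^1(\restr{E\otimes\CU}{C})$ onto $H^1(\restr{E\otimes\CU}{L_1})$. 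It then remains to compute the target: using $\restr{\CU}{L_1}\cong\CO_{L_1}(-1)\oplus\CO_{L_1}$ (lemma \ref{lemma:splitting on lines}) together with $\restr{E}{L_1}\cong\CO_{L_1}(m)\oplus\CO_{L_1}(-m)$ gives $\restr{E\otimes\CU}{L_1}\cong\CO_{L_1}(m-1)\oplus\CO_{L_1}(m)\oplus\CO_{L_1}(-m-1)\oplus\CO_{L_1}(-m)$, whose first cohomology has dimension $2m-1\ge 1$. Hence $H^1(\restr{E\otimes\CU}{C})\neq 0$, which finishes the argument.

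The step I expect to be the main obstacle is precisely this singular case: a priori the long exact sequence could let the contributions of the two components cancel, so one cannot argue by Euler characteristics alone (indeed $\chi(\restr{E\otimes\CU}{C}) = 0$ always, so $h^0 = h^1$ and the vanishing is a genuine question). The device that circumvents this is placing the jumping line in the \emph{quotient} slot of \eqref{eq:exact for conics}, which produces an honest surjection onto $H^1(\restr{E\otimes\CU}{L_1})$ and thereby detects the jump without needing to control the connecting homomorphism. The only care required is the relabeling of components, together with the bookkeeping when $C = 2L$, and this is handled uniformly because remark \ref{rmk:exact for conics} supplies the sequence in both the reduced and non-reduced cases.
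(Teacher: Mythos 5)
Your proof is correct and uses essentially the same ingredients as the paper's: the easy direction via lemma \ref{cohomologyonconic2}, the smooth case via corollary \ref{splittingonconic}, and the singular case via lemma \ref{trivialityoncross} combined with the sequence of remark \ref{rmk:exact for conics}. The only (harmless) difference is that the paper argues the contrapositive by noting that $H^0$ of the sub-term $\restr{E\otimes\CU}{L}(-1)$ injects into $H^0(\restr{E\otimes\CU}{C})$, whereas you run the implication directly by surjecting $H^1(\restr{E\otimes\CU}{C})$ onto $H^1$ of the quotient term.
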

\begin{proof}
Assume $\restr{E}{C}$ is trivial on $C$. 
Then there is an isomorphism between $\restr{E \otimes \CU}{C}$ and $\restr{\CU}{C}^{\oplus 2}$. 
By lemma \ref{cohomologyonconic2} we have $H^\bullet(\restr{E \otimes \CU}{C}) = 0$.

In the other direction, for $C$ smooth conic, $H^\bullet(\restr{E \otimes \CU}{C}) = 0$ implies $\restr{E}{C}$ trivial by \ref{splittingonconic}.
For $C$ singular conic, by lemma \ref{trivialityoncross} it is enough to prove that $H^\bullet(\restr{E \otimes \CU}{C}) = 0$ implies $\restr{E}{L}$ trivial for $L$ component of $C$.
Tensoring sequence \eqref{eq:exact for conics} by  $E \otimes \CU$ we find is an injection
\[
H^0(E \otimes \CU \otimes \CO_L(-1)) \to H^0(\restr{E \otimes \CU}{C}) = 0
\]
so that also $H^0(E \otimes \CU \otimes \CO_L(-1))$ vanishes.
By lemma \ref{lemma:splitting on lines} and $c_1(E) = 0$ we deduce that $\restr{E}{L}$ is trivial.
\end{proof}

The following remark is analogous to remark \ref{rmk:support of jumping lines}, but concerns also singular conics.
\begin{remark}\label{rmk:support of jumping conics}
By proposition \ref{prop:equivalence of jumping} a conic $C$ is jumping for $E$ if and only if, in the notation of definition \ref{def:notation for conics}, $v_C \in \PP(V^*)$ lies in the support of $\Phi_{\calC}(E \otimes \CU)$.
\end{remark}

While the family of lines through a point in $Y$ is disconnected, we have proved in \ref{cor:conics through P} that the family of conics through a point $P$ in $Y$ is an irreducible variety.
This allows us to use \ref{thm:GM} in order to state and prove the following theorem.
\begin{theorem}\label{cor:generic conic is not jumping}
For an instanton $E$ on $Y$, the generic conic is not jumping. Moreover, $\Phi_{\calC}(E \otimes \CU)[1]$ is the cokernel of an injection
\begin{equation}\label{eq:resolution of JC}
H \otimes \big( \CO(-2) \oplus \left(A \otimes \CO(-1)\right) \big) \to H^* \otimes \big( \left(A^*\otimes \CO(-1) \right) \oplus \CO \big)
\end{equation}
and is equivalent to a sheaf supported on a divisor.
\end{theorem}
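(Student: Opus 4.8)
I need to prove three things about $\Phi_{\calC}(E\otimes\CU)[1]$: that the generic conic is not jumping, that it is the cokernel of the specified injection \eqref{eq:resolution of JC}, and that it is a sheaf supported on a divisor. Let me think about the natural strategy, which parallels the line case (theorem \ref{thm:jumping sheaf}).

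The starting point should be the monad. By theorem \ref{maininstantontheorem}, the acyclic extension $\widetilde{E}$ of $E$ has a two-term resolution $0\to H\otimes\CU\to H^*\otimes\CU^\perp\to\widetilde{E}\to 0$. Since $\Phi_{\calC}$ is an exact (derived) functor, I can compute $\Phi_{\calC}(E\otimes\CU)$ by applying $\Phi_{\calC}(-\otimes\CU)$ to this resolution, provided I first check that tensoring with $\CU$ doesn't spoil the relation $\Phi_{\calC}(E\otimes\CU)=\Phi_{\calC}(\widetilde E\otimes\CU)$. The latter follows because the correction term $H^1(E)\otimes\CO_Y$ contributes $\Phi_{\calC}(\CU)$, and by lemma \ref{cohomologyonconic2} the restriction $\restr{\CU}{C}$ is acyclic on every conic, so $\Phi_{\calC}(\CU)=0$ by base change (using corollary \ref{cor:qcoh flat}). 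Thus $\Phi_{\calC}(E\otimes\CU)=\Phi_{\calC}(\widetilde E\otimes\CU)$.

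**Computing the two inputs.** The core computation is to identify $\Phi_{\calC}(\CU\otimes\CU)$ and $\Phi_{\calC}(\CU^\perp\otimes\CU)$. I expect this is where the real work lies. I would compute them via the resolution \eqref{eq:universal conic} for $\CO_\calC$ on $\PP(V^*)\times Y$, pull back $\CU\otimes\CU$ (resp. $\CU^\perp\otimes\CU$), and run the Grothendieck spectral sequence for $Rq_{V*}$, using K\"unneth and Borel--Bott--Weil on the $Y$-factor together with the cohomology computations of lemma \ref{cohomologyonconic} and lemma \ref{cohomologyonconic2}. The target shape of \eqref{eq:resolution of JC} tells me what to aim for: I expect $\Phi_{\calC}(\CU\otimes\CU)[1]$ to be $\CO(-2)\oplus(A\otimes\CO(-1))$ and $\Phi_{\calC}(\CU^\perp\otimes\CU)[1]$ to be $(A^*\otimes\CO(-1))\oplus\CO$, each concentrated in a single degree (so that $\Phi_{\calC}$ of the monad collapses to a two-term complex). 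The decomposition of $\CU^\perp\otimes\CU$ and $\CU\otimes\CU$ into $SL_2$-irreducibles, combined with the ranks read off from the Chern-character/Riemann--Roch bookkeeping, should pin down the line-bundle summands; the $A$ and $A^*$ factors will appear because $\Hom(\CU,\CU^\perp)=A$ and its dual govern the off-diagonal piece of $\CU^\perp\otimes\CU$.

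**Assembling and concluding.** Once both transforms are identified as the asserted vector bundles in a single degree, applying $\Phi_{\calC}$ to the monad gives a map of vector bundles \eqref{eq:resolution of JC} whose cone is $\Phi_{\calC}(E\otimes\CU)[1]$. To show this is a genuine cokernel (a sheaf) rather than a complex with cohomology in two degrees, I invoke the Grauert--M\"ulich theorem \ref{thm:GM}: by remark \ref{rmk:GM for conics} the family of conics through a point is irreducible (corollary \ref{cor:conics through P}), so the hypotheses of \ref{thm:GM} hold, and the generic splitting type of the stable bundle $E$ on a conic has no gaps, hence is trivial by corollary \ref{splittingonconic} and $c_1(E)=0$. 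By remark \ref{rmk:support of jumping conics} this means $\Phi_{\calC}(E\otimes\CU)$ vanishes at the generic point of $\PP(V^*)$, so the map in \eqref{eq:resolution of JC} is generically an isomorphism of vector bundles of equal rank (a rank count: $2(1+3)=8=(3+1)\cdot 2$ confirms the ranks match), hence injective; its cokernel is therefore a sheaf supported on the degeneracy divisor. The main obstacle I anticipate is the bookkeeping in the two spectral-sequence computations, particularly confirming that each transform is concentrated in a single cohomological degree and correctly tracking the $SL_2$-equivariant line-bundle summands so that the source and target of \eqref{eq:resolution of JC} come out exactly as stated.
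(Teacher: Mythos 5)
Your proposal follows essentially the same route as the paper's proof: Grauert--Mülich via the irreducibility of the family of conics through a point (using the relative Euler sequence of $\PP_Y(\CU^\perp)$ and corollary \ref{splittingonconic} to bound the splitting type) gives generic triviality, and then the monad for the acyclic extension $\widetilde E$ together with $\Phi_{\calC}(\CU)=0$, Borel--Bott--Weil and the resolution of $\CO_{\calC}$ produces the two-term complex \eqref{eq:resolution of JC}, whose generic injectivity (hence injectivity, by torsion-freeness of the source) follows from the first part. The only detail left implicit is the explicit computation of $\mu_{\min}(\CT_{\calC/Y})$ on a general conic, which is what turns the Grauert--Mülich bound into $2j\le 1$, but your plan already points at the right ingredients for it.
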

\begin{proof}
We will use theorem \ref{thm:GM} on the family of smooth conics in $Y$.
The flatness condition \eqref{itm:flat projection} holds as the projection from $\calC$ to $Y$ is the projection from $\PP_Y(\CU^\perp)$ to $Y$ by proposition \ref{prop:universal conic}.
The condition \eqref{itm:irreducible fiber} about the irreducibility of the generic fiber follows from corollary \ref{cor:conics through P}. 
The proportionality condition \eqref{itm:proportionality} is always true on $Y$ as each even cohomology group is isomorphic to $\ZZ$. 

We are now interested in the splitting type of an instanton $E$ on the generic smooth conic. By theorem \ref{thm:GM} we have that if $\ST_C(E) = (-j,j)$ with $j \geq 0$, then
\begin{equation}\label{eq:generic conic is not jumping -1}
2j \leq - \mu_{min}\left(\CT_{\calC/Y}/(\calC/\PP(V^*)\right)
\end{equation}
where the right hand side is the minimal slope in the Harder-Narasimhan filtration of the relative tangent bundle restricted to the general fiber of $\calC \to \PP(V^*)$.

This minimal slope can be computed as follows. 
First, by proposition \ref{prop:universal conic}, identify the universal conic (together with its two projections to $Y$ and $\PP(V^*)$) with $\PP_Y(\CU ^ \perp)$.

It follows that there is a relative Euler exact sequence
\[
0 \to \CO_{\calC} \to \CU^\perp \otimes p_V^*\CO_{\PP(V^*)}(1) \to \CT_{\calC/Y} \to 0
\]
which by corollary \ref{splittingonconic} restricts to any smooth conic $C$ as
\[
0 \to \CO_C \to \CO_C \oplus \CO_C(-1) \oplus \CO_C(-1) \to \restr{\CT_{\calC/Y}}{C} \to 0
\]
This is enough to conclude that $\ST_C(\CT_{\calC/Y}) = (-1,-1)$, so that inequality \eqref{eq:generic conic is not jumping -1} becomes $2j \leq 1$, that is to say $j=0$.

In order to show that $\Phi_{\calC}(E \otimes \CU)$ is equivalent to a shifted sheaf, note that as $\Phi_{\calC}(\CU) = 0 $ we have
\[
\Phi_{\calC}(E \otimes \CU) \cong \Phi_{\calC}(\WE \otimes \CU)
\]
where $\WE$ is the acyclic extension defined in \eqref{eq:acyclic extension}.
Moreover, for $\Phi_{\calC}(\WE \otimes \CU)$ we have a distinguished triangle
\[
H \otimes \Phi_{\calC}(\CU \otimes \CU) \to H^* \otimes \Phi_{\calC}(\CU^\perp \otimes \CU) \to \Phi_{\calC}(E \otimes \CU)
\]
Via Borel--Bott--Weil and the resolution for $\CO_{\calC}$ of proposition \ref{prop:universal conic}, the above distinguished triangle becomes
\[
H \otimes \big( \CO(-2) \oplus \left(A \otimes \CO(-1)\right) \big) \to H^* \otimes \big( \left(A^*\otimes \CO(-1) \right) \oplus \CO \big) \to \Phi_{\calC}(E \otimes \CU)[1]
\]
where we write $\CO$ instead of $\CO_{\PP(V^*)}$ for brevity.
By the first part of this proof, $\Phi_{\calC}(E \otimes \CU)$ vanishes at the generic point of $\PP(V^*)$, so that the map
\begin{equation}\label{eq:resolution of JC bis}
H \otimes \big( \CO(-2) \oplus \left(A \otimes \CO(-1)\right) \big) \to H^* \otimes \big( \left(A^*\otimes \CO(-1) \right) \oplus \CO \big)
\end{equation}
is injective at the generic point of $\PP(V^*)$.
By looking at the determinant of \eqref{eq:resolution of JC bis}, it follows that $\Phi_{\calC}(E \otimes \CU)[1]$ is equivalent to a sheaf supported on a divisor.
\end{proof}

In analogy with definition \ref{def:JL}, we give the following definition.
\begin{definition}\label{def:JC}
We will call $\Phi_{\calC}(E \otimes \CU)[1]$ the \emph{sheaf of jumping conics} of $E$ and we will denote it by $\JC_E$.
\end{definition}
\begin{remark}
By corollary \ref{cor:generic conic is not jumping} the object $\JC_E$ is equivalent to a sheaf supported on a divisor.
By remark \ref{rmk:support of jumping conics} a point $v_C \in \PP(V^*)$ is in the support of $\JC_E$ if and only if $C$ is jumping for $E$.
\end{remark}

The next corollary is a consequence of \ref{prop:equivalence of jumping} and \ref{cor:generic conic is not jumping} and allows to translate statements about jumping lines into statements about jumping conics.
Hopefully, this can be a step towards a proof of the following conjecture.
\begin{conjecture}\label{the conjecture}
For an instanton $E$ on $Y$ the generic line is not jumping.
\end{conjecture}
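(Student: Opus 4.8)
The first step is to restate the conjecture as a statement about the net of quadrics attached to $E$. By theorem \ref{thm:jumping sheaf} the object $\JL_E$ is the cone of $\gamma_E \colon H \otimes \CO_{\PP(A)}(-3) \to H^* \otimes \CO_{\PP(A)}(-2)$, and by corollary \ref{cor:jumping order} a line $L_a$ is jumping precisely when the symmetric matrix $\gamma_E(a) \in S^2H^*$ is degenerate. Since the entries of $\gamma_E(a)$ are linear in $a \in A$, the jumping locus $D_E \subset \PP(A) \cong \PP^2$ is the zero scheme of the degree-$n$ form $\det \gamma_E$. Thus the conjecture is equivalent to $\det\gamma_E \not\equiv 0$, i.e. to the assertion that the net of quadrics $\gamma_E \colon A \to S^2H^*$ — injective by theorem \ref{thm:gamma injective}, with every member of corank at most $n-1$ by corollary \ref{cor:no n-jumps} — contains at least one nondegenerate member. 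I would therefore argue by contradiction, assuming $D_E = \PP(A)$, so that \emph{every} member of the net is a singular quadric.

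The geometric engine I would use to exploit this assumption is the translation between jumping lines and jumping conics. If every line jumps, then by proposition \ref{prop:equivalence of jumping} together with the componentwise criterion for singular conics, every conic containing a line is jumping: for fixed $a$ the double line $2L_a$ and all the reducible conics $L_a \cup L'$ are jumping, and these sweep out the entire pencil $\ell_a = \kappa(a) \subset \PP(V^*)$ of conics through $L_a$ described in lemma \ref{lemma:conics through L}. As $a$ ranges over $\PP(A)$, the union $\bigcup_a \ell_a$ is exactly the locus of conics that contain a line, which (a smooth conic being irreducible and reduced of degree $2$) is the irreducible cubic divisor of singular conics of proposition \ref{prop:singular conics}; call it $\Delta_{\mathrm{sing}}$. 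Hence the hypothesis forces $\Delta_{\mathrm{sing}} \subseteq \Supp \JC_E$. On the other hand $\JC_E$ is supported on a divisor of degree $n$ by theorem \ref{cor:generic conic is not jumping}. This already settles $n=2$: a quadric hypersurface in $\PP(V^*)$ cannot contain the irreducible cubic $\Delta_{\mathrm{sing}}$, recovering proposition \ref{cor:generic line minimal}.

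The main obstacle is that this degree count becomes inconclusive as soon as $n \ge 3$, since then a degree-$n$ divisor can contain $\Delta_{\mathrm{sing}}$ set-theoretically. To push further I would bring in the finer data carried by $\JC_E$ — the natural rank-$2$ sheaf on the divisor of jumping conics and the order-of-jump bound of corollary \ref{cor:no n-jumps} — and compare the multiplicity that "$L_a$ jumps for all $a$" imposes on $\JC_E$ along $\Delta_{\mathrm{sing}}$ with the multiplicity the monad allows, restricting to the irreducible $\PP^2 \cong \PP(U_P^\perp)$ of conics through a general point $P$ (corollary \ref{cor:conics through P}) and running Grauert--M\"ulich (theorem \ref{cor:generic conic is not jumping}) against the finitely many lines through $P$. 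Equivalently, and perhaps more cleanly, one can attack the purely linear-algebraic reformulation head on: writing the polarized map as a skew form $\hat\gamma = \sum_i q_i \otimes a_i \in S^2H^* \otimes A$ on $H \otimes V$, one must show that a net in which every combination $\sum_i c_i q_i$ is a singular quadric cannot have $\mathrm{rank}(\hat\gamma) = 4n+2$, contradicting condition \eqref{itm:conditionrank} of theorem \ref{maininstantontheorem}. I expect the heart of the matter to be this rank incompatibility — producing a kernel of $\hat\gamma$ of dimension larger than $n-2$ out of a common degeneracy of the net, using that every form in $A$ has rank $4$ (lemma \ref{lemma:HPD of Gr}) — and I believe it is precisely this step that has so far obstructed a proof for arbitrary charge.
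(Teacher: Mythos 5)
The statement you were asked to prove is stated in the paper as a \emph{conjecture}, and the paper contains no proof of it: it proves the charge-$2$ case (proposition \ref{cor:generic line minimal}) and offers corollary \ref{cor:jumping lines vs conics} only as ``a step towards a proof.'' So there is no proof of the paper's to compare against, and your proposal should be judged on whether it closes the question on its own. It does not. Your first two paragraphs faithfully reproduce the partial progress already in the paper: the reformulation via $\det\gamma_E$ is theorem \ref{thm:jumping sheaf} plus corollary \ref{cor:jumping order}; the translation ``every line jumps $\Leftrightarrow$ $\Supp\JC_E$ contains the cubic of singular conics'' is exactly corollary \ref{cor:jumping lines vs conics}; and the degree count $2<3$ settling $n=2$ is the remark following theorem \ref{minimalmoduli}. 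All of that is correct.

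The genuine gap is the entire case $n\ge 3$, and your final paragraph is a research plan rather than an argument. The multiplicity refinement you suggest does not obviously close even $n=3$: by the remark after proposition \ref{prop:degree of jumping conics}, a generic jump of order $k$ on lines contributes multiplicity $2k-1$ along the degree-$3$ divisor of singular conics, so the degree budget $3(2k-1)\le n$ is consistent with $k=1$, $n=3$, and grows only linearly in $n$ thereafter. Likewise the linear-algebraic reformulation --- that a net of everywhere-degenerate quadrics $\gamma\colon A\to S^2H^*$ forces $\operatorname{rank}\hat\gamma<4n+2$ --- is precisely the open statement, not a step toward it; you give no mechanism for producing the required extra kernel vectors of $\hat\gamma$ from a common degeneracy of the net, and nets of quadrics in $\PP^2$ with identically vanishing discriminant do exist in general, so any such mechanism must use the instanton conditions in an essential way. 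You are candid that this step is missing, which is to your credit, but it means the proposal establishes nothing beyond what the paper already proves for $n=2$.
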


\begin{corollary}\label{cor:jumping lines vs conics}
For an instanton $E$ the following conditions are equivalent.
\begin{enumerate}
\item \label{itm:conics to lines 1}  The generic line is jumping for $E$.
\item \label{itm:conics to lines 2}  $\JC_E$ contains the divisor of singular conics.
\end{enumerate}
\end{corollary}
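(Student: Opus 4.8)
The plan is to reduce both conditions to a statement about jumping lines among the \emph{components} of singular conics, and then to run a dimension count on the incidence variety $\IQ$. The key input is that a singular conic $C = L_1 \cup L_2$ (allowing $L_1 = L_2$) is a jumping conic for $E$ if and only if at least one of its components is a jumping line: this follows by combining lemma \ref{trivialityoncross} with proposition \ref{prop:equivalence of jumping}, since these together show that $\restr{E}{C}$ is trivial exactly when $\restr{E}{L_1}$ and $\restr{E}{L_2}$ are both trivial. Since by corollary \ref{cor:generic conic is not jumping} the object $\JC_E$ is equivalent to a sheaf supported on a divisor, and since by proposition \ref{prop:singular conics} the locus of singular conics is an irreducible divisor in $\PP(V^*)$ (it is the image of $\IQ$), condition \eqref{itm:conics to lines 2} is equivalent to the single assertion that the generic singular conic is jumping for $E$.

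For the direction \eqref{itm:conics to lines 1} $\Rightarrow$ \eqref{itm:conics to lines 2}, I would note that the locus $J \subset \PP(A)$ of jumping lines is closed (by semicontinuity of cohomology, equivalently as the support of $\JL_E$). Hence if the generic line is jumping then $J = \PP(A)$ and \emph{every} line is a jumping line. In particular both components of every singular conic are jumping, so every singular conic is a jumping conic, and therefore the support of $\JC_E$ contains the whole divisor of singular conics.

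For the converse \eqref{itm:conics to lines 2} $\Rightarrow$ \eqref{itm:conics to lines 1}, I would argue by contraposition using the flag description $\IQ = \Flag(1,2;A) \subset \PP(A) \times \PP(A)$ from \eqref{eq:IQ is flag}. The two projections $p_1, p_2 : \IQ \to \PP(A)$ are $\PP^1$-fibrations over $\PP(A) \cong \PP^2$, so $\dim \IQ = 3$. Suppose the generic line is not jumping; then $J$ is a closed subset of $\PP(A)$ of dimension at most $1$, so each $p_i^{-1}(J)$ has dimension at most $2$. Hence $p_1^{-1}(J) \cup p_2^{-1}(J)$ is a proper closed subset of $\IQ$, and the generic point of $\IQ$ represents a pair of intersecting lines neither of which is jumping. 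By the component characterization above, the associated generic singular conic is then not jumping, so $\JC_E$ does not contain the divisor of singular conics, contradicting \eqref{itm:conics to lines 2}.

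The step I would treat most carefully — the only genuinely delicate point — is the identification of the generic point of the irreducible divisor of singular conics with the image of the generic point of $\IQ$. Here I would invoke proposition \ref{prop:singular conics}: the map $\IQ \to \PP(V^*)$ is dominant onto the divisor of singular conics (indeed generically two-to-one, via the component-swapping involution), so the generic singular conic is the image of a generic point of $\IQ$, and its two components are precisely the two generic intersecting lines parametrized there. Once this matching is in place, the equivalence follows formally from the two dimension arguments together with the closedness of the supports of $\JL_E$ and $\JC_E$.
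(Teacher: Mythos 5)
Your proof is correct and follows essentially the same route as the paper: reduce jumping of a singular conic to jumping of its components via lemma \ref{trivialityoncross} together with proposition \ref{prop:equivalence of jumping}, then compare the $3$-dimensional incidence variety $\IQ$ with the (at most $2$-dimensional) locus of pairs of jumping lines. The paper phrases the dimension count as $\dim(\JL_E\times\JL_E)=2<3=\dim\IQ$ rather than via the two $\PP^1$-fibrations $p_i:\IQ\to\PP(A)$, but the argument is the same; your extra care about matching the generic point of $\IQ$ with the generic singular conic is a reasonable addition.
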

\begin{proof}
To prove that condition \ref{itm:conics to lines 2}  implies condition \ref{itm:conics to lines 1} we can argue as follows.
Assume that the generic line is not jumping for $E$, so that the support of $\JL_E$ is a curve. 
It follows that $\JL_E \times \JL_E$ is a surface, while by \ref{prop:intersecting lines} the variety of intersecting lines $\IQ$ is $3$-dimensional.
As a consequence, there is at least a pair of intersecting lines $L_1, L_2$ such that $E$ restricts trivially to both of them.
By lemma \ref{trivialityoncross}, $E$ is trivial on the conic which is the union of $L_1$ and $L_2$, so that the support of $\JC_E$ not contain the locus of singular conics.

Condition \ref{itm:conics to lines 1} implies condition \ref{itm:conics to lines 2} by lemma \ref{trivialityoncross}.
\end{proof}

\begin{proposition}\label{prop:degree of jumping conics}
The divisor of jumping conics for an instanton $E$ of charge $n$ has degree $n$ (counted with multiplicity).
\end{proposition}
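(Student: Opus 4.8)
The plan is to compute the degree of the divisor $\JC_E$ as the degree of the determinant of the map resolving $\JC_E$, exploiting the explicit resolution already obtained in theorem \ref{cor:generic conic is not jumping}. By that theorem, $\JC_E = \Phi_{\calC}(E \otimes \CU)[1]$ is the cokernel of an injective map of vector bundles
\[
\varphi: H \otimes \big( \CO(-2) \oplus (A \otimes \CO(-1)) \big) \to H^* \otimes \big( (A^* \otimes \CO(-1)) \oplus \CO \big)
\]
on $\PP(V^*)$, and is a sheaf supported on a divisor. Since $\dim H = \dim H^* = n$ and $\dim A = \dim A^* = 3$, both the source and target of $\varphi$ have rank $n(1+3) = 4n$, so $\varphi$ is a map between vector bundles of equal rank. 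The support of its cokernel is exactly the vanishing locus of $\det \varphi$, and the degree of this divisor equals the degree of $\det \varphi$, i.e. $-\deg\big( c_1(\text{source}) \big) + \deg\big( c_1(\text{target}) \big)$ computed as a section of a line bundle on $\PP(V^*)$.

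First I would record the first Chern classes of the two bundles on $\PP(V^*) \cong \PP^4$. Writing $h$ for the hyperplane class, the source is $H \otimes (\CO(-2) \oplus A \otimes \CO(-1))$, whose determinant is $\CO(-2n - 3n) = \CO(-5n)$ since the $\CO(-2)$ summand contributes $-2$ with multiplicity $n$ and the $A \otimes \CO(-1)$ summand contributes $-1$ with multiplicity $3n$. The target is $H^* \otimes (A^* \otimes \CO(-1) \oplus \CO)$, whose determinant is $\CO(-3n + 0) = \CO(-3n)$, again by multiplicity count ($-1$ with multiplicity $3n$ from $A^* \otimes \CO(-1)$ and $0$ with multiplicity $n$ from $\CO$). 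Then $\det \varphi$ is a section of $\CHom(\det(\text{source}), \det(\text{target})) = \CO(-3n) \otimes \CO(5n) = \CO(2n)$.

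This already gives degree $2n$, which is twice the claimed value, so the key subtlety — and I expect this to be the main obstacle — is that $\det \varphi$ vanishes with multiplicity $2$ generically along the support, or equivalently that the resolution \eqref{eq:resolution of JC} is not minimal and $\varphi$ drops rank by $2$ (not $1$) at a generic point of its degeneracy locus. Indeed, by corollary \ref{splittingonconic} and remark \ref{rmk:support of jumping lines}, a generic jumping conic is $1$-jumping, and by proposition \ref{prop:equivalence of jumping} together with the Borel--Bott--Weil computation, $H^\bullet(\restr{E \otimes \CU}{C})$ has dimension $2$ at a generic jumping conic (the rank $2$ of $\JC_E$ asserted in the introduction). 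Thus the cokernel $\JC_E$ has generic rank $2$ on its support, which forces $\varphi$ to drop rank by $2$ there, so $\det \varphi$ vanishes to order $2$ along the reduced support divisor.

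Therefore I would argue that the scheme-theoretic degeneracy locus counted by $\det \varphi$ has class $2n\, h$, but the reduced divisor of jumping conics carries this with multiplicity $2$, giving a divisor of degree $n$ counted with multiplicity. To make this rigorous I would invoke the rank-$2$ structure of $\JC_E$: since $\JC_E$ is a rank $2$ sheaf on its support, the support divisor $D$ satisfies $2[D] = [\det\varphi\text{-locus}] = 2n\,h$ in $\mathrm{Pic}(\PP(V^*))$, whence $[D] = n\,h$ and $\deg D = n$. The careful point to verify is that the "with multiplicity" in the statement is precisely this: the fundamental cycle of the support, weighted by the generic rank of $\JC_E$ along each component, equals the degeneracy cycle of $\varphi$; this follows from the standard relation between the first Chern class of a torsion sheaf presented by a square matrix of bundles and the divisorial part of its support, i.e. $c_1(\JC_E) = c_1(\text{target}) - c_1(\text{source}) = 2n\, h$, and simultaneously $c_1(\JC_E) = 2[D]$ because $\JC_E$ has generic rank $2$. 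Equating the two expressions yields $\deg D = n$, which is the assertion.
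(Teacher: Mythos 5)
Your computation of the divisor class is the same as the paper's: the paper reads off the leading term of $\ch(\JC_E)$ from the resolution \eqref{eq:resolution of JC} and gets $2n$ times the hyperplane class, which is exactly your $c_1(\mathrm{target})-c_1(\mathrm{source})=-3n-(-5n)=2n$. The divergence, and the gap, is in how you pass from the degree-$2n$ degeneracy class to the degree-$n$ statement. You assert that $\JC_E$ has generic rank $2$ along its entire support, citing the informal "rank $2$" from the introduction and the claim that a generic jumping conic is $1$-jumping. Neither is established anywhere in the paper, and neither is true component by component: if $J_i$ is a component along which the generic smooth conic $C$ has $\restr{E}{C}\cong\CO_C(-O_i)\oplus\CO_C(O_i)$ with $O_i\ge 2$, then $\restr{E\otimes\CU}{C}\cong\CO_C(-O_i-1)^{\oplus 2}\oplus\CO_C(O_i-1)^{\oplus 2}$ has $h^1=2O_i$, so the generic rank of $\JC_E$ on $J_i$ is $2O_i$, not $2$. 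Similarly, if $J_i$ is the divisor of singular conics (which occurs exactly when the generic line is $k$-jumping, by corollary \ref{cor:jumping lines vs conics}), the generic rank is $2(2k-1)$, computed in the paper via lemma \ref{lemma:order of jump on crosses}. The statement you are proving is precisely $\sum_i \deg(J_i)\cdot O_i = n$ with $O_i$ the generic order of jump on $J_i$ (see the remark following the proposition), so the whole content of the proof is the identity $\rank(\JC_E,J_i)=2O_i$ for \emph{every} component, including the two nontrivial cases above. Your argument, as written, only covers the situation where every component is generically $1$-jumping, in which case "counted with multiplicity" is vacuous.

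To repair it you would need to replace the blanket "generic rank $2$" by: (i) for a component whose generic conic is smooth, base change plus the splitting types of corollary \ref{splittingonconic} give $\rank(\JC_E,J_i)=h^1(\restr{E\otimes\CU}{C})=2\,h^1(\restr{E}{C})=2O_i$; and (ii) for the component of reducible conics, the computation $h^0(\restr{E\otimes\CU}{C})=4k-2$ of lemma \ref{lemma:order of jump on crosses} together with $\chi(\restr{E\otimes\CU}{C})=0$ gives $h^1=4k-2=2(2k-1)=2O_i$. With that substitution your bookkeeping $\sum_i \deg(J_i)\cdot\rank(\JC_E,J_i)=2n$ yields the claim, and the proof coincides with the paper's.
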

\begin{remark}
By counted with multiplicity we mean the following.
The support of $\JC_E$ can be reducible and have many components $J_i$: $n$ will be the sum of $\deg(J_i) \cdot O_i$ where $O_i$ is the order of jump of $E$ at the generic curve of $J_i$.

If a component $J_i$ is the divisor of reducible conics, then by corollary \ref{cor:jumping lines vs conics} the generic line is jumping for $E$.
In this case $O_i$ is $2k - 1$, where $k$ the order of jump of $E$ at the generic line.
\end{remark}
\begin{proof}
By resolution \eqref{eq:resolution of JC}, the leading term of the Chern character of $\JC_E$ is $n$ times the class of a quadric in 
$\PP(V^*)$.
This shows that the sum
\[
\sum_i \deg(J_i) \cdot \rank(\JC_E, J_i) = 2n
\]
where $\rank(\JC_E, J_i)$ is the rank of $\JC_E$ at the generic point of $J_i$.
Our claim will be that for any component $J$ of $\JC_E$ we have
\[
\rank(\JC_E, J_i) = 2 \cdot O_i
\]

Note that by base change we always have the equality
\[
\rank(\JC_E, J_i) = h^1(\restr{E \otimes \CU}{C})
\]
where $C$ is a general conic in $J_i$.

Assume now that the generic conic $C$ in $J_i$ is smooth.
Then by corollary \ref{splittingonconic} we have
\[
\rank(\JC_E, J_i) = h^1(\restr{E \otimes \CU}{C}) = 2 \cdot h^1(\restr{E}{C}) = 2 \cdot O_i
\]

The only case which is left is that in which $J_i$ is the divisor of reducible conics.
Let $C$ be a general reducible reduced conic with components $L_1, L_2$.
By lemma \ref{lemma:order of jump on crosses} 
\[
h^0(\restr{E \otimes \CU}{C}) = 4k - 2
\]
where $k$ is the order of jump of $E$ at $L_i$.
Finally as $\chi (\restr{E \otimes \CU}{C}) = 0$ we deduce
\[
\rank(\JC_E, J_i) = h^1(\restr{E \otimes \CU}{C}) = 2 \cdot O_i
\]
\end{proof}

\begin{lemma}\label{lemma:order of jump on crosses}
If the generic line is $k$-jumping for $E$, $k > 0$, then $h^0(\restr{E \otimes \CU}{C}) = 4k - 2$.
\end{lemma}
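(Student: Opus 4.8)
The plan is to compute the cohomology of $F := \restr{E \otimes \CU}{C}$, a rank $4$ vector bundle on the nodal conic $C = L_1 \cup L_2$ with node $P = L_1 \cap L_2$, by passing to the normalization. Since both components are generic lines they are $k$-jumping, so $\restr{E}{L_i} \cong \CO_{L_i}(-k) \oplus \CO_{L_i}(k)$, and by lemma \ref{lemma:splitting on lines} $\restr{\CU}{L_i} \cong \CO_{L_i}(-1) \oplus \CO_{L_i}$. Hence
\[
\restr{F}{L_i} \cong \CO_{L_i}(-k-1) \oplus \CO_{L_i}(-k) \oplus \CO_{L_i}(k-1) \oplus \CO_{L_i}(k),
\]
so that, using $k \geq 1$, one finds $h^0(\restr{F}{L_i}) = k + (k+1) = 2k+1$ and all of $H^0$ is carried by the globally generated summand $E^+_i \otimes \restr{\CU}{L_i}$, where $E^+_i \cong \CO_{L_i}(k) \subset \restr{E}{L_i}$ is the maximal destabilizing sub-line-bundle. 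First I would tensor the normalization sequence $0 \to \CO_C \to \CO_{L_1} \oplus \CO_{L_2} \to \CO_P \to 0$ (equivalently, iterate \eqref{eq:exact for conics}) by the locally free sheaf $F$ and take cohomology, obtaining
\[
0 \to H^0(F) \to H^0(\restr{F}{L_1}) \oplus H^0(\restr{F}{L_2}) \xrightarrow{\ \rho\ } F_P \to H^1(F) \to \cdots,
\]
where $F_P$ is the $4$-dimensional fibre at $P$ and $\rho(s_1,s_2) = s_1(P) - s_2(P)$. Then $h^0(F) = (4k+2) - \dim \Image(\rho)$, and the whole computation reduces to showing that $\rho$ is surjective.

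The image of $\rho$ is $W_1 + W_2$, where $W_i \subseteq F_P$ is the image of the evaluation $H^0(\restr{F}{L_i}) \to F_P$. By the previous paragraph this evaluation factors through $E^+_i \otimes \restr{\CU}{L_i}$, which is globally generated, so $W_i$ is exactly its fibre at $P$, namely $W_i = (E^+_i)_P \otimes \CU_P = \langle e_i \rangle \otimes \CU_P$, where $e_i \in E_P$ spans the line $(E^+_i)_P$. Consequently $W_1 + W_2 = (\langle e_1 \rangle + \langle e_2 \rangle) \otimes \CU_P$, and this is all of $F_P = E_P \otimes \CU_P$ precisely when $e_1$ and $e_2$ are linearly independent in the $2$-dimensional fibre $E_P$. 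Thus the identity $h^0(F) = 4k-2$ is equivalent to the transversality claim: \emph{for a general reducible conic the two destabilizing lines $(E^+_1)_P$ and $(E^+_2)_P$ are distinct.}

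The hard part is this transversality claim, which I would prove by contradiction using the $\mu$-stability of $E$. The maximal destabilizing sub-line-bundle $E^+_L \subset \restr{E}{L}$ is canonically defined for every $k$-jumping line, and gives, over the dense open locus $G \subseteq \IQ$ of pairs of $k$-jumping intersecting lines, two regular maps $e_1, e_2$ to $\PP(E)$ pulled back along the node. Since $\IQ$ is irreducible (proposition \ref{prop:intersecting lines}, identifying $\IQ \cong \Flag(1,2;A)$) and the equality locus $\{e_1 = e_2\}$ is closed in $G$, if it held on a dense subset it would hold on all of $G$, i.e. for every pair of $k$-jumping intersecting lines. For a general point $P \in Y$ there are exactly three lines through it by corollary \ref{cor:3 to 1 cover}, all $k$-jumping, and the three pairs among them lie in $G$; equality on all three pairs forces the three destabilizing lines to coincide, producing a well-defined line $\CM_P \subset E_P$ at the general point $P$. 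This rational section of $\PP(E)$ saturates to a rank-one reflexive, hence invertible, subsheaf $\CM = \CO_Y(m) \hookrightarrow E$ whose restriction to a general line agrees with $\CO_L(k)$, forcing $m = k \geq 1$ and so $\mu(\CM) = k > 0 = \mu(E)$, contradicting the $\mu$-stability of the instanton $E$. Hence $e_1 \neq e_2$ generically, $\rho$ is surjective, and $h^0(F) = 4k - 2$.
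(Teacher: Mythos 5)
Your proof is correct, and its first two paragraphs (the normalization sequence $0 \to \CO_C \to \CO_{L_1}\oplus\CO_{L_2} \to \CO_P \to 0$ tensored by $E\otimes\CU$, the count $h^0 = 4k+2-\dim\Image(\rho)$, and the identification of the image of each evaluation map with $\langle e_i\rangle\otimes\CU_P$ via global generation of $\CO_{L_i}(k)\otimes\restr{\CU}{L_i}$) coincide with the paper's argument. The only genuine divergence is the final contradiction. The paper, assuming $e_1=e_2$ for all generic intersecting pairs, builds a section of $E$ over the open set $r_Y(\CL\setminus S_E)$ and must then argue separately (via chains of intersecting lines) that the complement of this open set is at most a curve, so that the section extends by Hartogs to a global section, contradicting $H^0(E)=0$. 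You instead descend the destabilizing line $(E^+)_P$ to a rational section of $\PP(E)$ and saturate it to a line bundle $\CO_Y(m)\hookrightarrow E$ with $m=k\ge 1$, contradicting $\mu$-stability directly. Your route is slightly cleaner: it works with the canonically defined line in each fibre rather than a choice of vector, and it only needs the subbundle on the complement of a divisor, so it avoids the paper's chains-of-lines step entirely. The one place you compress is the claim $m=k$: strictly, restricting $\CM\hookrightarrow E$ to a general line gives only $m\le k$, and you need the complementary inequality $k\le m$ from the fact that $\CO_L(k)$ lands in the kernel of $\restr{E}{L}\to\restr{(E/\CM)}{L}\cong\CO_L(-m)$; but since even $m\ge 1$ suffices for the contradiction, this is a cosmetic gap, not a real one.
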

\begin{proof}
Denote the intersection point of $L_1$ and $L_2$ by $P$. 
Tensor the exact sequence
\[
0 \to  \CO_C \to \CO_{L_1} \oplus \CO_{L_2} \to \CO_P \to 0
\]
by $E \otimes \CU$ and take the long exact sequence for sheaf cohomology.
\[
h^0(\restr{E \otimes \CU}{C}) = 4k + 2 - d
\]
where $d \in [0,4]$ is the dimension of the image of
\begin{equation}\label{eq:order of jump on crosses}
H^0(\restr{E \otimes \CU}{L_1}) \oplus H^0(\restr{E \otimes \CU}{L_2}) \to H^0(\restr{E \otimes \CU}{P})
\end{equation}

The following argument shows that $d = 4$ for the generic pair of intersecting $L_1$ and $L_2$.
First, each point $P$ not in the closed orbit lies is the intersection of at least two distinct lines (lemma \ref{cor:3 to 1 cover}).
For each such point $P$ and each pair $L_1$, $L_2$ intersecting at $P$, we get two vectors $e_1,e_2$ in the fiber of $E$ at $P$:
the images of the canonical
\[
\CO_{L_i}(k) \to \restr{E}{L_i}
\]

Note that
\[
\restr{\CU}{L_i}(k) \to \restr{E \otimes \CU}{L_i}
\]
is surjective on global sections as $k > 0$.
It follows that the map \eqref{eq:order of jump on crosses} is actually the restriction
\[
e_1 \otimes H^0(\restr{\CU}{L_1}(k)) \oplus  e_2 \otimes H^0(\restr{\CU}{L_2}(k)) \to E_P \otimes U_P
\]
As $\restr{\CU}{L_i}(k)$ is generated by global sections for $k > 0$, \eqref{eq:order of jump on crosses} becomes
\[
\left( e_1 \oplus e_2 \to E \right) \otimes U_P
\]
so that its image is not $4$-dimensional if and only if $e_1 = e_2$.

Finally, we are going to prove that if for the generic pair of lines the induced $e_1, e_2$ are equal, then there is a section of $E$ defined on the complement of a curve.

The locus of lines whose order of jump is at least $k+1$ is at most a curve, so that its preimage inside the universal line $\CL$ is at most a surface $S_E$.
On the complement of $S_E$ we can define a section of $r_Y^*E$ which (by flatness of $r_Y$) descends to a section of $E$ on $r_Y(\CL \setminus S_E)$, which is an open subvariety of $Y$ as $r_Y$ is flat.

Note that if $Y \setminus r_Y(\CL \setminus S_E)$ contains a surface, then its preimage in $\CL$ is saturated with respect to both $r_Y$ and $r_A$, contradicting the fact that for each two points we can find a chain of lines connecting them (as for any two lines there is a third one intersecting both of them by proposition \ref{prop:intersecting lines}).

To conclude, note that note that a section of $E$ defined on a complement of a curve extends to a global section of $E$, contradicting the stability of $E$.
\end{proof}


\section{Minimal instantons}\label{sec:Minimal instantons}

In this section we provide a complete description of the moduli space on instantons on $Y$ in the case of $c_2=2$.

As explained in remark \ref{rmk:ch(E)}, for any instanton $E$ we have
\[
h^1(E) = c_2(E)-2 = n - 2
\]
As a consequence, there are no instantons with $c_2(E) < 2$. 
\begin{definition}
Instantons such that $c_2(E)$ takes the minimal possible value, i.e. $c_2(E) = 2$ are called minimal instantons.
\end{definition}

The first result that we prove about minimal instantons is the following consequence of theorem \ref{thm:gamma injective}.
\begin{proposition}\label{cor:generic line minimal}
For a minimal instanton $E$ the generic line is not jumping. Moreover, the support of $\JL_E$ is a smooth conic.
\end{proposition}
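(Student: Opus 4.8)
The plan is to exploit theorem~\ref{thm:gamma injective} together with the charge-$2$ hypothesis to pin down the map $\gamma$ completely. For a minimal instanton we have $\dim H = n = 2$, so $H^*$ is $2$-dimensional and $S^2H^*$ is $3$-dimensional. The associated map
\[
\gamma : A^* \to S^2H^*
\]
is therefore a linear map between two $3$-dimensional spaces, and theorem~\ref{thm:gamma injective} says it is injective. Hence $\gamma$ is an isomorphism. First I would record this: injectivity of a map between equidimensional vector spaces forces it to be an isomorphism, so $\gamma$ is invertible.

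**Translating to jumping lines.**

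Next I would invoke corollary~\ref{cor:jumping order}: a line $L$ is jumping precisely when $\gamma(a_L)$ is degenerate, and the order of jump equals the corank of $\gamma(a_L) \in S^2H^*$. Viewing $\gamma(a) \in S^2H^*$ as a symmetric bilinear form on the $2$-dimensional space $H$, its corank is nonzero exactly when its determinant vanishes. The locus of jumping lines in $\PP(A)$ is thus the zero locus of the composition
\[
\PP(A^*) \xrightarrow{\ \gamma\ } \PP(S^2H^*) \xrightarrow{\ \det\ } \text{(degenerate forms)},
\]
i.e. the preimage under the isomorphism $\gamma$ of the discriminant conic $\{\det = 0\} \subset \PP(S^2H^*)$ of degenerate $2\times 2$ symmetric forms. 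Since $\gamma$ is an isomorphism of the ambient $\PP^2$'s, and the discriminant in $\PP(S^2H^*) \cong \PP^2$ is a smooth conic (the rank-$\le 1$ locus of $2\times 2$ symmetric matrices is the Veronese conic), its preimage $\Supp(\JL_E) \subset \PP(A^*) \cong \PP(A)$ is again a smooth conic. That this conic is a proper subvariety (not all of $\PP(A)$) shows immediately that the generic line is not jumping, proving the first assertion.

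**Assembling the statement.**

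Finally I would identify $\Supp(\JL_E)$ with the sheaf-theoretic support of the object $\JL_E = \Phi_{\CL}(E(-1))[1]$. By theorem~\ref{thm:jumping sheaf} there is a distinguished triangle with $\gamma_E$ as its central map, and by corollary~\ref{cor:no n-jumps} the order of jump at any line is at most $c_2(E)-1 = 1$; so every jump is a $1$-jump and the corank of $\gamma_E$ is everywhere $\le 1$. This means $\JL_E$ is the cokernel of the bundle map $\gamma_E : H\otimes\CO_{\PP(A)}(-3) \to H^*\otimes\CO_{\PP(A)}(-2)$ degenerating in corank exactly $1$ along a divisor, which (by the $\det$ computation above, recalling $\det \gamma_E \in H^0(\CO_{\PP(A)}(2))$ since each entry has bidegree matching a degree-$2$ form after twisting) is a conic. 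The determinant section is identified, up to the isomorphism $\gamma$, with the discriminant, hence cuts out a smooth conic; I would check this smoothness is the honest content to verify rather than assume. The main obstacle is precisely this last point: confirming that the determinant of $\gamma_E$ is a \emph{smooth} conic and not a degenerate one. This reduces to the claim that an instanton cannot have $\gamma$ degenerating along a singular conic, which should follow because $\gamma$ being an isomorphism transports the \emph{smooth} Veronese discriminant isomorphically; I expect this to be the step requiring the most care, and I would make it rigorous by arguing that the pullback of a smooth conic under a linear isomorphism of $\PP^2$ is smooth.
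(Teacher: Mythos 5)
Your proof is correct and follows essentially the same route as the paper: theorem~\ref{thm:gamma injective} plus $\dim A = \dim S^2H^* = 3$ makes $\gamma$ an isomorphism, and the support of $\JL_E$ is then the preimage under this isomorphism of the smooth discriminant (Veronese) conic in $\PP(S^2H^*)$, hence a smooth conic. The only cosmetic difference is that you derive the identification of $\Supp(\JL_E)$ with $\gamma^{-1}(\Delta_H)$ from corollary~\ref{cor:jumping order} and theorem~\ref{thm:jumping sheaf}, whereas the paper cites this fact from \cite{kuznetsov2012instanton}; your closing worry about smoothness is already settled by your own observation that a linear isomorphism of $\PP^2$ carries a smooth conic to a smooth conic.
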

\begin{proof}
We prove directly the second part of the statement, which includes the first.
From \cite[Prop. 4.10]{kuznetsov2012instanton} we know that the support of $\JL_E$ is $\gamma^{-1}(\Delta_H)$, where $\Delta_H$ is the discriminant inside $\PP(S^2H^*)$ and
\[
\gamma: A \cong A^* \to S^2H^*
\]
is any preimage of $E$ via the projection from $\Mon_n$ to $\CMI_2$.

By \ref{thm:gamma injective} $\gamma$ is injective. 
In the case of minimal instantons
\[
\dim(A) = \dim(S^2H^*) = 3
\] 
so that $\gamma$ is also surjective. 
As $\gamma$ is an isomorphism and $\Delta_H$ is a smooth conic in $\PP(S^2 H^*)$, the jumping divisor of a minimal instanton is a smooth conic.
\end{proof}

The main result in this section is theorem \ref{minimalmoduli}. It provides an $SL_2$-equivariant open embedding of $\CMI_2$ inside $\PP(S^2(A^*))$ and a complete $SL_2$-equivariant description of the complement $\PP(S^2A^*) \setminus \CMI_2$. 

One of the key steps in the proof of theorem \ref{minimalmoduli} is that in the case of minimal instantons, the conditions of theorem \ref{maininstantontheorem} specialize to a simpler single condition, as explained in lemma \ref{minimalequivalentconditions}.
\begin{lemma} \label{minimalequivalentconditions}
In the case $n=2$, condition \ref{maininstantontheorem} \eqref{itm:conditionfiber} is equivalent to condition \ref{maininstantontheorem} \eqref{itm:conditionrank}.
\end{lemma}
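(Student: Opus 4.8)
The plan is to unwind the definitions of conditions \eqref{itm:conditionfiber} and \eqref{itm:conditionrank} in the special case $n=2$, where $\dim H = 2$, so that $\gamma' : H \otimes \CU \to H^* \otimes \CU^\perp$ is a map between bundles of rank $4$ and $6$, and $\hat\gamma : H \otimes V \to H^* \otimes V^*$ is a map between $10$-dimensional spaces whose expected full rank is $4n+2 = 10$. Thus for $n=2$ condition \eqref{itm:conditionrank} asks that $\hat\gamma$ be an isomorphism, and condition \eqref{itm:conditionfiber} asks that $\gamma'$ be fiberwise injective on all of $Y$. I would first observe that $\hat\gamma = \Hom(\gamma', \CO_Y)$ under the identifications $\Hom(\CU^\perp, \CO_Y) \cong V$ and $\Hom(\CU, \CO_Y) \cong V^*$, so the two conditions are genuinely two facets of the same tensor $\gamma \in A \otimes S^2 H^*$.

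The natural strategy is to prove the two implications separately and to use the self-dual structure of the monad to make them symmetric. For one direction, I would show that condition \eqref{itm:conditionfiber} (fiberwise injectivity of $\gamma'$) forces $\hat\gamma$ to be an isomorphism. The idea is that if $\gamma'$ is a fiberwise monomorphism, then the cokernel sheaf $\CF = \coker(\gamma')$ is locally free of rank $6-4 = 2$, and one gets a short exact sequence $0 \to H\otimes\CU \to H^*\otimes\CU^\perp \to \CF \to 0$. Applying $\Hom(-, \CO_Y)$ and using the cohomology computations of lemma \ref{lemma:cohomologies on Y} (namely $H^\bullet(\CU) = H^\bullet(\CU^\perp) = 0$, $\Hom(\CU^\perp,\CO_Y) = V$, $\Hom(\CU,\CO_Y)=V^*$, and the vanishing of the relevant higher Ext groups), the long exact sequence would identify $\hat\gamma$ with the induced map on global homs and show it is injective, hence an isomorphism between the two $10$-dimensional spaces by a dimension count. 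The crux is controlling $\Ext^\bullet(\CF, \CO_Y)$ so that the boundary maps in the long exact sequence do not interfere; this uses that $\CF$ is a twist related to the instanton and the acyclicity statements already recorded.

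For the converse, I would argue that if $\hat\gamma : H\otimes V \to H^*\otimes V^*$ is an isomorphism then $\gamma'$ is fiberwise injective. Here the key point is pointwise: at a point $[U]\in Y$, a failure of fiberwise injectivity of $\gamma'$ means there is a nonzero element in the kernel of the fiber map $H \otimes U \to H^* \otimes (V/U)$ obtained by composing $\gamma$ with evaluation. I would translate such a kernel element into a degeneracy of $\hat\gamma$, using the factorization of the evaluation pairing $A \otimes V \to V^*$ described in lemma \ref{lemma:cohomologies on Y} and the defining property that $a(u_1,u_2)$ vanishes for $u_1,u_2 \in U$. The existence of a fiberwise kernel vector should produce a nonzero vector in $\ker \hat\gamma$ (roughly, an element of $H\otimes U \subset H \otimes V$ annihilated by $\hat\gamma$), contradicting that $\hat\gamma$ is an isomorphism. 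The main obstacle, and the step I would spend the most care on, is this converse direction: showing that a pointwise degeneration of the rank-$4$ bundle map really forces a drop in the rank of the global polarized map $\hat\gamma$, rather than merely of some restricted map. I expect this to hinge on the precise description of the evaluation pairing and on the fact that $\CU \subset V\otimes\CO_Y$ is a subbundle, so that a fiber kernel for $\gamma'$ embeds faithfully into $H \otimes V$. Once both implications are in place, the equivalence follows, and I would note that this is exactly the simplification exploited in the proof of theorem \ref{minimalmoduli}.
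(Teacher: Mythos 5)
Your proposal is correct and follows essentially the same route as the paper: for one direction the cokernel $F$ of $\gamma'$ is a rank-$2$ bundle with $H^\bullet(F)=0$, and dualizing the sequence (equivalently, applying $\Hom(-,\CO_Y)$) identifies $\hat\gamma$ with an isomorphism via the long exact sequence; for the converse, the paper restricts the commutative square relating $\gamma'$ and $\hat\gamma$ to a point $[U]$, so that the fiber map is the restriction of the injective $\hat\gamma$ to $H\otimes U\subset H\otimes V$ — exactly the mechanism you anticipate. The only quibble is notational: the fiber of $\CU^\perp$ at $[U]$ is $U^\perp\subset V^*$ (not $V/U$), but this does not affect the argument.
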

\begin{proof}
Assume that $\gamma$ is a fiberwise monomorphism. Then the quotient
\[
0 \to H \otimes \calU \xrightarrow{\gamma '} H^* \otimes \calU^\perp \to F \to 0
\]
is a vector bundle of rank $2$ with $c_1(F) = 0$. Moreover $H^\bullet(F) = 0$ 
 as the other two bundles are acyclic. 

We dualize and we find
\[
0 \to F \to H \otimes V/\calU \xrightarrow{ - {\gamma'}^*} H^* \otimes \calU^* \to 0
\]
Taking long exact sequence in cohomology and substituting $H^\bullet(F) = 0$ 
 we find that $\gamma^{\prime}$ induces an isomorphism
\[
H \otimes V \xrightarrow{ - \hat{\gamma}} H^* \otimes V^*
\]
which is condition \eqref{itm:conditionrank}.

The other way round, assume $\rank(\hat{\gamma}) = 10$, i.e. assume $\hat{\gamma}$ is an isomorphism, and restrict diagram
\[
\begin{tikzcd}
0  \arrow{r}	& H \otimes \CU 	\arrow{r}\arrow{d}{\gamma '}	& H \otimes V \otimes \CO_{Y} \arrow{d}{\hat{\gamma}} \\
0  \arrow{r} 	& H^* \otimes \CU^\perp \arrow{r}						& H^* \otimes V^* \otimes \CO_{Y}
\end{tikzcd}
\]
to any closed point $U$ to find
\begin{equation}
\begin{tikzcd}
0  \arrow{r}	& H \otimes U 	\arrow{r}\arrow{d}{\restr{\gamma '}{U}}		& H \otimes V \arrow{d}{\hat{\gamma}} \\
0  \arrow{r} 	& H^* \otimes U^\perp \arrow{r}			& H^* \otimes V^* 
\end{tikzcd}
\end{equation}
As the map $\hat{\gamma}$ is injective, the map $\restr{\gamma '}{U}$ is injective as well, so that condition \eqref{itm:conditionfiber} holds.
\end{proof}

The correspondence between instantons and curves with theta-characteristics takes a very simple form in the minimal case, as on smooth conics there's a unique choice of a theta-characteristic. 
We want to use this correspondence to give a complete description of $\mathcal{MI}_2$ inside $\PP(S^2A^*)$. 

Let us first recall that, by theorem \ref{thm:jumping sheaf},
$\mathcal{MI}_2$ embeds in $\PP(S^2A^*)$ by sending an instanton $E$ to its curve of jumping lines, which is a conic in $\PP(A)$ and can be therefore thought of as an element of $\PP(S^2 A^*)$.
\begin{proof}(Sketch)
Given a minimal instanton $E$ one can obtain a conic in $\PP(A)$ via theorem \ref{thm:jumping sheaf}.
Conversely, given a conic $C$ 
\[
i_C: C \to \PP(A)
\]
which comes from an instanton via theorem \ref{thm:jumping sheaf}, take the pushforward $i_{C*}\CO_C(-1)$ and decompose it with respect to the exceptional collection
\[
\langle	\CO_{\PP(A)}(-2), \CO_{\PP(A)}(-1), \CO_{\PP(A)} 	\rangle
\]
This will provide a resolution
\[
0 \to H \otimes \CO_{\PP(A)}(-2) \to H^* \otimes \CO_{\PP(A)}(-1) \to i_{C*}\CO_C(-1) \to 0
\]
Finally, one checks that such a resolution is symmetric in $H$ and recovers in this way an element in $A^* \otimes S^2H^*$ corresponding to an instanton.
\end{proof}

In section \ref{sec:SL2action} we have discussed an $SL(W)$-equivariant construction of $Y$: its first step is the choice of an $\SSL(W)$-invariant $3$-dimensional $A \subset \Lambda^2V^*$.
As $\SSL(W)$ acts on $A$, it also acts on $A \otimes S^2H^*$.
In the case of minimal instantons there is another copy of $SL_2$ acting on $A \otimes S^2H^*$, namely $\SSL(H)$.

As the two actions of $SL(W)$ and $SL(H)$ on $A \otimes S^2H^*$ commute, they induce an action of $\SSL(W) \times \SSL(H)$.
The description of the complement of the locus of instantonic $\gamma$ in $A \otimes S^2 H^*$ involves the two $SL(W) \times SL(H)$-invariant divisors which we are about to introduce. 
\begin{definition}\label{def:det and Q}
We denote the degree $3$ divisor in $\PP(A \otimes S^2H^*)$ of degenerate maps from $A^*$ to $S^2H^*$ by $\divQ_3$.

There is a unique divisor of degree $2$ in $\PP(A \otimes S^2H^*)$ which is invariant under the action of $SL(W) \times SL(H)$.
We denote it by $\divQ_2$.
\end{definition}
\begin{remark}\label{rmk:det and Q}
Let us show that $\divQ_2$ is well defined.
It is easy to determine the irreducible components of each direct summand of
\[
S^2(A^* \otimes S^2H) = \left( S^2 A^* \otimes S^2 S^2 H  \right) \oplus \left( \Lambda^2 A^* \otimes \Lambda^2 S^2 H \right)
\] 
with respect to the action of $\SSL(W) \times \SSL(H)$.
They are all the products of irreducible components of the factors.
It follows that there is only one $1$-dimensional $\SSL(W) \times \SSL(H)$ irreducible representation, which shows that $\divQ_2$ is well defined.
Note moreover that, as there are clearly no $\SSL(W) \times \SSL(H)$ invariant elements in $A^* \otimes S^2 H$, the divisor $\divQ_2$ is irreducible.
\end{remark}

\begin{proposition}\label{minimalmoduliabove}
The complement $\partial \Mon_2 = A \otimes S^2 H^* \setminus \Mon_2$ is the union of $\divQ_2$ and $\divQ_3$.
\end{proposition}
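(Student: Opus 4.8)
The plan is to identify $\partial\Mon_2$ with the zero locus of a single $\SSL(W)\times\SSL(H)$-invariant \emph{quintic} and then to factor that quintic using the results already established. First I would invoke lemma \ref{minimalequivalentconditions}: for $n=2$ conditions \eqref{itm:conditionfiber} and \eqref{itm:conditionrank} of theorem \ref{maininstantontheorem} coincide, and since $\dim(H\otimes V)=\dim(H^*\otimes V^*)=4n+2=10$, condition \eqref{itm:conditionrank} says exactly that the square map $\hat\gamma\colon H\otimes V\to H^*\otimes V^*$ is an isomorphism. Hence $\partial\Mon_2$ is the (cone over the) locus where $\hat\gamma$ drops rank.

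The key observation is that $\hat\gamma$ is \emph{alternating}. Indeed $\gamma\in A\otimes S^2H^*\subset \Lambda^2V^*\otimes S^2H^*$, so, read as a bilinear form on $H\otimes V$, $\hat\gamma$ is symmetric in the $H$-variables and alternating in the $V$-variables, hence alternating on the $10$-dimensional space $H\otimes V$. Therefore its Gram matrix is skew of size $10$, and $\hat\gamma$ fails to be an isomorphism precisely when its Pfaffian $\mathrm{Pf}(\hat\gamma)$ vanishes, where $\mathrm{Pf}(\hat\gamma)$ is homogeneous of degree $5$ in $\gamma$. Thus $\partial\Mon_2=\{\mathrm{Pf}(\hat\gamma)=0\}$. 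Since $\SSL(W)\times\SSL(H)$ acts on $H\otimes V$ through $\SSL$, the Pfaffian is $\SSL(W)\times\SSL(H)$-invariant, and it is not identically zero because $\CMI_2\neq\emptyset$, i.e. $\Mon_2\neq\emptyset$.

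Next I would locate the two factors. By theorem \ref{thm:gamma injective} the map $\gamma\colon A^*\to S^2H^*$ attached to any point of $\Mon_2$ is injective, hence (both spaces being $3$-dimensional) an isomorphism; equivalently $\Mon_2$ is disjoint from the determinantal cubic $\divQ_3$ of non-injective maps. So $\divQ_3\subseteq\{\mathrm{Pf}(\hat\gamma)=0\}$, and as $\divQ_3$ is an irreducible reduced cubic we conclude $\divQ_3\mid\mathrm{Pf}(\hat\gamma)$. Writing $\mathrm{Pf}(\hat\gamma)=\divQ_3\cdot R$, the cofactor $R$ is homogeneous of degree $2$ and again $\SSL(W)\times\SSL(H)$-invariant (these groups being semisimple admit no nontrivial characters, so every semi-invariant is invariant). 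By remark \ref{rmk:det and Q} the space of invariant quadrics is one-dimensional and spanned by $\divQ_2$, so $R=c\,\divQ_2$ with $c\neq 0$ (otherwise $\mathrm{Pf}(\hat\gamma)\equiv 0$). Consequently $\mathrm{Pf}(\hat\gamma)=c\,\divQ_2\,\divQ_3$ and
\[
\partial\Mon_2=\{\mathrm{Pf}(\hat\gamma)=0\}=\divQ_2\cup\divQ_3,
\]
as claimed.

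The conceptual content is concentrated in the skew-symmetry observation, which collapses the naive degree-$10$ determinant to the degree-$5$ Pfaffian; once this is in place the factorization is essentially forced by theorem \ref{thm:gamma injective} together with the one-dimensionality of the invariant quadrics from remark \ref{rmk:det and Q}. I expect the main points requiring care to be: verifying that $\hat\gamma$ is genuinely alternating as a form on $H\otimes V$ (rather than merely antisymmetric in some chosen basis), the passage from ``$\divQ_3$ vanishes on $\{\mathrm{Pf}=0\}$'' to the divisibility $\divQ_3\mid\mathrm{Pf}(\hat\gamma)$ via irreducibility of $\divQ_3$, and confirming $c\neq 0$ from the non-emptiness of $\CMI_2$.
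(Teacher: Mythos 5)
Your proof is correct and follows essentially the same route as the paper: reduce to condition \eqref{itm:conditionrank} via lemma \ref{minimalequivalentconditions}, observe that $\partial\Mon_2$ is cut by the degree-$5$ Pfaffian of the skew form $\hat\gamma$, use theorem \ref{thm:gamma injective} to extract the cubic factor $\divQ_3$, and identify the residual quadric as $\divQ_2$ by $\SSL(W)\times\SSL(H)$-invariance and remark \ref{rmk:det and Q}. You are in fact slightly more careful than the paper on the divisibility step (irreducibility of $\divQ_3$) and on checking the nonvanishing of the cofactor.
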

\begin{proof}
By lemma \ref{minimalequivalentconditions} we know that $\gamma \in \Mon_2$ if and only if it satisfies condition \eqref{itm:conditionrank}. 
As $\hat{\gamma}$ is a 10 by 10 antisymmetric matrix, it has rank 10 if and only if its Pfaffian does not vanish. 
As the entries of $\hat{\gamma}$ are linear in the coordinates of $A \otimes S^2H^*$, $\partial \Mon_2$ is cut by a degree 5 equation.
Moreover, by \ref{thm:gamma injective} we know that for $\gamma$ to be in $\Mon_2$ it is necessary that it induces an isomorphism from $A^*$ to $S^2 H^*$, so that $\divQ_3$ is contained in $\partial \Mon_2$. 

As $\divQ_3$ has degree 3, we are left with the task of finding a missing quadric, which we claim is $\divQ_2$.
To prove it, we use the joint action of $SL(W)$ and $SL(H)$. 
The instantonic conditions \eqref{itm:conditionfiber}, \eqref{itm:conditionrank} are clearly invariant under the action of both copies of $SL_2$, so that also the missing quadric has the same property.
By remark \ref{rmk:det and Q}, having degree $2$ and being $\SSL(W) \times \SSL(H)$-invariant uniquely determine $\divQ_2$, so that finally $\partial \Mon_2$ is the union of $\divQ_2$ and $\divQ_3$.
\end{proof}

Theorem \ref{minimalmoduli} completes the description of $\mathcal{MI}_2$ inside $\PP(S^2 A^*)$.
In order to state it, we need to choose a notation for two divisors in $\PP(S^2 A^*)$. We do it in the following definition.
\begin{definition}\label{def:det and L}
We denote the degree $3$ divisor in $\PP(S^2 A^*)$ of degenerate conics in $\PP(A)$ by $\Delta_A$. 

There is a unique hyperplane in $\PP(S^2 A^*)$ which is invariant under the action of $SL(W)$.
We denote it by $H_q$ as under the canonical $S^2A^* \cong S^2A$ it is cut by $q$.
\end{definition}

Denote by $F$ the rational surjection
\[
F: \PP(A \otimes S^2H^*) \DashedArrow[->,densely dashed    ] \PP(S^2A^*)
\]
sending $\gamma$ to $\gamma^{-1}(\Delta_H)$.
Note that $F$ has degree $2$ and that by the decomposition of $S^2(A \otimes S^2H^*)$ and $S^2A^*$ into $\SSL(W) \times \SSL(H)$-irreducibles it is the unique $SL(W) \times \SSL(H)$-equivariant map of degree $2$ from $\PP(A \otimes S^2H^*)$ to $\PP(S^2A^*)$.
Another way to describe $F$ is to say that
\begin{equation}\label{eq:F in coordinates}
F(\gamma) = \gamma^{T} \cdot q_H \cdot \gamma
\end{equation}
where $\gamma^{T}$ is the transpose of $\gamma$ and $q_H \in S^2H^*$ is the unique $\SSL(H)$-invariant.

We are now ready to state and prove theorem \ref{minimalmoduli}.
\begin{theorem}\label{minimalmoduli}
The map which associates with an instanton $E$ of charge $2$ its conic of jumping lines is an $\SSL_2$-equivariant isomorphism
\[
\CMI_2 \to \PP(S^2A^*) \setminus \left( \Delta_A \cup H_q \right)
\]
where $\Delta_A$ and $H_q$ are defined in \ref{def:det and L}.
\end{theorem}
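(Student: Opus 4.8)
The plan is to combine the GIT description of $\CMI_2$ from theorem \ref{maininstantontheorem}, the simplification in the minimal case from lemma \ref{minimalequivalentconditions}, and the complete description of the boundary $\partial \Mon_2$ from proposition \ref{minimalmoduliabove}, and then to descend everything along the map $F$ to $\PP(S^2A^*)$. The central object is the rational map $F$ of \eqref{eq:F in coordinates}, which sends $\gamma$ to $\gamma^{-1}(\Delta_H)$; theorem \ref{thm:jumping sheaf} and proposition \ref{cor:generic line minimal} already tell us that the geometric meaning of $F(\gamma)$ is precisely the conic of jumping lines of the instanton represented by $\gamma$. So the map in the statement is the map induced by $F$ on the GIT quotient, and the content of the theorem is (i) that $F$ descends to a well-defined morphism on $\CMI_2 = \Mon_2/G$, (ii) that its image is exactly the open set $\PP(S^2A^*) \setminus (\Delta_A \cup H_q)$, and (iii) that it is an isomorphism onto that image.

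First I would make precise how $F$ interacts with the $G = \GL(H)/\{\pm 1\}$-action. By formula \eqref{eq:F in coordinates}, $F(\gamma) = \gamma^T \cdot q_H \cdot \gamma$, and for $g \in \GL(H)$ acting on $\gamma$ one computes that $F(g\cdot\gamma) = \det(g)^{?}\,F(\gamma)$ up to the invariance of $q_H$ under $\SSL(H)$; since $F$ lands in a projective space and $q_H$ is $\SSL(H)$-invariant, the scaling factor coming from $g$ is absorbed projectively, so $F$ is constant on $G$-orbits. This gives a well-defined morphism $\bar F : \CMI_2 \to \PP(S^2A^*)$, using that $\CMI_2$ is a geometric quotient (remark \ref{rmk:geometric quotient}). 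Conversely, the sketch already in the excerpt (decomposing $i_{C*}\CO_C(-1)$ with respect to $\langle \CO_{\PP(A)}(-2),\CO_{\PP(A)}(-1),\CO_{\PP(A)}\rangle$ and checking the resulting resolution is symmetric in $H$) produces, from a conic $C$ in the target open set, a monad $\gamma$ with $F(\gamma) = [C]$; this is the candidate inverse. The bijectivity on closed points then reduces to the fact that a smooth conic determines, and is determined by, the minimal instanton through theorem \ref{thm:jumping sheaf} together with injectivity of $\gamma$ from theorem \ref{thm:gamma injective} (in the minimal case $\gamma\colon A^*\to S^2H^*$ is an isomorphism, which rigidifies the reconstruction).

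The heart of the argument is identifying the image with the stated complement, and this is where I would spend the most care. One inclusion is that $\CMI_2$ maps into $\PP(S^2A^*)\setminus(\Delta_A\cup H_q)$: by proposition \ref{cor:generic line minimal} the jumping conic of a minimal instanton is \emph{smooth}, hence does not lie on the discriminant $\Delta_A$ of degenerate conics; and one must show it also avoids the invariant hyperplane $H_q$. The conceptual reason for the latter is the $\SSL(W)$-equivariance: the two boundary divisors $\divQ_2$ and $\divQ_3$ of $\partial\Mon_2$ (proposition \ref{minimalmoduliabove}) should map under $F$ exactly onto $H_q$ and $\Delta_A$ respectively. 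Indeed $F$ is the unique $\SSL(W)\times\SSL(H)$-equivariant degree $2$ map, so it must send the invariant quadric $\divQ_2$ to the invariant hyperplane $H_q$ (degrees match: $2\cdot 1 = 2$) and the degeneracy cubic $\divQ_3$ to the degenerate-conic cubic $\Delta_A$ (degrees: $F$ has degree $2$ but $\divQ_3$ is the locus where $\gamma$ drops rank, which maps to the locus where $\gamma^{-1}(\Delta_H)$ degenerates). I would verify these two divisor identifications by an equivariance-plus-degree count, exactly as $\divQ_2$ was pinned down in remark \ref{rmk:det and Q} and definition \ref{def:det and Q}. Granting them, the boundary $\partial\Mon_2$ maps onto $\Delta_A\cup H_q$, so the open complement $\Mon_2$ maps onto the open complement, giving surjectivity of $\bar F$ onto the target open set.

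The main obstacle I anticipate is the bookkeeping that upgrades a bijection on closed points to a scheme-theoretic \emph{isomorphism}. Since $\Mon_2$ is smooth (remark \ref{rmk:geometric quotient}) and $\CMI_2$ is a geometric quotient by the free $G$-action, $\CMI_2$ is smooth; the target $\PP(S^2A^*)\setminus(\Delta_A\cup H_q)$ is smooth of the same dimension $5$; so once $\bar F$ is shown to be a bijective morphism between smooth varieties of the same dimension with the candidate inverse also a morphism (via the resolution construction above, which works in families by the relative Beilinson machinery of theorem \ref{thm:relative Beilinson}), Zariski's main theorem forces it to be an isomorphism. Concretely I would construct the inverse morphism globally: over the open subset of $\PP(S^2A^*)$ parametrizing smooth conics not on $H_q$, form the universal family of conics $i_C\colon C \hookrightarrow \PP(A)$, push forward $\CO_C(-1)$ twisted appropriately, and apply corollary \ref{cor:qcoh flat} to see the Beilinson resolution has locally free terms $H\otimes\CO(-2)$ and $H^*\otimes\CO(-1)$; the symmetry of the resulting $\gamma\in A^*\otimes S^2H^*$ then lands it in $\Mon_2$, and functoriality shows the two constructions are mutually inverse. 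The delicate point is checking that the symmetry and the $G$-ambiguity match up exactly so that the two natural transformations compose to the identity, and that no extra components of the target sneak in — but proposition \ref{minimalmoduliabove} guarantees the boundary is exactly $\divQ_2\cup\divQ_3$, leaving no room for surprises.
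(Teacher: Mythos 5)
Your proposal is correct and follows essentially the same route as the paper: the paper's proof also reduces to showing $F^{-1}(H_q)=\divQ_2$ (by $\SSL(W)\times\SSL(H)$-equivariance plus uniqueness of the bi-invariant quadric, remark \ref{rmk:det and Q}) and $F^{-1}(\Delta_A)=\divQ_3$, and then invokes proposition \ref{minimalmoduliabove}; the reconstruction of the instanton from its conic is handled, as in your second paragraph, by the Beilinson decomposition of $i_{C*}\CO_C(-1)$. One caution: for the cubic, a pure ``equivariance-plus-degree count'' would not suffice, since the space of $\SSL(W)$-invariant cubics in $\PP(S^2A^*)$ is more than one-dimensional (e.g.\ $H_q^3$ is also invariant); the identification $F^{-1}(\Delta_A)=\divQ_3$ really rests on the determinant identity $\det(F(\gamma))=\det(\gamma)^2\det(q_H)$, which is the precise form of your parenthetical remark that $\divQ_3$ is the rank-degeneracy locus of $\gamma$.
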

\begin{proof}
As $F$ is surjective, it is enough to show that
\begin{equation*}
F^{-1}(H_q) = Q_2	\qquad \qquad
F^{-1}(\Delta_A) = Q_3
\end{equation*}
The first one holds as $F$ is $\SSL(H)$-equivariant of degree $2$ and $H_q$ is $\SSL(W)$-equivariant, so that $F^{-1}(H_q)$ is invariant under the action of both $\SSL(W)$ and $\SSL(H)$. By remark \ref{rmk:det and Q}, the only such quadric is $Q_2$.

As for the second one, by equation \eqref{eq:F in coordinates} we have
\[
\det( F(\gamma)) = \det(\gamma)^2 \det(q_H)
\]
so that $F(\gamma)$ is degenerate if and only if $\gamma$ is degenerate.
\end{proof}

A straightforward consequence of theorem \ref{minimalmoduli} is the existence of a unique minimal instanton $E_0$ with an $SL_2$-equivariant structure.
\begin{corollary}\label{cor:SL_2 equivariant instanton}
There is a unique minimal instanton $E_0$ with an $SL_2$-equivariant structure.
\end{corollary}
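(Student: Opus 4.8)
The plan is to use the explicit description of $\CMI_2$ provided by theorem \ref{minimalmoduli} as the $\SSL_2$-variety $\PP(S^2A^*) \setminus (\Delta_A \cup H_q)$, and to translate the question of existence and uniqueness of an $\SSL_2$-equivariant instanton into a question about $\SSL_2$-fixed points on this quasi-projective variety. The key observation is that an instanton $E$ carries an $\SSL_2$-equivariant structure precisely when its isomorphism class is fixed by the $\SSL_2$-action on the moduli space, so the corollary should reduce to counting the $\SSL_2$-fixed points of $\PP(S^2A^*) \setminus (\Delta_A \cup H_q)$ (together with a short argument that a fixed point of the coarse moduli space actually lifts to an equivariant structure on the bundle).

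First I would analyze the fixed locus of $\SSL_2$ acting on the ambient $\PP(S^2A^*)$. Here $A \cong S^2W$ is the standard irreducible $3$-dimensional representation, so $S^2A^* \cong S^2(S^2W)$ decomposes into $\SSL_2$-irreducibles as $S^4W \oplus S^0W$. The $\SSL_2$-fixed points of $\PP(S^2A^*)$ are exactly the projectivizations of one-dimensional subrepresentations, i.e. the lines on which $\SSL_2$ acts trivially; since the only trivial summand is the one-dimensional $S^0W$, there is a \emph{unique} $\SSL_2$-fixed point $[q] \in \PP(S^2A^*)$, namely the class of the invariant quadratic form $q$. I would then check that this single candidate fixed point does not lie in the locus we remove: by definition \ref{def:det and L} the hyperplane $H_q$ is cut by $q$ under $S^2A^* \cong S^2A$, so I must verify that $[q]$ as a point of $\PP(S^2A^*)$ is \emph{not} on $H_q$, and that the non-degenerate quadric $q$ is not a degenerate conic, hence $[q] \notin \Delta_A$. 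The non-degeneracy of $q$ was established in the proof of lemma \ref{lemma:HPD of Gr}, so $[q] \notin \Delta_A$; and the pairing of $q$ with itself under the invariant form is nonzero precisely because $q$ is non-degenerate, giving $[q] \notin H_q$. Thus $[q]$ is the unique $\SSL_2$-fixed point lying in $\CMI_2$.

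The remaining step, which I expect to be the main obstacle, is the passage from a fixed point of the coarse moduli space to an actual $\SSL_2$-equivariant structure on the corresponding bundle $E_0$. Since $\CMI_2$ is only a coarse (not fine) moduli space — and indeed the discussion around proposition \ref{prop:minimal is not fine} warns that the even-charge case is subtle because $-1 \in \GL(H)$ acts nontrivially on the universal monad — one cannot immediately conclude that an $\SSL_2$-invariant isomorphism class admits an equivariant structure. The cleanest way around this is to work at the level of the monad data: the fixed point $[q]$ corresponds to an $\SSL_2$-invariant $\GL(H)$-orbit in $\Mon_2 \subset A \otimes S^2H^*$, and because $A \cong S^2H^* \cong S^2W$ as $\SSL_2$-representations (using $\gamma$ an isomorphism by theorem \ref{thm:gamma injective}), the invariant $\gamma$ intertwines the $\SSL(W)$-action on $A$ with an $\SSL(H)$-action on $S^2H^*$, thereby fixing an identification $H \cong W$ up to scalar and equipping the monad — hence $E_0$ — with an $\SSL_2$-equivariant structure. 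Uniqueness of the equivariant structure up to isomorphism then follows from the fact that $\SSL_2$ is semisimple (so the projective action lifts, and any two lifts differ by a character, of which $\SSL_2$ has none), while uniqueness of $E_0$ itself is exactly the uniqueness of the fixed point $[q]$ established above.

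In summary, I would organize the proof as: (i) invoke theorem \ref{minimalmoduli} to identify $\CMI_2$ with $\PP(S^2A^*)\setminus(\Delta_A\cup H_q)$ as $\SSL_2$-varieties; (ii) decompose $S^2A^* \cong S^4W \oplus S^0W$ to show $\PP(S^2A^*)$ has the single $\SSL_2$-fixed point $[q]$; (iii) check $[q]\notin \Delta_A \cup H_q$ using non-degeneracy of $q$ from lemma \ref{lemma:HPD of Gr}; and (iv) lift the fixed point to an equivariant bundle $E_0$ and argue uniqueness via semisimplicity of $\SSL_2$. The genuinely delicate point is (iv), the coarse-versus-fine subtlety, which I would handle entirely at the monad level rather than on the moduli space.
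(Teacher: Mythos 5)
Your proof follows the paper's argument exactly: identify the unique $\SSL_2$-fixed point of $\PP(S^2A^*)$ as $[q]$ via the decomposition $S^2A^* \cong S^4W \oplus S^0W$, and check $[q] \notin \Delta_A \cup H_q$ using the non-degeneracy of $q$ (the paper phrases the $H_q$ exclusion as $[q]$ spanning the summand complementary to the invariant hyperplane, which is the same fact). Your step (iv) --- lifting the $\SSL_2$-fixed point of the coarse moduli space to an actual equivariant structure on $E_0$ by working at the level of the monad and using simple connectedness and the absence of characters of $\SSL_2$ --- is a genuine subtlety that the paper's proof passes over in silence, and your treatment of it is correct.
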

\begin{proof}
We have already discussed the existence and uniqueness of an $\SSL_2$-invariant point in $q \in \PP(S^2A^*)$.
By theorem \ref{minimalmoduli}, to show that $q$ comes from an instanton it is enough to check that 
\[
q \notin H_q \cup \Delta_A
\]
By definition of irreducible component, $q \notin H_q$. 
To conclude the proof, recall that $\Delta_A$ is the locus of degenerate conics in $A$ while $q$ is non-degenerate, as its kernel is an $\SSL_2$-submodule of $A$.
\end{proof}
\begin{remark}
It is interesting to play the $2$-rays game on the projectivization $\PP_Y(E_0)$ (see for example \cite{corti2000singularities}).
\end{remark}

\begin{remark}
It is possible to combine proposition \ref{prop:degree of jumping conics} and proposition \ref{prop:singular conics} into another proof of the fact that for each minimal instanton the generic line is not jumping.

The argument is the following: if there is a minimal instanton $E$ such that the generic line is jumping for it, then the support of $\JC_E$ contains the divisor of singular conics. 
This contradicts the fact that the degree of the support of $\JC_E$ is $2$, while the degree of the locus of singular conics is $3$.
\end{remark}

The last fact we prove about $\CMI_2$ is that it is not a fine moduli space.
\begin{proposition}\label{prop:minimal is not fine}
There is no universal family of minimal instantons.
\end{proposition}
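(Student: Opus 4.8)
The plan is to exhibit the standard even-charge obstruction: the central involution $g_0 := -\mathrm{id}_H \in \GL(H)$ acts trivially on the base $\Mon_2$ but by $-1$ on the universal monad, and for $n=2$ this cannot be corrected by any twist. Recall from \ref{rmk:geometric quotient} that $\CMI_2 = \Mon_2/G$ is a geometric quotient with $G = \GL(H)/\{\pm 1\}$ acting with trivial stabilizers, and that over $\Mon_2 \times Y$ there is the universal monad \eqref{eq:universal n-monad} whose middle cohomology is a family $\CE$ of instantons; this family is $\GL(H)$-equivariant but a priori only $\GL(H)$-linearized, not $G$-linearized.

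First I would record the action of $g_0$. Since $g_0$ acts on $A \otimes S^2H^*$ by the scalar $(-1)^{-2} = 1$, it fixes every point of $\Mon_2$; on the other hand it scales both $H \boxtimes \calU$ and $H^* \boxtimes \CU^\perp$ by $-1$, hence acts on the middle cohomology $\CE$ by $-\mathrm{id}$ on every fibre. Because each instanton is $\mu$-stable, hence simple, this $-\mathrm{id}$ is an intrinsic automorphism that no fibrewise-scalar isomorphism can alter.

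Next I would argue by contradiction. Suppose a universal family $\CF$ existed on $\CMI_2 \times Y$, and pull it back along the quotient $\pi : \Mon_2 \to \CMI_2$. Then $(\pi \times \mathrm{id})^*\CF$ and $\CE$ are two families on $\Mon_2 \times Y$ inducing the same classifying map $\pi$ with the same simple fibres, so they differ by the twist of a line bundle pulled back from $\Mon_2$. Here I would invoke $\mathrm{Pic}(\Mon_2) = 0$: by \ref{minimalmoduliabove}, $\Mon_2$ is the complement in the affine space $A \otimes S^2H^*$ of the hypersurfaces $\divQ_2 \cup \divQ_3$, which are principal since the ambient space is factorial, and localising a UFD keeps it a UFD; hence every line bundle on $\Mon_2$ is trivial. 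Thus $(\pi \times \mathrm{id})^*\CF \cong \CE$ as sheaves. But $(\pi \times \mathrm{id})^*\CF$ carries the canonical $G$-linearization of a pullback along a geometric quotient, under which $g_0$ (whose image in $G$ is the identity) acts as $+\mathrm{id}$ on every fibre, whereas $g_0$ acts as $-\mathrm{id}$ on $\CE$. A fibrewise-scalar isomorphism commutes with both scalar actions, so it cannot intertwine $+\mathrm{id}$ with $-\mathrm{id}$; this contradiction shows that no universal family exists.

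The step deserving the most care, and which I expect to be the main obstacle to write cleanly, is the reduction: justifying that a universal family must pull back to $\CE$ up to a base twist. I would phrase this through the $\CC^*$-gerbe of local universal families over the coarse moduli space, whose triviality is by definition the existence of a universal family; combined with $\mathrm{Pic}(\Mon_2)=0$ and the nontrivial stabilizer character of $g_0$ computed above, this gerbe is non-trivial. Equivalently and more elementarily, since $\mathrm{Pic}(\Mon_2)=0$ every $\GL(H)$-linearized line bundle on $\Mon_2$ reduces to a character $\det^k$ of $\GL_2$, and $\det(g_0) = (-1)^2 = 1$, so no twist of $\CE$ by a base line bundle can make $g_0$ act trivially on fibres; this is exactly the failure of $G$-descent predicted for even $n$ in the discussion following theorem \ref{maininstantontheorem}.
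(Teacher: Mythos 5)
Your proof is essentially correct, but it follows a genuinely different route from the paper. The paper argues geometrically: a universal family would, via theorem \ref{thm:jumping sheaf} and proposition \ref{cor:generic line minimal}, produce a line bundle on an open subset of the incidence divisor $I \subset \PP(A)\times\PP(S^2A^*)$ restricting to the theta-characteristic $\CO(-1)$ (odd degree) on each conic fibre, whereas by Lefschetz $\Pic(I)$ is generated by $\CO(1,0)$ and $\CO(0,1)$, both of even degree on the fibres. You instead run the standard even-charge descent obstruction on $\Mon_2$: the central involution $-\mathrm{id}_H$ fixes $\Mon_2$ pointwise but acts by $-1$ on the universal monad, hence on each (simple) fibre of $\CE$, and since $\det(-\mathrm{id}_H)=1$ no character of $\GL(H)$ can cancel this, so $\CE$ cannot be $G$-linearized; combined with $\Pic(\Mon_2)=0$ and the fact that any two families with the same simple fibres differ by a base twist, this kills any putative universal family. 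Your argument is more standard and more portable (it uses nothing about $Y$ beyond the monad description and the simplicity of instantons), while the paper's is shorter given the theta-characteristic machinery already in place; both ultimately rest on a parity obstruction. One step you should tighten: since $\Mon_2$ is the complement of the hypersurfaces $\divQ_2\cup\divQ_3$ in affine space, $H^0(\CO^*_{\Mon_2})$ is not just $\CC^*$ but $\CC^*\times\ZZ^2$ (generated by the defining equations), so linearizations of the trivial line bundle are crossed homomorphisms into this larger group, not merely characters $\det^k$; the argument survives because the exponent components give homomorphisms $\GL(H)\to\ZZ$, which vanish, so every such cocycle still takes the value $1$ on $-\mathrm{id}_H$. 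With that remark added, your proof is complete.
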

\begin{proof}
Assume on the contrary there is a universal family $\CE$ of minimal instantons.
Then, by theorem \ref{thm:jumping sheaf} there is a family $\Phi_{\CL}(\CE(-1))$ of smooth conics with a theta-characteristic.
This means that if we consider over $\PP(A) \times \PP(S^2 A^*)$ the incidence divisor $I$, there is an open subset $I^\circ$ with a line bundle restricting to $\CO(-1)$ on each fiber of the projection to $\PP(S^2 A^*)$.

As $I$ is a smooth divisor of degree $(2,1)$, by Lefschetz hyperplane section theorem $\Pic(I)$ is generated by the restrictions of $\CO(1,0)$ and $\CO(0,1)$ from the ambient space.
As $I^\circ$ is open in $I$, the same holds for $I^\circ$.
It follows that the restriction of any line bundle on $I^\circ$ to a fiber of the projection to $\PP(S^2A^*)$ has even degree.
This provides a contradiction with the existence of $\CE$.
\end{proof}

\section{Instantons of charge $3$}\label{sec:Instantons of charge 3}

In this section we are going to describe the moduli space $\CMI_3$ of instantons of charge $3$.
More precisely, in \ref{sec:maptoGr} we construct a natural dominant map $\beta$ from $\CMI_3$ to $\Gr(3,V)$ and we introduce a class of instantons which we call special.
After this, in \ref{sec:An embedding of CMI_3} we lift the map $\beta$ to an embedding into a relative Grassmannian.
The embedding provides $\CMI_3$ with a natural compactification.
Finally, in \ref{sec:Special instantons} and \ref{sec:Non-special instantons} we treat the cases of special and of non-special instantons separately, focussing on the properties of their jumping lines.

\subsection{A map from $\CMI_3$ to $\Gr(3,V)$} \label{sec:maptoGr}

It turns out that there is a surjective map from $\CMI_3$ to $\Gr(3,V)$, which we will denote by $\beta$. 
Its fibers are either projective spaces or Grassmannians and it is convenient to stratify $\Gr(3,V)$ with respect to their type.
More precisely, there will be two strata and $\beta$ will be smooth on each of them.
This will correspond to a distinction between two kinds of instantons: the special ones and the non-special ones.

\begin{notation}\label{not:stratification of B}
We will denote $\Gr(3,V)$ by $\Bg$.
We will also denote $\Imkappa$, that is to say the image of $\kappa$ in $\Gr(3,V)$, by $\Bs$ and its complement $\Bg \setminus \Bs$ by $\Bn$.
\end{notation}
\begin{notation}\label{not:3-monad}
When $c_2(E) = 3$, the space $H^{\prime}$ in monad \eqref{MONAD} is $1$-dimensional. For this reason we will write
\begin{equation}\label{eq:3-monad}
H \otimes \CU \to H^*\otimes \CU^\perp \to \CO_Y
\end{equation}
for any monad associated with an instanton $E$ of charge $3$.
The first map in the complex 
\begin{equation}\label{special instanton spectral sequence}
H \xrightarrow{\gamma} H^* \otimes A \to V^*
\end{equation}
obtained by applying $\Hom(\CU, -)$ to \eqref{eq:3-monad} is denoted by $\gamma$.
\end{notation}

\begin{lemma}
For a charge $3$ instanton the following properties are equivalent
\begin{itemize}
\item $\ext^1(\calU,E) \neq 0$
\item $\hom(\calU,E) \neq 1$
\end{itemize}
\end{lemma}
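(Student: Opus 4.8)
The plan is to compute $\Ext^\bullet(\CU, E)$ by combining the monad \eqref{eq:3-monad} with the cohomology vanishings for tautological bundles collected in lemma \ref{lemma:cohomologies on Y}. First I would observe that, for a charge $3$ instanton, the claim is really a statement about the single number $\hom(\CU, E)$: since $\ext^1(\CU, E) \neq 0$ is to be shown equivalent to $\hom(\CU, E) \neq 1$, the natural strategy is to show that the Euler characteristic $\chi(\CU^*\otimes E) = \sum_i (-1)^i \ext^i(\CU, E)$ is the constant $1$, and then argue that only $\Ext^0$ and $\Ext^1$ can be nonzero. Once those two facts are in place the equivalence is immediate: if $\ext^1 = 0$ then $\hom = \chi = 1$, and conversely if $\hom = 1$ then $\ext^1 = \hom - \chi = 0$, while if $\ext^1 \neq 0$ then $\hom = 1 + \ext^1 > 1$.

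The first computation, $\chi(\CU^*\otimes E) = 1$, I would carry out by Riemann--Roch (or equivalently by additivity of the Chern character over the monad), using $\ch(E) = 2 - nL = 2 - 3L$ from \eqref{eq:ch(E)} together with the Chern data for $\CU$ in \eqref{Cherntable}; this is a routine calculation in the Chow ring of $Y$ with $H^3 = 5P$ and $\omega_Y = \CO_Y(-2)$. The second, more structural, step is the vanishing of $\Ext^{\geq 2}(\CU, E)$. Here I would apply $\Hom(\CU, -)$, i.e. $\RHom(\CU, -)$, to the monad \eqref{eq:3-monad}, viewing $E$ as the middle cohomology of the three-term complex $H\otimes\CU \to H^*\otimes\CU^\perp \to \CO_Y$. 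The key inputs are $\Ext^\bullet(\CU,\CU) = \CC$ (exceptionality of $\CU$), $\Ext^\bullet(\CU,\CU^\perp) = A$ concentrated in degree $0$, and $\Ext^\bullet(\CU, \CO_Y) = H^\bullet(\CU^*) = V^*$ concentrated in degree $0$, all from lemma \ref{lemma:cohomologies on Y}. Because all three of these graded $\Ext$-spaces are concentrated in cohomological degree $0$, the hypercohomology spectral sequence computing $\Ext^\bullet(\CU, E)$ has its $E_1$-page supported in a single row, which forces $\Ext^i(\CU, E) = 0$ for $i \geq 2$ (and for $i < 0$), leaving only $\Ext^0$ and $\Ext^1$.

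The main obstacle I anticipate is bookkeeping with the monad: $E$ is not literally the cokernel of a single map but the cohomology of a length-three complex, so I would split \eqref{eq:3-monad} into two short exact sequences by introducing the image sheaf, say $F := \coker(\gamma': H\otimes\CU \to H^*\otimes\CU^\perp)$, giving
\[
0 \to H\otimes\CU \to H^*\otimes\CU^\perp \to F \to 0, \qquad 0 \to E \to F \to \CO_Y \to 0,
\]
and then chase the two induced long exact sequences of $\Ext^\bullet(\CU, -)$. The degree concentration noted above makes each chase collapse, so the only genuine content is confirming that the connecting maps do not resurrect higher $\Ext$ groups; this follows formally from the fact that the relevant $\Ext$-spaces vanish outside degree $0$. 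I expect no essential difficulty beyond this, so the proof reduces to the Euler-characteristic count plus the one-row spectral-sequence (equivalently, two-step long-exact-sequence) argument.
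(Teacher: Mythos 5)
Your proposal is correct and takes essentially the same route as the paper: the paper likewise applies $\Hom(\CU,-)$ to the monad, uses that $\Ext^\bullet(\CU,\CU)=\CC$, $\Ext^\bullet(\CU,\CU^\perp)=A$ and $\Ext^\bullet(\CU,\CO_Y)=V^*$ are concentrated in degree $0$ to identify $\Hom(\CU,E)$ and $\Ext^1(\CU,E)$ with the cohomology of the complex $H\to H^*\otimes A\to V^*$, and then reads off $\hom(\CU,E)-\ext^1(\CU,E)=9-3-5=1$ from injectivity of $\gamma$. Your Riemann--Roch computation of $\chi(\CU^*\otimes E)$ is just a longer way of obtaining the same Euler characteristic, so no substantive difference.
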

\begin{proof}
We can compute both $\Hom(\CU,E)$ and $\Ext^1(\CU, E)$ using the monad \eqref{eq:n-monad}: they are the cohomology of the complex \eqref{special instanton spectral sequence}.
Note that $\gamma$ is injective as its kernel would contribute to $\Ext^{-1}(\CU, E)$.

It follows from \eqref{special instanton spectral sequence} that
\[
\hom(\calU,E) - \ext^1(\calU,E) = 1
\]
showing the equivalence of the two conditions.
\end{proof}

The above lemma motivates the following definition.
\begin{definition} \label{special instanton}
An instanton $E$ of charge 3 is \emph{special} if $\Ext^1(\calU,E) \neq 0$. Equivalently, an instanton is \emph{non-special} if $\Hom(\calU,E) = \CC$.
\end{definition}
We will see that special instantons are special in many ways. 
Essentially by definition they do not have a canonical map into $\CU^*$, while most of instantons have one. 
They are the only instantons having $2$-jumping lines (see proposition \ref{prop:2-jump again}). 
They are the only instantons whose associated theta-characteristic is not locally free (see proposition \ref{prop:reducible theta}).

We are now going to introduce the main character in the description of $\CMI_3$.

\begin{definition}\label{beta2}
The map 
\[
\beta: \CMI_3 \to \Bg
\] 
sends $E$ to $\Ext^1( - , E)$ of the tautological map $V^* \otimes \calO_Y \to \calU^*$.
\end{definition}

So far it is not clear that
\[
\Ext^1(\CU^* , E) \to \Ext^1(V^* \otimes \CO_Y, E) \cong V \otimes H^1(E) \cong V
\]
is a $3$-dimensional subspace of $V$ and that $\beta$ is regular: we prove it in proposition \ref{prop:beta regular} by means of lemma \ref{no maps from Uperp}.

\begin{lemma} \label{no maps from Uperp}
If $E$ is an instanton with $c_2(E) \geq 3$, then $\Hom(\calU^\perp, E) = 0$.
\end{lemma}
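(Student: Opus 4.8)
The statement is that $\Hom(\CU^\perp, E) = 0$ for any instanton $E$ with $c_2(E) \geq 3$. My plan is to decompose $\CU^\perp$ with respect to the monad for $E$, or dually to exploit the semiorthogonal decomposition \eqref{eq:full exceptional}, and to reduce the vanishing to a combination of the acyclicity results of lemma \ref{lemma:cohomologies on Y} and the orthogonality relations built into the monad \eqref{MONAD}. The cleanest route is to use the acyclic extension $\WE$ from \eqref{eq:acyclic extension}, which fits into
\[
0 \to E \to \WE \to H^1(E) \otimes \CO_Y \to 0,
\]
together with the short monad presentation
\[
0 \to H \otimes \CU \to H^* \otimes \CU^\perp \to \WE \to 0
\]
from theorem \ref{maininstantontheorem}.

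\textbf{Key steps.} First I would apply the functor $\Hom(\CU^\perp, -)$ to the defining sequence \eqref{eq:acyclic extension}. Since $H^\bullet(\CU^\perp) = 0$ by lemma \ref{lemma:cohomologies on Y}, we have $\Hom(\CU^\perp, \CO_Y) = 0$ and indeed $\RHom(\CU^\perp, \CO_Y) = 0$, so the long exact sequence gives $\Ext^\bullet(\CU^\perp, E) \cong \Ext^\bullet(\CU^\perp, \WE)$; in particular it suffices to prove $\Hom(\CU^\perp, \WE) = 0$. Next I would apply $\Hom(\CU^\perp, -)$ to the short monad. This produces an exact sequence
\[
\Hom(\CU^\perp, H^* \otimes \CU^\perp) \to \Hom(\CU^\perp, \WE) \to \Ext^1(\CU^\perp, H \otimes \CU).
\]
Since $\CU^\perp$ is exceptional, $\Hom(\CU^\perp, \CU^\perp) = \CC$, so the first term is $H^*$. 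For the third term I would compute $\Ext^1(\CU^\perp, \CU)$; by Serre duality on $Y$ this is $\Ext^2(\CU, \CU^\perp(-2))^*$, and I would evaluate it via Borel--Bott--Weil and the Koszul resolution \eqref{eq:resolution of Y} exactly as in lemma \ref{lemma:cohomologies on Y} (this is the same kind of computation that gives $\Hom(\CU,\CU^\perp) = A$). The crucial point is to identify the connecting map $\Hom(\CU^\perp, H^* \otimes \CU^\perp) \to \Ext^1(\CU^\perp, H\otimes \CU)$ induced by the monad map $\gamma': H \otimes \CU \to H^* \otimes \CU^\perp$, and to show it is injective, which forces $\Hom(\CU^\perp, \WE) = 0$.

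\textbf{The main obstacle.} The delicate part is controlling that connecting homomorphism: its injectivity is exactly the reflection of the instanton being genuine, and of the charge being at least $3$ (the hypothesis $c_2(E) \geq 3$, i.e. $\dim H \geq 3$, should enter here through the rank of the map $\gamma$ or through condition \ref{maininstantontheorem}\eqref{itm:conditionrank}). An alternative and perhaps more transparent approach, which I would pursue in parallel, is to dualize: $\Hom(\CU^\perp, E) \cong \Hom(E, \CU^\perp \otimes \omega_Y)^* = \Hom(E, \CU^\perp(-2))^*$ by Serre duality, using $E^* \cong E$. Then I would use stability: $E$ is $\mu$-stable of slope $0$ and $\CU^\perp(-2)$ is a direct summand-free stable bundle of negative slope, so any nonzero map $E \to \CU^\perp(-2)$ would violate the slope inequalities unless its image is appropriately constrained; combined with $c_2(E) \geq 3$ this should rule out nonzero homomorphisms entirely. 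I expect the stability argument to be the quickest, with the monad computation serving as a backup that pins down the precise role of the charge bound.
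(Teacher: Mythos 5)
Your first route breaks at its very first step. Lemma \ref{lemma:cohomologies on Y} gives $H^\bullet(\CU^\perp)=0$, i.e.\ $\RHom(\CO_Y,\CU^\perp)=0$; what you need is the opposite direction, and that does \emph{not} vanish: $\Hom(\CU^\perp,\CO_Y)=H^0((\CU^\perp)^*)=H^0(V/\CU)=V$. Consequently the acyclic-extension sequence does not identify $\Ext^\bullet(\CU^\perp,E)$ with $\Ext^\bullet(\CU^\perp,\WE)$. Worse, the reduction you aim for is structurally impossible: since $\Hom(\CU^\perp,\CU)=0$ (a nonzero map would have image of slope both $>-1/3$ and $<-1/2$), applying $\Hom(\CU^\perp,-)$ to the short monad shows that $H^*=\Hom(\CU^\perp,H^*\otimes\CU^\perp)$ injects into $\Hom(\CU^\perp,\WE)$, so $\Hom(\CU^\perp,\WE)\ne 0$ whenever $H\ne 0$ — this nonvanishing is in fact the point of proposition \ref{prop:beta1 is beta2}, where $\beta(E)$ is the image of $H^*\to\Hom(\CU^\perp,\CO_Y)=V$. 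The correct statement is $\Hom(\CU^\perp,E)=\ker\bigl(\Hom(\CU^\perp,\WE)\to V\otimes H^1(E)\bigr)$, and the lemma amounts to injectivity of that map, so the monad route only relocates the problem.

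Your second route contains the right idea but is also misstated and incomplete. Serre duality gives $\Hom(\CU^\perp,E)\cong\Ext^3(E,\CU^\perp(-2))^*$, not $\Hom(E,\CU^\perp(-2))^*$; the useful reformulation uses only $E\cong E^*$ and reads $\Hom(\CU^\perp,E)\cong\Hom(E,V/\CU)$, but one may as well work with $\Hom(\CU^\perp,E)$ directly, as the paper does. Slope inequalities alone only dispose of maps whose image has rank $0$ (torsion-freeness of $E$) or rank $1$ (the image would be a rank-one sheaf of integral slope strictly between $\mu(\CU^\perp)=-1/3$ and $\mu(E)=0$). They do \emph{not} exclude rank-$2$ maps, and that is precisely where $c_2(E)\ge 3$ must enter. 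The paper completes a rank-$2$ map to $0\to\CL\to\CU^\perp\to E\to Q\to 0$, notes that $\CL$ is a line bundle pinned down to $\CO_Y(-1)$ by the two stability inequalities $c_1(\CL)\le -1$ and $c_1(\CL)\ge -1$, and then computes $\ch(Q)=(3-c_2(E))L-P$, whose leading term is negative for $c_2(E)\ge 3$ — a contradiction. Your sentence ``combined with $c_2(E)\ge 3$ this should rule out nonzero homomorphisms entirely'' is exactly the missing argument, not a consequence of what precedes it.
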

\begin{proof}
Both $E$ and $\CU^\perp$ are $\mu$-stable (see lemma \ref{lemma:stability on Y}) of slope respectively $\mu(E) = 0$ and $\mu(\CU^\perp) = -1/3$. 
It follows that there is no map of rank $1$ from $\CU^\perp$ to $E$, as otherwise
\[
\mu(E) > \mu(\Image) > \mu(\CU^\perp)
\]
which contradicts the fact that the slope of the image $\mu(\Image)$ is an integer.

As $E$ is torsion-free, there are no maps of rank $0$, so that we only have to check that there are no maps of rank $2$. 
To prove it, assume there is such a map and complete it to an exact sequence.
\[
0 \to \mathcal{L} \to \calU^\perp \to E \to Q \to 0
\]
Then $\CL$ is reflexive of rank 1 and therefore it is a line bundle. 
Moreover, $c_1(\mathcal{L}) \leq -1$ by stability of $\calU^\perp$ and $c_1(\mathcal{L}) \geq -1$ by stability of $E$, so that $\mathcal{L} \cong \calO(-1)$. 
To conclude the proof, we use equations \eqref{Cherntable} and \eqref{eq:ch(E)} to compute $\ch(Q)$ and derive a contradiction:
\[
\ch(Q) = \ch(E) - \ch(\calU^\perp) + \ch(\calO(-1))
\]
yields 
\[
\ch(Q) = \left(3L - c_2(E)\right) - P
\] 
so that the leading term is negative when $c_2(E) \geq 3$.
\end{proof}
The bound in lemma \ref{no maps from Uperp} is sharp as instantons of charge $2$, by monadic description, always have a $2$-dimensional space of maps from $\calU^\perp$.

\begin{proposition}\label{prop:beta regular}
$\beta$ is a regular map from $\CMI_3$ to $\Bg$.
\end{proposition}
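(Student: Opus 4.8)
The goal is to show that $\beta$, defined in Definition \ref{beta2} by sending an instanton $E$ to $\Ext^1(-,E)$ applied to the tautological surjection $V^*\otimes\CO_Y \to \CU^*$, lands in $\Gr(3,V)$ and varies regularly. The plan is to unwind the definition: applying $\Ext^\bullet(-,E)$ to the short exact sequence \eqref{eq:definingsequencedual}, namely $0\to\CU^\perp\to V^*\otimes\CO_Y\to\CU^*\to 0$, produces a long exact sequence, and the map $\Ext^1(\CU^*,E)\to\Ext^1(V^*\otimes\CO_Y,E)$ is the one cutting out $\beta(E)$.

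First I would analyze this long exact sequence. The term $\Ext^1(V^*\otimes\CO_Y,E)$ is $V\otimes H^1(E)$, and since $c_2(E)=3$ gives $h^1(E)=n-2=1$ by Remark \ref{rmk:ch(E)}, this is canonically $V$. The relevant portion reads
\[
\Hom(\CU^\perp,E)\to \Ext^1(\CU^*,E)\to \Ext^1(V^*\otimes\CO_Y,E)\to \Ext^1(\CU^\perp,E).
\]
By Lemma \ref{no maps from Uperp}, $\Hom(\CU^\perp,E)=0$ for $c_2(E)\ge 3$, so $\Ext^1(\CU^*,E)\to V$ is injective, and $\beta(E)$ is genuinely the image subspace $\Ext^1(\CU^*,E)\subset V$. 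The key remaining point is to pin down its dimension as exactly $3$. For this I would compute $\ext^1(\CU^*,E)$ via Euler characteristic together with the vanishing of the other $\Ext^i(\CU^*,E)$. One expects $\Hom(\CU^*,E)=0$ and $\Ext^{\ge 2}(\CU^*,E)=0$ by stability and Serre duality arguments analogous to Lemma \ref{no maps from Uperp} (note $\CU^*$ has slope $1/2$, $E$ has slope $0$, so there are no nonzero maps $\CU^*\to E$), reducing the dimension count to $\chi(\CU^*,E) = -\chi(E\otimes\CU)$. This Euler characteristic can be read off from the Chern character data in \eqref{Cherntable} and \eqref{eq:ch(E)} by Riemann--Roch, and I expect it to equal $3$.

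Having established that $\beta$ is well-defined on points as a map to $\Gr(3,V)$, I would then upgrade this to regularity by working in families. The plan is to use the universal monad \eqref{eq:universal n-monad} over $\Mon_3$: applying the relative $\Ext$ functor $R\CHom$ and the universal version of the tautological sequence \eqref{eq:definingsequencedual}, one obtains a morphism of vector bundles over $\Mon_3$ whose fiberwise cokernel/image realizes the assignment $E\mapsto\Ext^1(\CU^*,E)$. Concretely, the relative $\Ext^1(\CU^*,-)$ of the universal family is a rank $3$ subbundle of the trivial bundle with fiber $V$ (the constancy of the rank, hence local freeness, follows from the pointwise dimension computation above together with base change, as in Corollary \ref{cor:qcoh flat}). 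This subbundle defines a classifying morphism $\Mon_3\to\Gr(3,V)$; since the construction is $\GL(H)$-equivariant with the action on $\Gr(3,V)$ trivial, it descends through the geometric quotient $\Mon_3\to\CMI_3$ to give the regular map $\beta$.

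The main obstacle I anticipate is the dimension count: one must rule out jumping of $\ext^1(\CU^*,E)$, i.e. show that all the other cohomologies $\Hom(\CU^*,E)$ and $\Ext^2(\CU^*,E)$, $\Ext^3(\CU^*,E)$ vanish uniformly, so that $\Ext^1$ has constant rank and the family is flat. The vanishing of $\Hom(\CU^*,E)$ is a slope-stability argument; the higher $\Ext$ groups vanish by Serre duality ($\omega_Y=\CO_Y(-2)$) combined with the self-duality $E\cong E^*$ of the instanton, converting them back into $\Hom$ and $\Ext^1$ type groups that were already controlled. Once uniformity is in hand, local freeness and the base-change isomorphism guarantee that the pointwise subspace assignment is algebraic, completing the proof of regularity.
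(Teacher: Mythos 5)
Your overall architecture matches the paper's: injectivity of $\Ext^1(\CU^*,E)\to V\otimes H^1(E)$ from $\Hom(\CU^\perp,E)=0$ (Lemma \ref{no maps from Uperp}), then a relative version of the construction plus cohomology-and-base-change to get a rank-$3$ subbundle of $V\otimes\CO_S$ and hence a classifying map to $\Bg$ (the paper works with an arbitrary flat family and invokes the universal property of the coarse moduli space, while you work over $\Mon_3$ and descend through the quotient; these are interchangeable). The one genuine divergence is the dimension count $\ext^1(\CU^*,E)=3$. The paper gets this in one line: by self-duality $\ext^1(\CU^*,E)=\ext^1(E,\CU)$, and the latter is read off the monad \eqref{MONAD} as $\dim H=3$, since $\RHom(\CU^\perp,\CU)=0$ and $\RHom(\CO_Y,\CU)=0$ force $\RHom(E,\CU)\cong H^*[-1]$. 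Your route via Riemann--Roch requires, in addition, the vanishing of $\Hom(\CU^*,E)$ (fine: slopes $1/2>0$) and of $\Ext^{2}(\CU^*,E)$ and $\Ext^3(\CU^*,E)$, and here your stated justification is not quite enough. Serre duality converts these to $\Ext^1(E,\CU(-1))^*$ and $\Hom(E,\CU(-1))^*$; the $\Hom$ dies by stability, but $\Ext^1(E,\CU(-1))=H^1(E\otimes\CU(-1))$ is not a group you have ``already controlled'' --- killing it needs the twisted tautological sequence $0\to\CU(-1)\to V\otimes\CO(-1)\to V/\CU(-1)\to 0$ together with the instanton condition $H^1(E(-1))=0$ and the stability vanishing $\Hom(\CU^\perp(1),E)=0$. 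So your plan closes, but only after importing the instantonic vanishing, not merely ``stability and Serre duality''; the paper's monad computation avoids this entirely and also delivers the vanishing of the other $\Ext$'s for free.
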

\begin{proof}
By self duality of $E$, $\ext^1( \calU^* , E)  = \ext^1( E, \calU)$, and by monadic description \eqref{MONAD} we have $\ext^1( E, \calU) = 3$. 
Moreover the kernel of $\Ext^1(\calU^*,E) \to V \otimes H^1(E)$ sits inside $\Hom(\calU^\perp, E)$, which vanishes by lemma \ref{no maps from Uperp}, so that
\[
\Ext^1(\CU^*,E) \subset V \otimes H^1(E)
\]
gives a point in $\Bg$.

We prove that the map $\beta$ is regular by constructing it in families. Take any flat family $\CE$ of instantons of charge 3 over a base scheme $S$. Take the tautological map $V^* \otimes \CO_Y \to \CU^*$ and out of it construct
\[
\CHom(\CU^*, \CE) \to V \otimes \CHom(\CO_Y,{\CE})
\]

Push it forward to $S$. By cohomology and base change, we obtain a (shifted by 1) rank 3 subbundle of $V \otimes \CH^1(E)$, where $\CH^1(\CE)$ is a line bundle. 
By twisting the subbundle by $\CH^1(\CE)^*$ we finally get a rank 3 subbundle of $V \otimes \CO_S$, which by defining property of $\Bg$ gives a map from $S$ to $\Bg$. 
By the universal property for the coarse moduli space of instantons, we induce a unique map from $\CMI_3$ to $\Bg$.
\end{proof}

Lemma \ref{beta2second} and proposition \ref{prop:beta1 is beta2} deal with different descriptions of $\beta$.
\begin{lemma}\label{beta2second}
$\beta(E)$ is the $3$-dimensional subspace of $V$ given by $\Ext^1(E, -)$ applied to the tautological $\CU \to V \otimes \CO_Y$. 
\end{lemma}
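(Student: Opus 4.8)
The plan is to show that the two descriptions of $\beta(E)$ coincide by exhibiting a canonical isomorphism between the relevant $\Ext$-groups. Recall that Definition \ref{beta2} produces $\beta(E)$ as the image of $\Ext^1(\CU^*,E) \hookrightarrow V \otimes H^1(E)$ induced by the tautological $V^* \otimes \CO_Y \to \CU^*$, whereas the statement of Lemma \ref{beta2second} produces a subspace of $V$ as the image of $\Ext^1(E,\CU) \to \Ext^1(E, V \otimes \CO_Y) \cong V \otimes H^1(E)^{**}$ induced by $\CU \to V \otimes \CO_Y$. So the task is essentially to reconcile these two via Serre duality.

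First I would invoke Serre duality together with the self-duality of the instanton, as was already done in the proof of Proposition \ref{prop:beta regular}: since $\omega_Y = \CO_Y(-2)$ and $E \cong E^*$, we have canonical isomorphisms $\Ext^1(\CU^*,E) \cong \Ext^2(E,\CU^* \otimes \omega_Y)^* \cong \Ext^2(E, \CU^*(-2))^*$, and one checks $\CU^*(-2) \cong \CU(-1)$ by the relation $\CU(1)=\CU^*$ of Lemma \ref{lemma:cohomologies on Y}, so this becomes $\Ext^2(E,\CU(-1))^*$. Running the same duality the other way, $\Ext^1(E,\CU)$ is paired with $\Ext^2(\CU, E(-2))$; using $E \cong E^*$ one matches these up so that the pairing identifies $\Ext^1(E,\CU)$ with the dual of $\Ext^1(\CU^*,E)$ (or directly with it, up to the fixed one-dimensional twist $H^1(E)$). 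The key point I would make precise is that under these identifications the map induced by $\CU \to V \otimes \CO_Y$ on the groups $\Ext^1(E,-)$ is the Serre-dual of the map induced by $V^* \otimes \CO_Y \to \CU^*$ on the groups $\Ext^1(-,E)$, because the two morphisms of sheaves $\CU \to V \otimes \CO_Y$ and $V^* \otimes \CO_Y \to \CU^*$ are themselves dual to one another (sequences \eqref{eq:definingsequence} and \eqref{eq:definingsequencedual}).

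Consequently the image of $\Ext^1(E,\CU) \to V \otimes H^1(E)$ is, after the canonical identification $V^{**} \cong V$, the same $3$-dimensional subspace of $V$ as the image coming from Definition \ref{beta2}. I would phrase this as a commutative square of Serre-duality isomorphisms intertwining the two evaluation maps, so that the two images agree on the nose rather than merely having the same dimension. The dimension count $\ext^1(E,\CU) = 3$ and the injectivity modulo $\Hom(\CU^\perp,E)=0$ (Lemma \ref{no maps from Uperp}) are already available from the proof of Proposition \ref{prop:beta regular}, so no new vanishing is needed.

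The main obstacle I anticipate is purely bookkeeping: tracking the one-dimensional twist by $H^1(E)$ and the various canonical identifications ($\CU^*(-2)\cong\CU(-1)$, $V^{**}\cong V$, and the duality between the tautological sequences) carefully enough that the two subspaces are literally equal, not just abstractly isomorphic. In particular I would verify that the functoriality of Serre duality applies to the map of sheaves $V^* \otimes \CO_Y \to \CU^*$, so that its Serre dual really is the map $\CU \to V \otimes \CO_Y$ applied under $\Ext^1(E,-)$; this is where a sign or twist error could creep in, and it is the step I would write out most carefully.
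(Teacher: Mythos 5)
Your proposal is correct, but it takes a more roundabout route than the paper. The paper's proof is a one-liner: since $E$ is a rank--$2$ bundle with $c_1(E)=0$, it is self-dual, and for locally free $F$ one has the elementary natural isomorphism $\Ext^1(F,E)=H^1(F^*\otimes E)\cong H^1(F^*\otimes E^*)=\Ext^1(E,F^*)$, contravariantly functorial in $F$. Applying this to the dual pair of tautological maps $V^*\otimes\CO_Y\to\CU^*$ and $\CU\to V\otimes\CO_Y$ gives precisely the commutative square identifying the two images, with no Serre duality and hence none of the twists you are worried about. Your route through Serre duality does close up — the chain $\Ext^1(\CU^*,E)\cong\Ext^2(E,\CU(-1))^*\cong\Ext^2(\CU,E(-2))^*\cong\Ext^1(E,\CU)$ is valid using $\CU^*\cong\CU(1)$ and $E\cong E^*$, and Serre duality is natural in both arguments so the evaluation maps are intertwined — but you are applying Serre duality twice in a way that cancels, leaving exactly the self-duality isomorphism the paper uses directly. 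The bookkeeping you flag as the main obstacle (tracking $\CU^*(-2)\cong\CU(-1)$, double dualization of $V$, the $H^1(E)$ twist) is real in your formulation but entirely avoidable; if you write the proof, I would recommend replacing the Serre-duality detour by the direct tensor identity above, which makes the commutativity of the square immediate.
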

\begin{proof}
By self-duality of $E$, there is a functorial isomorphism between $\Ext^1( - , E)$ and $\Ext^1(E, -^*)$ inducing a commutative square
\begin{equation}\label{diag:beta2second}
\begin{diagram}
\Ext^1(E, V \otimes \calO) 	& \lTo 	& \Ext^1(E,\calU) 	\\
	\dTo_{\cong}			&		&	\dTo_{\cong}	\\
\Ext^1(V^* \otimes \calO, E)	& \lTo	& \Ext^1(\calU^*,E)
\end{diagram}
\end{equation}
\end{proof}
\begin{proposition}\label{prop:beta1 is beta2}
Let \eqref{eq:3-monad} be a monad for $E$. $\beta(E)$ is the image of the injective map
\[
H^* \cong \Hom(\CU^\perp, H^* \otimes \CU^\perp) \to \Hom(\CU^\perp, \CO_Y) \cong V
\]
obtained from \eqref{eq:3-monad} by applying $\Hom(\CU^\perp, -)$.
\end{proposition}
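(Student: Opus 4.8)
The plan is to show that the two descriptions of $\beta(E)$ coming from Definition \ref{beta2} and from the monad \eqref{eq:3-monad} agree, by identifying both with the map on $\Ext^1$'s induced by the tautological inclusion $\CU^\perp \to V \otimes \CO_Y$ of sequence \eqref{eq:definingsequence}. By Lemma \ref{beta2second} we already know that $\beta(E)$ is the image of
\[
\Ext^1(E,\CU) \to \Ext^1(E, V \otimes \CO_Y) \cong V \otimes H^1(E) \cong V,
\]
so it suffices to reconcile this with the map in the statement, namely
\[
H^* \cong \Hom(\CU^\perp, H^* \otimes \CU^\perp) \to \Hom(\CU^\perp, \CO_Y) \cong V.
\]
First I would note that $\Hom(\CU^\perp, H^* \otimes \CU^\perp) \cong H^*$ holds because $\CU^\perp$ is exceptional (see Lemma \ref{lemma:cohomologies on Y} and Proposition \ref{prop:full exceptional}), and $\Hom(\CU^\perp,\CO_Y) \cong V$ is again Lemma \ref{lemma:cohomologies on Y}, so both identifications are canonical.

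The key computational step is to apply $\Hom(\CU^\perp, -)$ to the monad \eqref{eq:3-monad}, viewed as a filtration of $E$ (up to the acyclic extension). I would break \eqref{eq:3-monad} into its two short exact sequences: writing $G$ for the fiberwise cokernel $\coker(\gamma': H \otimes \CU \to H^* \otimes \CU^\perp)$, there are sequences
\begin{equation*}
0 \to H \otimes \CU \to H^* \otimes \CU^\perp \to G \to 0, \qquad 0 \to E \to G \to \CO_Y \to 0.
\end{equation*}
Applying $\Hom(\CU^\perp, -)$ and using $\Hom(\CU^\perp, \CU) = \Ext^\bullet(\CU^\perp, \CU) = 0$ (from the exceptional collection \eqref{eq:coll}, since $\CU$ precedes $\CU^\perp$) together with $\Ext^{>0}(\CU^\perp, \CU^\perp) = 0$, I would extract the connecting maps showing that $\Ext^1(\CU^\perp, E)$ is computed from $\Hom(\CU^\perp, H^* \otimes \CU^\perp) = H^*$ and that the resulting composite $H^* \to \Hom(\CU^\perp, \CO_Y) = V$ is exactly the one factoring through the monad map $H^* \otimes \CU^\perp \to \CO_Y$. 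The main obstacle I anticipate is bookkeeping: carefully tracking the shifts and the connecting homomorphisms in the two long exact sequences so that the map on $\Ext^1$ is genuinely identified with the map $\Hom(\CU^\perp, H^*\otimes\CU^\perp) \to \Hom(\CU^\perp,\CO_Y)$ induced by $\CU^\perp \to V \otimes \CO_Y$, rather than some twist or dual of it.

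To close the identification I would invoke self-duality of $E$ once more, as in diagram \eqref{diag:beta2second}: the monad \eqref{eq:3-monad} is anti-selfdual, so applying $\Hom(-, \CO_Y)$ (equivalently, dualizing and using $\Hom(\CU,\CO_Y) \cong V^*$, $\Hom(\CU^\perp,\CO_Y)\cong V$) interchanges the first and last maps of the monad compatibly with the $\Ext^1(E,\CU) \to V$ description of Lemma \ref{beta2second}. Thus the image of $H^* \to V$ coincides with the image of $\Ext^1(E,\CU) \to V$, which is $\beta(E)$. Injectivity of $H^* \to V$ is immediate from the vanishing $\Hom(\CU^\perp, E) = 0$ of Lemma \ref{no maps from Uperp} (valid since $c_2(E) = 3 \geq 3$), which forces the connecting map out of $\Hom(\CU^\perp, G)$ into $\Ext^1(\CU^\perp, H\otimes \CU) = 0$ to vanish and hence $H^* \hookrightarrow V$; this also re-proves that $\beta(E)$ is genuinely $3$-dimensional. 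I would finish by remarking that all constructions are functorial and can be carried out in families, so the equality of the two maps holds over $\CMI_3$, consistently with Proposition \ref{prop:beta regular}.
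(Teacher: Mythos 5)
Your proposal is correct and follows essentially the same route as the paper: the paper's proof is exactly the commutative diagram obtained by applying $\Hom(\CU^\perp,-)$ to the two short exact sequences you write down (the monad presentation of the acyclic extension $G=\widetilde E$ and $0\to E\to\widetilde E\to\CO_Y\to 0$), using $\Ext^\bullet(\CU^\perp,\CU)=0$, exceptionality of $\CU^\perp$, and $\Hom(\CU^\perp,E)=0$ just as you do. The only cosmetic difference is the closing step: the paper identifies $\ker\bigl(V\to\Ext^1(\CU^\perp,E)\bigr)$ with $\beta(E)$ directly from Definition \ref{beta2} by naturality of the connecting maps induced by $\CU^\perp\to V^*\otimes\CO_Y$, rather than detouring through Lemma \ref{beta2second} and anti-selfduality of the monad.
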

\begin{proof}
Under the canonical identification of $B$ with the Grassmannian of $2$-dimensional quotients of $V$, $\beta(E)$ is the bottom row the the following commutative diagram.
\begin{equation}\label{diag:beta1 is beta2}
\begin{diagram}
\Hom(\CU^\perp, H^*\otimes \CU^\perp)	&\rTo^\cong&\Hom(\CU^\perp,\widetilde{E})	&\lTo_0&0=\Hom(\CO_Y,\widetilde{E})\\
								& \rdTo	&		\dTo				&				&	\dTo_0	\\
								&  		& \Hom(\CU^\perp, \CO_Y)	&\lTo_{\quad \cong} &V\otimes H^0(\CO_Y)\\
								&		& 		\dTo				&				&\dTo^\cong		\\
								&		& \Ext^1(\CU^\perp, E)		&\lTo_{\quad \beta(E)}&V \otimes H^1(E)
\end{diagram}
\end{equation}
The upper left triangle is induced by the monad \eqref{eq:3-monad}, the middle and right column are induced by the acyclic extension sequence \eqref{eq:acyclic extension}. 
The horizontal maps are induced by $\CU^ \perp \to V^* \otimes \CO_Y$.
The commutativity of \eqref{diag:beta1 is beta2} shows that 
\[
\Hom(\CU^\perp, H^*\otimes \CU^\perp) \to \Hom(\CU^\perp, \CO_Y)
\] 
is the kernel of $\beta(E)$. 
\end{proof}
\begin{notation}
By proposition \ref{prop:beta1 is beta2}, for a family of instantons $\CE$ over a base $S$ we will write
\begin{equation}\label{eq:3-monad family}
\CH \boxtimes \CU \to \beta^*\CK \boxtimes \CU^\perp \to \CO_{S \times Y}
\end{equation}
for its decomposition with respect to the usual collection \eqref{eq:coll}.
\end{notation}

Next, we characterize the special instantons in terms of their image under $\beta$.
Recall that we have introduced a stratification $\{ \Bs, \Bn \}$ for $\Bg = \Gr(3,V)$, and that the close stratum $\Bs$ is the image under $\kappa$ of $\PP(A)$.
\begin{proposition}\label{beta special}
An instanton $E$ is special if and only if $\beta(E) \in \Bs$. 
\end{proposition}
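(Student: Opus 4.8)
The plan is to connect the two characterizations of specialness: being special means $\Ext^1(\CU, E) \neq 0$, which by the computation in Notation~\ref{not:3-monad} is equivalent to $\gamma: H \to H^* \otimes A$ failing to be part of an exact complex, i.e. to the cokernel of $\gamma$ inside $V^*$ being larger than expected. On the other side, $\beta(E) \in \Bs = \Imkappa$ means that $\beta(E) = \kappa(a)$ for some $a \in \PP(A)$. So the heart of the matter is to translate the vanishing (or non-vanishing) of $\Ext^1(\CU, E)$ into a statement about the $3$-dimensional subspace $\beta(E) \subset V$ landing in the image of $\kappa$.

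First I would recall the complex \eqref{special instanton spectral sequence}, namely $H \xrightarrow{\gamma} H^* \otimes A \to V^*$, obtained by applying $\Hom(\CU,-)$ to the monad \eqref{eq:3-monad}. Since $\gamma$ is injective (its kernel would contribute to $\Ext^{-1}(\CU,E)$), we have $\hom(\CU,E) = \dim \ker(\text{second map}) - 2$ and $\ext^1(\CU,E) = \coker$ of the second map; thus $E$ is special precisely when the map $H^* \otimes A \to V^*$ fails to be surjective. Dually, by Proposition~\ref{prop:beta1 is beta2}, $\beta(E)$ is the image of the injection $H^* \cong \Hom(\CU^\perp, H^*\otimes\CU^\perp) \to \Hom(\CU^\perp,\CO_Y) \cong V$ coming from the second map of the monad. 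I would make these two maps literally adjoint to each other under the composition pairing $A \otimes V \to V^*$ described in Lemma~\ref{lemma:cohomologies on Y}, which identifies the evaluation of a skew form on a vector. The key observation is then: the cokernel of $H^* \otimes A \to V^*$ is nonzero if and only if there is $0 \neq a \in A$ and a vector in $V$ (spanning $\ker$ of the transpose) such that the pairing $A(\beta(E), -)$ degenerates in the manner recorded by $\kappa$.

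The main technical step will be to show that this degeneracy is exactly the condition $\beta(E) = A(\ker a, -)^\perp = \kappa(a)$ from the definition of $\kappa$ in Section~\ref{sec:embeddings of P(A)}. Concretely, I would argue that $E$ special forces the three-dimensional space $\beta(E) \subset V$ to be the kernel of the composite $V \to A^* \otimes V^* \xrightarrow{\ker a} A^*$ for a suitable $a$; this is precisely the defining description of $\Imkappa$. For the converse, assuming $\beta(E) = \kappa(a)$, I would feed this back through the adjunction to produce a nonzero element of $\coker(H^* \otimes A \to V^*)$, hence a nonzero $\Ext^1(\CU,E)$. I expect the cleanest route is to run everything relatively over a family $\CE$ using the decomposition \eqref{eq:3-monad family}, so that the cokernel sheaf of $\beta^*\CK \boxtimes A \to V^* \otimes \CO_S$ (obtained after applying $\Hom(\CU,-)$ fiberwise and using $\Hom(\CU,\CU^\perp)=A$) jumps rank exactly over $\beta^{-1}(\Bs)$.

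The hard part will be pinning down the precise linear-algebra identification that the failure of surjectivity of $H^* \otimes A \to V^*$ matches the $\kappa$-condition rather than some larger degeneracy locus; this requires using that $\gamma$ is injective and that $\hat\gamma$ has rank $4n+2 = 14$ (condition \eqref{itm:conditionrank} of Theorem~\ref{maininstantontheorem}), so that the only way $\ext^1(\CU,E)$ can be nonzero is the codimension-one degeneration captured by $\kappa$, and not a deeper one. I would verify this by a dimension count on ranks combined with the explicit pairing of Lemma~\ref{lemma:cohomologies on Y}, checking that a one-dimensional cokernel forces $\beta(E)$ to be a fiber of $\kappa$ and conversely. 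Once this identification is in place, the equivalence $E \text{ special} \iff \beta(E) \in \Bs$ follows immediately.
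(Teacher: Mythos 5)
Your proposal follows essentially the same route as the paper: both reduce specialness to the non-surjectivity of the map $\epsilon: H^* \otimes A \to V^*$ from the complex \eqref{special instanton spectral sequence}, and both identify $\epsilon$, via the pairing of Lemma~\ref{lemma:cohomologies on Y}, with the composition of $A \otimes \beta(E)$ and the evaluation $A \otimes V \to V^*$. The only remark worth making is that the ``hard part'' you flag is handled in the paper by a one-line dimension count --- since $\dim \beta(E) = 3$, the condition $A(\beta(E), v) = 0$ can hold only when $\dim A(v,-) \leq 2$, i.e.\ when $v = \ker a_v$, and then $\beta(E) = \kappa(a_v)$ is forced --- so the rank condition on $\hat{\gamma}$ and the relative formulation are not needed.
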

\begin{proof}
$E$ is special if and only if in the sequence \eqref{special instanton spectral sequence} the map
\[
H^* \otimes A \xrightarrow{\epsilon} V^*
\]
is not surjective. Instead of computing $\Ext^1( - , E)$ only on $\calU$, compute it on the universal map $\calU \to A^* \otimes \calU^\perp$. The result is a commutative diagram
\begin{diagram}
A \otimes H^*		& \rTo_{\epsilon}			&	V^*		\\
\uTo^{Id}_{\cong}	& 						& \uTo_{a(v,-)}	\\
 A \otimes H^*		& \rTo_{A \otimes \beta(E)}	& A \otimes V
\end{diagram}
showing that $\epsilon$ has a cokernel if and only if there is $v \in V$ such that $A(\beta(E),v) = 0$. As $\dim \beta(E) = 3$, this can happen if and only if $\dim A(v, -) \leq 2$, which is the case if and only if there is $a_v \in A$ such that $v = \ker a_v$. Finally, if $v = \ker a_v$, there's a unique choice for $\beta(E)$ and it is $\korth(a_v)$.
\end{proof}

The following is a more accurate description of the moduli space $\CMI_3^{\sfs}$ of special instantons.
Note that so far we have defined what a single special instanton is, but we do not have a notion of family of special instantons.

Let $S$ be a scheme and denote the projections from $Y \times S$ to $Y$ and $S$ respectively by $\pi_Y$ and $\pi_S$.

\begin{definition}\label{def:special instanton definition}
We will denote by $\CMI_3^{\sfs}$ the functor which associates with $S$ the set of families of instantons $\CE$  such that the schematic support $R^1\pi_{\CMI_3*}\CHom(\pi_Y^*\CU,\CE)$ is the whole $S$.
\end{definition}

\begin{remark}
The formation of $R^1\pi_{\CMI_3*}\CHom(\pi_Y^*\CU,\CE)$ commutes with arbitrary base changes $T \to S$. 
The reason is that by monad \eqref{eq:3-monad} all higher pushforwards 
\[
R^{>1}\pi_{\CMI_3*}\CHom(\pi_Y^*\CU,\CE)
\] 
vanish.
\end{remark}
\begin{remark}
Also the formation of the support of $R^1\pi_{\CMI_3*}\CHom(\pi_Y^*\CU,\CE)$ commutes with arbitrary base change.
This is true as the rank of $R^1\pi_{\CMI_3*}\CHom(\pi_Y^*\CU,\CE)$ is never greater than $1$, so that its the schematic support is the first degeneracy locus of the vector bundle map
\begin{equation}\label{eq:special instanton definition}
A \otimes \beta^*\CK \to V^* \otimes \CO_S
\end{equation}
To conclude, note that the formation of the above map commutes with arbitrary base change by functoriality of the decomposition with respect to a full exceptional collection.
\end{remark}

\begin{proposition}\label{prop:special as fiber product}
There is a cartesian diagram
\begin{equation}\label{diag:special as fiber product}
\begin{diagram}
\CMI_3^{\sfs}		& \rInto		& \CMI_3		\\
\dTo^{\beta^{\sfs}}	&\square		& \dTo^{\beta}	\\
\PP(A)		&\rInto^{\kappa}	& \Bg
\end{diagram}
\end{equation}
where the embedding of $\CMI_3^{\sfs}$ in $\CMI_3$ is the natural one.
\end{proposition}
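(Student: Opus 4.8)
<br>

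The goal is to show that the square in \eqref{diag:special as fiber product} is cartesian, i.e.\ that $\CMI_3^{\sfs}$ represents the fiber product $\CMI_3 \times_{\Bg} \PP(A)$ over the embedding $\kappa$. The plan is to work with the functors of points and to exploit the two descriptions of specialness developed above: the closed condition defining $\CMI_3^{\sfs}$ as the full-support locus of $R^1\pi_{\CMI_3*}\CHom(\pi_Y^*\CU,\CE)$, and the degeneracy-locus description of the map \eqref{eq:special instanton definition}, namely
\[
A \otimes \beta^*\CK \to V^* \otimes \CO_S .
\]
First I would unwind what the fiber product $\CMI_3 \times_{\Bg} \PP(A)$ represents: a family of charge-$3$ instantons $\CE$ over $S$ (equivalently a map $S \to \CMI_3$, by the coarse moduli property) together with a lift of $\beta \circ (S \to \CMI_3) \colon S \to \Bg$ through $\kappa$. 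Since $\kappa$ is a closed embedding by proposition \ref{prop:embed P(A)}, such a lift exists (uniquely) if and only if the composite $S \to \Bg$ factors set-theoretically and scheme-theoretically through $\Bs = \Imkappa$. Thus the content to verify is that a family $\CE$ satisfies the support condition of definition \ref{def:special instanton definition} precisely when $\beta_{\CE} \colon S \to \Bg$ factors through $\Bs$.

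The key step is a pointwise-to-scheme-theoretic comparison of two closed subschemes of $S$. On one hand, by the remark following definition \ref{def:special instanton definition}, the support of $R^1\pi_{\CMI_3*}\CHom(\pi_Y^*\CU,\CE)$ is the first degeneracy locus $D_1$ of the map \eqref{eq:special instanton definition}, and its formation commutes with arbitrary base change. On the other hand, $\beta_\CE^{-1}(\Bs)$ is the preimage of the closed stratum. I would show these two closed subschemes coincide as subschemes of $S$. The diagram in the proof of proposition \ref{beta special} identifies, fiberwise, the cokernel of $A \otimes \beta^*\CK \to V^* \otimes \CO_S$ with the obstruction to surjectivity of $\epsilon$, and shows that this cokernel is nonzero exactly when $\beta_\CE(s) = \kappa(a_v)$ for some $a_v \in A$, i.e.\ exactly when $\beta_\CE(s) \in \Bs$. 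To upgrade this from points to schemes, I would argue that the degeneracy locus $D_1 \subset S$ of $A \otimes \beta^*\CK \to V^*\otimes \CO_S$ is nothing but the pullback $\beta_\CE^*$ of the analogous universal degeneracy locus on $\Bg$ cut out by $A \otimes \CK \to V^* \otimes \CO_{\Bg}$; by the exact sequence \eqref{eq:embed P(A) bis} of proposition \ref{prop:embed P(A)} this universal degeneracy locus is exactly $\korth_*\CO_{\PP(A)}$, hence its support (with its natural scheme structure) is $\Bs = \Imkappa$. Since degeneracy loci are compatible with base change, $D_1 = \beta_\CE^{-1}(\Bs)$ as subschemes of $S$.

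Granting this identification, the universal property follows formally. A map $S \to \CMI_3 \times_{\Bg}\PP(A)$ is a family $\CE$ on $S$ with $S \to \CMI_3$ whose $\beta$-image factors through $\kappa$; by the identification, this factorization holds iff $D_1 = S$, i.e.\ iff the support of $R^1\pi_{S*}\CHom(\pi_Y^*\CU,\CE)$ is all of $S$, which is exactly the condition that $\CE \in \CMI_3^{\sfs}(S)$. The lift to $\PP(A)$ is automatically unique because $\kappa$ is a monomorphism (a closed embedding). Conversely any $S$-point of $\CMI_3^{\sfs}$ produces a canonical $\PP(A)$-valued lift via $\beta^{\sfs}$, namely $a_v$ read off from the kernel description in proposition \ref{beta special}. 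These two assignments are mutually inverse and natural in $S$, giving the cartesian square.

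The main obstacle I anticipate is the scheme-theoretic (as opposed to merely set-theoretic) identification of $D_1$ with $\beta_\CE^{-1}(\Bs)$: one must be careful that the degeneracy-locus scheme structure coming from \eqref{eq:special instanton definition} agrees with the pullback of the embedding scheme structure of $\Bs$ in $\Bg$, rather than some nilpotent thickening. The cleanest way to control this is to phrase everything universally over $\Bg$ using \eqref{eq:embed P(A) bis}, which exhibits the universal cokernel as $\korth_*\CO_{\PP(A)}$ with reduced image $\Imkappa$, and then invoke base-change compatibility of the cokernel of $A \otimes \CK \to V^* \otimes \CO_{\Bg}$ along $\beta_\CE$; the vanishing of higher pushforwards noted in the remarks guarantees this compatibility, so no subtle $\Tor$ corrections arise.
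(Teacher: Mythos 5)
Your proposal is correct and follows essentially the same route as the paper: both directions reduce to identifying the support of $R^1\pi_{S*}\CHom(\pi_Y^*\CU,\CE)$ with $\beta_{\CE}^{-1}(\Bs)$ via the degeneracy locus of $A\otimes\beta^*\CK\to V^*\otimes\CO_S$, whose universal model on $\Bg$ is controlled by the sequence \eqref{eq:embed P(A) bis} of proposition \ref{prop:embed P(A)}, together with base-change compatibility. Your extra care about matching the degeneracy-locus scheme structure with that of $\Imkappa$ is a reasonable elaboration of a point the paper's proof treats more tersely, but it is not a different argument.
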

\begin{proof}
Given a family of special instantons $\CE$ on a scheme $S$, consider its associated monad \eqref{eq:3-monad family}.
Recall that $\beta(\CE)$ is by definition $\beta^*{\CK} \to V \otimes \CO_S$, whose cokernel is, by definition of family of special instantons, supported everywhere on $S$.
It follows that the map from $S$ to $\Bg$ factors uniquely via the degeneracy locus of \eqref{eq:special instanton definition}, which by proposition \ref{prop:embed P(A)} is $\Bs$.
As a consequence, we have constructed the maps in \eqref{diag:special as fiber product}.

In the other direction, given a family of instantons $\CE$ over $S$ such that $\beta(\CE)$ factors via $\Bs$, we clearly have that
\[
R^1\pi_{\CMI_3*}\CHom(\pi_Y^*\CU,\CE)
\]
is supported on the whole $S$ as it comes by pullback from $\Bs$, where by \ref{prop:embed P(A)} it is a line bundle.
\end{proof}

By definition \ref{special instanton}, any non-special instanton has a canonical map $E \to \CU^*$.
By stability of $E$ and $\CU^*$, this map is injective, so that we have an exact sequence
\[
0 \to E \to \CU^* \to Q_E \to 0
\]
where $Q_E$ is by definition the cokernel of $E \to \CU^*$. The Chern character computation yields 
\begin{equation}\label{ch Q_E}
ch(Q_E) = H + \frac{7L}{2} - \frac{P}{6}
\end{equation}
so that $Q_E$ is supported on a linear section of $Y$. Later on, in theorem \ref{thm:general instanton}, we will prove that for any non-special $E$ such a $Q_E$ is always of the form $\Phi_{\Bl}\left(\calO_{\PP(K)}\left(2\right)\right)$. 
In view of this, we prove that if $Q_E$ is isomorphic to $\Phi_{\Bl}\left(\calO_{\PP(K)}\left(2\right)\right)$, then $\beta(E) = K$.

\begin{proposition}\label{beta3}
For every short exact sequence
\[
0 \to E \to \calU^* \to \Phi_{\Bl}\left(\calO_{\PP(K)}\left(2\right)\right) \to 0
\]
where $E$ is an instanton and $K \in \Bg \setminus  \Dtr$, it is true that $\beta(E) = K$.
\end{proposition}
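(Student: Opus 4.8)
The plan is to compute $\beta(E)$ directly from the definition by applying the functor $\Ext^\bullet(\CU^\perp, -)$, equivalently $\Hom_Y(\CU^\perp, -)$ and its higher analogues, to the given short exact sequence
\[
0 \to E \to \CU^* \to \Phi_{\Bl}(\CO_{\PP(K)}(2)) \to 0.
\]
By definition \ref{beta2} (and its reformulation in lemma \ref{beta2second}), $\beta(E)$ is read off from $\Ext^1(\CU^\perp, E)$, so the idea is to use the long exact sequence in $\Ext^\bullet(\CU^\perp, -)$ to express $\Ext^1(\CU^\perp, E)$ in terms of the cohomology of $\CU^\perp$ against $\CU^*$ and against $Q_E := \Phi_{\Bl}(\CO_{\PP(K)}(2))$. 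First I would assemble the relevant vanishing and identification results: from lemma \ref{lemma:cohomologies on Y} one has $\Ext^\bullet(\CU^\perp, \CU^*) = \Hom(\CU^\perp, \CU^*)$ concentrated in degree $0$ (indeed $\CU^* = \CU(1)$, and these are controlled by Borel--Bott--Weil), and $\Hom(\CU^\perp, E)=0$ by lemma \ref{no maps from Uperp} since $c_2(E)=3$.

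The key step is to identify $\Ext^\bullet(\CU^\perp, Q_E)$ and the connecting map. Since $K \notin \Dtr$, lemma \ref{lemma:p_{Y_K} isomorphism} gives that $p_{Y_K}: S_K \to Y_K$ is an isomorphism, and lemma \ref{lemma:FM transform Tor}\eqref{item:FM transform Tor b} identifies $Q_E = \Phi_{\Bl}(\CO_{\PP(K)}(2))$ with the pushforward of $Rp_{Y_K*}p_K^*\CO_{\PP(K)}(2h)$ from the linear section $Y_K$. The plan is then to use adjunction for the Fourier--Mukai functor $\Phi_{\Bl}$ and its right adjoint $\Phi_{\Bl}^!$, computed in lemma \ref{lemma:right adjoint to Bl}, to rewrite $\Ext^\bullet(\CU^\perp, Q_E) = \Ext^\bullet_{\PP(V)}(\Phi_{\Bl}^!(\CU^\perp)\ ,\ \CO_{\PP(K)}(2))$, or more efficiently to compute directly on $\PP(V)$ using the resolution \eqref{resolution of Phi(K)} of $Q_E$ pulled through $\Hom(\CU^\perp, -)$. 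The upshot I expect is that the long exact sequence degenerates to an isomorphism
\[
\Hom(\CU^\perp, \CU^*) \xrightarrow{\ \sim\ } \Hom(\CU^\perp, Q_E) \oplus (\text{a piece}) \quad\text{and}\quad \Ext^1(\CU^\perp, E) \cong \Ext^1(\CU^\perp, Q_E),
\]
so that the map $\Ext^1(\CU^\perp, E) \to \Ext^1(\CU^\perp, V\otimes\CO_Y) \cong V$ defining $\beta(E)$ is governed entirely by the geometry of $Q_E$, i.e. by $K$.

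Concretely, I would exploit the already-established interpretation of $\beta$ through the monad: by proposition \ref{prop:beta1 is beta2}, $\beta(E)$ is the image of $H^* \cong \Hom(\CU^\perp, H^*\otimes\CU^\perp) \to \Hom(\CU^\perp, \CO_Y) \cong V$ coming from a monad \eqref{eq:3-monad} for $E$. The task then reduces to showing that the three-dimensional subspace cut out this way coincides with the subspace $K \subset V$ attached to $Q_E$ via the transform $\Phi_{\Bl}$. Since proposition \ref{transform of P(K)} already shows $\Phi_{\Bl}(\CO_{\PP(K)})$ is the structure sheaf of the linear section $Y_K$ corresponding to $\alpha(K)$, and lemma \ref{lemma:Dtr is common point} ties the subspace $K$ to the pencil of conics and trisecant geometry, the equality $\beta(E)=K$ becomes the statement that both constructions recover the same $3$-plane from the support datum of $Q_E$. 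I would verify this by tracking the tautological map $\CU^\perp \to V^*\otimes\CO_Y$ through the snake diagram connecting the monad of $E$ with the resolution \eqref{resolution of Phi(K)} of $Q_E$, checking commutativity via lemma \ref{lemma:cohomologies on Y}.

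The main obstacle will be the compatibility of the two $3$-dimensional subspaces of $V$: one arising intrinsically from $\Ext^1(\CU^\perp, E)$ (the definition of $\beta$) and one arising from the Fourier--Mukai description of $Q_E$ (the subspace $K$). Establishing that these agree as \emph{subspaces}, not merely as abstract three-dimensional vector spaces, requires care in matching the natural maps to $V$; the hypothesis $K \notin \Dtr$ is exactly what is needed to guarantee $Q_E$ is a genuine (torsion-free) sheaf on the smooth-enough section $Y_K$ with no excess $\Tor$ (lemma \ref{lemma:FM transform Tor}), so that the long exact sequence has no unwanted terms and the connecting homomorphism is an isomorphism onto the correct subspace. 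I would therefore spend most of the effort making the diagram chase explicit and confirming that the image subspace is $K$ rather than some other $3$-plane.
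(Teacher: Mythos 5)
Your overall strategy (transfer the computation of $\beta(E)$ to the quotient $Q_E = \Phi_{\Bl}(\CO_{\PP(K)}(2))$ using orthogonality of $\CU^*$, then pass to $\PP(V)$ via the adjoint functor $\Phi_{\Bl}^!$ of lemma \ref{lemma:right adjoint to Bl}) is the same as the paper's, but there are two problems. First, your setup is garbled: the map you propose to study, $\Ext^1(\CU^\perp,E) \to \Ext^1(\CU^\perp, V\otimes\CO_Y) \cong V$, does not compute $\beta$ --- indeed $\Ext^1(\CU^\perp,\CO_Y) = H^1(V/\CU) = 0$, so the target vanishes. The definition uses $\Ext^1(\CU^*,E) \to \Ext^1(V^*\otimes\CO_Y,E) \cong V\otimes H^1(E)$, and the paper (via lemma \ref{beta2second} and self-duality of $E$) instead applies $\Ext^1(E,-)$ to the tautological sequence $0\to\CU\to V\otimes\CO_Y\to V/\CU\to 0$; the $\CU^\perp$-based description of proposition \ref{prop:beta1 is beta2} realizes $\beta(E)$ as the \emph{kernel} of $V\to\Ext^1(\CU^\perp,E)$, not as an image inside it. Also your guessed degeneration $\Ext^1(\CU^\perp,E)\cong\Ext^1(\CU^\perp,Q_E)$ is wrong: since $\Hom(\CU^\perp,E)=0$ and $\Ext^{>0}(\CU^\perp,\CU^*)=0$, the long exact sequence gives $\Ext^1(\CU^\perp,E)\cong\coker\bigl(\Hom(\CU^\perp,\CU^*)\to\Hom(\CU^\perp,Q_E)\bigr)$, a different (and harder to use) object.

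Second, and more importantly, the step you defer --- ``confirming that the image subspace is $K$ rather than some other $3$-plane'' --- is the entire content of the proposition, and you do not supply the idea that resolves it. The paper's mechanism is: after applying $\Ext^2(\CO_{\PP(K)}(2),\Phi_{\Bl}^!(-))$ to the tautological injection $\CU\to V\otimes\CO_Y$ and using $\Phi_{\Bl}^!(\CO_Y)=I_{\Imsigma}(1)[1]$ together with Grothendieck duality, the map becomes $H^1(\mathfrak{e}_V)\colon H^1(I_4)\to V\otimes H^1(I_4(1))$ on $\PP(K)$, where $I_4$ is the ideal of the length-$4$ scheme $\PP(K)\cap\Imsigma$; this factors through the Euler map $\mathfrak{e}_K$ of $\PP(K)$, so the image is $K$ provided $H^1(\mathfrak{e}_K)$ is an isomorphism, which reduces to $H^1(\CT_{\PP(K)}\otimes I_4)=H^2(\CT_{\PP(K)}\otimes I_4)=0$. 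That vanishing is proved using the Koszul resolution $0\to\CO(-4)\to\CO(-2)^{\oplus2}\to I_4\to0$, and \emph{this} is where $K\notin\Dtr$ is used essentially: it guarantees the pencil of conics through the four points has no common component (equivalently, no line meets $\PP(K)\cap\Imsigma$ in length $\ge 3$). Your stated reason for the hypothesis (torsion-freeness of $Q_E$, absence of excess $\Tor$) is relevant to $E$ being a vector bundle in proposition \ref{general instanton}, but it is not what makes the identification of the $3$-plane work here. Without the Euler-sequence factorization and the cohomology vanishing for $\CT_{\PP(K)}\otimes I_4$, the diagram chase you propose does not close.
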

\begin{proof}
We will compute $\beta(E)$ using the slight modification of definition \ref{beta2} which is described in lemma \ref{beta2second}.

We need to compute $\Ext^1(E,-)$ on the tautological sequence
\[
0 \to \calU \to V \otimes \calO_Y \to V/\calU \to 0
\]
As $\calU^*$ lies in the left orthogonal to all bundles in the sequence, we get isomorphisms
\begin{equation}\label{eq:beta3 3}
\Ext^1(E,-) = \Ext^2\left(\Phi_{\Bl}\left(\calO_{\PP(K)}\left(2\right)\right),-\right) = 
\Ext^2(\calO_{\PP(K)}(2),\Phi_{\Bl}^!(-))
\end{equation}

Our next goal is to check that the image of
\begin{equation}\label{beta3 eq2}
\Ext^2(\calO_{\PP(K)}(2), \Phi_{\Bl}^!(\calU) ) \to V \otimes \Ext^2(\calO_{\PP(K)}(2), \Phi_{\Bl}^!(\calO_Y)) 
\end{equation}
is $K$.
We will apply $\Phi_{\Bl}^!$ to the tautological injection $\CU \to V \otimes \CO_Y$. In order to do it, recall the identifications of lemma \ref{lemma:right adjoint to Bl}. 
Note also that over $\PP_Y(\CU)$ there is a commutative diagram
\[
\begin{diagram}
				& 		& \calO_{\PP_{Y}(\CU)}(-E)		 		\\
				&\ldTo	& \dTo_{\mathfrak{e}_V(-E)}			\\
p_Y^* \calU(h - E)	&\rTo		&V \otimes \calO_{\PP_{Y}(\CU)}(h-E)
\end{diagram}
\]
where the diagonal map is the twisted relative tautological injection on $\PP_Y(\CU)$ and the horizontal map is the pullback of the tautological injection on $Y$.
If we push the diagram forward to $\PP(V)$ we find another commutative diagram
\begin{equation}\label{beta3 diagram}
\begin{diagram}
				& 		& I_{\Imsigma}		 			\\
				&\ldTo	& \dTo_{\Fe_V(-E)}				\\
\Phi_{\Bl}^!(\CU)[-1]	&\rTo		&V \otimes \Phi_{\Bl}^!(\CO_Y)[-1]
\end{diagram}
\end{equation}
To find the map \eqref{beta3 eq2}, apply the functor $\Ext^2(\calO_{\PP(K)}(2), - )[1]$ to the above diagram. By Grothendieck duality
\[
\Ext^2(\calO_{\PP(K)}(2), - )[1] \cong H^1_{\PP(K)}(Lj_K^*( - ))
\]
Applying $H^1_{\PP(K)}(Lj_K^*( - ))$ to diagram \eqref{beta3 diagram}.
The result is
\begin{equation}\label{beta3 diagram2}
\begin{diagram}
							&				&H^1(Lj_K^*I_{\Imsigma})			\\
							&\ldTo^{\cong}		&\dTo_{H^1(\mathfrak{e}_V)}	\\
H^1(Lj_K^*\Phi_{\Bl}^!(\calU)[-1]) 	&\rTo_{\beta(E)}	& V \otimes H^1(Lj_K^*I_{\Imsigma}(h))	
\end{diagram}
\end{equation}
where the diagonal arrow is an isomorphism as its cokernel is $\calO(-h)$ is acyclic on $\PP (K)$.
It follows that the image of $\beta(E)$ coincides with the image of $H^1(\mathfrak{e}_V)$.

As we assumed $K \notin \Dtr$, we know by lemma \ref{lemma:Imkappa is unexpected dimension} and lemma \ref{lemma:Imkappa in Dtr} that the intersection of $\PP(K)$ with $\Imsigma$ has expected dimension, so that $I_{\Imsigma}$ pulls back to an ideal sheaf on $\PP(K)$.
More precisely, $Lj_K^*I_{\Imsigma}$ is the ideal of a length $4$ subscheme of $\PP(K)$.
We will denote $\PP(K) \cap \Imsigma$ by $K_4$ and its ideal in $\PP(K)$ by $I_4$, so that the vertical arrow in the diagram \eqref{beta3 diagram2} becomes
\begin{equation}\label{beta3 eq4}
H^1_{\PP(K)}(\mathfrak{e}_V): H^1(I_4) \to V \otimes H^1(I_4(h))
\end{equation}

Note hat the restriction to $\PP(K)$ of the Euler map $\mathfrak{e}_V$ factors via the Euler map $\mathfrak{e}_K$ of $K$:
\[
\begin{diagram}
H^1(I_4)					&						&					\\
\dTo^{H^1(\mathfrak{e}_K)}	& \rdTo^{H^1(\mathfrak{e}_V)}	& 					\\
K \otimes H^1(I_4(h)) 		& \rTo_{j_K} 				& V \otimes H^1(I_4(h))	
\end{diagram}
\]
In order to conclude the proof, we only need to check that $H^1(\mathfrak{e}_K)$ is an isomorphism. 
As the cokernel of the Euler map is $\CT_{\PP(K)}$, if
\begin{equation}\label{eq:T otimes I_4}
H^1(\CT_{\PP(K)} \otimes I_4) = H^2(\CT_{\PP(K)} \otimes I_4) = 0
\end{equation}
then $H^1(\mathfrak{e}_K)$ is an isomorphism.

To check this last vanishing, we use a resolution for $I_4$. Using the fact that $\PP(K) \notin \Dtr$, we will prove that $K_4$ is always cut by two conics, so that $I_4$ has a Koszul resolution
\begin{equation*}
0 \to \calO(-4) \to \calO(-2)^{\oplus 2} \to I_4 \to 0
\end{equation*}
For any scheme of length 4 in $\PP(K)$ there is at least a pencil of conics containing it. 
Take a pencil of conics in $\PP(K)$ containing $K_4$: if they do not have common components, then the intersection is $K_4$, otherwise the conics in the pencil share a component.
If they share a component, $K_4$ is contained in a line with an embedded point, and therefore intersects the line in a scheme of length at least $3$, against the fact that $\PP(K) \notin \Dtr$.

Finally,
\[
H^\bullet(\CT_{\PP(K)}(-2))  = H^\bullet(\CT_{\PP(K)}(-4))  = 0
\]
imply that \eqref{eq:T otimes I_4} is true.
\end{proof}

\subsection{An embedding of $\CMI_3$ in a relative Grassmannian}\label{sec:An embedding of CMI_3}

The monadic data for the construction of an instanton of charge 3 consist of two maps: one from $H \otimes \CU$ to $H^* \otimes \CU^\perp$, the other from $H^* \otimes \CU^\perp$ to $\CO_Y$. 
When we constructed the map $\beta$ in section \ref{sec:maptoGr} we used only the second one. By taking also the first one into account we expect to construct an embedding of $\CMI_3$.

Recall that we are using notation \ref{not:stratification of B}, so that $\Bg$ is $\Gr(3,V)$ and $\{\Bn, \Bs\}$ is a stratification of $\Bg$.
We will construct a map $\Gamma$ from $\CMI_3$ to $\RelGr$ which will fit in a commutative diagram
\[
\begin{diagram}
\CMI_3	& \rTo^{\Gamma}	& \RelGr		\\
		&\rdTo_{\beta}		& \dTo_{\pi_{\Bg}}	\\
		&				& \Bg
\end{diagram}
\]
where $\pi_{\Bg}$ is the canonical projection.

Let $\CE$ be a family of instantons parametrized by $S$. 
Denote by $\pi_S$ and $\pi_Y$ the projections from $S \times Y$ to $S$ and $Y$. 
Denote by $\beta_{\CE}$ the composition of the map $S \to \CMI_3$ corresponding to $\CE$ with $\beta$.
The relative Beilinson spectral sequence provides us with a monad for $\CE$:
\begin{equation}\label{RELATIVE MONAD}
\CH \boxtimes \CU \xrightarrow{\gamma_{\CE}} \beta_{\CE}^*\CK  \boxtimes \CU^\perp \xrightarrow{\beta_{\CE}} \CO_{S \times Y}
\end{equation}
where $\CK$ is the tautological bundle on $\Bg$ (see notation \ref{not:3-monad}).

\begin{remark}
In order to construct a map $\Gamma$ from $\CMI_3$ to $\RelGr$ which is relative to $B$, it is enough to associate naturally to any family of instantons $\CE$ a rank $3$ subbundle of $\beta_{\CE}^*\CK \otimes A$.
\end{remark}
\begin{lemma}\label{lemma:definition gamma}
Given a family of instantons $\CE$ parametrized by $S$,
\begin{equation}\label{eq:definition gamma}
\pi_{S*}\CHom(\pi_Y^*\CU, \CH \boxtimes \CU) \xrightarrow{\pi_{S*}\CHom(p_Y^*\CU, \gamma)} \pi_{S*}\CHom(\pi_Y^*\CU, \beta_{\CE}^*\CK \boxtimes \CU^\perp)
\end{equation}
is a rank $3$ subbundle of $\beta_{\CE}^*\CK \otimes A$.
\end{lemma}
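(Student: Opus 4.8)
The goal is to identify the source and target of the map \eqref{eq:definition gamma} and to show the map is a fiberwise injection of vector bundles of the expected rank $3$. First I would compute the two pushforwards appearing as the source and the target. For the source, by the projection formula $\CHom(\pi_Y^*\CU, \CH\boxtimes\CU) = \CH\boxtimes(\CU^*\otimes\CU)$, and applying $\pi_{S*}$ together with $H^\bullet(\CU^*\otimes\CU)$ (which I would read off from Borel--Bott--Weil, or from the vanishings assembled in lemma \ref{lemma:cohomologies on Y}) gives that $\Hom_Y(\CU,\CU)=\CC$ with all higher cohomology vanishing. Hence the source is canonically $\CH$. For the target, $\CHom(\pi_Y^*\CU,\beta_\CE^*\CK\boxtimes\CU^\perp)=\beta_\CE^*\CK\boxtimes(\CU^*\otimes\CU^\perp)$, and since $\Hom_Y(\CU,\CU^\perp)=A$ with higher cohomology vanishing (again lemma \ref{lemma:cohomologies on Y}), base change yields $\beta_\CE^*\CK\otimes A$ for the target. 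So \eqref{eq:definition gamma} is a map $\CH \to \beta_\CE^*\CK\otimes A$; here I should note that $\CH$ is a rank $3$ bundle on $S$, so it remains to check the map is a fiberwise monomorphism in order to conclude it realizes $\CH$ as a rank $3$ subbundle.

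The main technical point, and the step I expect to be the obstacle, is the base-change/flatness justification: I must ensure that the relevant $R^0\pi_{S*}$ of the $\CHom$-sheaves are vector bundles whose formation commutes with arbitrary base change, so that fiberwise injectivity can be checked pointwise on $S$. For this I would invoke cohomology and base change, using that the higher direct images $R^{>0}\pi_{S*}\CHom(\pi_Y^*\CU,\CH\boxtimes\CU)$ and $R^{>0}\pi_{S*}\CHom(\pi_Y^*\CU,\beta_\CE^*\CK\boxtimes\CU^\perp)$ vanish because the corresponding cohomology groups on each fiber $Y$ vanish by lemma \ref{lemma:cohomologies on Y}. This reduces the statement to a fiberwise claim over each closed point of $S$.

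Finally, fiberwise the map \eqref{eq:definition gamma} is exactly $\Hom(\CU,\gamma_\CE)$ applied to a single monad \eqref{eq:3-monad}, i.e.\ the map $\gamma$ of notation \ref{not:3-monad} viewed as $H \to H^*\otimes A$. The injectivity of this fiberwise map is precisely the observation already recorded in the lemma preceding definition \ref{special instanton}: the kernel of $\gamma$ would contribute to $\Ext^{-1}(\CU,E)$, which vanishes, so $\gamma$ is injective. Therefore \eqref{eq:definition gamma} is injective at every point of $S$, and being a map of vector bundles of ranks $3$ and $3\cdot\dim A = 9$ that is a fiberwise monomorphism, it exhibits $\CH$ as a rank $3$ subbundle of $\beta_\CE^*\CK\otimes A$, which is what we wanted. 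This subbundle is exactly the datum needed to define $\Gamma(\CE)\in\RelGr$ over $\beta_\CE$, as anticipated in the remark preceding the lemma.
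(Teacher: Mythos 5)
Your proposal is correct and follows essentially the same route as the paper: identify the source and target via $\Hom(\CU,\CU)=\CC$ and $\Hom(\CU,\CU^\perp)=A$ together with K\"unneth/base change, reduce to fiberwise injectivity over closed points of $S$, and conclude from the vanishing of $\Ext^{-1}(\CU,E_s)$. No gaps.
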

\begin{proof}
First, note that by K\"unneth formula the map \eqref{eq:definition gamma} becomes
\[
\CH \otimes \CO_S \to \beta_{\CE}^*\CK \otimes A
\]
Next, let $s \in S$ be a closed reduced point and $E_s$ the corresponding instanton.
By base change and flatness of $p_S$, it is enough to check that for each closed reduced point of $S$ the map
\begin{equation}\label{eq:definition gamma at s}
\Hom(\CU, H \otimes \CU) \to \Hom(\CU, \beta(E_s) \otimes \CU^\perp)
\end{equation}
is injective.
This holds as a non-trivial kernel of \eqref{eq:definition gamma at s} contributes to $\Hom^{-1}(\CU, E_s)$, which vanishes.
\end{proof}

\begin{definition}\label{def:Gamma}
$\Gamma$ takes a family of instantons $\CE$ to the rank $3$ subbundle \eqref{eq:definition gamma}.
\end{definition}

Our next goal is to describe the closure of the image of $\Gamma$. Denote by $Z_3$ the zero locus on $\RelGr$ of the composition
\begin{equation}\label{eq:imageofGamma}
\CTaut^{rel}_3 \to A \otimes \CK \to A \otimes V \otimes \CO_{\RelGr} \to V^* \otimes \CO_{\RelGr}
\end{equation}
where the first map is the relative tautological injection, the second one is the pullback of the tautological injection from $\Bg$ tensored by $A$ and the third one is the evaluation of forms in $A$ on $V$. 

First, we describe $Z_3$. By proposition \ref{prop:embed P(A)} there is an exact sequence
\begin{equation}\label{eq:describe Z}
0 \to \CKer \to A \otimes \CK \to V^* \otimes \CO_{\Bg} \to \kappa_*\left(\CO_{\PP(A)}(2)\right) \to 0
\end{equation}
which defines a reflexive sheaf $\CKer$.

\begin{definition}\label{def:G and F}
The restriction of $\CKer$ to $\Bn$, that is to say the kernel of
\[
A \otimes \restr{\CK}{\Bn} \to V^* \otimes \CO_{\Bn}
\] 
is a rank $4$ vector bundle over $\Bn$ denoted by $\CKer^{\sfn}$.
The kernel of
\[
A \otimes \restr{\CK}{\Bs} \to V^* \otimes \CO_{\Bs}
\]
is a rank $5$ vector bundle over $\Bs$ denoted by $\CKer^{\sfs}$.
\end{definition}

\begin{lemma}\label{lemma:Z_3 over Imkappa}
The restriction of $Z_3$ to $\Bn$ is $\PP_{\Bn}(\CKer^{\sfn *})$.
The restriction of $Z_3$ to $\Bs$ is  $\Gr_{\Bs}(3,\CKer^{\sfs})$.
\end{lemma}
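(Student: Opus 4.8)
The scheme $Z_3$ is by construction the zero scheme of the section $s$ of $\CHom(\CTaut^{rel}_3, V^* \otimes \CO_{\RelGr})$ obtained in \eqref{eq:imageofGamma} by restricting the evaluation map $A \otimes \CK \to V^* \otimes \CO_{\RelGr}$ to the relative tautological subbundle $\CTaut^{rel}_3 \subset A \otimes \CK$. Over a point $K \in \Bg$ the fibre of $\RelGr$ is $\Gr(3, A\otimes K)$, and $s$ restricts to the map sending a $3$-plane $T \subset A \otimes K$ to the composition $T \hookrightarrow A \otimes K \to V^*$; hence set-theoretically the fibre of $Z_3$ over $K$ is $\{ T : T \subset \ker(A\otimes K \to V^*)\}$. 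The plan is to promote this fibrewise description to a scheme-theoretic identification over each stratum by comparing functors of points, using Definition \ref{def:G and F} to guarantee that the relevant kernel is a genuine vector bundle on $\Bn$ and on $\Bs$.

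Over $\Bn$ the evaluation map $A\otimes\CK \to V^*\otimes\CO_{\Bn}$ is surjective, since by \eqref{eq:describe Z} its cokernel $\kappa_*\CO_{\PP(A)}(2)$ is supported on $\Bs$. Thus $\CKer^{\sfn} = \CKer|_{\Bn}$ is a rank $4$ subbundle fitting in a short exact sequence $0 \to \CKer^{\sfn} \to A\otimes\CK \to V^*\otimes\CO_{\Bn} \to 0$. A $T$-point of $\RelGr|_{\Bn}$ is a map $f\colon T \to \Bn$ together with a rank $3$ subbundle $\CTaut^{rel}_3 \hookrightarrow A \otimes f^*\CK$; it factors through $Z_3$ precisely when the pulled-back section vanishes, i.e.\ when $\CTaut^{rel}_3 \subset \ker(A\otimes f^*\CK \to V^*\otimes\CO_T) = f^*\CKer^{\sfn}$, the equality holding because pullback preserves the short exact sequence of vector bundles. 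Conversely, if $\CTaut^{rel}_3 \subset f^*\CKer^{\sfn}$ then $f^*\CKer^{\sfn}/\CTaut^{rel}_3$ is the kernel of the surjection of vector bundles $(A\otimes f^*\CK)/\CTaut^{rel}_3 \to V^*\otimes\CO_T$, hence locally free; so $\CTaut^{rel}_3$ is a subbundle of $f^*\CKer^{\sfn}$ and the $T$-point lies on $\Gr_{\Bn}(3, \CKer^{\sfn})$. This identifies $Z_3|_{\Bn}$ with $\Gr_{\Bn}(3, \CKer^{\sfn})$, and since $\CKer^{\sfn}$ has rank $4$ the relative Grassmannian of $3$-planes in it is canonically $\PP_{\Bn}(\CKer^{\sfn *})$.

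The argument over $\Bs$ is identical once we use the rank $5$ bundle $\CKer^{\sfs}$ of Definition \ref{def:G and F}. Since $\kappa$ is an embedding and $\kappa_*\CO_{\PP(A)}(2)$ is a line bundle on $\Bs$, the evaluation map $A\otimes\CK|_{\Bs} \to V^*\otimes\CO_{\Bs}$ has constant rank $4$, so its image $I$ is a rank $4$ subbundle of $V^*\otimes\CO_{\Bs}$ and $0 \to \CKer^{\sfs} \to A\otimes\CK|_{\Bs} \to I \to 0$ is exact with $I$ locally free. Replacing $V^*\otimes\CO_{\Bn}$ by $I$ throughout, the same functor-of-points comparison shows that a $T$-point of $\RelGr|_{\Bs}$ factors through $Z_3$ iff $\CTaut^{rel}_3 \subset f^*\CKer^{\sfs}$, and that such an inclusion is automatically a subbundle inclusion (the quotient $f^*\CKer^{\sfs}/\CTaut^{rel}_3$ is the kernel of the surjection $(A\otimes f^*\CK)/\CTaut^{rel}_3 \to f^*I$ of vector bundles, of rank $2$). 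Hence $Z_3|_{\Bs} \cong \Gr_{\Bs}(3, \CKer^{\sfs})$, as claimed.

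The only delicate point is the scheme-theoretic, rather than merely set-theoretic, identification, which is exactly what the functor-of-points comparison handles; the technical inputs are that pullback of a short exact sequence of vector bundles remains exact and that the kernel of a surjection of vector bundles is a subbundle. The one place requiring care is that over $\Bs$ one must work with the restricted kernel $\CKer^{\sfs}$, and not the naive restriction $\CKer|_{\Bs}$, which may differ from it through $\Tor$ against the cokernel $\kappa_*\CO_{\PP(A)}(2)$; this is precisely why Definition \ref{def:G and F} defines $\CKer^{\sfs}$ as the kernel of the map already restricted to $\Bs$.
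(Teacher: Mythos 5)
Your proof is correct and is essentially the paper's argument written out in full: the paper's own proof consists of the single observation that the formation of zero loci of sections of vector bundles commutes with arbitrary base change, leaving the functor-of-points identification with $\Gr_{\Bn}(3,\CKer^{\sfn}) \cong \PP_{\Bn}(\CKer^{\sfn*})$ and $\Gr_{\Bs}(3,\CKer^{\sfs})$ implicit. Your careful treatment of the subbundle condition and of the distinction between $\CKer^{\sfs}$ and the naive restriction $\restr{\CKer}{\Bs}$ supplies exactly the details the paper suppresses.
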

\begin{proof}
The formation of zero loci of maps of vector bundles commutes with arbitrary base change.
\end{proof}

\begin{lemma}\label{lemma:Z_3 irreducible}
$Z_3$ is the zero locus of a regular section of $\CTaut^{rel*}_3 \otimes V^*$. Moreover, $Z_3$ is irreducible and $\Gamma$ factors via $Z_3$.
\end{lemma}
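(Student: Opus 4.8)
The statement of Lemma \ref{lemma:Z_3 irreducible} has three assertions: that $Z_3$ is the zero locus of a \emph{regular} section of $\CTaut^{rel*}_3 \otimes V^*$, that $Z_3$ is irreducible, and that $\Gamma$ factors through $Z_3$. I will address them in this order, since the last two depend on the first.

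My plan is to begin with the section-theoretic description. The composition \eqref{eq:imageofGamma} is, by construction, a morphism $\CTaut^{rel}_3 \to V^* \otimes \CO_{\RelGr}$, which is the same datum as a global section $s$ of $\CHom(\CTaut^{rel}_3, V^* \otimes \CO) = \CTaut^{rel*}_3 \otimes V^*$, whose zero locus is $Z_3$ by definition. To show $s$ is a regular section, I would compute the expected codimension of its zero locus and compare it with the actual codimension. The bundle $\CTaut^{rel*}_3 \otimes V^*$ has rank $3 \cdot 5 = 15$, and $\RelGr = \Gr_{\Bg}(3, A \otimes \CK)$ is a $\Gr(3,9)$-bundle over $\Bg = \Gr(3,V)$, so $\dim \RelGr = \dim \Bg + 3\cdot 6 = 6 + 18 = 24$. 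The cleanest route is to use Lemma \ref{lemma:Z_3 over Imkappa}, which computes the two strata of $Z_3$ explicitly: over $\Bn$ we get $\PP_{\Bn}(\CKer^{\sfn *})$, a $\PP^3$-bundle over the open $6$-dimensional $\Bn$, hence of dimension $9$; over the surface $\Bs$ we get $\Gr_{\Bs}(3, \CKer^{\sfs})$, a $\Gr(3,5)$-bundle over the $2$-dimensional $\Bs$, hence of dimension $2 + 6 = 8$. Thus $Z_3$ has pure dimension $9 = 24 - 15$, matching the expected codimension, which shows $s$ is regular. I would want to verify that both strata genuinely have the stated dimensions via the exactness in \eqref{eq:describe Z} together with Definition \ref{def:G and F}, and confirm that the closed stratum does not bump the codimension (it has codimension $16 > 15$, so it is already in the closure of the open part, as I discuss below).

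For irreducibility, the natural strategy is to show that the open stratum $Z_3 \cap \pi_{\Bg}^{-1}(\Bn) = \PP_{\Bn}(\CKer^{\sfn *})$ is irreducible and dense in $Z_3$. The open stratum is a projective bundle over $\Bn$, which is an irreducible open subset of the irreducible Grassmannian $\Bg$, hence is itself irreducible of dimension $9$. Since the closed stratum $\Gr_{\Bs}(3, \CKer^{\sfs})$ has dimension $8 < 9$, density of the open stratum will follow once I verify there is no embedded or extra component of $Z_3$ supported over $\Bs$ — but this is exactly what regularity of $s$ guarantees, since a regular section has zero locus of pure expected dimension $9$, forcing every component to meet the open dense locus. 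This is the main obstacle: I must rule out that $Z_3$ acquires a spurious $9$-dimensional component sitting entirely over $\Bs$, which cannot happen because $\dim \Gr_{\Bs}(3,\CKer^{\sfs}) = 8$, so the whole fiber part over $\Bs$ has dimension strictly less than $9$. Hence $Z_3 = \overline{\PP_{\Bn}(\CKer^{\sfn *})}$ is irreducible.

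Finally, for the factorization of $\Gamma$ through $Z_3$, I would argue pointwise and then note the argument is functorial. By Definition \ref{def:Gamma}, $\Gamma(\CE)$ is the rank $3$ subbundle \eqref{eq:definition gamma}, which at a point corresponds to the image $H \hookrightarrow \beta(E) \otimes A = \CK_{\beta(E)} \otimes A$ coming from $\gamma_{\CE}$. By Notation \ref{not:3-monad} this subspace is precisely the one sent to zero under the evaluation $H \xrightarrow{\gamma} H^* \otimes A \to V^*$ in \eqref{special instanton spectral sequence}; more directly, the composition of the monad maps \eqref{eq:3-monad} vanishes, so the image of $\gamma$ lands in the kernel of $A \otimes \CK \to V^*$. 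This is exactly the vanishing of the composition \eqref{eq:imageofGamma} restricted to $\Gamma(\CE)$, which says $\Gamma(\CE) \in Z_3$. Since everything is constructed functorially in $\CE$ and $Z_3$ is a closed subscheme of $\RelGr$, the universal property of the coarse moduli space yields that $\Gamma$ factors through $Z_3$ as schemes, completing the proof.
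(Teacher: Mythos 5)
Your proposal is correct and follows essentially the same route as the paper's proof: the dimension count via the stratification of Lemma \ref{lemma:Z_3 over Imkappa} ($9$-dimensional open stratum over $\Bn$, $8$-dimensional closed stratum over $\Bs$) gives $\dim Z_3 \leq 9$, hence regularity of the rank-$15$ section, which in turn forces purity and rules out a low-dimensional component over $\Bs$, giving irreducibility; and the factorization of $\Gamma$ follows because the relative monad is a complex, so the image of $\gamma_{\CE}$ lands in the kernel of $A \otimes \CK \to V^* \otimes \CO$. The only cosmetic remark is that "pure dimension $9$" should be stated as a consequence of regularity rather than before it (a priori one only knows $\dim Z_3 \leq 9$), but you in effect correct this yourself in the discussion of the closed stratum.
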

\begin{proof}
$Z_3$ is schematically cut by a section of the rank 15 vector bundle
\[
\CTaut^{rel}_3 \otimes V
\]
by its definition in \eqref{eq:imageofGamma}. 
We only need to check that the section is regular, i.e. that $\cod(Z_3) = 15$. As $\dim(\RelGr) = 24$, it is enough to check that $\dim(Z_3) \leq 9$. 

In the notation of definition \ref{def:G and F} and by lemma \ref{lemma:Z_3 over Imkappa}
\[
Z_3 = \PP_{\Bn}(\CKer^{\sfn *}) \sqcup \Gr_{\Bs}(3,\CKer^{\sfs})
\]
This gives a stratification of $Z_3$ into an open $9$-dimensional subscheme and a closed $8$-dimensional subscheme, so that $\dim(Z_3) \leq 9$.

The $9$-dimensional open subscheme is irreducible and its complement is $8$-dimensional, so that, if $Z_3$ were reducible, one of its components would be at most $8$-dimensional. 
This is impossible as $Z_3$ is cut by 15 equations inside a Cohen-Macaulay scheme of dimension 24.
It follows that $Z_3$ is irreducible.

Finally, by definition of $\Gamma$, the pullback of $\CHom(\CU, - )$ of
\[
\CTaut^{rel}_3 \to A \otimes \CK \to V^* \otimes \CO_{\RelGr}
\]
is the relative monad \eqref{RELATIVE MONAD}.
As the relative monad is a complex, $\Gamma$ factors via $Z_3$.
\end{proof}

We are now going to prove that the map $\Gamma$ is an embedding of $\CMI_3$ into $\RelGr$ and to show that the closure of its image is $Z_3$.

\begin{theorem}\label{thm:compactify MI_3}
$\Gamma$ is an embedding and the closure of its image is $Z_3$.
\end{theorem}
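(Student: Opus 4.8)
The plan is to exhibit $\Gamma$ as an isomorphism onto an open subscheme of $Z_3$ by explicitly constructing its inverse over that open locus, and then to deduce the statement about the closure from the irreducibility of $Z_3$ proved in lemma \ref{lemma:Z_3 irreducible}. The point is that $\Gamma$ already factors through $Z_3$ by lemma \ref{lemma:Z_3 irreducible}, so it remains to invert it, and to recognize its image as a dense open subset.

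First I would assemble a universal complex over $Z_3 \times Y$ from the tautological data. Using the relative tautological inclusion $\CTaut^{rel}_3 \hookrightarrow A \otimes \CK$, the evaluation $\ev_\CU : A \otimes \CU \to \CU^\perp$ of lemma \ref{lemma:cohomologies on Y}, and the tautological $\CK \hookrightarrow V \otimes \CO$ followed by the evaluation $\CU^\perp \to \CO_Y$, I would form
\[
\CTaut^{rel}_3 \boxtimes \CU \xrightarrow{\mu_1} \CK \boxtimes \CU^\perp \xrightarrow{\mu_2} \CO_{Z_3 \times Y},
\]
where $\CK$ denotes the pullback of the tautological bundle from $\Bg$. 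The defining vanishing \eqref{eq:imageofGamma} of $Z_3$ is precisely $\mu_2\circ\mu_1 = 0$, so this is a complex. Let $Z_3^{\circ}\subseteq Z_3$ be the open locus where $\mu_1$ is fibrewise injective and $\mu_2$ fibrewise surjective; there the middle cohomology $\CE$ is locally free of rank $2$. The decisive numerical observation is that each fibre of $\CE$ is automatically an instanton as soon as it is stable: counting ranks gives $\rank\CE=3\cdot3-3\cdot2-1=2$, the values $c_1(\CU)=c_1(\CU^\perp)=-H$ in \eqref{Cherntable} give $c_1=0$, and the acyclicity of $\CU(-1)$, $\CU^\perp(-1)$, $\CO_Y(-1)$ from lemma \ref{lemma:cohomologies on Y} forces the hypercohomology of the $(-1)$-twisted monad to vanish, hence $H^1(E(-1))=0$; anti-self-duality is free for a rank $2$ bundle with $c_1=0$. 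Thus, defining $Z_3^{st}\subseteq Z_3^{\circ}$ as the further open locus where the fibres of $\CE$ are $\mu$-stable, $\CE|_{Z_3^{st}}$ is a family of charge $3$ instantons in the sense of definitions \ref{def:instanton kuznetsov} and \ref{lemma:mumford vs Gieseker}, and by the corepresentability of $\CMI_3$ established in theorem \ref{maininstantontheorem} it induces a morphism $\Psi:Z_3^{st}\to\CMI_3$.

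Next I would verify that $\Gamma$ and $\Psi$ are mutually inverse. The composite $\Psi\circ\Gamma$ is the identity because applying the relative Beilinson spectral sequence (theorem \ref{thm:relative Beilinson}) to an instanton recovers its monad \eqref{RELATIVE MONAD}, whose middle cohomology is the instanton again. For $\Gamma\circ\Psi$, functoriality of the Beilinson decomposition identifies the monad of $\CE|_{Z_3^{st}}$ with the universal complex above, up to a twist of $\CTaut^{rel}_3$ by a line bundle from the base; such a twist is invisible to the Grassmannian of rank $3$ subbundles of $A\otimes\CK$, which is exactly why $\Gamma$ was defined as that subbundle. Hence $\Gamma$ reads off both $\CTaut^{rel}_3$ and, through proposition \ref{prop:beta1 is beta2} applied to $\mu_2$, the point $\CK$ of $\Bg$, recovering the original point of $Z_3^{st}$. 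Since $\Gamma$ factors through the open subscheme $Z_3^{st}$ and admits the two-sided inverse $\Psi$, it is an isomorphism $\CMI_3\xrightarrow{\sim}Z_3^{st}$, in particular an embedding. Finally $Z_3^{st}$ is non-empty (charge $3$ instantons exist by theorem \ref{thm:faenzi smooth}) and open in the irreducible scheme $Z_3$ (lemma \ref{lemma:Z_3 irreducible}), so $\overline{\Gamma(\CMI_3)}=\overline{Z_3^{st}}=Z_3$.

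The main obstacle is not geometric but lies in the coarseness of the moduli problem. Because $\CMI_3$ is only a coarse, not a fine, moduli space, the morphism $\Psi$ has to be produced via corepresentability rather than from a tautological family, and the relation $\Gamma\circ\Psi=\id$ must be argued modulo the ambiguity of twisting the auxiliary space $\CH$ by a line bundle on the base — a descent subtlety entirely analogous to the one handled in the construction of $\Mon_n/G$ under theorem \ref{maininstantontheorem}. The remaining care is in checking that $\mu$-stability is an open condition in the family $\CE$ over $Z_3^{\circ}$ and that the resulting $Z_3^{st}$ is non-empty, both of which are standard but should be stated.
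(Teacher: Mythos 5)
Your proposal is correct and follows essentially the same route as the paper: both build the tautological complex $\CTaut^{rel}_3 \boxtimes \CU \to \CK \boxtimes \CU^\perp \to \CO$ over $Z_3 \times Y$, observe that on a non-empty open subset its middle cohomology is a family of instantons giving an inverse to $\Gamma$, and deduce density of the image from the irreducibility of $Z_3$ established in lemma \ref{lemma:Z_3 irreducible}. Your version is simply more explicit than the paper's about the open conditions (fibrewise injectivity, surjectivity, stability), the numerical verification that the middle cohomology is an instanton, and the coarse-moduli subtlety, all of which the paper leaves implicit.
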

\begin{proof}
Over $\RelGr \times Y$ there are canonical maps
\begin{equation} \label{eq:reconstruction MI3}
\CTaut^{rel}_3 \boxtimes \CU \to \CK \boxtimes \CU^\perp \to \CO_{\RelGr} \boxtimes \CO_Y
\end{equation}
The first map is the composition
\[
\CTaut^{rel}_3 \boxtimes \CU \to A \otimes \CK \boxtimes \CU \to \CK \boxtimes \CU^\perp
\]
where the first map is the relative tautological injection and the second is the unique $SL_2$-equivariant one.
The second map is the unique $SL_2$-invariant element of 
\[
\Hom_{\RelGr}(\CK \boxtimes \CU^\perp, \CO_{\RelGr} \boxtimes \CO_Y) = V^* \otimes V
\]

By definition of $Z_3$ we have that the restriction of $\eqref{eq:reconstruction MI3}$ to $Z_3 \times Y$ is a complex.
By theorem \ref{thm:faenzi smooth}, the moduli space $\CMI_3$ is non-empty. 
Given an instanton $E$, by definition of $\Gamma$ the restriction of $\eqref{eq:reconstruction MI3}$ to $\Gamma(E) \times Y$ is a monad for $E$.
It follows that on a non-trivial open subset $Z_3^o \subset Z_3$, the only non-trivial cohomology of the complex \eqref{eq:reconstruction MI3} is the middle one, and that it is a family of instantons.

Summarizing, for any family of instantons $\CE$ over a base $S$, there is an induced map to $Z_3^o$ given by $\Gamma(\CE)$.
Conversely, for any map from $S$ to $Z_3^o$ the pullback of \eqref{eq:reconstruction MI3} is a family of instantons.
\end{proof}

\subsection{Special instantons}\label{sec:Special instantons}

By proposition \ref{prop:special as fiber product}, the moduli space $\CMI_3^{\sfs}$ of special instantons embeds in $\Gr_{\Bs}(3,\Ker^{\sfs})$.
In this section we are going to provide an instantonic interpretation for the projection to $\Bs$ and we are going to show that such a projection is surjective.

\begin{proposition}\label{prop:2-jump again}
Given an instanton $E$ and a line $L$, the order of jump of $E$ at $L$ is $2$ if and only if $\beta(E) = \kappa(a_L)$. In particular, $E$ has a 2-jumping line if and only if it is special.
\end{proposition}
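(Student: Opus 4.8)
The plan is to characterize the order of jump of $E$ at $L$ cohomologically and then match it against the defining condition $\beta(E) = \kappa(a_L)$ via the monad. By corollary \ref{cor:jumping order}, the order of jump of $E$ at a line $L_a$ equals the corank of $\gamma(a) \in S^2 H^*$, where $\gamma: A^* \to S^2 H^*$ is the map associated to a monad \eqref{eq:3-monad} for $E$. Since $H$ is $3$-dimensional in charge $3$ and theorem \ref{thm:gamma injective} guarantees $\gamma(a) \neq 0$, the rank of the symmetric form $\gamma(a)$ on $H$ is either $1$, $2$, or $3$, so the corank is $0$, $1$, or $2$. Thus the order of jump is $2$ precisely when $\gamma(a)$ has rank $1$, i.e. when $\gamma(a)$ is (up to scalar) a square $\ell^2$ for some $\ell \in H^*$. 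This reduces the statement to showing that $\gamma(a_L)$ has rank $1$ if and only if $\beta(E) = \kappa(a_L)$.

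The key is to relate this rank-$1$ condition to the specialness condition analyzed in proposition \ref{beta special}. First I would recall from that proposition the commutative diagram expressing that $E$ is special exactly when the map $\epsilon: A \otimes H^* \to V^*$ in \eqref{special instanton spectral sequence} fails to be surjective, and that in this case $\beta(E) = \kappa(a_v)$ where $v = \ker a_v$ is the unique vector making $A(\beta(E), v) = 0$. I would then set up the analogous fiberwise statement at a single line: by base change as in corollary \ref{cor:jumping order} (and as already used in the proof of theorem \ref{thm:gamma injective}), the form $\gamma(a_L)$ is computed as $H^1$ of the restriction $\restr{\gamma'}{L_a}$. Using lemma \ref{lemma:splitting on lines} to write $\restr{\CU}{L} = (-1,0)$ and $\restr{\CU^\perp}{L} = (-1,0,0)$, the corank of $\gamma(a_L)$ measures the defect of the restricted monad map, and a rank-$1$ corank-$2$ degeneration corresponds to the restricted instanton splitting as $\CO_L(-2) \oplus \CO_L(2)$.

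The heart of the argument is to show that $\gamma(a_L)$ has rank exactly $1$ if and only if $\beta(E)$ equals $\kappa(a_L)$. For the forward direction, if $\gamma(a_L) = \ell^2$ with $\ell \in H^*$, I would use the factorization of $\gamma(a_L)$ through a one-dimensional subspace of $H^*$ to produce, via the polarized map $\hat\gamma$ and the evaluation pairing $A \otimes V \to V^*$ of lemma \ref{lemma:cohomologies on Y}, a vector $v \in V$ annihilated by the image of $\beta(E)$ under $A$, forcing $v = \ker a_L$ and hence $\beta(E) = \kappa(a_L)$ by the description in proposition \ref{beta special}. Conversely, if $\beta(E) = \kappa(a_L)$, then $\beta(E) = A(\ker a_L, -)^\perp$ by the definition of $\kappa$, and I would run the diagram of proposition \ref{beta special} to deduce that $\epsilon$ has cokernel detected precisely along $\ker a_L$, which translates back through the symmetric structure of the monad into $\gamma(a_L)$ being a perfect square, i.e. rank $1$. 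The main obstacle I anticipate is bookkeeping the self-duality of the monad carefully enough to see that the rank of the symmetric form $\gamma(a_L)$ (rather than merely its corank as an asymmetric map) drops to $1$; this requires using that $\gamma$ takes values in $S^2 H^*$ and that the identification $\beta(E) = \kappa(a_L)$ picks out a \emph{single} vector $v = \ker a_L$, which matches the one-dimensional image forced by a rank-$1$ symmetric form. Once this matching of the unique kernel vector with the unique rank-one direction is established, the final sentence — that $E$ has a $2$-jumping line iff it is special, with the line then unique — follows immediately by combining with proposition \ref{beta special}, since $\beta(E) \in \Bs$ determines $a_L$ uniquely via the embedding $\kappa$.
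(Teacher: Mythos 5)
Your opening reduction is sound: by corollary \ref{cor:jumping order} the order of jump at $L_a$ is the corank of $\gamma(a_L)$, and by theorem \ref{thm:gamma injective} this corank is at most $2$ for charge $3$, so the order of jump is $2$ exactly when the symmetric form $\gamma(a_L)$ has rank $1$. (The paper gets the same bound from corollary \ref{cor:no n-jumps} and works with $H^1(\restr{E}{L})$ rather than $H^1(\restr{E(-1)}{L})$, but either normalization is fine.) The problem is that everything after this reduction is a description of what you \emph{would} do, not an argument. The equivalence ``$\gamma(a_L)$ has rank $1$ $\Leftrightarrow$ $\beta(E)=\kappa(a_L)$'' is precisely the content of the proposition, and neither direction of your sketch supplies a mechanism for it. In the forward direction, you say a rank-one $\gamma(a_L)=\ell^2$ ``produces a vector $v\in V$ annihilated by the image of $\beta(E)$ under $A$, forcing $v=\ker a_L$'' --- but the vector $\ker a_L$ is already determined by $L$, and no construction is given that extracts from the one-dimensional image of $\gamma(a_L)$ the relation $A(\beta(E),\ker a_L)=0$. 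In the backward direction, you invoke proposition \ref{beta special} to conclude that $\epsilon\colon A\otimes H^*\to V^*$ is not surjective; but non-surjectivity of $\epsilon$ is the global statement $\Ext^1(\CU,E)\neq 0$, and you give no way to convert it into the pointwise statement that $\gamma$ degenerates to rank $1$ \emph{at the particular point} $a_L$. Your own remark that the ``main obstacle'' is the self-duality bookkeeping is an acknowledgement that the central step is not carried out.

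What actually closes the gap in the paper is a direct restriction of the monad to $L$. Writing $L=\PP(\kappa(a_L)/\ker a_L)$ gives the canonical splitting $\restr{\CU^\perp}{L}\cong\bigl(A(\ker a_L,-)\otimes\CO_L\bigr)\oplus\CO_L(-1)$, and restricting \eqref{MONAD} to $L$ one sees that $H^1(\restr{E}{L})$ is the cokernel of $H^0$ of the middle term mapping to $H^0(\CO_L)=\CC$, i.e.\ of the explicit pairing $\beta(E)\otimes A(\ker a_L,-)\to V\otimes V^*\to\CC$. This pairing vanishes if and only if $\beta(E)\subseteq A(\ker a_L,-)^\perp=\kappa(a_L)$, which by the dimension count ($3=3$) is equality. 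If you want to keep your formulation in terms of $\gamma(a_L)$, you still have to perform essentially this restriction-to-$L$ computation; the linear algebra of $\gamma$ and $\beta$ alone, as assembled in your proposal, does not yield the equivalence.
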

\begin{proof}
Note that by corollary \ref{cor:no n-jumps} there are no jumping lines of order greater than $2$, so that $L$ is a $2$-jumping line for $E$ if and only if $H^1(\restr{E}{L}) \neq 0$.

Recall from corollary \ref{cor:lines through P and Q} that
\[
L = \PP \left( \kappa(a_L) / \ker (a_L) \right)
\]
so that 
\[
\restr{\CU^\perp}{L} \cong (A(\ker a_L, -) \otimes \CO_L) \oplus \CO_L(-1)
\]

It follows that if we restrict a monad \eqref{MONAD} for $E$ to $L$ we get
\[
H \otimes (\CO_L \oplus \CO_L(-1)) \to \beta(E) \otimes \big( A(\ker a_L, -) \otimes \CO_L \oplus \CO_L(-1) \big) \to \CO_L 
\]
The only contribution to $H^1(\restr{E}{L})$ comes from the rightmost term, so that $H^1(\restr{E}{L})$ is not zero if and only if the composition
\[
\beta(E) \otimes A(\ker a_L, -) \to V \otimes V^* \to \CC
\]
vanishes.

As $\beta(E)$ is $3$-dimensional and $A(\ker a_L, -)$ is $2$-dimensional, this happens if and only if
$\kappa(a_L) = \beta(E)$.
\end{proof}

Next, we will prove that for each line $L$ there is an instanton $E$ such that $\beta(E) = \kappa(L)$.
By proposition \ref{prop:2-jump again} it is enough to show that $L$ is a $2$-jumping line for $E$. 

Let $L$ be a line in $Y$ and let $K \subset V$ be a $3$-dimensional space such that $\kappa(L) \in \PP(K)$.
By lemma \ref{lemma:sections through L}, we can choose $K$ such that $Y_K$, the linear section of $Y$ induced by $K$, is smooth.
Note that in this case, by lemma \ref{lemma:p_{Y_K} isomorphism}, $Y_K$ is the blow up of $\PP(K)$ in $4$ distinct points such that no $3$ of them lie on a line.
Denote the $4$ components of the exceptional divisor by $L, L_1, L_2, L_3$.
In this notation, the following proposition holds.
\begin{proposition}\label{prop:special instantons on demand}
Given a surjective map
\begin{equation}\label{eq:special instantons on demand -1}
\CU \to \CO_{Y_K}(h-2L)
\end{equation}
denote its kernel by $E(-1)$. Then $E$ is an instanton and $L$ is a $2$-jumping line for $E$.
\end{proposition}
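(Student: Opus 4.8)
The plan is to treat the given exact sequence
\[
0 \to E(-1) \to \CU \to \CO_{Y_K}(h-2L) \to 0
\]
as the definition of $E(-1)$ and to extract every property of $E$ from it, using the del Pezzo geometry of $Y_K$ recorded in the statement: $Y_K = \Bl_{4}\PP(K)$ with exceptional $(-1)$-curves $L,L_1,L_2,L_3$, so that $\restr{H}{Y_K}=3h-(L+L_1+L_2+L_3)$ and $h\cdot L=0$, $L\cdot L=-1$. First I would prove local freeness: since $Y_K$ is a smooth Cartier divisor, $\CO_{Y_K}(h-2L)$ has projective dimension $1$ as an $\CO_Y$-module, hence the kernel $E(-1)$ has projective dimension $0$ and is a rank $2$ vector bundle, and so is $E$. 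A routine Chern character computation from \eqref{Cherntable}, together with $\ch(\CO_{Y_K}(h-2L))=\ch(\CU)-\ch(E(-1))$, then yields $c_1(E)=0$ and $\ch(E)=2-3L$, so that $E$ has the numerical type of a charge $3$ instanton.

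Next I would check the two defining conditions. Because $H^\bullet(\CU)=0$ by lemma \ref{lemma:cohomologies on Y}, the long exact sequence gives $H^1(E(-1))\cong H^0(Y_K,\CO_{Y_K}(h-2L))$, and this vanishes since a section of $\CO_{Y_K}(h-2L)$ is a linear form on $\PP(K)$ vanishing to order $2$ at a point, hence zero. For $\mu$-stability, note $\mu(E)=0$, so $E$ is normalized and by Hoppe's criterion \ref{thm:hoppe} (see remark \ref{rmk:hoppeY}) it suffices to show $H^0(E)=0$. Twisting the defining sequence by $\CO_Y(1)$ and using $\CU(1)=\CU^*$ gives
\[
0 \to E \to \CU^* \to \CO_{Y_K}(h-2L+H) \to 0 .
\]
A nonzero $s\in H^0(E)$ would produce a nonzero $\bar s\in H^0(\CU^*)=V^*$ whose composite with $\CU^*\to\CO_{Y_K}(h-2L+H)$ is zero; restricting to $Y_K$, the section $\restr{\bar s}{Y_K}$ would factor through the kernel line bundle $M:=\ker\!\big(\restr{\CU^*}{Y_K}\to\CO_{Y_K}(h-2L+H)\big)$, which by the determinant identity $\det\restr{\CU^*}{Y_K}=\CO_{Y_K}(H)$ equals $\CO_{Y_K}(2L-h)$, a noneffective class. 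Hence $\restr{\bar s}{Y_K}=0$, so $\bar s$ lifts to $H^0(\CU^*(-1))=H^0(\CU)=0$, a contradiction. Thus $H^0(E)=0$ and $E$ is an instanton of charge $3$.

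Finally, for the $2$-jump I would restrict the defining sequence to $L$ derivedly. Using $\restr{\CO_{Y_K}(h-2L)}{L}=\CO_L(2)$ and $\restr{\CN^*_{Y_K/Y}}{L}=\CO_L(-1)$ one gets $\Tor_1^Y(\CO_{Y_K}(h-2L),\CO_L)=\CO_L(1)$, so that, with $\restr{\CU}{L}=\CO_L(-1)\oplus\CO_L$ from lemma \ref{lemma:splitting on lines} and $\Tor_{>0}(E(-1),\CO_L)=0$,
\[
0 \to \CO_L(1) \to \restr{E(-1)}{L} \to \CO_L(-1)\oplus\CO_L \to \CO_L(2) \to 0 .
\]
The rightmost map is surjective with kernel $\CO_L(-3)$, and the resulting extension of $\CO_L(-3)$ by $\CO_L(1)$ splits since $\Ext^1(\CO_L(-3),\CO_L(1))=H^1(\CO_L(4))=0$; hence $\restr{E(-1)}{L}=\CO_L(1)\oplus\CO_L(-3)$ and $\restr{E}{L}=\CO_L(2)\oplus\CO_L(-2)$, i.e. $L$ is a $2$-jumping line, consistently with proposition \ref{prop:2-jump again}.

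I expect the main obstacle to be the stability step: the local-freeness and Chern class checks are bookkeeping and the jump is a short $\Tor$ computation, but ruling out global sections of $E$ relies on the geometric input that $M=\CO_{Y_K}(2L-h)$ is noneffective on the del Pezzo surface, combined with the identification $\CU^*(-1)=\CU$. Some care is needed in the $\Tor$ computation defining $M$ and in tracking the conormal twist $\restr{\CN^*_{Y_K/Y}}{L}$ when restricting to the $(-1)$-curve $L$.
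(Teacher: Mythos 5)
Your proof is correct and follows essentially the same route as the paper's: local freeness via the $\Tor$-shift along the Cartier divisor $Y_K$, acyclicity of $E(-1)$ from the cohomology of $\CO_{Y_K}(h-2L)$, stability via Hoppe's criterion reduced to a line-bundle vanishing on the del Pezzo surface $Y_K$, and the $2$-jump from the $\Tor$ computation along $L$. The only notable differences are cosmetic: you reduce stability to the non-effectivity of $\CO_{Y_K}(2L-h)$ (correctly keeping track of the twist of the quotient by $\CO_Y(1)$, which the paper's displayed map omits), and you compute the full splitting type $\restr{E}{L}=\CO_L(2)\oplus\CO_L(-2)$ rather than stopping at the sub-line-bundle $\CO_L(1)\to\restr{E(-1)}{L}$ and invoking the bound of corollary \ref{cor:no n-jumps}.
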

\begin{proof}
We need to prove that $H^\bullet(E(-1)) = 0$, that $\ch(E) = 2 - 3L$, that $E$ is locally free and that it is $\mu$-stable.

The vanishing of $H^\bullet(E(-1))$ follows from \ref{lemma:splitting on lines} and from the exact sequence
\[
0 \to \CO_{Y_K}(-L) \to \CO_{Y_K}(h-2L) \to \CO_{h-L}(-1) \to 0 
\]
which shows that $\CO_{Y_K}(h-2L)$ is acyclic.

The Chern character computation follows from \eqref{Cherntable} and from the fact that it is easy to compute the Euler characteristic of several twists of $\CO_{Y_K}(h- 2L)$ by means of the identification of $Y_K$ with the blow up of $\PP(K)$ in $4$ points such that no $3$ of them are collinear.

The fact that $E$ is locally free follows from the isomorphism
\[
\Tor_i(E, - ) \cong \Tor_{i+1}(\CO_{Y_K}(h-2L), - )
\]
and from the fact that locally $\CO_{Y_K}(h-2L)$ is the structure sheaf of a Cartier divisor, so that $\Tor_{> 1}(\CO_{Y_K}(h-2L), - ) = 0$.

The stability of $E$ can be checked via Hoppe's criterion \ref{thm:hoppe}.
In order to do it, it is enough to check that $H^0(E)$ vanishes.
This is equivalent to the fact that
\begin{equation}\label{eq:special instantons on demand}
H^0(\CU^*) \to H^0(\CO_{Y_K}(h-2L))
\end{equation}
is surjective.
Note that the restriction map $\CU^* \to \CU_{Y_K}$ is an isomorphism in cohomology, as its kernel $\CU$ is acyclic, so that the surjectivity of \eqref{eq:special instantons on demand} is equivalent to that of
\begin{equation}\label{eq:special instantons on demand 1}
H^0(\restr{\CU^*}{Y_K}) \to H^0(\CO_{Y_K}(h-2L))
\end{equation}
Finally, as $\CU^*_{Y_K} \to \CO_{Y_K}(h - 2L)$ is surjective, its kernel is a line bundle. More precisely, it is determined by its $c_1$, which is 
\[
H - h - 2L = 2h + L - L_1 - L_2 - L_3
\]
It follows that \eqref{eq:special instantons on demand 1} is surjective if and only if $H^1(\CO_{Y_K}(2h +L - L_1 - L_2 - L_3))$ vanishes.
To conclude note that
\[
H^1(\CO_{Y_K}(2h +L - L_1 - L_2 - L_3)) = H^1(2h - L_1 - L_2 - L_3) = 0
\]

Our last claim is that $L$ is a $2$-jumping line for $E$. Let us prove it.
Note that the Euler characteristic of the derived tensor product of $\CO_{Y_K}(h-2L)$ with a line does not depend on the choice of the line, so that it is always $1$.
In the case of our line $L$ we have
\[
\CO_L \otimes \CO_{Y_K}(h - 2L) = \CO_L(2)
\]
which implies
\[
\Tor_1(\CO_L, \CO_{Y_K}(h-2L)) = \CO_L(1)
\]
It follows that the $\Tor(\CO_L, - )$ exact sequence for the defining sequence \eqref{eq:special instantons on demand -1} of $E(-1)$ begins with an injection
\[
\CO_L(1) \to \restr{E(-1)}{L}
\]
which shows that $L$ is $2$-jumping for $E$.
\end{proof}

\begin{lemma}
For any $L$ there is a smooth linear section $Y_K$ of $Y$ such that $\sigma(a_L) \in \PP(K)$ and that the generic map from $\CU$ to $\CO_{Y_K}(h - 2L)$ is surjective.
\end{lemma}
\begin{proof}
First, note that by lemma \ref{lemma:sections through L} the generic plane $\PP(K)$ in $\PP(V)$ through $\sigma(a_L)$ is transverse to $\sigma(A)$ and contains no trisecants, so that the induced section $Y_K$ is smooth.

The space of  maps from $\CU$ to $\CO_{Y_K}(h - 2L)$ is a $\CC^4$.
This can be shown by noting that by adjunction
\[
\Hom(\CU, \CO_{Y_K}(h - 2L)) \cong \Hom(p_Y^*\CU, \CO_{S_K}(h - 2L))
\] 
and that the relative tautological sequence \eqref{eq:relative tautological p_Y} induces an exact sequence
\[
0 \to H^0(\CO_{S_K}(3h - E - 2L)) \to \Hom(\CU, \CO_{Y_K}(h - 2L)) \to H^0(\CO_{S_K}(2h - 2L)) \to \ldots 
\]
By lemma \ref{singular projection formula}, $Rp_{V*} \CO_{S_K} \cong \CO_{\PP(K)}$, so that the cohomology groups in the above sequence are easily computed via projection formula.

Note that as $Y_K$ is smooth $K \notin \Dtr$, so that by lemma \ref{lemma:p_{Y_K} isomorphism} a map in $\Hom(\CU, \CO_{Y_K}(h - 2L))$ is surjective if and only if the correpsonding one in $ \Hom(p_Y^*\CU, \CO_{S_K}(h - 2L))$ is surjective.
Recall that $S_K$ was defined in definition \ref{def:universal S_K} and is $p_V^{-1} \PP(K)$.
We will show that the space of non-surjective maps is $2$-dimensional inside the $\PP^3$ of maps from $p_Y^*\CU$ to $\CO_{S_K}(h- 2L)$.
Denote by $\phi$ a map from $p_Y^*\CU$ to $\CO_{S_K}(h- 2L)$. 
Consider the composition
\begin{equation}\label{eq:existence of special instantons}
\CO_{\PP_Y(\CU)}(-h) \to p_Y^*\CU \xrightarrow{\phi} \CO_{S_K}(h - 2L)
\end{equation}
If it is zero, then $\phi$ factors via $\CO_{\PP_Y(\CU)}(-2h + E)$.
As $h^0(\CO_{\PP_Y(\CU)}(3h - E - 2L)) = 1$, we can assume that \eqref{eq:existence of special instantons} is non-zero, so that, as $S_K$ is irreducible, it is also injective.
Consider the diagram
\begin{equation*}
\begin{diagram}
\CO_{\PP_Y(\CU)}(-h)		&\rTo			& p_Y^*\CU			&\rTo			& \CO_{\PP_Y(\CU)}(-2h + E) 	\\
\dTo						&			& \dTo^{\phi}			&			& \dTo					\\
\CO_{S_K}(h - 2L)			&\rTo^{1}		&\CO_{S_K}(h-2L)		&\rTo			& 0
\end{diagram}
\end{equation*}
By snake lemma, the cokernel of $\phi$ is isomorphic to the cokernel of
\[
\CO_{\PP_Y(\CU)}(-2h + E) \to \CO_{Z}(h-2L)
\]
where $Z$ is the zero locus of a section of $\CO_{S_K}(2h - 2L)$.
It follows that $Z$ contains a $\PP^1 \subset S_K$ linearly equivalent to $h-L$.
As a consequence, $\phi$ factors via $\CO_{S_K}(h-L)$.
One can finally check that there is a $\PP^1$ of maps from $\CU$ to $\CO_{S_K}(- L)$ and that there is a $\PP^1$ of maps from $\CO_{S_K}(-L)$ to $\CO_{S_K}(h - 2L)$, so that the space of non-surjective maps is the image of $\PP^1 \times \PP^1$, showing eventually that the generic $\phi$ is surjective.
\end{proof}


\subsection{Non-special instantons}\label{sec:Non-special instantons}

The aim of this section is to construct a family of instantons corresponding to an open dense subset of $\CMI_3$. We will also prove that this family is the universal family for non-special instantons. Consistently with notation \ref{not:stratification of B} and definition \ref{def:special instanton definition}, we will denote the moduli space of non-special instantons by $\CMI_3^{\sfn}$.

Note that, by lemma \ref{lemma:Z_3 over Imkappa} and theorem \ref{thm:compactify MI_3}, we already have a description of the moduli space of non-special instantons as an open subset of $\PP_{\Bn}(\CKer^{\sfn*})$.
We want to improve this description in two ways: by finding the image $\beta(\CMI_3^{\sfn})$ inside $\Bn$ and by describing the complement of $\CMI_3^{\sfn}$ inside $\PP_{\Bn}(\CKer^{\sfn*})$.


In the next proposition we will construct non-special instantons as kernels of surjective maps from $\calU^*$ to $\PhiK$. 
\begin{proposition}\label{general instanton}
If $K \notin \Dtr$, then the kernel $E$ of a surjective map from $\calU^*$ to $\PhiK$ is a non-special instanton of charge 3.
\end{proposition}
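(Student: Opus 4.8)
The plan is to check the four defining properties of a charge $3$ instanton for $E$, namely local freeness, the Chern character, acyclicity of the twist $E(-1)$ (equivalently $H^1(E(-1)) = 0$), $\mu$-stability, and finally non-specialness. The starting point is the short exact sequence
\[
0 \to E \to \calU^* \to \PhiK \to 0,
\]
together with everything we already know about $\PhiK$ from lemma \ref{FM transform} and lemma \ref{lemma:FM transform Tor}.

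First I would establish local freeness and the Chern character, which are the cheapest steps. Since $K \notin \Dtr$, lemma \ref{lemma:Dtr is common point} tells us that $\PhiK$ has no $\Tor_2$ against any skyscraper $\CO_P$, and by lemma \ref{lemma:FM transform Tor}(c) it has no $\Tor_3$ either; combined with the fact that $K \notin \Dtr \supset \Imkappa$ (lemma \ref{lemma:Imkappa in Dtr}), part (d) of that lemma gives that $\PhiK$ has no associated points in codimension $2$, hence is torsion free of $\Tor$-dimension $1$ on the linear section $Y_K$. Feeding this into the long exact $\Tor$-sequence of the defining sequence for $E$ yields $\Tor_{>0}(E,\CO_P)=0$ for all points $P$, so $E$ is locally free. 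The Chern character follows by additivity from $\ch(\calU^*)$ (table \eqref{Cherntable}) and $\ch(\PhiK) = H + 7L/2 - P/6$ (lemma \ref{lemma:FM transform Tor}(a)), which should give exactly $\ch(E) = 2 - 3L$, i.e. $c_1(E)=0$, $c_2(E) = 3$.

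Next I would handle the cohomological vanishing $H^1(E(-1)) = 0$. Twisting the defining sequence by $\CO_Y(-1)$ and taking the long exact sequence reduces this to understanding $H^\bullet(\calU^*(-1)) = H^\bullet(\calU)$, which vanishes by lemma \ref{lemma:cohomologies on Y}, and $H^\bullet(\PhiK(-1))$; the latter I would compute from the locally free resolution \eqref{transform of P(K)(2)} of $\PhiK$ (twisted by $\CO_Y(-1)$) via Borel--Bott--Weil, showing the relevant cohomology lines up so that $H^1(E(-1))$ is forced to vanish. For $\mu$-stability I would invoke the framework already in place: $E$ has rank $2$ and $c_1(E) = 0$, so by lemma \ref{lemma:mumford vs Gieseker} and remark \ref{rmk:hoppeY} it suffices to rule out a destabilizing sub-line-bundle, i.e. to show $H^0(E) = 0$ (Hoppe's criterion \ref{thm:hoppe} for the normalized bundle). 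Since $H^0(\calU^*) = V^*$ (lemma \ref{lemma:cohomologies on Y}) and the map $\calU^* \to \PhiK$ is surjective, $H^0(E)$ embeds in the kernel of $H^0(\calU^*) \to H^0(\PhiK)$, and I would argue this kernel vanishes because the surjection realizes all of $V^*$ as sections of $\PhiK$ on its support — this is where I would most carefully use that $K \notin \Dtr$ so that $\PhiK$ is torsion free on $Y_K$ and the global sections of $\calU^*$ do not collapse onto $E$.

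The main obstacle I expect is precisely the non-specialness claim, i.e. showing $\Ext^1(\calU, E) = 0$ (definition \ref{special instanton}). Applying $\Hom(\calU, -)$ to the defining sequence gives a long exact sequence relating $\Ext^\bullet(\calU,E)$, $\Ext^\bullet(\calU,\calU^*)$ and $\Ext^\bullet(\calU,\PhiK)$; the first of these is controlled by $\calU(1) = \calU^*$ and Borel--Bott--Weil, while $\Ext^\bullet(\calU, \PhiK)$ I would compute from resolution \eqref{transform of P(K)(2)}. The delicate point is that the connecting map $\Hom(\calU, \PhiK) \to \Ext^1(\calU, E)$ must be shown to be surjective from a term that vanishes (or that the relevant $\Ext^1(\calU,\calU^*)$ contribution vanishes), so that $\Ext^1(\calU,E)=0$; this hinges on the precise cohomology of $S^2\calU^*$ and $\calU^*$ against $\calU$, and on the hypothesis $K\notin\Dtr$ guaranteeing $\PhiK$ is well-behaved. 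I would expect this to dovetail with proposition \ref{beta3}, which already asserts $\beta(E) = K$ for such sequences, confirming $\beta(E) \in \Bn$ and hence (by proposition \ref{beta special}) that $E$ is non-special.
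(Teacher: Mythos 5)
Your overall architecture matches the paper's proof: local freeness via the $\Tor$ shift $\Tor_i(\CO_y,E)\cong\Tor_{i+1}(\CO_y,\PhiK)$ together with lemma \ref{lemma:Dtr is common point}, the Chern character by additivity, the vanishing of $H^1(E(-1))$ from the resolution \eqref{transform of P(K)(2)} (where the only nontrivial input is $H^\bullet(S^2\calU^*(-H))=0$, i.e.\ exceptionality of $\calU$), and non-specialness exactly as in your last sentence — proposition \ref{beta3} gives $\beta(E)=K$, and since $\Bs\subset\Dtr$ while $K\notin\Dtr$, proposition \ref{beta special} forces $E$ to be non-special. Your preliminary attempt to compute $\Ext^1(\calU,E)$ directly from the long exact sequence is unnecessary; the $\beta$-argument you end with is the one that closes the case.

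The genuine gap is in the stability step. You correctly reduce to showing $H^0(E)=0$, i.e.\ that $H^0(\calU^*)\to H^0(\PhiK)$ is injective, but the justification you offer — that the surjection ``realizes all of $V^*$ as sections of $\PhiK$'' because $\PhiK$ is torsion free on $Y_K$ — is not an argument. Surjectivity of a sheaf map never implies injectivity on global sections, and torsion-freeness of the quotient is irrelevant here: the kernel of $H^0(\calU^*)\to H^0(\PhiK)$ \emph{is} $H^0(E)$, so you would be assuming what you need to prove. The paper's actual argument requires real work in the blow-up picture: by adjunction the map in question is identified with $H^0$ of $\phi^{ad}\colon \restr{p_Y^*\calU^*}{S_K}\to p_V^*\CO_{\PP(K)}(2h)$ (after checking, via the Koszul resolution \eqref{Koszul for S} and acyclicity of $p_Y^*\calU^*(-h)$ and $p_Y^*\calU^*(-2h)$, that restriction to $S_K$ is an isomorphism on cohomology); then, since $\phi^{ad}$ is surjective and $c_1(\calU^*)=3h-E$, its kernel is the line bundle $\CO_{S_K}(h-E)$, and one computes $H^0(\CO_{S_K}(h-E))=0$ directly. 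Without identifying that kernel line bundle and killing its sections, the stability claim is unproven.
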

\begin{proof}
Assume the kernel $E$ of a surjective map from $\calU^*$ to $\PhiK$ is given.
To prove that $E$ is an instanton of charge 3, we need to prove that it is locally free, that it is stable, that $H^1(E(-1)) = 0$ and that $ch(E) = 2 - 3L$. We will denote the map $\calU^* \to \PhiK$ by $\phi$. After that, we will check that it is non-special.

$E$ is locally free if and only if the $\Tor$-dimension of $E$ is  $0$, i.e. if for any $i>0$ and any $y$ closed point $\Tor_i(\calO_{y},E) = 0$ . The defining sequence
\[
0 \to E \to \calU^* \xrightarrow{\phi} \PhiK \to 0
\]
gives an isomorphism between $\Tor_i(\calO_{y},E)$ and $\Tor_{i+1}(\calO_{y},\PhiK)$. 
We compute the latter via resolution \eqref{resolution of Phi(K)}, so that for $i > 1$ we always get $0$, while for $i = 1$ there might be a non-trivial result. 
More precisely, by lemma \ref{lemma:Dtr is common point} the condition $\Tor_2(\calO_{y},\PhiK) \neq 0$ is equivalent to $K \in \Dtr$, so that under the assumption $K \notin \Dtr$ we have that $E$ is locally free.


By Hoppe's criterion \ref{thm:hoppe}, $E$ is stable if and only if $H^0(E) = 0$. To check $H^0(E) = 0$, we check that the map
\begin{equation}\label{general instanton eq1}
H^0(\calU^*) \xrightarrow{H^0(\phi)} H^0\left(\PhiK\right)
\end{equation}
is injective. By the Grothendieck spectral sequence for the composition of two pushforwards, we get that the above map is the same as
\begin{equation}\label{general instanton eq2}
H^0(p_Y^*\calU^*) \xrightarrow{H^0(\phi^{ad})} H^0\left(p_V^*\calO_{\PP(K)}(2)\right)
\end{equation}
induced by the adjoint map $\phi^{ad}$ to $\phi$, so that it is enough to check that $H^0(\phi^{ad})$ is injective.

As $\phi$ is surjective, so is $\phi^{ad}$ by lemma \ref{lemma:surj above-below}. 
Now we claim that the restriction map
\begin{equation} \label{general instanton eq3}
p_Y^*\calU^* \to \restr{p_Y^*\calU^*}{S_K}
\end{equation}
is an isomorphism in cohomology. By Koszul resolution \eqref{Koszul for S} 
this follows if we show that $p_Y^*\calU^*(-2h)$ and $p_Y^* \calU^* (-h)$ are acyclic. By twisting the tautological sequence
\[
0 \to \calO_{\PP_{Y}(\CU)}(2h-E) \to p_Y^*\calU^* \to \calO_{\PP_{Y}(\CU)}(h) \to 0
\]
by $\calO_{\PP_{Y}(\CU)}(-2h)$ we get immediately that $p_Y^*\calU^*(-2h)$ is acyclic. By twisting it by $\calO_{\PP_{Y}(\CU)}(-h)$ we get a non trivial extension of $\calO_{\PP_{Y}(\CU)}$ by $\calO_{\PP_{Y}(\CU)}(h-E)$, so that $p_Y^*\calU^*(-h)$ is acyclic if and only if $h^1\left(\calO_{\PP_{Y}(\CU)}(h-E)\right) = 1$ and its other cohomology groups vanish. The short exact sequence
\[
0 \to \calO_{\PP_{Y}(\CU)}(h-E) \to \calO_{\PP_{Y}(\CU)}(h) \to \calO_E(h) \to 0
\]
is $SL_2$-equivariant and induces an $SL_2$-equivariant cohomology long exact sequence
\begin{equation}\label{general instanton eq4}
0 \to H^0\left(\calO_{\PP_{Y}(\CU)}(h-E)\right) \to S^4W \to S^4W \oplus \CC \to H^1\left(\calO_{\PP_{Y}(\CU)}(h-E)\right) \to 0
\end{equation}
so that $h^1\left(\calO_{\PP_{Y}(\CU)}(h-E)\right) = 1$. 
Summing up, we have proved that \eqref{general instanton eq3} is an isomorphism in cohomology.

In order to prove stability of $E$, we still need to prove that if
\[
\restr{p_Y^*\calU^*}{S_K} \xrightarrow{\restr{\phi^{ad}}{S_K}} p_V^*\calO_{\PP(K)}(2h)
\]
is surjective, then $H^0(\restr{\phi^{ad}}{S_K})$ is injective, that is to say that its kernel has no global sections. 
As we have just proved that $\restr{\phi^{ad}}{S_K}$ surjective, its kernel is locally free of rank one, and is determined by its $c_1$. 
As $c_1(\calU^*) = H = 3h - E$, our claim is now that $\calO_{S_K}(h-E)$ has no global sections. 
This last fact can be reduced  via resolution \eqref{Koszul for S} to some vanishing in the cohomology of 
\[
\CO_{\PP_{Y}(\CU)}(-h-E), \quad \CO_{\PP_{Y}(\CU)}(-E), \quad \CO_{\PP_{Y}(\CU)}(h-E)
\]
It is easy to check that the first two line bundles are acyclic, and that $h^0(\CO_{\PP_{Y}(\CU)}(h-E))$, so that finally also $\CO_{S_K}(h-E)$ has no global sections.

The vanishing of $H^1(E(-1))$ follows from 
\[
H^1(\calU) = 0, \quad H^0(\PhiK(-H)) = 0
\] 
While the vanishing of $H^1(\calU)$ is trivial, we check that 
\[
H^0(\PhiK(-H)) = 0
\]
Recall that $\PhiK$ has a resolution \eqref{transform of P(K)(2)}.
Twist it by $\CO_Y(-H)$ and note that the only nontrivial vanishing is that of $H^\bullet(S^2\CU^*(-H))$.
On the other hand, the vanishing of the cohomology of $S^2\CU^*(-H)$ is equivalent to the fact that $\CU$ is exceptional, which is in proposition \ref{prop:full exceptional}. 
It follows that $E(-1)$ is acyclic.

The Chern character of $E$ is $\ch(\calU^*) - \ch(\PhiK)$. 
To check that it is $2 - 3L$ it is enough to substitute the Chern characters which are computed in table \eqref{Cherntable} and lemma \ref{lemma:FM transform Tor}.

Finally, the fact that $E$ is non-special directly follows from proposition \ref{beta3} and criterion \ref{beta special}, which tell us that $\beta(E) = K$ and that $E$ is special if and only if $\beta(E) \in \Bs$. 
By assumption $\beta(E) \notin \Dtr$, while by lemma \ref{lemma:Imkappa in Dtr} $\Bs \subset \Dtr$, so that we have that $E$ is not special.
\end{proof}


\begin{lemma}\label{lemma:different 3-instantons}
For $i = 1,2$, let $E_i$ be the instanton associated (via proposition \ref{general instanton}) to $\phi_i: \calU^* \to \Phi_{\Bl}(K_i)$, $K_i \notin \Dtr$. If $E_1 \cong E_2$, then $K_1 = K_2$ and $\phi_1 = \phi_2$ up to rescaling.
\end{lemma}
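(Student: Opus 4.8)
The plan is to reconstruct both the subspace $K_i$ and the map $\phi_i$ from the instanton $E_i$ alone, so that an isomorphism $E_1 \cong E_2$ forces $K_1 = K_2$ and $\phi_1 = \phi_2$ up to scalar. The key observation is that by proposition \ref{general instanton} each $E_i$ is non-special with $\beta(E_i) = K_i$ (this is established at the end of the proof of \ref{general instanton}). Since $\beta$ is a well-defined map on $\CMI_3$ (proposition \ref{prop:beta regular}) depending only on the isomorphism class of the instanton, $E_1 \cong E_2$ immediately gives
\[
K_1 = \beta(E_1) = \beta(E_2) = K_2.
\]
So the first step, recovering $K := K_1 = K_2$, is essentially free from the existing machinery.

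Once we know $K_1 = K_2 = K$, it remains to show that $\phi_1$ and $\phi_2$ agree up to rescaling. First I would observe that, since $E_i$ is non-special, the space $\Hom(\CU^*, E_i)$ dualizes appropriately and in particular $\hom(\CU, E_i) = 1$ by definition \ref{special instanton}. The strategy is to show that the surjection $\phi_i : \CU^* \to \PhiK$ is recovered, up to scalar, as the cokernel map of the canonical inclusion $E_i \hookrightarrow \CU^*$. Concretely, I would argue that $\Hom(\CU^*, \PhiK)$ is one-dimensional up to the action that rescales $\phi_i$: from the defining sequence
\[
0 \to E_i \to \CU^* \xrightarrow{\phi_i} \PhiK \to 0
\]
and the long exact sequence obtained by applying $\Hom(\CU^*, -)$, the class of $\phi_i$ in $\Hom(\CU^*, \PhiK)$ is determined by the connecting data of the extension, and any isomorphism $E_1 \cong E_2$ extends to an isomorphism of the two short exact sequences once we check that the inclusion $E_i \to \CU^*$ is itself canonical up to scalar (which follows from $\hom(\CU^*, E_i) = \hom(\CU, E_i) = 1$ via self-duality of $E_i$, as in lemma \ref{beta2second}).

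More precisely, I would set up the following diagram chase. Given an isomorphism $\psi : E_1 \to E_2$, I want to extend it to a commutative ladder between the two defining sequences. Applying $\Hom(-, \CU^*)$ and using that $\Ext^1(\PhiK, \CU^*)$ contributes the obstruction, one checks the composite $E_1 \xrightarrow{\psi} E_2 \hookrightarrow \CU^*$ differs from $E_1 \hookrightarrow \CU^*$ by an automorphism of $\CU^*$; since $\CU^*$ is stable (lemma \ref{lemma:stability on Y}) hence simple, $\Hom(\CU^*,\CU^*) = \CC$, so this automorphism is a scalar $\lambda$. Passing to cokernels, the induced map $\PhiK \to \PhiK$ is then $\lambda$ times the identity, and commutativity forces $\phi_2 = \lambda^{-1}\phi_1$ modulo the identification of the two copies of $\PhiK$.

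The main obstacle I anticipate is justifying that the canonical inclusion $E_i \to \CU^*$ is unique up to scalar and that the extension data rigidifies $\phi_i$; this rests on the vanishing $\Ext^1(\PhiK, \CU^*) $ or, equivalently, on showing $\Hom(E_i, \CU^*)$ is one-dimensional. I would compute $\Hom(E_i,\CU^*)$ using the defining sequence together with $\hom(\CU^*,\CU^*) = 1$, $\hom(\PhiK, \CU^*)$ and the relevant $\Ext^1$ group, the latter handled via the resolution \eqref{transform of P(K)(2)} for $\PhiK$ and Borel--Bott--Weil-type vanishings on $Y$. The delicate point is ensuring no extra maps $E_i \to \CU^*$ appear beyond the tautological one, so that the reconstruction of $\phi_i$ is genuinely canonical; this is where the hypothesis $K \notin \Dtr$ (guaranteeing $E_i$ is locally free and $\PhiK$ is torsion-free on its support, by lemma \ref{lemma:FM transform Tor}) is used to keep all Hom-spaces of the expected dimension.
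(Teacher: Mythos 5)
Your proposal follows essentially the same route as the paper: recover $K$ from $\beta(E_i)$, use one-dimensionality of $\Hom(\CU,E_i)$ (non-specialness plus self-duality) to make the inclusion $E_i\hookrightarrow\CU^*$ canonical up to scalar, and then compare cokernels. Two remarks. First, your detour through $\Ext^1(\PhiK,\CU^*)$ is unnecessary: the identification $\Hom(E_1,\CU^*)\cong\Hom(\CU,E_1^*)\cong\Hom(\CU,E_1)=\CC$ already forces the two compositions $E_1\hookrightarrow\CU^*$ and $E_1\xrightarrow{\psi}E_2\hookrightarrow\CU^*$ to be proportional, which is all you need to build the ladder. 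Second, and this is the one real gap: at the end you assert that the induced map $\PhiK\to\PhiK$ on cokernels ``is then $\lambda$ times the identity.'' It is not automatically so --- the ladder only gives you an automorphism $g$ of $\PhiK$ with $g\circ\phi_1=\lambda\,\phi_2$, and concluding that $\phi_1$ and $\phi_2$ are proportional requires knowing that every automorphism of $\PhiK$ is a scalar. This is exactly where the hypothesis $K\notin\Dtr$ enters in the paper's proof: by lemma \ref{lemma:p_{Y_K} isomorphism} the sheaf $\PhiK$ is a line bundle on its irreducible support $Y_K\cong S_K$, hence its endomorphisms are $H^0(\CO_{Y_K})=\CC$. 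You should add that sentence; without it the last step is circular.
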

\begin{proof}
As $E_1 \cong E_2$, we have $K_1 = \beta(E_1) = \beta(E_2) = K_2$.
As for $\phi_1$ and $\phi_2$, there is a canonical arrow from $E_1 \cong E_2$ to $\calU^*$, so that (up to rescaling $f$), we can find a diagram
\begin{diagram}
E_1		& \rTo	& \calU^*	& \rTo^{\phi_1}	& \Phi_{\Bl}(K_1) 	\\
\dTo^{f}	&		& \dTo^{1}	&			& \dTo			\\
E_2		& \rTo	& \calU^* 	& \rTo^{\phi_2}	& \Phi_{\Bl}(K_2)
\end{diagram}
showing that $\phi_1$ and $\phi_2$ have the same kernel.

Moreover, by lemma \ref{lemma:p_{Y_K} isomorphism} and as $K_i \notin \Dtr$, the sheaf $\Phi_{\Bl}(K_i)$ is a line bundle on its support.
As its support is irreducible, the identity is its only automorphism up to rescaling.
\end{proof}

We should also check that our construction produces at least one instanton. We will check it in lemma \ref{generic map to Phi(K)}.

The next lemma, together with \ref{generic map to Phi(K)}, will give us the dimension of the family of instantons which we have just constructed.

%

\begin{lemma}\label{maps to Phi(K)}
For any $K$, there is an exact sequence 
\begin{equation}\label{eq:maps to PhiK}
0 \to K^* \to \Hom(\calU^*, \PhiK) \to \CC \to 0
\end{equation}
\end{lemma}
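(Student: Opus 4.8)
The plan is to compute $\Hom(\CU^*, \PhiK)$ by transporting it, via adjunction, to a cohomology computation on the blow-up $\PP_Y(\CU)$, where the relevant sheaves become line bundles whose sections can be read off directly. Recall that $\PhiK = \Phi_{\Bl}(\CO_{\PP(K)}(2h)) = Rp_{Y*}\CO_{S_K}(2h)$, with $S_K = p_V^{-1}(\PP(K))$ as in \eqref{diag:P(K), S_K, Y_K} (the $\CO_{S_K}(2h)$ appearing because $Lp_V^*\CO_{\PP(K)}(2h) \cong \CO_{S_K}(2h)$ by Tor-independence, as in the proof of Lemma \ref{FM transform}). First I would apply two adjunctions, namely $Lp_Y^* \dashv Rp_{Y*}$ and then the one for the closed embedding $i_S : S_K \hookrightarrow \PP_Y(\CU)$, to obtain
\[
\Hom(\CU^*, \PhiK) \cong \Hom_{\PP_Y(\CU)}(p_Y^*\CU^*, \CO_{S_K}(2h)) \cong H^0\big(S_K, \restr{p_Y^*\CU}{S_K}(2h)\big).
\]

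Next I would resolve the middle bundle using the relative tautological sequence \eqref{eq:relative tautological p_Y}, whose quotient line bundle has class $h - H$ (since $c_1(p_Y^*\CU) = -H$ because $\det\CU = \CO_Y(-1)$). Twisting by $\CO(2h)$ and restricting to $S_K$, and using $H = 3h - E$ from \eqref{eq:picard of blow up} so that $3h - H = E$, gives a short exact sequence of line bundles on $S_K$:
\[
0 \to \CO_{S_K}(h) \to \restr{p_Y^*\CU}{S_K}(2h) \to \CO_{S_K}(E) \to 0.
\]
Taking the long exact cohomology sequence, the lemma will follow once I identify $H^0(\CO_{S_K}(h)) \cong K^*$, $H^0(\CO_{S_K}(E)) \cong \CC$, and $H^1(\CO_{S_K}(h)) = 0$.

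The first and third are immediate from Lemma \ref{singular projection formula}: since $\CO(h) = p_V^*\CO_{\PP(V)}(1)$ restricts to $p_K^*\CO_{\PP(K)}(1)$ on $S_K$, the projection formula gives $Rp_{K*}\CO_{S_K}(h) \cong \CO_{\PP(K)}(1)$, whence $H^0 = K^*$ and $H^1 = 0$. The computation of $H^0(\CO_{S_K}(E))$ is the one point requiring care, precisely because I want it uniformly in $K$ (including the special loci $\Dtr$, $\Dfat$ and $\Imkappa$, where the fibre geometry of $S_K$ degenerates). Here I would argue by base change along the defining square of $S_K$, which is Tor-independent by Lemma \ref{lemma:smooth expected dimension} since $\PP(K) \not\subset \Imsigma$ forces $S_K$ to have expected dimension: this yields $Rp_{K*}\CO_{S_K}(E) \cong Li_K^* Rp_{V*}\CO_{\PP_Y(\CU)}(E)$. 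Since $E$ is the exceptional divisor of the blow-up of the codimension-$2$ smooth centre $\Imsigma$, one has $Rp_{V*}\CO_{\PP_Y(\CU)}(E) = \CO_{\PP(V)}$, so that $Rp_{K*}\CO_{S_K}(E) \cong \CO_{\PP(K)}$ and therefore $H^0(\CO_{S_K}(E)) = \CC$ for every $K$.

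Assembling these, the long exact sequence collapses to $0 \to K^* \to H^0\big(\restr{p_Y^*\CU}{S_K}(2h)\big) \to \CC \to 0$, which is the asserted sequence. The main (indeed essentially the only) obstacle is the uniform-in-$K$ evaluation of $H^0(\CO_{S_K}(E))$; the two adjunctions and the identification of $\CO_{S_K}(h)$ are formal, given Lemma \ref{singular projection formula}.
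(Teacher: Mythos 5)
Your proof is correct and is essentially the paper's argument transposed through the other adjunction: the paper computes the left adjoint $\Phi^*$ of $\Phi_{\Bl}$ on $\CU^*$, exhibiting it as an extension $0 \to \CO_{\PP(V)}(2) \to \Phi^*(\CU^*) \to \CO_{\PP(V)}(1) \to 0$ on $\PP(V)$ and then applying $\Hom(-,\CO_{\PP(K)}(2))$, whereas you pull back to $S_K$ and filter $\restr{p_Y^*\CU}{S_K}(2h)$ by $\CO_{S_K}(h)$ and $\CO_{S_K}(E)$ — the same tautological sequence of $\PP_Y(\CU)$, the same relation $H = 3h - E$, and the same pushforward vanishings for $\CO(E)$-type bundles along $p_V$. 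Both routes produce the identical exact sequence, including for $K$ in the degenerate loci, so no changes are needed.
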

\begin{proof}
Let $\Phi^*$ be the left adjoint functor to $\Phi_{\Bl}$.
Then by definition there is a natural isomorphism
\[
\Hom_{Y}(\calU^*, \PhiK) \cong \Hom_{\PP(V)}(\Phi^*( \calU^*), \calO_{\PP(K)}(2))
\]
As $\Phi_{\Bl} = Rp_{Y*} \circ Lp_V^*$, its left adjoint $\Phi^*$ is $Rp_{V*}(- \otimes \omega_{p_V}) \circ Lp_Y^*$. Substituting $\omega_{p_V} = \calO(E)$ and using the tautological sequence \eqref{eq:relative tautological p_Y} we get a distinguished triangle
\begin{equation}\label{maps to Phi(K) inner 1}
Rp_{V*} \CO_{\PP_{Y}(\CU)}(2h) \to \Phi^*(\CU^*) \to Rp_{V*}\CO(h+E)
\end{equation}
The leftmost term is $\CO_{\PP(V)}(2h)$ by projection formula. The rightmost is computed by pushing forward the exact sequence
\begin{equation}
0 \to \CO(h) \to \CO(h+E) \to \CO_E(h+E) \to 0
\end{equation}
to $\PP(V)$. 
Note that for every point $v$ in $\Imsigma$, the sheaf $\CO_E(h+E)$ is acyclic on the fiber $p_V^{-1}(v)$, as it is isomorphic to $\CO_{p_V^{-1}(v)}(-1)$, so that
\[
Rp_{V*}\CO(h+E) \cong Rp_{V*}\CO(h)
\]
In the end, by taking the long exact sequence for \eqref{maps to Phi(K) inner 1}  we have shown that there is a canonical exact sequence on $\PP(V)$
\begin{equation}
0 \to \CO_{\PP(V)}(2h) \to \Phi^*(\CU^*) \to \CO_{\PP(V)}(h) \to 0
\end{equation}
which after the application of $\Hom( - , \CO_{\PP(K)}(2h))$ becomes sequence \eqref{eq:maps to PhiK}.
\end{proof}

Now we want to identify surjective maps inside $\Hom(\CU^*, \PhiK)$.
The next lemma allows us to do it without discussing the possible singularities of $S_K$ and $Y_K$.
Note that for any $K$ there is an isomorphism
\begin{equation}\label{eq:surj above-below -1}
\Hom_Y\left( \CU^*, \PhiK \right) \xrightarrow{\cong} \Hom_{S_K}\left( p_{Y_K}^*\CU^*, \CO_{S_K}(2h) \right)
\end{equation}
given by adjunction with respect to pullback and pushforward via the composition
\begin{equation}\label{eq:surj above-below -2}
S_K \xrightarrow{p_{Y_K}} Y_K \to Y
\end{equation}

\begin{lemma}\label{lemma:surj above-below}
Let $K \notin \Dtr$.
A map in $\Hom_Y\left( \CU^*, \PhiK \right)$ is surjective the corresponding one in $\Hom_{S_K}\left( p_{Y_K}^*\CU^*, \CO_{S_K}(2h) \right)$ via \eqref{eq:surj above-below -1} is surjective.
\end{lemma}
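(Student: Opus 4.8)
The plan is to reduce the assertion to a formal statement about surjections onto the pushforward of a sheaf along a closed embedding, the only geometric input being that $p_{Y_K}$ is an isomorphism.

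First I would set up notation. Since $K \notin \Dtr$, Lemma~\ref{lemma:p_{Y_K} isomorphism} gives that $p_{Y_K}\colon S_K \to Y_K$ is an isomorphism. Write $g = i_{Y_K}\circ p_{Y_K}\colon S_K \to Y$ for the composition \eqref{eq:surj above-below -2}. The first thing to record is that $\PhiK$ is an honest (non-derived) pushforward along $g$: by part \ref{item:FM transform Tor b} of Lemma~\ref{lemma:FM transform Tor} we have $\PhiK = i_{Y_K*}Rp_{Y_K*}\CO_{S_K}(2h)$, and since $p_{Y_K}$ is an isomorphism $Rp_{Y_K*}=p_{Y_K*}$, while $i_{Y_K*}$ is exact; hence $\PhiK = g_*\CO_{S_K}(2h)$. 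Consequently the isomorphism \eqref{eq:surj above-below -1} is precisely the ordinary sheaf adjunction $\Hom_Y(\CU^*, g_*\CO_{S_K}(2h)) \cong \Hom_{S_K}(g^*\CU^*, \CO_{S_K}(2h))$ for the pair $g^*\dashv g_*$, where $g^*\CU^* = p_{Y_K}^*\CU^*$ because $\CU^*$ is locally free.

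Next I would factor the adjunction as $p_{Y_K}^*i_{Y_K}^* \dashv i_{Y_K*}p_{Y_K*}$ and use that $p_{Y_K}$ is an isomorphism, so that $p_{Y_K}^*$ and $p_{Y_K*}$ are exact and preserve surjectivity. This descends the whole problem along $p_{Y_K}$ to $Y_K$: setting $\CF := p_{Y_K*}\CO_{S_K}(2h)$ and $i := i_{Y_K}$, it suffices to show that for a map $\phi\colon \CU^* \to i_*\CF$ on $Y$, surjectivity of $\phi$ is equivalent to surjectivity of its adjoint $\psi\colon i^*\CU^* \to \CF$ with respect to $i^*\dashv i_*$. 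The core equivalence is then proved by a cokernel computation: since $i_*\CF$ is annihilated by the ideal $\CI_{Y_K}$ of $Y_K$, so is every quotient of it, whence $\coker\phi = i_*C$ for a unique sheaf $C$ on $Y_K$; applying the right-exact functor $i^*$ to $\CU^* \to i_*\CF \to i_*C \to 0$ and using the counit isomorphism $i^*i_*\cong \mathrm{id}$ for closed embeddings yields an exact sequence $i^*\CU^* \xrightarrow{\psi} \CF \to C \to 0$, so that $\coker\psi \cong C$. Therefore $\phi$ is surjective if and only if $C=0$ if and only if $\psi$ is surjective, and finally $\psi$ surjective is equivalent to $\phi^{ad}=p_{Y_K}^*\psi$ surjective because $p_{Y_K}$ is an isomorphism.

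I do not expect a serious obstacle here: the essential work is done by Lemma~\ref{lemma:p_{Y_K} isomorphism}, which removes the singularities of $S_K$ and $Y_K$ that would otherwise complicate the identification of $\PhiK$ with a plain pushforward, and the rest of the argument is purely formal. The only point that deserves a line of care is checking that the adjunction isomorphism \eqref{eq:surj above-below -1} named in the statement genuinely coincides with the $g^*\dashv g_*$ adjunction; this follows as soon as $\PhiK$ has been identified with the non-derived pushforward $g_*\CO_{S_K}(2h)$, as in the first step.
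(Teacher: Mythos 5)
Your proposal is correct and follows essentially the same route as the paper: the paper's proof simply invokes Lemma~\ref{lemma:p_{Y_K} isomorphism} to see that the composition $S_K \to Y_K \to Y$ is a closed embedding and then asserts that adjunction along a closed embedding preserves surjectivity. Your cokernel computation via $i^*i_*\cong\mathrm{id}$ is just the explicit justification of that last assertion, which the paper leaves as "clearly".
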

\begin{proof}
By lemma \ref{lemma:p_{Y_K} isomorphism}, if $K \notin \Dtr$ then the composition \eqref{eq:surj above-below -2} is a closed embedding.
Adjunction with respect to pullback and pushforward via closed embeddings clearly preserves surjectivity.
\end{proof}

\begin{lemma}\label{generic map to Phi(K)}
If $K \notin \Dtr$, then the general map in $\Hom_Y(\calU^*, \PhiK)$ is surjective. The locus of non-surjective maps is a union of at most $5$ hyperplanes. 
\end{lemma}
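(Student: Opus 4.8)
The plan is to reduce everything to the surface $S_K$ and to translate surjectivity of $\phi$ into a nowhere-vanishing condition for a section of a rank $2$ bundle. First I would use the isomorphism \eqref{eq:surj above-below -1} together with lemma \ref{lemma:surj above-below} to replace a map $\phi \in \Hom_Y(\CU^*, \PhiK)$ by its adjoint $\phi^{ad}: \restr{p_Y^*\CU^*}{S_K} \to \CO_{S_K}(2h)$, which is surjective if and only if $\phi$ is. Since $K \notin \Dtr$, by lemmas \ref{lemma:Imkappa is unexpected dimension} and \ref{lemma:Imkappa in Dtr} (and the complete-intersection description of $K_4 := \PP(K) \cap \Imsigma$ used in the proof of \ref{beta3}) the intersection $K_4$ is a length $4$ subscheme of expected dimension, so $S_K$ is the blow-up of $\PP(K) = \PP^2$ along $K_4$. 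For the main argument I would first treat the generic case where $K_4$ consists of $4$ distinct points, so that $S_K = \Bl_{4}\PP^2$ has disjoint exceptional $(-1)$-curves $E_0, \ldots, E_3$ with $\restr{E}{S_K} = \sum_i E_i$.

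Dualizing the tautological sequence $0 \to \CO_{S_K}(2h - E) \to \restr{p_Y^*\CU^*}{S_K} \to \CO_{S_K}(h) \to 0$ and twisting by $\CO_{S_K}(2h)$, the datum of $\phi^{ad}$ becomes a section $\psi \in H^0(\restr{p_Y^*\CU}{S_K}(2h))$, and $\phi^{ad}$ is surjective exactly when $\psi$ is nowhere vanishing (its cokernel is $\CO_Z(2h)$ for $Z$ the zero scheme of $\psi$). I would then exploit the sequence
\[
0 \to \CO_{S_K}(h) \to \restr{p_Y^*\CU}{S_K}(2h) \to \CO_{S_K}(E) \to 0,
\]
which, using $H^1(\CO_{S_K}(h)) = 0$ and $H^0(\CO_{S_K}(E)) = \CC$, gives $H^0(\restr{p_Y^*\CU}{S_K}(2h)) = K^* \oplus \CC$, in agreement with lemma \ref{maps to Phi(K)}. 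Write $\bar\psi$ for the image of $\psi$ in $H^0(\CO_{S_K}(E)) = \CC$.

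The heart of the argument is the analysis of the zeros of $\psi$. If $\bar\psi = 0$, i.e. $\psi \in K^* = H^0(\CO_{S_K}(h))$, then $\psi$ is multiplication by a linear form and vanishes along a line, so these maps form a hyperplane $\PP(K^*)$ of non-surjective maps. If $\bar\psi \neq 0$, then $\bar\psi$ is a nonzero multiple of the canonical section of $\CO_{S_K}(E)$, which vanishes exactly on $\bigcup_i E_i$, so the zeros of $\psi$ lie in $\bigcup_i E_i$. Restricting the sequence to each $E_i \cong \PP^1$ gives a split extension $\CO \oplus \CO(-1)$, and since $\bar\psi|_{E_i} = 0$ the restriction $\psi|_{E_i}$ lies in the $\CO$-summand as a constant; thus $\psi$ vanishes on $E_i$ if and only if this constant is zero, a single linear condition $\ell_i(\psi) = 0$. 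Each $\ell_i$ is nonzero, because its restriction to $K^*$ is evaluation of a linear form at $P_i$, so $\PP(\ker \ell_i)$ is a genuine hyperplane. Hence the non-surjective locus equals $\PP(K^*) \cup \bigcup_{i=0}^{3} \PP(\ker \ell_i)$, a union of at most $5$ hyperplanes; being a finite union of hyperplanes it is a proper subset of $\PP(\Hom_Y(\CU^*, \PhiK)) \cong \PP^3$, so the general map is surjective.

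Finally I would check that the general case $K \notin \Dtr$ with $K_4$ non-reduced runs in the same way: for $\bar\psi \neq 0$ the zeros of $\psi$ still lie over $K_4$, and the non-surjectivity conditions are local at the (at most $4$) points of $K_4$, so together with $\PP(K^*)$ they cut out at most $5$ hyperplanes. The main obstacle is precisely this bookkeeping over a possibly non-reduced $K_4$: I would need to verify that $H^0(\CO_{S_K}(E)) = \CC$ and that the fibrewise splitting $\CO \oplus \CO(-1)$ of $\restr{p_Y^*\CU}{S_K}(2h)$ over the exceptional locus persists, so that each connected component of that locus contributes exactly one hyperplane condition and the total count remains bounded by $5$.
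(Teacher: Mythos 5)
Your proof is correct and follows essentially the same route as the paper's: both reduce to $S_K$ via lemma \ref{lemma:surj above-below}, split off the hyperplane of maps killing the subbundle $\CO_{S_K}(2h-E) \subset \restr{p_Y^*\CU^*}{S_K}$ (your case $\bar\psi = 0$), and then show that the remaining non-surjective maps are cut out by one linear condition per irreducible component of the exceptional divisor --- your section-theoretic phrasing via $\psi \in H^0(\restr{p_Y^*\CU}{S_K}(2h))$ being exactly the dual of the paper's snake-lemma computation of $\Coker f$. The one loose end you flag, the bookkeeping when $K_4$ is non-reduced, is present in the paper's argument too, which disposes of it by noting that a nonzero cokernel always surjects onto $\CO_e$ for some irreducible component $e$ of $E$ and computing $\hom(p_Y^*\CU^*, I_e(2h)) = 3$ directly.
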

\begin{proof} 
We will classify non-surjective maps and show that they form a divisor inside $\PP\left(\Hom_Y(\CU,\PhiK)\right)$. By lemma \ref{lemma:surj above-below}, a map in $\PP\left(\Hom_Y(\CU,\PhiK \right)$ is surjective if and only if the corresponding one in $\Hom_{\PP_{Y}(\CU)}(p_Y^*\calU^*, p_V^*\CO_{\PP(K)}(2h))$ is surjective. 

Take any map $f \in \Hom_{\PP_{Y}(\CU)}(p_Y^*\calU^*, p_V^*\CO_{\PP(K)}(2h))$: the composition with the tautological map $\CO_{\PP_{Y}(\CU)}(2h-E) \to \CU^*$ is either 0 or not. If it vanishes, then $f$ factors via $\CO_{\PP_{Y}(\CU)}(h)$. 
In this case $f$ vanishes on a linear section of $S_K$, so that it is never surjective. 

To check how many such maps there are, we compute $\hom(\CO_{S_K}(h), \CO_{S_K}(2h)) = h^0(\CO_{S_K}(h))$. 
By lemma \ref{singular projection formula}, the blow-up projection $p_K: S_K \to \PP(K)$ is such that the coevaluation map \eqref{eq:coev_K} is an isomorphism.
Therefore, by projection formula for $p_K$, $H^\bullet(\CO_{S_K}(h)) \cong H^\bullet(\CO_{\PP(K)}(h)) = K^*$, which means that maps factoring via $\CO(h)$ form a hyperplane.

By lemma \ref{lemma:Imkappa is unexpected dimension}, whenever $K \notin \Bs$ the fiber product $S_K$ is irreducible. 
It follows that if the composition 
\[
\CO_{\PP_{Y}(\CU)}(2h-E) \to \CU^* \xrightarrow{f} p_V^* \CO_{\PP(K)}(2h)
\] 
does not vanish, then it is injective.
As a consequence, on $S_K$ there is a commutative diagram
\[
\begin{diagram}
\CO_{\PP_Y(\CU)}(2h-E)	&\rTo			& p_Y^*\CU^*			&\rTo			& \CO_{\PP_Y(\CU)}(h) 	\\
\dTo					&			& \dTo^{f}				&			& \dTo				\\
p_V^*\CO_{\PP(K)}(2h)	&\rTo^{1}		&p_V^*\CO_{\PP(K)}(2h)	&\rTo			& 0
\end{diagram}
\]
By the snake lemma, there is an exact sequence
\[
\CO(h) \to  \CO_E(2h) \to \Coker f \to 0
\]
As $h$ restricts trivially to $E$, we have
\[
\CO_E \to \CO_E \to \Coker f \to 0
\]
$E$ might be reducible or non-reduced, but in any case, if $\Coker f \neq 0$ then there is a $\PP^1$ denoted by $e$ such that $\Coker f \to \CO_{e}$. 

We want to show that for each such $e$ there is a $\PP^2$ of maps from $p_Y^*\CU^*$ to $p_V^*\CO_{\PP(K)}(2h)$ which are not surjective on $e_i$, and thus factor via the twisted ideal $I_{e}(2h)$ of $e$ in $S_K$.
Note that $e_i$ might not be a Cartier divisor inside $S_K$, so that we will write $I_{e}$ instead of $\CO_{S_K}(-e)$ for the ideal of $e$. 
To check $\hom(p_Y^*\CU^*,I_{e}(2h)) = 3$, use the long exact sequence induced by
\[
0 \to I_e(2h) \to \CO_{S_K}(2h) \to \CO_e \to 0
\]

We can easily check that $\hom(p_Y^*\CU^*,\CO_e) = 1$ via the tautological sequence on $S_K$ and using the fact that $E$ restricts to $e$ as -1.
Then we need to check that $\ext^1(p_Y^*\CU^*, I_e(2h)) = 0$. 
Again by tautological sequence it is enough to show that $H^1(\CO_e(2h-E)) = H^1(\CO_e(h)) = 0$, which is true as $h$ restricts trivially to $e$, while $E$ restricts as $\CO_e(-1)$.

Finally, we have proven that the set of non-surjective maps is a union of at most $5$ hyperplanes, such that each one represents maps that are non surjective on a divisor. 
The divisor where surjectivity fails can be either linearly equivalent to $h$ or to one of the (at most 4) irreducible components $e_i$ of $E$ on $S_K$.
\end{proof}
%
%



On the other hand, there are no surjective maps from $\CU^*$ to $\PhiK$ for $K \in \Dtr$, as proved in the following lemma.

\begin{lemma}\label{lemma:no surj on Dtr}
For $K \in \Dtr$ there are no surjective maps from $\CU^*$ to $\PhiK$
\end{lemma}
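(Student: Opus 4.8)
The plan is to show that for $K\in\Dtr$ the sheaf $\PhiK$ cannot be generated by two elements at a suitable point of its support, which immediately rules out any surjection from the rank-two bundle $\CU^*$.

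First I would produce the relevant point. Since $K\in\Dtr$, the plane $\PP(K)$ contains a trisecant line $T$ to $\Imsigma$. By corollary \ref{cor:points and trisecants} the map $\tr$ identifies $Y$ with the variety of trisecants, so there is a point $P\in Y$ with $\tr(P)=p_V(p_Y^{-1}(P))=T$. Writing $F_P:=p_Y^{-1}(P)\cong\PP^1$, the morphism $p_Y$ contracts $F_P$ to $P$ (this is exactly the phenomenon that fails outside $\Dtr$, cf.\ lemma \ref{lemma:p_{Y_K} isomorphism}), and since $T\subset\PP(K)$ we have $F_P\subset S_K$; note also that $p_V$ restricts to a linear isomorphism $F_P\to T$, so $h|_{F_P}=\CO_{F_P}(1)$.

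The heart of the argument is the computation of the fibre $\PhiK\otimes k(P)$. Following the base change used in the proof of lemma \ref{lemma:Dtr is common point}, I would write
\[
\CO_P \Lotimes \PhiK \cong Rp_{Y*}\left( \CO_{F_P} \Lotimes p_V^*\CO_{\PP(K)}(2h) \right).
\]
Because $\PP(K)$ is cut out of $\PP(V)$ by the two linear forms spanning $K^\perp$, the sheaf $\CO_{S_K}$ has a Koszul resolution by $\Lambda^\bullet(K^\perp)\otimes\CO(-\bullet h)$; restricting it to $F_P$, both Koszul differentials vanish identically (the two forms vanish on $T\subset\PP(K)$). Hence $\CO_{F_P}\Lotimes p_V^*\CO_{\PP(K)}(2h)$ has cohomology sheaves $\CO_{F_P}(2),\ \CO_{F_P}(1)^{\oplus 2},\ \CO_{F_P}$ in degrees $0,-1,-2$. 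Since each of these is acyclic in degree $1$ on $F_P\cong\PP^1$, the hypercohomology spectral sequence for $Rp_{Y*}$ degenerates and yields in particular
\[
\PhiK\otimes k(P) \cong H^0(F_P,\CO_{F_P}(2)) \cong \CC^3 .
\]

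Finally I would conclude by Nakayama. Any map $\phi\colon\CU^*\to\PhiK$ has image whose fibre at $P$ has dimension at most $\rank\CU^*=2$; since $\dim_\CC\PhiK\otimes k(P)=3$, the fibre map $\phi\otimes k(P)$ cannot be surjective, so $\phi$ is not surjective in any neighbourhood of $P$, and a fortiori not surjective. The main obstacle is organising the base-change/Koszul computation of the fibre cleanly (recalling $Lp_V^*\CO_{\PP(K)}\cong\CO_{S_K}$ via lemma \ref{lemma:smooth expected dimension}); once the fibre dimension $3>2$ is established, the non-existence of a surjection is immediate. As a consistency check, the same spectral sequence reproduces $\Tor_2(\CO_P,\PhiK)\cong\CC\neq 0$, in agreement with lemma \ref{lemma:Dtr is common point}.
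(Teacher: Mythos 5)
Your proposal is correct and takes essentially the same route as the paper: both pass to the point of $Y$ corresponding to the trisecant contained in $\PP(K)$, use base change along the contracted fibre $p_Y^{-1}(P)$ to see that the fibre of $\PhiK$ there has dimension (at least) $h^0(\CO_{\PP^1}(2))=3$, and conclude since $\rank\CU^*=2$. Your Koszul computation merely pins the fibre down to exactly $\CC^3$, where the paper settles for the inequality.
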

\begin{proof}
We will show that if $\PP(K)$ contains a trisecant $L$ to $\Imsigma$, then the fiber of $\PhiK$ at the point $y \in Y$ correpsonding to the trisecant via \ref{cor:points and trisecants} is $3$-dimensional.

By base change and as $p_Y$ has relative dimension $1$, the dimension of the restriction of $\PhiK$ to $y$ is greater than the dimension of
\[
H^0\left(p_Y^{-1}(y),\restr{ p_V^*\CO_{\PP(K)}(2h)}{p_Y^{-1}(y)} \right)
 \]
which is isomorphic to $H^0(\CO_L(2))$.
As the rank of $\CU^*$ is $2$, it follows that there are no surjective maps from $\CU^*$ to $\PhiK$.
\end{proof}


By performing the construction in \ref{general instanton} in families, we are now going to construct a 9-dimensional family of instantons and to prove that it is the universal family for the moduli space of non special istantons. 

First, recall from definition \ref{def:universal S_K} that over $\Bg$, one can construct a universal family $\CS$ for $S_K$.

By lemma \ref{maps to Phi(K)} the pushforward of $\CHom_{\Bg \times Y}(\CU^*, Rp_{Y*}\CO_{\CS}(2h))$ to $\Bg$ is a rank 4 vector bundle. Denote it by $\CG$.
It turns out that we have already discussed this vector bundle (or at least its restriction to the complement of $\Dtr$): it is the bundle 

\begin{lemma}
There is an isomorphism of vector bundles
\[
\CKer^{\Bg \setminus \Dtr} \cong \restr{\CG^*}{\Bg \setminus \Dtr}
\]
up to a twist by a line bundle on ${\Bg \setminus \Dtr}$.
\end{lemma}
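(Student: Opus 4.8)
The goal is to identify two vector bundles on $\Bg \setminus \Dtr$: the bundle $\CKer$ from the exact sequence \eqref{eq:describe Z}, and the dual of the bundle $\CG$ whose fiber over $K$ parametrizes $\Hom(\CU^*, \PhiK)$. Both are rank $4$ on $\Bg \setminus \Dtr$ (note $\Bs \subset \Dtr$ by lemma \ref{lemma:Imkappa in Dtr}, so on this open set $\CKer$ restricts to the genuine rank $4$ bundle $\CKer^{\sfn}$ of definition \ref{def:G and F}), and the natural approach is to produce a relative version of the computation in lemma \ref{maps to Phi(K)} and compare it with \eqref{eq:describe Z}.

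The plan is to work over $\Bg \setminus \Dtr$ with the universal linear section $\CS$ of definition \ref{def:universal S_K} and the Fourier--Mukai functor $\Phi_{\Bl}$. First I would globalize lemma \ref{maps to Phi(K)}: by definition $\CG = \pi_{\Bg *}\CHom_{\Bg \times Y}(\CU^*, Rp_{Y*}\CO_{\CS}(2h))$, and by adjunction this is $\pi_{\Bg *}\CHom_{\PP(V) \times \Bg}(\Phi^* \CU^*, \CO_{\PP(\CK)}(2h))$ where $\Phi^*$ is the left adjoint used in lemma \ref{maps to Phi(K)}. The relative version of the distinguished triangle \eqref{maps to Phi(K) inner 1} gives, over $\Bg \times \PP(V)$, a short exact sequence $0 \to \CO_{\PP(V)}(2h) \boxtimes \CO_{\Bg} \to \Phi^*\CU^* \to \CO_{\PP(V)}(h) \boxtimes \CO_{\Bg} \to 0$. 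Applying $\CHom( -, \CO_{\PP(\CK)}(2h))$ and pushing forward to $\Bg$ yields an exact sequence computing $\CG$, the relative analogue of \eqref{eq:maps to PhiK}, whose subbundle term is $\pi_{\Bg *}\CHom(\CO(h), \CO_{\PP(\CK)}(2h)) \cong \CK^*$ (this is the fiberwise $K^*$ of lemma \ref{maps to Phi(K)}) and whose quotient term is the trivial line bundle. Dualizing produces a presentation of $\CG^*$ sitting in $0 \to \CO_{\Bg} \to \CG^* \to \CK \to 0$ (up to a twist by a line bundle on $\Bg$, coming from the non-canonical trivialization of $R\pi_{\Bg*}$ of $\CO_{\PP(\CK)}(2h)$-type terms).

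Next I would match this against the bundle $\CKer$. Restricting \eqref{eq:describe Z} to $\Bg \setminus \Dtr$, where $\kappa_*\CO_{\PP(A)}(2)$ vanishes, gives the rank $4$ bundle $\CKer^{\sfn}$ as the kernel of $A \otimes \CK \to V^* \otimes \CO$. The key is to exhibit a natural map relating the two presentations. Using lemma \ref{lemma:FM transform Tor}\eqref{item:FM transform Tor b} and the identification of $\beta$ with the $\Ext^1$-construction, the space $\Hom(\CU^*, \PhiK)$ is dual to (a twist of) the fiber of $\CKer$: indeed applying $\CHom(-, \PhiK)$ to the resolution \eqref{resolution of Phi(K)} of $\PhiK$ and comparing with the defining sequence \eqref{eq:describe Z} of $\CKer$ shows that $\Hom(\CU^*, \PhiK)^*$ is naturally the kernel of the dual map $A \otimes \CK \to V^*$, i.e.\ $\CKer$. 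I would assemble these isomorphisms into a commuting square of short exact sequences and conclude $\CKer|_{\Bg \setminus \Dtr} \cong \CG^*|_{\Bg \setminus \Dtr}$ up to twist, by the five lemma (or simply by comparing the two two-step presentations with isomorphic sub- and quotient-terms).

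The main obstacle I anticipate is bookkeeping the line-bundle twists and checking that the identifications are genuinely natural rather than merely fiberwise isomorphisms. The fiberwise statement follows immediately from lemmas \ref{maps to Phi(K)} and the computation of \eqref{eq:describe Z}, but to get a bundle isomorphism one must verify that all the pushforwards commute with base change on $\Bg \setminus \Dtr$; here the hypothesis $K \notin \Dtr$ is essential, since by lemma \ref{lemma:FM transform Tor} it guarantees $\PhiK$ has no associated points in codimension $\geq 2$ and is torsion-free on its support, and by lemma \ref{lemma:p_{Y_K} isomorphism} that $p_{Y_K}\colon S_K \to Y_K$ is an isomorphism, so that all relative cohomology groups entering the construction have locally constant rank and the relevant higher direct images vanish. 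Once base change is secured the comparison of the two exact presentations is routine, and the ambiguity of the overall twist is exactly the content of the phrase "up to a twist by a line bundle on $\Bg \setminus \Dtr$" in the statement.
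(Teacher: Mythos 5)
Your setup is fine (both bundles have rank $4$ on $\Bg \setminus \Dtr$ since $\Bs \subset \Dtr$, and the relative version of lemma \ref{maps to Phi(K)} does give a presentation $0 \to \CK^* \to \CG \to \CO \to 0$), but the heart of the lemma is the identification of $\Hom(\CU^*, \PhiK)^*$ with $\Ker(A \otimes K \to V^*)$, and this is exactly the step you assert without an argument. Your claim that one gets it by "applying $\CHom(-,\PhiK)$ to the resolution \eqref{resolution of Phi(K)} and comparing with the defining sequence \eqref{eq:describe Z}" does not go through: running $\Hom(\CU^*,-)$ through that resolution reproduces (at best) the presentation $0 \to K^* \to \Hom(\CU^*,\PhiK) \to \CC \to 0$ of lemma \ref{maps to Phi(K)}, i.e.\ after dualizing an extension of $\CK$ by $\CO$, whereas $\CKer$ is presented as the kernel of $A \otimes \CK \to V^* \otimes \CO$. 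These two presentations have non-isomorphic sub- and quotient-terms ($\CC$ and $K$ versus $A \otimes K$ and $V^*$), so there is no "commuting square of short exact sequences" to feed to the five lemma, contrary to your final paragraph. Even if one first showed that $\CKer$ is also an extension of (a twist of) $\CK$ by a line bundle — which requires lemma \ref{lemma:from 9 to 8} and a computation of the quotient $\CKer/\CO_{\Bg}(-1)$ that you do not perform — one would still have to match the extension classes, since $\Ext^1(\CK,\CO_{\Bg})$ need not vanish.

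The paper closes this gap by a genuinely different route: Grothendieck duality for $Y_K \hookrightarrow Y$ converts $\Hom(\CU^*, \CO_{Y_K}(2h))$ into $\Ext^1(\CO_{Y_K}(h-E), \CU)$, and then one decomposes $\CO_{Y_K}(h-E)$ with respect to the exceptional collection $\langle \CU, \CU^\perp, \CO_Y, \CO_Y(H) \rangle$. The $\CO_Y(H)$- and $\CO_Y$-components are computed directly, the $\CU^\perp$-component is identified with $K \otimes \CU^\perp$ by analyzing the connecting morphism $V \to H^1(V/\CU \otimes \CO_{Y_K}(h-E))$ component by component over the exceptional curves, and the remaining $\CU$-component is then forced by the rank count to be $\Ker(K \otimes A \to V^*) \otimes \CU$. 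Since only the $\CU$-part of the monad contributes to $\Ext^\bullet(-,\CU)$, this gives $\Ext^1(\CO_{Y_K}(h-E),\CU) \cong \Ker(K \otimes A \to V^*)$ naturally, which is the identification your proof is missing. Your base-change remarks at the end are sensible but address a secondary issue; the primary missing ingredient is this fiberwise identification itself.
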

\begin{proof}
First, note that by Grothendieck duality for the embedding of $Y_K$ in $Y$ there is an isomorphism
\[
\Hom(\CU^*, \CO_{Y_K}(2h)) \cong \Ext^1(\CO_{Y_K}(h-E), \CU)
\]
which can be constructed in family, up to a twist by a line bundle on the base of the family.
Recall also that by lemma \ref{lemma:p_{Y_K} isomorphism}, the map $p_{Y_K}$ is an isomorphism between $S_K$ and $Y_K$ whenever $K \notin \Dtr$.

Next, compute the decomposition of $\CO_{Y_K}(h-E)$ with respect to the collection $\CU, \CU^\perp, \CO_Y, \CO_Y(H)$.
As $H = 3h - E$, $\CO_{Y_K}(h-E)$ lies in the right orthogonal to $\CO_Y(H)$.
Also the component with respect to $\CO_Y$ is easily computed:
\[
\RHom(\CO_Y, \CO_{Y_K}(h-E)) = \CC[-1]
\]
so that there is a non trivial extension
\begin{equation}\label{eq:late}
0 \to \CO_{Y_K}(h-E) \to \LL_{\CO_Y} \CO_{Y_K}(h-E) \to \CO_Y \to 0 
\end{equation}

To compute the $\CU^\perp$ component of $\LL_{\CO_Y} \CO_{Y_K}(h-E)$, use the defining sequence \eqref{eq:late}:
\[
0 \to V/\CU \otimes \CO_{Y_K}(h-E) \to V/\CU \otimes \LL_{\CO_Y} \CO_{Y_K}(h-E) \to V/\CU \to 0
\]
Note that the connection morphism
\[
V \cong H^0(V/\CU) \to H^1(V/\CU \otimes \CO_{Y_K}(h-E))
\]
is isomorphic to the second map in the exact sequence
\[
0 \to H^1(\CU \otimes \CO_{Y_K}(h-E)) \to V \otimes H^1(\CO_{Y_K}(h-E)) \to H^1(V/ \CU \otimes \CO_{Y_K}(h-E)) \to 0
\]
induced by the tautological sequence on $Y$.
Let us now prove that the above sequence is isomorphic to
\[
0 \to K \to V \to V/K \to 0
\]
For each component $e_i$ of the exceptional locus, get the commutative diagram
\[
\begin{diagram}
H^0(\CO_{e_i})		&		&	\\
\dInto			& \rdInto	&	\\
H^1(\CO_{Y_K}(-E))	& \rTo	& V \otimes H^1(\CO_{Y_K}(h-E))
\end{diagram}
\]
The diagonal map is the natural injection of a component of the intersection $\PP(K) \cap \Imsigma$ into $V$, so that as $i$ varies among all components, the images span $K \subset V$.
As $h^1(\CO_{Y_K}(-E)) = 3$, we have proved that the mutation of $\CO_{Y_K}(h-E)$ across $\CU^\perp, \CO_Y, \CO_Y(H)$ is the cone of
\[
\CO_{Y_K}(h-E) \to \{ K \otimes \CU^\perp \to \CO_Y\}
\]
As this last cone is isomorphic to a direct sum of shifts of $\CU$, the computation of the rank yields the following decomposition for $\CO_{Y_K}(h-E)$
\begin{equation} \label{eq:late 1}
\{	\Ker(K\otimes A \to V^*) \otimes \CU \to K \otimes \CU^\perp \to \CO_Y	\}
\end{equation}

Finally, we can use decomposition \eqref{eq:late 1} to show that
\[
\Ker(K \otimes A \to V^*) \cong \Ext^1(\CO_{Y_K}(h-E), \CU)
\]
This is clear as the $\CU^\perp$ and $\CO$ part of \eqref{eq:late 1} do not contribute to $\Hom( - , \CU)$.

The same isomorphisms, performed in families over $\Bg \setminus \Dtr$, show that 
\[
\CKer^{\Bg \setminus \Dtr} \cong \restr{\CG^*}{\Bg \setminus \Dtr}
\]
up to a twist by a line bundle on $\Bg \setminus \Dtr$.
\end{proof}


Our claim is that the moduli space of non-special instantons of charge $3$ is the open subset of $\PP_{\Bg}(\CG)$ parametrizing surjective maps from $\CU^*$ to $\CO_{S_K}$. We will denote it by $\PP(\CG)^\circ$.
By lemma \ref{lemma:no surj on Dtr} there are no such maps if $K \in \Dtr$, so that the image of $\PP(\CG)^\circ$ in $\Bg$ does not intersect $\Dtr$. 
By lemma \ref{generic map to Phi(K)} and lemma \ref{lemma:no surj on Dtr}, the image of $\PP(\CG)^\circ$ in $\Bg$ is exactly the complement of $\Dtr$.

\begin{theorem}\label{thm:general instanton}
The moduli space $\CMI_3^{\sfn}$ of non-special instantons of charge $3$ is isomorphic to $\PP(\CG)^\circ$.
\end{theorem}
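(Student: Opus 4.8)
The plan is to exhibit mutually inverse morphisms between $\PP(\CG)^\circ$ and $\CMI_3^{\sfn}$, and to check they are inverse both in families and on points. In one direction the work is essentially done: by proposition \ref{general instanton} each point of $\PP(\CG)^\circ$ lying over $K \in \Bg \setminus \Dtr$ gives a non-special instanton of charge $3$, namely the kernel of the corresponding surjection $\CU^* \to \PhiK$. First I would carry this out relatively: over $\PP(\CG)^\circ$ there is a tautological surjection from the pullback of $\CU^*$ to the relative version of $\PhiK$ built from $\CS$, and I would verify that its kernel is $Y$-flat with locally free, $\mu$-stable, and acyclic-after-twist fibers, repeating the fiberwise arguments of proposition \ref{general instanton}. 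By the coarse moduli corepresentability recorded in remark \ref{rmk:geometric quotient}, this family induces a morphism $\Psi\colon \PP(\CG)^\circ \to \CMI_3$, whose image lies in $\CMI_3^{\sfn}$ by the non-speciality assertion at the end of proposition \ref{general instanton}.

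For the inverse direction I would start from the short exact sequence $0 \to E \to \CU^* \to Q_E \to 0$ already attached to a non-special instanton $E$ in the discussion preceding proposition \ref{beta3}, where $E \to \CU^*$ is the canonical map of definition \ref{special instanton}, injective by $\mu$-stability, and $\ch(Q_E) = H + 7L/2 - P/6$ by \eqref{ch Q_E}; this agrees with $\ch(\PhiK)$ computed in lemma \ref{lemma:FM transform Tor}. The decisive step is to upgrade this numerical coincidence to an isomorphism $Q_E \cong \Phi_{\Bl}(\CO_{\PP(K)}(2h))$ with $K = \beta(E)$. I would first locate the support: $\det$ of the map $E \to \CU^*$ is a section of $\CO_Y(1)$, so $Q_E$ is supported on a hyperplane section of $Y$, which I would identify with the section $\alpha(\beta(E))$ produced by proposition \ref{transform of P(K)}. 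Then I would match the sheaf structure on that section, for instance by decomposing $Q_E$ with respect to a full exceptional collection and comparing with the resolution $\{\CO_Y \to \CU^{*\oplus 2} \to S^2\CU^*\}$ of lemma \ref{FM transform}, or by applying the right adjoint $\Phi_{\Bl}^!$ of lemma \ref{lemma:right adjoint to Bl} and recovering $\CO_{\PP(K)}(2h)$. Once $Q_E \cong \PhiK$ is known, the surjection $\CU^* \to Q_E$ is a point of $\PP(\Hom(\CU^*, \PhiK))$; since a surjection exists, lemma \ref{lemma:no surj on Dtr} forces $K \notin \Dtr$, so this point lies in $\PP(\CG)^\circ$ and defines $\Phi\colon \CMI_3^{\sfn} \to \PP(\CG)^\circ$, again to be performed in families.

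Finally I would check that $\Psi$ and $\Phi$ are mutually inverse. Applying $\Psi$ after $\Phi$ returns $\ker(\CU^* \to Q_E) = E$ tautologically; applying $\Phi$ after $\Psi$ returns the datum $(K,\phi)$, and proposition \ref{beta3} guarantees that $\beta$ of the kernel is again $K$, while lemma \ref{lemma:different 3-instantons} guarantees that $(K,\phi)$ is recovered up to the rescaling already quotiented out in $\PP(\CG)$. Performing these identifications in families and invoking corepresentability of $\CMI_3$ upgrades the bijection to an isomorphism of schemes; alternatively, since both $\PP(\CG)^\circ$ and $\CMI_3^{\sfn}$ are smooth, a bijective morphism in characteristic $0$ is automatically an isomorphism.

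The main obstacle will be the identification $Q_E \cong \Phi_{\Bl}(\CO_{\PP(\beta(E))}(2h))$: everything else is either already established (propositions \ref{general instanton}, \ref{beta3}, lemmas \ref{lemma:different 3-instantons}, \ref{generic map to Phi(K)}, \ref{lemma:no surj on Dtr}) or a routine relative-version-and-descent argument. The genuine content is that the cokernel of the canonical map $E \to \CU^*$ is not merely a torsion-free rank-one sheaf on a hyperplane section of the expected Chern character, but precisely the Fourier--Mukai image of the structure sheaf of the plane $\PP(\beta(E))$; pinning down both the correct hyperplane section and the full sheaf structure, rather than only the generic rank, is where the real work lies.
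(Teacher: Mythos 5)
Your plan correctly isolates the hard point, but it does not overcome it, and the mechanism the paper uses to overcome it is absent from your proposal. The forward direction (relative version of proposition \ref{general instanton}, injectivity on closed points via lemma \ref{lemma:different 3-instantons}, smoothness and equality of dimensions, hence an open embedding by Zariski's main theorem) matches the paper. But your inverse map $\Phi$ exists only if $Q_E \cong \Phi_{\Bl}(\CO_{\PP(\beta(E))}(2h))$ for \emph{every} non-special $E$, and you explicitly defer this (``the main obstacle'', ``where the real work lies''), offering only two untested suggestions: a Beilinson decomposition of $Q_E$, or an application of $\Phi_{\Bl}^!$. Neither is carried out, and neither is obviously routine: knowing that $Q_E$ admits a resolution with terms $\CO_Y$, $\CU^{*\oplus 2}$, $S^2\CU^*$ does not by itself identify it as $\PhiK$, since one must also pin down the maps and show they arise from some $K \in \Gr(3,V)$, namely $\beta(E)$. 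Since surjectivity of $\Psi$ is exactly this statement, your fallback that a bijective morphism between smooth varieties in characteristic $0$ is an isomorphism cannot rescue the argument either: bijectivity is what is missing.

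The paper sidesteps the pointwise identification entirely. It regards both $K \mapsto \PhiK$ (over $\Bn$) and $E \mapsto \coker(E \to \CU^*)$ (over $\CMI_3^{\sfn}$) as flat families of Gieseker-semistable pure sheaves with Chern character $\ch(\PhiK)$, hence as two morphisms $\delta \circ \beta$ and $\mathsf{ck}$ from $\CMI_3^{\sfn}$ to the separated moduli space $\CM_{\ch(\PhiK)}$. These agree on the open dense subset $\PP(\CG)^\circ$ by construction of the universal surjection there, so by separatedness of the target and reducedness of $\CMI_3^{\sfn}$ they agree everywhere; in particular $Q_E \cong \Phi_{\Bl}(\CO_{\beta(E)}(2h))$ for every non-special $E$, which is precisely the surjectivity you need. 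If you want to keep your direct approach you must actually execute one of your two suggestions; otherwise you should import this separatedness-plus-density argument, which converts the identification you flagged as the real work into a soft consequence of its validity on a dense open set.
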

\begin{proof}
First we prove that $\PP(\CG)^\circ$ has a natural identification with an open subscheme of $\CMI_3^{\sfn}$.
There is a natural map from $\PP(\CG)^\circ$ to $\CMI_3^{\sfn}$ induced by the relative version of lemma \ref{general instanton}. 

The diagram
\begin{equation}\label{diag:general instanton}
\begin{diagram}
\PP(\CG)^\circ	& \rTo	& \CMI_3^{\sfn}		\\
			& \rdTo	& \dTo_{\beta}		\\
			&		& \Bg
\end{diagram}
\end{equation}
is commutative by proposition \ref{beta3}.
By lemma \ref{lemma:different 3-instantons} the map is injective on closed points.
Moreover the two varieties have the same dimension and are smooth, as explained in remark \ref{rmk:geometric quotient} about the GIT construction.
It follows from Zariski's main theorem that the map is an open embedding.

Now we prove that the map from $\PP(\CG)^\circ$ to $\CMI_3^{\sfn}$ is surjective by proving that every non-special instanton is the kernel of a surjective map from $\CU^*$ to $\Phi(K)$ for some $K$.

First, note that by proposition \ref{beta special} the diagram \eqref{diag:general instanton} factors as
\begin{equation}\label{diag:general instanton 1}
\begin{diagram}
\PP(\CG)^\circ	& \rTo	& \CMI_3^{\sfn}		\\
			& \rdTo	& \dTo_{\beta}		\\
			&		& \Bn
\end{diagram}
\end{equation}
Over $Y \times (\Bn )$ we can consider the sheaf
\begin{equation}\label{eq:general instanton}
\Phi_{\Bl} \left(\CO_{\PP(\CK)}(2h) \right)
\end{equation}
and think of it as of a family of Gieseker semistable pure sheaves.
The fact that the sheaves $\PhiK$ are pure follows from lemma \ref{lemma:FM transform Tor}. Their stability follows from the fact that they have rank $1$ on their support (which is irreducible, by \ref{lemma:Imkappa is unexpected dimension}), so that they have no non-trivial saturated subsheaves.
It follows that \eqref{eq:general instanton} defines a regular map 
\[
\Bn \to \CM_{\ch(\PhiK)}
\]
into the moduli space of Giesecker semistable pure sheaves with the same Chern character as $\PhiK$.

One can also define a family of such sheaves over $\CMI_3^{\sfn}$ by taking the cokernel of the coevaluation map
\[
0 \to \CE \to \CU^*
\]
Note that this family of cokernels restricts to $\PP(\CG)^\circ$ as 
\[
0 \to \restr{\CE}{\PP(\CG)^\circ} \to \CU^* \to \Phi_{\Bl}(\CO_{\PP(\CK)}(2h)) \to 0
\]
where the surjective map is the universal one on $\PP(\CG)^\circ$ from $\CU^*$ to $\PhiK$.

What we have proved so far is that there is a diagram
\[
\begin{diagram}
\PP(\CG)^\circ				& \rInto		& \CMI_3^{\sfn}		\\
\dTo						& \ldTo^{\beta}	& \dTo_{\mathsf{ck} = \coker(\CE \to \CU^*)}	\\
\Bn						& \rTo_{\delta}	& \CM_{\ch(\PhiK)}
\end{diagram}
\]
and that the square in it is commutative.
As $\PP(\CG)^\circ$ is an open Zariski dense subvariety of $\CMI_3^{\sfn}$ and as
the moduli space $\CM_{\ch(\PhiK)}$ is separated, the two maps $\delta \circ \beta$ and $\mathsf{ck}$ coincide. 
In particular, for any non-special instanton $E$ of charge $3$ we have an exact sequence
\[
0 \to E \to \CU^* \to \Phi_{\Bl}\CO_{\beta(E)}(2h) \to 0
\]
that is to say that $E$ is in the image of $\PP(\CG)^\circ$.
\end{proof}

As a corollary, we get the following relation between jumping lines for $E$ and bisecants to $\Imsigma$ contained in $\PP(\beta(E))$.
\begin{corollary}\label{cor:bisecant jump}
Given a bisecant $L$ to $\Imsigma$ and a non-special instanton $E$ such that $L \subset \beta(E)$, the image $L^{+} = p_Y(\WL)$ of the strict transform $\WL$ of $L$ in $\PP_Y(\CU)$ is a $1$-jumping line for $E$.
\end{corollary}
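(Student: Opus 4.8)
The plan is to use the structural description of non-special instantons from Theorem \ref{thm:general instanton}. Since $E$ is non-special, $K := \beta(E)$ lies in $\Bg \setminus \Dtr$ and $E$ fits into the defining short exact sequence $0 \to E \to \CU^* \to \PhiK \to 0$. The hypothesis $L \subset \beta(E)$ means $L$ is a line in the plane $\PP(K)$, so its strict transform $\WL$ lies in $S_K = p_V^{-1}(\PP(K))$; as $L$ is a bisecant, $\ell(\Imsigma \cap L) = 2$, and Proposition \ref{secants to P(A)} gives $\deg L^+ = 3 - 2 = 1$, so $L^+$ is a line. Because $K \notin \Dtr$, Lemma \ref{lemma:p_{Y_K} isomorphism} says $p_{Y_K}: S_K \to Y_K$ is an isomorphism; in particular $L^+ \subset Y_K$ and $p_{Y_K}$ restricts to an isomorphism $\WL \xrightarrow{\sim} L^+ \cong \PP^1$, so it makes sense to speak of the splitting type of $E$ on $L^+$.

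First I would compute the two restrictions to $L^+$ that feed the long exact sequence obtained by applying $- \otimes^L \CO_{L^+}$ to the defining sequence of $E$. From Lemma \ref{lemma:splitting on lines}, $\restr{\CU^*}{L^+} \cong \CO_{L^+}\oplus\CO_{L^+}(1)$. For $\PhiK$ I would use Lemma \ref{lemma:FM transform Tor}(\ref{item:FM transform Tor b}) to identify $\PhiK$ with the pushforward from $Y_K$ of the line bundle $Rp_{Y_K*}p_K^*\CO_{\PP(K)}(2h) = (p_{Y_K})_*\CO_{S_K}(2h)$; transporting through the isomorphism $\restr{p_{Y_K}}{\WL}$ and using $h\cdot\WL = \deg L = 1$ gives $\restr{\PhiK}{L^+}\cong\CO_{L^+}(2)$. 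Since $Y_K$ is a hyperplane section, hence a Cartier divisor with $\CO_Y(-Y_K)\cong\CO_Y(-1)$, and $L^+ \subset Y_K$, the local resolution $0\to\CO_Y(-1)\to\CO_Y\to\CO_{Y_K}\to 0$ shows that $\Tor_1^{\CO_Y}(\PhiK,\CO_{L^+}) \cong \restr{\PhiK}{L^+}\otimes\restr{\CO_Y(-1)}{L^+}\cong\CO_{L^+}(1)$ and that the higher $\Tor$ vanish.

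Putting these together, the derived restriction yields the four-term exact sequence of sheaves on $L^+\cong\PP^1$: $0\to\CO_{L^+}(1)\to\restr{E}{L^+}\to\CO_{L^+}\oplus\CO_{L^+}(1)\to\CO_{L^+}(2)\to 0$. Writing $\restr{E}{L^+}\cong\CO_{L^+}(-n)\oplus\CO_{L^+}(n)$ (forced by $\rank E = 2$ and $c_1(E)=0$), the injection of $\CO_{L^+}(1)$ must hit the summand $\CO_{L^+}(n)$ nontrivially, whence $n\ge 1$; thus $L^+$ is jumping. Finally I would invoke non-specialness: by Proposition \ref{prop:2-jump again}, $E$ has no $2$-jumping line, while by Corollary \ref{cor:no n-jumps} the order of jump is at most $c_2(E)-1 = 2$; hence $n\neq 2$, so $n = 1$ and $L^+$ is $1$-jumping.

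The main obstacle is the precise identification of $\restr{\PhiK}{L^+}$ and of $\Tor_1(\PhiK,\CO_{L^+})$: this is exactly where the assumption $K\notin\Dtr$ is essential, since it guarantees that $p_{Y_K}$ is an isomorphism and that $\PhiK$ is an honest line bundle on the support $Y_K$ (Lemma \ref{lemma:FM transform Tor}), reducing the $\Tor$ computation to the conormal-bundle calculation for the Cartier divisor $Y_K\subset Y$. As a cross-check, the Euler characteristic identity $\chi(\Tor_1) - \chi(\restr{E}{L^+}) + \chi(\restr{\CU^*}{L^+}) - \chi(\restr{\PhiK}{L^+}) = 0$ gives $\chi(\Tor_1) = 2$, in agreement with $\Tor_1\cong\CO_{L^+}(1)$.
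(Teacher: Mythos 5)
Your proposal is correct and follows essentially the same route as the paper: restrict the defining sequence $0 \to E \to \CU^* \to \Phi_{\Bl}(\CO_{\beta(E)}(2h)) \to 0$ from Theorem \ref{thm:general instanton} to $L^+$ and read off the four-term $\Tor$ exact sequence (the paper merely twists by $\CO_Y(-1)$ first, which changes nothing). The only cosmetic difference is at the end: the paper pins down the splitting type directly from the sequence, whereas you get $n\ge 1$ from the injection $\CO_{L^+}(1)\hookrightarrow \restr{E}{L^+}$ and then exclude $n=2$ via Proposition \ref{prop:2-jump again} and Corollary \ref{cor:no n-jumps} — both routes are valid.
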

\begin{proof}
By theorem \ref{thm:general instanton} for any non-special instanton there is an exact sequence
\[
0 \to E \to \CU^* \to \CO_{Y_{\beta(E)}}(2h) \to 0
\]
By twisting it by $\CO_Y(-H)$ and restricting it to $L^{+}$ we get a long exact sequence
\begin{equation}\label{eq:no 2-jump}
0 \to \Tor_1(\CO_{Y_{\beta(E)}}(2h - H), \CO_{L^+}) \to \restr{E}{L^+}(-1) \to \CO_{L^+} \oplus \CO_{L^+}(-1) \to \CO_{L^+ \cap Y_{\beta(E)}}(2h-H) \to 0
\end{equation}
As $L^+$ lies in $Y_{\beta(E)}$ and $h$ restricts to $L$ as $\CO_L(1)$, sequence \eqref{eq:no 2-jump} becomes
\begin{equation}\label{eq:no 2-jump 2}
0 \to \CO_{L^+} \to \restr{E}{L^+}(-1) \to \CO_{L^+} \oplus \CO_{L^+}(-1) \to \CO_{L^+}(1) \to 0
\end{equation}
which shows that $L^+$ is $1$-jumping for $E$
\end{proof}

\begin{remark}
For $K \notin \Dtr \cup \Dfat$ there are $6$ bisecants to $\Imsigma$ contained in $\PP(K)$, corresponding to lines in $\PP(K)$ via the $4$ points $\PP(K) \cap \Imsigma$.
If $K \in \Dfat \setminus \Dtr$ there will be in general less and less bisecants to $\Imsigma$, as long as $K \notin \tau(\PP(A))$.

When $K \in \tau(\PP(A))$, i.e. when $\PP(K)$ is the tangent plane to $\Imsigma$ at some point $a_L \in \PP(A)$, there are infinitely many bisecants to $\Imsigma$ contained in $\PP(K)$, namely all lines in $\PP(K)$ through $a_L$.
It follows that for instantons $E$ such that $\beta(E) \in \tau(\PP(A)) \setminus \Dtr$ there is a $\PP^1 \subset \PP(A)$ of jumping lines which is the $\PP^1$ of lines intersecting $L$.

In other words, corollary \ref{cor:bisecant jump} says that the family of instantons with a fixed $\beta(E)$ induces a linear system of cubic plane curves (the supports of the associated theta characteristics) whose base locus corresponds in the dual $\PP(A)$ to the intersection of $\PP(K)$ and $\Imsigma$.
\end{remark}


\section{An example: degenerate theta-characteristics} \label{sec:An example}
In this section we will discuss the possible singularities of the theta-characteristic associated with a charge $3$ instanton.
We will prove that the instantons whose associated theta-characteristic is not locally free are the special instantons, and we will provide a geometric description of a family of instantons whose associated theta-characteristic is supported on a reducible curve.

Classically, a theta-characteristic on a smooth curve is a line bundle such that
\begin{equation} \label{eq:classical theta}
\CM \otimes \CM \cong \omega_C
\end{equation}
Unfortunately, condition \eqref{eq:classical theta} forces $\CM$ to be a line bundle, while a family of smooth curves with theta-characteristic can degenerate flatly to a singular curve with a torsion sheaf on it.
On a Gorenstein curve one can still make sense of the condition
\begin{equation} \label{eq:generalized theta}
\CM \cong \CRHom(\CM,\omega_C)
\end{equation}
which in the smooth case is equivalent to \eqref{eq:classical theta}.
\begin{definition}
A generalized theta-characteristic on a Gorenstein curve $C$ is a sheaf $\CM$ on $C$ such that condition \eqref{eq:generalized theta} holds.
\end{definition}
Generalized theta-characteristics can be not locally free and can have rank greater than one (as condition \eqref{eq:generalized theta} is for example invariant under direct sums).
Given an instanton $E$ and provided that it is trivial on the generic line, theorem \ref{thm:jumping sheaf} associates a theta-characteristic $\theta_E$ with $E$.
Moreover, all theta-characteristics $\theta_E$ arising from instantons satisfy $H^0(\theta_E) = 0$: such generalized theta-characteristics are called non-degenerate.

There are several open questions about how degenerate can $\theta_E$ be for an instanton $E$: can the support of $\theta_E$ be singular? Can it be reducible? Can the sheaf $\theta_E$ be not locally free?
It turns out the answer to all the above questions is positive, as it is proved in the next propositions.

\begin{proposition}\label{prop:non locally free theta}
Let $E$ be an instanton of charge $3$, then $\theta_E$ is not locally free if and only if $E$ is special.
\end{proposition}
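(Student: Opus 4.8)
The plan is to connect the local freeness of the theta-sheaf $\theta_E = \JL_E = \Phi_{\CL}(E(-1))[1]$ to the presence of a $2$-jumping line, which by proposition \ref{prop:2-jump again} is exactly the condition of being special. The key object is the map $\gamma_E : A^* \to S^2H^*$ attached to the monad, appearing in theorem \ref{thm:jumping sheaf} through the triangle
\[
\Phi_{\CL}(E(-1)) \to H \otimes \CO_{\PP(A)}(-3) \xrightarrow{\gamma_E} H^* \otimes \CO_{\PP(A)}(-2).
\]
By theorem \ref{thm:gamma injective} the map $\gamma$ is injective, so it defines a genuine map of vector bundles $\gamma_E : H \otimes \CO_{\PP(A)}(-3) \to H^* \otimes \CO_{\PP(A)}(-2)$ with $\JL_E$ the cokernel (using that for charge $3$ the generic line is not jumping, so $\Phi_{\CL}(E(-1))$ is a genuine sheaf up to shift). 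First I would recall, using corollary \ref{cor:jumping order}, that the corank of $\gamma_E$ at a point $a_L$ equals the order of jump of $E$ at $L$, and that by corollary \ref{cor:no n-jumps} this corank is at most $2$; moreover by corollary \ref{cor:jumping order} and theorem \ref{thm:gamma injective} the rank of $\gamma_E(a_L)$ is always at least $1$.

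The heart of the argument is a local computation of $\theta_E$ as the cokernel of the $3\times 3$ matrix of linear forms $\gamma_E$ on $\PP(A) \cong \PP^2$. The sheaf $\theta_E$ is locally free on its support precisely when the corank of $\gamma_E$ never jumps above $1$, i.e. when $\gamma_E(a)$ has rank exactly $2$ at every point $a$ of the support curve. Indeed, for a square matrix of linear forms whose determinant is not identically zero, the cokernel is a line bundle on the determinantal curve exactly at points of corank $1$, and fails to be locally free precisely at corank-$2$ points (where the sheaf has a $2$-dimensional fiber while the curve is still a divisor). Thus I would establish the chain: $\theta_E$ is not locally free $\iff$ $\gamma_E$ has a point of corank $2$ $\iff$ $E$ has a $2$-jumping line $\iff$ $E$ is special.

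Carrying this out: the middle equivalence is corollary \ref{cor:jumping order} combined with corollary \ref{cor:no n-jumps} (the only possible coranks being $1$ and $2$), and the last equivalence is exactly proposition \ref{prop:2-jump again}. So the remaining work is the first equivalence, which is the determinantal-sheaf computation. I would argue it by working in the local ring at a support point: when $\mathrm{corank}\,\gamma_E(a) = 1$ the minimal free resolution shows $\theta_E$ is locally $\CO$ of the curve (a Cohen--Macaulay module of rank $1$, hence locally free on the reduced determinantal curve since $\PP(A)$ is a surface and the curve is a divisor), whereas when $\mathrm{corank}\,\gamma_E(a) = 2$ the fiber $\theta_E \otimes k(a)$ has dimension $2 > 1$, so by Nakayama $\theta_E$ needs two generators at $a$ and cannot be invertible there. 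The main obstacle I anticipate is making the corank-$1$ direction clean, namely checking that at every corank-$1$ point $\theta_E$ really is locally free and not merely of rank $1$ with torsion or embedded behaviour; this requires knowing the support curve is reduced and Cohen--Macaulay near such points, which should follow from the Eagon--Northcott / Hilbert--Burch description of the symmetric determinantal cokernel together with the fact (theorem \ref{thm:gamma injective}) that $\gamma_E$ is injective so the determinant is a nonzero cubic. I would present this local freeness at corank-$1$ points as the one genuine lemma to verify, and deduce the proposition from the equivalences above.
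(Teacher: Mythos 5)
Your proof is correct and follows essentially the same route as the paper: the chain ``special $\iff$ existence of a $2$-jumping line (proposition \ref{prop:2-jump again}) $\iff$ a corank-$2$ point of $\gamma_E$ (corollary \ref{cor:jumping order}) $\iff$ $\theta_E$ not locally free on its support.'' The only difference is that you spell out the determinantal-cokernel argument showing local freeness at corank-$1$ points, a step the paper's proof treats as immediate; your extra care there is sound but not a different approach.
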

\begin{proof}
By corollary \ref{prop:2-jump again} an instanton of charge $3$ is special if and only if it has a line $L$ where its order of jump is greater than $1$.
By proposition \ref{cor:jumping order} this is equivalent to $\theta_E$ having rank $2$ at some point of its support, i.e. to the fact that $\theta_E$ is not locally free on its schematic support.
\end{proof}

\begin{proposition}\label{prop:reducible theta}
Let $E$ be a non-special instanton such that $\beta(E) \subset \PP(V)$ is a tangent plane to $\Imsigma$. 
Then the support of $\theta_E$ is reducible.
\end{proposition}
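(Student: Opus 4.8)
The plan is to exploit the explicit dictionary between jumping lines of a non-special instanton and bisecants to $\Imsigma$ contained in the plane $\PP(\beta(E))$, which is set up in corollary \ref{cor:bisecant jump} and the remark following it. Write $K = \beta(E)$, so that by hypothesis $\PP(K) = \tau(a_0)$ is the embedded tangent plane to $\Imsigma$ at $\sigma(a_0)$ for some $a_0 \in \PP(A)$; since $E$ is non-special, theorem \ref{thm:general instanton} together with lemma \ref{lemma:Imkappa in Dtr} gives $K \notin \Dtr$. First I would record that $\theta_E$ is genuinely supported on the cubic curve $Z_E = \{\det \gamma_E = 0\} \subset \PP(A)$ of theorem \ref{thm:jumping sheaf}, i.e. that the generic line is non-jumping. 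This one checks by restricting the defining sequence $0 \to E \to \CU^* \to \PhiK \to 0$ to a generic line $L$: the hyperplane section $Y_K = \Supp(\PhiK)$ meets $L$ in a single reduced point, so $\restr{\PhiK}{L}$ has length one and no higher $\Tor$, while $\restr{\CU^*}{L} = \CO_L \oplus \CO_L(1)$ by lemma \ref{lemma:splitting on lines}; this forces $\restr{E}{L}$ to be trivial.

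Next I would locate a line inside $Z_E$. Because $\PP(K)$ is the tangent plane to the smooth surface $\Imsigma$ at $\sigma(a_0)$, every line of $\PP(K)$ through $\sigma(a_0)$ meets $\Imsigma$ with multiplicity at least $2$ at that point, so for the generic one in the pencil $\ell(\Imsigma \cap L) = 2$ and it is a (possibly degenerate) bisecant. By proposition \ref{secants to P(A)} its transform $L^{+} = p_Y(\WL)$ is then a line in $Y$, and corollary \ref{cor:bisecant jump} shows each such $L^{+}$ is a $1$-jumping line for $E$, hence its point of $\PP(A)$ lies on $Z_E$. As explained in the remark following corollary \ref{cor:bisecant jump}, as the bisecant varies in this pencil the jumping lines $L^{+}$ sweep out exactly the $\PP^1$ of lines of $Y$ meeting $L_{a_0}$; by proposition \ref{prop:intersecting lines} and the identification $\IQ = \Flag(1,2;A)$ this $\PP^1$ is the polar line $\ell_{a_0} \subset \PP(A)$ of $a_0$ with respect to $\CQ$, an honest line.

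Finally, $Z_E$ is an effective cubic divisor in $\PP(A) \cong \PP^2$ whose support contains the irreducible line $\ell_{a_0}$; since $Z_E$ is purely one-dimensional, $\ell_{a_0}$ must be one of its components, so $Z_E = \ell_{a_0} \cup C'$ with $C'$ a conic and the support of $\theta_E$ is reducible. The main obstacle is the geometric bookkeeping of the middle step: one must verify that the tangent lines through $\sigma(a_0)$ are honest length-two secants, so that $L^{+}$ has degree one and corollary \ref{cor:bisecant jump} applies, and that the induced map from the pencil of these lines to $\PP(A)$ is non-constant with image the full polar line $\ell_{a_0}$ rather than a single point (this is what distinguishes a line component from an irreducible rational cubic, which would not yield reducibility). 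Both facts are precisely what the remark after corollary \ref{cor:bisecant jump} supplies, using corollary \ref{cor:points and trisecants} to translate between points of $Y$ and their trisecant, respectively bisecant, transforms.
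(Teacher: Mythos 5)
Your second step --- producing the line component of the support --- is exactly the paper's argument: the pencil of tangent lines to $\Imsigma$ through the point of tangency gives, via corollary \ref{cor:bisecant jump}, a $\PP^1$ of $1$-jumping lines; by proposition \ref{prop:intersecting lines} this $\PP^1$ is a line in $\PP(A)$, and a line contained in the support of a cubic curve must be a component of it. That part is correct and is the heart of the proof.

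The gap is in your first step, where you claim the generic line is non-jumping. From transversality of a generic $L$ with $Y_K$ you get the exact sequence
\[
0 \to \restr{E}{L} \to \CO_L \oplus \CO_L(1) \to \CO_{p} \to 0, \qquad p = L \cap Y_K,
\]
but this does \emph{not} force $\restr{E}{L}$ to be trivial: the kernel of a surjection $\CO_L \oplus \CO_L(1) \to \CO_{p}$ is $\CO_L^{\oplus 2}$ only when the canonical subsheaf $\CO_L(1) \subset \restr{\CU^*}{L}$ maps onto $\CO_{p}$; otherwise the kernel is $\CO_L(-1)\oplus\CO_L(1)$. So your computation only bounds the order of jump of such lines by $1$, which is not enough to conclude that the support of $\theta_E$ is a proper subvariety of $\PP(A)$ --- a priori every line could be $1$-jumping and $\det\gamma_E$ could vanish identically, in which case there is no cubic curve to speak of. The paper closes exactly this hole by a different mechanism: by theorem \ref{thm:gamma injective} the map $\gamma_E$ embeds $\PP(A)$ into $\PP(S^2H^*)$, and a plane contained in the symmetric determinantal cubic $\Delta_H$ necessarily meets the corank-$2$ locus; a corank-$2$ point would be a $2$-jumping line, which proposition \ref{prop:2-jump again} forbids for a non-special instanton, so $\PP(A) \not\subset \Delta_H$. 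You should either adopt that argument or otherwise rule out the possibility that the composite $\CO_L(1) \to \restr{\CU^*}{L} \to \restr{\PhiK}{L}$ vanishes for every line $L$.
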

\begin{proof}
First, note that by \ref{thm:gamma injective} the support of $\theta_E$ is the intersection of $\PP(A)$ and of the symmetric discriminant cubic fourfold $\Delta_H$ inside $\PP^(S^2H)$.
More precisely, the sheaf $\theta_E$ is the pullback of the sheaf of cokernels of a symmetric form in $H$, which is a sheaf of rank $1$ over the generic point of $\Delta_H$ and of rank $2$ over a surface.
If $\PP(A) \subset \Delta_H$, then it intersects the locus of corank $2$ symmetric forms, so that the rank of $\theta_E$ is $2$ at least at one point of $\PP(A)$.
As the instanton is non-special, this is impossible by proposition \ref{prop:2-jump again}, so that the support of $\theta_E$ is not the whole $\PP(A)$.

By corollary \ref{cor:bisecant jump}, all tangent lines to $\Imsigma$ which are contained in $\beta(E)$ are jumping lines for $E$.
As $\beta(E)$ is a tangent plane to $\Imsigma$ at some point $a$, the $\PP^1$ of lines in $\beta(E)$ through $a$ corresponds to a $\PP^1$ of lines in $Y$ all of them intersecting $L_a$.
By proposition \ref{prop:intersecting lines}, the family of lines intersecting $L_a$ is a line in $\PP(A)$, so that we have shown that the support of $\theta_E$ contains a line.
\end{proof}

\begin{remark}
One can construct explicitly a theta-characteristic coming from an instanton and supported on a reducible curve.
This can be done by taking coordinates $x,y,z$ on $\PP(A)$ of weight $-2,2,0$ with respect to the action of $\SSL_2$.
Then the object
\begin{equation}\label{eq:reducible theta 3}
\left\{ \CO_{\PP(A)}(-2)^{\oplus 3} \xrightarrow{\Theta}  \CO_{\PP(A)}(-1)^{\oplus 3} \right\}
\end{equation}
where $\Theta$ is given by the matrix
\[
\Theta = 
\begin{pmatrix}
 0 	& x 	& y \\
 -x 	& y 	& z \\
 -y 	& -z 	& 0
\end{pmatrix}
\]
The object \eqref{eq:reducible theta 3} is $\CC^*$-equivariant, and this fact allows to prove that the conditions of theorem \ref{maininstantontheorem} hold for it.
\end{remark}

\section{A desingularization of $\CMIbar_3$} \label{sec:A desingularization}
The compactification of $\CMI_3$ which we have constructed in section \ref{sec:Instantons of charge 3} is singular on a subvariety of codimension $3$. 
We will show it by constructing a desingularization for it, which we will denote by $\CMItil_3$, and a small contraction from $\CMItil_3$ to $\CMIbar_3$.

There is a commutative diagram
\begin{equation}\label{diag:flag}
\begin{diagram}
\Flag_{\Bg}(3,4, \CK \otimes A)	& \rTo^{\Wpi_3}		& \Gr_{\Bg}(4, \CK \otimes A)	\\
\dTo^{\Wpi_4}					&				& \dTo_{\pi_4}					\\
\Gr_{\Bg}(3, \CK \otimes A)		& \rTo_{\pi_3} 		& \Bg
\end{diagram}
\end{equation}
On the relative Grassmannians there are relative tautological injections
\begin{equation}
\CRelTaut_3 \to \pi_3^*\CK \otimes A
\end{equation}
and
\begin{equation}
\CRelTaut_4 \to \pi_4^*\CK \otimes A
\end{equation}
The bundle $\CK \otimes A$ is a subbundle of $V \otimes A \otimes \CO_{\Bg}$, so that the natural evaluation of $A$ on $V$ induces a map
\begin{equation}\label{eq:bundle Ker}
\CK \otimes A \to V^* \otimes \CO_{\Bg}
\end{equation}
extending to an exact sequence
\[
0 \to \CKer \to \CK \otimes A \to V^* \otimes \CO_{\Bg} \to \kappa_{*}\CO_{\PP(A)}(2) \to 0
\]
which is discussed in proposition \ref{prop:embed P(A)}.

\begin{lemma}\label{lemma:from 9 to 8}
There is a unique $\SSL_2$-equivariant line subbundle
\begin{equation}\label{eq:from 9 to 8}
\CO_{\Bg}(-1) \to A \otimes \CK
\end{equation}
and it is factors via $\CKer \to A \otimes \CK$.
\end{lemma}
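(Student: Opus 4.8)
The claim is that there is a unique $\SSL_2$-equivariant line subbundle $\CO_{\Bg}(-1) \to A \otimes \CK$, and that it factors through the subsheaf $\CKer \hookrightarrow A \otimes \CK$. Here $\CK$ is the tautological rank-$3$ bundle on $\Bg = \Gr(3,V)$, and $A$ is the $3$-dimensional $\SSL_2$-representation $S^2W$, while $V \cong S^4 W$ as an $\SSL_2$-module.

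**Overall approach.** The plan is to reduce the existence and uniqueness of the subbundle to a representation-theoretic computation of $\SSL_2$-invariants, and then to check the factorization via the $\SSL_2$-equivariance of the defining map \eqref{eq:bundle Ker}. A line subbundle $\CO_{\Bg}(-1) \to A \otimes \CK$ is the same data as an $\SSL_2$-equivariant global section of $(A \otimes \CK)(1) = A \otimes \CK \otimes \CO_{\Bg}(1)$ which is nowhere vanishing (i.e. fiberwise injective). Since $\CO_{\Bg}(1) = \det \CK^*$ on $\Gr(3,V)$, I would first compute $H^0\big(\Bg, A \otimes \CK \otimes \det\CK^*\big)^{\SSL_2}$. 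By Borel--Bott--Weil on $\Gr(3,V)$, the bundle $\CK \otimes \det\CK^*$ has global sections $H^0(\Gr(3,V), \CK \otimes \det \CK^*) \cong \Lambda^2 V$ (this is the standard identification $\CK \otimes \det\CK^* \cong \Lambda^2\CK$ followed by $H^0(\Lambda^2 \CK) = \Lambda^2 V$, using that $\CK \cong \Lambda^2\CK \otimes \det\CK^*$ for rank-$3$ bundles). Tensoring by the constant representation $A$, the space of $\SSL_2$-equivariant sections is
\[
\big(A \otimes \Lambda^2 V\big)^{\SSL_2} = \Hom_{\SSL_2}(A^*, \Lambda^2 V) = \Hom_{\SSL_2}(S^2W, \Lambda^2 S^4 W).
\]

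**Key computation.** The decisive step is the branching of $\Lambda^2 S^4 W$ into $\SSL_2$-irreducibles. One has the classical decomposition $\Lambda^2 S^4 W \cong S^6 W \oplus S^2 W$, which is exactly the decomposition $\Lambda^2 V^* \cong S^2 W \oplus S^6 W$ recorded in Section \ref{sec:SL2action} (identifying $V$ with $V^*$ as $\SSL_2$-modules). Hence the multiplicity of $A = S^2 W$ in $\Lambda^2 V$ is exactly one, so the space of $\SSL_2$-equivariant sections is one-dimensional. This gives both existence and uniqueness (up to scalar) of the equivariant map $\CO_{\Bg}(-1) \to A \otimes \CK$, and I must then check it is a subbundle, i.e. fiberwise nonzero: a nonzero $\SSL_2$-equivariant map of the form above cannot vanish on a divisor, since its zero-locus would be an $\SSL_2$-invariant divisor in $\Gr(3,V)$, and as $\Pic(\Gr(3,V)) = \ZZ\cdot\CO_{\Bg}(1)$ any such divisor is a hyperplane section; I would rule this out by checking nonvanishing at the single point $\kappa(a_0)$ for a well-chosen $a_0$, or by a direct degree/equivariance argument showing the section has no zeros.

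**Factorization through $\CKer$.** For the last assertion, I would show that the composition of \eqref{eq:from 9 to 8} with the evaluation map $A \otimes \CK \to V^* \otimes \CO_{\Bg}$ of \eqref{eq:bundle Ker} vanishes. This composition is an $\SSL_2$-equivariant map $\CO_{\Bg}(-1) \to V^* \otimes \CO_{\Bg}$, equivalently an $\SSL_2$-equivariant section of $V^* \otimes \CO_{\Bg}(1)$; its equivariant sections form $(V^* \otimes H^0(\CO_{\Bg}(1)))^{\SSL_2} = (V^* \otimes \Lambda^2 V^*)^{\SSL_2}$. The hard part—and the main obstacle—is showing this invariant space is zero, or that the specific composition lands in it as the zero element. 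I expect $\Hom_{\SSL_2}(\CC, V^* \otimes \Lambda^2 V^*) = \Hom_{\SSL_2}(V, \Lambda^2 V^*)$, and since $\Lambda^2 V^* \cong S^2 W \oplus S^6 W$ contains no copy of $V \cong S^4 W$, this $\Hom$-space vanishes. Therefore the composition is forced to be zero by equivariance alone, and \eqref{eq:from 9 to 8} factors through $\CKer$. The cleanest route is thus entirely representation-theoretic: both the existence/uniqueness and the factorization follow from multiplicity computations in $\Lambda^2 V^* \cong S^2W \oplus S^6W$, so the only genuine care needed is the nowhere-vanishing (subbundle) check, which I would settle by the invariant-divisor argument above.
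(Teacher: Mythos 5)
Your proof follows the paper's argument essentially verbatim: existence and uniqueness come from the multiplicity-one occurrence of $A = S^2W$ in $\Lambda^2 V^* \cong S^2W \oplus S^6W$ (the paper phrases this via $\CK(1)\cong\Lambda^2\CK^*$ and $H^0(\Lambda^2\CK^*)=\Lambda^2V^*$ containing a unique invariant $3$-dimensional subspace), and the factorization through $\CKer$ comes from the absence of $S^4W$ in $V^*\otimes\Lambda^2V$, exactly as in the paper. Two minor remarks: you wrote $\CK\otimes\det\CK^*\cong\Lambda^2\CK$ where it should be $\Lambda^2\CK^*$ (harmless, since $\SSL_2$-representations are self-dual), and your attention to fiberwise injectivity is a point the paper silently skips --- though note that the zero locus of a section of the rank-$9$ bundle $A\otimes\Lambda^2\CK^*$ need not be a divisor, so of your two suggested fixes the reliable one is checking nonvanishing at a point of the unique Borel-fixed (hence minimal closed invariant) locus rather than the invariant-divisor argument.
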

\begin{proof}
Note that $\CK(1)$ is isomorphic to $\Lambda^2 \CK^*$.
The space of sections of $\Lambda^2 \CK^*$ is $\Lambda^2V^*$ and it contains a unique $\SSL_2$-invariant subspace of dimension $3$.
It follows that there is a unique $\SSL_2$-invariant choice of the map \eqref{eq:from 9 to 8}.

In order to check that it factors via 
\[
\CKer \to A \otimes \CK
\]
we check that the composition
\[
\CO_{\Bg}(-1) \to A \otimes \CK \to V^* \otimes \CO_{\Bg}
\]
vanishes.
This last fact holds because there are no non-zero $\SSL_2$-invariant elements in $V^* \otimes \Lambda^2 V$.
\end{proof}

Denote by $\CF$ the rank $8$ vector bundle which is the quotient \eqref{eq:from 9 to 8}.
There is a natural embedding of relative Grassmannians
\[
\Gr_{\Bg}(3, \CF) \to \Gr_{\Bg}(4, A \otimes \CK)
\]
and a tautological embedding
\[
\CRelTaut_{\CF} \to \pi_{\CF}^*\CF
\]
where $\CRelTaut_{\CF}$ is the rank $3$ tautological vector bundle and $\pi_{\CF}$ is the canonical projection to $\Bg$.

By lemma \ref{lemma:from 9 to 8} the map \eqref{eq:bundle Ker} factors via the natural projection from $A \otimes \CK$ to $\CF$.
Denote by $\sigma_{\CF}$ the composition
\begin{equation}\label{eq:Z_CF -1}
\sigma_{\CF}: \CRelTaut_{\CF} \to \pi_{\CF}^*\CF \to V^* \otimes \CO_{\Bg}
\end{equation}
and its zero locus by $Z_{\CF}$.
Analogously there are maps
\begin{gather*}
\sigma_3: \CRelTaut_3 \to V^* \otimes \CO \\
\sigma_4: \CRelTaut_4 \to V^* \otimes \CO
\end{gather*}
over $\Gr_{\Bg}(3, \CK \otimes A)$ and $\Gr_{\Bg}(4, \CK \otimes A)$, whose zero loci will be denoted by $Z_3$ and $Z_4$.
By its definition, $\CMIbar_3$ is the zero locus $Z_3$ of $\sigma_3$.
Moreover, we will denote by $\WZ_{\CF}, \WZ_4$ the pullbacks of $Z_{\CF}, Z_4$ via $\Wpi_3$.

\begin{lemma}\label{lemma:Z_4}
The section $\sigma_{\CF}$ is regular.
Its zero locus $Z_{\CF}$ is irreducible and is isomorphic to $\Bl_{\Bs}\Bg$.
The intersection of $\pi_4^{-1}(\Bs)$ and $Z_{\CF}$ in $\Gr_{\Bg}(4,A \otimes \CK)$ embeds into $\pi_4^{-1}(\Bs)$ as
\[
\Gr_{\Bs}\left( 3, \CKer^{\sfs}/\CO_{\Bs}(-3) \right) \subset \Gr_{\Bs}\left(4, A \otimes \CK \right)
\]
and into $Z_{\CF}$ as
\[
E \subset \Bl_{\Bs}\Bg
\]
where $E$ is the exceptional divisor of the blow up.
\end{lemma}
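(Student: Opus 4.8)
The plan is to analyze the section $\sigma_{\CF}$ stratum by stratum over the decomposition $\Bg = \Bn \sqcup \Bs$, using Lemma \ref{lemma:from 9 to 8} to quotient out the $\SSL_2$-invariant line subbundle $\CO_{\Bg}(-1)$ before taking the zero locus. First I would establish regularity of $\sigma_{\CF}$: the bundle $\CRelTaut_{\CF}$ has rank $3$ and the target $V^* \otimes \CO$ has rank $5$, so $\sigma_{\CF}$ is a section of $\CRelTaut_{\CF}^* \otimes V^*$, a bundle of rank $15$. Since $\Gr_{\Bg}(3,\CF)$ has dimension $\dim \Bg + 3\cdot(8-3) = 6 + 15 = 21$, regularity amounts to showing $Z_{\CF}$ has dimension exactly $6$. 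This I would verify by the same two-stratum computation used in Lemma \ref{lemma:Z_3 irreducible}: over $\Bn$ the relevant kernel bundle $\CKer^{\sfn}$ has rank $4$, and after removing the line $\CO(-1)$ one lands in $\Gr(3,\text{rank }3)$, i.e. a single point in each fiber, giving a $6$-dimensional open piece; over $\Bs$ the kernel $\CKer^{\sfs}$ has rank $5$, so modulo $\CO(-3)$ one gets $\Gr_{\Bs}(3,\CKer^{\sfs}/\CO_{\Bs}(-3))$, a relative $\Gr(3,4)$, hence of dimension $\dim \Bs + 3 = 2 + 3 = 5$. This stratification shows $\dim Z_{\CF} \le 6$ with the closed stratum of dimension $5$, forcing regularity and, exactly as in Lemma \ref{lemma:Z_3 irreducible}, irreducibility (a component could only drop to dimension $\le 5$, impossible for a regular section inside a Cohen--Macaulay ambient).

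Next I would identify $Z_{\CF}$ with $\Bl_{\Bs}\Bg$. Over $\Bn$ the projection $Z_{\CF} \to \Bn$ is an isomorphism, since each fiber is a single point by the rank count above; over $\Bs$ the fiber is $\Gr(3,4) \cong \PP^3$. The natural candidate for the comparison map is the projection $\pi_{\CF}\colon Z_{\CF} \to \Bg$, which is birational (iso over the open $\Bn$) and has $\PP^3$-fibers precisely over $\Bs$. To promote this to an isomorphism with the blow-up I would invoke the universal property of $\Bl_{\Bs}\Bg$: it suffices to exhibit on $Z_{\CF}$ a Cartier divisor whose image is the (smooth, codimension-unspecified) center $\Bs$, namely the exceptional locus $\pi_{\CF}^{-1}(\Bs)$, and to check that $Z_{\CF}$ is smooth so that the induced birational morphism $Z_{\CF} \to \Bl_{\Bs}\Bg$ is an isomorphism by Zariski's main theorem (both being smooth and birational with the same fibers). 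Here I would use that $\Bs = \kappa(\PP(A)) \subset \Gr(3,V)$ is smooth of codimension $4$ (by Proposition \ref{prop:embed P(A)}, $\kappa$ is an embedding of $\PP(A)$), so that its blow-up has exceptional divisor a $\PP^3$-bundle, matching the fiber computation.

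Finally, for the description of $\pi_4^{-1}(\Bs) \cap Z_{\CF}$ I would base change the defining section $\sigma_{\CF}$ to $\Bs$ and use Lemma \ref{lemma:Z_3 over Imkappa} together with Lemma \ref{lemma:from 9 to 8}. Over $\Bs$ the kernel of the evaluation $A \otimes \restr{\CK}{\Bs} \to V^*\otimes \CO$ is the rank $5$ bundle $\CKer^{\sfs}$, and the factorization of the invariant line $\CO_{\Bs}(-1) = \CO_{\Bs}(-3)$ (after the relevant twist) through $\CKer^{\sfs}$ identifies the zero locus with $\Gr_{\Bs}(3,\CKer^{\sfs}/\CO_{\Bs}(-3))$, sitting inside $\Gr_{\Bs}(4,A\otimes\CK)$ via the composition with the quotient $A \otimes \CK \to \CF$. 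On the $Z_{\CF}\cong \Bl_{\Bs}\Bg$ side this fiber locus is exactly the exceptional divisor $E$, since it lies over $\Bs$ and is the whole $\PP^3$-bundle. The main obstacle I anticipate is bookkeeping the various twists by $\CO_{\Bg}(-1)$, $\CO_{\Bg}(-3)$ and $\Lambda^2\CK^* \cong \CK(1)$ consistently (the precise power arises from $\det \CK = \CO(-1)$ and the identification $\CK(1)\cong\Lambda^2\CK^*$), so that the subbundle $\CO_{\Bs}(-3) \subset \CKer^{\sfs}$ is the correct one to quotient by; getting this twist exactly right is what makes the two displayed embeddings compatible under the isomorphism $Z_{\CF}\cong \Bl_{\Bs}\Bg$.
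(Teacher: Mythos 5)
The dimension count, the two-stratum analysis (a point over each fiber of $\Bn$, a $\PP^3$ over each point of $\Bs$), the regularity of $\sigma_{\CF}$, and the irreducibility argument all match the paper, as does your identification of $\pi_4^{-1}(\Bs)\cap Z_{\CF}$ with $\Gr_{\Bs}(3,\CKer^{\sfs}/\CO_{\Bs}(-3))$ and with the exceptional divisor.

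The gap is in the identification $Z_{\CF}\cong\Bl_{\Bs}\Bg$. First, the universal property of the blow-up requires that the scheme-theoretic preimage $\pi_{\CF}^{-1}(\Bs)$ be a Cartier divisor in $Z_{\CF}$; you assert this but do not prove it. The paper obtains it by observing that on $Z_{\CF}$ the section induces a map $\CF/\CRelTaut_{\CF}\to V^*\otimes\CO$ of rank-$5$ bundles whose cokernel has length at most one at every point (by proposition \ref{prop:embed P(A)}), so its degeneracy locus is cut by a single determinant equation. Second, and more seriously, your concluding step --- ``$Z_{\CF}$ is smooth and the morphism is birational with the same fibers, hence an isomorphism by Zariski's main theorem'' --- is not a valid inference: a birational morphism between smooth proper varieties with matching fiber dimensions need not be an isomorphism unless you also establish quasi-finiteness, and the smoothness of $Z_{\CF}$ is itself not available at this stage (a regular section only gives Cohen--Macaulay; in the paper smoothness of $Z_{\CF}$ is a \emph{consequence} of the isomorphism with the blow-up, not an input). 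One can patch the quasi-finiteness (surjectivity of the proper birational map forces each exceptional $\PP^3\to\PP^3$ to be non-constant, hence finite, and then finite birational onto the normal target $\Bl_{\Bs}\Bg$ is an isomorphism --- for which you also need $Z_{\CF}$ reduced, which the paper deduces from generic reducedness plus absence of embedded components), but as written the step does not go through. The paper sidesteps all of this by constructing a map in the \emph{opposite} direction as well, $\Bl_{\Bs}\Bg\to Z_{\CF}$ (the kernel of the pulled-back evaluation becomes a rank-$4$ subbundle containing $\CO(-1)$, hence a rank-$3$ subbundle of $\CF$ killed by $\sigma_{\CF}$), and then using that the only self-map of $\Bl_{\Bs}\Bg$ relative to $\Bg$ is the identity: this makes $\Bl_{\Bs}\Bg\to Z_{\CF}$ a closed embedding of reduced irreducible schemes of the same dimension, hence an isomorphism, with no smoothness or finiteness input needed.
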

\begin{proof}
First, the zero locus $Z_{\CF}$ has a stratification into $Z_{\CF}^{\sfs}$ and $Z_{\CF}^{\sfn}$ induced by pulling back the stratification $\Bs, \Bn$.
Note that $Z_{\CF}^{\sfn}$ is isomorphic to $\Bn$, as the kernel of
\begin{equation}\label{eq:Z_CF 1}
\pi_{\CF}^*\CF \to V^* \otimes \CO_{\Bg}
\end{equation}
is locally free of rank $3$ over $\Bn$.
On the other hand, for the same reason, $Z_{\CF}^{\sfn}$ is a $\PP^3$ fibration over $\Bs$, so that its dimension is $5$.
It follows that $Z_{\CF}$ is $6$-dimensional inside a $21$-dimensional space where it is cut by a section $\sigma_{\CF}$ of a rank $15$ vector bundle.
As the ambient space is smooth, the section is regular and $Z_{\CF}$ is irreducible. 

Next, let us construct a map from $Z_{\CF}$ to the blow up $\Bl_{\Bs}\Bg$.
Note that by the definition of $Z_{\CF}$ the map \eqref{eq:Z_CF -1} vanishes over $Z_{\CF}$, so that there is an induced
\begin{equation}\label{eq:Z_CF 3}
\CF/\CRelTaut_{\CF} \to V^* \otimes \CO_{\Gr_{\CF}}
\end{equation}
By proposition \ref{prop:embed P(A)} its cokernel is a line bundle supported on $\pi_{\CF}^{-1}(\Bs)$, so that the rank of \eqref{eq:Z_CF 3} drops at most by $1$.
It follows that the schematic support of the cokernel of \eqref{eq:Z_CF 3} is cut by the determinant of \eqref{eq:Z_CF 3}.
This shows that $\pi_{\CF}^{-1}(\Bs)$ is a Cartier divisor and by universal property induces a unique map to the blow up of $\Bg$ in $\Bs$.

In the other direction, again by proposition \ref{prop:embed P(A)} and by definition of blow up, the cokernel of the pullback to $\Bl_{\Bs}\Bg$ of \eqref{eq:bundle Ker} is a line bundle supported on a Cartier divisor.
It follows by $\Tor$-dimension that the kernel of the pullback of \eqref{eq:bundle Ker} is a rank $4$ subbundle of $A \otimes \CK$.
Moreover, by lemma \ref{lemma:from 9 to 8}, the line subbundle
\[
\pi_4^*\CO_{\Bg}(-1) \to A \otimes \CK
\]
factors via this kernel, inducing a rank $3$ subbundle of $\CF$ such that the composition with
\[
\pi_{\CF}^*\CF \to V^* \otimes \CO_{\Bg}
\]
vanishes.
By universal property of $Z_{\CF}$ this induces a map from $\Bl_{\Bs}\Bg$ to it.

Moreover, by their construction, these two maps are relative to $\Bg$ and are isomorphisms out of the preimage of $\Bs$.
Note that there is a unique map from $\Bl_{\Bs}\Bg$ to itself which is relative to $\Bg$ and it is the identity.
It follows that
\begin{equation}\label{eq:Z_CF 2}
\Bl_{\Bs}\Bg \to Z_{\CF}
\end{equation}
is a closed embedding.
Note that $Z_{\CF}$ is generically reduced and that it cannot have embedded components as it is cut by a regular section.
It follows that $Z_{\CF}$ is reduced, so that  \eqref{eq:Z_CF 2} is a closed embedding of irreducible reduced varieties of the same dimension.
This is enough to conclude that it is an isomorphism.

As for the embedding statements, they follow directly from the definitions of $Z_{\CF}$ and of $\Bl_{\Bs}$.
\end{proof}

\begin{lemma}\label{lemma:Z_4 again}
The section $\sigma_4$ is regular and its zero locus $Z_4$ has $2$ components: $Z_{\CF}$ and
\[
Z_4^v := \Gr_{\Bs}(3, \CKer^{\sfs})
\]
\end{lemma}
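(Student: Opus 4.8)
The plan is to analyze the zero locus $Z_4$ of the section $\sigma_4 \colon \CRelTaut_4 \to V^* \otimes \CO$ over the relative Grassmannian $\Gr_{\Bg}(4, \CK \otimes A)$ by stratifying over the two strata $\Bs$ and $\Bn$ of $\Bg$, exactly as was done for $Z_3$ in lemma \ref{lemma:Z_3 irreducible} and for $Z_{\CF}$ in lemma \ref{lemma:Z_4}. First I would establish regularity of $\sigma_4$: since $\sigma_4$ is a section of the rank $15$ bundle $\CRelTaut_4^* \otimes V^*$ over a space of dimension $\dim \Bg + \dim \Gr(4, 9) = 6 + 20 = 26$, regularity amounts to checking that $Z_4$ has dimension exactly $11$. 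I would do this stratum by stratum using the sequence \eqref{eq:describe Z}, i.e.\ by understanding the kernel of $A \otimes \CK \to V^* \otimes \CO$, which over $\Bn$ is the rank $4$ bundle $\CKer^{\sfn}$ and over $\Bs$ is the rank $5$ bundle $\CKer^{\sfs}$ (definition \ref{def:G and F}).

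The decomposition into components then follows from this stratified analysis. Over $\Bn$, a $4$-dimensional subspace $\CRelTaut_4 \subset A \otimes \CK$ lies in the kernel of $\sigma_4$ iff it coincides with $\CKer^{\sfn}$, since that kernel is exactly $4$-dimensional; this forces the fiber over each point of $\Bn$ to be a single point, so $Z_4 \cap \pi_4^{-1}(\Bn)$ maps isomorphically to $\Bn$ and agrees there with $Z_{\CF}^{\sfn}$. Over $\Bs$, the kernel $\CKer^{\sfs}$ has rank $5$, so the fiber is $\Gr(4, \CKer^{\sfs})$, which is $4$-dimensional; this is strictly larger than the generic fiber and signals the presence of an extra vertical component. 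Using lemma \ref{lemma:from 9 to 8}, which produces the distinguished line subbundle $\CO_{\Bg}(-1) \hookrightarrow A \otimes \CK$ factoring through $\CKer$, I would separate the $\Gr(4,\CKer^{\sfs})$ fiber: the subspaces containing the distinguished line assemble into the closure $Z_{\CF}$ (by lemma \ref{lemma:Z_4}, giving $\Gr_{\Bs}(3, \CKer^{\sfs}/\CO_{\Bs}(-3)) = E \subset \Bl_{\Bs}\Bg$), while the subspaces \emph{not} containing it form the purely vertical family $Z_4^v := \Gr_{\Bs}(3, \CKer^{\sfs})$. Identifying these two loci as the irreducible components of $Z_4$ is the crux; I would show each is irreducible (both are Grassmannian bundles over irreducible bases, $Z_{\CF} \cong \Bl_{\Bs}\Bg$ being $6$-dimensional and $Z_4^v$ being a $\Gr(3,5)$-bundle over the surface $\Bs \cong \PP(A)$, hence $2 + 6 = 8$-dimensional after accounting for $\dim \Gr(3,5) = 6$).

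The main obstacle I anticipate is the dimension bookkeeping needed to confirm that $Z_4$ has \emph{pure} expected dimension away from $Z_4^v$, and in particular that $Z_4^v$ really is a full-dimensional irreducible component rather than sitting inside $Z_{\CF}$. The delicate point is that $Z_{\CF}$ and $Z_4^v$ meet along $E \cap Z_4^v$, and I must verify that neither is contained in the other; this is precisely where the distinction between $4$-planes containing versus not containing the $\SSL_2$-invariant line $\CO_{\Bs}(-3) \subset \CKer^{\sfs}$ does the work. Once both candidate components are shown to have the correct dimension and to exhaust $Z_4$ (via the stratified fiber computation, covering every point of $Z_4$), regularity of $\sigma_4$ follows because a section of the expected-rank bundle whose zero locus has expected dimension is automatically regular, the ambient space being smooth and Cohen--Macaulay.
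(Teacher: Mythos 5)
Your overall strategy — stratify $Z_4$ over $\Bn$ and $\Bs$, compute the fibers via $\CKer^{\sfn}$ and $\CKer^{\sfs}$, and deduce regularity of $\sigma_4$ from the fact that the zero locus has the expected dimension inside a smooth ambient space — is exactly the route the paper takes (it refers back to the argument of lemma \ref{lemma:Z_4}). However, the numerical bookkeeping in your write-up is wrong in ways that make the argument, as written, fail. First, $\CRelTaut_4^* \otimes V^*$ has rank $4 \cdot 5 = 20$, not $15$ (you have carried over the rank from $\sigma_3$ and $\sigma_{\CF}$, whose tautological bundles have rank $3$); so the expected dimension of $Z_4$ is $26 - 20 = 6$, not $11$. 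Second, the vertical component is the \emph{entire} fiber of $Z_4$ over $\Bs$, namely $\Gr_{\Bs}(4,\CKer^{\sfs})$, a $\Gr(4,5)$-bundle over the surface $\Bs$, hence of dimension $2+4 = 6$. Your later assertion that $Z_4^v$ is an $8$-dimensional $\Gr(3,5)$-bundle contradicts your own (correct) computation two sentences earlier that the fiber of $Z_4$ over a point of $\Bs$ is the $4$-dimensional $\Gr(4,\CKer^{\sfs})$: a subvariety of $\pi_4^{-1}(\Bs)\cap Z_4$ cannot have dimension $8$ when the whole of $\pi_4^{-1}(\Bs)\cap Z_4$ has dimension $6$. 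Worse, an $8$-dimensional component would exceed the expected dimension $6$ and would make $\sigma_4$ \emph{not} regular, destroying the first claim of the lemma; your closing sentence ("regularity follows because the zero locus has expected dimension") is therefore inconsistent with the dimensions $6$, $8$ and $11$ appearing in your own text. (The "$3$" in the displayed formula of the statement should be read as $\Gr_{\Bs}(4,\CKer^{\sfs})$; this is the only reading compatible with lemma \ref{lemma:rational sing}, where $\WZ_4^v = \Flag_{\Bs}(3,4,\CKer^{\sfs})$ and the intersection $Z_4^v \cap Z_{\CF}$ is taken "inside $\Gr_{\Bs}(4,\CKer^{\sfs})$".)

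With the corrected numbers the proof closes as you intend: over $\Bn$ the fiber is a single point (the unique $4$-plane $\CKer^{\sfn}$, which contains $\CO_{\Bg}(-1)$ by lemma \ref{lemma:from 9 to 8} and hence agrees with $Z_{\CF}$ there), so the closure of this locus is the $6$-dimensional irreducible $Z_{\CF} \cong \Bl_{\Bs}\Bg$; over $\Bs$ one gets the $6$-dimensional irreducible $\Gr_{\Bs}(4,\CKer^{\sfs})$. Neither contains the other, since $Z_{\CF}$ dominates $\Bg$ while $Z_{\CF}\cap\pi_4^{-1}(\Bs)$ is the $5$-dimensional exceptional divisor $E = \Gr_{\Bs}(3,\CKer^{\sfs}/\CO_{\Bs}(-3))$, a proper closed subset of $Z_4^v$ — note that the "contains the line / does not contain the line" dichotomy is only needed to identify $E$, not to produce the two components, which are simply the closure of the graph over $\Bn$ and the full fiber over $\Bs$. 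Since both components have dimension $6 = 26 - 20$, the zero locus has pure expected dimension and $\sigma_4$ is regular. Please redo the arithmetic before relying on the "expected dimension implies regular" step.
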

\begin{proof}
As in lemma \ref{lemma:Z_4} one can check that $\sigma_4$ is regular.
Clearly there is an inclusion of $Z_{\CF}$ in $Z_4$ which is an isomorphism out of the preimage of $\Bs$.
On the other hand the restriction of $Z_4$ to $\Bs$ is by definition $\Gr_{\Bs}(3, \CKer^{\sfs})$.
\end{proof}

Recall that $\WZ_{\CF}$ is the result of the fiber product
\begin{equation}\label{eq:desing 1}
\begin{diagram}
\WZ_{\CF} 			& \rTo^{\Wpi_3}	& Z_{\CF}		\\
\dTo^{\Wpi_4}			&			& \dTo^{\pi_4}	\\
\CMIbar_3				& \rTo^{\pi_3}	& \Bg
\end{diagram}
\end{equation}
where we abuse the notation of \eqref{diag:flag} and use the same letters for the restricted projections. 

Our next claim is that $\CMIbar_3$ is singular on a $6$-dimensional locus and that $\WZ_{\CF}$ is a small resolution of singularities for it.
We also show that $\WZ_{\CF}$ is isomorphic to the blow up of $\CMIbar_3$ in the (non-Cartier) divisor $\CMIbar_3^{\sfs}$, which is the closure of $\CMI_3^{\sfs}$ inside $\CMIbar_3$.
\begin{theorem}\label{thm:resolution of singularity}
There is an isomorphism 
\[
\phi : \WZ_{\CF} \to \Bl_{\CMIbar_3^{\sfs}}\CMIbar_3
\] fitting in a commutative diagram
\begin{equation} \label{diag:resolution of MI3}
\begin{diagram}
\WCMI_3:=\WZ_{\CF}	& 			& 	\rTo^{\Wpi_3}			&		& \Bl_{\Bs}\Bg	\\
			& \rdTo_{\phi}			&		*				& \ruTo	& 					\\
 \dTo^{\Wpi_4}	&	\quad			&\Bl_{\CMIbar_3^{\sfs}}\CMIbar_3	&\quad \,\,	&\dTo_{\pi_4 = b}		\\
			& \ldTo				&						& \rdTo	&					\\
 \CMIbar_3	& 					& \rTo_{\pi_3 = \beta}				&		& \Bg
\end{diagram}
\end{equation}
The singular locus of $\CMIbar_3$ is 
\[
\Gr_{\Bs}(2,\CKer^{\sfs}/\CO_{\Bs}(-3)) \subset \Gr_{\Bs}(3,\CKer^{\sfs}) \cong \CMIbar_3^{\sfs}
\]
and the map $\Wpi_4$ is a small resolution of singularities for $\CMIbar_3$ such that
\[
\CO_{\CMIbar_3} \to R\Wpi_{4*} \CO_{\WCMI_3}
\]
is an isomorphism, i.e. the singularity of $\CMIbar_3$ is rational.
\end{theorem}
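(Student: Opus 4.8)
The plan is to exploit the two projections of the relative flag bundle \eqref{diag:flag}, playing the smooth rank-$4$ side against the singular rank-$3$ side: the projection $\Wpi_3$ will realise $\WZ_{\CF}$ as a smooth projective bundle, while $\Wpi_4$ will realise it as a small contraction onto $\CMIbar_3$. First I would describe $\WZ_{\CF}$ concretely as the space of flags $W_3 \subset W_4 \subset A \otimes \CK$ with $W_3 \in Z_3 = \CMIbar_3$ and $W_4 \in Z_{\CF}$. By construction $W_4 \in Z_{\CF}$ forces $W_4 \subset \CKer$ and $W_4 \supset \CO_{\Bg}(-1)$, the distinguished line of Lemma \ref{lemma:from 9 to 8}; any $W_3 \subset W_4$ then lies automatically in $\CKer$, so the fibre of $\Wpi_3$ over $W_4$ is the whole $\Gr(3,W_4) \cong \PP^3$. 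Thus $\WZ_{\CF}$ is a $\PP^3$-bundle over $Z_{\CF} \cong \Bl_{\Bs}\Bg$ (Lemma \ref{lemma:Z_4}); in particular it is smooth and is the $\PP^3$-fibration over $\Bl_{\Bs}\Bg$ promised in the introduction, and the upper part of \eqref{diag:resolution of MI3} commutes by construction.

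Next I would analyse $\Wpi_4$ by reconstructing $W_4$ from $W_3$. Over $\Bn$ one has $\CKer = \CKer^{\sfn}$ of rank $4$, so $W_4 = \CKer^{\sfn}$ is forced and the fibre is a reduced point; over $\Bs$ one has $\CKer^{\sfs}$ of rank $5$ and must have $W_3 + \CO_{\Bg}(-1) \subseteq W_4 \subseteq \CKer^{\sfs}$, so $W_4 = W_3 + \CO_{\Bg}(-1)$ is again forced unless $\CO_{\Bg}(-1) \subset W_3$, in which case $W_4/W_3$ is an arbitrary line in $\CKer^{\sfs}/W_3$ and the fibre is a $\PP^1$. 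Since $\kappa$ has degree $3$ (Lemma \ref{lemma:deg korth}), $\CO_{\Bg}(-1)$ restricts to $\CO_{\Bs}(-3)$, so the nontrivial fibres sit exactly over
\[
\Sigma := \Gr_{\Bs}\!\left(2, \CKer^{\sfs}/\CO_{\Bs}(-3)\right) \subset \Gr_{\Bs}(3, \CKer^{\sfs}) = \CMIbar_3^{\sfs}.
\]
A dimension count gives $\dim \Sigma = 6$, so $\Wpi_4^{-1}(\Sigma)$ is a $\PP^1$-bundle of dimension $7$, of codimension $2$ in $\WZ_{\CF}$, contracted onto the codimension-$3$ locus $\Sigma$: hence $\Wpi_4$ is a small birational contraction. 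The explicit reconstruction $W_3 \mapsto (W_3 \subset W_4)$ shows $\Wpi_4$ is an isomorphism over $\CMIbar_3 \setminus \Sigma$, which is therefore smooth; since $Z_3$ is cut by a regular section (Lemma \ref{lemma:Z_3 irreducible}) it is a local complete intersection, hence $S_2$, and being regular in codimension $2$ it is normal. Finally $\CMIbar_3$ is singular at every point of $\Sigma$: were it smooth, hence $\QQ$-factorial, there, the small contraction $\Wpi_4$ would be forced to be a local isomorphism, contradicting the $\PP^1$-fibres. This gives $\mathrm{Sing}\,\CMIbar_3 = \Sigma$.

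For the rational singularity statement, the fibres of $\Wpi_4$ are points or copies of $\PP^1$, all satisfying $h^0(\CO) = 1$ and $h^{>0}(\CO) = 0$; as $\CMIbar_3$ is reduced and the fibre dimension is at most $1$, base change yields $R^{>0}\Wpi_{4*}\CO_{\WZ_{\CF}} = 0$, and the normality of $\CMIbar_3$ together with the connectedness of the fibres forces the natural injection $\CO_{\CMIbar_3} \to \Wpi_{4*}\CO_{\WZ_{\CF}}$ to be an isomorphism. Hence $R\Wpi_{4*}\CO_{\WZ_{\CF}} \cong \CO_{\CMIbar_3}$, so the singularity is rational.

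It remains to identify $\WZ_{\CF}$ with the blow-up of the Weil divisor $\CMIbar_3^{\sfs}$. I would first observe that $\Wpi_4^{-1}(\CMIbar_3^{\sfs})$ coincides with $\Wpi_3^{-1}(E)$, where $E \subset Z_{\CF} \cong \Bl_{\Bs}\Bg$ is the Cartier exceptional divisor (Lemma \ref{lemma:Z_4}); since $\Wpi_3$ is a projective bundle, this preimage is a Cartier divisor $D$ on $\WZ_{\CF}$. The universal property of blowing up then produces a unique factorisation $\phi : \WZ_{\CF} \to \Bl_{\CMIbar_3^{\sfs}}\CMIbar_3$ over $\CMIbar_3$, automatically an isomorphism away from $\Sigma$. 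To see $\phi$ is an isomorphism everywhere I would show $-D$ is $\Wpi_4$-ample: on a contracted $\PP^1$ the divisor $D$ restricts to the restriction of the (negative) normal bundle of $E$ in $\Bl_{\Bs}\Bg$, so that $\Wpi_4 = \Proj \bigoplus_d \Wpi_{4*}\CO(-dD)$ recovers a blow-up of $\CMIbar_3$ with centre supported on $\CMIbar_3^{\sfs}$. The delicate point, which I expect to be the main obstacle, is to check that this graded algebra is exactly $\bigoplus_d \CI_{\CMIbar_3^{\sfs}}^{\,d}$ rather than merely its integral closure; I would settle this by exhibiting the étale-local model of $\CMIbar_3$ transverse to $\Sigma$ as a three-dimensional ordinary double point $xy = zw$, whose small resolution is precisely the blow-up of the Weil divisor $\{x=z=0\}$ and matches $\Wpi_4$. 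Once $\phi$ is known to be an isomorphism, the commutativity of the whole diagram \eqref{diag:resolution of MI3} follows from the identifications $\pi_3 = \beta$ and $\pi_4 = b$, completing the proof.
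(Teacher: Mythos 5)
Your geometric skeleton largely matches the paper's: $\Wpi_3$ exhibits $\WZ_{\CF}$ as a $\PP^3$-bundle over $Z_{\CF}\cong\Bl_{\Bs}\Bg$, the fibre of $\Wpi_4$ is a point away from $\Sigma:=\Gr_{\Bs}(2,\CKer^{\sfs}/\CO_{\Bs}(-3))$ and a $\PP^1$ over it, the dimension count makes the contraction small, and the "a nontrivial small contraction cannot have smooth ($\QQ$-factorial) target" argument for singularity of $\CMIbar_3$ along $\Sigma$ is exactly the one used in the paper. Two of your remaining steps, however, have genuine gaps.

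First, $R^{>0}\Wpi_{4*}\CO_{\WZ_{\CF}}=0$ does not follow from "the fibres are points or $\PP^1$'s with $h^{>0}(\CO)=0$, plus base change": $\Wpi_4$ is not flat (the fibre dimension jumps), and by the theorem on formal functions you would need vanishing of $H^1$ on every infinitesimal neighbourhood of a contracted $\PP^1$, i.e.\ control of the symmetric powers of the conormal bundle of $\Wpi_4^{-1}(\Sigma)$ restricted to the fibres, which you have not computed. The paper replaces this by a global computation (lemmas \ref{lemma:Koszul of Z_4} and \ref{lemma:rational sing}): it pushes forward the Koszul resolution of $\CO_{\WZ_4}$ along $\Wpi_4$, using the Schur-functor filtration of $\Lambda^i(\CRelTaut_4\otimes V)$ and the acyclicity of $\CO_{\Wpi_4}(-j)$ for $0\le j\le 5$, to get $R\Wpi_{4*}\CO_{\WZ_4}\cong\CO_{Z_3}$, and then shows the ideal of $\WZ_{\CF}$ in $\WZ_4$ pushes forward to zero. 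Some computation of this kind is unavoidable. The same issue undermines your claim that $\Wpi_4$ is a scheme-theoretic isomorphism over $\CMIbar_3\setminus\Sigma$: a bijection on points is not enough, and your candidate inverse $W_3\mapsto W_3+\CO_{\Bg}(-1)$ fails to be a subbundle along the locus in $\pi_3^{-1}(\Bn)$ where $\CO_{\Bg}(-1)\subset W_3$; the paper deduces the isomorphism from $R\Wpi_{4*}\CO\cong\CO$ together with lemma \ref{lemma:closed embedding and pushforward}.

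Second, you reduce the identification $\phi:\WZ_{\CF}\to\Bl_{\CMIbar_3^{\sfs}}\CMIbar_3$ being an isomorphism to an unproven \'etale-local model of $\CMIbar_3$ transverse to $\Sigma$ as an ordinary double point, plus a Rees-algebra identification; you flag this yourself as the main obstacle, and indeed neither is established. This machinery is not needed: after producing $\phi$ by the universal property of the blow-up (your construction of the Cartier divisor $\Wpi_3^{-1}(E)$ is correct and is what the paper does), the paper simply notes that $(\Wpi_3,\Wpi_4)$ is a closed embedding of $\WZ_{\CF}$ into $\Gr_{\Bg}(3,\CK\otimes A)\times_{\Bg}\Gr_{\Bg}(4,\CK\otimes A)$ which factors through $\phi$, so $\phi$ is a closed embedding; a closed embedding between integral proper schemes of the same dimension $9$ is an isomorphism. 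You should adopt this softer argument, or else actually prove the local model.
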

\begin{proof}
First, we construct the map
\[
\Bl_{\CMIbar_3^{\sfs}}\CMIbar_3 \to \Bl_{\Bs}\Bg
\]
By universal property of the blow-up, given a map from $\Bl_{\CMIbar_3^{\sfs}}\CMIbar_3$ to $\Bg$ it is enough to show that the pullback of $\Bs$ is a Cartier divisor.
As the rightmost square is cartesian by proposition \ref{prop:special as fiber product} and as the composition of cartesian squares is cartesian, the outer square of
\begin{equation}
\begin{diagram}
\Bl_{\CMIbar_3^{\sfs}}\CMIbar_3 \times_{\CMIbar_3} \CMIbar_3^{\sfs}	& \rTo			& \CMIbar_3^{\sfs}	& \rTo	& \PP(A)		\\
\dTo								&\square\qquad\quad	& \dTo		&\qquad \quad \square \qquad	& \dTo^{\kappa}	\\
\Bl_{\CMIbar_3^{\sfs}}\CMIbar_3							& \rTo			& \CMIbar_3		& \rTo	& \Bg
\end{diagram}
\end{equation}
is cartesian.
Moreover, by definition of the blow up of $\CMIbar_3^{\sfs} \subset \CMIbar_3$, the leftmost vertical arrow is the embedding of the exceptional divisor, which is always a Cartier divisor.

Now we construct the map 
\[
\phi: \WZ_{\CF} \to \Bl_{\CMIbar_3^{\sfs}}\CMIbar_3
\]
Again, by universal property of blow up we want to check that the leftmost vertical arrow in
\[
\begin{diagram}
\WZ_{\CF} \times_{\CMIbar_3} \CMIbar_3^{\sfs}	& \rTo				& \CMIbar_3^{\sfs}	& \rTo		& \PP(A)		\\
\dTo									&\square\qquad		& \dTo		&\quad \square	& \dTo_{\kappa}	\\
\WZ_{\CF}								& \rTo_{\Wpi_4 \qquad}	& \CMIbar_3	& \rTo_{\beta}	& \Bg
\end{diagram}
\]
is the embedding of a Cartier divisor. By commutativity of square \eqref{diag:resolution of MI3}, this is equivalent to the fact that the leftmost vertical arrow in
\[
\begin{diagram}
\WZ_{\CF} \times_{\Bl_{\Bs}\Bg} E	& \rTo	& E						& \rTo	& \PP(A)		\\
\dTo									& \square	& \dTo					& \square	& \dTo_{\kappa}	\\
\WZ_{\CF}					&\rTo_{\Wpi_3 \qquad}	& \Bl_{\Bs}\Bg		& \rTo_{b}	& \Bg
\end{diagram}
\]
is the embedding of a Cartier divisor. 
This last fact is true as $E$ is a Cartier divisor in $\Bl_{\Bs}\Bg$ and as $\Wpi_3$ is a dominant map from an irreducible scheme.

Next, we prove commutativity of all triangles in \eqref{diag:resolution of MI3}. 
The triangles which are not marked with $*$ are commutative by construction. 
The triangle marked with $*$ is commutative because by universal property of the blow up there is a unique map from $\WZ_{\CF}$ to $\Bl_{\Bs}\Bg$ which commutes with the projections to $\Bg$, and both $\Wpi_3$ and
\[
\WZ_{\CF} \xrightarrow{\phi} \Bl_{\CMIbar_3^{\sfs}}\CMIbar_3 \to  \Bl_{\Bs}\Bg
\]
commute with the projections $\pi_3 \circ \Wpi_4$ and $\pi_4$.


We still have to prove that $\phi$ is an isomorphism. 
As $\CMIbar_3$ is integral, also its blow-up in $\CMIbar_3^{\sfs}$ is integral. 
As both $\WZ_{\CF}$ and $\Bl_{\CMIbar_3^{\sfs}}\CMIbar_3$ are integral proper schemes of dimension $9$, so that in order to check that $\phi$ is an isomorphism it is enough to check that it is a closed embedding.
By their definition, the maps $\Wpi_3$ and $\Wpi_4$ induce a closed embedding
\[
\WZ_{\CF} \xrightarrow{(\Wpi_3, \Wpi_4)} \Gr(3,\CK \otimes A) \times_{\Bg} \Gr(4, \CK \otimes A)
\]
As we have just proved that diagram \eqref{diag:resolution of MI3} is commutative, there is an induced map from $\Bl_{\CMIbar_3^{\sfs}}\CMIbar_3$ to the fiber product of $\pi_3$ and $\pi_4$ such that the triangle
\[
\begin{diagram}
\WZ_{\CF} 				&		& 	\\
\dTo^{\phi}				& \rdInto	&	\\
\Bl_{\CMIbar_3^{\sfs}}\CMIbar_3	& \rTo	&\Gr(3,\CK \otimes A) \times_{\Bg} \Gr(4, \CK \otimes A)
\end{diagram}
\]
commutes.
As $\phi$ is the first factor of a closed embedding, it is itself a closed embedding, and therefore an isomorphism.

As for the last part of the statement, $\Wpi_3$ is a smooth map as it is the projection associated with the projectivization of a vector bundle, so that $\WZ_{\CF}$ is smooth as $\Bl_{\Bs} \Bg $ is smooth.

To describe the singular locus of $\CMIbar_3$, recall that by lemma \ref{lemma:from 9 to 8} there is an $\SSL_2$-equivariant exact sequence
\[
0 \to \CO_{\Bg}(-1) \to A \otimes \CK \to \CF \to 0
\]
inducing an embedding
\[
\Gr_{\Bs}(2, \CKer^{\sfs}/\CO_{\Bs}(-3)) \to \Gr_{\Bs}(3, \CKer^{\sfs}) \cong \CMIbar_3^{\sfs} 
\]
Over points in $\Gr_{\Bs}(2, \CKer^{\sfs}/\CO_{\Bs}(-3))$ the fiber of $\Wpi_4$ is $\PP^1$, while over the complement of $\Gr_{\Bs}(2, \CKer^{\sfs}/\CO(-3))$ inside $\CMIbar_3$ it has relative dimension $0$.

As a consequence, each point in $\Gr_{\Bs}(2, \CKer^{\sfs}/\CO(-3))$ is in the singular locus, as otherwise, by choosing a small enough neighborhood of the point, one would find a non-trivial small contraction between smooth varieties, which is impossible.

Now we are going to check that over the complement of $\Gr_{\Bs}(2, \CKer^{\sfs}/\CO(-3))$ in $\CMIbar_3$ the map $\Wpi_4$ is an isomorphism. 
Note that the restriction of
\[
\Wpi_4 : \WCMI_3 \to \CMIbar_3
\]
to
\[
\CMIbar_3 \setminus \Gr_{\Bs}\left(2, \CKer^{\sfs}/\CO(-3)\right)
\]
is proper of relative dimension $0$. 
If we prove that
\[
\CO_{\CMIbar_3} \cong R\Wpi_{4*}\CO_{\WCMI_3}
\]
then by lemma \ref{lemma:closed embedding and pushforward} we have proved that the restricted projection $\Wpi_4$ is an isomorphism.
This will prove at the same time that the singularity of $\CMIbar_3$ is rational and is the content of lemma \ref{lemma:Koszul of Z_4} and lemma \ref{lemma:rational sing}.
\end{proof}

\begin{lemma}\label{lemma:Koszul of Z_4}
The adjunction map
\begin{equation}\label{eq:Koszul of Z_4 bis 1}
\CO_{Z_3} \to R\Wpi_{4*}\CO_{\WZ_4}
\end{equation}
is an isomorphism.
\end{lemma}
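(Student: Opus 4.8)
The plan is to resolve $\CO_{\WZ_4}$ by a Koszul complex on the flag variety and push it down through $\Wpi_4$, thereby reducing the assertion to the Koszul resolution of $\CO_{Z_3}$ already supplied by lemma \ref{lemma:Z_3 irreducible}. The reason for passing through $\WZ_4$, rather than arguing fibrewise over $Z_3$ where $\Wpi_4$ has jumping fibre dimension, is that upstairs on $\Flag_{\Bg}(3,4,\CK\otimes A)$ the scheme $\WZ_4$ is an honest complete intersection. Concretely, $\Wpi_3\colon\Flag_{\Bg}(3,4,\CK\otimes A)\to\Gr_{\Bg}(4,\CK\otimes A)$ is a flat $\PP^3$-bundle, and by lemma \ref{lemma:Z_4 again} the subscheme $Z_4$ is the zero locus of the regular section $\sigma_4$ of $\CRelTaut_4^*\otimes V^*$ (of rank $20$); hence $\WZ_4=\Wpi_3^{-1}(Z_4)$ is cut out by the regular section $\Wpi_3^*\sigma_4$, and $\CO_{\WZ_4}$ is quasi-isomorphic to $\Kosz(\CRelTaut_4\otimes V,\sigma_4)$, whose $i$-th term is $\Lambda^i(\CRelTaut_4\otimes V)$ (I write $\CRelTaut_4$ for its pullback to the flag).

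Next I would push this complex forward through $\Wpi_4$, which is the $\PP^5$-bundle obtained by projectivizing $(\CK\otimes A)/\CRelTaut_3$ over $\Gr_{\Bg}(3,\CK\otimes A)$. Its relative tautological sequence $0\to\Wpi_4^*\CRelTaut_3\to\CRelTaut_4\to\CR\to 0$, with $\CR$ the relative $\CO(-1)$, induces on each exterior power a filtration whose graded pieces are $\Wpi_4^*\Lambda^{i-q}(\CRelTaut_3\otimes V)\otimes\CR^{\otimes q}\otimes\Lambda^q V$ for $0\le q\le\min(i,5)$. By Bott vanishing on the bundle one has $R\Wpi_{4*}\CR^{\otimes q}=0$ for $1\le q\le 5$, whereas $R\Wpi_{4*}\CR^{\otimes 0}$ is the structure sheaf of the base; since moreover $\Lambda^q V=0$ for $q\ge 6$, only the $q=0$ pieces survive, giving $R\Wpi_{4*}\Lambda^i(\CRelTaut_4\otimes V)\cong\Lambda^i(\CRelTaut_3\otimes V)$ concentrated in degree $0$.

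Finally I would identify the resulting complex. Because $\sigma_4$ restricts to $\sigma_3$ on the subbundle $\Wpi_4^*\CRelTaut_3\otimes V$, the Koszul differential preserves the filtration and acts on the surviving $q=0$ strand as contraction by $\sigma_3$; the filtration spectral sequence therefore degenerates and $R\Wpi_{4*}\CO_{\WZ_4}\cong\Kosz(\CRelTaut_3\otimes V,\sigma_3)$. By lemma \ref{lemma:Z_3 irreducible} the section $\sigma_3$ is regular, so this Koszul complex resolves $\CO_{Z_3}$, and tracing the unit of adjunction through these identifications shows that the natural map $\CO_{Z_3}\to R\Wpi_{4*}\CO_{\WZ_4}$ is exactly this isomorphism.

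The main obstacle is the bookkeeping in the last step: one must verify that the edge homomorphism of the pushforward spectral sequence is genuinely the Koszul differential for $\sigma_3$, with no spurious twist, and that the abstract isomorphism produced coincides with the adjunction map rather than being merely \emph{some} isomorphism. The delicate conceptual feature is that $\Wpi_4$ has positive-dimensional ($\PP^1$) fibres over the whole special stratum $\pi_3^{-1}(\Bs)\cap Z_3$, so the statement cannot be read off fibrewise; it is precisely the Bott vanishing for $\CR^{\otimes q}$, applied on $\Flag$ where $\WZ_4$ is a complete intersection, that absorbs this jump in fibre dimension and yields the clean answer $\CO_{Z_3}$.
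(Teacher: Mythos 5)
Your proof is correct and follows essentially the same route as the paper's: resolve $\CO_{\WZ_4}$ by the pulled-back Koszul complex of the regular section $\sigma_4$, push forward term by term along the $\PP^5$-bundle $\Wpi_4$, kill everything involving $\CO_{\Wpi_4}(-j)$ for $1\le j\le 5$, and identify the surviving complex with $\Kosz(\CRelTaut_3\otimes V,\sigma_3)\simeq\CO_{Z_3}$. The only (cosmetic) difference is that you filter $\Lambda^i(\CRelTaut_4\otimes V)$ directly by the subbundle $\Wpi_4^*\CRelTaut_3\otimes V$ and identify the induced differential by restriction of $\sigma_4$, whereas the paper first applies the Cauchy filtration into Schur functors $\Sigma^\alpha\CRelTaut_4\otimes\Sigma^{\alpha^T}V$ and pins down the differential by $\SSL_2$-equivariance; your bookkeeping is, if anything, slightly leaner.
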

\begin{proof}
Take the locally free resolution of $\CO_{Z_4}$ given by the Koszul complex of
\begin{equation}\label{eq:Koszul of Z_4 bis}
V \otimes \CRelTaut_4 \xrightarrow{\sigma_4} \CO_{\Gr_{\Bg}(4,\CK \otimes A)}
\end{equation}
Pull it back via the flat map $\Wpi_3$ to find a Koszul resolution for $\CO_{\WZ_4}$.

We are now going to pushforward the Koszul resolution for $\CO_{\WZ_4}$ via $\Wpi_{4}$.
Clearly,
\[
R\Wpi_{4*}\CO_{\Flag_{\Bg}(3,4,A\otimes \CK)} \cong \CO_{\Gr_{\Bg}(3,A \otimes \CK)}
\]
The tautological flag fits in an exact sequence
\begin{equation}\label{eq:tautological flag}
0 \to \Wpi_4^* \CRelTaut_3 \to \Wpi_3^* \CRelTaut_4 \to \CO_{\Wpi_4}(-1) \to 0
\end{equation}
so that
\[
R\Wpi_{4*}\Wpi_3^* \CRelTaut_4 \cong R\Wpi_{4*} \Wpi_4^* \CRelTaut_3
\]
By projection formula
\[
R\Wpi_{4*} \Wpi_4^* \CRelTaut_3 \cong \CRelTaut_3
\]

As the projections $\Wpi_4$ and $\Wpi_3$ are $SL_2$-equivariant, the $SL_2$-invariant section \eqref{eq:Koszul of Z_4 bis} is transformed into the unique $SL_2$-invariant section
\[
V \otimes \CRelTaut_3 \xrightarrow{\sigma_3} \CO_{\Gr_{\Bg}(3,\CK \otimes A)}
\]
In order to prove \eqref{eq:Koszul of Z_4 bis 1}, it is enough to prove that
\[
R^i \Wpi_{4*} \Wpi_3^* \Lambda^i(\CRelTaut_4 \otimes V)
\]
vanish for $i > 0$.
The exterior powers
\[
\Lambda^i(\CRelTaut_4 \otimes V)
\]
admit a filtration (see e.g. \cite{weyman2003cohomology})
with subquotients
\begin{equation}\label{eq:Koszul of Z_4 bis 2}
\Sigma^\alpha \CRelTaut_4 \otimes \Sigma^{\alpha^T} V
\end{equation}
where $\Sigma^\alpha$ is the Schur functor associated with a Young diagram $\alpha$ such that $| \alpha | = i$.
Note that \eqref{eq:Koszul of Z_4 bis 2} is not zero only when $\alpha$ is contained in a 5-by-4 rectangle, as the ranks of $\CRelTaut_4$ and $V$ are respectively $4$ and $5$.

The Schur powers of $\CRelTaut_4$ can be rewritten in terms of Schur powers of $\CRelTaut_3$ and $\CO_{\Wpi_4}(-1)$. 
This can be done via a natural filtration for  
\[
\Sigma^\alpha \CRelTaut_4
\]
induced by the tautological flag \eqref{eq:tautological flag}.
The associated graded object of this natural filtration is
\begin{equation}\label{eq:Koszul of Z_4 bis 3}
\bigoplus_{\beta \subseteq \alpha} \left( \Sigma^\beta \CRelTaut_3 \otimes \Sigma^{\alpha / \beta} \CO_{\Wpi_4}(-1) \right)
\end{equation}
where $\Sigma^{\alpha / \beta}$ are the skew Schur functors.
As $\alpha$ is contained in a 5-by-4 rectangle
\begin{equation}
\Sigma^{\alpha / \beta}(\CO_{\Wpi_4}(-1)) = \CO_{\Wpi_4}(-j)
\end{equation}
for $j \in [0,5]$.
As $\Wpi_4$ is the projectivization of a rank $6$ vector bundle, the line bundles have vanishing higher pushforward.
Finally, by projection formula the pushforward of summands of \eqref{eq:Koszul of Z_4 bis 3} is
\[
\Sigma^\beta \CRelTaut_3 \otimes R \Wpi_{4*} \CO_{\Wpi_4}(-j)
\]
which is always concentrated in degree $0$.
\end{proof}

\begin{lemma}\label{lemma:rational sing}
The adjunction map
\begin{equation}
\CO_{\CMIbar_3} \to R\Wpi_{4*}\CO_{\WZ_{\CF}}
\end{equation}
is an isomorphism.
\end{lemma}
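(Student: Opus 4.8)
The plan is to bootstrap from Lemma \ref{lemma:Koszul of Z_4}, which already identifies $\CO_{\CMIbar_3}$ with $R\Wpi_{4*}\CO_{\WZ_4}$, and to transfer that computation from the full zero locus $\WZ_4$ to its component $\WZ_{\CF} = \WCMI_3$. By Lemma \ref{lemma:Z_4 again} the scheme $Z_4$ is the union of $Z_{\CF}$ and the vertical component $Z_4^v$, the two coinciding away from $\pi_4^{-1}(\Bs)$; pulling back along the flat morphism $\Wpi_3$ gives a closed cover $\WZ_4 = \WZ_{\CF} \cup \WZ_4^v$. First I would write the Mayer--Vietoris short exact sequence
\[
0 \to \CO_{\WZ_4} \to \CO_{\WZ_{\CF}} \oplus \CO_{\WZ_4^v} \to \CO_{\WZ_{\CF} \cap \WZ_4^v} \to 0,
\]
whose exactness rests on the two components meeting in the expected scheme-theoretic intersection; this follows because $Z_4$ is cut by a regular section and hence has no embedded components, exactly as checked for $Z_{\CF}$ in Lemma \ref{lemma:Z_4}.

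Applying $R\Wpi_{4*}$ turns this into a distinguished triangle whose first term is $\CO_{\CMIbar_3}$ by Lemma \ref{lemma:Koszul of Z_4}. The statement then reduces to the single claim that the restriction map $R\Wpi_{4*}\CO_{\WZ_4^v} \to R\Wpi_{4*}\CO_{\WZ_{\CF} \cap \WZ_4^v}$ is a quasi-isomorphism: granting this, the two corresponding summands cancel in the triangle, forcing the adjunction map $\CO_{\CMIbar_3} \to R\Wpi_{4*}\CO_{\WZ_{\CF}}$ to be an isomorphism. I would close this last point by verifying, via naturality of the base-change and adjunction units, that the isomorphism produced by the triangle is indeed the adjunction map appearing in the statement.

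To prove the key quasi-isomorphism I would analyze $\Wpi_4$ over the special stratum $\Bs$, where both $\WZ_4^v$ and $\WZ_{\CF} \cap \WZ_4^v$ are supported. Both are $\Wpi_3$-flag bundles over $Z_4^v$ and over $Z_{\CF} \cap Z_4^v$, and I would compute their structure-sheaf pushforwards exactly as in Lemma \ref{lemma:Koszul of Z_4}, by filtering the relevant Schur powers of the tautological flag \eqref{eq:tautological flag} in $\CRelTaut_3$, $\CRelTaut_4$ and $\CO_{\Wpi_4}(-1)$ and using that projective bundles and their hyperplane-type subbundles have structure sheaf with no higher direct images. The fibers of $\Wpi_4$ on these loci are projective spaces over the generic point of the image and jump to larger projective spaces precisely over the singular locus $\Gr_{\Bs}(2,\CKer^{\sfs}/\CO_{\Bs}(-3))$ identified in Theorem \ref{thm:resolution of singularity}; in each case the intersection cuts out a hyperplane-type section, so on cohomology the restriction is the identity $\CC \to \CC$, giving the fiberwise and hence global quasi-isomorphism.

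The essential difficulty is that $\CMIbar_3$ is singular exactly along $\Gr_{\Bs}(2,\CKer^{\sfs}/\CO_{\Bs}(-3))$, which is where the fibers of $\Wpi_4$ jump, so the clean pushforward formulas for blow-ups with smooth centers are unavailable even though $\WCMI_3 \cong \Bl_{\CMIbar_3^{\sfs}}\CMIbar_3$. The content therefore lies genuinely in the fiberwise cohomology bookkeeping over $\Bs$: one must track the Schur powers of $\CRelTaut_3$, $\CRelTaut_4$ and $\CO_{\Wpi_4}(-1)$ along the flag, as in Lemma \ref{lemma:Koszul of Z_4}, and confirm that the extra vertical contribution of $\WZ_4^v$ is completely accounted for by $\WZ_{\CF}\cap\WZ_4^v$. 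Combined with Lemma \ref{lemma:closed embedding and pushforward}, this isomorphism simultaneously yields the rationality of the singularity asserted in Theorem \ref{thm:resolution of singularity}.
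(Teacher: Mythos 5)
Your proposal is correct and follows essentially the same route as the paper: your Mayer--Vietoris sequence for $\WZ_4 = \WZ_{\CF}\cup\WZ_4^v$ is just a repackaging of the paper's two ideal-sheaf sequences, and both arguments reduce, via Lemma \ref{lemma:Koszul of Z_4}, to showing that $R\Wpi_{4*}$ kills the ideal of $\WZ_4^v\cap\WZ_{\CF}$ inside $\WZ_4^v$. The paper closes this last step slightly more cleanly by identifying that ideal with the pullback of $\CO(-1)$ from $\Gr(4,\CKer^{\sfs})$, a line bundle acyclic on the $\PP^1$-fibers of $\Flag_{\Bs}(3,4,\CKer^{\sfs})\to\Gr_{\Bs}(3,\CKer^{\sfs})$, rather than by fiberwise cohomology bookkeeping, but the content is the same.
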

\begin{proof}
On $\WZ_4$ there are exact sequences
\[
0 \to \CI_{\WZ_{\CF}/\WZ_4} \to \CO_{\WZ_4} \to \CO_{\WZ_{\CF}} \to 0
\]
and
\[
0 \to \CI_{\WZ_4^v \cap \WZ_{\CF}/ \WZ_4^v} \to \CO_{\WZ_4^v} \to \CO_{\WZ_4^v \cap \WZ_{\CF}} \to 0
\]
By lemma \ref{lemma:Koszul of Z_4}
\[
R\Wpi_{4*} \CO_{\WZ_4} \cong \CO_{Z_3}
\]
so that our claim becomes that
$R\Wpi_{4*} \CI_{\WZ_{\CF}/\WZ_4}$ vanishes,
or equivalently
\begin{equation}\label{eq:rational sing}
R\Wpi_{4*}\CI_{\WZ_4^v \cap \WZ_{\CF}/ \WZ_4^v} = 0
\end{equation}

We are now going to prove that \eqref{eq:rational sing} is an isomorphism. 
To this purpose, recall that the intersection of $Z_4^v$ and $Z_{\CF}$ inside $\Gr_{\Bs}(4, \CKer^{\sfs})$ 
is described in lemma \ref{lemma:Z_4} and \ref{lemma:Z_4 again}.
As a consequence, the commutative diagram
\[
\begin{diagram}
\WZ_4^v \cap \WZ_{\CF} 	& \rInto	& \WZ_4^v	\\
					& \rdTo	& \dTo		\\
					&		& \CMI_3^{\sfs}
\end{diagram}
\]
is isomorphic to the commutative diagram
\begin{equation}\label{eq:rational sing 1}
\begin{diagram}
\Flag_{\Bs}(3, \CO_{\Bs}(-3) \subset 4,\CKer^{\sfs})	& \rInto	&\Flag_{\Bs}(3,4,\CKer^{\sfs}) 	\\
						& \rdTo	& \dTo		\\
						&		& \Gr_{\Bs}(3,\CKer^{\sfs})
\end{diagram}
\end{equation}
where the projections are the natural ones to $\Gr_{\Bs}(3,\CKer^{\sfs})$ and where the upper left corner is the flag variety of subbundles of $\CKer^{\sfs}$ such that the injection \eqref{eq:from 9 to 8} restricted to $\Bs$ factors via the $4$-dimensional subbundle.

The ideal of the top left corner of diagram \eqref{eq:rational sing 1} is the pullback of $\CO(-1)$ from $\Gr(4,\CKer^{\sfs})$ (possibly with a twist by a line bundle on $\Bs$). 
It follows that it is acyclic on the fibers of the projection to $\Gr_{\Bs}(3,\CKer^{\sfs})$, so that \eqref{eq:rational sing} holds.
\end{proof}


\bibliographystyle{alpha}
\bibliography{arXiv_Sanna_Thesis.bbl}

\end{document}